\newtheorem{theorem}{Theorem}[chapter]
\newtheorem{proposition}[theorem]{Proposition}
\newtheorem{corollary}[theorem]{Corollary}
\newtheorem{lemma}[theorem]{Lemma}
\newtheorem{conjecture}[theorem]{Conjecture}
\newtheorem{observation}[theorem]{Observation}
\newtheorem{question}[theorem]{Question}
\newtheorem{problem}[theorem]{Problem}
\theoremstyle{definition}
\newtheorem{definition}[theorem]{Definition}
\newtheorem{defn}[theorem]{Definition}
\newtheorem{example}[theorem]{Example}
\theoremstyle{remark}
\newtheorem{remark}[theorem]{Remark}
\numberwithin{section}{chapter}
\numberwithin{equation}{chapter}
\newcommand{\stat}{{\mathrm {stat}}}
\newcommand{\maj}{{\mathrm {maj}}}
\newcommand{\inv}{{\mathrm {inv}}}
\newcommand{\exc}{{\mathrm {exc}}}
\newcommand{\sign}{{\mathrm {sign}}}
\newcommand{\peak}{{\mathrm {peak}}}
\newcommand{\mult}{{\mathrm {mult}}}
\newcommand{\Des}{{\mathrm {Des}}}
\newcommand{\End}{{\mathrm {End}}}
\newcommand{\path}{{\mathrm {path}}}
\newcommand{\id}{{\mathrm {id}}}
\newcommand{\TTT}{{\mathcal {T}}}
\newcommand{\OOO}{{\mathcal {O}}}
\newcommand{\PPP}{{\mathcal {P}}}
\newcommand{\HHH}{{\mathcal {H}}}
\newcommand{\des}{{\mathrm {des}}}
\newcommand{\asc}{{\mathrm {asc}}}
\newcommand{\GGG}{{\mathcal {G}}}
\newcommand{\VVV}{{\mathcal {V}}}
\newcommand{\content}{{\mathrm {cont}}}
\newcommand{\fix}{{\mathrm {fix}}}
\newcommand{\lis}{{\mathrm {lis}}}
\newcommand{\Irr}{{\mathrm {Irr}}}
\newcommand{\Mat}{{\mathrm {Mat}}}
\newcommand{\Res}{{\mathrm {Res}}}
\newcommand{\triv}{{\mathrm {triv}}}
\newcommand{\Mob}{{\mathrm {M\ddot{o}b}}}
\newcommand{\Ind}{{\mathrm {Ind}}}
\newcommand{\height}{{\mathrm {ht}}}
\newcommand{\shape}{{\mathrm {shape}}}
\newcommand{\symm}{{\mathfrak{S}}}
\newcommand{\wt}{{\mathrm{wt}}}
\newcommand{\YY}{{\mathbb {Y}}}
\newcommand{\FF}{{\mathbb {F}}}
\newcommand{\PP}{{\mathbb {P}}}
\newcommand{\EE}{{\mathbb {E}}}
\newcommand{\CC}{{\mathbb {C}}}
\newcommand{\QQ}{{\mathbb {Q}}}
\newcommand{\ZZ}{{\mathbb {Z}}}
\newcommand{\NN}{{\mathbb{N}}}
\newcommand{\RR}{{\mathbb{R}}}
\newcommand{\VV}{{\mathbb{V}}}
\newcommand{\WWW}{{\mathcal{W}}}
\newcommand{\UUU}{{\mathcal{U}}}
\newcommand{\NNN}{{\mathcal{N}}}
\newcommand{\AAA}{{\mathcal{A}}}
\newcommand{\MMM}{{\mathcal{M}}}
\newcommand{\BBB}{{\mathcal{B}}}
\newcommand{\one}{{\bm{1}}}
\newcommand{\III}{{\mathcal{I}}}
\newcommand{\gr}{{\mathrm{gr}}}
\newcommand{\Hilb}{{\mathrm{Hilb}}}
\newcommand{\domcyc}{{\mathrm {domcyc}}}
\newcommand{\cyc}{{\mathrm {cyc}}}
\newcommand{\xx}{{\mathbf {x}}}
\newcommand{\II}{{\mathbf {I}}}
\newcommand{\ch}{{\mathrm {ch}}}
\newcommand{\Fun}{{\mathrm{Fun}}}
\newcommand{\Class}{{\mathrm{Class}}}
\newcommand{\Loc}{{\mathrm{Loc}}}
\begin{document}

\frontmatter

\title[characters of local and regular permutation statistics]
{Characters of Local and Regular Permutation Statistics}

%    Remove any unused author tags.

%    author one information
\author{Zachary Hamaker}
\address
{Department of Mathematics \newline \indent
University of Florida \newline \indent
Gainesville, FL, 32611, USA}
\email{zhamaker@ufl.edu}
\thanks{Z. Hamaker was partially supported by NSF Grant DMS-2054423.}

\author{Brendon Rhoades}
\address
{Department of Mathematics \newline \indent
University of California, San Diego \newline \indent
La Jolla, CA, 92093, USA}
\email{bprhoades@ucsd.edu}
\thanks{B. Rhoades was partially supported by NSF Grants DMS-1953781 and DMS-2246846.}

%    \date is required; it is the date received by the editor.
\date{}

\subjclass[2020]{Primary }
%    Recognition of the 2010 edition of the Mathematics Subject
%    Classification requires a version of amsbook.cls from July 2009
%    or later.  If "2010" is not recognized, please upgrade.

\keywords{}

\dedicatory{}

\begin{abstract}
The goal of this monograph is to study the indicator function for a set of permutations mapping one finite sequence of positive integers to another from a representation theoretic, combinatorial and probabilistic perspective.
The degree of a function of permutations is the size of the largest pair of sequences required when expressing it as a linear combination of these indicators.
This notion of degree, implicit in work of Diaconis, is critical for many applications of representation theory to extremal combinatorics, machine learning, probability and statistics.
We use the term \emph{local} to indicate bounded degree, as it is both easier to work with and less confusing given the pervasiveness of the term `degree'.
Most permutation statistics in enumerative and algebraic combinatorics are manifestly local.
Such functions can be decomposed into indicators, hence are amenable to analysis using techniques and machinery from representation theory.
We initiate the study of low degree class functions, which encode probabilistic data for permutation statistics on each cycle type simultaneously.

Our monograph splits into three parts.
Since the concept of degree is unstudied in the enumerative and algebraic combinatorics literatures, we first develop its theory from scratch in a language familiar to this audience.
Class functions correspond to symmetric functions, and the pattern counts used in our work lead naturally to a novel basis we call the \emph{path power sum symmetric functions}.
The second and most technically challenging part of this monograph is our \emph{path Murnaghan-Nakayama formula}, which expands path power sums into Schur functions.

By combining the the path Murnaghan-Nakayam formula with the classical theory of character polynomials, one obtains a structural characterization for moments for permutation statistics conditioning on cycle type.
The third part analyzes asymptotic properties of these moments.
In doing so, we introduce the novel family of \emph{regular permutation statistics}, which include almost all reasonable weighted pattern counting statistics.
We show a large family of regular statistics satisfy a law of large numbers on a given cycle type depending only on the proportion of fixed points and a have variances depending only on fixed points and two cycles.
As a consequence, we prove in a precise sense the heuristic that permutons, which are limiting objects of permutations, do not depend on cycle type.

\end{abstract}

\maketitle

\tableofcontents

%    Include unnumbered chapters (preface, acknowledgments, etc.) here.
\section*{Acknowledgements}
\label{Acknowledgements}

The authors are grateful to Michael Coopman, Yeor Hafouta, Mohammed Slim Kammoun, Gene Kim, Toby Johnson, Kevin Liu, 
Moxuan Liu,
 Arnaud Marsiglietti, 
James Pascoe, 
Bruce Sagan, John Stembridge,
and Yan Zhuang for helpful conversations, to Valentin F\'eray, Christian Gaetz, and Dan Rockmore for their guidance on the previous literature and to Tony Mendes for sharing his ribbon drawing code.
We are especially grateful to Eric Ramos, who helped us conceptualize the project at its inception.
Z. Hamaker was partially supported by NSF Grant DMS-2054423.
B. Rhoades was partially supported by NSF Grant DMS-1953781 and DMS-2246846.

\mainmatter
%    Include main chapters here.
%\include{c-intro}
\chapter{Introduction}
\label{Introduction}

\section{Overview}
Viewed as smooth functions, polynomials are fundamental due to their ubiquity and ease of analysis.
When studying smooth functions $f: S^1 \to \CC$ instead, periodic functions play an analogous role.
Taylor series and the Fourier transform show these subfamilies can approximate all functions in their reference class with arbitrary precision.
More generally, mathematicians encountering a family of functions must first aim to identify a subfamily that strikes this balance between simplicity and expressiveness.
This is a necessary prelude to analysis.
When accomplished successfully, great rewards follow as demonstrated by the classical theories mentioned above and, as a more recent success, the numerous beautiful consequences obtained by approximating Boolean functions with multivariate square--free polynomials (for a textbook treatment see~\cite{ODonnell} and for examples see, e.g., ~\cite{FB,KKL}).

We study functions $f: \symm_n \to \CC$ on permutations, which are ubiquitous in mathematics, statistics, computer science and social choice theory.
Our work makes critical use of two subfamilies of functions, class functions and low-degree functions.
Class functions, also known as characters, are invariant under conjugation of the input.
They date back to work of Frobenius and are perhaps the most fundamental objects of study in the representation theory of groups.
The notion of degree for functions of permutations is a comparatively recent development, having first been applied implicitly in work of Diaconis from the 1980's~\cite{Diaconis1,Diaconis2}.
While the original definition of degree is representation theoretic, it can also be interpreted combinatorially~\cite{DFLLV,EFP}:

\begin{definition}
\label{k-local-definition}
For $i,j \in [n] = \{1,2,\dots,n\}$, define $\one_{ij}: \symm_n \to \CC$ by $\one_{ij}(w) = 1$ if $w(i) = j$ and $0$ if $w(i) \neq j$.
The \emph{degree} of $f: \symm_n \to \CC$ is the minimal degree needed to express $f$ as a polynomial in the $\one_{ij}$'s.
\end{definition}

Equivalently, for positive integer sequences $I= (i_1,\dots,i_k)$ and $J=(j_1,\dots,j_k)$ in $[n]^k$, define $\one_{IJ}: \symm_n \to \CC$ by
\[
\one_{IJ} = \one_{i_1 j_1} \cdot \one_{i_2j_2} \cdot \ldots \cdot \one_{i_k j_k}
\]
where $\cdot$ is pointwise multiplication of functions.
This expression can be abbreviated or will vanish  when $I$ or $J$ has repeated entries.
We call pairs $(I,J)$ with no repetitions \emph{partial permutations}, and let $\symm_{n,k}$ denote the set of partial permutations with $k$ entries in $[n]$. 
It turns out (see Observation~\ref{lower-closure}) $f:\symm_n \to \CC$ has degree at most $k$ if
\[f \in \mbox{span}(\one_{IJ}:(I,J) \in \symm_{n,k}).
\]

Identifying $f$ as a linear combinations of $\one_{IJ}$'s does not determine degree, so we introduce the term \emph{$k$-local} for functions of degree at most $k$.
This nomenclature is also useful since the term `degree' occurs in several other contexts in our work.

Beginning with Ellis, Friedgut and Pilpel's groundbreaking proof of the Deza--Frankl conjecture, an Erd\H{o}s--Ko--Rado  property for sets of permutations~\cite{EFP}, analysis by degree has been a key tool in the solution of further Erd\H{o}s--Ko--Rado type problems~\cite{EL,Lindzey,KLM}.
Structural properties of approximately local $\{0,1\}$--valued functions and local almost $\{0,1\}$--valued functions are investigated in~\cite{EFF1,EFF2,Filmus}.
Degree is related to other notions of complexity for functions of permutations in~\cite{DFLLV}.
Local functions occur organically in probability and social choice theory as demonstrated in Diaconis's aforementioned foundational work, with a recent application to Markov chain mixing~\cite{Dubail}.
Approximation using local functions has become an invaluable tool for dimension reduction in the machine learning literature, with early examples~\cite{HGG,KHJ}.

The vast majority of permutation statistics studied in the combinatorics and probability literatures are manifestly local.
Motivated by this observation, we introduce a powerful new approach for analyzing probabilistic and asymptotic properties of local statistics.
Every fucntion $\stat: \symm_n \to \CC$ has an associated class function $R\, \stat:\symm_n \to \CC$ defined by
\begin{equation}
R\, \stat(w) = \frac{1}{n!}\sum_{v \in \symm_n} \stat(v^{-1} w v),
\label{eq:intro-reynolds}	
\end{equation}
which computes conditional expectations of $\stat$ on all cycle types simultaneously.
Here $R$ is the linear \emph{Reynolds operator}.
Therefore when $\stat$ is $k$-local, we have 
\begin{equation}
	\label{eq:intro-stat}
	\stat = \sum_{(I,J) \in \symm_{n,k}} c_{IJ} \one_{IJ} \quad \mbox{so} \quad R\,\stat = \sum_{(I,J) \in \symm_{n,k}} c_{IJ} R\,\one_{IJ}.
\end{equation}
We show $R\, \one_{IJ}$ 
is determined by the isomorphism type
of the directed graph on 
$[n] := \{1, \dots, n \}$
corresponding to the partial permutation $(I,J)$, 
so 
many of the $R \, \one_{IJ}$ are equal.
Therefore, by grouping like terms and tallying 
$c_{IJ}$'s, we reduce the problem of computing the 
conditional expectation of $\stat$ on all cycle types 
to understanding $R\, \one_{IJ}$ for unlabeled 
graphs associated to partial permutations.
Such graphs are disjoint unions of directed paths and cycles.

Our analysis of $R \, \one_{IJ}$ uses
the machinery of symmetric functions.
The Frobenius isomorphism
$\ch_n: \Class(\symm_n,\CC) \xrightarrow{\, \sim \,} \Lambda_n$
identifies complex-valued class functions on $\symm_n$
with homogeneous symmetric functions of degree $n$.
Since $R\, \one_{IJ}$ is a class function, 
we have a corresponding symmetric function 
$\ch_n(R \, \one_{IJ})$.
Up to a scalar, we call $\ch_n(R \, \one_{IJ})$ 
the {\em atomic symmetric function}
$A_{n,I,J}$.
Like $R \, \one_{IJ}$, the atomic symmetric function
$A_{n,I,J}$ depends only on the unlabeled graph
of the partial permutation $(I,J)$.
We introduce a novel basis for the ring of symmetric functions called the \emph{path power sums} $\{\vec{p}_\lambda\}$ and show $A_{n,I,J}$ is the product of a path power sum and a usual power sum;
this corresponds precisely to the 
decomposition of the graph of $(I,J)$ into 
paths and cycles.
The most technically challenging contribution of our work is the \emph{path Murnaghan--Nakayama formula}, which computes the Schur expansion of a path power sum and has further interpretation in the character theory of $\symm_n$.
The path Murnaghan--Nakayama formula combines with the
usual Murnaghan--Nakayama formula to give 
the Schur expansion of the atomic function
$A_{n,I,J}.$

Recall that irreducible characters $\chi^\lambda:\symm_n \to \CC$ of $\symm_n$ are in one-to-one correspondence with partitions $\lambda$ of $n$.
By linear extension, we may regard $\chi^\lambda$ as a function on the group algebra $\CC[\symm_n]$.
Given any group algebra element
$ \sum_{w \in \symm_n} \alpha_w \cdot w \in \CC[\symm_n]$, one can request a `formula' for
\[
\chi^\lambda\left(\sum_{w \in \symm_n} \alpha_w \cdot w\right) = \sum_{w \in \symm_n} \alpha_w \cdot \chi^\lambda(w).
\]
In general, such character evaluation problems are intractable.
However, our Schur expansion of the atomic function
$A_{n,I,J}$
solves this problem for group algebra elements of the form $\sum_{w \in \symm_n}\one_{IJ}(w) \cdot w$ as a signed count of ribbon tilings for $\lambda$ reminiscent of but fundamentally distinct from the classical Murnaghan--Nakayama rule.
This expression is sufficiently tractable to understand stability phenomena as $n \to \infty$.

In the setting of class functions the path Murnaghan--Nakayma formula allows us to expand $R\, \one_{IJ}$ into the basis of irreducible characters for any partial permutation $(I,J)$.
The aforementioned stability phenomena as $n \to \infty$ extend to the expansion of $R\,\one_{IJ}$.
The theory of character polynomials, introduced by Specht in~\cite{Specht} but previously applied by Murnaghan~\cite{Murnaghan}\footnote{Our chronology follows that of~\cite{GG}.}, shows the $\chi^\lambda$'s appearing in this expansion are polynomials in the cycle counts for cycles of length at most $k$ and do not depend on $n$ for $n$ sufficiently large.

Returning to~\eqref{eq:intro-stat}, for $w \in \symm_n$ we see $R\, \stat(w)$ only depends on the counts for cycles of lengths at most $k$ in $w$ when
$\stat: \symm_n \rightarrow \CC$ is $k$-local.
Since degree is submultiplicative, this observation alone has profound implications on all moments of $\stat$.
However, for the strongest possible results, we also need methods to group $R\, \one_{IJ}$'s in~\eqref{eq:intro-stat} that are the same and tally the $c_{IJ}$'s.
We introduce a new family of \emph{regular permutation statistics}, which are tailor-made to facilitate these computations and include nearly all weighted pattern counting statistics currently in the literature.
Using this outline, we prove that moments of regular statistics are rational functions in $n$ and the short cycle counts with predictable denominator.
As a consequence we show a large subfamily of regular statistics including classical and vincular pattern counts, when applied to a uniformly random permutations drawn from a sequence of cycle types, satisfy a weak law of large numbers depending only on the limiting proportion of fixed points.
Moreover, in the limit their variance depends only on the limiting proportion of fixed points and two cycles.
Subsequently, we make rigorous the heuristic that cycle type does not interact with the permuton structures of~\cite{HKMRS}.

We present an example of the above outline:
\begin{example}
\label{ex:intro-maj}
For $w \in \symm_n$, recall 
that an index $1 \leq i \leq n-1$ 
is a \emph{descent} in $w$ if $w(i) > w(i+1)$.
The \emph{major index}	$\maj:\symm_n \to \CC$ is the sum of descents.
In terms of the $\one_{IJ}$ we have the 
equivalent expression
\begin{equation}
\label{eq:maj-example-one}
\maj = \sum_{\substack{1 \, \leq \, i \, \leq \, n-1 \\ j \, < \,k}} i \cdot \one_{(i,i+1)(k,j)}
\end{equation}
so $\maj$ is $2$--local.

The graphs on $[n]$ 
associated to the partial permutations
appearing in Equation~\eqref{eq:maj-example-one}
have two edges: $i \rightarrow k$ and $i+1 \rightarrow j$.
There are several possibilities for the isomorphism
type of such a graph depending on the values
of $i, j,$ and $k$: in `most' situations we will 
have two paths of size 2 and $n-4$ paths of size 1,
but for special values of $j$ and $k$
we could also have one path of size 3 and $n-3$
paths of size 1 or one 2-cycle and $n-2$ paths of size 1.
We group terms appearing on the RHS of 
Equation~\eqref{eq:maj-example-one}
into separate sums such that the isomorphism
type of the directed graphs within each sum is the same:
\begin{align}
\nonumber
\maj =& \sum_{i} i \cdot \one_{(i,i+1)(i+1,i)} + \sum_{i < i+1 < j} i \cdot \one_{(i,i+1)(j,i)} + \sum_{i < i+1 < j} i \cdot \one_{(i,i+1)(j,i+1)} \\
&+ \sum_{i <j} j \cdot \one_{(j,j+1),(j,i)} + \sum_{i <j} j \cdot \one_{(j,j+1)(j+1,i)} +
\sum_{1 \leq i < i+1 < j < k} i \cdot \one_{(i,i+1)(k,j)} \label{eq:maj-example-two} \\ 
\nonumber
&+ \sum_{i < j < j+1 <k} j \cdot \one_{(j,j+1)(k,i)} + \sum_{i < j < k < k+1} k \cdot \one_{(k,k+1)(j,i)}.
\end{align}
Each of the eight
sums in \eqref{eq:maj-example-two}
is an example of a {\em constrained translate statistic}
on $\symm_n$; we introduce and study these statistics
in Chapter~\ref{Regular}.
Constrained translates are sums of indicator
statistics $\one_{IJ}$ weighted by polynomials 
in the indices $I$ and the values $J$;
they can feature requirements that positions
and/or values be consecutive
(here, we weight by the smaller of the two 
indices in $I$, require that the two indices in $I$
be consecutive, and require that the two values 
in $J$ be decreasing).

The next step in our method for analyzing the 
expectation of $\maj$ on conjugacy classes
is to turn \eqref{eq:maj-example-two}
into a class function by applying $R$ to both sides.
The class 
function $R \, \one_{IJ}$ is the same within
each summand on the RHS.
Therefore, by straightforward counting arguments we have
\begin{align}
\nonumber
R\,\maj =& \frac{(n)_2}{2} R\,\one_{(1,2)(2,1)} + \frac{(n)_3}{6} R\, \one_{(1,2),(3,1)} + \frac{(n)_3}{6}R\, \one_{(1,2)(3,2)} \\
\label{eq:maj-example-three}
&+ \frac{(n)_3}{3}  R\, \one_{(2,3)(2,1)} + \frac{(n)_3}{3} R\, \one_{(2,3)(3,1)} +
\frac{(n)_4}{24} R\,\one_{(1,2)(4,3)}\\
\nonumber
&+ \frac{(n)_4}{12} R\, \one_{(2,3)(4,1)} + \frac{(n)_4}{8} R\, \one_{(3,4)(2,1)},
\end{align}
where $(n)_k = n(n-1)\dots (n-k+1)$ is the falling factorial.
Observe that Equation~\eqref{eq:maj-example-three}
has only finitely many (eight) terms.
Furthermore, we have
\begin{equation}
    R \, \one_{(1,2),(3,1)} = 
    R \, \one_{(1,2),(3,2)} =
    R \, \one_{(2,3),(2,1)} = 
    R \, \one_{(2,3),(3,1)}
\end{equation} 
and
\begin{equation}
    R \, \one_{(1,2),(4,3)} = 
    R \, \one_{(2,3),(4,1)} = 
    R \, \one_{(3,4),(2,1)}
\end{equation}
since the directed graphs coresponding to these partial
permutations are isomorphic.
The expansion in Equation~\eqref{eq:maj-example-three}
therefore involves only three distinct class functions.

Applying the Frobenius isomorphism to each $R\, \one_{IJ}$ to obtain $A_{n,I,J}$, writing each $A_{n,I,J}$ as a product of path power sums
and classical power sums, and combining like terms, 
we have
\begin{multline}
\label{eq:maj-example-four}
\ch_n (R\, \maj) = \\ \frac{1}{n!}\left(\frac{(n)_2}{2} \cdot  \vec{p}_{1^{n-2}} p_2 
+ \frac{(n)_3}{2} \cdot  \vec{p}_{21^{n-3}} p_1
+ \frac{(n)_3}{2} \cdot  \vec{p}_{31^{n-3}} 
+ \frac{(n)_4}{4} \cdot \vec{p}_{2^21^{n-4}}
\right).
\end{multline}
Applying the Path Murnaghan--Nakayama formula as in Example~\ref{ex:intro-stability} and the usual Murnaghan--Nakayama as in~\eqref{eq:intro-MN}, we compute
the Schur expansion
of \eqref{eq:maj-example-four} to obtain
\begin{align}
\nonumber
\ch_n(R\,\maj) =&\  
\frac{1}{2}\left(-s_{n-2,1,1} + s_{n-2,2} + s_{n} \right) \\
\nonumber
& + \frac{1}{2} \left(
- s_{n-2,1,1} - s_{n-2,2} + (n-3) \cdot s_{n-1,1} + (n-2) \cdot s_n
\right) \\
\label{eq:maj-example-five}
& + \frac{1}{2} 
\left( s_{n-2,1,1} - s_{n-2,2} - s_{n-1,1} + (n-2) \cdot  s_n \right)\\
\nonumber
& + \frac{1}{2}\left(s_{n-2,2} - (n-3) \cdot s_{n-1,1} + \binom{n-2}{2} \cdot s_{n} \right)\\
\nonumber
=& \frac{n(n-1)}{4} \cdot s_n 
- \frac{1}{2} \cdot s_{n-1,1} 
- \frac{1}{2} \cdot s_{n-2,1,1}
\end{align}
Observe that for any term $s_\lambda$ appearing in 
\eqref{eq:maj-example-five}, the partition
$\lambda \vdash n$ has $\leq 2$ boxes outside the 
first row. 
This is not an accident, and reflects the fact that
$\maj$ is a 2-local statistic.
Also observe that the coefficient of each $s_\lambda$
is a rational function 
$f_\lambda(n)$ in $n$, and that the 
degree of $f_\lambda(n)$ is maximized when 
$\lambda = (n)$. Again, this is a general phenomenon.

Replacing each Schur function $s_\lambda$ in
Equation~\eqref{eq:maj-example-five} with the 
corresponding irreducible character
$\chi^\lambda$ expresses $R \, \maj$ as 
a linear combination of irreducible characters.
While such an expansion is interesting,
for combinatorial and probabilistic purposes
one often wants a polynomial expansion of
$R \, \maj$ in terms of cycle counts.
To achieve this, we apply character polynomials
as follows.

For $w \in \symm_n$, let $m_i(w)$ 
be the number of $i$--cycles in $w$.
We use the Schur expansion of $\ch_n(R \, \maj)$
computed above to express $R \, \maj$ as a polynomial
function of $n$ and the $m_i$.
To this end,
recall that 
$\chi^\lambda = \ch_n^{-1}(s_\lambda)$.
From the theory of character polynomials
(see e.g. \cite[p. 323]{Specht} or
\cite[Table 5.1]{GG}), 
we have 
\begin{equation}
\label{eq:maj-example-six}
\chi^{(n)} = 1,\ \chi^{(n-1,1)} = m_1 - 1,\ \mbox{and} \ \chi^{(n-2,1,1)} = \binom{m_1}{2} - 
\binom{m_2}{1} - \binom{m_1}{1} + 1
\end{equation}
as functions $\symm_n \rightarrow \CC$
where  
$\binom{x}{k} := \frac{x(x-1) \cdots (x-k+1)}{k!}$
for a variable $x$.
Therefore we conclude
\begin{equation}
R\, \maj =\  \frac{n(n-1)}{4} - \frac{1}{4} \cdot m_1^2 + \frac{1}{2} \cdot m_2 + \frac{1}{4}\cdot m_1.
\end{equation}
Note the leading term by degree\footnote{Here we 
use the na\"ive notion of degree which takes
$n$ and all of the $m_i$ to have degree 1.
This is different from the more typical
notion of degree in character polynomials which
assigns $m_i$ to degree $i$.} 
is $(n^2 - m_1^2)/4$, 
so for $\{\lambda^{(n)}\}$ a sequence of partitions with $\lim_{n \to \infty} m_1(\lambda^{(n)})/n = \alpha$ and $\Sigma_n$ a uniformly random element of $K_{\lambda^{(n)}}$ we have 
\[
\lim_{n \to \infty} \EE[ \maj(\Sigma_n) ] = (n^2 - \alpha^2)/4 + O(n).
\]

For probabilistic reasons, one often 
needs to understand higher moments of statistics
in addition to their expectations.
A similar but far more involved computation 
for the second moment $\maj^2$
shows that
\begin{align}
\nonumber
\ch_n \, R \, \maj^2 = & 
\frac{9 n^4 - 14 n^3 + 15 n^2 - 10 n}{144} \cdot s_n 
+ \frac{-3 n^2 + 3n + 8}{12} s_{n-1,1} \\
\label{eq:maj-example-seven}
&+ \frac{7}{6} s_{n-2,2} + 
\frac{-3 n^2 + 3n + 8}{12} s_{n-2,1,1}  
+ \frac{7}{6} s_{n-2,2,1} + \frac{1}{2} s_{n-3,1^3} \\
\nonumber
&+ \frac{1}{2} s_{n-4,2,2} + \frac{1}{2} s_{n-4,1^4}.
\end{align}
The Schur expansion \eqref{eq:maj-example-seven}
is quite complicated, 
but some structure remains.
Observe that $s_\lambda$ appears 
only if $\lambda$ has $\leq 4$ boxes outside the first
row. This reflects the fact that the statistic
$\maj^2$ is 4-local;
in general, the pointwise product of a $k$-local 
statistic and an $\ell$-local statistic is
$(k + \ell)$-local. Furthermore, observe that 
the coefficients are 
polynomials in $n$, with maximal degree achieved
when $\lambda = (n)$.

As in the case of $R \, \maj$, we can use 
Equation~\eqref{eq:maj-example-seven}
and character polynomials
to write 
$R \, \maj^2$ as a polynomial in $n$ and the $m_i$.
Consulting the character polynomial
data in \cite[p. 323]{Specht} yields
\begin{align}
\nonumber
    R \, \maj^2 = &
    \frac{1}{16} \cdot m_1^4 - 
    \frac{1}{8} \cdot m_1^2n^2 
    + \frac{1}{16} \cdot n^4 - \frac{11}{72} \cdot m_1^3 - 
    \frac{1}{4} \cdot m_1^2 m_2 + \frac{1}{8} \cdot m_1^2 n + \frac{1}{8} \cdot m_1 n^2 
    \\&+ 
    \frac{1}{4} \cdot m_2 n^2  - 
    \frac{7}{72} \cdot n^3 + 
    \frac{1}{48} \cdot m_1^2 + 
    \frac{1}{4} \cdot m_1 m_2 + 
    \frac{3}{4} \cdot m_2^2 - \frac{1}{8} \cdot m_1 n - 
    \frac{1}{4} \cdot m_2 n 
    \label{eq:maj-example-eight}
    \\&+ 
    \frac{5}{48} \cdot n^2 + 
    \frac{5}{72} \cdot m_1 - \frac{3}{4} \cdot m_2 - 
    \frac{2}{3} \cdot m_3 - \frac{1}{2} \cdot m_4 - 
    \frac{5}{72} \cdot n
    \nonumber
\end{align}
for the conditional expectation of $\maj^2$
on a given conjugacy class.
The variance of $\maj$ on a conjugacy class
is therefore the somewhat more compact expression
\begin{align}
    R \, \maj^2 - \left( R \, \maj \right)^2 =
    &-\frac{1}{36} \cdot m_1^3 +
    \frac{1}{36} \cdot n^3 - \frac{1}{24} \cdot m_1^2 + 
    \frac{1}{2} \cdot m_2^2 + \frac{1}{24} \cdot n^2 
    \nonumber
    \\ &+ 
    \frac{5}{72} \cdot m_1 - \frac{3}{4} \cdot m_2 - 
    \frac{2}{3} \cdot m_3 -
    \frac{1}{2} \cdot m_4 - \frac{5}{72} \cdot n.
    \label{eq:maj-example-nine}
\end{align}
The leading term by degree of the variance 
is therefore $(n^3 - m_1^3)/36$.
This variance computation is, to our knowledge, previously unknown.
\end{example}

\section{Partial Permutations, Local Functions and Class Functions}

The combinatorial Definition~\ref{k-local-definition} for low degree functions on permutations has a representation-theoretic reformulation in terms of the Fourier transform (or 
Artin-Wedderburn Theorem).
Recall that irreducible representations $V^{\lambda}$ 
of $\symm_n$ over $\CC$ are parametrized by partitions $\lambda \vdash n$.
Writing $\End_{\CC}(V^{\lambda})$ for the space of $\CC$-linear maps $V^{\lambda} \rightarrow V^{\lambda}$,
the natural map
$\Psi: \CC[\symm_n] \rightarrow \bigoplus_{\lambda \vdash n} \End_{\CC}(V^{\lambda})$ is an $\CC$-algebra isomorphism.
The space of functions $\Fun(\symm_n,\CC)$ of the form $f: \symm_n \rightarrow \CC$ can be identified with the group algebra $\CC[\symm_n]$ via $\alpha_f := \sum_{w \in \symm_n} f(w) \cdot w \in \CC[\symm_n]$.

\begin{theorem}
\label{aw-local-preview} 
{\em (\cite{Diaconis2,EFP}, $=$ Corollary~\ref{aw-local})}
Let $f: \symm_n \rightarrow \CC$ be a function with $\Psi(\alpha_f) = \left( A^{\lambda} \right)_{\lambda \vdash n}$.
Then $f$ is $k$-local if and only if $A^{\lambda} = 0$ whenever $\lambda_1 < n-k$.
\end{theorem}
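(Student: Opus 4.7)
The plan is to identify the space $V_k := \mbox{span}\{\one_{IJ} : (I,J) \in \symm_{n,k}\}$ of $k$-local functions, transported via $f \leftrightarrow \alpha_f$ to a subspace of $\CC[\symm_n]$, as a specific two-sided ideal in the group algebra, and then to read off the matrix blocks $\End_{\CC}(V^\lambda)$ it contains via Artin--Wedderburn and Frobenius reciprocity.

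First I would verify that $V_k$ is a two-sided ideal of $\CC[\symm_n]$. The $\symm_n \times \symm_n$-action on $\Fun(\symm_n, \CC)$ corresponding to left and right multiplication is $((\sigma, \tau) \cdot f)(w) = f(\sigma^{-1} w \tau)$, and a direct computation shows it sends $\one_{IJ}$ to another indicator $\one_{I'J'}$ of a partial permutation of the same size. Hence $V_k$ is $\symm_n \times \symm_n$-invariant, so by Artin--Wedderburn it equals $\bigoplus_{\lambda \in \Lambda_k} \End_{\CC}(V^\lambda)$ for some subset $\Lambda_k \subseteq \{\lambda \vdash n\}$.

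Next I would pin down $\Lambda_k$ using a convenient generator. Taking $I = J = [k]$ gives $\alpha_{\one_{[k],[k]}} = \epsilon_k$, where $\epsilon_k := \sum_{\sigma \in \symm_{\{k+1,\ldots,n\}}} \sigma$. For an arbitrary $(I,J) \in \symm_{n,k}$ with $|I|=|J|=k$, choosing $\pi, \rho \in \symm_n$ that move $[k]$ to $J$ and $I$ to $[k]$ respectively gives $\alpha_{\one_{IJ}} = \pi\, \epsilon_k\, \rho$, so the two-sided ideal generated by $\epsilon_k$ coincides with $V_k$ (using Observation~\ref{lower-closure} to absorb indicators with $|I| < k$). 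Consequently $\lambda \in \Lambda_k$ iff the $\lambda$-component of $\Psi(\epsilon_k)$ is nonzero, which happens iff $(V^\lambda)^{\symm_{n-k}} \neq 0$, since $\epsilon_k$ acts on $V^\lambda$ as $(n-k)!$ times the projection onto $\symm_{n-k}$-invariants.

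Finally, Frobenius reciprocity gives $(V^\lambda)^{\symm_{n-k}} \cong \Hom_{\symm_n}(V^\lambda, \Ind_{\symm_{n-k}}^{\symm_n} \triv)$, and the induced module is the Young permutation module $M^{(n-k,1^k)}$. By Young's rule, $V^\lambda$ appears iff the Kostka number $K_{\lambda,(n-k,1^k)}$ is positive, and a standard SSYT argument --- the $n-k$ copies of $1$ must all sit in the first row, and conversely when $\lambda_1 \geq n-k$ one completes the tableau by any standard filling of the skew shape $\lambda/(n-k)$ with $2,\ldots,k+1$ --- shows this is equivalent to $\lambda_1 \geq n-k$. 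The main obstacle I anticipate is the book-keeping in the generator computation: one must track the left/right conventions carefully enough to confirm that the two-sided translates of $\epsilon_k$ really do sweep out all of $V_k$, after which the representation-theoretic ingredients assemble cleanly.
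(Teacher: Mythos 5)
Your proposal is correct and follows essentially the same route as the paper's proof of Corollary~\ref{aw-local}: identify $\Loc_k(\symm_n,\CC)$ with the two-sided ideal generated by the symmetrizer over an $(n-k)$-element subset, invoke the classification of ideals in a semisimple algebra to reduce to determining which blocks $\End_\CC(V^\lambda)$ survive, and characterize these by the nonvanishing of $(V^\lambda)^{\symm_{n-k}}$. The only (cosmetic) difference is at the last step, where the paper applies the Branching Rule to $\Res^{\symm_n}_{\symm_{n-k}} V^\lambda$ while you compute the Kostka number $K_{\lambda,(n-k,1^k)}$ via Frobenius reciprocity and Young's Rule; these are equivalent.
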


The forward direction of Theorem~\ref{aw-local-preview}  has appeared in various guises throughout the literature. 
It is implicit in Diaconis \cite{Diaconis2} and appeared more explicitly in Huang et. al. \cite[Appendix C]{HGG}.
The converse appears as~\cite[Thm. 7]{EFP}, while Even-Zohar \cite[Lemma 3]{EZ} proved a result which is `dual' to Theorem~\ref{aw-local-preview}.
Since this result is less known in the enumerative and algebraic combinatorics communities, we give a self-contained treatment of this theory in Chapter~\ref{Local}.
We include an exposition of the second author's recent basis for $k$--local functions whose terms are certain $\one_{IJ}$'s defined in terms of Viennot's shadow construction from RSK.

Since the irreducible characters form a basis for the vector space $\Class(\symm_n,\CC)$ of class functions, for $f \in \Class(\symm_n,\CC)$ there are unique coefficients $c_{f,\lambda} \in \CC$ so that
\begin{equation}
f = \sum_{\lambda \vdash n} c_{f,\lambda} \cdot \chi^{\lambda} \quad \quad \text{as maps $\symm_n \rightarrow \CC$.}
\end{equation}
Characters have the benefit of being defined independent of choice of basis for $\symm_n$-modules.
Since each irreducible character $\chi^\lambda$ is supported on $V^\lambda$, Theorem~\ref{aw-local-preview} implies for $f$ a class function that
\begin{equation}
\text{degree of $f$} = \max  \{ n - \lambda_1 \,:\,  \lambda \vdash n \text{ and } c_{f,\lambda} \neq 0 \}
\end{equation}
(see also Corollary~\ref{character-local}).

We are able to prove stronger results for the $k$-local maps $f: \symm_n \rightarrow \CC$ that are also class functions thanks to the 
powerful machinery
of symmetric functions. 
The Reynolds operator $R$ is a projection operator
\begin{equation}
R: \Fun(\symm_n,\CC) \twoheadrightarrow \Class(\symm_n,\CC).
\end{equation}
The map $R\,f$ is the closest class function approximation to $f$.
Writing $K_{\lambda} \subseteq \symm_n$ for the conjugacy class of cycle type $\lambda$, the map $R\,f$ sends any $w \in K_{\lambda}$ to the average value of $f$ on $K_{\lambda}$.
As mentioned after~\eqref{eq:intro-reynolds}, this is the expected value of $f$ for the uniform distribution on $K_{\lambda}$.

We study the class function approximations $Rf$ of maps $f: \symm_n \rightarrow \CC$ using symmetric function theory; see 
Chapter~\ref{Background} for the relevant definitions and Chapter~\ref{Atomic} for a complete exposition.
Let $\Lambda$ be the ring of symmetric functions and $\Lambda_n$ its $n$th graded component.
Then \emph{Frobenius characteristic isomorphism} is the map
\begin{equation}
 \ch_n: \Class(\symm_n,\CC) \xrightarrow{ \, \sim \, } \Lambda_n \quad \mbox{with}\quad \ch_n(\chi^\lambda) = s_\lambda.
 \end{equation}
Here $s_\lambda$ is a {\em Schur function}.

From Definition~\ref{k-local-definition} we know the indicator functions $\{ \one_{IJ} \,:\, (I,J) \in \symm_{n,k} \}$ are the building blocks of $k$-local statistics. 
As observed in~\eqref{eq:intro-stat}, we can understand arbitrary local statistics via the $\one_{IJ}$.
To this end, we introduce the following class of symmetric functions.

\begin{defn}
\label{atomic-symmetric-function-definition}
Let $(I,J) \in \symm_{n,k}$ be a partial permutation. The {\em atomic  symmetric function} is
\begin{equation}
\label{eq:intro-A}
A_{n,I,J} := n! \cdot \ch_n(R \, \one_{IJ}) = 
\sum_{\lambda \vdash n} \left( \sum_{w \in K_{\lambda}} \one_{I,J}(w) \right) \cdot p_{\lambda}
\end{equation}
where $p_{\lambda} \in \Lambda_n$ is the power sum symmetric function.
\end{defn}

The normalization factor
of $n!$ in 
Definition~\ref{atomic-symmetric-function-definition}
simplifies our formulas.
The symmetric group $\symm_n$ acts on the family
of $k$-tuples
of distinct elements in $[n]$ by the rule
\begin{equation}
w(I) = (w(i_1), \dots, w(i_k)) \quad \quad 
\text{for $w \in \symm_n$ and $I = (i_1, \dots, i_k).$}
\end{equation}
Then for $(I,J) \in \symm_{n,k}$, by definition we have 
\begin{equation}
R\,\one_{IJ} = R\,\one_{w(I)w(J)} \quad \mbox{so} \quad A_{n,I,J} = A_{n,w(I)w(J)}.
\end{equation}
The underlying structure of $(I,J)$ that determines $A_{n,I,J}$ is its directed \emph{graph} $G_n(I,J)$ with vertex set $[n]$ and edges 
$i_1 \rightarrow j_1, \dots, i_k \rightarrow j_k.$
Up to isomorphism $G_n(I,J)$ is characterized by the \emph{cycle-path type} of $(I,J)$, which is the pair of partitions $(\nu,\mu)$ of cycle lengths and path lengths, respectively, with isolated vertices viewed as paths of length one.
The cycle-path type plays the role of cycle type for partial permutations; a closely related concept appears in the setting of the symmetric association scheme of injections~\cite{BBIT}.

Let $(I,J) \in \symm_n$ with cycle-path type $(\nu,\mu)$.
A careful examination of~\eqref{eq:intro-A} shows we can factor $p_\nu$ out of $A_{n,I,J}$.
The resulting symmetric function, which depends only on the partition $\mu$ of path lengths, is the {\em path power sum} $\vec{p}_\mu$.
Path power sums are a new basis for $\Lambda$ and have relatively straightforward descriptions in terms of power sum symmetric functions (Proposition~\ref{path-to-classical}) and monomial symmetric functions (Proposition~\ref{path-to-monomial}).

%\begin{enumerate}
%	\item Artin--Wedderburn isomorphism
%	\item Ellis--Friedgut--Pilpel
%	\item mention shadow basis (open problem: find basis as subset of spanning set -- reference future directions)
%	\item Artin--Wedderburn for local functions restricts to local class functions (Corollary~\ref{character-local})
%	\item define $A_{n,IJ}$ properly
%	\item graphs and cycle-path types
%	\item path power sums and expansions into $p$'s, $m$'s 
%\end{enumerate}

\section{The Path Murnaghan--Nakayama Formula}

The classical Murnaghan--Nakayama formula computes the product $s_\lambda \cdot p_k$.
A \emph{ribbon} $\xi$ is an edgewise-connected skew partition whose Young diagram does not contain any $2 \times 2$ square.
The \emph{height} $\height(\xi)$ of the ribbon $\xi$ is the number of rows occupied by $\xi$ minus one and the \emph{sign} of $\xi$ is $\sign(\xi) = (-1)^{\height(\xi)}$.
\begin{theorem}[Classical Murnaghan--Nakayama rule]
\label{t:intro-mn}
	For $\lambda$ an integer partition and $k$ a positive integer, 
	\[
	s_\lambda \cdot p_k = \sum_{\mu:\ |\mu| - |\lambda| = k,\ \mu/\lambda \mathrm{\ is\ a\  ribbon}} \sign(\mu/\lambda) s_\mu.
	\]
\end{theorem}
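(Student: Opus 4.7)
The plan is to prove the classical Murnaghan--Nakayama formula using the Jacobi--Trudi determinant for $s_\lambda$ combined with the Maya diagram (or ``bead'') encoding of partitions, which geometrizes the addition of ribbons and tracks signs via determinantal row interchanges.

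First I would set up the Maya encoding. For a partition $\lambda = (\lambda_1, \ldots, \lambda_\ell)$ (padded with zeros if needed), set $\beta_i := \lambda_i + \ell - i$; the $\beta_i$ form a strictly decreasing sequence, and the Jacobi--Trudi formula reads $s_\lambda = \det(h_{\beta_i - \ell + j})_{1 \leq i, j \leq \ell}$. A standard combinatorial lemma identifies the shapes $\mu$ for which $\mu/\lambda$ is a ribbon of size $k$ with the shapes obtained by incrementing exactly one $\beta_i$ to $\beta_i + k$, subject to this value not coinciding with another $\beta_j$; moreover $\height(\mu/\lambda)$ equals the number of $\beta_j$'s strictly between $\beta_i$ and $\beta_i + k$. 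As a base case I would derive $p_k = \sum_{i=0}^{k-1}(-1)^i s_{(k-i, 1^i)}$ (the $\lambda = \emptyset$ instance of the rule, whose ribbons are precisely hook shapes) by comparing the generating function $\log H(t) = \sum_k p_k t^k / k$ with $E(-t) = 1/H(t)$, or equivalently by combining Newton's identity $k h_k = \sum_{j=1}^k p_j h_{k-j}$ with Jacobi--Trudi for hook shapes.

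For the general case, I would multiply the Jacobi--Trudi determinant for $s_\lambda$ by $p_k$ and distribute $p_k$ across the rows using multilinearity; applying the base-case expansion (or equivalently the Pieri rule for each hook $s_{(k-i, 1^i)}$) and collecting terms, the surviving non-vanishing contributions correspond precisely to incrementing a single $\beta_i$ by $k$ as described above, producing exactly the partitions $\mu \supset \lambda$ such that $\mu/\lambda$ is a ribbon of size $k$.

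The main obstacle is the sign bookkeeping: one must verify that the sign from row interchanges in the determinant expansion matches the ribbon sign $(-1)^{\height(\mu/\lambda)}$. The Maya diagram picture makes this transparent, since moving a bead past $h$ other beads in the sorted sequence both traces out a ribbon of height $h$ in the Young diagram of $\mu/\lambda$ and contributes $(-1)^h$ via the corresponding adjacent transpositions needed to restore strictly decreasing order. With this correspondence in place, the combinatorial identification and the sign match become simultaneous, and the remaining steps are routine symmetric function manipulations.
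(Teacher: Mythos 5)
Your combinatorial skeleton---the $\beta$-numbers $\beta_i = \lambda_i + \ell - i$, the identification of $k$-ribbon additions $\mu/\lambda$ with single bead moves $\beta_i \mapsto \beta_i + k$ that avoid collisions, and the fact that $\height(\mu/\lambda)$ equals the number of $\beta_j$ strictly between $\beta_i$ and $\beta_i + k$---is correct and is exactly the combinatorial core of the paper's argument (its Observation on ribbon additions, phrased there in terms of the exponent sequence $\lambda + \delta$). The base case $p_k = \sum_{i=0}^{k-1}(-1)^i s_{(k-i,1^i)}$ is also fine. The problem is the engine you propose for the general case: ``multiply the Jacobi--Trudi determinant for $s_\lambda$ by $p_k$ and distribute $p_k$ across the rows using multilinearity.'' Multilinearity lets you absorb a scalar factor into \emph{one} row of a determinant; it does not turn $p_k \cdot \det(h_{\beta_i - \ell + j})$ into a sum over $i$ of determinants with row $i$ shifted. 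For that you would need an identity replacing $p_k \cdot h_m$ by $h_{m+k}$ (or a signed sum of single $h$'s), and no such identity exists: already $p_1 h_1 = h_2 + s_{11} \neq h_2$. So the central step fails as written. Your parenthetical fallback ``(or equivalently the Pieri rule for each hook $s_{(k-i,1^i)}$)'' is not equivalent and is not routine either: Pieri applies to $h_r$ and $e_r$, not to hook Schur functions, so you would have to expand each hook further and then run a sign-reversing involution to see that everything other than the ribbon terms cancels---none of which is sketched.

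The repair is to swap the Jacobi--Trudi determinant for the bialternant $s_\lambda = (\varepsilon \cdot x^{\lambda+\delta})/(\varepsilon \cdot x^{\delta})$, whose numerator is the determinant $\det(x_j^{\beta_i})$ in disguise. There $p_k = \sum_m x_m^k$ multiplies the \emph{single monomial} $x^{\lambda+\delta}$, and the product genuinely distributes into $N$ monomials, one for each increment of a single exponent by $k$; antisymmetrizing annihilates the terms with repeated exponents (your ``collisions'') and the sign from resorting is $(-1)^{\#\{j \,:\, \beta_i < \beta_j < \beta_i + k\}} = \sign(\mu/\lambda)$. This is precisely the proof the paper gives, and once you make this substitution your Maya-diagram sign bookkeeping goes through verbatim. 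Note that the paper has a structural reason for insisting on the bialternant rather than any $h$-determinant route: the same monomial-level distribution argument is what it later generalizes (via word arrays and a sign-reversing involution) to prove the Path Murnaghan--Nakayama rule, where the multiplicative structure needed for a Jacobi--Trudi approach is entirely absent.
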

For example, the fact that $s_{3} \cdot p_2 = s_5 + s_{32} - s_{311}$ is witnessed by the tableaux
\begin{equation}
\label{eq:intro-MN}
\begin{tikzpicture}[scale=.25]
 \begin{scope}
    \clip (0,0) -| (5,1) -| (0,0);
    \draw [color=black!25] (0,0) grid (5,1);
  \end{scope}

  \draw [thick] (0,0) -| (5,1) -| (0,0);

  \draw [thick, rounded corners] (3.5,0.5) -- (4.5,0.5);
  \draw [color=black,fill=black,thick] (4.5,0.5) circle (.4ex);
  \node [draw, circle, fill = white, inner sep = 1pt] at (3.5,0.5) { };

   \begin{scope}
    \clip (11,-1) -| (13,0) -| (14,1) -| (11,-1);
    \draw [color=black!25] (11,-1) grid (14,1);
  \end{scope}

  \draw [thick] (11,-1) -| (13,0) -| (14,1) -| (11,-1);
  \draw [thick, rounded corners] (11.5,-0.5) -- (12.5,-0.5);
\draw [color=black,fill=black,thick] (12.5,-0.5) circle (.4ex);
  \node [draw, circle, fill = white, inner sep = 1pt] at (11.5,-0.5) { };
  
 \begin{scope}
    \clip (20,1) -| (23,0) -| (21,-2) -| (20,1);
    \draw [color=black!25] (20,-1) grid (26,1);
  \end{scope}

  \draw [thick] (20,1) -| (23,0) -| (21,-2) -| (20,1);

  \draw [thick, rounded corners] (20.5,-0.5) -- (20.5,-1.5);
  \draw [color=black,fill=black,thick] (20.5,-0.5) circle (.4ex);
  \node [draw, circle, fill = white, inner sep = 1pt] at (20.5,-1.5) { };
\end{tikzpicture}.
\end{equation}
Note this computation extends to 
$s_{n-2} \cdot p_2 = s_n + s_{n-1,2} - s_{n-2,1,1}$ 
as used in Example~\ref{ex:intro-maj}.

For $\lambda = (\lambda_1,\dots,\lambda_k)$ a partition, repeated application of the Murnaghan--Nakayma rule for the product $p_\lambda = p_{\lambda_1} \cdots p_{\lambda_k}$ gives the expansion of $p_\lambda$ into the Schur basis as a signed sum of ribbon tilings, which are sequences 
\[\varnothing = \mu^0 \subseteq \mu^1 \subseteq \dots \subseteq \mu^k
\]
so that $\mu^i/\mu^{i-1}$ is a ribbon of size $\lambda_i$.
See Theorem~\ref{mn-rule} for a precise statement.
At the level of representation theory, for $w \in K_\lambda$ the coefficient of $s_\mu$ in $p_\lambda$ computes $\chi^\mu(w)$, so the Murnaghan--Nakayama rule evaluates irreducible characters.
%For example, ***** add simple example, note we use English notation *****

As demonstrated in Example~\ref{ex:intro-maj}, our outline requires computing the Schur expansion of $A_{n,I,J}$ for every $(I,J) \in \symm_{n,k}$, which we do in Chapter~\ref{Path}.
With $(\nu,\mu)$ the cycle-path type of $(I,J)$, we have $A_{n,I,J} = p_\nu \cdot \vec{p}_\mu$ so the classical Murnaghan--Nakayama rule reduces this problem to expanding $\vec{p}_\mu$ into the Schur basis.

The expansion of $\vec{p}_\mu$ in the Schur basis may
be computed na\"ively using the Murnaghan-Nakayama Rule
as follows.
If $\mu = (\mu_1, \dots \mu_r)$ has $r$ parts, 
we show (Proposition~\ref{path-to-classical}) that
the 
path power sum $\vec{p}_\mu$ expands into classical
power sums according to the rule
\begin{equation}
    \label{intro-path-to-classical}
    \vec{p}_\mu = \sum_{\sigma \in \Pi_r}
    \prod_{B \in \sigma} (|B| - 1)! \cdot p_{\mu_B},
\end{equation}
where
\begin{itemize}
    \item the sum is over the family $\Pi_r$ of set 
    partitions of $[r]$,
    \item the product is over blocks $B$ of the set 
    partition $\sigma$, and
    \item $\mu_B = \sum_{i \in B} \mu_i$ is the sum
    of the parts of $\mu$ indexed by elements of the block
    $B$.
\end{itemize}
The classical Murnaghan-Nakayama formula may 
be applied to find the $s$-expansion
of each term on the right-hand side of 
\eqref{intro-path-to-classical}.
However, since the family $\Pi_r$ of set partitions 
of $[r]$ grows rapidly with $r$, this involves 
a huge number of ribbon tableaux as $r$ grows.
We will be interested in the $s$-expansion of $A_{n,I,J}$
in the limit $n \rightarrow \infty$, which corresponds
to adding parts of size $1$ to the path partition $\mu$,
thus increasing $r$.
Fortunately, there is a massive amount of 
cancellation in the $s$-expansion
of the right-hand side of \eqref{intro-path-to-classical},
and this $s$-expansion may be expressed compactly 
using a monotonicity condition on ribbon tilings.

For $T = (\varnothing = \mu^0,\mu^1,\dots,\mu^k = \mu)$ a ribbon tiling with ribbons $\xi^i = \mu^i/\mu^{i-1}$, let 
$\sign(T) = \sign(\xi^1)\cdot \ldots \cdot \sign(\xi^k)$.
Let $\alpha(T) = (|\xi^1|,\dots,|\xi^k|)$ and $\shape(T) = \mu$.
We say $T$ is \emph{monotonic} if the southwest cells $(r_i,c_i)$ of the ribbons $\xi^i$ satisfy 
\begin{center}
$
c_1 < \dots < c_k$ and $r_1 \geq \dots \geq r_k$.
\end{center}
See Figure~\ref{fig:intro-monotonic-and-not} for two ribbon tilings, one of which is monotonic and the other not.
To expand $A_{n,I,J}$ into the Schur basis, or equivalently $R\,\one_{IJ}$ into the irreducible character basis, we use:
\begin{theorem}[Path Murnaghan--Nakayama formula]
\label{t:intro-path-mn}
For $\lambda = (\lambda_1,\dots,\lambda_k)$ a partition,
\[
\vec{p}_\lambda = \sum_{w \, \in \, \symm_k} \sum_{\substack{T \; \mathrm{monotonic} \\ \ w(\alpha(T)) \, = \, \lambda}} \sign(T) \cdot s_{\shape(T)}.
\]
\end{theorem}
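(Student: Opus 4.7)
The plan is to derive the Path Murnaghan-Nakayama formula from Proposition~\ref{path-to-classical} and the classical Murnaghan-Nakayama rule, followed by a sign-reversing involution that reduces the resulting signed ribbon expansion onto monotonic tilings.

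First, I would apply Proposition~\ref{path-to-classical} to expand $\vec{p}_\lambda$ as a sum over set partitions $\sigma \in \Pi_k$ of the products $\prod_{B \in \sigma}(|B|-1)! \cdot p_{\lambda_B}$, and then apply the classical Murnaghan-Nakayama rule iteratively to each power-sum product $\prod_{B \in \sigma} p_{\lambda_B}$. After fixing a linear ordering on the blocks of $\sigma$ (and dividing by $|\sigma|!$ to compensate), this presents $\vec{p}_\lambda$ as a signed sum over pairs $(\sigma, S)$ in which $S$ is a ribbon tiling with one ribbon of size $\lambda_B = \sum_{i \in B} \lambda_i$ for each block $B$, weighted by $\sign(S)$ in the classical sense.

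Second, I would reinterpret the prefactor $(|B|-1)!$ as the number of cyclic orderings of the elements of the block $B$. Using these cyclic orderings, I would refine each ribbon of size $\lambda_B$ in $S$ into $|B|$ consecutive subribbons of sizes $\lambda_i$ for $i \in B$, producing a ribbon tiling $T$ with $k$ refined ribbons. Combining the linear order of blocks with the cyclic orders within blocks furnishes a total ordering of the $k$ refined ribbons, and hence a permutation $w \in \symm_k$ such that $w(\alpha(T)) = \lambda$. This recasts $\vec{p}_\lambda$ as a signed sum indexed by pairs $(w, T)$ in which $T$ is an arbitrary (not necessarily monotonic) ribbon tiling with $k$ ribbons whose sizes form a permutation of $\lambda$. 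A short calculation relating the height of a parent ribbon to the heights of its sub-ribbons, modulo the number of row transitions between consecutive pieces, is needed to verify that the sign $\sign(S)$ transforms into $\sign(T)$ as claimed.

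Third, and most importantly, I would construct a sign-reversing involution on the set of such pairs $(w, T)$ with $T$ non-monotonic, whose fixed points are exactly the monotonic tilings. The involution would act on the smallest index $i$ at which one of the conditions $c_i < c_{i+1}$ and $r_i \geq r_{i+1}$ fails: when the $i$-th and $(i+1)$-th refined ribbons are geometrically disjoint, one would swap their order in the ordering and in the underlying cyclic structure, reversing the sign contribution from their heights; when they share a boundary cell, one would either merge them into a single larger ribbon (corresponding to a block merger in the associated set partition) or split an existing larger ribbon at that boundary. The shape $\shape(T)$ is preserved under each operation while the height parity flips, and the surviving monotonic fixed points contribute $\sign(T) \cdot s_{\shape(T)}$ with the correct multiplicity inherited from $\sum_{w \in \symm_k} [w(\alpha(T)) = \lambda]$.

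The main obstacle is the third step. The involution must be engineered to handle every failure mode of monotonicity while simultaneously reconciling the signs from ribbon heights, the cyclic block orderings, and the linear block ordering inherited from the iterative Murnaghan-Nakayama expansion. Verifying that the involution is well-defined, sign-reversing, and has precisely the monotonic tilings as fixed points will demand a detailed case analysis on the local geometry of adjacent ribbons and on how they decompose under the refinement procedure; this is where the technical heart of the proof lies.
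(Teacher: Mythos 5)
Your overall route---expand $\vec{p}_\lambda$ into classical power sums via Proposition~\ref{path-to-classical}, apply the classical Murnaghan--Nakayama rule, and then cancel down to monotonic tilings with a sign-reversing involution---is in outline the same as the paper's, except that the paper performs the cancellation not on ribbon tableaux but on ``word arrays,'' the monomials that appear when $\vec{p}_\lambda$ is multiplied through the Vandermonde in the bialternant formula. Your step 3 is where the proof actually lives, and as written it is a restatement of what must be proved rather than a construction. The most serious issue is involutivity: you propose to act at ``the smallest index $i$'' at which monotonicity fails, but the swap or merge/split performed there can create a new violation at an \emph{earlier} index, so applying the map twice need not return you to the starting configuration. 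The paper's proof confronts exactly this danger: it introduces the \emph{score} of an unstable pair and proves (the Claim inside the proof of Lemma~\ref{sign-reversing}) that swapping a pair of minimal score creates no pair of smaller score and leaves the positions of minimal-score pairs unchanged; only then is the map an involution. You would need an analogous invariant at the level of tilings, and none is proposed.

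The sign and multiplicity bookkeeping in step 2 is also not correct as stated. If a ribbon $\xi$ of size $\lambda_B$ is cut into consecutive sub-ribbons $\xi_1,\dots,\xi_m$, then $\height(\xi) = \sum_j \height(\xi_j) + v$, where $v$ is the number of cuts made between vertically adjacent cells, so $\sign(S) = \sign(T)\cdot(-1)^{v}$ and the two signs genuinely differ (compare $p_2 = s_2 - s_{11}$ with $p_1 p_1 = s_2 + s_{11}$ on the shape $(1,1)$). Moreover, a monotonic tiling can arise from a nontrivial refinement---for $\vec{p}_{11}$ the single-row tiling of shape $(2)$ comes both from $\sigma = 1/2$ and from $\sigma = 12$---so the involution cannot simply have the monotonic tilings as fixed points; it must also cancel or correctly aggregate their merged-block preimages while absorbing the $(-1)^{v}$ discrepancy, and it must reconcile the $(|B|-1)!$ cyclic-order weights (which determine a linear cutting order only after a convention for breaking the cycle) with the $m(\lambda)!$-fold sum over $w$ in the statement. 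None of these obstacles is necessarily fatal, but they are precisely what the word-array formalism is built to handle automatically---there Observation~\ref{ribbon-addition-lemma} makes both signs come from the same sorting permutation---and in your tableau-level formulation each one is an unresolved gap, so the proposal does not yet constitute a proof.
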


The proof of Theorem~\ref{t:intro-path-mn} adapts one of the standard proofs of the Murnaghan--Nakayama formula, but with several additional complexities.
Restricting to a finite (but large) variable set,
the Schur function $s_\lambda$ may be expressed 
as a bialternant, or ratio of determinants.
The proof of Theorem~\ref{t:intro-mn} we adapt proceeds by   multiplying $p_k$ through the numerator of the bialternant and identifying the terms that do not vanish with ribbon 
additions.
We take this same tact, using 
\eqref{intro-path-to-classical} to 
multiply $\vec{p}_\lambda$ 
through the numerator of the bialternant 
(in our setting the numerator is 
simply the Vandermonde determinant).
We give a sign-reversing involution on the non-vanishing terms so obtained, and identify the surviving 
terms with monotonic ribbon tilings.
Since the simple multiplicative structure
$p_{\lambda} = p_{\lambda_1} p_{\lambda_2} \cdots$
is replaced by the more complicated formula
\eqref{intro-path-to-classical}, the proof 
of Theorem~\ref{t:intro-path-mn} is 
substantially more 
involved than that of the classical
Murnaghan-Nakayama formula.

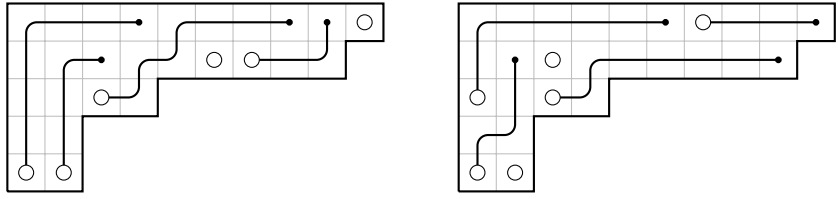
\begin{figure}
\begin{center}
\begin{tikzpicture}[scale = 0.5]

  \begin{scope}
    \clip (0,0) -| (2,2) -| (4,3) -| (9,4) -| (10,5) -| (0,0);
    \draw [color=black!25] (0,0) grid (10,5);
  \end{scope}

  \draw [thick] (0,0) -| (2,2) -| (4,3) -| (9,4) -| (10,5) -| (0,0);

  \draw [thick, rounded corners] (0.5,0.5) |- (3.5,4.5);
  \draw [color=black,fill=black,thick] (3.5,4.5) circle (.4ex);
  \node [draw, circle, fill = white, inner sep = 2pt] at (0.5,0.5) { };
  
  \draw [thick, rounded corners] (1.5,0.5) |- (2.5,3.5);
  \draw [color=black,fill=black,thick] (2.5,3.5) circle (.4ex);
  \node [draw, circle, fill = white, inner sep = 2pt] at (1.5,0.5) { };
  
  \draw [thick, rounded corners] (2.5,2.5) -| (3.5,3.5) -| (4.5,4.5) -- (7.5,4.5);
  \draw [color=black,fill=black,thick] (7.5,4.5) circle (.4ex);
  \node [draw, circle, fill = white, inner sep = 2pt] at (2.5,2.5) { };

    \node [draw, circle, fill = white, inner sep = 2pt] at (5.5,3.5) { };
    
  \draw [thick, rounded corners] (6.5,3.5) -| (8.5,4.5);
  \draw [color=black,fill=black,thick] (8.5,4.5) circle (.4ex);
  \node [draw, circle, fill = white, inner sep = 2pt] at (6.5,3.5) { };   
  
      \node [draw, circle, fill = white, inner sep = 2pt] at (9.5,4.5) { };

   \begin{scope}
    \clip (12,0) -| (14,2) -| (16,3) -| (21,4) -| (22,5) -| (12,0);
    \draw [color=black!25] (12,0) grid (22,5);
  \end{scope}

   \draw [thick] (12,0) -| (14,2) -| (16,3) -| (21,4) -| (22,5) -| (12,0);    
  
  \draw [thick, rounded corners] (12.5,2.5) |- (17.5,4.5);
  \draw [color=black,fill=black,thick] (17.5,4.5) circle (.4ex);
  \node [draw, circle, fill = white, inner sep = 2pt] at (12.5,2.5) { };   
  
  \draw [thick, rounded corners] (12.5,0.5) |- (13.5,1.5) -- (13.5,3.5);
  \draw [color=black,fill=black,thick] (13.5,3.5) circle (.4ex);
  \node [draw, circle, fill = white, inner sep = 2pt] at (12.5,0.5) { };   
  
  \draw [thick, rounded corners] (14.5,2.5) -| (15.5,3.5) -- (20.5,3.5);
  \draw [color=black,fill=black,thick] (20.5,3.5) circle (.4ex);
  \node [draw, circle, fill = white, inner sep = 2pt] at (14.5,2.5) { };   
  
  \draw [thick, rounded corners]  (18.5,4.5) -- (21.5,4.5);
  \draw [color=black,fill=black,thick] (21.5,4.5) circle (.4ex);
  \node [draw, circle, fill = white, inner sep = 2pt] at (18.5,4.5) { };

   \node [draw, circle, fill = white, inner sep = 2pt] at (13.5,0.5) { };

    \node [draw, circle, fill = white, inner sep = 2pt] at (14.5,3.5) { };

\end{tikzpicture}
\end{center}
\caption{A monotonic ribbon tiling (left) and a tiling by ribbons which is not monotonic (right).}
\label{fig:intro-monotonic-and-not}
\end{figure}

\begin{example}
\label{ex:intro-path-to-s}
    To get a flavor of how the monotonicity condition of
    Theorem~\ref{t:intro-path-mn} behaves, 
    we calculate the $s$-expansion
    of the path power sum $\vec{p}_{31^3}$.
    This expansion
    \begin{equation*}
        \vec{p}_{31^3} = 3! \cdot \left( 
        s_{41^2} + s_{51} + s_{42} + 4 s_6
        \right)
    \end{equation*}
    is witnessed by the monotonic tilings
    \[
\begin{tikzpicture}[scale=.25]
 \begin{scope}
    \clip (0,3) -| (4,2) -| (1,1) -| (0,3);
    \draw [color=black!25] (0,0) grid (6,3);
  \end{scope}

  \draw [thick] (0,3) -| (4,2) -| (1,0) -| (0,3);

  \draw [thick, rounded corners] (0.5,2.5) -- (0.5,0.5);
  \draw [color=black,fill=black,thick] (0.5,2.5) circle (.4ex);
  \node [draw, circle, fill = white, inner sep = 1pt] at (0.5,0.5) { };
  \node [draw, circle, fill = white, inner sep = 1pt] at (1.5,2.5) { };
  \node [draw, circle, fill = white, inner sep = 1pt] at (2.5,2.5) { };
  \node [draw, circle, fill = white, inner sep = 1pt] at (3.5,2.5) { };
  
\begin{scope}
    \clip (10,2) -| (15,1) -| (11,0) -| (10,0);
    \draw [color=black!25] (10,-1) grid (16,2);
\end{scope}

  \draw [thick] (10,2) -| (15,1) -| (11,0) -| (10,2);

  \draw [thick, rounded corners] (10.5,.5) -- (10.5,1.5) -- (11.5,1.5);
  \draw [color=black,fill=black,thick] (11.5,1.5) circle (.4ex);
  \node [draw, circle, fill = white, inner sep = 1pt] at (10.5,0.5) { };
  \node [draw, circle, fill = white, inner sep = 1pt] at (12.5,1.5) { };
  \node [draw, circle, fill = white, inner sep = 1pt] at (13.5,1.5) { };
  \node [draw, circle, fill = white, inner sep = 1pt] at (14.5,1.5) { };

\begin{scope}
    \clip (20,0) -| (22,1) -| (24,2) -| (20,0);
    \draw [color=black!25] (20,0) grid (24,2);
\end{scope}

\draw [thick] (20,0) -| (22,1) -| (24,2) -| (20,0);

\draw [thick, rounded corners] (20.5,.5) -- (20.5,1.5) -- (21.5,1.5);
  \draw [color=black,fill=black,thick] (21.5,1.5) circle (.4ex);
  \node [draw, circle, fill = white, inner sep = 1pt] at (20.5,0.5) { };
  \node [draw, circle, fill = white, inner sep = 1pt] at (21.5,0.5) { };
  \node [draw, circle, fill = white, inner sep = 1pt] at (22.5,1.5) { };
  \node [draw, circle, fill = white, inner sep = 1pt] at (23.5,1.5) { };

\begin{scope}
    \clip (30,2) -| (36,3) -| (30,2);
    \draw [color=black!25] (30,2) grid (36,3);
\end{scope}

\draw [thick] (30,2) -| (36,3) -| (30,2);  
\draw [thick, rounded corners] (30.5,2.5) -- (32.5,2.5);
\draw [color=black,fill=black,thick] (32.5,2.5) circle (.4ex);
\node [draw, circle, fill = white, inner sep = 1pt] at (30.5,2.5) { };
\node [draw, circle, fill = white, inner sep = 1pt] at (33.5,2.5) { };
\node [draw, circle, fill = white, inner sep = 1pt] at (34.5,2.5) { };
\node [draw, circle, fill = white, inner sep = 1pt] at (35.5,2.5) { };

\begin{scope}
    \clip (30,0) -| (36,1) -| (30,0);
    \draw [color=black!25] (30,0) grid (36,1);
\end{scope}

\draw [thick] (30,0) -| (36,1) -| (30,0);
\draw [thick, rounded corners] (31.5,0.5) -- (33.5,0.5);
\draw [color=black,fill=black,thick] (33.5,0.5) circle (.4ex);
\node [draw, circle, fill = white, inner sep = 1pt] at (31.5,0.5) { };
\node [draw, circle, fill = white, inner sep = 1pt] at (30.5,0.5) { };
\node [draw, circle, fill = white, inner sep = 1pt] at (34.5,0.5) { };
\node [draw, circle, fill = white, inner sep = 1pt] at (35.5,0.5) { };

\begin{scope}
    \clip (40,2) -| (46,3) -| (40,2);
    \draw [color=black!25] (40,2) grid (46,3);
\end{scope}

\draw [thick] (40,2) -| (46,3) -| (40,2);  
\draw [thick, rounded corners] (42.5,2.5) -- (44.5,2.5);
\draw [color=black,fill=black,thick] (44.5,2.5) circle (.4ex);
\node [draw, circle, fill = white, inner sep = 1pt] at (42.5,2.5) { };
\node [draw, circle, fill = white, inner sep = 1pt] at (40.5,2.5) { };
\node [draw, circle, fill = white, inner sep = 1pt] at (41.5,2.5) { };
\node [draw, circle, fill = white, inner sep = 1pt] at (45.5,2.5) { };

\begin{scope}
    \clip (40,0) -| (46,1) -| (40,0);
    \draw [color=black!25] (40,0) grid (46,1);
\end{scope}

\draw [thick] (40,0) -| (46,1) -| (40,0);
\draw [thick, rounded corners] (43.5,0.5) -- (45.5,0.5);
\draw [color=black,fill=black,thick] (45.5,0.5) circle (.4ex);
\node [draw, circle, fill = white, inner sep = 1pt] at (43.5,0.5) { };
\node [draw, circle, fill = white, inner sep = 1pt] at (40.5,0.5) { };
\node [draw, circle, fill = white, inner sep = 1pt] at (41.5,0.5) { };
\node [draw, circle, fill = white, inner sep = 1pt] at (42.5,0.5) { };
\end{tikzpicture}.
\]
The factor of $3!$ arises from the three ribbons 
of size 1.

It is instructive to compare with the na\"ive approach
of computing the $s$-expansion using the classical
Murnaghan-Nakayama formula.
Applying \eqref{intro-path-to-classical}, the $p$-expansion
of $\vec{p}_{31^3}$ is given by
\begin{equation*}
    \vec{p}_{31^3} = 
    p_{31^3} + 3 p_{321} + 2p_{33} + 3p_{411} + 3p_{42} + 6 p_{51} + 6p_6.
\end{equation*}
Term-by-term, the number of 
ribbon tableaux involved 
in the use of classical Murnaghan-Nakayama
to compute the $s$-expansion of the right-hand side is
\begin{equation*}
    52 + 22 + 12 + 28 + 12 + 13 + 6 = 145
\end{equation*}
assuming we add ribbons to the empty shape
from largest to smallest. This immediately reveals
the computational advantages of 
the Path Murnaghan-Nakayama formula.
Furthermore, the na\"ive approach yields 
ribbon tableaux corresponding to many terms
such as $s_{1^6}$ and $s_{2 1^4}$ which are precluded
by the monotonicity condition 
(and ultimately cancel out in subtle ways).
\end{example}

The monotonicity condition for ribbon tilings guarantees that every shape $\lambda$ occurring in the Schur expansion of $\vec{p}_\mu$ with non-zero coefficient is greater than or equal to the transpose $\mu'$ of $\mu$ in dominance order.
Beginning with this observation, 
Theorem~\ref{t:intro-path-mn} 
allows us to prove a stability phenomenon 
for path power sums with many parts of size 1.
Roughly speaking, after a certain threshold,
adding a part of size $1$ to $\mu$ will increase
the length of the first row of each term in the 
$s$-expansion of $\vec{p}_\mu$.
The coefficients in this $s$-expansion will change in 
predictable ways corresponding to how
the horizontal ribbons in this long first row 
intermingle.

More specifically, given $k \leq n$ and a partition 
$\mu  \vdash k$, construct a new partition
$\mu(n)$ by appending $1^{n-k}$ to $\mu$. Note that 
the conjugate
$\mu(n)'$ has first part at least $n-k$.
For $T$ a monotonic ribbon tiling whose ribbon sizes are a permutation of $\mu(n)$, the monotonicity condition guarantees at most $k$ of these ribbons are not contained in the first row.
Therefore, there are only finitely many configurations for the first $k$ columns.
The remaining entries can be placed in polynomially many ways, leading to the corollary:

\begin{corollary}
	\label{c:intro-stability}
	Let $\mu = (\mu_1,\dots,\mu_k)$ be a partition whose $k$ parts are all of size $> 1$ and $n \geq 2|\mu| - 2r$.
	We have the Schur expansion
	\[
	\frac{1}{(n - |\mu|)!} \cdot \vec{p}_{\mu(n)} = \sum_{|\lambda| \leq |\mu| - r} f_\lambda(n) \cdot s_{\lambda[n]}
	\]	
	where each $f_\lambda(n)$ is a polynomial function of $n$ of degree at most $r$ with equality when $\lambda = \varnothing$.
\end{corollary}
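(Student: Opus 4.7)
The plan is to apply the Path Murnaghan--Nakayama formula (Theorem~\ref{t:intro-path-mn}) and extract polynomial-in-$n$ behavior by partitioning monotonic tilings into a ``skeleton'' of bounded complexity plus a free interleaving of size-$1$ ribbons in the tail of row $1$. Writing $\mu(n) = (\mu_1,\dots,\mu_k,1^{n-|\mu|})$, Theorem~\ref{t:intro-path-mn} gives
\[
\vec{p}_{\mu(n)} = \sum_{(w,T)} \sign(T)\cdot s_{\shape(T)},
\]
summed over pairs of a permutation $w$ and a monotonic tiling $T$ with $w(\alpha(T))=\mu(n)$. The number of valid $w$ per tiling is $\prod_j m_j(\mu(n))! = (n-|\mu|)! \cdot \prod_{j\geq 2} m_j(\mu)!$, so dividing by $(n-|\mu|)!$ reduces the right-hand side to a constant multiple of a signed count of monotonic tilings whose ribbon multiset is $\mu(n)$.

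The first step is the support bound. In any such $T$, the $k+(n-|\mu|)$ ribbons have southwest cells in distinct columns, and every column index is bounded by the first-row length of $\shape(T)$. Thus this first row has length at least $k+(n-|\mu|)$, forcing $\shape(T)=\lambda[n]$ with $|\lambda|\leq|\mu|-k$ (so $r=k$ in the hypothesis).

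The second and most delicate step is the polynomial count. I would partition the monotonic tilings of a fixed shape $\lambda[n]$ by their \emph{skeleton} $S$, which records the cells in rows $\geq 2$, the big ribbons whose southwest cells meet rows $\geq 2$, and the ordering data of the big ribbons. Since $|\lambda| \leq |\mu|-k$ is independent of $n$, there are only finitely many skeletons. The hypothesis $n \geq 2|\mu| - 2k$ ensures the skeleton footprint occupies at most $2|\mu|-k$ columns of row $1$, leaving a disjoint ``tail'' of free columns. Given $S$, the remaining data is the interleaving of the $k_1$ big ribbons with southwest cell in row $1$ with the $n-|\mu|$ singleton ribbons, respecting monotonicity. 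By stars and bars this count equals $\binom{n-c(S)+k_1}{k_1}$ for an integer $c(S)$ depending only on $S$, a polynomial in $n$ of degree $k_1 \leq k$. Summing with signs over the finitely many skeletons produces $f_\lambda(n)$ as a polynomial of degree at most $k$.

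The third step handles the claim of equality when $\lambda = \varnothing$. Here the shape $(n)$ has a single row, every big ribbon is horizontal (height $0$, sign $+1$), every skeleton has $k_1=k$, and the interleaving count is $\binom{n-|\mu|+k}{k}$. All contributions add constructively, yielding $f_\varnothing(n) = k! \cdot \binom{n-|\mu|+k}{k}$, a polynomial of degree exactly $k$ with positive leading coefficient. Since no cancellation can occur at this order, the upper bound is attained.

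The main obstacle will be verifying the skeleton-plus-tail factorization: one must prove that every monotonic tiling decomposes uniquely into a skeleton and a tail interleaving, that the skeleton lives in the leftmost $2|\mu|-k$ columns under the hypothesis $n \geq 2|\mu|-2k$, and that monotonicity of the full tiling is equivalent to monotonicity of the skeleton together with column-sorting of the tail. Careful sign bookkeeping---noting that signs are invariant as size-$1$ ribbons slide through the tail---is also needed to avoid spurious cancellation and to guarantee the degree-$k$ lower bound at $\lambda = \varnothing$.
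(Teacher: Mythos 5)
Your proposal is correct and follows essentially the same route as the paper's proof of Corollary~\ref{degree-corollary}: your ``skeleton plus tail interleaving'' is exactly the paper's decomposition $T = T_0 \sqcup T_1$ into the finitely many frozen tilings $T_0$ not contained in the first row and the freely ordered horizontal ribbons $T_1$, counted by a multinomial coefficient that is a polynomial in $n$ of degree equal to the number of big ribbons in $T_1$. Your explicit evaluation $f_\varnothing(n) = k!\binom{n-|\mu|+k}{k}$ at $\lambda = \varnothing$ agrees with the paper's computation and correctly yields the degree-$r$ equality claim.
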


For $(I,J)$ a partial permutation, this result extends to the symmetric functions $A_{n,I,J}$ with sharper degree bounds.
See Corollary~\ref{atomic-asymptotics} for a precise statement.
A stronger polynomiality result for $R\, \one_{IJ}$ in terms of character polynomials appears as Proposition~\ref{indicator-polynomiality-proposition}.

\begin{example}
\label{ex:intro-stability}
For $n \geq 6$, the calcuation
in Example~\ref{ex:intro-path-to-s} extends to compute the
$s$-expansion of $\vec{p}_{31^{n-3}}$.
The leftmost three tilings extend uniquely to monotonic
tilings of the shape $(n-2,1,1), (n-1,1)$, and $(n-2,1)$.
The four tilings on the right are replaced by 
$n-2$ tilings of shape $(n)$ that are determined by the 
placement of the ribbon of size 3.
Therefore
\[
\vec{p}_{31^{n-3}} = (n-3)!
\left(s_{n-2,1,1} - s_{n-1,1} - s_{n-2,2} + (n-2) \cdot s_n\right).
\]
Similarly, we compute
\[
\vec{p}_{221^{n-4}} = 2! \cdot (n-4)! \left(s_{n-2,2} - (n-3) \cdot s_{n-1,1} + \binom{n-2}{2} \cdot s_n \right)
\]
with the $n=5$ case addressed via the monotonic ribbon tilings
\[
\begin{tikzpicture}[scale=.25]
 \begin{scope}
    \clip (0,3) -| (3,2) -| (2,1) -| (0,3);
    \draw [color=black!25] (0,0) grid (6,4);
  \end{scope}

  \draw [thick] (0,3) -| (3,2) -| (2,1) -| (0,3);

  \draw [thick, rounded corners] (0.5,1.5) -- (0.5,2.5);
  \draw [color=black,fill=black,thick] (0.5,2.5) circle (.4ex);
  \node [draw, circle, fill = white, inner sep = 1pt] at (0.5,1.5) { };
  \draw [thick, rounded corners] (1.5,1.5) -- (1.5,2.5);
  \draw [color=black,fill=black,thick] (1.5,2.5) circle (.4ex);
  \node [draw, circle, fill = white, inner sep = 1pt] at (1.5,1.5) { };
  \node [draw, circle, fill = white, inner sep = 1pt] at (2.5,2.5) { };
  
   \begin{scope}
    \clip (10,2) -| (14,1) -| (11,0) -| (10,2);
    \draw [color=black!25] (10,-1) grid (16,3);
  \end{scope}

  \draw [thick] (10,2) -| (14,1) -| (11,0) -| (10,2);

  \draw [thick, rounded corners] (10.5,.5) -- (10.5,1.5);
  \draw [color=black,fill=black,thick] (10.5,1.5) circle (.4ex);
  \node [draw, circle, fill = white, inner sep = 1pt] at (10.5,0.5) { };
    \draw [thick, rounded corners] (12.5,1.5) -- (13.5,1.5);
  \draw [color=black,fill=black,thick] (13.5,1.5) circle (.4ex);
  \node [draw, circle, fill = white, inner sep = 1pt] at (12.5,1.5) { };
  \node [draw, circle, fill = white, inner sep = 1pt] at (11.5,1.5) { };
  
     \begin{scope}
    \clip (10,5) -| (14,4) -| (11,3) -| (10,5);
    \draw [color=black!25] (10,1) grid (16,5);
  \end{scope}

  \draw [thick] (10,5) -| (14,4) -| (11,3) -| (10,5);

  \draw [thick, rounded corners] (10.5,3.5) -- (10.5,4.5);
  \draw [color=black,fill=black,thick] (10.5,4.5) circle (.4ex);
  \node [draw, circle, fill = white, inner sep = 1pt] at (10.5,3.5) { };
    \draw [thick, rounded corners] (11.5,4.5) -- (12.5,4.5);
  \draw [color=black,fill=black,thick] (12.5,4.5) circle (.4ex);
  \node [draw, circle, fill = white, inner sep = 1pt] at (11.5,4.5) { };
  \node [draw, circle, fill = white, inner sep = 1pt] at (13.5,4.5) { };

  \begin{scope}
    \clip (20,0) -| (25,1) -| (20,0);
    \draw [color=black!25] (20,0) grid (25,2);
  \end{scope}

  \draw [thick] (20,0) -| (25,1) -| (25,1) -| (20,0);

  \draw [thick, rounded corners] (21.5,0.5) -- (22.5,0.5);
  \draw [color=black,fill=black,thick] (22.5,0.5) circle (.4ex);
  \node [draw, circle, fill = white, inner sep = 1pt] at (21.5,0.5) { };
    \draw [thick, rounded corners] (23.5,0.5) -- (24.5,0.5);
  \draw [color=black,fill=black,thick] (24.5,0.5) circle (.4ex);
  \node [draw, circle, fill = white, inner sep = 1pt] at (23.5,0.5) { };
  \node [draw, circle, fill = white, inner sep = 1pt] at (20.5,0.5) { };
  
   \begin{scope}
    \clip (20,2) -| (25,3) -| (20,2);
    \draw [color=black!25] (20,2) grid (26,4);
  \end{scope}

  \draw [thick] (20,2) -| (25,3) -| (25,3) -| (20,2);

  \draw [thick, rounded corners] (20.5,2.5) -- (21.5,2.5);
  \draw [color=black,fill=black,thick] (21.5,2.5) circle (.4ex);
  \node [draw, circle, fill = white, inner sep = 1pt] at (20.5,2.5) { };
    \draw [thick, rounded corners] (23.5,2.5) -- (24.5,2.5);
  \draw [color=black,fill=black,thick] (24.5,2.5) circle (.4ex);
  \node [draw, circle, fill = white, inner sep = 1pt] at (23.5,2.5) { };
  \node [draw, circle, fill = white, inner sep = 1pt] at (22.5,2.5) { };

   \begin{scope}
    \clip (20,4) -| (25,5) -| (20,4);
    \draw [color=black!25] (20,4) grid (26,6);
  \end{scope}

  \draw [thick] (20,4) -| (25,5) -| (25,5) -| (20,4);

  \draw [thick, rounded corners] (20.5,4.5) -- (21.5,4.5);
  \draw [color=black,fill=black,thick] (21.5,4.5) circle (.4ex);
  \node [draw, circle, fill = white, inner sep = 1pt] at (20.5,4.5) { };
    \draw [thick, rounded corners] (22.5,4.5) -- (23.5,4.5);
  \draw [color=black,fill=black,thick] (23.5,4.5) circle (.4ex);
  \node [draw, circle, fill = white, inner sep = 1pt] at (22.5,4.5) { };
  \node [draw, circle, fill = white, inner sep = 1pt] at (24.5,4.5) { };
  
\end{tikzpicture}.
\]
Here, the term $2! \cdot (n-4)!$ accounts for permutations of the 2 ribbons of size 2 and the $n-4$ ribbons of size 1.
The coefficients $(n-3)$ and $\binom{n-2}{2}$  account for possible locations of horizontal ribbons of length two.
\end{example}

%\begin{enumerate}
%	\item refresher: classical MN computes arbitrary character eval on single perm (with example)
%	\item can be used for any, but horrifically inefficient
%	\item sketch def of monotone ribbon tilings with Figure 1 examples, emphasizing differences
%	\item present formula
%	\item sketch the proof: classical MN proof + sign reversing involution
%	\item Examples: exhaustive small and Figure 1 example
%	\item stability plus Figure 4
%	\item consequences of stability: polynomiality results
%\end{enumerate}

\section{Regular Statistics and Their Asymptotics}

In algebraic and enumerative combinatorics, permutation statistics are usually defined on all symmetric groups simultaneously.
Fundamental questions in a wide variety of computational fields including computer science and statistics are best understood by characterizing the typical behavior of permutation statistics.
We introduce a novel family of pattern counting permutation statistics called \emph{regular statistics} and show how to apply our previous results to understand their asymptotic properties when applied to random permutations of a given cycle type.

For $(I,J)$ a partial permutation, note $\one_{IJ}$ is a statistic that detects whether a permutation contains $(I,J)$ as a pattern.
From this perspective, any permutation statistic that performs a weighted pattern count should have a simple interpretation in terms of $\one_{IJ}$'s, with degree bounded by the size of the patterns being counted.
As we have demonstrated in Example~\ref{ex:intro-maj}, it is natural to group partial permutations by their graphs.
Our regular statistics are designed to facilitate this grouping.

Notation is needed to define regular statistics.
We say $(U,V)$ is a \emph{packed partial permutation} if $U \cup V = [m]$ with $m$ a positive integer.
For $S = \{s_1 < \dots < s_m\} \subseteq \NN$, let $S(U) = (s_{i_1},\dots,s_{i_k})$ and $S(V) = (s_{j_1},\dots,s_{j_k})$
where $U = (i_1 < \cdots < i_k)$
and $V = (j_1 < \cdots < j_k)$.
Note that the graphs of $(U,V)$ and $(S(U),S(V))$ are isomorphic.
Lastly, for $C \subseteq [m-1]$ let 
$\binom{\NN}{m}_C$ be the set of $m$--element subsets $S = \{s_1 < \dots < s_m\}$ of $\NN$ so that for $i \in C$ we have $s_{i+1} = s_i + 1$.
\begin{definition}
	\label{d:intro-regular}

For $(U,V)$ a packed partial permutation with $U \cup V = [m]$, $C \subseteq [m-1]$ and $f \in \CC[x_1,\dots,x_m]$, the associated \emph{constrained translate} is
\[
T^f_{(U,V),C} = \sum_{L = \{\ell_1 < \dots < \ell_m\} \in \binom{\NN}{m}_C} f(\ell_1,\dots,\ell_m) \cdot \one_{L(U)\,L(V)}.
\]
A \emph{regular statistic} is a linear combination of constrained translates.
\end{definition}

Notice that the constrained translate $T^f_{(U,V),C}$
defines a function $\symm_n \rightarrow \CC$ for each 
$n$.
The first equation of Example~\ref{ex:intro-maj} shows that $\maj$ is a regular statistic:
\begin{align*}
\maj =& T^{x_1}_{((1,2),(2,1)),\{1\}} + T^{x_1}_{((1,2),(3,1)),\{1\}} + T^{x_1}_{((1,3),(2,1)),\{1\}}	\\
&+ T^{x_2}_{(2,3),(2,1),\{2\}}  + T^{x_2}_{(2,3),(3,1),\{2\}} + T^{x_1}_{(1,2),(4,3),\{1\}}\\
&+ T^{x_2}_{(2,3),(4,1),\{2\}} + T^{x_3}_{(3,4),(2,1),\{3\}}.
\end{align*}
As we have seen in Example~\ref{ex:intro-maj}, an explicit realization of a statistic $\stat$ as regular facilitates the computation of $R \, \stat$.
Additionally, regular statistics are closed under multiplication, hence form an algebra.
The following result is a portion of 
Proposition~\ref{p:simple-prod}.

\begin{proposition}
	\label{p:intro-regular-algebra}
	A pointwise 
 product of regular statistics is regular.
\end{proposition}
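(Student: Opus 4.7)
By the bilinearity of pointwise multiplication, it suffices to show that the product of two constrained translates $T^{f_1}_{(U_1, V_1), C_1}$ and $T^{f_2}_{(U_2, V_2), C_2}$ (with $U_i \cup V_i = [m_i]$) is regular. Expanding the definitions, this product is the double sum
\[
\sum_{L_1 \in \binom{\NN}{m_1}_{C_1}} \; \sum_{L_2 \in \binom{\NN}{m_2}_{C_2}} f_1(L_1) \cdot f_2(L_2) \cdot \one_{L_1(U_1)\,L_1(V_1)} \cdot \one_{L_2(U_2)\,L_2(V_2)}.
\]

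The plan is to group the pairs $(L_1, L_2)$ according to their \emph{interleaving type}: the triple $(m', \alpha_1, \alpha_2)$ where $m' = |L_1 \cup L_2|$ and $\alpha_i: [m_i] \hookrightarrow [m']$ is the order-preserving injection that records the position of each element of $L_i$ inside the merged set $L := L_1 \cup L_2$. This data determines, independent of the specific integer values of $L$, how the two tuples $L_1(U_i), L_1(V_i)$ sit inside $L$, and in particular which (if any) entries of the two partial permutations are forced to coincide.

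For each interleaving type, the product $\one_{L_1(U_1) L_1(V_1)} \cdot \one_{L_2(U_2) L_2(V_2)}$ is either identically zero (if the merger is inconsistent, meaning some position is forced to two distinct images, or some image is forced from two distinct positions) or equals $\one_{L(U) L(V)}$ for a combined packed partial permutation $(U, V)$ on $[m']$ obtained by merging $(\alpha_1(U_1), \alpha_1(V_1))$ and $(\alpha_2(U_2), \alpha_2(V_2))$ and then collapsing entries that correspond to the same position of $L$. Meanwhile, the weight $f_1(L_1) f_2(L_2)$ becomes a polynomial $g \in \CC[x_1, \dots, x_{m'}]$ after the substitutions induced by $\alpha_1$ and $\alpha_2$, and the pair of constraints $(C_1, C_2)$ translates, for pairs $(L_1, L_2)$ of the fixed interleaving type, into a single constraint set $C \subseteq [m'-1]$ assembled from the images $\alpha_1(C_1)$ and $\alpha_2(C_2)$ together with any additional adjacency forced by the interleaving itself. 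Only finitely many interleaving types are \emph{admissible} (if $i \in C_1$ then $\alpha_1(i+1) = \alpha_1(i) + 1$, since no element of $L_2$ can fit strictly between two consecutive integers, and likewise for $C_2$), and for an admissible interleaving the pairs $(L_1, L_2)$ realizing it biject with sets $L \in \binom{\NN}{m'}_C$. The contribution of each admissible interleaving is therefore precisely $T^g_{(U, V), C}$, expressing the product as a finite sum of constrained translates.

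The main obstacle is purely combinatorial bookkeeping: giving a clean definition of the merged partial permutation $(U, V)$ in the presence of coincidences $\alpha_1(i) = \alpha_2(j)$, characterizing exactly when such a merger is consistent (equivalently, when the product of indicators does not vanish), and verifying that the constraint set $C$ is chosen so that the bijection between pairs $(L_1, L_2)$ of a fixed interleaving type and elements $L \in \binom{\NN}{m'}_C$ holds without over- or under-counting. Once this indexing is set up carefully, Proposition~\ref{p:intro-regular-algebra} reduces to a finite regrouping of the double sum.
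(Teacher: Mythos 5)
Your argument is correct and matches the paper's proof of Proposition~\ref{p:simple-prod}: both expand the double sum of indicators, observe that a nonvanishing product $\one_{M(U_1),M(V_1)} \cdot \one_{L(U_2),L(V_2)}$ is the indicator of a merged partial permutation supported on $M \cup L$, and regroup terms according to how the two index sets interleave and overlap into finitely many constrained translates. The ``consistency of the merger'' bookkeeping you flag as the remaining obstacle is exactly what the paper's Lemma~\ref{compatible-lemma} on compatible partial permutations supplies, together with Lemma~\ref{l:unique-pack} for passing back to a packed triple.
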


For $(U,V) \in \symm_k$ a packed partial permutation with $U \cup V = [m]$, $C \subseteq [m-1]$ and $f \in \CC[x_1,\dots,x_m]$, we say the constrained translate $T^f_{(U,V),C}$ has \emph{size} $k$, \emph{shift} $|C|$ and \emph{power} $k + \deg f - |C|$.
Similarly, when expressing a regular statistic as a linear combination of constrained translates, the size, shift and power of that expansion are the maxima amongst all translates in the expansion.
These properties determine the asymptotic behavior of a regular statistic.

\begin{theorem}
\label{t:intro-regular-character}
	Let $\stat$ be a regular statistic of size $k$, shift $q$ and power $p$. Then
	\begin{equation}
	\label{eq:intro-regular}
	\ch_n (R\,\stat) = \sum_{|\lambda| \leq k} c_\lambda(n) s_{\lambda[n]}	
	\end{equation}
where on the domain $n \geq 2k$ each $(n)_q \cdot c_\lambda(n)$ is a polynomial function of $n$ with degree at most $p+q - |\lambda|$.
\end{theorem}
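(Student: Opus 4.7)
The plan is to reduce to a single constrained translate by linearity, factor $\ch_n(R\,\stat)$ through the atomic symmetric function, and then combine Corollary~\ref{c:intro-stability} with the classical Murnaghan--Nakayama rule while carefully tracking polynomial degrees. Since a regular statistic is a finite linear combination of constrained translates and the bounds in the theorem are additive across summands, it suffices to prove the theorem for a single constrained translate $\stat = T^f_{(U,V),C}$. Set $\kappa = |U|$, $q = |C|$, $d = \deg f$, $m = |U \cup V|$, and $p = \kappa + d - q$, and let $(\nu,\mu)$ be the cycle-path type of $(U,V)$ on $[m]$; since $(U,V)$ is packed, every vertex is incident to some edge, so $\mu$ has $\ell := m - \kappa$ parts all of size $\geq 2$. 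Because the graph of $(L(U), L(V))$ on $[n]$ is isomorphic to that of $(U,V)$ together with $n - m$ isolated vertices for every $L \in \binom{[n]}{m}_C$, the atomic symmetric function $A_{n,L(U),L(V)}$ is independent of $L$, yielding
\[
\ch_n(R\,\stat) = \frac{S(n)}{n!}\cdot A_{n,U,V}, \qquad S(n) := \sum_{L \in \binom{[n]}{m}_C} f(L).
\]

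I would then analyze $S(n)$ and $A_{n,U,V}$ separately. Parametrizing $\binom{[n]}{m}_C$ by the $m - q$ starting positions of its maximal runs of consecutive integers shows $S(n)$ is a polynomial in $n$ of degree at most $m - q + d$; since this set is combinatorially empty for $n < m$ and the standard sum $\sum_{Y \in \binom{[N]}{k}} g(Y)$ vanishes as a polynomial at $N = 0, 1, \dots, k-1$ (by iterated monomial summation), the polynomial $S(n)$ vanishes at $n = q, q+1, \dots, m-1$. Thus $(n-q)_{m-q}$ divides $S(n)$ and we may write $S(n) = (n-q)_{m-q}\,\tilde S(n)$ with $\deg \tilde S \leq d$. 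For $A_{n,U,V} = p_\nu \cdot \vec{p}_{\mu \cup 1^{n-m}}$, Corollary~\ref{c:intro-stability} expands $\vec{p}_{\mu \cup 1^{n-m}}$ into shifted Schur functions $s_{\tilde\lambda[n-|\nu|]}$ with $|\tilde\lambda| \leq \kappa - |\nu|$, and applying classical Murnaghan--Nakayama to multiply through by $p_\nu$ produces
\[
A_{n,U,V} = (n-m)!\sum_{|\lambda'| \leq \kappa} h_{\lambda'}(n)\,s_{\lambda'[n]},
\]
where each $h_{\lambda'}$ is a polynomial of degree at most $\kappa - |\lambda'|$ (the sharper stability bound for atomic functions promised as Corollary~\ref{atomic-asymptotics}).

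Combining the two displays above with the factorial identity $(n)_m = (n)_q \cdot (n-q)_{m-q}$ gives
\[
c_{\lambda'}(n) \;=\; \frac{S(n)\,h_{\lambda'}(n)}{(n)_m} \;=\; \frac{\tilde S(n)\,h_{\lambda'}(n)}{(n)_q},
\]
so $(n)_q\,c_{\lambda'}(n) = \tilde S(n)\,h_{\lambda'}(n)$ is a polynomial of degree at most $d + (\kappa - |\lambda'|) = p + q - |\lambda'|$, as required; the hypothesis $n \geq 2k \geq 2\kappa$ ensures both $n \geq m$ (so $S(n)$ actually agrees with the combinatorial count) and the hypothesis of Corollary~\ref{c:intro-stability}.

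The main obstacle is establishing the sharper degree bound $\deg h_{\lambda'} \leq \kappa - |\lambda'|$, as the bound $\deg h_{\lambda'} \leq \ell$ that follows directly from Corollary~\ref{c:intro-stability} is too weak when $|\lambda'|$ is large relative to $\ell$. I would obtain this by refining the path Murnaghan--Nakayama analysis to show that the coefficient of $s_{\tilde\lambda[n-|\nu|]}$ in $\vec{p}_{\mu \cup 1^{n-m}}$ has degree at most $(\kappa - |\nu|) - |\tilde\lambda|$: combinatorially, each cell of $\tilde\lambda$ below the first row forces an extra ribbon out of the first row, reducing by one the count of free size-$1$ ribbons whose placements produce polynomial growth in $n$. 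Passing through classical Murnaghan--Nakayama using the constraint $|\tilde\lambda| \geq |\lambda'| - |\nu|$ then converts this into the claimed bound $\deg h_{\lambda'} \leq \kappa - |\lambda'|$.
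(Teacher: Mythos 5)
Your proposal is correct and follows essentially the same route as the paper's proof of Theorem~\ref{t:simple}: reduce by linearity to a single constrained translate, factor $\ch_n(R\,T^f_{(U,V),C})$ as a weight sum times the atomic function $A_{n,U,V}$, control the weight sum via the divisibility $(n-q)_{m-q} \mid S(n)$ (the content of Lemma~\ref{l:vincular-poly}), and invoke the refined degree bound $\deg g_\lambda \leq k - |\lambda|$ of Corollary~\ref{atomic-asymptotics}. The only cosmetic difference is that you re-sketch the greedy-tiling argument behind that corollary rather than citing it, and you obtain the divisibility of $S(n)$ by a vanishing-at-small-$n$ argument rather than the paper's shift-map induction; both are equivalent to the paper's lemmas.
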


We refine Proposition~\ref{p:intro-regular-algebra} as Proposition~\ref{p:simple-prod} by showing all three of the properties size, shift, and power are subadditive.
Therefore, Theorem~\ref{t:intro-regular-character} applies to products of regular statistic, and in particular higher moments.

Recall that an irreducible character $\chi^\lambda:\symm_n \to \CC$ is a class function, hence only depends on the cycle type of the input.
For $w \in \symm_n$, let $m_i(w)$ be the number of $i$-cycles in $w$.
The theory of character polynomials, introduced in~\cite{Specht}, gives a structural description of $\chi^\lambda$ as a function of $m_i$'s.

\begin{theorem}
	\label{t:intro-character-polynomial}
	For $\lambda$ a partition of size $n$, the chracter $\chi^\lambda$ is a polynomial in the variables $m_1,m_2,\dots,m_n$ whose degree is $n - |\lambda_1|$ where $\deg (m_i) = i$.
\end{theorem}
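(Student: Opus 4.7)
The plan is to prove this classical theorem of Specht via the Frobenius character formula, using the multinomial theorem to expose polynomial dependence on the cycle counts and the antisymmetry of the Vandermonde to control the weighted degree.

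First, I would invoke the Frobenius character formula: for $w \in \symm_n$ of cycle type $\nu = (1^{m_1} 2^{m_2} \cdots)$ and $\ell \geq \ell(\lambda)$, with $\delta = (\ell-1, \ldots, 0)$ and $x = (x_1, \ldots, x_\ell)$,
\[
\chi^\lambda(w) = [x^{\lambda + \delta}] \, \Delta(x) \prod_{k \geq 1} p_k(x)^{m_k},
\]
where $\Delta(x) = \det(x_i^{\delta_j}) = \sum_{\sigma \in \symm_\ell} \sign(\sigma) \, x^{\gamma(\sigma)}$ with $\gamma(\sigma)_i = \delta_{\sigma^{-1}(i)}$ a rearrangement of $\delta$. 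Since the right-hand side depends on $w$ only through the cycle counts $m_k$, this already confirms that $\chi^\lambda$ is a class function expressible in terms of the $m_k$.

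Next, I would expand each $p_k(x)^{m_k}$ by the multinomial theorem and collect like powers of $x$ to obtain
\[
\chi^\lambda = \sum_{\sigma \in \symm_\ell} \sign(\sigma) \sum_\alpha \prod_{k \geq 1} \binom{m_k}{\alpha^{(k)}_1, \ldots, \alpha^{(k)}_\ell},
\]
where the inner sum ranges over tuples $\alpha = (\alpha^{(k)})_{k \geq 1}$ of compositions in $\ZZ_{\geq 0}^\ell$ satisfying $\sum_i \alpha^{(k)}_i = m_k$ for each $k$ and $\sum_k k \alpha^{(k)}_i = (\lambda + \delta - \gamma(\sigma))_i$ for each $i$. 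Setting $A^{(k)} := \sum_{i > 1} \alpha^{(k)}_i$ and splitting
\[
\binom{m_k}{\alpha^{(k)}_1, \ldots, \alpha^{(k)}_\ell} = \binom{m_k}{A^{(k)}} \cdot \binom{A^{(k)}}{\alpha^{(k)}_2, \ldots, \alpha^{(k)}_\ell},
\]
each multinomial becomes a genuine polynomial in $m_k$ of ordinary degree $A^{(k)}$, confirming polynomiality of $\chi^\lambda$ in $(m_1, m_2, \ldots)$.

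Finally, I would perform the weighted degree analysis with $\deg m_k = k$. The total weighted degree of each term is
\[
\sum_k k A^{(k)} = \sum_{i > 1}(\lambda + \delta - \gamma(\sigma))_i = (n - \lambda_1) - (\delta_1 - \gamma(\sigma)_1),
\]
using $\sum_i \delta_i = \sum_i \gamma(\sigma)_i$. Since $\gamma(\sigma)_1 \leq \delta_1 = \ell - 1$ with equality exactly when $\sigma(1) = 1$, this establishes the upper bound $\deg \chi^\lambda \leq n - \lambda_1$ for the weighted degree. The main obstacle is the matching lower bound, namely that the leading-degree contributions are not killed by cancellation. The candidate leading terms come from permutations $\sigma$ fixing $1$, which form a copy of $\symm_{\ell-1}$ acting on $\{2, \ldots, \ell\}$; summing these should give a factor of the falling factorial $(m_1)_{\lambda_1}$ times a reduced Frobenius-type expression for the partition $(\lambda_2, \ldots, \lambda_\ell)$ of size $n - \lambda_1$, whose non-vanishing I expect to verify by specializing to a cycle type and recognizing a positive dimension such as $\dim V^{(\lambda_2, \ldots, \lambda_\ell)}$.
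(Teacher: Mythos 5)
First, note that the paper does not actually prove this statement: Theorem~\ref{t:intro-character-polynomial} is the classical character polynomial theorem, restated more carefully as Theorem~\ref{character-polynomial-theorem}, and for its proof the paper simply refers to Macdonald \cite[Chapter 1, Section 7, Example 14]{Macdonald}. Your Frobenius-formula route is essentially the standard argument behind that reference, and your first two steps are sound: coefficient extraction against the bialternant shows dependence only on the cycle counts, and the split $\binom{m_k}{\alpha^{(k)}} = \binom{m_k}{A^{(k)}}\binom{A^{(k)}}{\alpha^{(k)}_2,\ldots,\alpha^{(k)}_\ell}$ works because the constraints $\sum_k k\alpha^{(k)}_i = (\lambda+\delta-\gamma(\sigma))_i$ for $i>1$ leave only finitely many choices of outer data, independent of the $m_k$, while $\alpha^{(k)}_1 = m_k - A^{(k)}$ is forced and $\binom{m_k}{A^{(k)}}$ vanishes at exactly the nonnegative integers where this would be negative. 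The computation $\sum_k kA^{(k)} = (n-\lambda_1) - (\delta_1 - \gamma(\sigma)_1)$ then correctly gives the upper bound $n-\lambda_1$ on the weighted degree.

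The gap is the lower bound, and your sketch of it would not go through as written. The proposed leading term ``$(m_1)_{\lambda_1}$ times a reduced Frobenius-type expression for $(\lambda_2,\ldots,\lambda_\ell)$'' has the wrong weighted degree: $(m_1)_{\lambda_1}$ alone already contributes $\lambda_1$, so the product overshoots $n-\lambda_1$ (compare $\chi^{(n-1,1)} = m_1-1$, whose top part is just $m_1$). Moreover, ``specializing to a cycle type'' evaluates the whole polynomial rather than isolating its top homogeneous component, so it cannot certify that the leading part survives. The correct completion of your own calculation is the following: restricting to $\sigma(1)=1$ and replacing each $\binom{m_k}{A^{(k)}}$ by its top part $m_k^{A^{(k)}}/A^{(k)}!$, the inner sums factor over $i>1$ into coefficients of $\exp\left(\sum_k m_k t^k\right)$, and the signed sum over $\sigma$ assembles into the Jacobi--Trudi determinant
\begin{equation*}
\det\bigl( h_{\lambda_i - i + j} \bigr)_{2 \le i,j \le \ell}, \qquad h_r := [t^r]\exp\Bigl(\sum_{k \ge 1} m_k t^k\Bigr),
\end{equation*}
which is $s_{(\lambda_2,\lambda_3,\ldots)}$ under the substitution $p_k \mapsto k\,m_k$. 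Since that substitution is an algebra isomorphism onto $\CC[m_1,m_2,\ldots]$, this is a nonzero homogeneous polynomial of weighted degree $n-\lambda_1$, which settles the lower bound. With that replacement your proof is complete.
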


Combining Theorems~\ref{t:intro-regular-character} and~\ref{t:intro-character-polynomial} gives a powerful structural description for moments of regular statistics on arbitrary cycle types.

\begin{corollary}[= Corollary~\ref{cycle-type-asymptotics}]
	\label{c:intro-regular-moments}
	Let $\stat$ be a regular statistic with size $k$, shift $q$ and power $p$, and let $d \geq 1$.
	Then for $\Sigma$ a uniformly random permutation of cycle type $\lambda$,
	\[
	\EE(\stat^d(\Sigma)) = \frac{f(n,m_1(\lambda),m_2(\lambda),\dots,m_{dk}(\lambda))}{(n)_{dq}}
	\]
	where $f$ is a polynomial of degree $dp + dq$ where $\deg n =1$ and $\deg m_i = i$.
\end{corollary}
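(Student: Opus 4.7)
The plan is to combine three ingredients already established in the paper: the algebra structure of regular statistics with subadditive size, shift, and power (Proposition~\ref{p:intro-regular-algebra}, refined to Proposition~\ref{p:simple-prod}); the Schur expansion of $\ch_n(R\,\stat)$ for regular $\stat$ from Theorem~\ref{t:intro-regular-character}; and the character polynomial framework of Theorem~\ref{t:intro-character-polynomial}. Since regular statistics are closed under pointwise multiplication and the three parameters are subadditive, $\stat^d$ is regular with size at most $dk$, shift at most $dq$, and power at most $dp$.

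First, I would apply Theorem~\ref{t:intro-regular-character} to $\stat^d$ to obtain
\[
\ch_n(R\,\stat^d) \;=\; \sum_{|\mu| \leq dk} c_\mu(n)\, s_{\mu[n]},
\]
where $(n)_{dq}\, c_\mu(n)$ is a polynomial in $n$ of degree at most $dp + dq - |\mu|$ on the domain $n \geq 2dk$. Inverting the Frobenius isomorphism turns this into an irreducible character expansion $R\,\stat^d = \sum_{|\mu| \leq dk} c_\mu(n)\, \chi^{\mu[n]}$. Because $R\,\stat^d$ computes the conditional expectation of $\stat^d$ on each conjugacy class and $\Sigma$ is uniform on $K_\lambda$, evaluating at any $w \in K_\lambda$ yields
\[
\EE(\stat^d(\Sigma)) \;=\; \sum_{|\mu| \leq dk} c_\mu(n)\, \chi^{\mu[n]}(w).
\]

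Next, I would invoke character polynomials. For $|\mu| \leq dk$ the partition $\mu[n]$ has first part $n - |\mu|$, so Theorem~\ref{t:intro-character-polynomial} represents $\chi^{\mu[n]}$ as a polynomial $P_\mu(m_1,\dots,m_{|\mu|})$ of weighted degree at most $|\mu|$ under $\deg m_i = i$. Substituting and clearing the falling factorial yields
\[
\EE(\stat^d(\Sigma)) \;=\; \frac{1}{(n)_{dq}} \sum_{|\mu| \leq dk} \bigl((n)_{dq}\, c_\mu(n)\bigr)\, P_\mu\bigl(m_1(\lambda),\dots,m_{|\mu|}(\lambda)\bigr),
\]
which has exactly the asserted form, and each summand in the numerator has total weight at most $(dp + dq - |\mu|) + |\mu| = dp + dq$ in the grading $\deg n = 1,\, \deg m_i = i$.

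The main obstacle is the clean degree bookkeeping: one must verify that the complementary bounds $dp + dq - |\mu|$ from the Schur expansion and $|\mu|$ from the character polynomials combine additively into the single uniform bound $dp + dq$ that is independent of $\mu$. A minor technical wrinkle is that both Theorem~\ref{t:intro-regular-character} and the character polynomial representations are only guaranteed for $n$ past some threshold; however, the resulting expression is a polynomial identity in $n, m_1, \ldots, m_{dk}$ that holds on infinitely many integer points, and hence as an identity of rational functions valid throughout the stated regime.
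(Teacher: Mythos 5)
Your overall route is the paper's: reduce to the $d=1$ case by noting that $\stat^d$ is regular with size, shift, and power at most $dk$, $dq$, $dp$ (Proposition~\ref{p:simple-prod}), then feed the Schur expansion of $\ch_n(R\,\stat^d)$ into character polynomials and observe that the complementary degree bounds $dp+dq-|\mu|$ and $|\mu|$ sum to the uniform bound $dp+dq$. This is precisely how the second statement of Theorem~\ref{t:simple} is proved in the paper; the corollary there simply cites that statement and Proposition~\ref{p:simple-prod}, so you have in effect unfolded the citation correctly.

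The one genuine gap is in your final paragraph. The Schur-expansion-plus-character-polynomial argument produces a polynomial $f$ with $(n)_{dq}\,\EE(\stat^d\mid K_\lambda)=f(n,m_1(\lambda),\dots,m_{dk}(\lambda))$ only for $n\geq 2dk$. To extend to all $n$ you assert that the identity ``holds on infinitely many integer points, and hence as an identity of rational functions valid throughout the stated regime.'' That inference is vacuous unless you already know the left-hand side is given by \emph{some} polynomial in $n,m_1,\dots,m_{dk}$ for \emph{all} $n$ --- only then is there a second polynomial to compare $f$ against on a dense set of evaluation points; agreement of $f$ with the true values on large $n$ says nothing about the values at small $n$ by itself. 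The paper supplies exactly this missing input: Proposition~\ref{indicator-polynomiality-proposition}, resting on Lemma~\ref{indicator-polynomiality-lemma} (a nontrivial counting argument via Hurwitz's binomial theorem), shows that $(n)_r\cdot R\,\one_{I,J}$ is polynomial on the full disjoint union $\bigsqcup_{n}\symm_n$, and Lemma~\ref{cycle-identification-lemma} then identifies that polynomial with the one of controlled degree obtained for $n\geq 2dk$. Without invoking these, your argument establishes the corollary only on the domain $n\geq 2dk$.
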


There are many previous results on moments of pattern counting statistics in the literature, beginning with Zeilberger's result that all moments for the random variable counting subwords of size $k$ in a uniformly random permutation with a given relative order is a polynomial in the size of the permutation~\cite{Zeilberger}.
There are many generalizations and extensions of Zeilberger's result, both to more general statistics~\cite{DK} and allowing for specific~\cite{KLY} or arbitrary cycle type~\cite{Hultman,GR,GP}.
Corollary~\ref{c:intro-regular-moments} generalizes all previous results on this topic we are aware of.
Chapter~\ref{Pattern} begins with a more detailed discussion of this history and explains why our results generalize this prior work.

We next apply Corollary~\ref{c:intro-regular-moments} to understand the limiting behavior of regular statistics.
Our first result in this vein demonstrates the value of character polynomials as a tool for asymptotic analysis.
Hofer has shown \emph{vincular pattern counts} for uniformly random permutations are asymptotically normal~\cite{Hofer}.
Via a generating function argument, Fulman showed the descent statistic, which is vincular, is asymptotically normal for random permutations with no short cycles.
As a corollary of locality and the theory of character polynomials, we extend Fulman's result to all vincular patterns (see Theorem~\ref{our-long-cycles}), a result incomparable to Kammoun's recent work on conjugacy invariant distributions~\cite{Kammoun}.

Recent work of Kim and Lee extends Fulman's result for descents to random permutations on any cycle type~\cite{KL1}.
While we do not demonstrate normality in these settings, we demonstrate both a law of large numbers and a structural characterization of the variance for regular statistics with the correct scaling behavior.

\begin{theorem} \label{t:intro-regular-asymptotics}
 {\em ($\subset$ Theorems~\ref{expectation-fixed-point} and~\ref{regular-variance})}
	Let $\stat$ be a regular statistic with power $p$ and $\{\lambda^{(n)}\}$ be a sequence of partitions of $n$
	so that for $\alpha,\beta \in [0,1]$ we have
	\begin{equation}
		\lim_{n \to \infty} \frac{m_1(\lambda^{(n)})}{n} = \alpha
		\quad \mbox{and} \quad
		\lim_{n \to \infty} \frac{m_2(\lambda^{(n)})}{n} = \beta.
	\end{equation}
	Viewing $\stat$ as a random variable with respect to the uniform distribution, there exist polynomials $f,g,h \in \RR[x]$ independent
	of $\{ \lambda^{(n)} \}$ so that
\begin{equation}\label{eq:intro-asymptotics}
	\lim_{n \to \infty} \EE\left( \frac{\stat}{n^p} \mid K_{\lambda^{(n)}} \right) = f(\alpha), \quad \  
	\lim_{n \to \infty} \VV\left(\frac{\stat}{n^{2p-1}} \mid K_{\lambda^{(n)}}\right)= g(\alpha) + \beta h(\alpha).
\end{equation}
\end{theorem}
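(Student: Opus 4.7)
The plan is to apply Corollary~\ref{c:intro-regular-moments} simultaneously to $\stat$ and $\stat^2$---the latter is a regular statistic of size at most $2k$, shift at most $2q$ and power at most $2p$ by Proposition~\ref{p:simple-prod}---and then analyze the resulting rational functions asymptotically along the sequence $\lambda^{(n)}$. For $d \in \{1,2\}$ the corollary yields
\[
    \EE(\stat^d \mid K_\mu) \;=\; \frac{P_d(n,m_1(\mu),\dots,m_{dk}(\mu))}{(n)_{dq}},
\]
where $P_d$ has weighted degree at most $dp+dq$ under the assignment $\deg n = 1$, $\deg m_i = i$.

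First I would perform a term-by-term asymptotic analysis. Under the substitution $m_j/n \to \alpha_j$ (with $\alpha_1 = \alpha$ and $\alpha_2 = \beta$), a monomial $n^a m_1^{b_1} m_2^{b_2}\cdots$ in $P_d$ scales as $n^{a+b_1+b_2+\cdots}\alpha^{b_1}\beta^{b_2}\cdots$, while the weighted-degree bound $a+b_1+2b_2+\cdots \le dp+dq$ forces the actual $n$-order to be at most $dp+dq$, with equality exactly when $b_j = 0$ for all $j \ge 2$. The next order $n^{dp+dq-1}$ additionally admits a single factor of $m_2$ (with $b_j = 0$ for $j \ge 3$) alongside monomials of weighted degree $dp+dq-1$. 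Combined with $(n)_{dq} = n^{dq} - \binom{dq}{2} n^{dq-1} + O(n^{dq-2})$, this yields
\[
    \EE(\stat^d \mid K_{\lambda^{(n)}}) \;=\; n^{dp} F_d(\alpha) + n^{dp-1}\bigl(G_d(\alpha) + \beta H_d(\alpha)\bigr) + O(n^{dp-2})
\]
for polynomials $F_d, G_d, H_d \in \RR[x]$ independent of $\{\lambda^{(n)}\}$. Setting $d=1$ and $f := F_1$ proves the expectation claim. Combining the $d = 1,2$ expansions then gives
\[
    \VV(\stat \mid K_{\lambda^{(n)}}) \;=\; n^{2p}\bigl(F_2(\alpha)-f(\alpha)^2\bigr) + n^{2p-1}\bigl(\tilde g(\alpha) + \beta \tilde h(\alpha)\bigr) + O(n^{2p-2}),
\]
so the theorem reduces to the concentration identity $F_2 = f^2$, after which $g := \tilde g$ and $h := \tilde h$ give the claimed variance limit.

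The hard part is establishing $F_2 = f^2$. I would attack it via the combinatorics of constrained translates. Writing $\stat = \sum_\tau a_\tau T^{f_\tau}_{(U_\tau, V_\tau), C_\tau}$, we have
\[
    \stat^2 \;=\; \sum_{\tau, \tau'} a_\tau a_{\tau'}\, T^{f_\tau}_{(U_\tau, V_\tau), C_\tau}\cdot T^{f_{\tau'}}_{(U_{\tau'}, V_{\tau'}), C_{\tau'}}.
\]
Each product of constrained translates splits into a \emph{disjoint} part, in which the two indexing position sets and the two indexing value sets are pairwise disjoint, and a \emph{collision} part, in which at least one position or value is shared. The disjoint part is itself a regular statistic of power exactly $p_\tau + p_{\tau'}$ whose expectation on $K_{\lambda^{(n)}}$ factors, at leading order, as the product of the individual expectations of the two translates---selecting two partial permutations with disjoint supports is, up to $O(1/n)$, the same as selecting them independently under the uniform measure on $K_{\lambda^{(n)}}$. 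Summed over $\tau, \tau'$, this recovers $f(\alpha)^2$ at order $n^{2p}$. The collision part is a regular statistic of power at most $p_\tau + p_{\tau'} - 1$, hence contributes only $O(n^{2p-1})$ after summing. These together force $F_2 = f^2$, and the $n^{2p-1}$ coefficients in the combined expansion assemble into $g(\alpha) + \beta h(\alpha)$ as required.
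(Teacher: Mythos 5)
Your overall architecture coincides with the paper's: the expectation claim is exactly Corollary~\ref{expectation-fixed-point} (obtained from Theorem~\ref{regular-class-asymptotics}), and for the variance the paper likewise writes $\VV(\stat) = R(\stat^2) - (R\,\stat)^2$, expands $\stat^2$ into pairwise products of constrained translates, and separates the disjoint-support configurations from the colliding ones (this is the content of Lemma~\ref{translate-product}). Your first paragraph's bookkeeping of which monomials $n^a m_1^{b_1} m_2^{b_2}\cdots$ can survive at orders $n^{dp}$ and $n^{dp-1}$ under the weighted degree bound is correct and is the same reduction the paper performs.

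The gap is in your treatment of the collision part. The claim that it ``is a regular statistic of power at most $p_\tau + p_{\tau'}-1$'' is false: when a \emph{value} of one translate coincides with a \emph{position} of the other, the two edge sets concatenate into longer paths, so the edge count (hence the size) is preserved and the power does not drop. Concretely, $\exc^2 = T_{(1,2)}^2$ contains the collision term $2T_{(12,23)}$, a single $3$-path of size $2$, shift $0$, and power $2 = p + p$. Moreover this term genuinely contributes at weighted degree $2p$: one computes $R(2T_{(12,23)}) = \tfrac13\left(n - m_1 - 2m_2\right)$, whose top weighted-degree part is $-\tfrac23 m_2$. So the collision part cannot be discarded by a power count; it is in fact one of the two sources of the $\beta h(\alpha)$ term. (The other source is the failure of your ``independence up to $O(1/n)$'' claim for the disjoint part: its discrepancy from $\EE(T_1)\EE(T_2)$ also carries an $m_2$ term of weighted degree $2p$, as the same $\exc$ computation shows, so it too must be tracked rather than absorbed into an error term.) What actually forces $F_2 = f^2$ is not a power bound but a component count: every collision graph has strictly fewer connected components than $G_1 \sqcup G_2$, and by Proposition~\ref{fixed-point-coefficient} the degree in $n$ and $m_1$ \emph{alone} of the resulting character polynomial is bounded by the number of components. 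This kills all pure $n^a m_1^b$ monomials of weighted degree $2p$ in the variance while leaving the $m_2$-linear ones intact. To close the proof you would need to replace the power argument by this component/pure-degree argument and upgrade the disjoint-part factorization to the coefficient-level statement of Lemma~\ref{translate-product}.
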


We remark that the scaling factors in~\eqref{eq:intro-asymptotics} can dominate the numerator, for instance when $\stat$ is the number of two cycles.
However, for classical and vincular pattern counting statistics, they are correct.
While Theorem~\ref{t:intro-regular-asymptotics} does not characterize the limiting behavior of regular statistics, it has already been used to prove to prove normality at this generality for classical pattern counts and some vincular pattern counts~\cite{FK}.
In addition, we can use \eqref{eq:intro-asymptotics} to show that uniformly random permutations on a sequence of cycle types converge to a limiting object called a permuton (see Theorem~\ref{alpha-permuton}).

\section*{Outline}
The rest of the document is organized as follows.
In {\bf Chapter~\ref{Background}} we give background information on symmetric functions, the representation theory of $\symm_n$, and basic probability.
{\bf Chapter~\ref{Local}} characterizes the spaces of low frequency functions  and low frequency class functions.
{\bf Chapter~\ref{Atomic}} studies the atomic symmetric functions $A_{n,I,J}$. We prove that for $(I,J) \in \symm_{n,k}$ the Schur expansion
of $A_{n,I,J}$ is supported on partitions $\lambda$ where $\lambda_1 \geq n-k$ and give a factorization
$A_{n,I,J} = \vec{p}_{\mu} \cdot p_{\nu}$ of the atomic functions into a ``path power sum" $\vec{p}_{\mu}$ and a classical power sum $p_{\nu}$.
{\bf Chapter~\ref{Path}} is the most technical of the paper; the main result is the Schur expansion of the path power sum $\vec{p}_{\mu}$.
This gives our combinatorial interpretation of the character evaluations.
The factorization $A_{n,I,J} = \vec{p}_{\mu} \cdot p_{\nu}$ leads to asymptotic results on the atomic functions as $n \rightarrow \infty$.
{\bf Chapter~\ref{Regular}} defines regular permutation statistics and uses the Path Murnaghan-Nakayama Rule to obtain asymptotics for their Schur 
expansions.
{\bf Chapter~\ref{Pattern}} applies the theory of regular statistics to pattern enumeration.
In {\bf Chapter~\ref{Convergence}} we apply our results to prove convergence properties of local statistics and regular statistics.
We close in {\bf Chapter~\ref{Conclusion}} with directions for future research.

\newpage

\chapter{Background}
\label{Background}

We give necessary preliminaries on symmetric functions, partial permutations, representation theory of the symmetric group, and basic probability.
The material on symmetric function and representation theory is standard, with textbook treatments~\cite{Macdonald,Sagan}.
The results on character polynomials can be found in~\cite{Macdonald}.
The notation and terminology for partial permutations has not been standardized -- our presentation is closely related to that in~\cite{DIL}, which summarizes classical results on them as orbitals in the symmetric association scheme of injections treated in~\cite{BBIT}.
Our treatment of probability is standard, summarizing standard facts and some classic results on convergence.
Many textbook treatments exist, e.g. \cite{Billingsley}.

\section{Symmetric functions}
A {\em (strong) composition} of $n$ is a sequence $\alpha = (\alpha_1, \dots, \alpha_k)$ of positive integers such that $\alpha_1 + \cdots + \alpha_k = n$.
We write $\alpha \models n$ to indicate the $\alpha$ is a composition of $n$ and $|\alpha| = n$ for the sum of the parts of $\alpha$.
We let $m_i(\alpha)$ be the multiplicity of $i \geq 1$ as  a part of $\alpha$ and introduce the notation 
$m(\alpha)! := m_1(\alpha)! \cdots m_n(\alpha)!$.

A {\em partition} of $n$ is a composition $\lambda = (\lambda_1 \geq \cdots \geq \lambda_k)$ of $n$ 
that is weakly decreasing. We write $\lambda \vdash n$ to indicate that $\lambda$ is a partition of $n$ and $\ell(\lambda)$ for the number
of positive parts of $\lambda$.
We identify $\lambda$ with its {\em  (English) Young diagram} which consists of $\lambda_i$ left-justified boxes in row $i$.
We write $|\lambda| = n$ to indicate the number of boxes in the Young diagram of $\lambda$.
We let $\lambda' \vdash n$ be the conjugate partition to $\lambda$ whose Young diagram is obtained by reflecting across the main diagonal.
The Young diagrams of $\lambda = (4,3,1) \vdash 8$ and $\lambda' = (3,2,2,1)$ are shown below.

\begin{center}
\begin{tikzpicture}[scale = .3]
  \begin{scope}
    \clip (0,0) -| (1,1) -| (3,2) -| (4,3) -| (0,0);
    \draw [color=black!25] (0,0) grid (4,3);
  \end{scope}

  \draw [thick] (0,0) -| (1,1) -| (3,2) -| (4,3) -| (0,0);

   \begin{scope}
    \clip (10,-1) -| (11,0) -| (12,2) -| (13,3) -| (10,-1);
    \draw [color=black!25] (10,-1) grid (13,3);
  \end{scope}

  \draw [thick] (10,-1) -| (11,0) -| (12,2) -| (13,3) -| (10,-1);
\end{tikzpicture}
\end{center}

We make occasional use of the {\em dominance order}
on partitions of $n$.
This is the partial order $\leq_{\mathrm{dom}}$ defined by
$\mu \leq_{\mathrm{dom}} \lambda$ if and only if
$\mu_1 + \cdots + \mu_i \leq 
\lambda_1 + \cdots + \lambda_i$ for all $i \geq 0$.
(Here we pad the sequences $\mu$ and $\lambda$ with 
an infinite string of 0s so that these sums 
make sense.)

We will make more frequent use of {\em Young's 
Lattice}. This is the partial order $\subseteq$
defined on partitions by
$\mu \subseteq \lambda$ if $\mu_i \leq \lambda_i$ for all $i$.
Young's Lattice is a graded poset where 
the rank of a partition $\lambda$ is given by
$|\lambda|$.

Let $\lambda, \mu$ be partitions with 
$\mu \subseteq \lambda$.
The {\em skew shape} $\lambda/\mu$ is the set-theoretic difference $\lambda/\mu := \lambda - \mu$ of Young diagrams.
We write $|\lambda/\mu| := |\lambda|  - |\mu|$ for the number of boxes in $\lambda/\mu$.
The Young diagram of the skew shape $(4,4,1)/(2,1)$ is shown below.

\begin{center}
\begin{tikzpicture}[scale = .3]
  \begin{scope}
    \clip (0,0) -| (1,1) -| (4,3) -| (2,2) -| (1,1) -| (0,0);
    \draw [color=black!25] (0,0) grid (4,3);
  \end{scope}
 
 \draw [thick] (0,0) -| (1,1) -| (4,3) -| (2,2) -| (1,1) -| (0,0);
\end{tikzpicture}
\end{center}

Let $\lambda \vdash n$ be a partition of $n$ with part multiplicities $m_1(\lambda), m_2(\lambda), \dots, m_n(\lambda)$.  We let $z_{\lambda}$ be 
\begin{equation}
z_{\lambda} := 1^{m_1(\lambda)} 2^{m_2(\lambda)} \cdots n^{m_n(\lambda)} \cdot m_1(\lambda)! m_2(\lambda)! \cdots m_n(\lambda)!.
\end{equation}
In algebraic terms, the number $z_{\lambda}$ enumerates the centralizer of any permutation $w \in \symm_n$ of cycle type $\lambda$.

The ring $\Lambda$ of symmetric functions in an infinite variable set $\{ x_1, x_2, \dots \}$ over the ground field $\CC$
will play a central role in our work.
We work over the field of complex numbers for representation-theoretic convenience, 
but all results relating to symmetric functions and representation theory of the symmetric
group are valid over any field $\mathbb{F}$ of characteristic zero.
We will sometimes work over the real numbers $\RR$ when dealing with probabilistic ideas.

Let 
$\CC[[x_1, x_2, \dots ]]_n$ be the $\CC$-vector space of power series in the infinite variable set $\{x_1, x_2, \dots \}$ of homogeneous degree $n$.
For $n > 0$, the {\em power sum}, {\em homogeneous}, and {\em elementary} symmetric functions are the elements of 
$\CC[[x_1, x_2, \dots ]]_n$ 
 defined by
\begin{equation}
p_n := \sum_{i \geq 1} x_i^n \quad \quad 
h_n := \sum_{i_1 \leq \cdots \leq i_n} x_{i_1} \cdots x_{i_n} \quad \quad
e_n := \sum_{i_1 < \cdots < i_n} x_{i_1} \cdots x_{i_n},
\end{equation}
respectively. 

The direct sum $\bigoplus_{n \geq 0} \CC[[x_1, x_2, \dots ]]_n$ has the structure of a graded ring under power series multiplication.
We define $\Lambda$ to be the unital $\CC$-subalgebra of $\bigoplus_{n \geq 0} \CC[[x_1, x_2, \dots ]]_n$
generated by $\{ p_n \,:\, n \geq 1\}$.  The
three sets $\{ p_n \,:\, n \geq 1\}$,
$\{ h_n \,:\, n \geq 1 \}$, and $\{ e_n \,:\, n \geq 1 \}$
are algebraically independent generating sets of $\Lambda$.
 In symbols, we have
\begin{equation}
\Lambda = \CC[p_1, p_2, \dots ] = \CC[h_1, h_2, \dots ] = \CC[e_1, e_2, \dots ].
\end{equation}
We write $\Lambda_n := \Lambda \cap \CC[[x_1, x_2, \dots ]]_n$ for the vector space of symmetric functions of homogeneous degree $n$.

The ring $\Lambda = \bigoplus_{n \geq 0} \Lambda_n$ is graded by power series degree.
 Bases of $\Lambda_n$ are indexed by partitions $\lambda \vdash n$. The {\em power sum}, {\em homogeneous}, and {\em elementary} bases 
 are given by the multiplicative rules
 \begin{equation}
 p_{\lambda} := p_{\lambda_1} p_{\lambda_2} \cdots \quad \quad
 h_{\lambda} := h_{\lambda_1} h_{\lambda_2} \cdots \quad \quad
 e_{\lambda} := e_{\lambda_1} e_{\lambda_2} \cdots
 \end{equation}
 for all partitions $\lambda = (\lambda_1, \lambda_2, \dots) \vdash n$.  
 For a partition $\lambda = (\lambda_1, \dots, \lambda_k)$, the {\em monomial
 symmetric function} is 
 \begin{equation}
 m_{\lambda} := \sum_{(a_1, \dots, a_k)} \sum_{i_1 < \cdots < i_k} x_{i_1}^{a_1} \cdots x_{i_k}^{a_k}
 \end{equation}
 where the outer sum is over all distinct rearrangements $(a_1, \dots, a_k)$ of the sequence $(\lambda_1, \dots, \lambda_k)$.

 The {\em Schur basis} $s_{\lambda}$ of $\Lambda$ was originally
 defined as follows. Let us temporarily restrict to a finite number $N$ of variables $x_1, \dots, x_N$.
 Recall that $\CC[\symm_N]$ denotes the group algebra of the symmetric group $\symm_N$. The algebra $\CC[\symm_N]$ acts naturally on
$\CC[x_1, \dots, x_N]$ by subscript permutation.
 Let $\varepsilon \in \CC[\symm_N]$ be the antisymmetrizing element 
 \begin{equation}
 \varepsilon := \sum_{w \in \symm_N} \sign(w) \cdot w.
 \end{equation}
 The {\em Vandermonde determinant} is the polynomial in $\CC[x_1, \dots, x_N]$ given by
 \begin{equation}
 \varepsilon \cdot x^{\delta} = \varepsilon \cdot (x_1^{\delta_1} \cdots x_N^{\delta_N})
 \end{equation}
where the exponent sequence 
 $\delta := (N-1, N-2, \dots, 1, 0)$ is the staircase of length $N$. 
 Let $\lambda = (\lambda_1, \dots, \lambda_N)$ be a partition with $\leq N$ parts (where we pad with zeros if needed to achieve a sequence of length $N$).
The {\em Schur polynomial} $s_{\lambda}(x_1, \dots, x_N)$ is the quotient
 \begin{equation}
 \label{bialternant-schur-formula}
 s_{\lambda}(x_1, \dots, x_N) := \frac{\varepsilon \cdot x^{\lambda + \delta}}{\varepsilon \cdot x^{\delta}} =
 \frac{ \varepsilon \cdot (x_1^{\lambda_1 + \delta_1} \cdots x_N^{\lambda_N + \delta_N})}{ \varepsilon \cdot (x_1^{\delta_1} \cdots x_N^{\delta_N})}
 \end{equation}
 where the addition $\lambda + \delta$ of exponents is interpreted componentwise.
 It is not hard to see that the rational expression $s_{\lambda}(x_1, \dots ,x_N)$ is a polynomial that is symmetric in $x_1, \dots, x_N$.

A polynomial $f \in \CC[x_1, \dots, x_N]$ is called {\em alternating} if 
\begin{equation}
w \cdot f = \sign(w) \cdot w
\end{equation}
 for all $w \in \symm_N$.
As the numerator and denominator in \eqref{bialternant-schur-formula} are alternating, \eqref{bialternant-schur-formula} 
is called the {\em bialternant formula} for the Schur polynomial $s_\lambda(x_1, \dots, x_N)$.
Equation~\eqref{bialternant-schur-formula} is equivalent to the {\em Weyl Character Formula} which computes the trace of a diagonal matrix acting
on an irreducible polynomial $GL_N(\CC)$-module.
 It is not hard to check that 
 \begin{equation}
 s_{\lambda}(x_1, \dots, x_N,0) = s_{\lambda}(x_1, \dots, x_N)
 \end{equation} 
 for any partition $\lambda$ with $\leq N$ parts, so that 
 the limit
 \begin{equation}
 s_{\lambda} := \lim_{N \rightarrow \infty} s_{\lambda}(x_1, \dots, x_N)
 \end{equation}
 is a well-defined formal power series belonging to $\Lambda$.
 Strictly speaking, to make  sense of this limit we use an
  equivalent definition of $\Lambda = \bigoplus_{n \geq 0} \Lambda_n$ obtained by identifying the graded piece $\Lambda_n$ with the inverse limit
 \begin{equation}
 \Lambda_n := \lim_{\substack{\longleftarrow \\ N}} \left( \CC[x_1, \dots, x_N, x_{N+1}]^{\symm_{N+1}}_n \xrightarrow{ \, \, \pi_N \, \, } \CC[x_1, \dots, x_N]^{\symm_N}_n  \right).
 \end{equation}
 Here $\CC[x_1, \dots, x_N]^{\symm_N}_n$ denotes the vector
 space of homogeneous $\symm_N$-invariant polynomials in $x_1, \dots, x_N$ of degree $n$ and the map $\pi_N$ 
is given by 
\begin{equation}
\pi_N: f(x_1, \dots, x_N, x_{N+1}) \mapsto f(x_1, \dots, x_N, 0).
\end{equation}
 The limit $s_\lambda \in \Lambda$ is called a {\em Schur function}, and the collection
 $\{ s_{\lambda} \}$ forms the {\em Schur basis}   of $\Lambda$.

The Schur function $s_\lambda$ admits a combinatorial definition in terms of tableaux. 
If $\lambda$ is a partition, a {\em semistandard tableau} of shape $\lambda$ is a filling $T: \lambda \rightarrow \ZZ_{> 0}$
of the Young diagram of $\lambda$ with positive integers
that is weakly increasing across rows and strictly increasing down columns.
We have
\begin{equation}
\label{combinatorial-schur-function}
s_{\lambda} := \sum_{T} x^T
\end{equation}
where the sum is over all semistandard tableaux of shape $\lambda$ and $x^T = x^{c_1} x^{c_2} \cdots $ where $c_i$ is the number 
of $i$'s in $T$. For example, the semistandard tableau
\begin{center}
\begin{tikzpicture}[scale = .3]
  \begin{scope}
    \clip (0,0) -| (1,1) -| (3,2) -| (4,3) -| (0,0);
    \draw [color=black!25] (0,0) grid (4,3);
  \end{scope}

  \draw [thick] (0,0) -| (1,1) -| (3,2) -| (4,3) -| (0,0);
  
  \node at (0.5,2.5) {$1$};
  \node at (1.5,2.5) {$1$};
  \node at (2.5,2.5) {$3$};
  \node at (3.5,2.5) {$3$};
  \node at (0.5,1.5) {$2$};
  \node at (1.5,1.5) {$3$};
  \node at (2.5,1.5) {$4$};
  \node at (0.5,0.5) {$4$};
\end{tikzpicture}
\end{center}
contributes $x_1^2 x_2 x_3^3 x_4^2$ to $s_{431}$.
We will not make use of the combinatorial definition \eqref{combinatorial-schur-function} of $s_\lambda$
 in this manuscript; the bialternant definition 
\eqref{bialternant-schur-formula} will be more useful for our purposes.

A semistandard tableau $T$ with $n$ boxes is called {\em standard} if it is a bijection onto $\{1, \dots, n \}$.
A standard tableau of shape $(4,3,1)$ is shown below.
\begin{center}
\begin{tikzpicture}[scale = .3]
  \begin{scope}
    \clip (0,0) -| (1,1) -| (3,2) -| (4,3) -| (0,0);
    \draw [color=black!25] (0,0) grid (4,3);
  \end{scope}

  \draw [thick] (0,0) -| (1,1) -| (3,2) -| (4,3) -| (0,0);
  
  \node at (0.5,2.5) {$1$};
  \node at (1.5,2.5) {$2$};
  \node at (2.5,2.5) {$4$};
  \node at (3.5,2.5) {$6$};
  
  \node at (0.5,1.5) {$3$};
  \node at (1.5,1.5) {$7$};
  \node at (2.5,1.5) {$8$};
  
  \node at (0.5,0.5) {$5$};
\end{tikzpicture}
\end{center}
Tableaux of various kinds play a central role in combinatorial representation theory in general, and symmetric function theory in particular.

 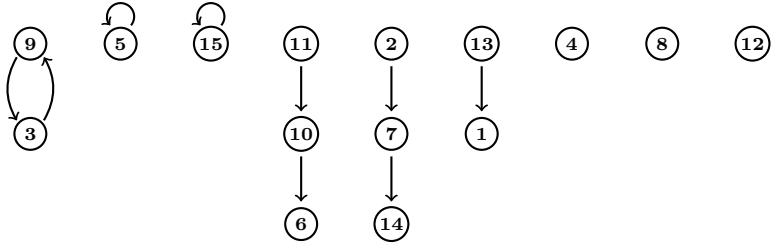
\begin{figure}
 \begin{center}
 \begin{tikzpicture}[scale = 0.6]
 
 \coordinate (v11) at (6,0);
 
 \coordinate (v10) at (6,-2);
 
 \coordinate (v6) at (6,-4);

 \coordinate (v2) at (8,0);
 
 \coordinate (v7) at (8,-2);
 
\coordinate (v14) at (8,-4);
  
 \coordinate (v13) at (10,0);
   
 \coordinate (v1) at (10,-2);
    
 \coordinate (v8) at (14,0);
 
 \coordinate (v4) at (12,0);
 
 \coordinate (v12) at (16,0);
 
 \coordinate (v9) at (0,0);
 
 \coordinate (v3) at (0,-2);
 
 \coordinate (v5) at (2,0);
 
 \coordinate (v15) at (4,0);

   \node [draw, circle, fill = white, inner sep = 2pt, thick] at (v1)
  {\scriptsize {\bf 1} };
     \node [draw, circle, fill = white, inner sep = 2pt, thick] at (v2)
  {\scriptsize {\bf 2}};
     \node [draw, circle, fill = white, inner sep = 2pt, thick] at (v3)
  {\scriptsize {\bf 3}};
     \node [draw, circle, fill = white, inner sep = 2pt, thick] at (v4)
  {\scriptsize {\bf 4}};
     \node [draw, circle, fill = white, inner sep = 2pt, thick] at (v5)
  {\scriptsize {\bf 5}};
     \node [draw, circle, fill = white, inner sep = 2pt, thick] at (v6)
  {\scriptsize {\bf 6}};
     \node [draw, circle, fill = white, inner sep = 2pt, thick] at (v7)
  {\scriptsize {\bf 7}};
     \node [draw, circle, fill = white, inner sep = 2pt, thick] at (v8)
  {\scriptsize {\bf 8}};
     \node [draw, circle, fill = white, inner sep = 2pt, thick] at (v9)
  {\scriptsize {\bf 9}};
     \node [draw, circle, fill = white, inner sep = 1pt, thick] at (v10)
  {\scriptsize {\bf 10}};
     \node [draw, circle, fill = white, inner sep = 1pt, thick] at (v11)
  {\scriptsize {\bf 11}};
     \node [draw, circle, fill = white, inner sep = 1pt, thick] at (v12)
  {\scriptsize {\bf 12}};
     \node [draw, circle, fill = white, inner sep = 1pt, thick] at (v13)
  {\scriptsize {\bf 13}};
     \node [draw, circle, fill = white, inner sep = 1pt, thick] at (v14)
  {\scriptsize {\bf 14}};
     \node [draw, circle, fill = white, inner sep = 1pt, thick] at (v15)
  {\scriptsize {\bf 15}};
  
  \draw [->, thick] (6,-0.5) -- (6,-1.5);
  \draw [->, thick] (6,-2.5) -- (6,-3.5);
 \draw [->, thick] (8,-0.5) -- (8,-1.5);
 \draw [->, thick] (8,-2.5) -- (8,-3.5);
 \draw [->, thick] (10,-0.5) -- (10,-1.5);
 \draw [->, thick]  (-0.3,-0.3) to[bend right]  (-0.3,-1.7);
 \draw [->, thick]  (0.3,-1.7) to[bend right]  (0.3,-0.3);
 
 \draw[->, thick]  ($(2,0.6) + (-40:3mm)$) arc (-40:220:3mm);

 \draw[->, thick]  ($(4,0.6) + (-40:3mm)$) arc (-40:220:3mm);
 
 \end{tikzpicture} 
 \end{center}
 \caption{The graph $G_n(I,J)$  with $I = [11,10,2,7,13,9,3,5,15]$ and $J = [10,6,7,14,1,3,9,5,15]$. Here $(I,J) \in \symm_{15,9}$.
 The cycle partition of is $(2,1,1)$ and the path partition is $(3,3,2,1,1,1)$.  The reduced path partition is $(3,3,2)$.}
 \label{fig:graph}
 \end{figure}

\section{Permutations and Partial permutations}

We repeat several key definitions from the introduction.
For $S$ a finite set, let $\symm_S$ be the permutations of $S$.
In particular, if $[n] := \{1, \dots, n \}$, then $\symm_{[n]}$ is the usual symmetric group $\symm_n$.
A \emph{partial permutation} of size $k$ in $\symm_n$ is a bijection $S \to T$ where $S, T \subseteq [n]$ and $|S| = |T| = k$.
We represent a partial permutation as a pair $(I,J)$ of tuples $I = (i_1,\dots,i_k) \in \symm_S$ and $J = (j_1,\dots,j_k) \in \symm_T$ where $i_1 \mapsto j_1,\dots, i_k \mapsto j_k$.
We write $\symm_{n,k}$ for the set of all partial permutations of size $k$ in $\symm_n$.
There are eighteen partial permutations in $\symm_{3,2}$; the six with $I = (12)$ are:
\begin{align*}
(12,12), (12, 13), (12,21), (12,23), (12,31), (12, 32).
\end{align*}
The symmetric group $\symm_k$ acts on pairs of lists $(I,J)$ of length $k$ by permuting their entries simultaneously.
Partial permutations $(I,J) \in \symm_{n,k}$ are unchanged under this $\symm_k$ action, so we can always write $I$ in increasing order.

Let $(I,J) \in \symm_{n,k}$ with $I = (i_1, \dots, i_k)$ and $J = (j_1, \dots, j_k)$.
The {\em graph} $G_n(I,J)$ of $(I,J)$ is the directed graph on the vertex set $[n]$ with edges $i_1 \rightarrow j_1, \dots, i_k \rightarrow j_k$.
The graph $G_n(I,J)$ of a partial permutation $(I,J) \in \symm_{n,k}$ is an extension of the disjoint cycle notation for a permutation in $\symm_n$.
Every connected component of $G_n(I,J)$ is a directed path or a directed cycle.
In particular, the 1-vertex paths in $G_n(I,J)$ correspond precisely to the elements in $[n] - (I \cup J)$ which are not involved in the partial permutation $(I,J)$.
The {\em cycle partition} $\nu = (\nu_1, \nu_2, \dots )$ has parts given by the cycle lengths in $G_n(I,J)$.
Similarly, the {\em path partition} $\mu = (\mu_1, \mu_2, \dots )$ of $(I,J)$ is obtained by listing the path sizes in $G_n(I,J)$ in weakly decreasing order.
The \emph{cycle-path type} of $(I,J) \in \symm_{n,k}$ is the pair $(\nu,\mu)$ given the cycle and path decomposition of $G_n(I,J)$.
Oberve that $|\nu| + |\mu| = n$.
The cycle-path type of a partial permutation plays the role of the cycle type for a permutation.
See Figure~\ref{fig:graph} for an example of these concepts.

The symmetric group $\symm_n$ acts naturally on length $k$ lists of elements of $[n]$ by the rule 
\begin{equation}
w(I) = w((i_1, \dots, i_k)) := (w(i_1), \dots, w(i_k))
\end{equation}
where $I = (i_1, \dots, i_k)$.
This induces an action of $\symm_n$ on $\symm_{n,k}$ by the diagonal rule 
\begin{equation}
w(I,J) := (w(I), w(J)).
\end{equation}
The orbits of this action are parametrized by cycle-path type.

\begin{proposition}
\label{p:cycle-path}
	The partial permutations $(I,J), (I',J')$ of $[n]$
	have the same cycle-path type if and only if there is a genuine permutation $w \in \symm_n$ which satisfies $I' = w(I)$ and $J' = w(J)$.
\end{proposition}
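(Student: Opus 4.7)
The backward direction is essentially immediate. If $w \in \symm_n$ satisfies $I' = w(I)$ and $J' = w(J)$, then the map $v \mapsto w(v)$ is a bijection $[n] \to [n]$ that sends each edge $i_s \to j_s$ of $G_n(I,J)$ to the edge $w(i_s) \to w(j_s)$ of $G_n(I',J')$. This is an isomorphism of directed graphs, and since cycle-path type is an isomorphism invariant of the unlabeled directed graph, $(I,J)$ and $(I',J')$ must share their cycle-path type.

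For the forward direction, the approach is constructive. Suppose $(I,J)$ and $(I',J')$ have the same cycle-path type $(\nu,\mu)$. I will decompose both $G_n(I,J)$ and $G_n(I',J')$ into their weakly connected components. Since in each graph every vertex has in-degree at most $1$ and out-degree at most $1$, each component is either a directed cycle (whose length is some $\nu_i$) or a directed path (whose size is some $\mu_j$); isolated vertices are treated as paths of length $1$. Because the two cycle-path types coincide, we can choose a bijection $\phi$ between the component sets of $G_n(I,J)$ and $G_n(I',J')$ which pairs cycles with cycles of the same length and paths with paths of the same length.

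Next, I would define $w$ one component at a time. For a path component $v_1 \to v_2 \to \cdots \to v_m$ of $G_n(I,J)$ paired with a path $u_1 \to u_2 \to \cdots \to u_m$ of $G_n(I',J')$ (here $v_1$ and $u_1$ are the unique sources and $v_m, u_m$ the unique sinks, or $m=1$ with $v_1, u_1$ isolated), set $w(v_i) := u_i$ for each $i$. For a cycle component $v_1 \to v_2 \to \cdots \to v_\ell \to v_1$ paired with a cycle $u_1 \to u_2 \to \cdots \to u_\ell \to u_1$, fix an arbitrary choice of starting vertex in each and set $w(v_i) := u_i$. These component-wise bijections have pairwise disjoint domains (partitioning $[n]$) and pairwise disjoint codomains (also partitioning $[n]$), so their union is a well-defined element $w \in \symm_n$.

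It remains to verify that $I' = w(I)$ and $J' = w(J)$ up to the simultaneous $\symm_k$ action on pairs of length-$k$ lists (which, by the convention in the excerpt, does not change the partial permutation). By construction, $w$ carries each edge $i_s \to j_s$ of $G_n(I,J)$ to an edge of $G_n(I',J')$, so the multisets $\{(w(i_s), w(j_s))\}_{s=1}^k$ and $\{(i'_t, j'_t)\}_{t=1}^k$ of edges agree. Choosing the indexing of $(I',J')$ to match, we obtain $w(I) = I'$ and $w(J) = J'$ as required. The only ``obstacle'' in this proof is really the bookkeeping of components and the care needed to treat isolated vertices as trivial paths; the underlying combinatorial fact, that a disjoint union of directed cycles and directed paths is determined up to isomorphism by the two partitions recording its cycle and path lengths, is standard.
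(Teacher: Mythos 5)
Your proof is correct and follows essentially the same route as the paper's: both directions reduce the statement to the observation that equality of cycle-path type is equivalent to isomorphism of the directed graphs $G_n(I,J)$ and $G_n(I',J')$, and that such an isomorphism is exactly a permutation $w$ carrying edges to edges. The paper states this more tersely as a ``relabeling of $I \cup J$ to $I' \cup J'$'' extending to $w \in \symm_n$; your component-by-component construction just makes that relabeling explicit.
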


\begin{proof}
	Observe $(I,J)$ and $(I',J')$ have the same cycle-path type if and only if $G_n(I,J)$ and $G_n(I',J')$ are isomorphic as graphs. 
	The graphs $G_n(I,J)$ and $G_n(I',J')$ are isomorphic if and only
	if there is a relabeling of $I \cup J$ to $I'\cup J'$ sending edges in $G_n(I,J)$ to edges in $G_n(I',J')$ and visa versa.
	This relabeling extends to a permutation $w$ with $I' = w(I)$ and $J' = w(J)$, and such permutations restrict to labelings with this property.
\end{proof}

In the statement of Proposition~\ref{p:cycle-path}, note that the partial permutations $(I,J)$ and $(I',J')$ of $[n]$ do not a priori have the same size.
In particular, we see that
cycle-path type determines size.
In fact, for $(I,J)$ a partial permutation of size $k$ with cycle-path type $(\nu,\mu)$ we have
\begin{equation}
\label{eq:cycle-path-size}
k = |\mu| + |\nu| - \ell(\nu).	
\end{equation}

Given two sequences $I, J$ of distinct positive integers with the same length $k$, the pair $(I,J)$ may be regarded as a partial permutation in 
$\symm_{n,k}$ whenever $n \geq \max(I \cup J)$.
The transition $n \leadsto n+1$ adds a 1-path labelled $n+1$ to the graph of $(I,J)$.
For probabilistic purposes, we will often want to consider $(I,J)$ as a partial permutation of $[n]$ in the limit $n \rightarrow \infty$.
As such, we define the {\em reduced path partition} $\overline{\mu}$ of $(I,J)$ to be the partition obtained by listing the path lengths $> 1$ 
in $G_n(I,J)$ in weakly decreasing order.

%We write $\GGG_n$ for the set of all directed graphs $G$ on the vertex set $[n]$ such that every component of $G$ is a path or a cycle.
%The set $\GGG_n$ bijects to the set of all partial permutations $(I,J)$ of $[n]$ via $(I,J) \leftrightarrow G(I,J)$. The symmetric group $\symm_n$
%acts on $\GGG_n$ by letting $w \cdot G$ be the graph obtained from $G$ by replacing the vertex $i$ with $w(i)$ for $1 \leq i \leq n$.  Equivalently, we have
%$w \cdot (I,J) = (w(I), w(J))$. 
%Two partial permutations belong to the same $\symm_n$-orbit if and only if they have the same path and cycle partitions, so that the orbit set 
%$\GGG_n/\symm_n$ is parametrized by ordered pairs $(\mu,\nu)$ of partitions with $|\mu| + |\nu| = n$.
%This extends the fact that conjugacy classes in $\symm_n$ correspond to cycle types of permutations.

\section{$\symm_n$-representation theory} 
Let $G$ be a finite group. A complex-valued function $\varphi: G \rightarrow \CC$ is a {\em class function} if 
$\varphi$ is constant on conjugacy classes, i.e. 
\begin{equation}
\varphi(h g h^{-1}) = \varphi(g) \text{ for all } g, h \in G.
\end{equation}
The set $\Class(G,\CC)$ of all class functions $G \rightarrow \CC$ forms a vector space under the operations of pointwise
addition and scalar multiplication.
If $V$ is a finite-dimensional $\CC[G]$-module, 
the {\em character} $\chi_V: G \rightarrow \CC$
of $V$
is defined by
\begin{equation}
\chi_V(g) := \mathrm{trace}_V(v \mapsto g \cdot v).
\end{equation}
Since trace is invariant under matrix conjugation,
the map $\chi_V$ is a class function.

If $V_1$ and $V_2$ are $\CC[G]$-modules,
the direct sum $V_1 \oplus V_2$ is a 
$\CC[G]$-module via 
$g \cdot (v_1, v_2) := (g \cdot v_1, g \cdot v_2)$.
We have 
$\chi_{V_1 \oplus V_2} = \chi_{V_1} + \chi_{V_2}$
as functions on $G$, where the function
$(\chi_{V_1} + \chi_{V_2}): G \rightarrow \CC$ 
is interpreted pointwise.
A nonzero $\CC[G]$-module $V$ is {\em irreducible}
if its only submodules are $0$ and $V$.
Since the group algebra $\CC[G]$ is semisimple,
a nonzero $\CC[G]$-module $V$ is irreducible
if $V = V_1 \oplus V_2$ implies $V = V_1$ or $V = V_2$.
For any $\CC[G]$-modules $V_1$ and $V_2$, we have
$V_1 \cong_{\CC[G]} V_2$ if and only if 
$\chi_{V_1} = \chi_{V_2}$.
A character $\chi_V$ of $G$ is called irreducible 
if its corresponding module $V$ is irreducible.

The vector space $\Class(G,\CC)$ is endowed with the {\em class function inner product} defined by
\begin{equation}
\label{G-class-function-inner}
\langle \varphi, \psi \rangle := \frac{1}{|G|} \sum_{g \in G} \varphi(g) \cdot \overline{\psi(g)}.
\end{equation}
The irreducible characters of $G$ form an orthonormal basis of $\Class(G,\CC)$ with respect to this inner product.
We will focus on class functions on the symmetric group.
In this setting, Equation~\eqref{G-class-function-inner} reads
\begin{equation}
\langle \varphi, \psi \rangle = \frac{1}{n!} \sum_{w \in \symm_n} \varphi(w) \cdot \overline{\psi(w)}
\end{equation}
for class functions $\varphi, \psi: \symm_n \rightarrow \CC$.

The {\em characteristic map} $\ch_n: \Class(\symm_n,\CC) \rightarrow \Lambda_n$ carries class functions to symmetric functions. It is defined by
\begin{equation}
\ch_n(\varphi) := \frac{1}{n!} \sum_{w \in \symm_n} \varphi(w) \cdot p_{\lambda(w)}
\end{equation}
where $\lambda(w) \vdash n$ is the cycle type of the permutation $w \in \symm_n$. The map $\ch_n$ is an isomorphism of $\CC$-vector spaces.
Furthermore, if we endow $\Lambda_n$ with the {\em Hall inner product} $\langle -, - \rangle$ defined by either of the equivalent conditions
\begin{equation}
\langle p_{\lambda}, p_{\mu} \rangle = z_{\lambda} \cdot \delta_{\lambda,\mu} \quad \quad \text{or} \quad \quad
\langle s_{\lambda}, s_{\mu} \rangle = \delta_{\lambda,\mu}
\end{equation}
where $\delta_{\lambda,\mu}$ is the Kronecker delta, the map $\ch_n: \Class(\symm_n,\CC) \rightarrow \Lambda_n$ is an isometry.

Irreducible representations of $\symm_n$ over $\CC$ are in one-to-one correspondence 
with partitions $\lambda \vdash n$. 
Given a partition $\lambda \vdash n$, the corresponding irreducible representation $V^\lambda$ may be constructed as follows.
For a set $X \subseteq \symm_n$ of permutations, let $[X]_+, [X]_- \in \CC[\symm_n]$ be the group algebra elements
\begin{equation}
[X]_+ := \sum_{w \in X} w \quad \quad [X]_- := \sum_{w \in X} \sign(w) \cdot w
\end{equation}
given by the sum and signed sum over $X$. Fix a bijective filling $T: \lambda \rightarrow [n]$ of the Young diagram of $\lambda$ with $\{1, \dots, n \}$
and let $R(T), C(T) \subseteq \symm_n$ be the subgroups of $\symm_n$ which stabilize the rows and columns of the filling $T$.
The {\em Young idempotent} is the element $\varepsilon_\lambda \in \CC[\symm_n]$ given by
\begin{equation}
\varepsilon_\lambda := [R(T)]_+ \cdot [C(T)]_- \in \CC[\symm_n].
\end{equation}
The group algebra
 element $\varepsilon_\lambda$ depends on the filling $T$, so this is (standard) notational abuse. However, changing $T$ merely conjugates
  $\varepsilon_\lambda$ by an element of $\symm_n$.
 The representation $V^\lambda$ is the left ideal in $\CC[\symm_n]$ generated by $\varepsilon_\lambda$.
 In symbols, we have
 \begin{equation}
 V^\lambda := \CC[\symm_n] \cdot \varepsilon_\lambda
 \end{equation}
 with the natural left action of $\symm_n$. 
 It can be shown that $\{ V^\lambda \,:\, \lambda \vdash n \}$ gives a complete set of irreducible $\symm_n$-modules. For example, the trivial representation
 is $V^{(n)}$ while the sign representation is $V^{(1^n)}$.  We will make no further use of this explicit construction of $V^\lambda$.

 If $\lambda \vdash n$ is a partition, we write
 $\chi^{\lambda}: \symm_n \rightarrow \CC$ for the character of the irreducible module
 $V^{\lambda}$ and $f^{\lambda} := \chi^{\lambda}(e)$ for the dimension of 
$V^{\lambda}$.
If $\mu \vdash n$ is another partition, we write $\chi^{\lambda}_{\mu}$ for the value $\chi^{\lambda}(w)$ of $\chi^{\lambda}$ on any permutation 
$w \in \symm_n$ of cycle type $\mu$.  We have 
\begin{equation}
\ch_n(\chi^{\lambda}) = s_{\lambda},
\end{equation}
so that the isomorphism $\ch_n$ carries the irreducible character basis of $\Class(\symm_n,\CC)$ to the Schur basis of $\Lambda_n$.

The matrix irreducible character values $(\chi^{\lambda}_{\mu})_{\lambda, \mu \vdash n}$ is the {\em character table} of $\symm_n$.
This array gives the transition matrices between the Schur and power sum bases of $\Lambda_n$. More precisely, we have
\begin{equation}
p_{\mu} = \sum_{\lambda \vdash n} \chi^{\lambda}_{\mu} \cdot s_{\lambda} \quad \quad \text{and} \quad \quad
s_{\lambda} = \sum_{\mu \vdash n} \frac{\chi^{\lambda}_{\mu}}{z_{\mu}} \cdot p_{\mu}.
\end{equation}
The {\em Murnaghan-Nakayama Rule} describes the numbers $\chi^{\lambda}_{\mu}$ using the combinatorics of
ribbon tableaux as follows.

 A {\em ribbon} $\xi$ is an edgewise connected skew partition whose Young diagram contains no $2 \times 2$ square. The {\em height} $\height(\xi)$
 of a ribbon $\xi$ is the number of rows occupied by $\xi$, less one. The {\em sign} of $\xi$ is $\sign(\xi) := (-1)^{\height(\xi)}$.
For example, the figure
 \begin{center}
 \begin{tikzpicture}[scale = .5]
  \begin{scope}
    \clip (0,0) -| (1,1) -| (2,2) -| (3,3) -| (5,5) -| (0,0);
    \draw [color=black!25] (0,0) grid (5,5);
  \end{scope}

  \draw [thick] (0,0) -| (1,1) -| (2,2) -| (3,3) -| (5,5) -| (0,0);

  \draw [thick, rounded corners] (1.5,1.5) |- (2.5,2.5) |- (4.5,3.5) ;
  \draw [color=black,fill=black,thick] (4.5,3.5) circle (.4ex);
  \node [draw, circle, fill = white, inner sep = 2pt] at (1.5,1.5) { };
  \end{tikzpicture}
 \end{center}
 shows a ribbon $\xi$ of size 6, satisfying $\height(\xi) = 2$ and $\sign(\xi) = (-1)^2 = +1$ embedded inside the 
 Young diagram of $(5,5,3,2,1)$.
 Ribbons whose removal from a Young diagram yields a Young diagram (such as above) are called {\em rim hooks}.
 The southwesternmost cell in a ribbon $\xi$ is called its {\em tail}; the tail is decorated with a white circle $\circ$ in the above picture.

 If $\lambda$ is a partition, a {\em standard ribbon tableau} (or {\em standard rim hook tableau}) of shape $\lambda$ is a sequence 
 \begin{equation}
T= ( \varnothing = \lambda^{(0)} \subseteq \lambda^{(1)} \subseteq \cdots \subseteq \lambda^{(r)} = \lambda)
 \end{equation}
 of nested partitions starting at the empty partition $\varnothing$
 and ending at $\lambda$ such that each difference $\xi^{(i)} := \lambda^{(i)}/\lambda^{(i-1)}$ is a ribbon.
 The {\em sign}  of  $T$ is the product 
 \begin{equation}
 \sign(T) := \sign(\xi^{(1)}) \cdots \sign(\xi^{(r)})
 \end{equation}
 of the signs of the ribbons in $T$.
 The {\em type} of $T$ is the composition $\mu = (\mu_1, \dots, \mu_r)$ where $\mu_i = |\xi^{(i)}|$ is the size of the ribbon $\xi^{(i)}$.

\begin{theorem}
\label{mn-rule}
{\em (Classical Murnaghan-Nakayama Rule)}
Let $\mu = (\mu_1, \dots, \mu_r)$ be a composition of $n$ and let $\lambda$ be a partition of $n$.
We have 
\begin{equation}
\label{classical-mn-sum}
p_{\mu} = \sum_T \sign(T) \cdot s_{\shape(T)}
\end{equation}
where the sum is over all standard ribbon tableaux $T$ of type $\mu$.
\end{theorem}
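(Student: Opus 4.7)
The plan is to prove Theorem~\ref{mn-rule} by induction on the length $r$ of the composition $\mu$, where the inductive engine is the ``one ribbon at a time'' identity
\begin{equation*}
p_k \cdot s_\lambda \;=\; \sum_{\xi} \sign(\xi) \cdot s_{\lambda \cup \xi},
\end{equation*}
the sum ranging over ribbons $\xi$ of size $k$ whose addition to $\lambda$ yields a partition $\lambda \cup \xi$. Iterating this identity turns the product $p_\mu = p_{\mu_1} p_{\mu_2} \cdots p_{\mu_r}$, starting from $s_\varnothing = 1$, into a sum indexed by sequences $\varnothing = \lambda^{(0)} \subseteq \lambda^{(1)} \subseteq \cdots \subseteq \lambda^{(r)}$ with $\lambda^{(i)}/\lambda^{(i-1)}$ a ribbon of size $\mu_i$, weighted by the product of ribbon signs. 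This is exactly the right-hand side of~\eqref{classical-mn-sum}.

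To prove the one-ribbon identity, I would use the bialternant formula~\eqref{bialternant-schur-formula}. Fix $N$ larger than $\ell(\lambda)$ and work in $\CC[x_1,\dots,x_N]$. Writing $a_\alpha := \varepsilon \cdot x^\alpha$ for the antisymmetrization of a monomial, we have $s_\lambda(x_1,\dots,x_N) = a_{\lambda+\delta}/a_\delta$, so it suffices to compute $p_k \cdot a_{\lambda+\delta}$. Expanding $p_k = \sum_i x_i^k$ and distributing $x_i^k$ across each monomial $x_{\sigma(1)}^{(\lambda+\delta)_1} \cdots x_{\sigma(N)}^{(\lambda+\delta)_N}$ in $a_{\lambda+\delta}$, one bumps the exponent of whichever variable equals $x_i$, namely the variable sitting in position $\sigma^{-1}(i)$. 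Reindexing the sum by the position $j$ being incremented (rather than the variable $i$) and using antisymmetry yields
\begin{equation*}
p_k \cdot a_{\lambda+\delta} \;=\; \sum_{j=1}^{N} a_{\beta^{(j)}},
\end{equation*}
where $\beta^{(j)}$ is obtained from $\lambda+\delta$ by adding $k$ to the $j$th entry.

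The key combinatorial step is to interpret this sum. If $\beta^{(j)}$ has a repeated entry, $a_{\beta^{(j)}} = 0$. Otherwise, $\beta^{(j)}$ is a strictly decreasing rearrangement of $\mu + \delta$ for a unique partition $\mu \supseteq \lambda$; rearranging into the standard decreasing order introduces a sign $(-1)^h$, where $h$ is the number of entries of $\lambda+\delta$ which the newly enlarged entry must pass over. I would then verify that the map $j \mapsto \mu$ (on non-vanishing indices) is a bijection with ribbons $\xi$ of size $k$ that can be appended to $\lambda$, and that the sorting sign $(-1)^h$ coincides with $\sign(\xi) = (-1)^{\height(\xi)}$. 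Dividing by $a_\delta$ then yields the one-ribbon identity.

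The main obstacle is the final combinatorial identification of the sorting sign with the ribbon's height sign: one must verify that the rows of $\lambda$ whose $(\lambda+\delta)$-entries are passed over during sorting are in bijection with the rows of $\lambda$ strictly between the top and bottom of the new ribbon $\xi$, so that each contributes both a transposition (hence a $-1$) and a row to the height. Once this verification is made, completing the induction is routine: writing $p_\mu = p_{(\mu_1,\dots,\mu_{r-1})} \cdot p_{\mu_r}$, applying the inductive hypothesis to the first factor, and then the one-ribbon identity to each resulting Schur function, the terms assemble into precisely the sum over standard ribbon tableaux of type $\mu$.
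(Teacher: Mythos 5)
Your proposal is correct and follows essentially the same route as the paper: the paper also proves the rule by iterating the one-ribbon identity, established via the bialternant formula by distributing $p_k$ across the alternant $a_{\lambda+\delta}$, discarding the terms with repeated exponents, and identifying the surviving terms (with their sorting signs) with signed ribbon additions — the paper packages that last identification as its Observation on ribbon additions, exactly the step you flag as the main thing to verify. No gaps; the two arguments match.
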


For example, let $\mu = (3,2,2,1)$. The coefficient $\chi^{431}_{3221}$ of 
$s_{431}$ in $p_{3221}$ is the signed sum of standard ribbon tableaux of 
shape $\lambda = (4,3,1)$ and type $\mu = (3,2,2,1)$.  The five ribbon tableaux in question are shown in Figure~\ref{fig:classical-mn}; we conclude that
$\chi^{431}_{3221} = 1 - 1 + 1 - 1 - 1 = -1.$

\begin{figure}
\begin{center}
\begin{tikzpicture}[scale = .4]
  \begin{scope}
    \clip (0,0) -| (1,1) -| (3,2) -| (4,3) -| (0,0);
    \draw [color=black!25] (0,0) grid (4,3);
  \end{scope}

  \draw [thick] (0,0) -| (1,1) -| (3,2) -| (4,3) -| (0,0);

  \draw [thick, rounded corners] (0.5,0.5) -- (0.5,2.5);
  \draw [color=black,fill=black,thick] (0.5,2.5) circle (.4ex);
  \node [draw, circle, fill = white, inner sep = 1pt] at (0.5,0.5)
  {\scriptsize 1};

  \draw [thick, rounded corners] (1.5,1.5) -- (1.5,2.5);
  \draw [color=black,fill=black,thick] (1.5,2.5) circle (.4ex);
  \node [draw, circle, fill = white, inner sep = 1pt] at (1.5,1.5)
  {\scriptsize 2};

  \draw [thick, rounded corners] (2.5,1.5) -- (2.5,2.5);
  \draw [color=black,fill=black,thick] (2.5,2.5) circle (.4ex);
  \node [draw, circle, fill = white, inner sep = 1pt] at (2.5,1.5)
  {\scriptsize 3};

  \node [draw, circle, fill = white, inner sep = 1pt] at (3.5,2.5)
  {\scriptsize 4};

 \node at (2.5,0) {$+1$};

   \begin{scope}
    \clip (6,0) -| (7,1) -| (9,2) -| (10,3) -| (6,0);
    \draw [color=black!25] (6,0) grid (10,3);
  \end{scope}

  \draw [thick] (6,0) -| (7,1) -| (9,2) -| (10,3) -| (6,0);

  \draw [thick, rounded corners] (6.5,0.5) -- (6.5,2.5);
  \draw [color=black,fill=black,thick] (6.5,2.5) circle (.4ex);
  \node [draw, circle, fill = white, inner sep = 1pt] at (6.5,0.5)
  {\scriptsize 1};

  \draw [thick, rounded corners] (7.5,1.5) -- (7.5,2.5);
  \draw [color=black,fill=black,thick] (7.5,2.5) circle (.4ex);
  \node [draw, circle, fill = white, inner sep = 1pt] at (7.5,1.5)
  {\scriptsize 2};

  \draw [thick, rounded corners] (8.5,2.5) -- (9.5,2.5);
  \draw [color=black,fill=black,thick] (9.5,2.5) circle (.4ex);
  \node [draw, circle, fill = white, inner sep = 1pt] at (8.5,2.5)
  {\scriptsize 3};

  \node [draw, circle, fill = white, inner sep = 1pt] at (8.5,1.5)
  {\scriptsize 4};
  
  \node at (8.5,0) {$-1$};

 \begin{scope}
    \clip (12,0) -| (13,1) -| (15,2) -| (16,3) -| (12,0);
    \draw [color=black!25] (12,0) grid (16,3);
  \end{scope}

  \draw [thick] (12,0) -| (13,1) -| (15,2) -| (16,3) -| (12,0);

  \draw [thick, rounded corners] (12.5,0.5) -- (12.5,2.5);
  \draw [color=black,fill=black,thick] (12.5,2.5) circle (.4ex);
  \node [draw, circle, fill = white, inner sep = 1pt] at (12.5,0.5)
  {\scriptsize 1};

  \draw [thick, rounded corners] (13.5,2.5) -- (14.5,2.5);
  \draw [color=black,fill=black,thick] (14.5,2.5) circle (.4ex);
  \node [draw, circle, fill = white, inner sep = 1pt] at (13.5,2.5)
  {\scriptsize 2};

  \draw [thick, rounded corners] (13.5,1.5) -- (14.5,1.5);
  \draw [color=black,fill=black,thick] (14.5,1.5) circle (.4ex);
  \node [draw, circle, fill = white, inner sep = 1pt] at (13.5,1.5)
  {\scriptsize 3};

  \node [draw, circle, fill = white, inner sep = 1pt] at (15.5,2.5)
  {\scriptsize 4};
  
  \node at (14.5,0) {$+1$};

   \begin{scope}
    \clip (18,0) -| (19,1) -| (21,2) -| (22,3) -| (18,0);
    \draw [color=black!25] (18,0) grid (22,3);
  \end{scope}

  \draw [thick]  (18,0) -| (19,1) -| (21,2) -| (22,3) -| (18,0);

  \draw [thick, rounded corners] (18.5,2.5) -- (20.5,2.5);
  \draw [color=black,fill=black,thick] (20.5,2.5) circle (.4ex);
  \node [draw, circle, fill = white, inner sep = 1pt] at (18.5,2.5)
  {\scriptsize 1};

  \draw [thick, rounded corners] (18.5,0.5) -- (18.5,1.5);
  \draw [color=black,fill=black,thick] (18.5,1.5) circle (.4ex);
  \node [draw, circle, fill = white, inner sep = 1pt] at (18.5,0.5)
  {\scriptsize 2};

  \draw [thick, rounded corners] (19.5,1.5) -- (20.5,1.5);
  \draw [color=black,fill=black,thick] (20.5,1.5) circle (.4ex);
  \node [draw, circle, fill = white, inner sep = 1pt] at (19.5,1.5)
  {\scriptsize 3};

  \node [draw, circle, fill = white, inner sep = 1pt] at (21.5,2.5)
  {\scriptsize 4};
  
  \node at (20.5,0) {$-1$};

     \begin{scope}
    \clip (24,0) -| (25,1) -| (27,2) -| (28,3) -| (24,0);
    \draw [color=black!25] (24,0) grid (28,3);
  \end{scope}

  \draw [thick]  (24,0) -| (25,1) -| (27,2) -| (28,3) -| (24,0);

  \draw [thick, rounded corners] (24.5,1.5) |- (25.5,2.5);
  \draw [color=black,fill=black,thick] (25.5,2.5) circle (.4ex);
  \node [draw, circle, fill = white, inner sep = 1pt] at (24.5,1.5)
  {\scriptsize 1};

  \draw [thick, rounded corners] (26.5,2.5) -- (27.5,2.5);
  \draw [color=black,fill=black,thick] (27.5,2.5) circle (.4ex);
  \node [draw, circle, fill = white, inner sep = 1pt] at (26.5,2.5)
  {\scriptsize 2};

  \draw [thick, rounded corners] (25.5,1.5) -- (26.5,1.5);
  \draw [color=black,fill=black,thick] (26.5,1.5) circle (.4ex);
  \node [draw, circle, fill = white, inner sep = 1pt] at (25.5,1.5)
  {\scriptsize 3};

  \node [draw, circle, fill = white, inner sep = 1pt] at (24.5,0.5)
  {\scriptsize 4};

\node at (26.5,0) {$-1$};
  
\end{tikzpicture}
\end{center}
\caption{The standard ribbon tableaux computing $\chi^{431}_{3221}$ together
with their signs. The numbers in the tails indicate the order in which ribbons are added.}
\label{fig:classical-mn}
\end{figure}
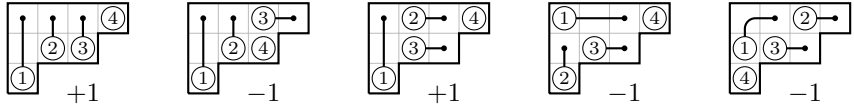

The status of $\chi^{\lambda}_{\mu}$ as a character evaluation guarantees that $\chi^{\lambda}_{\mu}$ is invariant
under permutations of $\mu = (\mu_1, \dots, \mu_r)$, although this is not evident from
Theorem~\ref{mn-rule}.
Changing the order of the sequence $\mu = (\mu_1, \dots, \mu_r)$ will usually result in different sets of standard ribbon tableaux
(and these sets can have different sizes), but the signed ribbon tableau count is independent of the order of $(\mu_1, \dots, \mu_r)$.
A combinatorial proof of this fact was discovered by 
Mendes  \cite{Mendes}.
Figure~\ref{fig:classical-mn} shows that the Murnaghan-Nakayama Rule is not cancellation free.
Finding a cancellation free expression for the irreducible character value $\chi^{\lambda}_{\mu}$ is an important open problem.

For a composition
$\mu \models n$, let $\symm_{\mu} = \symm_{\mu_1} \times \symm_{\mu_2} \times \cdots $ be the associated parabolic subgroup of 
$\symm_n$. The permutation modules
\begin{equation}
M^{\mu} := \CC[\symm_n/\symm_{\mu}] = \Ind_{\symm_{\mu}}^{\symm_n}( \triv_{\symm_{\mu}})
\end{equation}
are an important class of $\symm_n$-representations. Writing $\eta^{\mu}: \symm_n \rightarrow \CC$ for the character of $M^{\mu}$, 
we have
$\ch_n(\eta^{\mu}) = h_{\mu}$.

The coefficients in $K_{\lambda,\mu}$ in the equations
\begin{equation}
h_{\mu} = \sum_{\lambda \vdash n} K_{\lambda,\mu} \cdot s_{\lambda} \quad \quad \text{and} \quad \quad
\eta^{\mu} = \sum_{\lambda \vdash n} K_{\lambda, \mu} \cdot \chi^{\lambda}
\end{equation}
are called {\em Kostka numbers}. Combinatorially, they are given as follows.  
The {\em content} of a tableau $T$ is the sequence $\mu = (\mu_1, \mu_2, \dots )$ where $\mu_i$ is the number of copies of $i$ in $T$.
{\em Young's Rule} states that $K_{\lambda,\mu}$ is the number of semistandard tableaux of shape $\lambda$ and content $\mu$.

%For any set $X \subseteq \symm_n$ of permutations, we write $[X]_+, [X]_- \in \CC[\symm_n]$ for the group algebra elements
%\begin{equation}
%[X]_+ := \sum_{w \in X} w \quad \quad [X]_- := \sum_{w \in X} \sign(w) \cdot w
%\end{equation}
%that symmetrize and antisymmetrize over $X$, respectively.
%By linear extension, we may regard the irreducible character $\chi^{\lambda}: \symm_n \rightarrow \CC$
%(or any function $\symm_n \rightarrow \CC$) as a map $\CC[\symm_n] \rightarrow \CC$.
%Young's Rule implies a vanishing property of the characters $\chi^{\lambda}$ on parabolic subgroups.
%Recall that the {\em dominance order} on partitions is given by $\lambda \leq \mu$ if for all $i$ we have
%$\lambda_1 + \cdots + \lambda_i \leq \mu_1 + \cdots + \mu_i$.

%\begin{lemma}
%\label{vanish-lemma}
%Let $\chi^{\lambda}$ be an $\symm_n$-irreducible character and let $\symm_{\mu}$
%be a parabolic subgroup of $\symm_n$.
%\begin{enumerate}
%\item  We have $\chi^{\lambda}([\symm_{\mu}]_+) = 0$ unless $\lambda \geq \mu$ in dominance order.
%\item  We have $\chi^{\lambda}([\symm_{\mu}]_-) = 0$ unless $\lambda \leq \mu$ in dominance order.
%\end{enumerate}
%Furthermore, both of the numbers $\chi^{\lambda}([\symm_{\lambda}]_+)$ and
%$\chi^{\lambda}([\symm_{\lambda}]_-)$ are nonzero.
%\end{lemma}

\section{Character polynomials}
We will be interested in class functions on $\symm_n$ as $n$ grows.
Given a partition $\lambda = (\lambda_1, \lambda_2, \dots )$, the {\em padded partition} is the sequence 
$\lambda[n] = (n - |\lambda|, \lambda_1, \lambda_2, \dots )$.
Note $\lambda[n]$ is a partition of $n$ if and only if $n \geq |\lambda| + \lambda_1.$

Let $\mu \vdash n$ be a partition with irreducible character $\chi^{\mu}: \symm_n \rightarrow \RR$ and let $w \in \symm_n$.
If $m_i(w)$ is the number of $i$-cycles in $w$, then $\chi^{\mu}$ is a function of $m_1(w), m_2(w), \dots, m_n(w)$.
The theory of {\em character polynomials} gives a more precise description of this dependence.

\begin{theorem}
\label{character-polynomial-theorem}
Let $\lambda$ be a partition of $k$.  For any $n \geq |\lambda| - \lambda_1$ and any $w \in \symm_n$,
the character evaluation $\chi^{\lambda[n]}(w)$ is a polynomial in $m_1(w), m_2(w), \dots, m_k(w)$ with rational coefficients.
Furthermore, with respect to the grading which places $m_i(w)$ in degree $i$, this polynomial has degree $k$.
\end{theorem}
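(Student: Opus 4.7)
The plan is to apply the Frobenius character formula --- a direct consequence of the bialternant formula~\eqref{bialternant-schur-formula} --- and unpack the resulting coefficient via the multinomial theorem. Fix $N \geq \ell(\lambda)+1$ so that $\lambda[n]$ has at most $N$ parts, and let $\delta = (N-1,\ldots,1,0)$. For $w \in \symm_n$ with cycle type $\mu$ and $m_j := m_j(w)$, the bialternant formula gives
\[
\chi^{\lambda[n]}(w) \,=\, \sum_{\sigma \in \symm_N} \sign(\sigma)\cdot [x^{a(\sigma)}]\,p_\mu(x_1,\ldots,x_N),
\]
where $a(\sigma)_i := \lambda[n]_i + \sigma(i) - i$ and $p_\mu = \prod_j (x_1^j + \cdots + x_N^j)^{m_j}$.

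Next, I would expand each coefficient $[x^{a(\sigma)}]p_\mu$ via the multinomial theorem, introducing nonnegative integer parameters $d_{ij}$ (for $2 \leq i \leq N$ and $j \geq 1$) that record how many of the $m_j$ factors $(x_1^j + \cdots + x_N^j)$ contribute $x_i^j$. The $x_1$-exponent is then determined by $c_{1j} = m_j - \sum_{i \geq 2} d_{ij}$, and the cycle-type identity $\sum_j j\,m_j = n$ automatically forces the constraint $\sum_j j\,c_{1j} = a(\sigma)_1$, so the only nontrivial constraints are $\sum_j j\,d_{ij} = a(\sigma)_i$ for $i \geq 2$. Each summand is then a product of falling factorials $m_j(m_j-1)\cdots(m_j - D_j + 1)/\prod_{i\geq 2} d_{ij}!$ with $D_j := \sum_{i\geq 2} d_{ij}$, polynomial in $m_j$ of ordinary degree $D_j$. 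A direct calculation yields the key identity $\sum_{i \geq 2} a(\sigma)_i = k + 1 - \sigma(1)$, which is at most $k$ and --- crucially --- independent of $n$. Combined with $j \cdot d_{ij} \leq a(\sigma)_i \leq k$ for $i \geq 2$, this shows only $m_j$ with $j \leq k$ can appear, each summand has graded degree at most $k$ under the grading $\deg m_j = j$, and the total sum is a polynomial in $m_1,\ldots,m_k$ of graded degree at most $k$, independent of $n$.

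To confirm that the graded degree is exactly $k$, I would specialize to $w = e$: then $\chi^{\lambda[n]}(e) = f^{\lambda[n]}$ is a polynomial in $n$ of degree $k = |\lambda|$ by the hook-length formula, and since $m_1(e) = n$ while $m_j(e) = 0$ for $j \geq 2$, this specialization isolates the coefficient of $m_1^k$ in the putative character polynomial, which must therefore be nonzero.

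The main obstacle lies in the multinomial-theorem bookkeeping: one must verify that the sum genuinely produces a polynomial identity valid for every cycle type $\mu$, not merely when $m_j \geq D_j$, so that the same polynomial formula applies to all $w \in \symm_n$. The falling-factorial form handles this automatically via vanishing when $m_j < D_j$, but careful accounting across all $\sigma$ is required to guarantee that cancellations do not destroy the $\sigma(1) = 1$ contributions responsible for the top graded degree. A useful cross-check, if the direct bookkeeping proves intricate, is to combine the stability analysis of ribbon tableaux via the Murnaghan--Nakayama rule (Theorem~\ref{mn-rule}) --- classifying tableaux by the finitely many ``body-touching'' ribbons that meet the rows of the body $\lambda$ --- with the Frobenius computation above.
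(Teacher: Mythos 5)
Your proof is correct. Note that the paper does not actually prove this theorem; it quotes it as background and refers the reader to Macdonald (Chapter 1, Section 7, Example 14), and your argument is essentially a self-contained write-up of that classical derivation: extract $\chi^{\lambda[n]}_\mu$ as $[x^{\lambda[n]+\delta}](a_\delta\, p_\mu)$, expand $p_\mu=\prod_j p_j^{m_j}$ multinomially, and observe that the falling factorials $(m_j)_{D_j}/\prod_i d_{ij}!$ give a single polynomial valid on every cycle type. The two load-bearing points are exactly the ones you identify: the identity $\sum_{i\geq 2}a(\sigma)_i=k+1-\sigma(1)$, which simultaneously bounds the graded degree by $k$, forces $j\leq k$ whenever $m_j$ appears, and makes the indexing set of contributing pairs $(\sigma,(d_{ij}))$ independent of $n$ (since $a(\sigma)_i=\lambda_{i-1}+\sigma(i)-i$ for $i\geq 2$ does not involve $n$, while $a(\sigma)_1=n-k+\sigma(1)-1\geq 0$ automatically); and the specialization $w=e$, which cleanly sidesteps any worry about cancellation in graded degree $k$, relying only on the standard fact that $f^{\lambda[n]}$ is a polynomial in $n$ of degree exactly $k$ (with leading coefficient $f^\lambda/k!$). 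Your fallback via Murnaghan--Nakayama stability is also viable and is closer in spirit to the paper's own ribbon-tiling arguments (Corollary~\ref{degree-corollary}), but it is not needed. One cosmetic remark: the hypothesis ``$n\geq|\lambda|-\lambda_1$'' in the statement is evidently a typo for $n\geq|\lambda|+\lambda_1$ (the condition for $\lambda[n]$ to be a partition, as the paper itself notes just before the theorem), and your choice of $N$ and use of the bialternant implicitly work in that regime, which is the intended one.
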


Theorem~\ref{character-polynomial-theorem} implies that 
$\chi^{\lambda[n]}(w)$ does not depend on the number of $i$-cycles in $w$ for $i > k$.
The polynomial $q_{\lambda}(m_1,\dots,m_k) \in \QQ[m_1, \dots, m_k]$ which satisfies 
\begin{equation}
\label{character-polynomial-equation}
\chi^{\lambda[n]} = q_{\lambda}(m_1, \dots, m_k) \quad \quad \text{for all $n \geq |\lambda| + \lambda_1$}
\end{equation}
is the {\em character polynomial} of $\lambda$.  A famous example is given by
$q_{(1)}(m_1) = m_1 - 1$, corresponding to the fact that $\chi^{(n-1,1)}(w)$ is the number of fixed points of $w \in \symm_n$, less one.
Garsia and Goupil \cite{GG} described an algorithm for computing the polynomials $q_{\lambda}(m_1, \dots, m_k)$ in general.
See Figure~\ref{fig:char-poly} or the tables
in \cite{Specht}
for more examples.

\begin{figure}
	
	\begin{tabular}{c|c}
		Partition & Polynomial\\
		\hline
		$(n)$ & $1$\\
		\hline
		$(n-1,1)$ & $m_1 - 1$\\
		\hline
		$(n-2,2)$ & $m_2 - m_1 + (m_1)_2/2$\\
		\hline
		$(n-2,1,1)$ & $-m_2 - m_1 + (m_1)_2/2  + 1$\\
		\hline
		$(n-3,3)$ & $m_3 + m_1m_2 - m_2 - (m_1)_2/2 + (m_1)_3/6$ \\
		\hline
		$(n-3,2,1)$ & $-m_3 - (m_1)_2 + (m_1)_3/3 + m_1$\\
		\hline
		$(n-3,1,1,1)$ & $m_3 - m_1m_2 + m_2 - (m_1)_2/2 + (m_1)_3/6 + m_1 - 1$
	\end{tabular}
	
	\caption{\label{fig:char-poly} The character polynomials for $\chi^{\lambda[n]}$ with $\lambda \vdash k \leq 3$.
	Here $(x)_k = x(x-1) \dots (x-k+1)$.}
\end{figure}

If $n < |\lambda| + \lambda_1$, then $\lambda[n]$ is not a partition. Equation~\eqref{character-polynomial-equation} extends to this setting,
but we must interpret the class function $\chi^{\lambda[n]}: \symm_n \rightarrow \CC$ by {\em straightening} $\lambda[n]$ into a partition;
see \cite{Macdonald}.
This results in plus or minus an irreducible character $\chi^{\nu}: \symm_n \rightarrow \CC$, or zero.

For a proof of Theorem~\ref{character-polynomial-theorem} we refer the reader
to \cite[Chapter 1, Section 7, Example 14]{Macdonald}.
Theorem~\ref{character-polynomial-theorem} will allow us to transfer from Schur expansions of $\ch_n(R \, f)$ to 
asymptotics for $R \, f$ itself, where $f: \symm_n \rightarrow \CC$ is a $k$-local statistic.

\section{Probability on $\symm_n$}

We go over basic material on probability theory.
For $S$ a countable set, a \emph{discrete probability measure} is a function $\mu: S \to \RR_{\geq 0}$ such that
\begin{equation}
\sum_{w \in S} \mu(w) = 1.
\end{equation}
The pair $(S,\mu)$ is a \emph{(discrete) probability space}.
For $(S,\mu)$ a probability space, the \emph{probability} of $A \subseteq S$ is
\begin{equation}
\PP(A) = \sum_{a \in A} \mu(a).	
\end{equation}
Unless otherwise stated, we assume our measure on $\symm_n$ is the uniform measure on the symmetric group, given by $\mu(w) = \frac{1}{n!}$ for all $w \in \symm_n$.

A {\em (real) random variable} on a discrete probability space $(S,\mu)$  is a function $X: S \rightarrow \RR$.
We often view permutation statistics as random variables.
The {\em expectation} of $X$ is the quantity
\begin{equation}
\EE(X) := \sum_{w \in S} X(w) \cdot \mu(w).
\end{equation}
For any subset $A \subseteq S$ with $\PP(A) \neq 0$, the {\em conditional expectation} is given by
\begin{equation}
\EE(X \mid A) := \frac{1}{\PP(A)} \sum_{w \in A} X(w) \cdot \mu(w). 
\end{equation}
Viewing $f:\symm_n \to \RR$ as a random variable with respect to the uniform measure, for $w \in K_\lambda$ note that
\begin{equation}
Rf(w) =	\EE(f \mid K_\lambda).
\end{equation}

The {\em cumulative distribution function} of a real random variable $X$
 is the map $F_X: \RR \rightarrow \RR$ given by
\begin{equation}
F_X(x) := \PP(X \leq x).
\end{equation}
The {\em probability density function} of $X$ is the distribution
\begin{equation}
f_X = \sum_{x \in \RR}  \PP(X = x) \cdot \delta_x
\end{equation}
in the discrete case (where $\delta_x$ is the Dirac distribution concentrated at $x \in \RR$)  and the function
\begin{equation}
f_X = \frac{d F_X}{dx}
\end{equation}
when $F_X: \RR \rightarrow \RR$ is differentiable.
A {\em probability distribution} is a function (or more generally a distribution) $\rho: \RR \rightarrow \RR_{\geq 0}$ satisfying 
$\int_{\RR} \rho = 1$.
If $\rho$ is a probability distribution and $X$ is a random variable, we write $X \sim \rho$ to mean that $f_X = \rho$.
For this paper, the most important distribution is the {\em normal distribution} $\NNN(\mu,\sigma^2)$ of mean $\mu$ and variance $\sigma^2$
given by the density 
\begin{equation}
f(x) = \frac{1}{\sigma \sqrt{2 \pi}} e^{- \frac{1}{2} \left( \frac{x - \mu}{\sigma} \right)^2}.	
\end{equation}

If $X_1, X_2, \dots, Y$ are random variables, we say that the sequence $X_n$ {\em converges to $Y$ in distribution} and write
$X_n \xrightarrow{d} Y$ if 
\begin{equation}
\lim_{n \rightarrow \infty} F_{X_n}(x) = F_Y(x)
\end{equation}
for all $x \in \RR$.
If $\rho$ is a probability distribution, we write $X_n \xrightarrow{d} \rho$ to mean $X_n \xrightarrow{d} Y$ for $Y \sim \rho$.
A stronger notion of convergence is \emph{convergence in probability}, denoted $X_n \xrightarrow{p} Y$, which says for every $\epsilon >0$ that
\begin{equation}
\lim_{n \to \infty}	\PP(|X_n - Y|> \epsilon) = 0.
\end{equation}
Another notion of convergence is \emph{almost sure convergence}, which says
\begin{equation}
	\PP\left( \lim_{n\to \infty} X_n = Y \right) = 1.
\end{equation}
Almost sure convergence implies convergence in probability, which in turn implies convergence in distribution.

Let $X$ be a random variable on a probability space.
For $d \geq 0$, the {\em $d^{th}$ moment} of $X$ is $\EE(X^d)$, provided this expectation is finite.
 In particular, the first moment is the usual expectation while the second moment is closely related to the {\em variance} $\VV(X) := \EE(X^2) - \EE(X)^2$.
If we have a convergence $X_n \xrightarrow{d} Y$ of random variables in distribution, for any $d \geq 0$ the corresponding moments satisfy
\begin{equation}
\lim_{n \rightarrow \infty} \EE(X_n^d) = \EE(Y^d),
\end{equation}
provided these expectations are finite. The {\em Method of Moments} gives a kind of converse to this statement as follows; see e.g.
\cite[Thm. 30.2]{Billingsley} for a proof.

\begin{theorem}
\label{method-of-moments}
{\em (Method of Moments)}
Let $X_1, X_2, \dots $ and $Y$ be real random variables with finite $d^{th}$ moments for all $d$.  
Assume that the random variable $Y$ is determined by its moments.
If
$\lim_{n \rightarrow \infty} \EE(X_n^d) = \EE(Y^d)$ for all $d$,
then $X \xrightarrow{d} Y$.
\end{theorem}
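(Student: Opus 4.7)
The plan is to follow the classical route: (1) show $\{X_n\}$ is tight; (2) extract subsequential distributional limits; (3) identify every such limit with $Y$ via the moment uniqueness hypothesis; (4) conclude that the whole sequence converges to $Y$.

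First I would establish tightness. Since $\mathbb{E}(X_n^2) \to \mathbb{E}(Y^2) < \infty$, the sequence $\{\mathbb{E}(X_n^2)\}$ is bounded, say by some $M < \infty$. Markov's inequality gives $\mathbb{P}(|X_n| > t) \leq M/t^2$, which tends to zero uniformly in $n$ as $t \to \infty$. Thus $\{X_n\}$ is tight. By Helly's selection theorem (equivalently, by Prokhorov's theorem on $\mathbb{R}$), every subsequence of $\{X_n\}$ has a further subsequence $\{X_{n_k}\}$ converging in distribution to some real random variable $Z$.

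Next I would verify that any such subsequential limit $Z$ has the same moments as $Y$. Fix $d \geq 1$. To pass from $X_{n_k} \xrightarrow{d} Z$ to $\mathbb{E}(X_{n_k}^d) \to \mathbb{E}(Z^d)$, I need uniform integrability of $\{X_{n_k}^d\}$. This is supplied by the hypothesis applied one moment higher: $\mathbb{E}(|X_{n_k}|^{2d}) \to \mathbb{E}(|Y|^{2d}) < \infty$, so $\sup_k \mathbb{E}(|X_{n_k}^d|^2) < \infty$, which implies uniform integrability of $\{X_{n_k}^d\}$. Combined with the standard fact that convergence in distribution plus uniform integrability implies convergence of expectations (apply a truncation argument together with Skorokhod's representation, or see the classical statement in \cite{Billingsley}), we obtain $\mathbb{E}(Z^d) = \lim_k \mathbb{E}(X_{n_k}^d) = \mathbb{E}(Y^d)$ for every $d$.

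Now the hypothesis that $Y$ is determined by its moments forces $Z \stackrel{d}{=} Y$. Since every subsequence of $\{X_n\}$ admits a further subsequence converging in distribution to $Y$, a standard subsequence argument (if $F_{X_n}(x) \not\to F_Y(x)$ at some continuity point $x$ of $F_Y$, a subsequence stays bounded away from $F_Y(x)$, contradicting the previous step) yields $X_n \xrightarrow{d} Y$. The main subtlety, and the only step that is not purely bookkeeping, is the passage from distributional convergence to moment convergence along subsequences; this is where uniform integrability, and hence the hypothesis that all moments (not just up to some order) converge, is essential.
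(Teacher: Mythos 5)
Your proof is correct and is essentially the argument the paper defers to, namely the classical proof of \cite[Thm.~30.2]{Billingsley}: tightness from the bounded second moments, Helly/Prokhorov selection, uniform integrability from the convergent $2d$-th moments to transfer moments to the subsequential limit, moment determinacy of $Y$ to identify it, and the subsequence principle to conclude. No gaps.
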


The condition in  Theorem~\ref{method-of-moments} that $Y$ is determined by its moments is satisfied when the moments 
$\EE(Y^d)$ do not grow too quickly in $d$.  
In our applications, the random variable $Y$ will always be normally distributed, and this condition will be satisfied.

\chapter{Local Statistics}
\label{Local}

We develop fundamental algebraic properties of the space $\Loc_k(\symm_n,\CC)$ of $k$-local functions on $\symm_n$.
The dimension of $\Loc_k(\symm_n,\CC)$ is the number permutations $w \in \symm_n$ that have an increasing subsequence of length $n-k$.
We present a vector space basis of $\Loc_k(\symm_n,\CC)$ related to the shadow line construction of Viennot \cite{Viennot} due to the second author~\cite{Rhoades}.
Our primary tool is an interpretation of $\Loc_k(\symm_n,\CC)$ in terms of the Artin-Weddernburn isomorphism applied to the symmetric group algebra 
$\CC[\symm_n]$.
For any function $f: \symm_n \rightarrow \CC$, the Artin-Wedderburn perspective gives rise to a best $k$-local approximation $L_k f \in \Loc_k(\symm_n,\CC)$
to the function $f$.
As another consequence, we describe the image of the space of $k$-local class functions under the Frobenius characteristic map $\ch_n$.
We also deduce an old result of Murnaghan \cite{Murnaghan} on bounds for Kronecker coefficients.

Most of the results in this chapter (aside from those in Section~\ref{local-basis})
have appeared in various guises in the probability, machine learning, and extremal combinatorics literature.
The main purpose of this chapter is to give a comprehensive exposition of these ideas through the lens of Artin-Wedderburn theory.
We hope that this perspective will be enriching for 
those
the enumerative and algebraic combinatorics communities
who have are outsiders to the probabilistic perspective.
We give a leisurely exposition of the representation-theoretic ideas involved for the benefit of readers from outside the realms of algebra.

\section{The local filtration}
Recall from the introduction
 that a statistic $f: \symm_n \rightarrow \CC$ is $k$-local if 
there exist constants $c_{I,J} \in \CC$ such that $f = \sum_{(I,J) \in \symm_{n,k}} c_{I,J} \cdot \one_{I,J}$ where 
$\one_{I,J}(w) = 1$ if $w$ sends the list $I$ to the list $J$ and $0$ otherwise.
We write
\begin{equation}
\Loc_k(\symm_n,\CC) := \{ f: \symm_n \rightarrow \CC \,:\, \text{$f$ is $k$-local} \}
\end{equation}
for the $\CC$-vector space of all $k$-local statistics on $\symm_n$. 
It may be tempting to define a statistic $f$ to be `$\leq k$-local' if it is a linear combination of $\one_{I,J}$ for partial permutations $(I,J)$
of size $\leq k$, but the following observation
makes this unnecessary.
If $I = (i_1, \dots, i_k)$ is a list of integers and $a$ is another integer, we use the shorthand $Ia := (i_1, \dots, i_k,a)$.

\begin{observation}
\label{lower-closure}
For any $(I,J) \in \symm_{n,k-1}$, the statistic $\one_{I,J}: \symm_n \rightarrow \CC$ is $k$-local.
Indeed, for any fixed $a \notin I$ we have 
\begin{equation}
\one_{I,J}(w) = \sum_{b \notin J} \one_{Ia,Jb}(w)
\end{equation}
for any $w \in \symm_n$. In particular, any $(k-1)$-local statistic is also $k$-local.
\end{observation}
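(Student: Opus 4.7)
The plan is to verify the displayed identity pointwise by evaluating both sides on an arbitrary $w \in \symm_n$ and splitting into two cases based on whether $w$ realizes the partial permutation $(I,J)$. The ``in particular'' statement will then follow immediately from linearity.

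First I would check that the right-hand sum is well-formed: since $(I,J) \in \symm_{n,k-1}$ the lists $I$ and $J$ each have $k-1$ distinct entries, and the conditions $a \notin I$ and $b \notin J$ ensure that $Ia$ and $Jb$ each have $k$ distinct entries. Hence every term $\one_{Ia,Jb}$ appearing in the sum is the indicator of a genuine partial permutation in $\symm_{n,k}$, so each is manifestly $k$-local.

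Next I would evaluate both sides at a fixed $w \in \symm_n$. If $w$ does not map $I$ to $J$ entrywise, then $\one_{I,J}(w) = 0$ and, since extending a failed match cannot succeed, $\one_{Ia,Jb}(w) = 0$ for every $b$, so both sides vanish. If instead $w$ does map $I$ to $J$ entrywise, then $\one_{I,J}(w) = 1$. Because $w$ is a bijection on $[n]$ and restricts to a bijection from the set of entries of $I$ onto the set of entries of $J$, it also bijects the complements; in particular, from $a \notin I$ we conclude $w(a) \notin J$. Setting $b_0 := w(a)$, the term $\one_{Ia, Jb_0}(w)$ equals $1$, while for every other $b \notin J$ with $b \neq b_0$ the term $\one_{Ia, Jb}(w)$ vanishes because $w(a) \neq b$. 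The right-hand side therefore equals $1$, matching the left.

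For the final sentence, any $(k-1)$-local statistic is by Definition~\ref{k-local-definition} a $\CC$-linear combination of indicators $\one_{I,J}$ with $(I,J) \in \symm_{n,k-1}$; by the displayed identity each such indicator lies in $\Loc_k(\symm_n,\CC)$, so the whole linear combination does too. There is no real obstacle here: the only subtle point is confirming that the two conditions $a \notin I$ and $b \notin J$ together with bijectivity of $w$ force the unique $b$ witnessing the match to be $w(a)$, which is a one-line bookkeeping check.
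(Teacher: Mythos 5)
Your proof is correct and follows exactly the route the paper takes: the paper simply asserts the identity $\one_{I,J} = \sum_{b \notin J} \one_{Ia,Jb}$ as the content of the observation, and your pointwise case analysis (including the key check that bijectivity of $w$ together with $w(I)=J$ forces $w(a) \notin J$, so exactly one summand survives) is the intended verification.
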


Since the values of any permutation $w \in \symm_n$ at any $n-1$ positions determine the value of $w$ at the remaining position,
any permutation statistic $f: \symm_n \rightarrow \CC$ is $(n-1)$-local.
Observation~\ref{lower-closure} implies that we have a filtration
of the space $\Fun(\symm_n,\CC)$ of complex-valued functions on $\symm_n$ given by the subspaces
\begin{equation}
\label{local-filtration}
\Loc_0(\symm_n,\CC) \subseteq \Loc_1(\symm_n,\CC) \subseteq \cdots \subseteq \Loc_{n-1}(\symm_n,\CC) = \Fun(\symm_n,\CC).
\end{equation}

To understand how the filtration \eqref{local-filtration} behaves under pointwise multiplication, we use the following lemma.
Two partial permutations $(I_1,J_1), (I_2,J_2)$ of $[n]$ are {\em compatible} if there exists $w \in \symm_n$ such that
$w(I_1) = J_1$ and $w(I_2) = J_2$.  Equivalently, the directed graph on $[n]$ whose edges are the union
$G_n(I_1,J_1) \cup G_n(I_2,J_2)$ is a disjoint union of directed paths and directed cycles (i.e. the graph of a partial permutation).

\begin{lemma}
\label{compatible-lemma}
Let $(I_1, J_1)$ and $(I_2, J_2)$ be compatible partial permutations of $[n]$. Let $(I,J)$ be the partial permutation of $[n]$
whose graph has edges 
\begin{equation*}
G_n(I,J) = 
G_n(I_1,J_1) \cup G_n(I_2,J_2). 
\end{equation*}
Then 
$\one_{I_1,J_1} \cdot \one_{I_2,J_2} = \one_{I,J}$ as functions $\symm_n \rightarrow \CC$.
\end{lemma}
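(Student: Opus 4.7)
The plan is to argue directly from the definition of $\one_{I,J}$ as a pointwise indicator: the lemma is essentially a tautology once one unwinds what it means for a permutation to ``respect'' the edges of a graph. The only substantive content is that the hypothesis of compatibility guarantees $G_n(I_1,J_1) \cup G_n(I_2,J_2)$ really is the graph of a partial permutation, so that the symbol $\one_{I,J}$ on the right-hand side is legally defined.

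First, I would reinterpret the indicator statistics graph-theoretically. For any partial permutation $(K,L) \in \symm_{n,m}$ with $K = (k_1,\ldots,k_m)$ and $L = (\ell_1,\ldots,\ell_m)$, and for any $w \in \symm_n$, we have $\one_{K,L}(w) = 1$ if and only if $w(k_r) = \ell_r$ for every $r$, which is exactly the condition that $w$ realizes every directed edge $k_r \to \ell_r$ of $G_n(K,L)$; otherwise $\one_{K,L}(w) = 0$.

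Next I would compute the pointwise product. Fix $w \in \symm_n$. Since each indicator takes values in $\{0,1\}$, the product $\one_{I_1,J_1}(w) \cdot \one_{I_2,J_2}(w)$ equals $1$ precisely when both factors equal $1$, i.e.\ when $w$ realizes every edge of $G_n(I_1,J_1)$ \emph{and} every edge of $G_n(I_2,J_2)$, which is the same as saying $w$ realizes every edge of the union $G_n(I_1,J_1) \cup G_n(I_2,J_2) = G_n(I,J)$. By the graph-theoretic reformulation in the previous paragraph, this condition is equivalent to $\one_{I,J}(w) = 1$.

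The only subtlety, and the step I would flag as requiring care rather than difficulty, is that the symbol $\one_{I,J}$ presupposes $(I,J)$ is a genuine partial permutation: the lists $I$ and $J$ must each have no repeats, equivalently every vertex of $G_n(I,J)$ must have in-degree $\leq 1$ and out-degree $\leq 1$. This is exactly guaranteed by compatibility of $(I_1,J_1)$ and $(I_2,J_2)$: if $w \in \symm_n$ satisfies $w(I_1) = J_1$ and $w(I_2) = J_2$, then the edges of $G_n(I_1,J_1) \cup G_n(I_2,J_2)$ are all of the form $i \to w(i)$ for $i$ ranging over a subset of $[n]$, so each vertex has in-degree and out-degree at most $1$, and the union is the graph of a partial permutation as required. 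With this verified, the two displays above combine to give $\one_{I_1,J_1} \cdot \one_{I_2,J_2} = \one_{I,J}$ as functions on $\symm_n$.
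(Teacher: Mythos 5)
Your proof is correct and follows the same route as the paper, which simply observes that $w(I)=J$ if and only if $w(I_1)=J_1$ and $w(I_2)=J_2$. Your extra remark that compatibility guarantees $(I,J)$ is a genuine partial permutation is already built into the paper's definition of compatibility, so nothing is missing.
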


\begin{proof}
A permutation $w \in \symm_n$ satisfies $w(I) = J$ if and only if $w(I_1) = J_1$ and $w(I_2) = J_2$.
\end{proof}

Recall that the $d^{th}$ moment of a statistic $f: \symm_n \rightarrow \CC$ is the map
$f^d: \symm_n \rightarrow \CC$ given by $f^d(w) := f(w)^d$.
Pointwise products (and in particular moments) of statistics behave well with respect locality.

\begin{proposition}
\label{product-local}
If $f: \symm_n \rightarrow \CC$ is $k$-local and $g: \symm_n \rightarrow \CC$ is $\ell$-local, their pointwise product
$f \cdot g$ is $(k + \ell)$-local. In particular, the $d^{th}$ moment $f^d$ of $f$ is $(d \cdot \ell)$-local.
\end{proposition}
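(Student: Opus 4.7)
The plan is to prove the first assertion by a direct expansion using Lemma~\ref{compatible-lemma}, then deduce the moment statement by induction.

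First I will express $f$ and $g$ as linear combinations of indicators:
\begin{equation*}
f = \sum_{(I_1,J_1) \in \symm_{n,k}} c_{I_1,J_1} \cdot \one_{I_1,J_1}, \quad \quad
g = \sum_{(I_2,J_2) \in \symm_{n,\ell}} d_{I_2,J_2} \cdot \one_{I_2,J_2}.
\end{equation*}
Expanding the pointwise product gives
\begin{equation*}
f \cdot g = \sum_{(I_1,J_1), (I_2,J_2)} c_{I_1,J_1} \, d_{I_2,J_2} \cdot \one_{I_1,J_1} \cdot \one_{I_2,J_2}.
\end{equation*}
For each pair of indexing partial permutations, there are two cases. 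If $(I_1,J_1)$ and $(I_2,J_2)$ are incompatible, then no $w \in \symm_n$ can simultaneously send $I_1 \to J_1$ and $I_2 \to J_2$, so $\one_{I_1,J_1} \cdot \one_{I_2,J_2} \equiv 0$ and the term drops out. Otherwise, Lemma~\ref{compatible-lemma} identifies the product as $\one_{I,J}$, where $(I,J)$ is the partial permutation whose directed graph is the union $G_n(I_1,J_1) \cup G_n(I_2,J_2)$.

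The key observation is that this union has at most $k + \ell$ edges (possibly fewer if some edges of the two subgraphs coincide), so $(I,J) \in \symm_{n,m}$ for some $m \leq k + \ell$. By Observation~\ref{lower-closure}, the indicator $\one_{I,J}$ of any partial permutation of size $\leq k+\ell$ is itself $(k+\ell)$-local. Thus $f \cdot g$ is a linear combination of $(k+\ell)$-local functions, hence is $(k+\ell)$-local.

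The moment claim follows by a straightforward induction on $d$. The base case $d=1$ is trivial, and the inductive step applies the just-proved product bound to $f^{d} = f^{d-1} \cdot f$: if $f^{d-1}$ is $((d-1)k)$-local, then $f^d$ is $((d-1)k + k) = (dk)$-local. There is essentially no obstacle here; the only point to keep track of is that the degree bound from Lemma~\ref{compatible-lemma} is additive (rather than multiplicative) in edge counts, which is exactly what makes locality behave well under pointwise multiplication.
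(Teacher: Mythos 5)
Your proof is correct and is exactly the argument the paper intends: the paper's proof is the one-line ``Apply Lemma~\ref{compatible-lemma},'' and your write-up simply fills in the details (bilinear expansion, vanishing of incompatible pairs, the edge-count bound on the union graph, and Observation~\ref{lower-closure} to absorb partial permutations of size strictly less than $k+\ell$). No differences of substance.
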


\begin{proof}
Apply Lemma~\ref{compatible-lemma}.
\end{proof}

Proposition~\ref{product-local} states that \eqref{local-filtration} is a filtration of the algebra $\Fun(\symm_n,\CC)$ of 
functions $f: \symm_n \rightarrow \CC$
under the operations of pointwise linear combinations and product.
The product group $\symm_n \times \symm_n$ acts naturally on the space $\Fun(\symm_n,\CC)$ by the rule
\begin{equation}
((u,v) \cdot f)(w) := f(u^{-1} w v) \quad \quad u, v, w \in \symm_n, \, \, f: \symm_n \rightarrow \CC.
\end{equation}
The following observation implies that the filtration \eqref{local-filtration} is preserved by this action.

\begin{lemma}
\label{product-action-on-indicators}
Let $(I,J) \in \symm_{n,k}$ be a partial permutation and let $u, v \in \symm_n$.  We have 
\begin{equation}
(u,v) \cdot \one_{I,J} = \one_{vI,uJ}.
\end{equation}
\end{lemma}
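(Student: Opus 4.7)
The plan is to unfold the definition of the $\symm_n \times \symm_n$ action on $\Fun(\symm_n,\CC)$ and evaluate both sides at a test point $w \in \symm_n$. Since we are comparing two functions $\symm_n \to \CC$, it suffices to show that they agree pointwise.

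First I would compute the left-hand side: by the definition of the action,
\begin{equation*}
    \big((u,v)\cdot \one_{I,J}\big)(w) \;=\; \one_{I,J}(u^{-1} w v).
\end{equation*}
By definition of $\one_{I,J}$, this evaluates to $1$ precisely when $(u^{-1} w v)(i_r) = j_r$ for every $r = 1,\dots,k$, where $I = (i_1,\dots,i_k)$ and $J = (j_1,\dots,j_k)$. Rearranging this equation gives $w(v(i_r)) = u(j_r)$ for every $r$, which is exactly the condition that $w$ sends the list $vI = (v(i_1),\dots,v(i_k))$ to the list $uJ = (u(j_1),\dots,u(j_k))$. By the definition of $\one_{vI,uJ}$, this is the same as saying $\one_{vI,uJ}(w) = 1$.

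Since the chain of equivalences is reversible, the two indicator functions take the value $1$ on exactly the same permutations $w$, and hence agree as functions on $\symm_n$. One small sanity check to record along the way is that $(vI, uJ)$ really is a partial permutation: since $v$ and $u$ are bijections on $[n]$ and $I, J$ have no repeated entries, neither do $vI$ and $uJ$, so $\one_{vI,uJ}$ is defined as an element of $\Fun(\symm_n,\CC)$.

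There is no real obstacle here beyond bookkeeping: the lemma is a direct verification, and the only potentially confusing point is keeping straight which of $u, v$ acts on which of $I, J$ (the inverse on $u$ in the action definition is what swaps their roles on the index side versus the value side). I would therefore write the proof as a single displayed chain of equivalences, with a brief remark noting how the inverse causes the apparent swap between $u$ and $v$ in the conclusion.
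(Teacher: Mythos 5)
Your proof is correct and follows essentially the same route as the paper: evaluate $((u,v)\cdot \one_{I,J})(w) = \one_{I,J}(u^{-1}wv)$ at a test permutation $w$ and rewrite the condition $u^{-1}wvI = J$ as $wvI = uJ$. The additional remark that $(vI,uJ)$ is again a partial permutation is a harmless extra check that the paper leaves implicit.
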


\begin{proof}
Let $w \in \symm_n$. We calculate
\begin{align}
((u,v) \cdot \one_{I,J})(w) = \one_{I,J}(u^{-1} w v) &= \begin{cases}
1 & u^{-1} w v I = J \\
0 & \text{else}
\end{cases}  \\
&= \begin{cases}
1 & w v I = u J \\
0 & \text{else}
\end{cases} = \one_{vI,uJ}(w).\nonumber
\end{align}
\end{proof}

Although the family $\{ \one_{I,J} \,:\, (I,J) \in \symm_{n,k} \}$ of indicator functions spans $\symm_{n,k}$ by definition, 
this spanning set has many linear dependencies.  For example, we have 
$\one_{1,1} + \one_{1,2} + \cdots + \one_{1,n} = \one_{1,1} + \one_{2,1} + \cdots + \one_{n,1}$ as functions on $\symm_n$.
The reader may find it pleasant to verify that 
\begin{equation}
\{ \one_{i,j} \,:\, 2 \leq i, j \leq n \} \cup \{ \one_{\varnothing,\varnothing} \}
\end{equation}
is a basis for the space $\Loc_1(\symm_n,\CC)$ of 1-local functions on $\symm_n$, so that 
\begin{equation}
\dim \Loc_1(\symm_n,\CC) = n^2 - 2n + 2.
\end{equation}
The dimension of $\Loc_k(\symm_n,\CC)$ for general $k$ is best understood via representation theory.

\section{Local statistics and the Artin-Wedderburn Theorem}

Let $G$ be a finite group and let $\Irr(G)$ be the set of (isomorphism classes of) complex irreducible representations of $G$.
Given $V \in \Irr(G)$, let $\End_\CC(V)$ be the $\CC$-algebra of endomorphisms of $V$. If $\dim V = d$, then $\End_\CC(V)$ is isomorphic to the algebra 
$\Mat_d(\CC)$ of $d \times d$ complex matrices.
The Artin-Wedderburn Theorem is the following paramount result in representation theory; we will apply it in the following form.

\begin{theorem}
\label{artin-wedderburn}
{\em (Artin-Wedderburn)}
Let $G$ be a finite group and let $\CC[G]$ be its group algebra with complex coefficients. We have an isomorphism of algebras
\begin{equation}
\Psi: \CC[G] \xrightarrow{ \, \, \sim \, \, } \bigoplus_{V \in \Irr(G)} \End_\CC(V)
\end{equation}
induced by $\Psi(\alpha): v \mapsto \alpha \cdot v$ for any irreducible representation $V \in \Irr(G)$ and any vector $v \in V$.
\end{theorem}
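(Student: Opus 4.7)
The plan is to verify that $\Psi$ is a well-defined algebra homomorphism, establish injectivity via semisimplicity of $\CC[G]$, and conclude surjectivity by a dimension count.

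First, I would check that $\Psi$ is a homomorphism of $\CC$-algebras. Each irreducible representation $V \in \Irr(G)$ is a $\CC[G]$-module, so the structure map $\Psi_V : \CC[G] \to \End_\CC(V)$ sending $\alpha \mapsto (v \mapsto \alpha \cdot v)$ is by definition an algebra homomorphism (it preserves sums and scalar multiples linearly, and the module axiom $(\alpha\beta) \cdot v = \alpha \cdot (\beta \cdot v)$ gives $\Psi_V(\alpha\beta) = \Psi_V(\alpha) \circ \Psi_V(\beta)$). The component map $\Psi = \bigoplus_V \Psi_V$ into the product algebra is therefore also an algebra homomorphism.

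Next, for injectivity, I would invoke Maschke's theorem: since $G$ is finite and $\CC$ has characteristic zero, $\CC[G]$ is a semisimple algebra, so every $\CC[G]$-module decomposes as a direct sum of irreducibles. Suppose $\alpha \in \ker \Psi$; then $\alpha$ acts as zero on every irreducible representation $V \in \Irr(G)$, hence on every $\CC[G]$-module. Applying this to the left regular representation $\CC[G]$ itself gives $\alpha \cdot 1_G = \alpha = 0$.

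Finally, for surjectivity, I would compare dimensions. Clearly $\dim_\CC \CC[G] = |G|$. On the right,
\begin{equation*}
\dim_\CC \bigoplus_{V \in \Irr(G)} \End_\CC(V) = \sum_{V \in \Irr(G)} (\dim V)^2.
\end{equation*}
The classical identity $\sum_{V \in \Irr(G)} (\dim V)^2 = |G|$ follows from decomposing the left regular representation $\CC[G] \cong \bigoplus_{V \in \Irr(G)} V^{\oplus \dim V}$ and taking dimensions. Thus $\Psi$ is an injective $\CC$-linear map between vector spaces of equal finite dimension, hence an isomorphism. The only conceptual subtlety is the semisimplicity of $\CC[G]$ and the decomposition of the regular representation; both are standard and can be cited. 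I would expect no serious obstacle.
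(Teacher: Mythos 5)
Your proposal is correct and matches the paper's argument essentially step for step: verify $\Psi$ is an algebra homomorphism, use the decomposition of the regular representation to get $|G| = \sum_{V}(\dim V)^2$ for the dimension count, and establish injectivity by observing that an element of the kernel annihilates the regular representation and hence equals $\alpha \cdot e = 0$. The only cosmetic difference is that you cite Maschke/semisimplicity where the paper cites character orthogonality for the decomposition of $\CC[G]$; these are interchangeable here.
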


Although standard,
Theorem~\ref{artin-wedderburn} is so important that we give a proof.

\begin{proof}
It is easy check that $\Psi$ is linear and that $\Psi(\alpha \cdot \beta) = \Psi(\alpha) \circ \Psi(\beta)$ for all $\alpha, \beta \in \CC[G]$.
It is a consequence of character orthogonality (see e.g. \cite{Sagan}) that the left regular representation $\CC[G]$ of $G$ 
decomposes into irreducibles according to
\begin{equation}
\label{CG-decomposition}
\CC[G] \cong_G \bigoplus_{V \in \Irr(G)} (\dim V) \cdot V.
\end{equation} 
Taking dimensions gives $|G| = \sum_{V \in \Irr(G)} (\dim V)^2$, so 
 $\Psi$  is  a $\CC$-algebra homomorphism between algebras of the same 
dimension. It therefore suffices to show that $\Psi$ is injective.  

Let $\alpha \in \CC[G]$ and suppose $\Psi(\alpha) = 0$.
Then $\alpha$ acts by 0 on any irreducible representation of $G$.
In particular, the decomposition \eqref{CG-decomposition} implies that 
$\alpha$ acts by 0 on the group algebra $\CC[G]$. It follows that $\alpha = \alpha \cdot e = 0$ where $e \in G$ is the identity element.
\end{proof}

The isomorphism $\Psi$ called  the {\em Fourier transform} in probability \cite{Diaconis1, Diaconis2, Terras}
and machine learning  \cite{HGG, KHJ}.
 Indeed, when $G = Z_n$ is a cyclic group, the representations in $\Irr(Z_n)$ are labeled by $n^{th}$ roots-of-unity
 and the components of $\Psi$
record the usual
discrete Fourier transform of a function $Z_n \rightarrow \CC$.
We will apply Theorem~\ref{artin-wedderburn} in the case $G = \symm_n$.
In this setting, the isomorphism of Theorem~\ref{artin-wedderburn} is a map
\begin{equation}
\label{aw-sn}
\Psi: \CC[\symm_n] \xrightarrow{ \, \, \sim \, \, } \bigoplus_{\lambda \, \vdash \, n} \End_{\CC}(V^{\lambda}).
\end{equation}

We have a natural bijective correspondence between the space $\Fun(\symm_n,\CC)$ and the group algebra $\CC[\symm_n]$ given by
\begin{equation}
f: \symm_n \rightarrow \CC \quad \quad \leftrightarrow \quad \quad \alpha_f = \sum_{w \in \symm_n} f(w) \cdot w.
\end{equation}
Under this correspondence, we may regard $\Loc_k(\symm_n,\CC)$ as a subspace of $\CC[\symm_n]$ for all $k \geq 0$.
The image of $\Loc_k(\symm_n,\CC)$ under the Artin-Weddernburn isomorphism $\Psi$ has a nice description.
In order to prove this result, we recall two standard facts from ring theory.

Let $R$ be a ring, not necessarily commutative. Recall that $R$ is {\em simple} if the only two-sided ideals $I \subseteq R$ 
are $I = 0, R$.  
Ideals in direct sums of simple rings 
are described as follows.

\begin{lemma}
\label{semisimple-ideals}
Let $R_1, \dots, R_n$ be simple rings and let $R = R_1 \oplus \cdots \oplus R_n$ be their direct sum.
Ideals in $R$ are indexed by subsets of $[n]$; 
if $J \subseteq [n]$, the corresponding ideal is given by $I_J = (I_J)_1 \oplus \cdots \oplus (I_J)_n$ with components
\begin{equation}
(I_J)_i = \begin{cases}
R_i & i \in J, \\
0 & i \notin J.
\end{cases}
\end{equation}
\end{lemma}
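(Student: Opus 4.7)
The plan is to decompose any two-sided ideal $I \subseteq R$ using the central idempotents coming from the direct sum decomposition. For each $i \in [n]$, let $e_i \in R$ denote the identity element of $R_i$, regarded as an element of $R$ via the embedding $R_i \hookrightarrow R$. These $e_i$ are pairwise orthogonal central idempotents satisfying $1 = e_1 + \cdots + e_n$ and $R_i = e_i R = R e_i = e_i R e_i$.

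First I would examine the pieces $I_i := e_i I$ of an arbitrary two-sided ideal $I \subseteq R$. Since $e_i$ is central, $e_i I = I e_i$ is a two-sided ideal of $R$ contained in $R_i$, and a direct check shows it is in fact a two-sided ideal of $R_i$ itself. By simplicity of $R_i$, each $e_i I$ equals either $0$ or $R_i$. Let $J := \{i \in [n] : e_i I = R_i\}$. For any $x \in I$, the decomposition $1 = e_1 + \cdots + e_n$ gives $x = \sum_i e_i x \in \sum_i e_i I$, so $I = \bigoplus_{i=1}^n e_i I = \bigoplus_{i \in J} R_i = I_J$. Conversely, every $I_J$ is manifestly a two-sided ideal of $R$, since each $R_i$ is, and distinct subsets $J$ yield distinct ideals because $I_J \cap R_i = R_i$ if and only if $i \in J$. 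This establishes the claimed bijection between subsets of $[n]$ and ideals of $R$.

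There is no real obstacle here; the only point requiring care is noting that $e_i I$, which is a priori only an ideal of $R$, is also an ideal of the subring $R_i$, which follows immediately because $R_i = e_i R e_i$ and $e_i$ is central. The argument uses nothing about the particular simple rings $R_i = \End_\CC(V^\lambda)$ beyond simplicity and the existence of a multiplicative identity.
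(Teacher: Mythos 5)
Your proof is correct and follows essentially the same route as the paper's: decompose an arbitrary two-sided ideal $I$ using the idempotents $e_i$ coming from the direct sum, use simplicity of each $R_i$ to force each piece to be $0$ or $R_i$, and reassemble via $1 = e_1 + \cdots + e_n$. The only cosmetic difference is that you work with $e_i I$ and invoke centrality of $e_i$, while the paper works with $e_i I e_i$ directly; since $e_i$ is a central idempotent these coincide, and your explicit treatment of the converse direction (that each $I_J$ is an ideal and distinct $J$ give distinct ideals) is a welcome addition the paper leaves implicit.
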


We give the standard proof of Lemma~\ref{semisimple-ideals}.

\begin{proof}
For $1 \leq i \leq n$, let $e_i \in R$ denote the $i^{th}$ `standard basis vector', with a 1 in position $i$ and 0s elsewhere.
Then $e_i R e_i$ is a ring with multiplicative identity $e_i$ and we have a natural identification $e_i R e_i = R_i$.

Let $I \subseteq R$ be a two-sided ideal. We have $e_i I e_i = I \cap e_i R e_i = I \cap R_i$ for each $i$, where the containment $e_i I e_i \supseteq I \cap e_i R e_i$ uses 
$e_i^2 = e_i$. Since $R_i$ is simple, we have $e_i I e_i = 0$ or $R_i$ for each $i$. Furthermore, since $e_1 + \cdots + e_n = 1$ and $e_i R e_j = 0$ for any $i \neq j$, we have
\begin{multline}
I = (e_1 + \cdots + e_n) \cdot I \cdot (e_1 + \cdots + e_n) = \sum_{i, j \, = \, 1}^n e_i I e_j
\\ = \sum_{i \, = \, 1}^n e_i I e_i
= \bigoplus_{i \, = \, 1}^n e_i I e_i = \bigoplus_{j \, \in \, J} R_j = I_J
\end{multline}
where $J = \{1 \leq j \leq n \,:\, e_j I e_j \neq  0 \}$.
\end{proof}

Ring $R$  which are direct sums of simple rings
as in Lemma~\ref{semisimple-ideals} are called {\em semisimple}.
The following lemma implies that the rings arising in the Artin-Wedderburn Theorem are semisimple.

\begin{lemma}
\label{matrix-rings-are-simple}
Let $\FF$ be a field. The ring $\Mat_d(\FF)$ of $d \times d$ matrices over $\FF$ is simple.
\end{lemma}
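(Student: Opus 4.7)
The plan is to show that any nonzero two-sided ideal $I \subseteq \Mat_d(\FF)$ must contain the identity matrix, hence must equal the whole ring. The key tool is the family of elementary matrix units $E_{ij}$, where $E_{ij}$ has a $1$ in position $(i,j)$ and zeros elsewhere, together with their multiplicative law $E_{ij} E_{kl} = \delta_{jk} E_{il}$.

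First I would pick a nonzero element $A \in I$ and write $A = \sum_{i,j} a_{ij} E_{ij}$. Since $A \neq 0$, some entry $a_{pq}$ is nonzero. The main calculation is to compute $E_{ip} A E_{qj}$ for arbitrary indices $i,j$: by the multiplication rule for matrix units, only the $(p,q)$ entry of $A$ survives, giving $E_{ip} A E_{qj} = a_{pq} E_{ij}$. Because $a_{pq} \in \FF$ is a unit and $I$ is a two-sided ideal, multiplying by $a_{pq}^{-1}$ shows $E_{ij} \in I$ for every pair $(i,j)$.

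Finally, since the identity matrix satisfies $\mathrm{Id} = \sum_{i=1}^{d} E_{ii}$ and each $E_{ii} \in I$, we conclude $\mathrm{Id} \in I$, so $I = \Mat_d(\FF)$. Thus the only two-sided ideals are $0$ and $\Mat_d(\FF)$, proving simplicity.

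The proof is essentially a direct computation with matrix units, so there is no serious obstacle — the only point requiring a moment of care is verifying that left- and right-multiplying by $E_{ip}$ and $E_{qj}$ isolates a single entry of $A$, which follows immediately from $E_{ij}E_{kl} = \delta_{jk} E_{il}$.
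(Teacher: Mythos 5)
Your proof is correct and follows essentially the same matrix-unit argument as the paper: both isolate a nonzero entry $a_{pq}$ via products $E_{ip} A E_{qj}$, scale by $a_{pq}^{-1}$, and conclude that all $E_{ij}$ (hence the whole ring) lie in the ideal. The only cosmetic difference is that you finish by summing the $E_{ii}$ to get the identity, while the paper invokes closure under $\FF$-linear combinations; these are interchangeable.
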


Lemma~\ref{matrix-rings-are-simple} is proven in many places; we give a proof here for the convenience of the reader.

\begin{proof}
For $1 \leq i,j \leq d$, let $E_{i,j} \in \Mat_d(\mathbb{F})$ be the matrix with a $1$ in position $(i,j)$ and $0$s elsewhere.
Let $\III \subseteq \Mat_d(\FF)$ be a nonzero ideal and choose a nonzero matrix $A = (a_{i,j}) \in \III$.  
There exist $1 \leq i_0, j_0 \leq d$ such that $a_{i_0,j_0} \neq 0$.  It follows that 
\begin{equation}
E_{i_0, i_0} \cdot A \cdot E_{j_0,j_0} = a_{i_0,j_0}  E_{i_0,j_0} \in \III.
\end{equation}
Furthermore, if $I_d \in \Mat_d(\FF)$ is the identity matrix, we have 
\begin{equation}
\left( a_{i_0,j_0}^{-1} I_d \right) \cdot \left( a_{i_0,j_0} E_{i_0,j_0} \right) = E_{i_0, j_0} \in \III
\end{equation}
where we used the fact that $a_{i_0,j_0}$ is a nonzero element of $\FF$.
For any $1 \leq i, j \leq d$, we deduce that
\begin{equation}
E_{i,i_0} \cdot E_{i_0,j_0} \cdot E_{j_0,j} = E_{i,j} \in \III
\end{equation}
so that $\{ E_{i,j} \,:\, 1 \leq i,j \leq d \} \subseteq \III$.  Finally, since $(a I_d) \cdot B = a B$ for all $a \in \FF$ and $B \in \Mat_d(\FF)$,
we see that $\III$ is closed under taking $\FF$-linear combinations, which forces $\III = \Mat_d(\FF)$.
\end{proof}

With Lemma~\ref{matrix-rings-are-simple} in hand, we 
can describe the image of the $k$-local subspace of $\CC[\symm_n]$ under the Artin-Wedderburn isomorphism.
The following theorem is true
(with the same proof) 
when $\CC$ is replaced by any field
$\FF$ for which $\FF[\symm_n]$ is semisimple.
This happens when $\FF$ has characteristic 
zero or positive characteristic $p > n$.

\begin{theorem}
\label{aw-local}
Let $k \geq 0$ and regard the space $\Loc_k(\symm_n,\CC)$ of $k$-local functions $\symm_n \rightarrow \CC$ as a subspace of the 
symmetric group algebra $\CC[\symm_n]$.  We have
 the equality of $\CC$-algebras
\begin{equation}
 \Psi \left(   \Loc_k(\symm_n,\CC) \right) = \bigoplus_{\lambda_1 \, \geq \, n-k} \End_\CC(V^{\lambda})
\end{equation}
where $\Psi$ is the Artin-Weddernburn isomorphism  \eqref{aw-sn}.
\end{theorem}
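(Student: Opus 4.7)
The plan is to combine the semisimple ideal structure of $\CC[\symm_n]$ produced by the Artin--Wedderburn isomorphism with Frobenius reciprocity applied to pointwise stabilizers. First I would observe that $\Loc_k(\symm_n,\CC)$, viewed inside $\CC[\symm_n]$ via $f \mapsto \alpha_f$, is a \emph{two-sided} ideal. Under this identification, left and right multiplication by arbitrary elements of $\symm_n$ correspond to the $\symm_n \times \symm_n$-action of Lemma~\ref{product-action-on-indicators}, which sends every $\one_{I,J}$ to another $k$-local indicator. Since $\Psi$ is an algebra isomorphism, $\Psi(\Loc_k(\symm_n,\CC))$ is therefore a two-sided ideal in $\bigoplus_{\lambda \vdash n}\End_\CC(V^\lambda)$. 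Because each factor $\End_\CC(V^\lambda) \cong \Mat_{f^\lambda}(\CC)$ is simple (Lemma~\ref{matrix-rings-are-simple}), Lemma~\ref{semisimple-ideals} yields a subset $S \subseteq \{\lambda \vdash n\}$ with $\Psi(\Loc_k(\symm_n,\CC)) = \bigoplus_{\lambda \in S}\End_\CC(V^\lambda)$. The remaining task is to identify $S = \{\lambda : \lambda_1 \geq n-k\}$.

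Next I would analyze the action of the generators $\alpha_{\one_{I,J}}$ on each simple block $V^\lambda$. For $(I,J) \in \symm_{n,k}$, fix any $w_0 \in \symm_n$ with $w_0(I) = J$ and let $H_I \subseteq \symm_n$ denote the pointwise stabilizer of the entries of $I$, a subgroup isomorphic to $\symm_{n-k}$. The set $\{w \in \symm_n : w(I) = J\}$ is the coset $w_0 \cdot H_I$, so
\[
\alpha_{\one_{I,J}} \;=\; w_0 \cdot [H_I]_+.
\]
Since $\tfrac{1}{(n-k)!}[H_I]_+$ is an idempotent whose action on any $\symm_n$-module $V$ is projection onto the subspace $V^{H_I}$ of $H_I$-invariants, and since the action of $w_0$ is invertible on every $V^\lambda$, the $\lambda$-component of $\Psi(\alpha_{\one_{I,J}})$ vanishes if and only if $(V^\lambda)^{H_I} = 0$. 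This condition is independent of $(I,J)$ because all such stabilizers are conjugate in $\symm_n$.

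To decide when $(V^\lambda)^{H_I} = 0$, I would invoke Frobenius reciprocity together with Young's rule. Since $H_I$ is a conjugate of the parabolic subgroup $\symm_{(n-k,1,\dots,1)}$, one has
\[
\dim (V^\lambda)^{H_I} \;=\; \dim\Hom_{\symm_n}\!\left(\Ind_{H_I}^{\symm_n}\triv,\, V^\lambda\right) \;=\; K_{\lambda,(n-k,1^k)},
\]
which counts semistandard tableaux of shape $\lambda$ with $n-k$ copies of $1$. Such a tableau exists precisely when the first row of $\lambda$ can absorb all of the $1$'s, i.e.\ $\lambda_1 \geq n-k$. Thus if $\lambda_1 < n-k$ every $\alpha_{\one_{I,J}}$ acts trivially on $V^\lambda$ and $\lambda \notin S$; conversely, if $\lambda_1 \geq n-k$, taking $I = J = (1,\dots,k)$ and $w_0 = e$ gives $\alpha_{\one_{I,J}} = [H_I]_+$, which acts nontrivially on $V^\lambda$ and therefore forces $\lambda \in S$ (recalling that $S$ records entire blocks).

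The only genuine obstacle is this last step: the ideal argument of the first paragraph is formal and gives no direct handle on which $\lambda$ lie in $S$. Frobenius reciprocity applied to the coset description of $\alpha_{\one_{I,J}}$ supplies exactly the needed criterion, after which the conclusion reduces to the standard combinatorial fact about the Kostka numbers $K_{\lambda,(n-k,1^k)}$.
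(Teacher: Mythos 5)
Your proof is correct and follows essentially the same route as the paper: identify $\Loc_k(\symm_n,\CC)$ as the two-sided ideal generated by the coset sums $w_0\cdot[\symm_{n-k}]_+$, invoke the classification of ideals in a direct sum of simple matrix algebras, and then determine the surviving blocks by computing when $(V^\lambda)^{\symm_{n-k}}\neq 0$ via Frobenius reciprocity. The only cosmetic difference is that you finish with Young's rule and the Kostka number $K_{\lambda,(n-k,1^k)}$, while the paper cites the Branching Rule for the multiplicity of the trivial in $\Res^{\symm_n}_{\symm_{n-k}}V^\lambda$; these are the same computation.
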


\begin{proof}
For $r \leq n$, embed $\symm_r \subseteq \symm_n$ by acting on the first $r$ letters.
We have 
the group algebra element $[\symm_r]_+ \in \CC[\symm_n]$ given by
$[\symm_r]_+ = \sum_{w \in \symm_r} w$. If $V$ is any $\symm_n$-module, it is not hard to see that the image of $V$ under 
$[\symm_r]_+$ coincides with the $\symm_r$-fixed subspace of $V$, that is
\begin{equation}
[\symm_r]_+ \cdot V = V^{\symm_r} := \{ v \in V \,:\, w \cdot v = v \text{ for all $w \in \symm_r$} \}.
\end{equation}

Observe that $[\symm_{n-k}]_+$ is the group algebra element corresponding to the indicator function $\one_{I,J}: \symm_n \rightarrow \CC$ 
where $I = J = (n-k+1, \dots, n-1, n)$.
Since $\Loc_k(\symm_n,\CC)$ is spanned by $\{ \one_{I,J} \,:\, (I,J) \in \symm_{n,k} \}$, Lemma~\ref{product-action-on-indicators} identifies 
$\Loc_k(\symm_n,\CC)$ with the two-sided ideal $\III_k \subseteq \CC[\symm_n]$ generated by $[\symm_{n-k}]_+$.
We show that 
\begin{equation}
 \Psi \left(   \III_k \right) = \bigoplus_{\lambda_1 \geq n-k} \End_\CC(V^\lambda)
\end{equation}
as follows.
 
 Lemma~\ref{matrix-rings-are-simple} implies that 
 the matrix ring $\End_\CC(V^\lambda)$ is simple for all $\lambda \vdash n$.
Since $\Psi(\III_k)$ is a two-sided ideal in the direct sum $\bigoplus_{\lambda \vdash n} \End_\CC(V^\lambda)$,
by Lemma~\ref{semisimple-ideals}
 there exists a family $P_k$ of partitions of $n$ 
such that 
\begin{equation}
 \Psi \left(   \III_k \right) = \bigoplus_{\lambda \in P_k} \End_\CC(V^\lambda).
\end{equation}
By Theorem~\ref{artin-wedderburn}, the family $P_k$ is characterized by
\begin{equation}
P_k = \{ \lambda \vdash n \,:\, \III_k \cdot V^{\lambda} \neq 0 \} = \{ \lambda \vdash n \,:\, [\symm_{n-k}]_+ \cdot V^{\lambda} \neq 0 \}
\end{equation}
where the second equality holds because the ideal $\III_k$ is generated by $[\symm_{n-k}]_+$.

Let $\lambda \vdash n$.  We are reduced to showing that 
$[\symm_{n-k}]_+ \cdot V^{\lambda} \neq 0$ if and only if $\lambda_1 \geq n-k$.
But we have 
\begin{equation}
[\symm_{n-k}]_+ \cdot V^{\lambda} = \left( V^{\lambda} \right)^{\symm_{n-k}} 
\end{equation}
so that
\begin{equation}
\dim \left( [\symm_{n-k}]_+ \cdot V^{\lambda}  \right) = \mult_\triv \left( \Res^{\symm_n}_{\symm_{n-k}} V^{\lambda} \right)
\end{equation}
is the multiplicity of the trivial character in the restriction of $V^\lambda$ from $\symm_n$ to $\symm_{n-k}$.
By the Branching Rule for irreducible $\symm_n$-representations (see e.g. \cite{Sagan}), this multiplicity is nonzero if and only if 
$\lambda_1 \geq n-k$.
\end{proof}

\begin{remark}
Theorem~\ref{aw-local} (or portions thereof) has appeared throughout the literature in various guises.
In machine learning, 
Huang, Guerstin, and Guibas \cite[Appendix C]{HGG} studied the isomorphism $\Psi$ with respect to {\em Young's orthogonal basis}
(or the {\em Gelfand-Tsetlin basis}) of $\symm_n$-irreducibles.
They proved a result equivalent to the assertion $\Psi(\Loc_k(\symm_n,\CC)) \subseteq \bigoplus_{\lambda_1 \, \geq \, n-k} \End_\CC(V^\lambda)$.
Meanwhile, Ellis, Friedgut and Pilpel proved the reverse inclusion as~\cite[Thm. 7]{EFP} using the Branching Rule for symmetric group representations.
In the context of permutation patterns, Even-Zohar \cite[Lem. 3]{EZ} proved a `dual' result which classifies functions 
$f: \symm_n \rightarrow \CC$ which vanish when summed over any two-sided coset $u \cdot \symm_k \cdot v$ of $\symm_k$ inside $\symm_n$.
The Peter-Weyl Theorem implies that Even-Zohar's \cite[Lem. 3]{EZ} is equivalent to Theorem~\ref{aw-local}.
\end{remark}

Taking dimensions in Theorem~\ref{aw-local} gives the following combinatorial interpretation of $\dim \Loc_k(\symm_n,\CC)$.
Recall that an {\em increasing subsequence} of length $r$ in a permutation $w \in \symm_n$ is a sequence $1 \leq i_1 < \cdots < i_r \leq n$
of positions such that $w(i_1) < \cdots < w(i_r)$.

\begin{corollary}
\label{increasing-subsequence-corollary}
The dimension of $\Loc_k(\symm_n,\CC)$ is the number of permutations in $\symm_n$ which have an increasing subsequence of length $n-k$.
\end{corollary}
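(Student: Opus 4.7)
The plan is to combine Theorem~\ref{aw-local} with the Robinson--Schensted correspondence and Schensted's theorem on longest increasing subsequences.

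First I would take dimensions on both sides of the equality in Theorem~\ref{aw-local}. Since $\End_\CC(V^\lambda) \cong \Mat_{f^\lambda}(\CC)$ has dimension $(f^\lambda)^2$, this gives
\begin{equation*}
\dim \Loc_k(\symm_n,\CC) = \sum_{\substack{\lambda \, \vdash \, n \\ \lambda_1 \, \geq \, n-k}} (f^{\lambda})^2.
\end{equation*}
Recall $f^\lambda = |\SYT(\lambda)|$ is the number of standard Young tableaux of shape $\lambda$, so $(f^\lambda)^2$ counts pairs $(P,Q)$ of standard Young tableaux of shape $\lambda$.

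Next I would invoke the RSK correspondence, which gives a bijection between $\symm_n$ and pairs $(P,Q)$ of standard Young tableaux of the same shape $\lambda \vdash n$. Under this bijection, Schensted's theorem asserts that the length of the longest increasing subsequence of $w \in \symm_n$ equals $\lambda_1$, where $\lambda$ is the common shape of $P$ and $Q$. Therefore
\begin{equation*}
\#\{w \in \symm_n : w \text{ has an increasing subsequence of length } \geq n-k\} = \sum_{\substack{\lambda \, \vdash \, n \\ \lambda_1 \, \geq \, n-k}} (f^{\lambda})^2,
\end{equation*}
which matches the dimension count above. Note that ``$w$ has an increasing subsequence of length $n-k$'' is equivalent to ``the longest increasing subsequence of $w$ has length $\geq n-k$,'' so the two counts coincide.

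Since both the Artin--Wedderburn dimension calculation and the RSK/Schensted step are standard and already developed (or invoked) in the preceding material, there is no real technical obstacle; the only point worth flagging is the interpretation of ``has an increasing subsequence of length $n-k$'' as a statement about the maximum such length, which I have just addressed. This completes the proof.
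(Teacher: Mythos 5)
Your proposal is correct and follows essentially the same route as the paper: take dimensions in Theorem~\ref{aw-local} to get $\sum_{\lambda_1 \geq n-k} (f^{\lambda})^2$, then identify this sum via RSK and Schensted's theorem with the count of permutations whose longest increasing subsequence has length at least $n-k$. Your explicit remark that ``has an increasing subsequence of length $n-k$'' means ``length $\geq n-k$'' is a small clarification the paper leaves implicit, but the argument is the same.
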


\begin{proof}
The {\em RSK correspondence} 
is a  bijection $w \mapsto (P(w), Q(w))$ between permutations $w \in \symm_n$ and pairs
$(P,Q)$ of standard tableaux with $n$ boxes having the same shape (see e.g.~\cite{Sagan}). It is well-known that the length of the longest increasing 
subsequence of $w$ is the length of the first row of $P(w)$ (or $Q(w)$). The number of $w \in \symm_n$ which have 
an increasing subsequence of length $n-k$ is therefore $\sum_{\substack{\lambda \vdash n \\ \lambda_1 \geq n-k}} (f^{\lambda})^2$ .
By Theorem~\ref{aw-local} this is also the dimension of $\Loc_k(\symm_n,\CC)$.
\end{proof}

Let $k \geq 0$.
Given any statistic $f: \symm_n \rightarrow \CC$, Theorem~\ref{aw-local} gives a natural best $k$-local approximation $L_k f$ to the statistic $f$.
Indeed, if $\Psi(f) = (\varphi_\lambda)_{\lambda \vdash n}$ where $\varphi_\lambda: V^{\lambda} \rightarrow V^{\lambda}$ is a linear map, the statistic $L_k f$
is characterized by $\Psi(L_k f) (\psi_\lambda)_{\lambda \vdash n}$ where $\psi_\lambda: V^{\lambda} \rightarrow V^{\lambda}$ is given by
\begin{equation}
\psi_\lambda = \begin{cases}
\varphi_\lambda & \lambda_1 \geq n-k, \\
0 & \text{else.}
\end{cases}
\end{equation}
We record some simple properties of $L_k  f$.

\begin{proposition}
\label{local-approximation-properties}
Let $f: \symm_n \rightarrow \CC$ be a function and let $k \geq 0$.
\begin{enumerate}
\item  We have $L_n f = f$.
\item  The function $L_0 f: \symm_n \rightarrow \CC$ is the constant function $\frac{1}{n!} \sum_{w \in \symm_n} f(w)$.
\item  The function $f$ is $k$-local if and only if $L_k f = f$.
\end{enumerate}
\end{proposition}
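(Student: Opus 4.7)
The three claims are essentially immediate consequences of the definition of $L_k$ via the Artin--Wedderburn isomorphism $\Psi$ together with Theorem~\ref{aw-local}, but they require a small separate computation in each case.

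For part (1), when $k = n$ the constraint $\lambda_1 \geq n-k = 0$ is vacuous, so every component $\varphi_\lambda$ of $\Psi(f)$ is kept and $L_n f = f$. For part (3), I would apply Theorem~\ref{aw-local} in both directions: if $f$ is $k$-local then $\Psi(f)$ already lies in $\bigoplus_{\lambda_1 \geq n-k} \End_\CC(V^\lambda)$, so the componentwise truncation defining $L_k$ leaves $f$ unchanged; conversely, $L_k f = f$ forces $\Psi(f)$ into the same direct sum, which by Theorem~\ref{aw-local} is exactly the condition for $f$ to be $k$-local.

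Part (2) is the only one needing actual computation. The condition $\lambda_1 \geq n$ selects only $\lambda = (n)$, so $L_0 f$ corresponds to an element of $\CC[\symm_n]$ whose $\Psi$-image is concentrated in the one-dimensional summand $\End_\CC(V^{(n)}) \cong \CC$. On the trivial representation every $w \in \symm_n$ acts as the identity, so $\alpha_f = \sum_w f(w) \cdot w$ acts by the scalar $S := \sum_{w \in \symm_n} f(w)$, i.e.\ $\varphi_{(n)} = S$. To identify the preimage, I would consider the candidate element $\frac{S}{n!} \sum_{v \in \symm_n} v \in \CC[\symm_n]$. On $V^{(n)}$ it acts as $\frac{S}{n!} \cdot n! = S$, matching $\varphi_{(n)}$. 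On any nontrivial irreducible $V^\lambda$ with $\lambda \neq (n)$, the central element $\sum_v v$ acts as a scalar by Schur's lemma, and this scalar must be zero because otherwise $\sum_v v$ would act invertibly and exhibit a trivial subrepresentation inside $V^\lambda$, contradicting irreducibility (equivalently, one can compute the scalar as $\frac{1}{\dim V^\lambda} \sum_v \chi^\lambda(v) = 0$ by character orthogonality against the trivial character). Thus $\frac{S}{n!} \sum_v v$ matches $\Psi(L_0 f)$ in every component, so by injectivity of $\Psi$ it equals $L_0 f$ in $\CC[\symm_n]$; translated back to a function, this is the constant $\frac{1}{n!} \sum_w f(w)$.

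The only subtle step is the vanishing of $\sum_v v$ on nontrivial irreducibles, which is the standard fact that the group sum is (a multiple of) the projector onto the trivial isotypic component; everything else is bookkeeping with the Artin--Wedderburn decomposition already in place.
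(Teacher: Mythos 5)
Your proof is correct and follows the same route as the paper's: the paper's (very terse) proof likewise derives (1) and (3) directly from Theorem~\ref{aw-local} and gets (2) from the fact that $V^{(n)}$ is the trivial representation. You have simply written out in full the computation the paper leaves implicit, including the standard vanishing of $\sum_v v$ on nontrivial irreducibles, and all the steps check out.
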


\begin{proof}
Item (3) is a consequence of Theorem~\ref{aw-local}, and implies Item (1).  Item (2) follows because $V^{(n)}$ is the trivial representation.
\end{proof}

In practice, computing $L_k f$ for a given statistic $f$
requires choosing a basis $\BBB_\lambda$ for every irreducible $V^\lambda$ and coordinatizing the isomorphism 
$\Psi$ of \eqref{aw-sn} to get an isomorphism
\begin{equation}
\label{aw-coordinate}
\CC[\symm_n] \cong \bigoplus_{\lambda \vdash n} \Mat_{f^{\lambda}}(\CC)
\end{equation}
between $\CC[\symm_n]$ and a product of matrix algebras.
Although the matrix tuples representing $f$ and $L_k f$ depend on the choice of bases $\BBB_\lambda$, the statistic $L_k f: \symm_n \rightarrow \CC$
is uniquely determined by $f$.
It could be interesting to study efficient methods to compute $L_k f$, with related work appearing in~\cite{DR} and~\cite{HKMOW}.

The replacement of $f: \symm_n \rightarrow \CC$ by its $k$-local approximation $L_k f$ introduces some amount of error.
It could also be interesting to quantify this error, a la Taylor's Theorem for smooth functions.
Character theory gives a hint that this line of study could be fruitful.
 
For each partition $\lambda \vdash n$, choose a basis $\BBB_\lambda$ of the corresponding $\symm_n$-irreducible $V^\lambda$.
Given $w \in \symm_n$, let $A^\lambda(w)$ be the representing matrix for the action of $w$ on $V^\lambda$ with respect to the basis $\BBB_\lambda$. 
The space $\Fun(\symm_n,\CC)$ admits an inner product
\begin{equation}
\label{arbitrary-function-inner-product}
\langle f, g \rangle := \frac{1}{n!} \sum_{w \in \symm_n} f(w) \cdot \overline{g(w)}.
\end{equation}
The $n! = \sum_{\lambda \vdash n} (f^{\lambda})^2$ matrix elements of the matrix tuple $(A^{\lambda}(w))_{\lambda \vdash n}$ where
 $w \in \symm_n$
determine $n!$ functions $\symm_n \rightarrow \CC$. 
If $U_{\lambda} \subseteq \Fun(\symm_n,\CC)$ is the subspace spanned by the $(f^{\lambda})^2$
matrix elements in the $\lambda^{th}$ component, the {\em Peter-Weyl Theorem} implies that 
we have a direct sum decomposition
\begin{equation}
\Fun(\symm_n,\CC) = \bigoplus_{\lambda \vdash n} U_{\lambda}
\end{equation}
which is orthogonal with respect to the inner product \eqref{arbitrary-function-inner-product}.
Furthermore, the subspace $U_\lambda \subseteq \Fun(\symm_n,\CC)$ is independent of the choice of bases $\BBB_\lambda$ and 
we have the identification $\Loc_k(\symm_n,\CC) = \bigoplus_{\lambda_1 \geq n-k} U_{\lambda}$.
The map $L_k: \Fun(\symm_n,\CC) \twoheadrightarrow \Loc_k(\symm_n,\CC)$ admits a coordinate-free description
as orthogonal projection onto the subspace $\bigoplus_{\lambda_1 \geq n-k} U_{\lambda}$.
This gives another sense in which $L_k f$ is the best $k$-local approximation of $f$.

\section{A basis of $k$-local statistics}
\label{local-basis}

In principle, Theorem~\ref{aw-local} can be used to obtain a basis for the space $\Loc_k(\symm_n)$ of $k$-local statistics on $\symm_n$.
One could choose a basis $\BBB_{\lambda}$ for each $\symm_n$-irreducible $V^{\lambda}$, use these bases to coordinatize the Artin-Wedderburn isomorphism 
as in \eqref{aw-coordinate}, and compute the inverse image of the matrix coordinates in the algebras $\Mat_{f^\lambda}(\CC)$ for which $\lambda_1 \geq n-k$.
In practice, bases for $\Loc_k(\symm_n)$ obtained in this fashion are likely to be very complicated and hard to compute.
The reader who suspects that simple combinatorial bases of $\Loc_k(\symm_n)$ may be trivially obtained is encouraged to try out their intuition on 
$\Loc_2(\symm_n)$; Corollary~\ref{increasing-subsequence-corollary} 
states that such a basis would biject with permutations in $\symm_n$ with an increasing subsequence of length $n-2$.

In this subsection, we state without proof a result of the second author which gives a simple, explicit basis for $\Loc_k(\symm_n)$. In fact, we give a nested family of bases
for the filtration $\Loc_0(\symm_n,\CC) \subset \Loc_1(\symm_n,\CC) \subset \cdots \subset \Loc_{n-1}(\symm_n,\CC)$ of the space $\Fun(\symm_n,\CC)$ of functions
on $\symm_n$.
In order to describe these bases, we need the shadow lines construction of Viennot \cite{Viennot}.

We represent a permutation $w \in \symm_n$ with the graph of its function $w: [n] \rightarrow [n]$,
 i.e. the collection of points $\{ (i,w(i)) \,:\, 1 \leq i \leq n \}$ on the grid $[n] \times [n]$.
For example, the permutation $w = [4,1,8,5,3,6,2,7] \in \symm_8$ is given below in bullets. 

\begin{center}
\begin{tikzpicture}[scale = 0.5]

\draw (1,1) grid (8,8);

\node at (1,4) {$\bullet$};
\node at (2,1) {$\bullet$};
\node at (3,8) {$\bullet$};
\node at (4,5) {$\bullet$};
\node at (5,3) {$\bullet$};
\node at (6,6) {$\bullet$};
\node at (7,2) {$\bullet$};
\node at (8,7) {$\bullet$};

\end{tikzpicture}
\end{center}

Shine a flashlight northeast from the origin (0,0). Each bullet in the permutation casts a shadow to its northeast. The boundary of the shaded region is the 
{\em first shadow line}; in our example it is as follows.
\begin{center}
\begin{tikzpicture}[scale = 0.5]

\draw (1,1) grid (8,8);

\node at (1,4) {$\bullet$};
\node at (2,1) {$\bullet$};
\node at (3,8) {$\bullet$};
\node at (4,5) {$\bullet$};
\node at (5,3) {$\bullet$};
\node at (6,6) {$\bullet$};
\node at (7,2) {$\bullet$};
\node at (8,7) {$\bullet$};

\draw[very thick] (1,9) -- (1,4) -- (2,4) -- (2,1) -- (9,1);

\end{tikzpicture}
\end{center}
Removing the points on the first shadow line and repeating this procedure, we obtain the {\em second shadow line}. 
Iterating, we obtain the {\em third shadow line}, the {\em fourth shadow line}, and so on.
In our example, the shadow lines are shown below.
\begin{center}
\begin{tikzpicture}[scale = 0.5]

\draw (1,1) grid (8,8);

\node at (1,4) {$\bullet$};
\node at (2,1) {$\bullet$};
\node at (3,8) {$\bullet$};
\node at (4,5) {$\bullet$};
\node at (5,3) {$\bullet$};
\node at (6,6) {$\bullet$};
\node at (7,2) {$\bullet$};
\node at (8,7) {$\bullet$};

\draw[very thick] (1,9) -- (1,4) -- (2,4) -- (2,1) -- (9,1);

\draw[very thick] (3,9) -- (3,8) -- (4,8) -- (4,5) -- (5,5) -- (5,3) -- (7,3) -- (7,2) -- (9,2);

\draw[very thick] (6,9) -- (6,6) -- (9,6);

\draw[very thick] (8,9) -- (8,7) -- (9,7);

\node at (2,4) {${\color{red} \bullet}$};
\node at (4,8) {${\color{red} \bullet}$};
\node at (5,5) {${\color{red} \bullet}$};
\node at (7,3) {${\color{red} \bullet}$};

\end{tikzpicture}
\end{center}

The {\em Viennot shadow line construction} associates a permutation $w \in \symm_n$ to its sequence of shadow lines.
The points on the graph of $w$ (shown in black above) form the southwest corners of the shadow lines.
We will be especially interested in the northeast corners of lines in the shadow diagram; these are shown in red.
The northeast corners of the shadow diagram of $w$ may be naturally regarded as a partial permutation; call this partial 
permutation $(I(w), J(w))$.  In the example above we have $I(w) = 2457$ and $J(w) = 4853$.

Viennot used \cite{Viennot} the shadow line construction to give a reformulation of the RSK bijection $w \mapsto (P(w), Q(w))$.
He proved that the entries of the first row of $P(w)$ are the $y$-coordinates of the horizontal rays in the shadow line construction and that
 the entries of the first row of $Q(w)$ are the $x$-coordinates of the vertical rays in the shadow line construction.
The northeast corners of the shadow diagram of $w$ form the partial permutation $(I(w), J(w))$ to which the shadow line construction may be applied;
this yields the second rows of $P(w)$ and $Q(w)$. The remaining rows of $P(w)$ and $Q(w)$ are obtained recursively.
The Viennot construction makes various properties of the Schensted correspondence transparent, notably its behavior
$P(w^{-1}) = Q(w), Q(w^{-1}) = P(w)$ under group-theoretic inversion $w \mapsto w^{-1}$.

We will use the shadow line construction to get a basis for the space of $k$-local statistics.
For $w \in \symm_n$,
let $s(w)$ be the number of northeast corners in the shadow diagram (or, equivalently, the length of the partial permutation $(I(w), J(w))$);
we have $s(w) = 4$ in our example.

\begin{theorem} 
\label{shadow-basis}
\cite[Thm. 3.12]{Rhoades} 
For any $n, k \geq 0$, the indicator functions 
\begin{equation}
\left \{ \one_{I(w),J(w)} \,:\, w \in \symm_n, \, \, s(w) \leq k  \right \}
\end{equation}
form a basis of $\Loc_k(\symm_n,\CC)$.
\end{theorem}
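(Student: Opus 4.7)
The plan has three ingredients: membership in $\Loc_k(\symm_n,\CC)$, a cardinality count, and linear independence. Membership is immediate since $(I(w),J(w))$ is a partial permutation of size $s(w) \leq k$, so $\one_{I(w),J(w)}$ is $s(w)$-local and hence $k$-local by Observation~\ref{lower-closure}. It therefore suffices to show the proposed family has cardinality $\dim \Loc_k(\symm_n,\CC)$ and is linearly independent.

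For the cardinality count, I would establish $s(w) = n - \lambda_1(P(w))$ directly from Viennot's construction. The number of shadow lines in the diagram of $w$ equals $\lambda_1(P(w))$, because the $y$-coordinates of the horizontal rays comprise the first row of $P(w)$. A shadow line with $m$ bullets contributes exactly $m-1$ northeast corners (one bend between each consecutive pair of bullets on the line), so summing over all lines yields $s(w) = \sum (m_i - 1) = n - \lambda_1(P(w))$. Consequently $\{w \in \symm_n : s(w) \leq k\} = \{w : \lambda_1(P(w)) \geq n-k\}$, and Corollary~\ref{increasing-subsequence-corollary} gives the dimension match. A useful by-product is that $w$ is recoverable from $(I(w),J(w))$: the complements $[n] \setminus J(w)$ and $[n] \setminus I(w)$ are the first rows of $P(w)$ and $Q(w)$ respectively, while iterating the shadow line construction on $(I(w),J(w))$ produces the remaining rows. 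Hence the map $w \mapsto (I(w),J(w))$ is injective and the proposed family has no repetitions.

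The main obstacle is linear independence. A naive triangularity attempt on the evaluation matrix $\bigl(\one_{I(w'),J(w')}(w)\bigr)_{w,w'}$ fails because the diagonal entry $\one_{I(w),J(w)}(w)$ can vanish: for $w = 132 \in \symm_3$ the corner partial permutation is $3 \mapsto 3$ but $w(3) = 2$. My plan is instead to set up a block triangularity with respect to a partial order on $\{w : s(w) \leq k\}$ that first compares the pair of subtableaux $(\widehat{P}(w),\widehat{Q}(w))$ obtained by deleting the first rows of $P(w)$ and $Q(w)$, and then refines using the first-row content. The reconstruction in the previous paragraph says that $(I(w),J(w))$ determines exactly these lower subtableaux, which should force a compatibility condition on any $v$ whose indicator $\one_{I(v),J(v)}$ evaluates non-trivially at $w$, thereby propagating the triangularity. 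The most delicate step is pinning down this compatibility rigorously and identifying within each block a distinguished witness permutation providing the nonzero diagonal entry. A possible fallback is to invoke Theorem~\ref{aw-local} and verify independence after applying the Artin-Wedderburn isomorphism, reducing the problem to an explicit computation inside $\bigoplus_{\lambda_1 \geq n-k} \End_{\CC}(V^{\lambda})$.
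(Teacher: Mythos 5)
You should first note that the paper does not prove this statement: it is explicitly ``stated without proof'' and cited from \cite[Thm.~3.12]{Rhoades}, so there is no in-paper argument to compare against. The parts of your proposal that are carried out are correct and nicely done. Membership via Observation~\ref{lower-closure} is exactly right. The identity $s(w) = n - \lambda_1(P(w))$ is correct (each shadow line with $m$ bullets contributes $m-1$ northeast corners, and the number of lines is $\lambda_1(P(w))$), so the candidate set is indexed by permutations with an increasing subsequence of length $n-k$, matching $\dim \Loc_k(\symm_n,\CC)$ by Corollary~\ref{increasing-subsequence-corollary}. Your injectivity argument is also sound: $J(w)$ consists of the non-minimal $y$-coordinates on each shadow line, so $[n]\setminus J(w)$ is the first row of $P(w)$ (similarly for $I(w)$ and $Q(w)$), and iterating the shadow construction on $(I(w),J(w))$ recovers the remaining rows, whence $w$ is determined by RSK.

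The linear independence step, however, is a genuine gap, and it is essentially the entire content of the cited theorem. You correctly observe that naive triangularity fails (your $w=132$ example is right), but the proposed repair is only a hope: the ``compatibility condition'' that is supposed to relate the tableaux of $v$ and $w$ whenever $w(I(v)) = J(v)$ is never formulated, the partial order on blocks is not defined, and no argument is given that each diagonal block is invertible (the failure of the naive diagonal already shows this is where the difficulty lives). Knowing that $w$ contains the partial permutation $(I(v),J(v))$ constrains the shape of $w$ only through Greene-type inequalities, and turning that into a strict triangularity is not routine. The Artin--Wedderburn fallback is likewise not an argument — it restates the problem as ``an explicit computation'' without indicating what that computation is. For what it is worth, the route taken in the cited reference runs in the opposite direction: one proves that the monomials $x_{I(w),J(w)}$ \emph{span} the quotient $\CC[\xx]/\gr\,\II(\symm_n)$ via an explicit straightening algorithm, and linear independence then follows from the cardinality count you established. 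So your dimension count is the right second half of a proof, but the first half (spanning or independence) still has to be supplied, and your sketch does not yet do so.
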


 Theorem~\ref{shadow-basis} also holds when one replaces $\CC$ with any commutative ring $R$
 (in which case the given indicator functions form an $R$-module basis of $\Loc_k(\symm_n,R)$).
Observation~\ref{lower-closure} guarantees that the given indicator functions lie in $\Loc_k(\symm_n,\CC)$.
The nested bases of $\Loc_0(\symm_3,\CC) \subset \Loc_1(\symm_3,\CC) \subset \Loc_2(\symm_3,\CC)$ given by
Theorem~\ref{shadow-basis} are as follows.
\begin{center}
\begin{tikzpicture}[scale = 0.4]

\draw (1,1) grid (3,3);

\draw[very thick] (1,4) -- (1,1) -- (4,1);
\draw[very thick] (2,4) -- (2,2) -- (4,2);
\draw[very thick] (3,4) -- (3,3) -- (4,3);

\node at (1,1) {$\bullet$};
\node at (2,2) {$\bullet$};
\node at (3,3) {$\bullet$};

\node at (2.5,0) {$\one_{\varnothing,\varnothing}$};

\draw[dashed] (6,4) -- (6,0);

\draw (8,1) grid (10,3);

\draw[very thick] (8,4) -- (8,2) -- (9,2) -- (9,1) -- (11,1);
\draw[very thick] (10,4) -- (10,3) -- (11,3);

\node at (8,2) {$\bullet$};
\node at (9,1) {$\bullet$};
\node at (10,3) {$\bullet$};

\node at (9,2) {${\color{red} \bullet}$};

\node at (9.5,0) {$\one_{2,2}$};

\draw (12,1) grid (14,3);

\draw[very thick] (12,4) -- (12,1) -- (15,1);
\draw[very thick] (13,4) -- (13,3) -- (14,3) -- (14,2) -- (15,2);

\node at (12,1) {$\bullet$};
\node at (13,3) {$\bullet$};
\node at (14,2) {$\bullet$};

\node at (14,3) {${\color{red} \bullet}$};

\node at (13.5,0) {$\one_{3,3}$};

\draw (16,1) grid (18,3);

\draw[very thick] (16,4) -- (16,2) -- (18,2) -- (18,1) -- (19,1);
\draw[very thick] (17,4) -- (17,3) -- (19,3);

\node at (16,2) {$\bullet$};
\node at (17,3) {$\bullet$};
\node at (18,1) {$\bullet$};

\node at (18,2) {${\color{red} \bullet}$};

\node at (17.5,0) {$\one_{3,2}$};

\draw (20,1) grid (22,3);

\draw[very thick] (20,4) -- (20,3) -- (21,3) -- (21,1) -- (23,1);
\draw[very thick] (22,4) -- (22,2) -- (23,2);

\node at (20,3) {$\bullet$};
\node at (21,1) {$\bullet$};
\node at (22,2) {$\bullet$};

\node at (21,3) {${\color{red} \bullet}$};

\node at (21.5,0) {$\one_{2,3}$};

\draw[dashed] (25,4) -- (25,0);

\draw (27,1) grid (29,3);

\draw[very thick] (27,4) -- (27,3) -- (28,3) -- (28,2) -- (29,2) -- (29,1) -- (30,1);

\node at (27,3) {$\bullet$};
\node at (28,2) {$\bullet$};
\node at (29,1) {$\bullet$};

\node at (28,3) {${\color{red} \bullet}$};
\node at (29,2) {${\color{red} \bullet}$};

\node at (28.5,0) {$\one_{23,32}$};

\end{tikzpicture}
\end{center}

Theorem~\ref{shadow-basis} has a geometric interpretation. Regard the set $\Mat_n(\CC)$ of $n \times n$ complex matrices as 
an affine space with coordinate ring $\CC[\xx]$ where $\xx = (x_{i,j})$ is an $n \times n$ matrix of variables.
We have a permutation matrix embedding $\symm_n \subseteq \Mat_n(\CC)$ which realizes $\symm_n$ as a locus of $n!$ permutation matrices.
By Lagrange Interpolation, any function $\symm_n \rightarrow \CC$ is the restriction of some polynomial $f \in \CC[\xx]$ to the locus $\symm_n$.
In symbols, we have
\begin{equation}
\CC[\symm_n] = \CC[\xx]/\II(\symm_n)
\end{equation}
where we identify $\CC[\symm_n]$ with the family of (polynomial) functions $\symm_n \rightarrow \CC$ and 
\begin{equation}
\II(\symm_n) = \{ f \in \CC[\xx] \,:\, f(w) = 0 \text{ for all $w \in \symm_n$} \} \subseteq \CC[\xx]
\end{equation}
is the ideal of polynomials which vanish on $\symm_n$.

Let $(I,J) \in \symm_{n,k}$ be a partial permutation with $I = (i_1, \dots, i_k)$ and $J = (j_1, \dots, j_k)$.  
Clearly the indicator function $\one_{I,J}: \symm_n \rightarrow \CC$ is the restriction of the polynomial function
$x_{I,J} := x_{i_1,j_1} \cdots x_{i_k,j_k}$ from $\Mat_n(\CC)$ to $\symm_n$. Thanks to Observation~\ref{lower-closure}, we have an identification
\begin{equation}
\label{geometric-local-identification}
\Loc_k(\symm_n,\CC) \cong \mathrm{Image} \left(  \CC[\xx]_{\leq k} \hookrightarrow \CC[\xx] \twoheadrightarrow \CC[\xx]/\II(\symm_n)  \right)
\end{equation}
where $\CC[\xx]_{\leq k} \subseteq \CC[\xx]$ is the subspace of polynomials of degree $\leq k$ and the maps are inclusion followed by projection.
Theorem~\ref{shadow-basis} implies that the cosets 
\begin{equation}
\{x_{I(w),J(w)} + \II(\symm_n) \,:\, s(w) \leq k \}
\end{equation}
form a basis of $\Loc_k(\symm_n,\CC)$ under the identification \eqref{geometric-local-identification}.  
In particular, if $\gr \, \II(\symm_n) \subseteq \CC[\xx]$ denotes the {\em associated graded ideal} of $\II(\symm_n)$
generated by the highest degree homogeneous components of polynomials $f \in \II(\symm_n)$, we have the Hilbert series
\begin{equation}
\Hilb \left(   \CC[\xx]/\gr \, \II(\symm_n) ; q \right) = \sum_{d = 0}^{n-1} a_{n,n-d} \cdot q^d
\end{equation}
where $a_{n,n-d}$ counts permutations $w \in \symm_n$ whose longest increasing subsequence has length $n-d$.
The ideal $\gr \, \II(\symm_n)$ is generated \cite[Thm. 3.12]{Rhoades}
by the family of all row sums $x_{i,1} + \cdots + x_{i,n}$ of variables, column sums $x_{1,j} + \cdots + x_{n,j}$
of variables, and products $x_{i,j} \cdot x_{i,j'}$ and $x_{i,j} \cdot x_{i',j}$ with $1 \leq i, i', j, j' \leq n$.

In addition to respecting the filtration of $\Fun(\symm_n,\CC)$ by local statistics,
 the bases of Theorem~\ref{shadow-basis} are also stable under the operation $n \rightarrow n+1$.
 Indeed, if $w \in \symm_n$ is a permutation, we can build a larger permutation $\hat{w} \in \symm_{n+1}$ by letting 
 $\hat{w}(i) = w(i)$ for $1 \leq i \leq n$ and setting $\hat{w}(n+1) = n+1$.  It is not hard to see from the shadow line construction that 
 $I(w) = I(\hat{w})$ and $J(w) = J(\hat{w})$, so that $w$ and $\hat{w}$ give rise to the same indicator function in Theorem~\ref{shadow-basis}.
 On the other hand,
the authors do not know how to extract a basis of $\Loc_k(\symm_n,\CC)$ from its definitional spanning 
set $\{ \one_{I,J} \,:\, (I,J) \in \symm_{n,k} \}$.

\section{Local class functions}

Under the identification of the symmetric group algebra $\CC[\symm_n]$ with the space $\Fun(\symm_n,\CC)$ of functions on $\symm_n$, the center $Z(\CC[\symm_n])$
of $\CC[\symm_n]$ corresponds to the subspace $\Class(\symm_n,\CC)$ of functions $\symm_n \rightarrow \CC$ which are constant on conjugacy classes.
Since the center of a complex matrix ring $\Mat_d(\CC)$ is its 1-dimensional subalgebra of scalar matrices, 
the Artin-Wedderburn Theorem gives rise to isomorphisms
\begin{equation}
\label{aw-central}
Z(\CC[\symm_n]) \cong \bigoplus_{\lambda \, \vdash \, n} \CC \cdot \mathrm{id}_{V^{\lambda}} \cong \bigoplus_{\lambda \, \vdash \, n} \CC.
\end{equation}
Restricting to $k$-local class functions yields the following.

\begin{corollary}
\label{k-local-central-aw}
The space $\Loc_k(\symm_n,\CC) \cap \Class(\symm_n,\CC)$ of $k$-local class functions on $\symm_n$ corresponds to 
\begin{equation}
\bigoplus_{\lambda_1 \, \geq \, n-k} \CC \cdot \mathrm{id}_{V^{\lambda}} 
\end{equation}
under the Artin-Wedderburn isomorphism $\Psi$.
\end{corollary}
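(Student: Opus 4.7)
The plan is to observe that this corollary is essentially the intersection of two already-established descriptions under the same isomorphism $\Psi$. Since $\Psi: \CC[\symm_n] \xrightarrow{\sim} \bigoplus_{\lambda \vdash n} \End_\CC(V^\lambda)$ is an isomorphism of $\CC$-algebras (Theorem~\ref{artin-wedderburn}), it carries the center of the domain to the center of the codomain. The center of a product of matrix algebras is the product of their centers, and each $\End_\CC(V^\lambda) \cong \Mat_{f^\lambda}(\CC)$ has as its center the line of scalar matrices $\CC \cdot \mathrm{id}_{V^\lambda}$. This yields the identification in \eqref{aw-central}, and in particular $\Psi(\Class(\symm_n,\CC)) = \bigoplus_{\lambda \vdash n} \CC \cdot \mathrm{id}_{V^\lambda}$.

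Next, I will invoke Theorem~\ref{aw-local}, which says
\[
\Psi(\Loc_k(\symm_n,\CC)) = \bigoplus_{\lambda_1 \,\geq\, n-k} \End_\CC(V^\lambda).
\]
Because $\Psi$ is a bijection, the image of an intersection is the intersection of the images. Therefore
\[
\Psi\bigl(\Loc_k(\symm_n,\CC) \cap \Class(\symm_n,\CC)\bigr) = \left(\bigoplus_{\lambda_1 \,\geq\, n-k} \End_\CC(V^\lambda)\right) \cap \left(\bigoplus_{\lambda \,\vdash\, n} \CC \cdot \mathrm{id}_{V^\lambda}\right),
\]
and since both right-hand summands decompose as direct sums indexed by the same partitions, the intersection is computed componentwise and equals $\bigoplus_{\lambda_1 \,\geq\, n-k} \CC \cdot \mathrm{id}_{V^\lambda}$, as desired.

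There is no real obstacle here: the substance lies in Theorem~\ref{aw-local} and in the standard fact that $\Psi$ identifies centers with centers. The only minor point worth being careful about is that intersections interact well with the direct sum decomposition, which follows because the projection $\CC[\symm_n] \twoheadrightarrow \End_\CC(V^\lambda)$ is $\Psi$ followed by projection onto the $\lambda$-summand, so membership in either of the two subspaces in question can be checked one $\lambda$ at a time. Thus the proof reduces to one sentence combining Theorem~\ref{aw-local} with \eqref{aw-central}.
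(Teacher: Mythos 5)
Your proposal is correct and matches the paper's proof, which likewise just combines the center identification \eqref{aw-central} with Theorem~\ref{aw-local}. The extra care you take in checking that the intersection distributes over the direct sum decomposition is a harmless elaboration of the same one-line argument.
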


\begin{proof}
Combine the isomorphisms \eqref{aw-central} with Theorem~\ref{aw-local}.
\end{proof}

Given a partition $\lambda \vdash n$, the irreducible character $\chi^{\lambda}: \symm_n \rightarrow \CC$ is a class function on $\symm_n$.
It is a well-known consequence of character orthogonality and 
Schur's Lemma that the group algebra element $\sum_{w \in \symm_n} \chi^{\lambda}(w) \cdot w \in \CC[\symm_n]$
acts by a nonzero scalar on $V^{\lambda}$ and annihilates any irreducible representations $V^{\mu}$ of $\symm_n$ with $\mu \neq \lambda$.
The following result is now an immediate consequence of Theorem~\ref{aw-local}.

\begin{corollary}
\label{character-local}
Let $\lambda \vdash n$. The irreducible character $\chi^{\lambda}: \symm_n \rightarrow \CC$ of $\symm_n$ is $k$-local if and only if 
$\lambda_1 \geq n-k$.
\end{corollary}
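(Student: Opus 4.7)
The plan is to combine Theorem~\ref{aw-local} with a direct computation of the image of $\chi^\lambda$ under the Artin--Wedderburn isomorphism $\Psi$. Under the identification $\Fun(\symm_n,\CC) \leftrightarrow \CC[\symm_n]$, the class function $\chi^\lambda$ corresponds to the group algebra element $\alpha_{\chi^\lambda} = \sum_{w \in \symm_n} \chi^\lambda(w) \cdot w$. Since $\alpha_{\chi^\lambda}$ is central in $\CC[\symm_n]$, its image $\Psi(\alpha_{\chi^\lambda})$ lies in $\bigoplus_{\mu \vdash n} \CC \cdot \mathrm{id}_{V^\mu}$ by~\eqref{aw-central}, so it is determined by the scalars by which $\alpha_{\chi^\lambda}$ acts on each irreducible $V^\mu$.

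I would compute these scalars via Schur's Lemma and character orthogonality. For any $\mu \vdash n$, the action of $\alpha_{\chi^\lambda}$ on $V^\mu$ is $(\dim V^\mu)^{-1} \cdot \mathrm{trace}_{V^\mu}(\alpha_{\chi^\lambda}) \cdot \mathrm{id}_{V^\mu}$, and
\begin{equation*}
\mathrm{trace}_{V^\mu}(\alpha_{\chi^\lambda}) = \sum_{w \in \symm_n} \chi^\lambda(w) \chi^\mu(w) = n! \cdot \langle \chi^\lambda, \chi^\mu \rangle = n! \cdot \delta_{\lambda,\mu},
\end{equation*}
using that irreducible characters of $\symm_n$ are real-valued and orthonormal. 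Therefore $\Psi(\alpha_{\chi^\lambda})$ has a single nonzero component, namely the scalar $n!/f^\lambda$ on $\End_\CC(V^\lambda)$, and vanishes on $\End_\CC(V^\mu)$ for every $\mu \neq \lambda$.

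Now Theorem~\ref{aw-local} states that $\chi^\lambda$ is $k$-local if and only if $\Psi(\alpha_{\chi^\lambda})$ has zero component on $\End_\CC(V^\mu)$ for every $\mu$ with $\mu_1 < n-k$. Since the unique nonzero component sits in $\End_\CC(V^\lambda)$, the condition becomes simply $\lambda_1 \geq n-k$, which is what we wanted to prove. No step here is substantive; the result is essentially a direct reading-off from Theorem~\ref{aw-local} once one identifies the Artin--Wedderburn image of $\alpha_{\chi^\lambda}$, and the only verification required is the standard character-orthogonality calculation above.
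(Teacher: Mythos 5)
Your proposal is correct and is essentially the paper's own argument: the paper likewise observes (via Schur's Lemma and character orthogonality) that $\alpha_{\chi^\lambda}$ acts by a nonzero scalar on $V^\lambda$ and annihilates every $V^\mu$ with $\mu \neq \lambda$, then reads off the conclusion from Theorem~\ref{aw-local}. Your explicit computation of the scalar $n!/f^\lambda$ is a harmless elaboration of the same step.
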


For example, it is impossible to write the sign function $\sign: \symm_n \rightarrow \{ \pm 1 \}$ is a linear combination of indicator functions $\one_{I,J}$
for partial permutations $(I,J) \in \symm_{n,k}$ with $k < n-1$.

\begin{remark}
\label{kronecker-remark}
Corollary~\ref{character-local} states that for a partition $\lambda$, the irreducible character $\chi^{\lambda[n]}: \symm_n \rightarrow \CC$ 
is a $|\lambda|$-local statistic. 
In turn, Proposition~\ref{product-local} implies that if $\mu$ is another partition, the pointwise product (or {\em Kronecker product})
$\chi^{\lambda[n]} \cdot \chi^{\mu[n]}: \symm_n \rightarrow \CC$ is $(|\lambda| + |\mu|)$-local.
Finding the expansion of $\chi^{\lambda[n]} \cdot \chi^{\mu[n]}$ into irreducible characters is a famous open problem; the above argument 
shows that the only characters $\chi^{\nu}$ which appear in this expansion correspond to partitions $\nu \vdash n$ satisfying $n - \nu_1 \leq |\lambda| + |\mu|$.
This support result was first discovered by Murnaghan \cite{Murnaghan}.
\end{remark}

Recall that $\ch_n: \Class(\symm_n,\CC) \rightarrow \Lambda_n$ is the Frobenius isomorphism.
The space of $k$-local class functions maps to an easily identifiable space of symmetric functions.

\begin{theorem}
\label{local-support-theorem}
For any $n, k \geq 0$ we have the equality of subspaces 
\begin{equation}
\ch_n \left(  \Loc_k(\symm_n,\CC) \cap \Class(\symm_n,\CC) \right) = 
\mathrm{span}  \left\{   s_{\lambda} \,:\, \lambda \vdash n, \, \, \lambda_1 \geq n-k \right\}
\end{equation}
of the space $\Lambda_n$ of degree $n$ symmetric functions.
\end{theorem}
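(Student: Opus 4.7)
The plan is to deduce this result directly from the two corollaries already in hand: Corollary~\ref{character-local}, which characterizes which individual irreducible characters are $k$-local, and Corollary~\ref{k-local-central-aw}, which pins down the dimension of the space of $k$-local class functions via Artin-Wedderburn. Since $\ch_n$ is a vector space isomorphism sending $\chi^\lambda$ to $s_\lambda$, the theorem is equivalent to the assertion that
\[
\Loc_k(\symm_n,\CC) \cap \Class(\symm_n,\CC) = \mathrm{span}\{\chi^\lambda : \lambda \vdash n,\ \lambda_1 \geq n-k\},
\]
which I will establish by checking both inclusions.

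For the inclusion $\supseteq$, I simply invoke Corollary~\ref{character-local}: for each $\lambda \vdash n$ with $\lambda_1 \geq n-k$, the irreducible character $\chi^\lambda$ is itself a $k$-local class function, and taking linear combinations preserves both $k$-locality (by linearity of the defining expansion in indicators) and the class function property. Hence the entire span on the right lies in the space on the left, and applying $\ch_n$ gives $\mathrm{span}\{s_\lambda : \lambda_1 \geq n-k\} \subseteq \ch_n(\Loc_k \cap \Class)$.

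For the reverse inclusion $\subseteq$, I argue by dimension count. The set $\{\chi^\lambda : \lambda \vdash n,\ \lambda_1 \geq n-k\}$ is linearly independent in $\Class(\symm_n,\CC)$ because the full family of irreducible characters is a basis. By the previous paragraph, this set lies in $\Loc_k(\symm_n,\CC) \cap \Class(\symm_n,\CC)$. On the other hand, Corollary~\ref{k-local-central-aw} identifies $\Loc_k(\symm_n,\CC) \cap \Class(\symm_n,\CC)$ with $\bigoplus_{\lambda_1 \geq n-k} \CC \cdot \mathrm{id}_{V^\lambda}$ under the Artin-Wedderburn isomorphism, so its dimension is exactly the cardinality of $\{\lambda \vdash n : \lambda_1 \geq n-k\}$. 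Matching dimensions forces the independent set to be a basis, which completes the $\subseteq$ inclusion after applying $\ch_n$.

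There is essentially no obstacle here: the hard representation-theoretic content was packaged into Theorem~\ref{aw-local} and its corollaries. The only mild care required is to make sure the dimension count is valid, i.e., that Corollary~\ref{k-local-central-aw} actually computes the dimension of the class-function subspace rather than merely describing it up to isomorphism; this is immediate since $\bigoplus_{\lambda_1 \geq n-k} \CC \cdot \mathrm{id}_{V^\lambda}$ visibly has dimension equal to $|\{\lambda \vdash n : \lambda_1 \geq n-k\}|$.
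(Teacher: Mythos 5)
Your proof is correct and follows essentially the same route as the paper, which simply cites Corollary~\ref{character-local} together with $\ch_n(\chi^\lambda)=s_\lambda$. In fact your version is slightly more careful: the paper's one-line proof leaves the reverse inclusion (that a $k$-local class function has no component on $\chi^\lambda$ with $\lambda_1<n-k$) implicit, whereas you justify it explicitly via the dimension count from Corollary~\ref{k-local-central-aw}.
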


\begin{proof}
Recall that $\ch_n(\chi^\lambda) = s_\lambda$ and apply Corollary~\ref{character-local}.
\end{proof}

\chapter{Atomic Symmetric Functions and Path Power Sums}
\label{Atomic}

The atomic  functions $A_{n,I,J}$ will be our road to understanding the asymptotic behavior of permutation statistics.
In this chapter we develop basic properties of these symmetric functions.
Prominent among these (Proposition~\ref{atomic-factorization})
is a factorization $A_{n,I,J} = p_\nu \cdot \vec{p}_\mu$ of the atomic function into 
a classical power sum $p_\nu$ and new symmetric function $\vec{p}_\mu$ which we term a 
`path power sum'.
Since the power sums $p_\nu$ are classical objects, understanding the atomic symmetric functions will come down to understanding the path power
sums $\vec{p}_\mu$.

We give combinatorial formulas for the $p$-expansion and $m$-expansion of the path power sums (and hence of the atomics),
as well as graph-theoretic recursions for the $A_{n,I,J}$ under the action of skewing operators $h_j^\perp$ and $p_j^\perp$.
We also give a representation-theoretic interpretation (Proposition~\ref{atomic-character-interpretation}) for the $s$-expansion of $A_{n,I,J}$ in terms
of irreducible character evaluations.

\section{Atomics to power sums}
Let $(I,J) \in \symm_{n,k}$ be a partial permutation.
Recall that the atomic symmetric function $A_{n,I,J}$ is given by
\begin{equation}
    A_{n,I,J} = n! \cdot \ch_n(R \, \one_{I,J})
\end{equation}
where $\one_{I,J}: \symm_n \rightarrow \CC$ is the indicator function for $w(I) = J$ and 
\begin{equation}
    R: \Fun(\symm_n,\CC) \twoheadrightarrow \Class(\symm_n,\CC)
\end{equation} 
is the Reynolds projection.
In this section we study the combinatorics of the $A_{n,I,J}$.

Let $\GGG_n$ be the family of directed graphs on the vertex set $[n]$ whose connected components are all directed cycles and directed paths.
We may identify $\GGG_n$ with the collection of all partial permutations of $[n]$.
The first result of this chapter states that the atomic symmetric function $A_{n,I,J}$ of a partial permutation $(I,J)$
only depends on the isomorphism type of its directed graph $G_n(I,J) \in \GGG_n$.

\begin{proposition}
\label{atomic-g-invariance}
Let $(I,J) \in \symm_{n,k}$ be a partial permutation.  We have 
$A_{n,I,J} = A_{n, w(I), w(J)}$ for all $w \in \symm_n$.
\end{proposition}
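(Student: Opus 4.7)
The plan is to work directly from the definition
\[
A_{n,I,J} = \sum_{\lambda \vdash n} \left( \sum_{v \in K_\lambda} \one_{I,J}(v) \right) \cdot p_\lambda
\]
and show that for every $\lambda \vdash n$, the inner sum $\sum_{v \in K_\lambda} \one_{I,J}(v)$ is unchanged when $(I,J)$ is replaced by $(w(I), w(J))$. Since the $p_\lambda$ are linearly independent, this will give the desired equality of symmetric functions.

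First I would observe the pointwise identity
\[
\one_{w(I), w(J)}(v) = \one_{I,J}(w^{-1} v w), \qquad v \in \symm_n.
\]
Indeed, $\one_{w(I),w(J)}(v) = 1$ iff $v(w(i_r)) = w(j_r)$ for each $r$, iff $(w^{-1} v w)(i_r) = j_r$ for each $r$, iff $\one_{I,J}(w^{-1} v w) = 1$. This is essentially the content of Lemma~\ref{product-action-on-indicators} applied with $u = w$, but it deserves to be stated explicitly here.

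Next, I would use the fact that conjugation by $w$ permutes each conjugacy class $K_\lambda$ bijectively: the map $v \mapsto w^{-1} v w$ is a bijection $K_\lambda \to K_\lambda$ since cycle type is a conjugation invariant. Combining this with the identity above,
\[
\sum_{v \in K_\lambda} \one_{w(I), w(J)}(v) = \sum_{v \in K_\lambda} \one_{I,J}(w^{-1} v w) = \sum_{v' \in K_\lambda} \one_{I,J}(v'),
\]
where the reindexing $v' = w^{-1} v w$ is valid by the bijection. This equality holds for every $\lambda \vdash n$, so reading off coefficients of $p_\lambda$ in the defining expansion yields $A_{n,I,J} = A_{n, w(I), w(J)}$.

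No step is genuinely difficult here; the proposition is essentially a bookkeeping consequence of the fact that $R$ is the Reynolds operator for the conjugation action and of the $\symm_n$-equivariance encoded in Lemma~\ref{product-action-on-indicators}. The only minor subtlety to flag is to carefully distinguish the two $\symm_n$-actions in play (the diagonal action on partial permutations defined above, and the conjugation action on $\symm_n$ itself through which $R$ is built), and to verify that the indicator identity interchanges them correctly.
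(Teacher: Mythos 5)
Your proof is correct and is essentially the paper's argument: both rest on the identity $\one_{w(I),w(J)}(v) = \one_{I,J}(w^{-1}vw)$ (Lemma~\ref{product-action-on-indicators} with $u=v=w$) combined with conjugation-invariance of the averaging. The paper phrases the second step as $R(w\cdot\psi)=R(\psi)$ for the Reynolds operator, while you unpack the same fact by reindexing the sum over each conjugacy class $K_\lambda$ in the $p$-expansion; these are the same computation.
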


\begin{proof}
The symmetric group $\symm_n$ acts on the space $\Fun(\symm_n,\CC)$ by the rule $(w \cdot \psi)(v) := \psi(w^{-1}vw)$.
By Lemma~\ref{product-action-on-indicators} we have 
$w \cdot \one_{I,J} = \one_{w(I),w(J)}$.  Since $R(w \cdot \psi) = R(\psi)$ for any function $\psi: \symm_n \rightarrow \CC$,  we have
\begin{multline}
A_{n,w(I),w(J)} = n! \cdot \ch_n( R \, \one_{w(I),w(J)} ) \\ = n! \cdot \ch_n( R (w \cdot \one_{I,J})) =
n! \cdot \ch_n( R \, \one_{I,J}) = A_{n,I,J},
\end{multline}
as desired.
\end{proof}

As mentioned in Chapter 1, Proposition~\ref{atomic-g-invariance} implies that the true indexing set of the 
atomic symmetric functions is given by ordered pairs $(\nu,\mu)$ of partitions where $\nu$ records the 
cycle sizes in $G_n(I,J)$
and $\mu$ records the path sizes in $G_n(I,J)$. The atomic symmetric functions will factor
(Proposition~\ref{atomic-factorization})
into a `cycle part' indexed by $\nu$
and a `path part' in indexed by $\mu$. 
The cycle part is amenable to classical analysis 
whereas the path part requires 
significantly more work.

The power sum expansion of $A_{n,I,J}$ has a nice combinatorial interpretation.
If $(I,J) \in \symm_{n,I,J}$ is a partial permutation whose graph $G_n(I,J)$ has path partition $\mu$ and cycle partition $\nu$,
let $\domcyc(I,J) \vdash n$ be the partition of $n$ whose parts consist of the sum $|\mu|$ of the path lengths together 
with the cycle lengths in $\nu$.
The notation $\domcyc(I,J)$ is justified by the following proposition.

\begin{proposition}
\label{p-combinatorial-interpretation}
Let $(I,J) \in \symm_{n,k}$ be a partial permutation and consider the $p$-expansion
\begin{equation}
A_{n,I,J} = \sum_{\lambda \, \vdash \, n} a_{\lambda} \cdot p_{\lambda}
\end{equation}
of the atomic symmetric function $A_{n,I,J}$.  The coefficient $a_{\lambda}$ counts $w \in \symm_n$ of cycle type $\lambda$
such that $w(I) = J$.  We have $a_{\lambda} = 0$ unless $\lambda \leq \domcyc(I,J)$ in dominance order.
If $\lambda = \domcyc(I,J)$ then $a_{\lambda} \neq 0$.
\end{proposition}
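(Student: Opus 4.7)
The plan is to read the first statement directly from Definition~\ref{atomic-symmetric-function-definition}, which already displays
\[
A_{n,I,J} = \sum_{\lambda \vdash n} \Big( \sum_{w \in K_\lambda} \one_{I,J}(w) \Big) \cdot p_\lambda.
\]
Thus the $p_\lambda$-coefficient $a_\lambda$ is literally the number of cycle-type-$\lambda$ permutations $w$ with $w(I) = J$, and no further argument is needed there. The remaining content of the proposition is a purely combinatorial statement about which cycle types can arise from such extensions, so I would proceed by analyzing the graph $G_n(I,J)$.

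Let $(\nu, \mu)$ be the cycle-path type of $(I,J)$, with $\mu = (\mu_1, \ldots, \mu_r)$. The key structural observation to establish is that every permutation $w \in \symm_n$ with $w(I) = J$ arises as follows: each directed cycle of $G_n(I,J)$ is automatically a cycle of $w$ (contributing the parts $\nu_i$ to the cycle type verbatim), and the remaining freedom is encoded by a permutation $\tau \in \symm_r$. Namely, each of the $r$ paths has a unique ``tail'' (the vertex not appearing in $I$, which is the vertex itself for an isolated-vertex path) and a unique ``head'' (the vertex not appearing in $J$), and $w$ must send the tail of path $i$ to the head of path $\tau(i)$ for some $\tau$. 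A cycle $(i_1\, i_2\, \cdots\, i_t)$ of $\tau$ then splices paths $i_1, \ldots, i_t$ into one cycle of $w$ of length $\mu_{i_1} + \cdots + \mu_{i_t}$.

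Granting this, the cycle type $\lambda$ of any such $w$ is obtained from $\domcyc(I,J) = (|\mu|, \nu_1, \nu_2, \ldots)$ by splitting the single part $|\mu|$ into the multiset of block-sums $\{\sum_{i \in B} \mu_i : B \text{ a cycle of } \tau\}$ while leaving the $\nu_i$ untouched. Refining one part of a partition into positive summands can only weakly decrease the partition in dominance order, a standard fact. Hence every $\lambda$ with $a_\lambda \neq 0$ satisfies $\lambda \leq_{\mathrm{dom}} \domcyc(I,J)$. For the nonvanishing claim at $\lambda = \domcyc(I,J)$, I would take $\tau$ to be a single $r$-cycle (the identity when $r = 0$); this fuses all paths into one cycle of length $|\mu|$ and produces a valid extension of $(I,J)$ of cycle type exactly $\domcyc(I,J)$, so $a_{\domcyc(I,J)} \geq 1$.

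The one step requiring real care is the bijection between cycles of $\tau$ and the new cycles of $w$ created by fusing paths; this is where the precise definitions of ``head'' and ``tail'' (and the convention for length-1 paths) must be set up carefully. Once that bookkeeping is in place, the rest is the standard splitting-refines-dominance lemma together with the explicit extremal construction, and the whole proposition follows.
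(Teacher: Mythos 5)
Your proposal is correct and takes essentially the same route as the paper, whose proof is a one-liner: the coefficient is read directly from Definition~\ref{atomic-symmetric-function-definition}, and the triangularity is attributed to the definition of dominance order, with the path-gluing picture you spell out appearing verbatim in the paper's discussion immediately following the proposition. Your extra detail (the bijection with $\tau \in \symm_r$, the splitting-refines-dominance fact, and the single $r$-cycle witnessing $a_{\domcyc(I,J)} \neq 0$) just fills in what the paper leaves implicit.
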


\begin{proof}
The combinatorial interpretation of $a_{\lambda}$ is immediate from Definition~\ref{atomic-symmetric-function-definition}; the 
triangularity assertion follows from the definition of dominance order.
\end{proof}

In combinatorial terms, Proposition~\ref{p-combinatorial-interpretation} asserts that the $p$-expansion of $A_{n,I,J}$
may be obtained from the graph $G_n(I,J)$ by joining the ends of paths in $G_n(I,J)$ to form cycles in all possible ways (while leaving the cycles
in $G_n(I,J)$ alone) and recording the partition $\lambda$ of cycle lengths so obtained in the power sum $p_\lambda$.

For example, consider the partial permutation $\symm_{6,3}$ given by $(I,J) = (456,346)$.
The path partition of $(I,J)$ is $\mu = (2,1,1)$ and the cycle partition of $(I,J)$ is $\nu = (2)$.
The graph $G_6(I,J)$ is as follows.

\begin{center}
\begin{tikzpicture}[scale = 0.5]

   \node [draw, circle, fill = white, inner sep = 2pt, thick] at (2,0)
  {\scriptsize {\bf 1} };
     \node [draw, circle, fill = white, inner sep = 2pt, thick] at (6,-2)
  {\scriptsize {\bf 6}};
     \node [draw, circle, fill = white, inner sep = 2pt, thick] at (0,-2)
  {\scriptsize {\bf 3}};
     \node [draw, circle, fill = white, inner sep = 2pt, thick] at (4,0)
  {\scriptsize {\bf 2}};
     \node [draw, circle, fill = white, inner sep = 2pt, thick] at (0,0)
  {\scriptsize {\bf 5}};
     \node [draw, circle, fill = white, inner sep = 2pt, thick] at (6,0)
  {\scriptsize {\bf 4}};
  \draw [->, thick] (0,-0.6) -- (0,-1.4);
 \draw [->, thick]  (5.6,-0.4) to[bend right]  (5.6,-1.6);
 \draw [->, thick]  (6.4,-1.6) to[bend right]  (6.4,-0.4);
\end{tikzpicture}
\end{center}
To compute the $p$-expansion of $A_{6,I,J}$, we glue the paths in this diagram to form cycles in all possible ways, yielding the 
following pictures.
\begin{center}
\begin{tikzpicture}[scale = 0.5]

   \node [draw, circle, fill = white, inner sep = 2pt, thick] (1) at (2,0)
  {\scriptsize {\bf 1} };
     \node [draw, circle, fill = white, inner sep = 2pt, thick]  (6) at (6,-2)
  {\scriptsize {\bf 6}};
     \node [draw, circle, fill = white, inner sep = 2pt, thick] (3) at (0,-2)
  {\scriptsize {\bf 3}};
     \node [draw, circle, fill = white, inner sep = 2pt, thick] (2) at (4,0)
  {\scriptsize {\bf 2}};
     \node [draw, circle, fill = white, inner sep = 2pt, thick]  (5) at (0,0)
  {\scriptsize {\bf 5}};
     \node [draw, circle, fill = white, inner sep = 2pt, thick] (4) at (6,0)
  {\scriptsize {\bf 4}};
 \draw [->, thick]  (-0.4,-0.4) to[bend right]  (-0.4,-1.6);
 \draw [->, thick]  (0.4,-1.6) to[bend right]  (0.4,-0.4);
 \draw [->, thick]  (5.6,-0.4) to[bend right]  (5.6,-1.6);
 \draw [->, thick]  (6.4,-1.6) to[bend right]  (6.4,-0.4);
 
  \path  (1) edge [loop below, thick]  (1);
  \path (2) edge [loop below, thick] (2);
  
     \node [draw, circle, fill = white, inner sep = 2pt, thick] (1a) at (12,0)
  {\scriptsize {\bf 1} };
     \node [draw, circle, fill = white, inner sep = 2pt, thick]  (6a) at (16,-2)
  {\scriptsize {\bf 6}};
     \node [draw, circle, fill = white, inner sep = 2pt, thick] (3a) at (11,-2)
  {\scriptsize {\bf 3}};
     \node [draw, circle, fill = white, inner sep = 2pt, thick] (2a) at (14,0)
  {\scriptsize {\bf 2}};
     \node [draw, circle, fill = white, inner sep = 2pt, thick]  (5a) at (10,0)
  {\scriptsize {\bf 5}};
     \node [draw, circle, fill = white, inner sep = 2pt, thick] (4a) at (16,0)
  {\scriptsize {\bf 4}};
   \draw [->, thick]  (15.6,-0.4) to[bend right]  (15.6,-1.6);
   \draw [->, thick]  (16.4,-1.6) to[bend right]  (16.4,-0.4);
   \draw [->, thick] (10.25,-0.5) to (10.75,-1.5);
   \draw [->, thick] (11.25,-1.5) to (11.75,-0.5);
    \draw [->, thick] (11.5,0) to (10.5,0);
      \path (2a) edge [loop below, thick] (2a);
\end{tikzpicture}
\end{center}
\vspace{0.1in}
\begin{center}
\begin{tikzpicture}[scale = 0.5]
       \node [draw, circle, fill = white, inner sep = 2pt, thick] (2a) at (2,0)
  {\scriptsize {\bf 2} };
     \node [draw, circle, fill = white, inner sep = 2pt, thick]  (6a) at (6,-2)
  {\scriptsize {\bf 6}};
     \node [draw, circle, fill = white, inner sep = 2pt, thick] (3a) at (1,-2)
  {\scriptsize {\bf 3}};
     \node [draw, circle, fill = white, inner sep = 2pt, thick] (1a) at (4,0)
  {\scriptsize {\bf 1}};
     \node [draw, circle, fill = white, inner sep = 2pt, thick]  (5a) at (0,0)
  {\scriptsize {\bf 5}};
     \node [draw, circle, fill = white, inner sep = 2pt, thick] (4a) at (6,0)
  {\scriptsize {\bf 4}};
   \draw [->, thick]  (5.6,-0.4) to[bend right]  (5.6,-1.6);
   \draw [->, thick]  (6.4,-1.6) to[bend right]  (6.4,-0.4);
   \draw [->, thick] (0.25,-0.5) to (0.75,-1.5);
   \draw [->, thick] (1.25,-1.5) to (1.75,-0.5);
    \draw [->, thick] (1.5,0) to (0.5,0);
      \path (1a) edge [loop below, thick] (1a);

   \node [draw, circle, fill = white, inner sep = 2pt, thick] (1) at (12,0)
  {\scriptsize {\bf 1} };
     \node [draw, circle, fill = white, inner sep = 2pt, thick]  (6) at (14,-2)
  {\scriptsize {\bf 6}};
     \node [draw, circle, fill = white, inner sep = 2pt, thick] (3) at (10,-2)
  {\scriptsize {\bf 3}};
     \node [draw, circle, fill = white, inner sep = 2pt, thick] (2) at (12,-2)
  {\scriptsize {\bf 2}};
     \node [draw, circle, fill = white, inner sep = 2pt, thick]  (5) at (10,0)
  {\scriptsize {\bf 5}};
     \node [draw, circle, fill = white, inner sep = 2pt, thick] (4) at (14,0)
  {\scriptsize {\bf 4}};
     \draw [->, thick] (13.6,-0.4) to[bend right]  (13.6,-1.6);
   \draw [->, thick]  (14.4,-1.6) to[bend right]  (14.4,-0.4);
   
\draw [->, thick] (11.6,-0.4) to[bend right]  (11.6,-1.6);
   \draw [->, thick]  (12.4,-1.6) to[bend right]  (12.4,-0.4);

 \draw [->, thick] (9.6,-0.4) to[bend right]  (9.6,-1.6);
   \draw [->, thick]  (10.4,-1.6) to[bend right]  (10.4,-0.4);
\end{tikzpicture}
\end{center}
\vspace{0.1in}
\begin{center}
\begin{tikzpicture}[scale = 0.5]
   \node [draw, circle, fill = white, inner sep = 2pt, thick] (1a) at (2,0)
  {\scriptsize {\bf 1} };
     \node [draw, circle, fill = white, inner sep = 2pt, thick]  (6a) at (4,-2)
  {\scriptsize {\bf 6}};
     \node [draw, circle, fill = white, inner sep = 2pt, thick] (3a) at (0,-2)
  {\scriptsize {\bf 3}};
     \node [draw, circle, fill = white, inner sep = 2pt, thick] (2a) at (2,-2)
  {\scriptsize {\bf 2}};
     \node [draw, circle, fill = white, inner sep = 2pt, thick]  (5a) at (0,0)
  {\scriptsize {\bf 5}};
     \node [draw, circle, fill = white, inner sep = 2pt, thick] (4a) at (4,0)
  {\scriptsize {\bf 4}};
   \draw [->, thick] (3.6,-0.4) to[bend right]  (3.6,-1.6);
   \draw [->, thick]  (4.4,-1.6) to[bend right]  (4.4,-0.4);
   \draw [->, thick] (0,-0.5) to (0,-1.5);
   \draw [->, thick] (0.5,-2) to (1.5,-2);
   \draw [->, thick] (1.5,0) to (0.5,0);
    \draw [->, thick] (2,-1.5) to (2,-0.5);

   \node [draw, circle, fill = white, inner sep = 2pt, thick] (2) at (12,0)
  {\scriptsize {\bf 2} };
     \node [draw, circle, fill = white, inner sep = 2pt, thick]  (6) at (14,-2)
  {\scriptsize {\bf 6}};
     \node [draw, circle, fill = white, inner sep = 2pt, thick] (3) at (10,-2)
  {\scriptsize {\bf 3}};
     \node [draw, circle, fill = white, inner sep = 2pt, thick] (1) at (12,-2)
  {\scriptsize {\bf 1}};
     \node [draw, circle, fill = white, inner sep = 2pt, thick]  (5) at (10,0)
  {\scriptsize {\bf 5}};
     \node [draw, circle, fill = white, inner sep = 2pt, thick] (4) at (14,0)
  {\scriptsize {\bf 4}};
     \draw [->, thick] (13.6,-0.4) to[bend right]  (13.6,-1.6);
   \draw [->, thick]  (14.4,-1.6) to[bend right]  (14.4,-0.4);
      \draw [->, thick] (10,-0.5) to (10,-1.5);
   \draw [->, thick] (10.5,-2) to (11.5,-2);
   \draw [->, thick] (11.5,0) to (10.5,0);
    \draw [->, thick] (12,-1.5) to (12,-0.5);

\end{tikzpicture}
\end{center}
These pictures correspond to ways to complete the partial permutation $(I,J) \in \symm_{6,3}$ to a genuine permutation in $\symm_6$.
By Proposition~\ref{p-combinatorial-interpretation} we have the power sum expansion
\begin{equation*}
A_{6,I,J} = 2 p_{4,2} + 2 p_{3,2,1} + p_{2,2,2} + p_{2,2,1,1}
\end{equation*}
where the term corresponding to $\domcyc(I,J) = (4,2) \vdash 6$ is shown first.

We will be interested in the Schur expansion of $A_{n,I,J}$.
As a first step in this direction, Proposition~\ref{p-combinatorial-interpretation} yields a crucial factorization property of the atomic functions.

\begin{proposition}
\label{atomic-factorization}
Let $(I,J) \in \symm_{n,k}$ be a partial permutation, let $\mu = (\mu_1 \geq \mu_2 \geq \cdots ) \vdash r$ be the sizes of the paths in $G_n(I,J)$,
and let $\nu = (\nu_1 \geq \nu_2 \geq \cdots ) \vdash s$ be the sizes of the cycles in $G_n(I,J)$.  Then
\begin{equation}
\label{a-factor}
A_{n,I,J} = A_{r,I_1,J_1} \cdot A_{s,I_2,J_2}
\end{equation}
where $G_r(I_1,J_1)$ is a digraph on $[r]$ consisting of paths of sizes $\mu_1, \mu_2, \dots $ and $G_s(I_2,J_2)$ is a digraph 
on $[s]$ consisting of cycles of sizes $\nu_1, \nu_2, \dots $. Furthermore, the second factor on the RHS of \eqref{a-factor} is the power sum
\begin{equation}
A_{s,I_2,J_2} = p_{\nu}
\end{equation}
indexed by $\nu \vdash s$.
\end{proposition}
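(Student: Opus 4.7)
The plan is to use the combinatorial interpretation of the $p$-expansion of $A_{n,I,J}$ from Proposition~\ref{p-combinatorial-interpretation} and show that the enumeration of permutations completing $(I,J)$ factors according to the decomposition of $G_n(I,J)$ into its path and cycle components. First, I would partition the vertex set as $[n] = P \sqcup C$, where $P$ is the set of vertices lying on paths of $G_n(I,J)$ (so $|P| = r$) and $C$ is the set of vertices lying on cycles (so $|C| = s$). Any $w \in \symm_n$ satisfying $w(I) = J$ must respect every edge $i_t \to j_t$, and in particular must preserve the partition $[n] = P \sqcup C$ setwise. Moreover, on the cycle vertices $C$ the edges of $G_n(I,J)$ already determine $w$ completely, while on the path vertices $P$ the restriction $w|_P$ can be any permutation of $P$ whose graph contains the path edges of $G_n(I,J)$.

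Next I would combine this with the formula
\[
A_{n,I,J} = \sum_{\lambda \, \vdash \, n} \#\{w \in K_\lambda \,:\, w(I) = J\} \cdot p_\lambda
\]
of Proposition~\ref{p-combinatorial-interpretation}. Under the partition above, the cycle type of $w$ is the multiset union of the cycle type of $w|_C$, which is forced to equal $\nu$, and the cycle type of $w|_P$, which ranges over all cycle types $\lambda'$ of permutations of $P$ completing the path data. Since $p_\lambda = p_\nu \cdot p_{\lambda'}$ for $\lambda = \nu \cup \lambda'$, we obtain
\[
A_{n,I,J} \;=\; p_\nu \cdot \sum_{\lambda' \, \vdash \, r} \#\{ w' \in \symm_P \,:\, w'(I|_P) = J|_P, \, w' \in K_{\lambda'} \} \cdot p_{\lambda'}.
\]

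To finish, I would invoke Proposition~\ref{atomic-g-invariance}: the sum on the right depends only on the isomorphism type of the directed graph on $P$ carved out by the path edges, which is a disjoint union of directed paths of sizes $\mu_1, \mu_2, \dots$ Relabeling $P$ by $[r]$ via any bijection identifies this sum with $A_{r,I_1,J_1}$ for any partial permutation $(I_1,J_1)$ with path partition $\mu$ and no cycles. Separately, for $(I_2,J_2)$ with $G_s(I_2,J_2)$ consisting entirely of cycles of sizes $\nu_1,\nu_2,\dots$, the partial permutation $(I_2,J_2)$ already defines a single element of $\symm_s$ of cycle type $\nu$ and is the unique $w \in \symm_s$ with $w(I_2) = J_2$, so $A_{s,I_2,J_2} = p_\nu$.

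There is no real obstacle here; the proposition is essentially a bookkeeping statement encoding the fact that paths and cycles in $G_n(I,J)$ contribute independently to the count of completions, and the one point that needs care is verifying that $p_\lambda$ multiplicatively splits in accordance with the disjoint union of cycle contributions, which is immediate from the definition $p_\lambda = \prod_i p_{\lambda_i}$.
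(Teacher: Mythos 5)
Your proof is correct and follows essentially the same route as the paper: the paper's own argument is a two-line appeal to the combinatorial $p$-expansion of Proposition~\ref{p-combinatorial-interpretation} together with multiplicativity of the power sums, and your write-up simply fills in the bookkeeping (the forced behavior on cycle vertices, the free completion on path vertices, and the relabeling via graph isomorphism invariance). No gaps.
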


\begin{proof}
Recall that the $p$-basis of $\Lambda$ is multiplicative: we have $p_{\lambda} = p_{\lambda_1} p_{\lambda_2} \cdots $ for any partition
$\lambda$.  The result follows from the combinatorial description of $a_{\lambda}$ in Proposition~\ref{p-combinatorial-interpretation}.
\end{proof}

It is {\bf not} true that the first factor $A_{r,I_1,J_1}$ in \eqref{a-factor} corresponding to paths in $G_n(I,J)$ is the 
power sum $p_{\mu}$. Paths and cycles behave very differently with regards to atomic functions. We notate
the first factor of \eqref{a-factor} as follows.

\begin{defn}
\label{path-power-sum-definition}
Let $\mu = (\mu_1, \mu_2, \dots )$ be a partition of $n$. The {\em path power sum} $\vec{p}_{\mu} \in \Lambda_n$ is the atomic
symmetric function 
\begin{equation}
\vec{p}_{\mu} = A_{n,I,J}
\end{equation}
where $(I,J)$ is a partial permutation of $[n]$ whose graph $G_n(I,J)$ consists of disjoint paths of sizes 
given by the parts $\mu_1, \mu_2, \dots $ of $\mu$.
\end{defn}

The vector notation $\vec{p}_{\mu}$ is meant to recall the shape of the graph $G_n(I,J)$ in Definition~\ref{path-power-sum-definition}.
With this notation, Proposition~\ref{atomic-factorization} reads
\begin{equation}
A_{n,I,J} = \vec{p}_{\mu} \cdot p_{\nu}
\end{equation}
whenever $G_n(I,J)$ has path partition $\mu$ and cycle partition $\nu$.
The analysis of $\vec{p}_{\mu}$ is the most technical part of this manuscript and will take place in Chapter~\ref{Path}.

\section{Atomics to Schurs}

Proposition~\ref{p-combinatorial-interpretation} gives a combinatorial interpretation of the $p$-expansion of an atomic 
function $A_{n,I,J}$.
The $s$-expansion of $A_{n,I,J}$ has a representation-theoretic interpretation. If
$(I,J) \in \symm_{n,k}$ is a partial permutation, we write
\begin{equation}
[I,J] := \sum_{\substack{w  \, \in \, \symm_n \\ w(I) \, = \, J}} w \in \CC[\symm_n].
\end{equation}
for the group algebra sum of all permutations $w \in \symm_n$ sending $I$ to $J$.

\begin{proposition}
\label{atomic-character-interpretation}
Let $(I,J) \in \symm_{n,k}$ be a partial permutation.
The expansion of $A_{n,I,J}$ in the Schur basis is given by
\begin{equation}
A_{n,I,J} = \sum_{\lambda \, \vdash \, n} \chi^{\lambda}( [I,J] ) \cdot s_{\lambda}
\end{equation}
where $\chi^{\lambda}: \CC[\symm_n] \rightarrow \CC$ is the linearly extended irreducible character of $\symm_n$ indexed by 
$\lambda$.
\end{proposition}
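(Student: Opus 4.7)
The plan is to start from the definition $A_{n,I,J} = n! \cdot \ch_n(R\,\one_{I,J})$ and extract the Schur coefficient representation-theoretically via the Hall inner product. Since $R\,\one_{I,J} \in \Class(\symm_n,\CC)$, it has a unique expansion $R\,\one_{I,J} = \sum_{\lambda \vdash n} c_\lambda \cdot \chi^\lambda$, and because $\ch_n(\chi^\lambda) = s_\lambda$, it suffices to show $c_\lambda = \chi^\lambda([I,J])/n!$.

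First I would compute $c_\lambda$ using orthonormality of the irreducible characters under the class function inner product, namely $c_\lambda = \langle R\,\one_{I,J}, \chi^\lambda \rangle$. The key manipulation is that the Reynolds operator $R$ is self-adjoint on $\Fun(\symm_n,\CC)$ with respect to this inner product and acts as the identity on class functions. Concretely, for any $f: \symm_n \to \CC$ and any class function $g$,
\[
\langle R\,f, g \rangle = \frac{1}{(n!)^2}\sum_{v,w \in \symm_n} f(v^{-1}wv)\,\overline{g(w)} = \frac{1}{n!}\sum_{w \in \symm_n} f(w)\,\overline{g(w)} = \langle f, g \rangle,
\]
where the middle equality uses $g(w) = g(v^{-1}wv)$ together with the substitution $w \mapsto vwv^{-1}$ (absorbing the average over $v$). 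Applying this with $f = \one_{I,J}$ and $g = \chi^\lambda$, and using that irreducible characters of $\symm_n$ are integer valued so $\overline{\chi^\lambda} = \chi^\lambda$, gives
\[
c_\lambda = \langle \one_{I,J}, \chi^\lambda \rangle = \frac{1}{n!}\sum_{w \in \symm_n} \one_{I,J}(w)\,\chi^\lambda(w) = \frac{1}{n!}\sum_{\substack{w \in \symm_n \\ w(I) = J}} \chi^\lambda(w) = \frac{1}{n!}\,\chi^\lambda([I,J]),
\]
where the last step uses the linear extension of $\chi^\lambda$ to $\CC[\symm_n]$ applied to the group algebra element $[I,J]$.

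Putting this together,
\[
A_{n,I,J} = n! \cdot \ch_n\!\left(\sum_{\lambda \vdash n} c_\lambda \cdot \chi^\lambda\right) = n! \sum_{\lambda \vdash n} \frac{\chi^\lambda([I,J])}{n!} \cdot s_\lambda = \sum_{\lambda \vdash n} \chi^\lambda([I,J]) \cdot s_\lambda,
\]
which is the desired identity. There is no real obstacle here; the proof is essentially a bookkeeping exercise combining the definition of $A_{n,I,J}$, the self-adjointness of $R$, orthonormality of irreducible characters, and the fact that $\ch_n$ carries $\chi^\lambda$ to $s_\lambda$. The only step requiring minor care is verifying that $R$ behaves as claimed on the inner product, which reduces to the standard change-of-variables argument on conjugation.
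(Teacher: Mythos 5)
Your proof is correct. It takes a genuinely different (though equally short) route from the paper's. The paper first invokes its combinatorial description of the power-sum expansion, $A_{n,I,J} = \sum_{\mu \vdash n} a_\mu \, p_\mu$ with $a_\mu = \#\{w \in K_\mu : w(I) = J\}$, and then applies the character-table transition $p_\mu = \sum_{\lambda} \chi^\lambda_\mu \, s_\lambda$ termwise; collecting coefficients of $s_\lambda$ gives $\sum_{w(I)=J}\chi^\lambda(w) = \chi^\lambda([I,J])$ directly, with no mention of inner products. You instead stay on the class-function side: you expand $R\,\one_{I,J}$ in the orthonormal basis of irreducible characters, use the self-adjointness of the Reynolds operator (which you verify correctly via the conjugation change of variables, and where you rightly note that $\overline{\chi^\lambda} = \chi^\lambda$ for $\symm_n$), and then push through $\ch_n$. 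The two arguments encode the same underlying fact — that the character table is the transition matrix between $\{p_\mu\}$ and $\{s_\lambda\}$, equivalently that $\ch_n$ is an isometry sending $\chi^\lambda$ to $s_\lambda$ — but yours is more self-contained representation-theoretically, while the paper's leans on its earlier combinatorial proposition and is slightly more economical given that that proposition is already in hand.
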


\begin{proof}
If $w \in \symm_n$ has cycle type $\mu$ then $p_{\mu} = \sum_{\lambda \vdash n} \chi^{\lambda}(w) \cdot s_{\lambda}$.
Proposition~\ref{p-combinatorial-interpretation} and linearity give the result.
\end{proof}

The coefficients appearing in Proposition~\ref{atomic-character-interpretation} admit another characterization in terms of double cosets.
If $G$ is a group, $H \subseteq G$ is a subgroup, and $g \in G$, recall that the {\em double coset} is 
$H g H := \{ hgh' \,:\, h, h' \in H \}$.
Write $H \backslash G / H := \{ H g H \,:\, g \in G \}$ for the family of all such double cosets.

Given a partial permutation $(I,J) \in \symm_{n,k}$, there exist (Proposition~\ref{product-action-on-indicators}) elements $u,v \in \symm_n$ such that 
$[I,J] = u \cdot [ \symm_{n-k} ]_+ \cdot v$. Applying the cyclic invariance of trace, we see that
\begin{equation}
\chi^{\lambda}( [I,J] ) = \chi^{\lambda} \left(u \cdot [ \symm_{n-k} ]_+ \cdot v  \right)
 = \chi^{\lambda} \left(  w \cdot [ \symm_{n-k} ]_+   \right)
\end{equation}
where $w := vu$.  For $1 \leq j \leq n$, let $\eta_j \in \CC[\symm_n]$ be the group algebra element
\begin{equation}
\eta_j := \frac{1}{j!} \sum_{x \, \in \, \symm_j} x = \frac{1}{j!} \cdot [\symm_j]_+.
\end{equation}
It is easy to see that $\eta_j^2 = \eta_j$ is an idempotent element of $\CC[\symm_n]$ so that 
\begin{multline}
\chi^{\lambda} \left(  w \cdot [ \symm_{n-k} ]_+   \right) = 
(n-k)! \cdot \chi^{\lambda} \left(  w \cdot \eta_{n-k}   \right) = \\
(n-k)! \cdot \chi^{\lambda} \left(  w \cdot \eta_{n-k}^2   \right) =
(n-k)! \cdot \chi^{\lambda} \left(  \eta_{n-k} \cdot w \cdot \eta_{n-k}   \right).
\end{multline}
In other words, the quantity $\chi^\lambda([I,J])$ is, up to rescaling, the sum of evaluations of the irreducible character $\chi^{\lambda}$ 
over the double coset $\symm_{n-k} w \symm_{n-k}$.
C. Ryba proved a generating function which gives irreducible character evaluations related to $\chi^{\lambda}([I,J])$.  We describe how his result relates to our work.

Let $\mu = (\mu_1, \dots, \mu_r) \vdash n$ be a partition of length $r$ 
and consider the parabolic subgroup $\symm_\mu = \symm_{\mu_1} \times \cdots \times \symm_{\mu_r}$ 
of $\symm_n$. 
Double cosets in $\symm_\mu \backslash \symm_n / \symm_\mu$ are indexed by $\ZZ_{\geq 0}$-matrices $Q = (q_{i,j})_{1 \leq i,j \leq r}$ with row and column sums equal
to $\mu = (\mu_1, \mu_2, \dots )$.  Such matrices are also known as {\em contingency tables}. 

In a MathOverflow answer,
Ryba \cite{RybaOverflow} (see also \cite[Lem. 2.12]{Ryba}) described a generating function for the irreducible character $\chi^{\mu}$ over double cosets corresponding
to contingency tables.
In particular, Ryba proved that
\begin{equation}
\label{ryba-generating-function}
\sum_Q c_Q \prod_{i,j \, = \, 1}^r x_{i,j}^{q_{i,j}} = \det(x_{1,1})^{\mu_1 - \mu_2} \cdot \det \begin{pmatrix} x_{1,1} & x_{1,2} \\ x_{2,1} & x_{2,2} \end{pmatrix}^{\mu_2 - \mu_3} \cdots 
  \, \, \det(X)^{\mu_r}
\end{equation}
where
\begin{itemize}
\item the sum is over all contingency tables $Q$ with row and column sums $\mu$,
\item if a contingency table $Q$ corresponds to a double coset $\symm_\mu w \symm_\mu$, then $c_Q$ is the trace of the operator
\begin{equation}
\frac{|\symm_\mu w \symm_\mu|}{ |\symm_\mu| } \sum_{u,v \, \in \, \symm_\mu} uwv \in \CC[\symm_n]
\end{equation}
on the $\symm_n$-irreducible $V^{\mu}$, and
\item $X = (x_{i,j})_{1 \leq i, j \leq r}$ is an $r \times r$ matrix of variables.
\end{itemize}

Ryba's Equation~\eqref{ryba-generating-function} may be used to compute $\chi^{\mu} \left(  w \cdot [\symm_{\mu}]_+ \right)$ where $\mu$ is an arbitrary partition
of $n$ by expanding determinants and renormalizing.
On the other hand, we are interested in character evaluations of the form
$\chi^{\lambda} \left(  w \cdot [\symm_{\mu}]_+ \right)$ where $\mu = (n-k,1^k) \vdash n$ is a hook and $\lambda \vdash n$ is an arbitrary partition.
It may be interesting to combine these settings and study
$\chi^{\lambda} \left(  w \cdot [\symm_{\mu}]_+ \right)$ where $\lambda, \mu \vdash n$ are  arbitrary and 
potentially distinct.

Since $\chi^\lambda([I,J]) = \sum_{w(I) = J} \chi^\lambda(w)$, na\"ively calculating the $s$-expansion of $A_{n,I,J}$ 
using Proposition~\ref{atomic-character-interpretation} would involve $(n-k)!$ applications of the Murnaghan-Nakayama Rule.
In our probabilistic applications we will be interested in the behavior of the $s$-expansion as $n \rightarrow \infty$, so this is not
a practical approach.
As a first source of computational savings,
the character evaluations $\chi^{\lambda}([I,J])$ appearing in Proposition~\ref{atomic-character-interpretation}
exhibit 
nontrivial vanishing properties.

\begin{theorem}
\label{atomic-support}
Let $(I,J) \in \symm_{n,k}$ be a partial permutation of $[n]$ of size $k$.  The coefficients $b_{\lambda}$ in the Schur expansion
\begin{equation}
A_{n,I,J} = \sum_{\lambda \, \vdash \, n} b_{\lambda} \cdot s_{\lambda}
\end{equation}
of the atomic local symmetric function $A_{n,I,J}$ satisfy $b_{\lambda} = 0$ unless $\lambda_1 \geq n-k$.
\end{theorem}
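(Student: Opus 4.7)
The plan is to deduce Theorem~\ref{atomic-support} by combining the two main structural results already established in the preceding chapters: the classification of $k$-local class functions under the Frobenius isomorphism (Theorem~\ref{local-support-theorem}) and the closure of $k$-local functions under the Reynolds averaging operator. In other words, almost no new calculation should be needed; the work is all in recognizing that $R\,\one_{I,J}$ lies in $\Loc_k(\symm_n,\CC) \cap \Class(\symm_n,\CC)$.

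First I would observe that $\one_{I,J}$ is manifestly $k$-local since $(I,J) \in \symm_{n,k}$. Next, I would show that the Reynolds operator preserves locality on indicators. Writing out the definition
\[
R\,\one_{I,J}(w) = \frac{1}{n!}\sum_{v \in \symm_n}\one_{I,J}(v^{-1}wv),
\]
and noting that $\one_{I,J}(v^{-1}wv) = 1$ precisely when $w(vI) = vJ$, one has
\[
R\,\one_{I,J} = \frac{1}{n!}\sum_{v \in \symm_n}\one_{vI,\,vJ},
\]
which displays $R\,\one_{I,J}$ as a linear combination of indicators attached to partial permutations of size $k$, hence is $k$-local. (Alternatively, one can invoke Lemma~\ref{product-action-on-indicators} with the pair $(v,v)$ to get the same conclusion directly.) By construction $R\,\one_{I,J}$ is a class function, so in fact $R\,\one_{I,J} \in \Loc_k(\symm_n,\CC) \cap \Class(\symm_n,\CC)$.

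Now I would apply Theorem~\ref{local-support-theorem}, which says
\[
\ch_n\bigl(\Loc_k(\symm_n,\CC) \cap \Class(\symm_n,\CC)\bigr) = \mathrm{span}\{\,s_\lambda : \lambda \vdash n,\ \lambda_1 \geq n-k\,\}.
\]
Therefore $\ch_n(R\,\one_{I,J})$ is supported on Schur functions $s_\lambda$ with $\lambda_1 \geq n-k$. By Definition~\ref{atomic-symmetric-function-definition}, $A_{n,I,J} = n! \cdot \ch_n(R\,\one_{I,J})$, so the same support condition holds for $A_{n,I,J}$, proving the theorem. A parallel route using Proposition~\ref{atomic-character-interpretation} would express the coefficient $b_\lambda = \chi^\lambda([I,J])$ via $[I,J] = u \cdot [\symm_{n-k}]_+ \cdot v$; applying cyclic invariance of trace reduces the vanishing to the fact that the idempotent $\eta_{n-k}$ annihilates $V^\lambda$ unless $\lambda_1 \geq n-k$, which is the Branching Rule input already used in the proof of Theorem~\ref{aw-local}.

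There is no real obstacle here: once the identification of $k$-local class functions with $\mathrm{span}\{s_\lambda : \lambda_1 \geq n-k\}$ is in place, Theorem~\ref{atomic-support} is essentially a corollary. The only point requiring care is verifying that $R$ preserves the local filtration, but this is immediate from the explicit form of the conjugation action on indicators.
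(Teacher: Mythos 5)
Your proof is correct and follows essentially the same route as the paper: observe via Lemma~\ref{product-action-on-indicators} that $R\,\one_{I,J}$ is a $k$-local class function, then apply Theorem~\ref{local-support-theorem} to $A_{n,I,J} = n!\cdot \ch_n(R\,\one_{I,J})$. The explicit averaging formula and the alternate Branching-Rule route you sketch are both sound but add nothing beyond the paper's two-line argument.
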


\begin{proof}
Lemma~\ref{product-action-on-indicators} implies that $R \, \one_{I,J}: \symm_n \rightarrow \CC$ is a $k$-local class function. 
Since $A_{n,I,J} = n! \cdot \ch_n(R \, \one_{I,J})$, the result follows from Theorem~\ref{local-support-theorem}.
\end{proof}

As a sanity check, we consider Theorem~\ref{atomic-support} in the case $I = J = \varnothing$.  We have 
\begin{equation*}
[I,J] = [\varnothing, \varnothing] = 
\sum_{w \, \in \, \symm_n} w \in \CC[\symm_n],
\end{equation*}
an operator which annihilates every $\symm_n$-irreducible $V^\lambda$ for which $\lambda \neq (n)$.
Accordingly, the only surviving term in the $s$-expansion of Proposition~\ref{atomic-character-interpretation} is $s_n$, which is consistent 
with Theorem~\ref{atomic-support}.

The factorization $A_{n,I,J} = \vec{p}_{\mu} \cdot p_{\nu}$ of Proposition~\ref{atomic-factorization} and the
Path Murnaghan-Nakayama formula (Theorem~\ref{path-murnaghan-nakayama}) will sharpen
Theorem~\ref{atomic-support}.
In the $I = J = \varnothing$ case, we will
recover the fact that 
$[\varnothing, \varnothing] = \sum_{w \in \symm_n} w$
acts on 
the trivial representation $V^{(n)}$ with 
trace $n!$.

\section{Path and classical power sums}

By Proposition~\ref{atomic-factorization}, the $p$-expansion of $A_{n,I,J}$ is determined by the expansion of path power sums into classical power sums.
This expansion is best understood via the theory of M\"obius inversion on posets.

The poset of interest for us is the {\em partition lattice} $\Pi_r$.
 This is the family of all set partitions 
$\sigma$ of $[r]$ partially ordered by $\sigma \leq \sigma'$ if and only if $\sigma$ refines $\sigma'$.
The poset $\Pi_3$ is shown below together with the M\"obius function values $\Mob_{\Pi_3}(\hat{0},\sigma)$ corresponding 
to lower order intervals where $\hat{0} = 1/2/3$ is the minimum element.
\begin{center}
\begin{tikzpicture}[scale = 0.5]
\node at (0,0) {$1/2/3$};
\draw [-] (0,0.5) -- (0,1.5);
\draw [-] (0,2.5) -- (0,3.5);
\draw [-] (1,0.5) -- (3,1.5);
\draw [-] (-1,0.5) -- (-3,1.5);
\draw [-] (-1,3.5) -- (-3,2.5);
\draw [-] (1,3.5) -- (3,2.5);
\node at (0,2) {$13/2$};
\node at (-4,2) {$12/3$};
\node at (4,2) {$23/1$};
\node at (0,4) {$123$};

  \node [draw, circle, fill = white, inner sep = 1pt, thick] at (1.5,0)
  {\scriptsize {\bf 1}};
 \node [draw, circle, fill = white, inner sep = 1pt, thick] at (5.5,2)
  {\scriptsize {\bf -1}};
  \node [draw, circle, fill = white, inner sep = 1pt, thick] at (1.5,2)
  {\scriptsize {\bf -1}};  
   \node [draw, circle, fill = white, inner sep = 1pt, thick] at (-5.5,2)
  {\scriptsize {\bf -1}};   
   \node [draw, circle, fill = white, inner sep = 1pt, thick] at (1.5,4)
  {\scriptsize {\bf 2}};     
\end{tikzpicture}
\end{center}

For any $r \geq 1$, 
the M\"obius function of $\Pi_r$ satisfies
\begin{equation}
\label{mobius-product}
\Mob_{\Pi_r}(\hat{0}, \hat{1}) = (-1)^{r-1} \cdot (r-1)!
\end{equation}
where $\hat{0}$ is the set partition of $[r]$ in which all blocks are singletons and $\hat{1}$ is the set partition of $[r]$ with a single block.
Since the M\"obius function on posets $P, Q$
satisfies the multiplicative property
\begin{equation}
    \Mob_{P \times Q}( (p_1,q_1), (p_2, q_2) ) = 
    \Mob_P(p_1, p_2) \cdot \Mob_Q(q_1, q_2)
\end{equation}
where $P \times Q$ is given the componentwise
partial order,
Equation~\eqref{mobius-product} implies that the lower interval M\"obius function evaluations on $\Pi_r$ are given by
\begin{equation}
\label{mobius-partition}
\Mob_{\Pi_r}(\hat{0}, \sigma) = (-1)^{r - |\sigma|} \cdot \prod_{B \, \in \, \sigma} ( |B| - 1)!.
\end{equation}
If $\mu = (\mu_1, \dots, \mu_r)$ is a partition with $r$ parts,
the expansion of the path power sum $\vec{p}_{\mu}$ into classical power sums has a similar formula, but without the signs.

\begin{proposition}
\label{path-to-classical}
Let $\mu = (\mu_1, \dots, \mu_r)$ be a partition with $r$ parts.  We have 
\begin{equation}
\vec{p}_{\mu} = \sum_{w \, \in \, \symm_r} \prod_{C \, \in \, w} p_{\mu_C} = \sum_{\sigma \, \in \, \Pi_r} \prod_{B \, \in \, \sigma} (|B|-1)! \cdot p_{\mu_B}
\end{equation}
where the product in the middle expression is over all cycles $C$ belonging to the permutation $w \in \symm_r$.
\end{proposition}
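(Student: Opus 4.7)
The plan is to give a direct combinatorial proof using Proposition~\ref{p-combinatorial-interpretation}. By definition, $\vec{p}_{\mu} = A_{n,I,J}$ where $(I,J)$ is a partial permutation whose graph $G_n(I,J)$ consists of $r$ vertex-disjoint directed paths $P_1,\dots,P_r$ of sizes $\mu_1,\dots,\mu_r$, with $n = |\mu|$. Proposition~\ref{p-combinatorial-interpretation} tells us that the coefficient of $p_\lambda$ in the $p$-expansion of $A_{n,I,J}$ counts permutations $v \in \symm_n$ of cycle type $\lambda$ satisfying $v(I) = J$. The entire task therefore reduces to enumerating such completions $v$ and reading off their cycle types.

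The key observation is that each path $P_i$ has a unique source $s_i$ (vertex with no incoming edge) and a unique sink $t_i$ (vertex with no outgoing edge). Any extension of $(I,J)$ to a genuine permutation $v \in \symm_n$ is determined by specifying, for each sink $t_i$, which source $s_{w(i)}$ it maps to, and this assignment must be a bijection. Hence completions are in bijection with permutations $w \in \symm_r$ on the set of path labels. Moreover, if $w$ has a cycle $C = (i_1\,i_2\,\cdots\,i_\ell)$, then the corresponding completion glues $P_{i_1},\dots,P_{i_\ell}$ into a single cycle of $v$ of length $\mu_{i_1} + \cdots + \mu_{i_\ell} = \mu_C$. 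Consequently the cycle type of $v$ is the partition whose parts are $\{\mu_C : C \in w\}$, and summing over all $w$ and using the multiplicativity $p_\lambda = \prod_i p_{\lambda_i}$ yields
\begin{equation*}
\vec{p}_\mu = \sum_{w \in \symm_r} \prod_{C \in w} p_{\mu_C}.
\end{equation*}

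For the second equality, group permutations $w \in \symm_r$ according to the set partition $\sigma \in \Pi_r$ whose blocks are the underlying vertex sets of the cycles of $w$. For a fixed $\sigma$, the number of $w$ yielding that set partition is $\prod_{B \in \sigma}(|B|-1)!$, since each block $B$ admits $(|B|-1)!$ cyclic orderings. Combined with the fact that $\mu_C$ depends only on the underlying block $B$ of $C$, this collapses the permutation sum into the partition sum
\begin{equation*}
\sum_{w \in \symm_r}\prod_{C \in w} p_{\mu_C} = \sum_{\sigma \in \Pi_r}\prod_{B \in \sigma}(|B|-1)!\cdot p_{\mu_B},
\end{equation*}
completing the proof. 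There is no serious obstacle here; the only point requiring care is the bijection between completions of the path graph and $\symm_r$, together with the verification that the induced cycle lengths are exactly the block sums $\mu_C$. Everything else is bookkeeping, and the case $n > |\mu|$ (where extra 1-cycles/paths appear) is handled automatically by the convention that parts of size 1 contribute factors of $p_1$.
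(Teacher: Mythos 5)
Your proof is correct and follows essentially the same route as the paper: both arguments invoke Proposition~\ref{p-combinatorial-interpretation} to reduce the claim to counting completions of the disjoint-path graph to genuine permutations, identify these completions with gluings of the paths into cycles (i.e.\ with permutations of the path labels), and then group by the underlying set partition to get the $(|B|-1)!$ factors. The paper states this in two sentences; you have merely supplied the bookkeeping (the sink-to-source bijection) explicitly, and your aside about the case $n > |\mu|$ is vacuous since $\mu \vdash n$ by definition, with isolated vertices already counted as parts of size~$1$.
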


For example, if $\mu = (a,b,c)$ then $r = 3$ and
\begin{equation*}
\vec{p}_{a,b,c} = p_{a,b,c} + p_{a+b,c} + p_{a+c,b} + p_{b+c,a} + 2 \cdot p_{a+b+c}
\end{equation*}
where the coefficient $2$ arises since the cycles $(a,b,c)$ and $(c,b,a)$ are different permutations.

\begin{proof}
By Definition~\ref{path-power-sum-definition} and
 Proposition~\ref{p-combinatorial-interpretation}, the $p$-expansion of $\vec{p}_{\mu}$ is obtained by starting with disjoint paths of 
 sizes $\mu_1, \dots, \mu_r$ glueing these paths to form disjoint cycles in all possible ways. The result follows.
\end{proof}

Proposition~\ref{path-to-classical} implies that
the terms $p_{\nu}$ in the $p$-expansion of $\vec{p}_{\mu}$ are indexed by partitions $\nu$ obtained by combining parts of $\mu$.
Since the $p$-functions are a basis of the space of symmetric functions,  by triangularity  we have the following.

\begin{corollary}
\label{path-basis}
The path power sums $\{ \vec{p}_{\mu} \,:\, \mu \vdash n \}$ form a basis of the vector space $\Lambda_n$.
\end{corollary}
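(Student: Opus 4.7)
The plan is to deduce the basis property from Proposition~\ref{path-to-classical} by establishing a triangularity relation between the path power sums $\{\vec{p}_\mu\}$ and the classical power sums $\{p_\nu\}$, which are already known to form a basis of $\Lambda_n$. Since $\dim \Lambda_n = |\{\lambda \vdash n\}|$ matches the number of path power sums being considered, it suffices to show that the transition matrix between these two families is invertible.

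The key observation is that Proposition~\ref{path-to-classical} expresses $\vec{p}_\mu$ as a nonnegative linear combination of classical power sums $p_\nu$ where $\nu$ ranges over partitions obtained from $\mu$ by \emph{merging} parts according to a set partition $\sigma \in \Pi_r$. When $\sigma = \hat{0}$ is the set partition of $[r]$ into singletons, each block $B$ has size 1, contributing the factor $\prod_B (|B|-1)! = 1$ and yielding $p_\mu$. So the expansion has the form
\[
\vec{p}_\mu = p_\mu + \sum_{\nu \, > \, \mu} c_{\mu,\nu} \cdot p_\nu
\]
for some coefficients $c_{\mu,\nu}$, where $\nu$ runs over partitions strictly coarser than $\mu$ (in the sense of being obtainable by a nontrivial merging).

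The step I would verify carefully is that this merging relation is contained in a partial order on partitions of $n$, so that the transition matrix is genuinely triangular. For this, I would use dominance order $\leq_{\mathrm{dom}}$: combining two parts $\mu_i$ and $\mu_j$ of a partition into a single part $\mu_i + \mu_j$ produces a partition that is strictly greater in dominance order, since merging concentrates weight toward larger parts while preserving the total. Iterating, any coarsening $\nu$ of $\mu$ obtained via a nontrivial $\sigma \in \Pi_r$ satisfies $\nu >_{\mathrm{dom}} \mu$. Thus the transition matrix from $\{\vec{p}_\mu\}$ to $\{p_\nu\}$, when rows and columns are ordered by a linear extension of dominance order, is upper unitriangular, hence invertible.

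Since the classical power sums $\{p_\nu : \nu \vdash n\}$ form a basis of $\Lambda_n$, invertibility of the transition matrix implies $\{\vec{p}_\mu : \mu \vdash n\}$ is also a basis. The main (minor) obstacle is simply verifying the dominance inequality under a single merge; this is immediate from the definition, since replacing $\mu_i, \mu_j$ by $\mu_i + \mu_j$ pushes mass strictly leftward in the partial sums $\mu_1 + \cdots + \mu_k$.
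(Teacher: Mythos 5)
Your proof is correct and follows essentially the same route as the paper, which likewise deduces the result from Proposition~\ref{path-to-classical} "by triangularity" (the paper leaves the choice of partial order implicit, whereas you correctly spell out that merging parts strictly increases a partition in dominance order and that the diagonal coefficient is $1$ since only the singleton set partition reproduces $p_\mu$).
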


By Proposition~\ref{path-to-classical} and the fact that the $p$-basis is multiplicative,
 the path power sums satisfy the multiplication formula
\begin{equation}
\vec{p}_a \cdot \vec{p}_{(\mu_1, \dots, \, \mu_r)} = \vec{p}_{(a, \, \mu_1, \dots, \, \mu_r)} - \sum_{i \, = \, 1}^r \vec{p}_{(\mu_1,  \dots, \, \mu_i + a, \dots, \, \mu_r)}
\end{equation}
where $\mu = (\mu_1, \dots, \mu_r)$ is any composition. Thanks to Proposition~\ref{atomic-factorization} and the fact that
$\vec{p}_a = p_a$, this gives the expansion of any atomic symmetric function $A_{n,I,J}$
in the basis $\{ \vec{p}_{\mu} \}$ of Corollary~\ref{path-basis}.

In the special case where all parts of $\mu = (k^r)$ are equal, the path power sum $\vec{p}_{\mu}$ has a plethystic 
interpretation. Let $F[G]$ be the {\em plethysm} operation defined on symmetric functions $F, G$ characterized by
\begin{equation}
(F_1 \cdot F_2)[G] = F_1[G] \cdot F_2[G] \quad 
(c_1 F_1 + c_2 F_2)[G] = c_1 F_1[G] + c_2 F_2[G] \quad
c[G] = c
\end{equation}
for symmetric functions $F_1, F_2, G$ and constants $c_1, c_2, c \in \CC$ together with the condition
\begin{equation}
p_k[G] = p_k[G(x_1, x_2, \dots)] = G(x_1^k, x_2^k, \dots ).
\end{equation}
We claim that
\begin{equation}
\label{path-power-plethysm}
\vec{p}_{(k^r)} = r! \cdot h_r[p_k].
\end{equation}
Indeed, the $p$-expansion of the complete homogeneous function is $h_r = \sum_{\lambda \vdash r} \frac{1}{z_{\lambda}} p_{\lambda}$ so that
\begin{equation}
r! \cdot h_r[p_k] = 
\sum_{\lambda \, \vdash \, r} \frac{r!}{z_{\lambda}} p_{\lambda}[p_k] = 
\sum_{\lambda \, \vdash \, r} |K_{\lambda}| 
\cdot p_{k \cdot \lambda}
\end{equation}
where $k \cdot \lambda = (k \lambda_1, k \lambda_2, \dots )$ is the partition obtained by multiplying each part of $\lambda$ by $k$ and $K_\lambda \subseteq \symm_n$
is the conjugacy class of permutations of cycle type $\lambda$.
Proposition~\ref{path-to-classical} implies that
$\vec{p}_{(k^r)} = \sum_{\lambda \, \vdash \, r} 
|K_{\lambda}| \cdot p_{k \cdot \lambda}$, which implies 
\eqref{path-power-plethysm}.
Since the path power sum $\vec{p}_{\mu}$ is not multiplicative in $\mu$, Equation~\eqref{path-power-plethysm}
does not na\"ively extend to nonconstant partitions $\mu$.

The expansion of the classical power sums in terms of path power sums is simpler than 
Proposition~\ref{path-to-classical} and involves signs.

\begin{proposition}
\label{classical-to-path}
Let $\mu = (\mu_1, \dots, \mu_r)$ be a partition with $r$ parts. We have
\begin{equation}
p_{\mu} = \sum_{\sigma \, \in \, \Pi_r}
(-1)^{r - |\sigma|}
\prod_{B \, \in \, \sigma} \vec{p}_{\mu_B}.
\end{equation}
\end{proposition}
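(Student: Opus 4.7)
The strategy is M\"obius-style inversion on the partition lattice $\Pi_r$ using Proposition~\ref{path-to-classical}. Since $\vec{p}$ is not multiplicative in its parts, the expression $\prod_{B \in \sigma}\vec{p}_{\mu_B}$ on the right-hand side is to be read as the path power sum $\vec{p}_{\mu/\sigma}$ of the integer partition $\mu/\sigma$ whose parts are the block sums $(\mu_B)_{B \in \sigma}$ (so the indicated ``product'' simply collects the block sums into a single partition). Applying Proposition~\ref{path-to-classical} to $\vec{p}_{\mu/\sigma}$ and identifying the resulting set partitions of $[|\sigma|]$ with coarsenings $\tau \geq \sigma$ in $\Pi_r$ yields
\[
\vec{p}_{\mu/\sigma} \;=\; \sum_{\tau \,\geq\, \sigma} \prod_{C \in \tau}(k_C - 1)!\, p_{\mu_C}, \qquad k_C := |\{B \in \sigma : B \subseteq C\}|.
\]

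Substituting this into the RHS of the claim and interchanging the order of summation, it suffices to show
\[
\sum_{\tau \in \Pi_r}\left(\prod_{C \in \tau} p_{\mu_C}\right)\prod_{C \in \tau}\left(\sum_{\pi \in \Pi_{|C|}}(-1)^{|C|-|\pi|}(|\pi|-1)!\right) \;=\; p_\mu,
\]
where the inner expression factors over blocks of $\tau$ because a refinement $\sigma \leq \tau$ decomposes independently into refinements of each block. Thus everything hinges on the single combinatorial identity
\[
\sum_{\pi \in \Pi_n}(-1)^{n-|\pi|}(|\pi|-1)! \;=\; \delta_{n,1}.
\]

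I would verify this identity via exponential generating functions. Using the standard EGF $\sum_{n \geq k} S(n,k)\,y^n/n! = (e^y-1)^k/k!$ for the Stirling numbers of the second kind together with the Mercator expansion $-\ln(1-z) = \sum_{k \geq 1} z^k/k$,
\[
\sum_{k,\,n \geq 1}(-1)^k S(n,k)(k-1)!\,\frac{y^n}{n!} \;=\; \sum_{k \geq 1}\frac{(1-e^y)^k}{k} \;=\; -\ln(e^y) \;=\; -y,
\]
so $\sum_k (-1)^k S(n,k)(k-1)! = -\delta_{n,1}$, and multiplying by $(-1)^n$ recovers the identity above. Once it is in hand, the inner factor in the reduced sum vanishes unless every block $C$ of $\tau$ is a singleton, forcing $\tau = \hat{0}$ and leaving precisely $\prod_i p_{\mu_i} = p_\mu$. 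The main obstacle is the EGF identity; the rest of the argument is straightforward bookkeeping on $\Pi_r$.
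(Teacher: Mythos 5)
Your proof is correct and takes essentially the same route as the paper, whose proof is a one-line appeal to Proposition~\ref{path-to-classical} plus M\"obius inversion on the partition lattice $\Pi_r$ --- exactly the computation you carry out explicitly, and your reading of $\prod_{B \in \sigma}\vec{p}_{\mu_B}$ as the single path power sum of the block-sum partition is the intended one (it matches the paper's $\mu=(a,b,c)$ example). The only cosmetic difference is that you verify the key cancellation $\sum_{\pi \in \Pi_n}(-1)^{n-|\pi|}(|\pi|-1)! = \delta_{n,1}$ by generating functions, where the paper would instead invoke the M\"obius function data for $\Pi_r$ recorded in its Equation~\eqref{mobius-partition}.
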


If $\mu = (a,b,c)$ as before, Proposition~\ref{classical-to-path} yields
\begin{equation*}
p_{a,b,c} = \vec{p}_{a,b,c} - \vec{p}_{a+b,c} - \vec{p}_{a+c,b} - \vec{p}_{b+c,a} + \vec{p}_{a+b+c}.
\end{equation*}

\begin{proof}
This follows from Proposition~\ref{path-to-classical} and
 the M\"obius Inversion Formula as applied to the partition lattice $\Pi_r$.
\end{proof}

\section{Path power sums to monomial symmetric functions}

Let $\mu \vdash n$ be a partition of $n$ with $r$ parts. The classical power sum $p_\mu$ expands into the basis of monomial
symmetric functions as 
\begin{equation}
\label{classical-p-to-m}
p_\mu = \sum_{\lambda \, \vdash \, n} a_{\mu,\lambda} \cdot m_\lambda.
\end{equation}
If $\lambda$ has $s$ parts, then $a_{\mu,\lambda}$ counts ordered partitions $(B_1, \dots , B_s)$ of the set $[r]$ 
such that $\sum_{i \in B_j} \mu_i = \lambda_j$ for all $1 \leq j \leq s$.

The path power sum $\vec{p}_{\mu}$ also admits a simple expansion in the monomial basis.
A ribbon $\xi$ is {\em horizontal} if its boxes occupy a single row.
Recall that $m_i(\mu)$ denotes the multiplicity of $i$ as a part of a composition $\mu$.
The factor $m(\mu)! := m_1(\mu)! m_2(\mu)! \cdots $ appearing in the following result will also feature in the 
Schur expansion of $\vec{p}_{\mu}$.

\begin{proposition}
\label{path-to-monomial}
For any partition $\mu = (\mu_1, \dots, \mu_r) \vdash n$ we have
\begin{equation}
\vec{p}_{\mu} = m(\mu)! \cdot \sum_{\lambda \, \vdash \, n} d_{\lambda} \cdot m_{\lambda}
\end{equation} 
where $d_{\lambda}$ counts the number of ways to tile the Young diagram of $\lambda$ with 
horizontal ribbons of sizes $\mu_1, \dots, \mu_r$.
\end{proposition}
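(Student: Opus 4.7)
The plan is to compute the coefficient of $m_\lambda$ in $\vec{p}_\mu$ directly from the expansion of $\vec{p}_\mu$ into classical power sums, then match the resulting combinatorial count with labeled horizontal-ribbon tilings. I would use the permutation form of Proposition~\ref{path-to-classical}, namely $\vec{p}_\mu = \sum_{w \in \symm_r} \prod_{C \in w} p_{\mu_C}$, together with the elementary identity $p_a = m_{(a)} = \sum_{i \geq 1} x_i^a$.

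First, substituting $p_{\mu_C} = \sum_i x_i^{\mu_C}$, every monomial appearing in $\vec{p}_\mu$ is indexed by a pair $(w, f)$ where $w \in \symm_r$ and $f$ assigns each cycle $C$ of $w$ a row index $f(C) \in \ZZ_{>0}$; the contribution is $\prod_{C \in w} x_{f(C)}^{\mu_C}$. Collecting monomial content, the coefficient of $m_\lambda$ equals the number of pairs $(w, f)$ with $f$ surjecting onto $[\ell(\lambda)]$ and $\sum_{C : f(C) = j} \mu_C = \lambda_j$ for every row index $j$.

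Next, I would repackage this data. Given $(w, f)$, define $E_j := \bigcup_{C : f(C) = j} C \subseteq [r]$; then $(E_1, \dots, E_s)$ is an ordered set partition of $[r]$ into $s = \ell(\lambda)$ blocks satisfying $\mu_{E_j} = \lambda_j$, and $w$ is a permutation of $[r]$ stabilizing each $E_j$ setwise. Conversely, any ordered partition together with any block-stabilizing permutation yields a valid $(w, f)$ via the cycle decomposition of $w$ restricted to each $E_j$. Since there are $\prod_j |E_j|!$ permutations stabilizing a fixed $(E_1, \dots, E_s)$, we obtain
\[
[m_\lambda] \, \vec{p}_\mu \;=\; \sum_{(E_1, \dots, E_s)} \prod_{j=1}^{s} |E_j|!,
\]
the sum ranging over ordered set partitions of $[r]$ into $s$ blocks with $\mu_{E_j} = \lambda_j$.

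Finally, the right-hand side is recognizable as the number of horizontal-ribbon tilings of $\lambda$ by labeled ribbons of sizes $\mu_1, \dots, \mu_r$: the ordered partition records which ribbons occupy which row, and the factorials record the left-to-right orderings within each row. Since permuting ribbon labels within each size class of $\mu$ produces $m(\mu)!$ distinct labeled tilings for every unlabeled one, this count equals $m(\mu)! \cdot d_\lambda$, proving the proposition. The one step demanding care is the bijection between $(w,f)$ pairs and (ordered partition, block-stabilizing permutation) pairs in the third paragraph; however, this is really just the statement that a permutation of $[r]$ stabilizing each $E_j$ is the same data as a tuple of permutations of the $E_j$, so it poses no substantive obstacle.
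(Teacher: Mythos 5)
Your proof is correct, and it is essentially the paper's argument: the paper simply cites the combination of Proposition~\ref{path-to-classical} with the classical monomial expansion of power sums (Equation~\eqref{classical-p-to-m}), which is exactly the computation you carry out in detail (your use of the $\symm_r$ form rather than the set-partition form just absorbs the identity $\sum_{\sigma \in \Pi_m} \prod_B (|B|-1)! = m!$ into the bookkeeping). The bijection you flag between pairs $(w,f)$ and (ordered partition, block-stabilizing permutation) pairs is indeed unproblematic, and your labeled-versus-unlabeled tiling count correctly produces the factor $m(\mu)!$.
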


\begin{proof}
Combine Equation~\eqref{classical-p-to-m} with Proposition~\ref{path-to-classical}.
\end{proof}

For example, if $\mu = (3,2,2,1)$ then $m(\mu)! = 2$ and Proposition~\ref{path-to-monomial} gives the $m$-basis expansion
\begin{equation*}
\vec{p}_{3221} = \\ 2 \times \left[
\begin{array}{c}
m_{3221} + 4 m_{332} + 2 m_{422} + m_{431} + 4 m_{44}  \\ + \, 2 m_{521} + 7 m_{53} + 6 m_{62} + 3 m_{71} + 12 m_8
\end{array}
\right].
\end{equation*}
The coefficient $7$ of $m_{53}$ is witnessed by the collection of tilings in Figure~\ref{fig:horizontal-monomial}.
Equation~\eqref{classical-p-to-m} and Proposition~\ref{path-to-monomial} combine with Proposition~\ref{atomic-factorization}
 to give an expansion of any atomic function $A_{n,I,J}$ in the $m$-basis; we leave details to the interested reader.

\begin{figure}
\begin{center}
\begin{tikzpicture}[scale = 0.3]
  \begin{scope}
    \clip (0,0) -| (3,1) -| (5,2) -| (0,0);
    \draw [color=black!25] (0,0) grid (5,2);
  \end{scope}

  \draw [thick] (0,0) -| (3,1) -| (5,2) -| (0,0);

  \draw [thick, rounded corners] (0.5,1.5) -- (2.5,1.5);
  \draw [color=black,fill=black,thick] (2.5,1.5) circle (.4ex);
  \node [draw, circle, fill = white, inner sep = 1.2pt] at (0.5,1.5) { };
  
   \draw [thick, rounded corners] (3.5,1.5) -- (4.5,1.5);
  \draw [color=black,fill=black,thick] (4.5,1.5) circle (.4ex);
  \node [draw, circle, fill = white, inner sep = 1.2pt] at (3.5,1.5) { };
  
   \draw [thick, rounded corners] (0.5,0.5) -- (1.5,0.5);
  \draw [color=black,fill=black,thick] (1.5,0.5) circle (.4ex);
  \node [draw, circle, fill = white, inner sep = 1.2pt] at (0.5,0.5) { };
  
    \node [draw, circle, fill = white, inner sep = 1.2pt] at (2.5,0.5) { };

   \begin{scope}
    \clip (8,0) -| (11,1) -| (13,2) -| (8,0);
    \draw [color=black!25] (8,0) grid (13,2);
  \end{scope}

  \draw [thick] (8,0) -| (11,1) -| (13,2) -| (8,0);

   \draw [thick, rounded corners] (8.5,1.5) -- (10.5,1.5);
  \draw [color=black,fill=black,thick] (10.5,1.5) circle (.4ex);
  \node [draw, circle, fill = white, inner sep = 1.2pt] at (8.5,1.5) { };
  
   \draw [thick, rounded corners] (11.5,1.5) -- (12.5,1.5);
  \draw [color=black,fill=black,thick] (12.5,1.5) circle (.4ex);
  \node [draw, circle, fill = white, inner sep = 1.2pt] at (11.5,1.5) { };
  
   \draw [thick, rounded corners] (9.5,0.5) -- (10.5,0.5);
  \draw [color=black,fill=black,thick] (10.5,0.5) circle (.4ex);
  \node [draw, circle, fill = white, inner sep = 1.2pt] at (9.5,0.5) { };
  
   \node [draw, circle, fill = white, inner sep = 1.2pt] at (8.5,0.5) { };

   \begin{scope}
    \clip (16,0) -| (19,1) -| (21,2) -| (16,0);
    \draw [color=black!25] (16,0) grid (21,2);
  \end{scope}

  \draw [thick] (16,0) -| (19,1) -| (21,2) -| (16,0);
  
   \draw [thick, rounded corners] (18.5,1.5) -- (20.5,1.5);
  \draw [color=black,fill=black,thick] (20.5,1.5) circle (.4ex);
  \node [draw, circle, fill = white, inner sep = 1.2pt] at (18.5,1.5) { };
  
   \draw [thick, rounded corners] (16.5,1.5) -- (17.5,1.5);
  \draw [color=black,fill=black,thick] (17.5,1.5) circle (.4ex);
  \node [draw, circle, fill = white, inner sep = 1.2pt] at (16.5,1.5) { };
  
   \draw [thick, rounded corners] (16.5,0.5) -- (17.5,0.5);
  \draw [color=black,fill=black,thick] (17.5,0.5) circle (.4ex);
  \node [draw, circle, fill = white, inner sep = 1.2pt] at (16.5,0.5) { };
  
   \node [draw, circle, fill = white, inner sep = 1.2pt] at (18.5,0.5) { };

   \begin{scope}
    \clip (24,0) -| (27,1) -| (29,2) -| (24,0);
    \draw [color=black!25] (24,0) grid (29,2);
  \end{scope}

  \draw [thick] (24,0) -| (27,1) -| (29,2) -| (24,0);
  
   \draw [thick, rounded corners] (26.5,1.5) -- (28.5,1.5);
  \draw [color=black,fill=black,thick] (28.5,1.5) circle (.4ex);
  \node [draw, circle, fill = white, inner sep = 1.2pt] at (26.5,1.5) { };  
  
   \draw [thick, rounded corners] (24.5,1.5) -- (25.5,1.5);
  \draw [color=black,fill=black,thick] (25.5,1.5) circle (.4ex);
  \node [draw, circle, fill = white, inner sep = 1.2pt] at (24.5,1.5) { };
  
   \draw [thick, rounded corners] (25.5,0.5) -- (26.5,0.5);
  \draw [color=black,fill=black,thick] (26.5,0.5) circle (.4ex);
  \node [draw, circle, fill = white, inner sep = 1.2pt] at (25.5,0.5) { };
  
   \node [draw, circle, fill = white, inner sep = 1.2pt] at (24.5,0.5) { };

  \begin{scope}
    \clip (4,-3) -| (7,-2) -| (9,-1) -| (4,-3);
    \draw [color=black!25] (4,-3) grid (9,-1);
  \end{scope}

  \draw [thick]  (4,-3) -| (7,-2) -| (9,-1) -| (4,-3);

  \draw [thick, rounded corners] (4.5,-2.5) -- (6.5,-2.5);
  \draw [color=black,fill=black,thick] (6.5,-2.5) circle (.4ex);
  \node [draw, circle, fill = white, inner sep = 1.2pt] at (4.5,-2.5) { };
  
   \draw [thick, rounded corners] (4.5,-1.5) -- (5.5,-1.5);
  \draw [color=black,fill=black,thick] (5.5,-1.5) circle (.4ex);
  \node [draw, circle, fill = white, inner sep = 1.2pt] at (4.5,-1.5) { };
  
   \draw [thick, rounded corners] (6.5,-1.5) -- (7.5,-1.5);
  \draw [color=black,fill=black,thick] (7.5,-1.5) circle (.4ex);
  \node [draw, circle, fill = white, inner sep = 1.2pt] at (6.5,-1.5) { };
  
    \node [draw, circle, fill = white, inner sep = 1.2pt] at (8.5,-1.5) { };

  \begin{scope}
    \clip (12,-3) -| (15,-2) -| (17,-1) -| (12,-3);
    \draw [color=black!25] (12,-3) grid (17,-1);
  \end{scope}

  \draw [thick]  (12,-3) -| (15,-2) -| (17,-1) -| (12,-3); 
  
  \draw [thick, rounded corners] (12.5,-2.5) -- (14.5,-2.5);
  \draw [color=black,fill=black,thick] (14.5,-2.5) circle (.4ex);
  \node [draw, circle, fill = white, inner sep = 1.2pt] at (12.5,-2.5) { };
  
   \draw [thick, rounded corners] (12.5,-1.5) -- (13.5,-1.5);
  \draw [color=black,fill=black,thick] (13.5,-1.5) circle (.4ex);
  \node [draw, circle, fill = white, inner sep = 1.2pt] at (12.5,-1.5) { };
  
   \draw [thick, rounded corners] (15.5,-1.5) -- (16.5,-1.5);
  \draw [color=black,fill=black,thick] (16.5,-1.5) circle (.4ex);
  \node [draw, circle, fill = white, inner sep = 1.2pt] at (15.5,-1.5) { };
  
   \node [draw, circle, fill = white, inner sep = 1.2pt] at (14.5,-1.5) { };

  \begin{scope}
    \clip (20,-3) -| (23,-2) -| (25,-1) -| (20,-3);
    \draw [color=black!25] (20,-3) grid (25,-1);
  \end{scope}

  \draw [thick, rounded corners] (20.5,-2.5) -- (22.5,-2.5);
  \draw [color=black,fill=black,thick] (22.5,-2.5) circle (.4ex);
  \node [draw, circle, fill = white, inner sep = 1.2pt] at (20.5,-2.5) { };
  
   \draw [thick, rounded corners] (21.5,-1.5) -- (22.5,-1.5);
  \draw [color=black,fill=black,thick] (22.5,-1.5) circle (.4ex);
  \node [draw, circle, fill = white, inner sep = 1.2pt] at (21.5,-1.5) { };
  
   \draw [thick, rounded corners] (23.5,-1.5) -- (24.5,-1.5);
  \draw [color=black,fill=black,thick] (24.5,-1.5) circle (.4ex);
  \node [draw, circle, fill = white, inner sep = 1.2pt] at (23.5,-1.5) { };
  
   \node [draw, circle, fill = white, inner sep = 1.2pt] at (20.5,-1.5) { };   

  \draw [thick]  (20,-3) -| (23,-2) -| (25,-1) -| (20,-3);

\end{tikzpicture}
\end{center}
\caption{The coefficient of $m_{53}$ in $\vec{p}_{\mu}$ for $\mu = (3,2,2,1)$ is $m(\mu)! \cdot 7$.}
\label{fig:horizontal-monomial}
\end{figure}
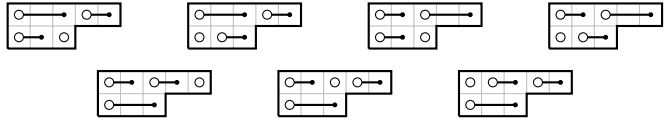

\section{Skewing operators}

For any symmetric function $F \in \Lambda$, let $F^{\perp}: \Lambda \rightarrow \Lambda$ be the adjoint to multiplication by $F$
under the Hall inner product. This operator is characterized by
\begin{equation}
\langle F^{\perp} G, H \rangle = \langle G, FH \rangle \quad \quad \text{for all $G,H \in \Lambda$.}
\end{equation}
In particular, the operator $h_j^{\perp}$ lowers the degree of a symmetric function by $j$. More precisely, the Pieri Rule
implies
\begin{equation}
h_j^{\perp} s_{\lambda} = \sum_{\mu} s_{\mu}
\end{equation}
where the sum is over all partitions $\mu \subseteq \lambda$ such that the skew shape $\lambda/\mu$ consists of $j$ boxes,
each in different columns. 
The operators $h_j^{\perp}$ can be used to recursively characterize symmetric functions; for any $F, G \in \Lambda_n$  of positive
homogeneous degree $n \geq 1$
we have
\begin{equation}
\label{skewing-assertion}
F = G \text{ if and only if } h_j^{\perp} F = h_j^{\perp} G \text{ for all $j \geq 1$.}
\end{equation}
The assertion \eqref{skewing-assertion} is true because the elements $h_1, h_2, \dots $ constitute an algebraically independent
generating set of $\Lambda$.

The atomic functions behave nicely under the action of $h_j^{\perp}$. This recursion is best stated using the graphical representation
of partial permutations.  If $(I,J)$ is a partial permutation of $[n]$ with graph $G = G_n(I,J) \in \GGG_n$, we write
$A_G := A_{n,I,J}$ for the atomic symmetric function corresponding to $(I,J)$.

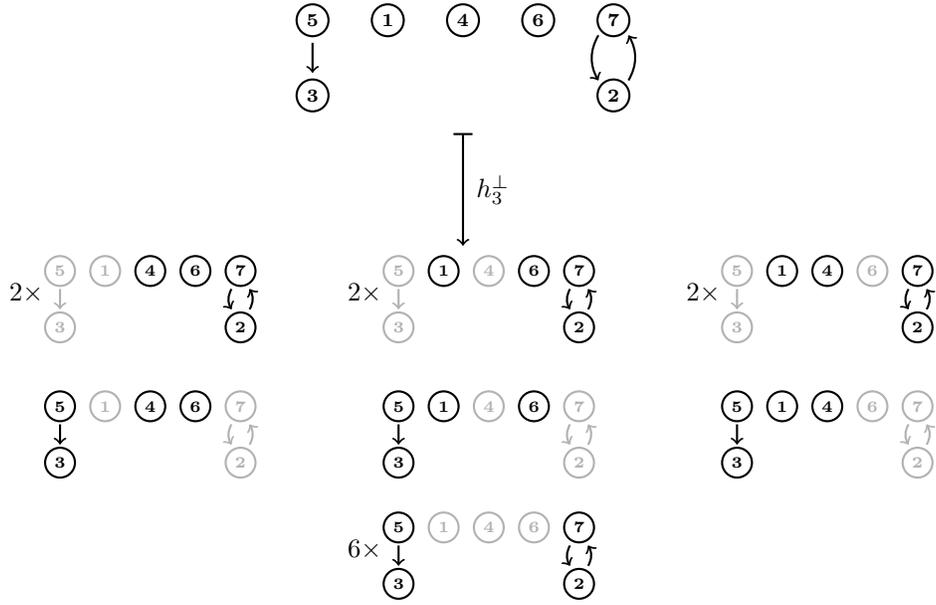
\begin{figure}
\begin{center}
\begin{tikzpicture}[scale = 0.5]

   \node [draw, circle, fill = white, inner sep = 2pt, thick] at (2,0)
  {\scriptsize {\bf 1} };
     \node [draw, circle, fill = white, inner sep = 2pt, thick] at (8,-2)
  {\scriptsize {\bf 2}};
     \node [draw, circle, fill = white, inner sep = 2pt, thick] at (0,-2)
  {\scriptsize {\bf 3}};
     \node [draw, circle, fill = white, inner sep = 2pt, thick] at (4,0)
  {\scriptsize {\bf 4}};
     \node [draw, circle, fill = white, inner sep = 2pt, thick] at (0,0)
  {\scriptsize {\bf 5}};
     \node [draw, circle, fill = white, inner sep = 2pt, thick] at (6,0)
  {\scriptsize {\bf 6}};
     \node [draw, circle, fill = white, inner sep = 2pt, thick] at (8,0)
  {\scriptsize {\bf 7}};
  \draw [->, thick] (0,-0.6) -- (0,-1.4);
 \draw [->, thick]  (7.6,-0.4) to[bend right]  (7.6,-1.6);
 \draw [->, thick]  (8.4,-1.6) to[bend right]  (8.4,-0.4);
 
 \draw [|->, thick] (4,-3) -- (4,-6);
 
\node at (4.8,-4.5) {$h_3^{\perp}$};

\end{tikzpicture}
\end{center}

\begin{center}
\begin{tikzpicture}[scale = 0.3]

\node at (-1.5,-1) {$2 \times$};
   \node [draw, circle, black!30, fill = white, inner sep = 2pt, thick] at (2,0)
  {\tiny {\bf 1} };
     \node [draw, circle, fill = white, inner sep = 2pt, thick] at (8,-2.5)
  {\tiny {\bf 2}};
     \node [draw, circle, black!30, fill = white, inner sep = 2pt, thick] at (0,-2.5)
  {\tiny {\bf 3}};
     \node [draw, circle, fill = white, inner sep = 2pt, thick] at (4,0)
  {\tiny {\bf 4}};
     \node [draw, circle, black!30, fill = white, inner sep = 2pt, thick] at (0,0)
  {\tiny {\bf 5}};
     \node [draw, circle, fill = white, inner sep = 2pt, thick] at (6,0)
  {\tiny {\bf 6}};
     \node [draw, circle, fill = white, inner sep = 2pt, thick] at (8,0)
  {\tiny {\bf 7}};
  \draw [->, thick, black!30] (0,-0.8) -- (0,-1.7);
 \draw [->, thick]  (7.6,-0.8) to[bend right]  (7.6,-1.7);
 \draw [->, thick]  (8.4,-1.7) to[bend right]  (8.4,-0.8);
 
 \node at (13.5,-1) {$2 \times$};
    \node [draw, circle, fill = white, inner sep = 2pt, thick] at (17,0)
  {\tiny {\bf 1} };
     \node [draw, circle, fill = white, inner sep = 2pt, thick] at (23,-2.5)
  {\tiny {\bf 2}};
     \node [draw, circle, black!30, fill = white, inner sep = 2pt, thick] at (15,-2.5)
  {\tiny {\bf 3}};
     \node [draw, circle, black!30, fill = white, inner sep = 2pt, thick] at (19,0)
  {\tiny {\bf 4}};
     \node [draw, circle, black!30, fill = white, inner sep = 2pt, thick] at (15,0)
  {\tiny {\bf 5}};
     \node [draw, circle, fill = white, inner sep = 2pt, thick] at (21,0)
  {\tiny {\bf 6}};
     \node [draw, circle, fill = white, inner sep = 2pt, thick] at (23,0)
  {\tiny {\bf 7}};
  \draw [->, thick, black!30] (15,-0.8) -- (15,-1.7);
 \draw [->, thick]  (22.6,-0.8) to[bend right]  (22.6,-1.7);
 \draw [->, thick]  (23.4,-1.7) to[bend right]  (23.4,-0.8);

 \node at (28.5,-1) {$2 \times$};
    \node [draw, circle, fill = white, inner sep = 2pt, thick] at (32,0)
  {\tiny {\bf 1} };
     \node [draw, circle, fill = white, inner sep = 2pt, thick] at (38,-2.5)
  {\tiny {\bf 2}};
     \node [draw, circle, black!30, fill = white, inner sep = 2pt, thick] at (30,-2.5)
  {\tiny {\bf 3}};
     \node [draw, circle, fill = white, inner sep = 2pt, thick] at (34,0)
  {\tiny {\bf 4}};
     \node [draw, circle, black!30, fill = white, inner sep = 2pt, thick] at (30,0)
  {\tiny {\bf 5}};
     \node [draw, circle, black!30, fill = white, inner sep = 2pt, thick] at (36,0)
  {\tiny {\bf 6}};
     \node [draw, circle, fill = white, inner sep = 2pt, thick] at (38,0)
  {\tiny {\bf 7}};
  \draw [->, thick, black!30] (30,-0.8) -- (30,-1.7);
 \draw [->, thick]  (37.6,-0.8) to[bend right]  (37.6,-1.7);
 \draw [->, thick]  (38.4,-1.7) to[bend right]  (38.4,-0.8);

    \node [draw, circle, black!30, fill = white, inner sep = 2pt, thick] at (2,-6)
  {\tiny {\bf 1} };
     \node [draw, circle, black!30, fill = white, inner sep = 2pt, thick] at (8,-8.5)
  {\tiny {\bf 2}};
     \node [draw, circle, fill = white, inner sep = 2pt, thick] at (0,-8.5)
  {\tiny {\bf 3}};
     \node [draw, circle, fill = white, inner sep = 2pt, thick] at (4,-6)
  {\tiny {\bf 4}};
     \node [draw, circle, fill = white, inner sep = 2pt, thick] at (0,-6)
  {\tiny {\bf 5}};
     \node [draw, circle, fill = white, inner sep = 2pt, thick] at (6,-6)
  {\tiny {\bf 6}};
     \node [draw, circle, black!30, fill = white, inner sep = 2pt, thick] at (8,-6)
  {\tiny {\bf 7}};
  \draw [->, thick] (0,-6.8) -- (0,-7.7);
 \draw [->, thick, black!30]  (7.6,-6.8) to[bend right]  (7.6,-7.7);
 \draw [->, thick, black!30]  (8.4,-7.7) to[bend right]  (8.4,-6.8);

    \node [draw, circle, fill = white, inner sep = 2pt, thick] at (17,-6)
  {\tiny {\bf 1} };
     \node [draw, circle, black!30, fill = white, inner sep = 2pt, thick] at (23,-8.5)
  {\tiny {\bf 2}};
     \node [draw, circle, fill = white, inner sep = 2pt, thick] at (15,-8.5)
  {\tiny {\bf 3}};
     \node [draw, circle, black!30, fill = white, inner sep = 2pt, thick] at (19,-6)
  {\tiny {\bf 4}};
     \node [draw, circle, fill = white, inner sep = 2pt, thick] at (15,-6)
  {\tiny {\bf 5}};
     \node [draw, circle, fill = white, inner sep = 2pt, thick] at (21,-6)
  {\tiny {\bf 6}};
     \node [draw, circle, black!30, fill = white, inner sep = 2pt, thick] at (23,-6)
  {\tiny {\bf 7}};
  \draw [->, thick] (15,-6.8) -- (15,-7.7);
 \draw [->, thick, black!30]  (22.6,-6.8) to[bend right]  (22.6,-7.7);
 \draw [->, thick, black!30]  (23.4,-7.7) to[bend right]  (23.4,-6.8);

    \node [draw, circle, fill = white, inner sep = 2pt, thick] at (32,-6)
  {\tiny {\bf 1} };
     \node [draw, circle, black!30, fill = white, inner sep = 2pt, thick] at (38,-8.5)
  {\tiny {\bf 2}};
     \node [draw, circle, fill = white, inner sep = 2pt, thick] at (30,-8.5)
  {\tiny {\bf 3}};
     \node [draw, circle, fill = white, inner sep = 2pt, thick] at (34,-6)
  {\tiny {\bf 4}};
     \node [draw, circle, fill = white, inner sep = 2pt, thick] at (30,-6)
  {\tiny {\bf 5}};
     \node [draw, circle, black!30, fill = white, inner sep = 2pt, thick] at (36,-6)
  {\tiny {\bf 6}};
     \node [draw, circle, black!30, fill = white, inner sep = 2pt, thick] at (38,-6)
  {\tiny {\bf 7}};
  \draw [->, thick] (30,-6.8) -- (30,-7.7);
 \draw [->, thick, black!30]  (37.6,-6.8) to[bend right]  (37.6,-7.7);
 \draw [->, thick, black!30]  (38.4,-7.7) to[bend right]  (38.4,-6.8);

\end{tikzpicture}
\end{center}
\vspace{0.1in}
\begin{center}
\begin{tikzpicture}[scale = 0.3]
  \node at (43.5,-1) {$6 \times$};
     \node [draw, circle, black!30, fill = white, inner sep = 2pt, thick] at (47,0)
  {\tiny {\bf 1} };
     \node [draw, circle, fill = white, inner sep = 2pt, thick] at (53,-2.5)
  {\tiny {\bf 2}};
     \node [draw, circle, fill = white, inner sep = 2pt, thick] at (45,-2.5)
  {\tiny {\bf 3}};
     \node [draw, circle, black!30, fill = white, inner sep = 2pt, thick] at (49,0)
  {\tiny {\bf 4}};
     \node [draw, circle, fill = white, inner sep = 2pt, thick] at (45,0)
  {\tiny {\bf 5}};
     \node [draw, circle, black!30, fill = white, inner sep = 2pt, thick] at (51,0)
  {\tiny {\bf 6}};
     \node [draw, circle, fill = white, inner sep = 2pt, thick] at (53,0)
  {\tiny {\bf 7}};
  \draw [->, thick] (45,-0.8) -- (45,-1.7);
 \draw [->, thick]  (52.6,-0.8) to[bend right]  (52.6,-1.7);
 \draw [->, thick]  (53.4,-1.7) to[bend right]  (53.4,-0.8);

\end{tikzpicture}
\end{center}

\caption{An application of $h_3^{\perp}$ to an atomic symmetric function.}
\label{fig:h-perp}
\end{figure}

\begin{proposition}
\label{skewing-recursion}
Let $G \in \GGG_n$ be a graph corresponding to a partial permutation and let $j \geq 1$.
We have
\begin{equation}
h_j^{\perp} A_G = \sum_{H} \path(G - H)! \cdot A_H
\end{equation}
where the sum is over all subgraphs $H \subseteq G$ obtained by removing components of $G$ such that $G - H$ has $j$ vertices.
Here $\path(G-H)$ is the number of components in the removed graph $G-H$ which are paths (instead of cycles).
\end{proposition}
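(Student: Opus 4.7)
The plan is to factor $A_G = \vec{p}_\mu \cdot p_\nu$ by Proposition~\ref{atomic-factorization} and then exploit the identity
\begin{equation*}
    h_j^\perp(FG) = \sum_{a+b=j}(h_a^\perp F)(h_b^\perp G), \qquad F, G \in \Lambda,
\end{equation*}
which follows from the coproduct formula $\Delta(h_j) = \sum_{a+b=j} h_a \otimes h_b$ on the self-dual Hopf algebra $\Lambda$ (equivalently, from $h_j(x \cup y) = \sum_{a+b=j} h_a(x) h_b(y)$). Applying this to $A_G = \vec{p}_\mu p_\nu$ reduces the problem to computing $h_a^\perp \vec{p}_\mu$ and $h_b^\perp p_\nu$ separately.

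For the cycle factor, a direct computation using $h_j = \sum_{\lambda \vdash j} z_\lambda^{-1} p_\lambda$ and orthogonality of the $p_\lambda$ basis shows that $h_j^\perp p_k$ equals $p_k$ if $j = 0$, equals $1$ if $j = k$, and is zero otherwise. Iterating the coproduct formula across $p_\nu = p_{\nu_1} \cdots p_{\nu_s}$ then gives $h_b^\perp p_\nu = \sum_S \prod_{i \notin S} p_{\nu_i}$, with $S \subseteq [s]$ ranging over subsets satisfying $\sum_{i \in S}\nu_i = b$. For the path factor I would expand $\vec{p}_\mu = \sum_{\sigma \in \Pi_r}\prod_{B \in \sigma}(|B|-1)! \cdot p_{\mu_B}$ via Proposition~\ref{path-to-classical}, apply $h_a^\perp$ termwise, and observe that the iterated coproduct rule forces each block $B$ of $\sigma$ either to survive intact or to be consumed whole. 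Reindexing the pairs $(\sigma, U)$, where $U \subseteq \sigma$ is the set of consumed blocks, by the triple $(T, \tau, \sigma')$ with $T := \bigcup_{B \in U} B \subseteq [r]$ and $\tau, \sigma'$ the restrictions of $\sigma$ to $T$ and $[r]\setminus T$, the $\sigma'$-sum recomputes $\vec{p}_{\mu|_{[r]\setminus T}}$ while the $\tau$-sum collapses via the classical identity $\sum_{\tau \in \Pi_T}\prod_B (|B|-1)! = |T|!$, which encodes the decomposition of a permutation of $T$ as a set partition decorated with cyclic orderings of each block. This yields
\begin{equation*}
    h_a^\perp \vec{p}_\mu = \sum_{T \subseteq [r],\, \sum_{i \in T}\mu_i = a} |T|! \cdot \vec{p}_{\mu|_{[r]\setminus T}}.
\end{equation*}

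Multiplying the two factor formulas, the pairs $(T,S)$ of subsets of path indices and cycle indices biject with subgraphs $H \subseteq G$ obtained by deleting whole components, with $|G-H| = j$, $|T| = \path(G-H)$, and $\vec{p}_{\mu|_{[r]\setminus T}} \cdot \prod_{i \notin S} p_{\nu_i} = A_H$ (again by Proposition~\ref{atomic-factorization}); this produces the claimed identity. The main obstacle is the path-side computation: because $\vec{p}_\mu$ is not multiplicative in $\mu$, there is no Leibniz-style shortcut, and one must route through the $p$-expansion of Proposition~\ref{path-to-classical} and use the factorial collapse to trade a sum over set partitions of $T$ for the combinatorial factor $|T|!$ that appears in the proposition.
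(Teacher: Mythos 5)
Your proof is correct. It differs from the paper's argument mainly in which machinery carries the power-sum computation and in how the bookkeeping is organized. The paper interprets $h_j^{\perp}$ representation-theoretically: via Frobenius reciprocity, $h_j^{\perp} \ch_n(\psi) = \ch_{n-j}(\eta_j \psi)$, from which it reads off that $h_j^{\perp} p_{\mu}$ is the sum of $p_{\nu}$ over all ways of deleting parts of $\mu$ summing to $j$; it then applies this to the full $p$-expansion of $A_G$ from Proposition~\ref{p-combinatorial-interpretation} (gluing paths into cycles) and leaves the regrouping that produces $\path(G-H)!$ to the reader. You instead invoke the Hopf-algebraic Leibniz rule $h_j^{\perp}(FG) = \sum_{a+b=j}(h_a^{\perp}F)(h_b^{\perp}G)$, which lets the factorization $A_G = \vec{p}_{\mu} \cdot p_{\nu}$ of Proposition~\ref{atomic-factorization} do real work: the cycle factor is handled by the elementary Hall-inner-product computation of $h_b^{\perp}p_k$, and the path factor yields the standalone formula $h_a^{\perp}\vec{p}_{\mu} = \sum_{T} |T|! \cdot \vec{p}_{\mu|_{[r]\setminus T}}$ via the set-partition reindexing and the collapse $\sum_{\tau \in \Pi_T}\prod_{B}(|B|-1)! = |T|!$. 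The two routes rest on the same underlying identity — the factorial is in both cases the number of ways to assemble the removed paths into cycles — but yours makes that origin explicit and produces a clean intermediate skewing formula for path power sums that the paper does not state, at the cost of importing the coproduct rule for $h_j^{\perp}$ (standard, but not proved in the paper) rather than the Frobenius reciprocity fact the paper uses.
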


\begin{proof}
The application of $h_j^{\perp}$ has the following class function interpretation. Recall that $\eta_j \in \CC[\symm_n]$ is the group algebra element
\begin{equation}
\eta_j = \frac{1}{j!} \cdot \sum_{w \, \in \, \symm_j} w
\end{equation}
which symmetrizes over the first $j$ letters in $[n]$. 
For any (linearly extended) class function 
$\psi: \CC[\symm_n] \rightarrow \CC$ we have a class function
$\eta_j \psi: \CC[\symm_{n-j}] \rightarrow \CC$ on the smaller symmetric group $\symm_{n-j}$ given by
\begin{equation}
\eta_j \psi( v) := \psi( \eta_j v)
\end{equation}
where we interpret the argument of $\psi(\eta_j v)$ via the subgroup $\symm_j \times \symm_{n-j}$ of $\symm_n$.  Frobenius Reciprocity
yields
the equality
\begin{equation}
h_j^{\perp} \ch_n(\psi) = \ch_{n-j}( \eta_j \psi)
\end{equation}
of symmetric functions.

The class function $\psi^{\mu}: \symm_n \rightarrow \CC$ satisfying $\ch_n(\psi^{\mu}) = p_{\mu}$ 
is the scaled indicator function for the conjugacy class $K_{\mu}$. We have
\begin{equation}
\psi^{\mu}(w) = \begin{cases}
z_{\mu} & \text{$w$ has cycle type $\mu$} \\
0 & \text{otherwise.}
\end{cases}
\end{equation}
If we write $\mu = (\mu_1, \mu_2, \dots, \mu_{\ell})$ for the parts of $\mu$, it is not hard to see that
\begin{equation}
\eta_j \psi^{\mu} = 
\sum_{\substack{1 \, \leq \, i_1 \, < \, \cdots \, < \, i_r \, \leq \, \ell \\ \mu_{i_1} + \cdots + \mu_{i_r} \, = \, n-j}}
\psi^{(\mu_{i_1}, \dots, \mu_{i_r})}
\end{equation}
as class functions on $\symm_{n-j}$.
Applying the characteristic map gives
\begin{equation}
h_j^{\perp} p_{\mu} 
\sum_{\substack{1 \, \leq \, i_1 \, < \, \cdots \, < \, i_r \, \leq \, \ell \\ \mu_{i_1} + \cdots + \mu_{i_r} \, = \, n-j}}
p_{(\mu_{i_1}, \dots, \mu_{i_r})}.
\end{equation}
That is, to apply $h_j^{\perp}$ to $p_{\mu}$, we sum over all $p_{\nu}$'s for which 
$\nu$ is obtained by removing parts of $\mu$ which sum to $j$.
The result follows from Proposition~\ref{p-combinatorial-interpretation} and linearity.
\end{proof}

An example of Proposition~\ref{skewing-recursion} is shown in Figure~\ref{fig:h-perp}.
Let $(I,J) \in \symm_{7,3}$ be the partial permutation $(I,J) = (257,732)$.  The graph $G_n(I,J)$ has cycle partition
$(2)$ and path partition $(2,1,1,1)$. To compute $h_3^{\perp} A_{7,I,J} = h_3^{\perp} (p_2 \cdot \vec{p}_{2,1,1,1})$, we remove components from $G_n(I,J)$ in all possible ways
such that a total of 3 vertices are removed.
Removing $r$ paths in this fashion contributes a factor of $r!$.
This results in the formula
\begin{equation*}
h_3^{\perp} A_{7,I,J} = h_3^{\perp} (p_2 \cdot \vec{p}_{2,1,1,1}) = 6 p_2 \cdot \vec{p}_{1,1} + 3 \vec{p}_{2,1,1} + 6 p_2 \cdot \vec{p}_2.
\end{equation*}

Next we study the action of the skewing operator $p_j^\perp$.
The formula has the same flavor as Proposition~\ref{skewing-recursion}.

\begin{proposition}
\label{p-skewing-recursion}
Let $G \in \GGG_n$ be a graph corresponding to a partial permutation and let $j \geq 1$.
We have
\begin{equation}
p_j^{\perp} A_G = j \times \sum_{H} (\path(G - H)-1)! \cdot A_H
\end{equation}
where the sum is over all subgraphs $H \subseteq G$ obtained by removing components of $G$ such that $G - H$ is a disjoint union of paths with $j$ total
vertices or a single cycle with $j$ vertices.
\end{proposition}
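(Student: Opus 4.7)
The plan is to proceed by the same strategy as the proof of Proposition~\ref{skewing-recursion}, but using the combinatorially simpler action of $p_j^\perp$ on the power sum basis in place of the $h_j^\perp$ calculation there. First I would establish that
\[
p_j^\perp p_\mu \;=\; j \cdot m_j(\mu) \cdot p_{\mu \setminus j}
\]
for any partition $\mu$, where $\mu \setminus j$ denotes removal of a single part equal to $j$ (and the expression is zero if $\mu$ has no such part). This follows directly from adjointness under the Hall inner product together with the multiplicativity $p_j p_\nu = p_{\nu \cup \{j\}}$ and the identity $z_{\nu \cup \{j\}}/z_\nu = j \cdot (m_j(\nu)+1)$; the scalar factor of $j$ comes precisely from the $j^{m_j}$ piece of $z_\mu$.

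Next I would feed this into the combinatorial $p$-expansion $A_G = \sum_{w:\, w(I) = J} p_{\lambda(w)}$ from Proposition~\ref{p-combinatorial-interpretation}, obtaining
\[
p_j^\perp A_G \;=\; j \sum_{(w,\, C)} p_{\lambda(w) \setminus j},
\]
where the sum is over pairs consisting of a permutation $w$ with $w(I) = J$ together with a distinguished $j$-cycle $C$ of $w$. The bulk of the argument is a reindexing of this sum by the vertex set $V(C) \subseteq [n]$. Because every edge of $G$ lies in some cycle of $w$, the connected components of $G$ meeting $V(C)$ must be contained in $V(C)$, so $V(C)$ is a union of components of $G$ which $w$ assembles into a single $j$-cycle. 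This forces the removed portion $G - H := G|_{V(C)}$ to be either (a) a single $j$-cycle component of $G$, in which case $C$ is uniquely determined, or (b) a disjoint union of $r \geq 1$ directed paths totalling $j$ vertices, in which case $C$ is obtained by arranging the $r$ directed paths cyclically in $(r-1)!$ ways. Summing the resulting $p_{\lambda(w')}$ over all compatible permutations $w'$ of $[n] \setminus V(C)$ yields $A_H$ by a second invocation of Proposition~\ref{p-combinatorial-interpretation}.

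Assembling these observations gives the claimed formula, where in case (a) one interprets the factor $(\path(G-H)-1)! = (-1)!$ as $1$, consistent with the single available choice of $C$ when $G - H$ is already a cycle. The main step requiring care is the cyclic arrangement count $(r-1)!$ in case (b) — one must check that directed paths of total length $j$ can be assembled into a directed $j$-cycle in exactly $(r-1)!$ ways, which follows from the standard fact that there are $(r-1)!$ cyclic orderings of $r$ distinguishable directed objects; all remaining steps are routine adjointness manipulations and bookkeeping for the index set.
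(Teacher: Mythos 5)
Your proof is correct, but it takes a genuinely different route from the paper's. The paper's argument is algebraic: it invokes the factorization $A_G = p_\nu \cdot \vec{p}_\mu$ of Proposition~\ref{atomic-factorization}, the derivative formula $p_j^\perp = j\,\partial/\partial p_j$ on the polynomial ring $\CC[p_1,p_2,\dots]$, the product rule, and the signed expansion of classical power sums into path power sums (Proposition~\ref{classical-to-path}) to extract the $\vec{p}$-expansion of $p_j^\perp \vec{p}_\mu$. You instead work directly with the combinatorial $p$-expansion $A_G = \sum_{w(I)=J} p_{\lambda(w)}$ of Proposition~\ref{p-combinatorial-interpretation}, use the adjointness identity $p_j^\perp p_\mu = j\, m_j(\mu)\, p_{\mu\setminus j}$ (equivalent to the paper's derivative formula), and reindex the resulting sum over pairs $(w,C)$ by the vertex set of the distinguished $j$-cycle $C$. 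Your key structural observation — that each component of $G$ lies inside a single cycle of any compatible $w$, so $V(C)$ is a union of components that is either one $j$-cycle of $G$ or a union of $r$ paths assembled into $C$ in $(r-1)!$ ways — is sound, and the $(r-1)!$ count is exactly the cyclic-gluing count that underlies the $(|B|-1)!$ factors in Proposition~\ref{path-to-classical}; you are essentially re-deriving that combinatorics in situ rather than citing the symmetric-function identity. What your approach buys is a self-contained, bijective explanation of where the factor $(\path(G-H)-1)!$ comes from and why the two cases (all paths, or a single cycle) are exhaustive; what the paper's approach buys is brevity and uniformity with the $h_j^\perp$ recursion proved just before it. Your handling of the degenerate case, reading $(\path(G-H)-1)! = (-1)!$ as $1$ when $G-H$ is a single $j$-cycle, matches the convention the statement implicitly requires.
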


\begin{proof}
Let 
$F \in \Lambda$ be an arbitrary symmetric function and
express $F$ uniquely as a polynomial in the power sums $\{ p_1, p_2, \dots \}$ via
\begin{equation}
F = \sum_{\mathbf{a}} c_{\mathbf{a}} \cdot p_1^{a_1} p_2^{a_2} \cdots 
\end{equation} 
where the sum ranges over all sequences $\mathbf{a} = (a_1, a_2, \dots )$ in $\ZZ_{\geq 0}$ with finite support and all but finitely many 
of the coefficients $c_{\mathbf{a}}$ are zero.
The application of $p_j^\perp$ to $F$ satisfies
\begin{equation}
\label{p-skew-derivative}
p_j^\perp F = j (\partial / \partial p_j) F =  \sum_{\mathbf{a}} j a_j c_{\mathbf{a}} \cdot p_1^{a_1} p_2^{a_2} \cdots p_j^{a_j - 1} \cdots 
\end{equation}
where $\partial/\partial p_j$ is differentiation with respect to $p_j$.

Thanks to Proposition~\ref{atomic-factorization}, the atomic symmetric function $A_G$ factors as
 $A_{G} = p_\nu \cdot \vec{p}_\mu$ where $\nu$ and $\mu$ record the cycle 
 and path sizes in $G$. By Equation~\eqref{p-skew-derivative} and the product rule we have
\begin{equation}
p_j^\perp A_{G} = ( p_j^\perp p_\nu ) \cdot p_\mu + p_\nu \cdot ( p_j^\perp \vec{p}_\mu ).
\end{equation}
Equation~\eqref{p-skew-derivative} gives the $p$-expansion of  $p_j^\perp p_\nu$ immediately.
The $\vec{p}$-expansion of $p_j^\perp \vec{p}_\mu$ follows from
Proposition~\ref{classical-to-path}. Combining these expansions gives the desired result.
\end{proof}

We close this chapter with basic questions and problems concerning the symmetric functions $A_{n,I,J}$ and $\vec{p}_{\mu}$.
Let $\omega: \Lambda \rightarrow \Lambda$ be the involution which interchanges $h_n$ and $e_n$.

\begin{question}
\label{omega-question}
Does the image of $A_{n,I,J}$ or $\vec{p}_\mu$ under $\omega$ have a simple form?
\end{question}

An answer to Question~\ref{omega-question} would relate the two parts of the following problem to one another.

\begin{problem}
\label{h-and-e-expansion}
Describe the expansion of $A_{n,I,J}$ and $\vec{p}_\mu$ in the $h$-basis or $e$-basis of $\Lambda$.
\end{problem}

Although Propositions~\ref{skewing-recursion} and \ref{p-skewing-recursion} give recursive formulas for the image of an atomic function
under the skewing operators $h_j^\perp$ and $p_j^\perp$, their image under other skewing operators is unknown.

\begin{problem}
\label{skewing-problem}
Give a recursion for the image of an atomic symmetric function $A_{n,I,J}$ under the skewing operators $e_j^\perp$ and $s_\rho^\perp$, where $\rho \vdash j$.
\end{problem}

\chapter{The Path Murnaghan-Nakayama Rule}
\label{Path}

In order to perform asymptotic analysis of class functions on $\symm_n$, we study the Schur expansion of the 
atomic functions $A_{n,I,J}$ for $(I,J) \in \symm_{n,k}$.
By 
Theorem~\ref{atomic-support}, the $s$-expansion of $A_{n,I,J}$ is supported on partitions $\lambda \vdash n$
for which $\lambda_1 \geq n-k$. As we explained, pieces of this support result have appeared in the literature in various guises.
This section develops new symmetric function theory to understand what these coefficients actually are, starting with the factorization
$A_{n,I,J} = \vec{p}_{\mu} \cdot p_{\nu}$ of Proposition~\ref{atomic-factorization}.  
As the classical power sum factor $p_{\mu}$ of $A_{n,I,J}$ is well studied, this section 
analyzes the path power sum $\vec{p}_{\mu}$ factor in detail.

The main result of this chapter (Theorem~\ref{path-murnaghan-nakayama})
is a combinatorial expansion of
 $\vec{p}_{\mu}$ in the Schur basis using `monotonic ribbon tilings'.
 Theorem~\ref{path-murnaghan-nakayama} and its proof form the technical heart of this manuscript.

We use the following notation throughout this section.
Let $\mu = (\mu_1, \dots, \mu_r)$ be a composition with $r$ parts and let $S$ 
be one of the following objects:
\begin{itemize}
\item a word $S = i_1 \dots i_m$ in the alphabet $\{1, \dots, r\}$,
\item a subset $S = \{i_1, \dots, i_m \}$ of $\{1, \dots, r \}$, or
\item a cycle $S = (i_1, \dots, i_m)$ in a permutation $w \in \symm_r$.
\end{itemize}
In each of these situations, we write
\begin{equation}
\mu_S := \sum_{j \, = \, 1}^m \mu_{i_j}
\end{equation}
for the sum of the parts of $\mu$ indexed by entries in $S$.  For example, we have
\begin{equation*}
\mu_{325} = \mu_{\{3,2,5\}} = \mu_{(3,2,5)} = \mu_3 + \mu_2 + \mu_5
\end{equation*}
whenever $\mu$ has $\geq 5$ parts.

\section{Alternants and classical Murnaghan-Nakayama}
The classical Murnaghan-Nakayama Rule describes the coefficients $\chi^{\lambda}_{\mu}$ in the expansion
$p_{\mu} = \sum_{\lambda \vdash n} \chi^{\lambda}_{\mu} \cdot s_{\lambda}$ in terms of standard ribbon tableaux.
There are several ways to prove this result. The proof that extends best to the path power sum setting uses alternants;
we recall this argument here before  treating the more elaborate case of
 $\vec{p}_{\mu}$ in the following sections.

We restrict to a finite variable set $x_1, \dots, x_N$ and use the bialternant expression for the Schur polynomial 
\begin{equation}
s_{\lambda}(x_1, \dots, x_N) = \frac{ \varepsilon \cdot x^{\lambda + \delta} }{ \varepsilon \cdot x^{\delta}}
\end{equation}
where we assume that $\lambda = (\lambda_1, \dots, \lambda_N)$ has $\leq N$ parts and $\varepsilon \in \CC[\symm_N]$
is given by $\varepsilon = \sum_{w \in \symm_N} \sign(w) \cdot w$.
 The polynomial $\varepsilon \cdot x^{\lambda + \delta}$
appearing in the numerator is the {\em alternant}
\begin{equation}
a_{\lambda}(x_1, \dots, x_N) := \varepsilon \cdot x^{\lambda + \delta}.
\end{equation}
Recall from the introduction that a polynomial $f \in \CC[x_1, \dots, x_N]$ is {\em alternating}
if $w \cdot f = \sign(w) \cdot f$ for all $w \in \symm_N$.
The set $\{ a_{\lambda} \}$ 
of all alternants corresponding to partitions $\lambda$ with $\leq N$ parts is a basis of the vector space of alternating 
polynomials in $x_1, \dots, x_N$.

Multiplying through by the Vandermonde determinant $\varepsilon \cdot x^{\delta}$, the Murnaghan-Nakayama Rule
is equivalent to finding the expansion of 
\begin{equation}
\label{first-classical}
p_{\mu}(x_1, \dots, x_N) \times \varepsilon \cdot (x^{\delta}) =
\varepsilon \cdot \left( p_{\mu}(x_1, \dots, x_N) \times x^{\delta} \right)
\end{equation}
in the alternant basis $\{ a_{\lambda} \,:\, \ell(\lambda) \leq N \}$.
Equation~\eqref{first-classical} uses the fact that the action of $\CC[\symm_N]$ on $\CC[x_1, \dots, x_N]$ commutes with multiplication
by symmetric polynomials.  

Since the $p$-basis is multiplicative, the alternant expansion of \eqref{first-classical} can be understood inductively. In particular,
for $k \geq 1$ and a partition $\lambda = (\lambda_1, \dots, \lambda_N)$ we have
\begin{multline}
\label{inductive-step}
p_k(x_1, \dots, x_N) \cdot a_{\lambda}(x_1, \dots, x_N) = 
\varepsilon \cdot \left( p_k(x_1, \dots, x_N) \cdot x^{\lambda + \delta}  \right) \\ =
\sum_{i \, = \, 1}^N \varepsilon \cdot \left( x_1^{\lambda_1 + N-1} \cdots x_i^{k + \lambda_i + N-i} \cdots x_N^{\lambda_N} \right).
\end{multline}
Each term 
$\varepsilon \cdot \left( x_1^{\lambda_1 + N-1} \cdots x_i^{k + \lambda_i + N-i} \cdots x_N^{\lambda_N} \right)$ in this sum 
is an alternant, the negative of an alternant, or zero. 
Indeed, we have 
\begin{equation*}
\varepsilon \cdot \left( x_1^{\lambda_1 + N-1} \cdots x_i^{k + \lambda_i + N-i} \cdots x_N^{\lambda_N} \right) = 0
\end{equation*}
precisely when $k + \lambda_i + N-i$ coincides with one of the other exponents 
\begin{equation*}
(\lambda_1 + N - 1, \dots, \lambda_{i+1} + i, \lambda_{i-1} + i - 2, \dots, \lambda_N).
\end{equation*}
The following crucial observation describes 
$\varepsilon \cdot \left( x_1^{\lambda_1 + N-1} \cdots x_i^{k + \lambda_i + N-i} \cdots x_N^{\lambda_N} \right)$
combinatorially.

\begin{observation}
\label{ribbon-addition-lemma}
For $1 \leq i \leq N$, we have 
\begin{equation*}
\varepsilon \cdot \left( x_1^{\lambda_1 + N-1} \cdots x_i^{k + \lambda_i + N-i} \cdots x_N^{\lambda_N} \right) \neq 0
\end{equation*}
if and only if it is possible to add a size $k$ ribbon $\xi$ to the Young diagram of $\lambda$ such that the tail of $\xi$ is in row $i$
and $\lambda \cup \xi$ is the Young diagram of a partition. In this case, we have
\begin{equation}
\varepsilon \cdot \left( x_1^{\lambda_1 + N-1} \cdots x_i^{k + \lambda_i + N-i} \cdots x_N^{\lambda_N} \right) =
\sign(\xi) \cdot a_{\lambda \cup \xi}.
\end{equation}
\end{observation}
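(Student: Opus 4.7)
The plan is to analyze the exponent sequence $\alpha = (\lambda_1 + N-1, \dots, k + \lambda_i + N-i, \dots, \lambda_N)$ obtained from $\lambda + \delta$ by adding $k$ to its $i$-th entry, and to show that the two sides of the desired equality both vanish or both encode the same ribbon-addition data. Since $\varepsilon$ antisymmetrizes, $\varepsilon \cdot x^{\alpha}$ vanishes if and only if two entries of $\alpha$ coincide. The entries $\lambda_j + N - j$ for $j \neq i$ are already strictly decreasing, so the only possible collision is between $k + \lambda_i + N - i$ and some $\lambda_j + N - j$ with $j \neq i$. When there is no such collision, I will sort $\alpha$ into strictly decreasing order to write $\alpha = w \cdot (\mu + \delta)$ for a unique partition $\mu$ and permutation $w$, giving $\varepsilon \cdot x^{\alpha} = \sign(w) \cdot a_\mu$.

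First I would identify the position $i' \leq i$ to which the new entry $k + \lambda_i + N - i$ moves after sorting. This $i'$ is characterized by the strict inequalities $\lambda_{i'-1} + N - (i'-1) > k + \lambda_i + N - i > \lambda_{i'} + N - i'$ (with the second inequality vacuous when $i' = i$). The sorting permutation is the cycle that moves position $i$ to position $i'$ and shifts positions $i', \dots, i-1$ down by one, which requires $i - i'$ adjacent transpositions; hence $\sign(w) = (-1)^{i - i'}$.

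Next I would compute the resulting partition $\mu$ explicitly, obtaining $\mu_j = \lambda_j$ for $j < i'$ or $j > i$, $\mu_{i'} = k + \lambda_i - i + i'$, and $\mu_j = \lambda_{j-1} + 1$ for $i' < j \leq i$. A direct check shows that $\mu_{j-1} \geq \mu_j$ at every boundary (using $\lambda_{j-1} \geq \lambda_j$), so $\mu$ is indeed a partition, and the skew shape $\xi = \mu / \lambda$ occupies rows $i', i'+1, \dots, i$. Row $j$ of $\xi$ for $i' < j \leq i$ runs from column $\lambda_j + 1$ to column $\lambda_{j-1} + 1$, so the rightmost cell of row $j$ lies directly above the leftmost cell of row $j-1$; this is precisely the defining property of a ribbon (no $2 \times 2$ subsquare). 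A telescoping sum shows $|\xi| = k$, the tail of $\xi$ sits in row $i$, and $\height(\xi) = i - i'$, whence $\sign(\xi) = (-1)^{i - i'} = \sign(w)$, matching the claim.

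The main subtlety, and the step I would handle most carefully, is the bijective correspondence in the reverse direction: every size-$k$ ribbon $\xi$ with tail in row $i$ whose union with $\lambda$ is a valid Young diagram arises from exactly one choice of head-row $i'$, and this $i'$ satisfies the strict-inequality characterization above. I would argue this by showing that the head-row $i'$ of such a $\xi$ forces $\mu_{i'} = \lambda_{i'} + (\text{boxes of } \xi \text{ in row } i')$ and the ribbon shape determines all other row-counts, yielding exactly the exponent vector $\alpha$ sorted. This closes the loop: nonzero alternants for position-$i$ row-additions biject with size-$k$ ribbons with tail in row $i$, and signs agree on the nose.
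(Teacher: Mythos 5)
Your proof is correct and is exactly the standard sorting-the-exponent-vector argument that the paper leaves implicit: the Observation is stated without proof and only illustrated by the worked example with $k=3$, $\lambda=(3,3,1)$, $N=6$, and your identification of the landing position $i'$, the explicit formula for $\mu$, and the matching of $\sign(w)=(-1)^{i-i'}$ with $\sign(\xi)$ supply precisely the missing details. The only nitpick is an orientation slip (in English notation the rightmost cell of row $j$ sits directly \emph{below} the leftmost cell of row $j-1$), which does not affect the argument since what matters is that consecutive rows of $\xi$ overlap in exactly one column.
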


To see Observation~\ref{ribbon-addition-lemma} in action, take $k = 3, \lambda = (3,3,1),$ and $N = 6$. Starting with the length
$N$ sequence $(\lambda_1, \dots, \lambda_6) + (\delta_1, \dots, \delta_6) = (8,7,4,2,1,0)$, adding $k = 3$ in all possible positions yields
\begin{multline*}
(11,7,4,2,1,0), \quad (8,10,4,2,1,0), \quad (8,7,7,2,1,0), \\ \quad (8,7,4,5,1,0),  \quad (8,7,4,2,4,0), \quad \text{and} \quad
(8,7,4,2,1,3).
\end{multline*}
The third and fifth sequences above have repeated terms; the corresponding monomials in $\CC[x_1, \dots, x_6]$
are annihilated by $\varepsilon$. Sorting the remaining sequences into decreasing order and applying $\varepsilon$
introduces signs of $+1, -1, -1,$ and $+1$ into the first, second, fourth, and sixth sequences (respectively).
At the level of partitions, these are the four signs associated to the ways to add size $3$ ribbons to $\lambda = (3,3,1)$.
\begin{center}
\begin{tikzpicture}[scale = 0.25]

  \begin{scope}
    \clip (37,0) -| (38,4) -| (40,6) -| (37,0);
    \draw [color=black!25] (37,0) grid (40,6);
  \end{scope}

  \draw [thick] (37,0) -| (38,4) -| (40,6) -| (37,0);

  \draw [thick, rounded corners] (37.5,0.5) -- (37.5,2.5);
  \draw [color=black,fill=black,thick] (37.5,2.5) circle (.4ex);
  \node [draw, circle, fill = white, inner sep = 1pt] at (37.5,0.5) { };

  \begin{scope}
    \clip (32,2) -| (34,4) -| (35,6) -| (32,2);
    \draw [color=black!25] (32,2) grid (35,6);
  \end{scope}

  \draw [thick] (32,2) -| (34,4) -| (35,6) -| (32,2);

  \draw [thick, rounded corners] (32.5,2.5) -| (33.5,3.5);
  \draw [color=black,fill=black,thick] (33.5,3.5) circle (.4ex);
  \node [draw, circle, fill = white, inner sep = 1pt] at (32.5,2.5) { };

 \begin{scope}
    \clip (25,3) -| (26,4) -| (29,5) -| (30,6) -| (25,3);
    \draw [color=black!25] (25,3) grid (30,6);
  \end{scope}

  \draw [thick] (25,3) -| (26,4) -| (29,5) -| (30,6) -| (25,3);

   \draw [thick, rounded corners] (28.5,4.5) |- (29.5,5.5);
  \draw [color=black,fill=black,thick] (29.5,5.5) circle (.4ex);
  \node [draw, circle, fill = white, inner sep = 1pt] at (28.5,4.5) { };

 \begin{scope}
    \clip (17,3) -| (18,4) -| (20,5) -| (23,6) -| (17,3);
    \draw [color=black!25] (17,3) grid (23,6);
  \end{scope}

  \draw [thick]  (17,3) -| (18,4) -| (20,5) -| (23,6) -| (17,3);
  
   \draw [thick, rounded corners] (20.5,5.5) -- (22.5,5.5);
  \draw [color=black,fill=black,thick] (22.5,5.5) circle (.4ex);
  \node [draw, circle, fill = white, inner sep = 1pt] at (20.5,5.5) { };

\end{tikzpicture}
\end{center}
The tails of these added ribbons are in rows 1, 2, 4, and 6.  These agree with the relative values 
of the terms in $\lambda + \delta = (8,7,4,2,1,0)$ to which $k  =3$ was added. The illegal ribbon additions
\begin{center}
\begin{tikzpicture}[scale = 0.25]

\begin{scope}
   \clip (0,0) -| (4,1) -| (3,3) -| (0,0);
   \draw [color=black!25] (0,0) grid (4,3);
\end{scope}

\draw [thick] (0,0) -| (4,1) -| (3,3) -| (0,0);

  \draw [thick, rounded corners] (1.5,0.5) -- (3.5,0.5);
  \draw [color=black,fill=black,thick] (3.5,0.5) circle (.4ex);
  \node [draw, circle, fill = white, inner sep = 1pt] at (1.5,0.5) { };

\begin{scope}
   \clip (10,-2) -|  (11,-1) -| (12,0) -| (11,1) -| (13,3) -| (10,-2);
   \draw [color=black!25] (13,3) grid (10,-2);
\end{scope}  

\draw [thick] (10,-2) -|  (11,-1) -| (12,0) -| (11,1) -| (13,3) -| (10,-2);

  \draw [thick, rounded corners] (10.5,-1.5) |- (11.5,-0.5);
  \draw [color=black,fill=black,thick] (11.5,-0.5) circle (.4ex);
  \node [draw, circle, fill = white, inner sep = 1pt] at (10.5,-1.5) { };

\end{tikzpicture}
\end{center}
obtained by adding a 3-ribbon with tail in rows 3 and 5 correspond precisely to the sequences 
$\lambda + \delta + (0,0,3,0,0,0)$ and $\lambda + \delta + (0,0,0,0,3,0)$ having repeated terms.

Observation~\ref{ribbon-addition-lemma} and induction on the number of parts of $\mu$ show that
multiplying $p_{\mu} = p_{\mu_1} p_{\mu_2} \cdots $ by $\varepsilon \cdot x^{\delta}$ corresponds to building up a standard ribbon
tableau of type $\mu$ ribbon by ribbon, starting with the empty tableau $\varnothing$. That is, we have
\begin{equation}
p_{\mu}(x_1, \dots, x_N) \times a_{\delta}(x_1, \dots, x_N) =
\sum_T \sign(T) \times  a_{\shape(T) + \delta}(x_1, \dots, x_N)
\end{equation}
where the sum is over all standard ribbon tableaux $T$ of type $\mu$. Dividing by the Vandermonde $a_{\delta}$ and taking the limit
as $N \rightarrow \infty$ yields the Murnaghan-Nakayama Rule as presented in Theorem~\ref{mn-rule}.

\section{Word arrays} 
We embark on finding the $s$-expansion of $\vec{p}_{\mu}$.  The main difficulty in adapting the proof
in the previous section to the path power sum setting is that the polynomials $\vec{p}_{\mu}$ are not multiplicative in $\mu$
so the induction argument breaks down. In spite of this, the $s$-expansion of the path power sum $\vec{p}_{\mu}$ will admit 
a reasonably compact expansion in terms of certain ribbon tilings.
In this section we set the stage for this result by giving a combinatorial description of the alternating
polynomial $\vec{p}_{\mu}(x_1, \dots, x_N) \times a_{\delta}(x_1, \dots, x_N)$.

Fix an integer $N \gg 0$ and a partition $\mu = (\mu_1, \dots, \mu_r)$ with $r$ parts.  A {\em $\mu$-word array} of length $N$
is  a sequence $\omega = ( w_1 \mid \cdots \mid w_N)$ of finite (possibly empty) words $w_1, \dots, w_N$
over the alphabet $\{1, \dots, r \}$.
The word array $\omega$ is {\em standard} if each letter $1, \dots, r$ appears exactly once among the words 
$w_1,  \dots, w_N$.
We will mainly be interested in standard word arrays, but will pass through more general word arrays 
in the course of our inductive arguments.

If $\omega = (w_1 \mid w_2 \mid \cdots \mid w_{N-1} \mid w_N)$ is a $\mu$-word array of length $N$, the {\em weight}
$\wt(\omega)$ is the sequence of integers
\begin{multline}
\wt(\omega) := ( \mu_{w_1} + N - 1, \mu_{w_2} + N - 2, \dots, \mu_{w_{N-1}} + 1, \mu_{w_N} ) \\ =
( \mu_{w_1}, \mu_{w_2}, \dots, \mu_{w_{N-1}}, \mu_{w_N}) + (N-1,N-2, \dots, 1,0)
\end{multline}
where the addition in the second line is componentwise.  Recall that $\mu_{w_i}$ is shorthand for the sum of the parts $\mu_j$
corresponding to the letters $j$ of the word $w_i$.
Observe that $\wt(\omega)$ depends on $\mu$; we leave this dependence implicit to reduce clutter.
Word arrays have the following connection to path power sums.

\begin{lemma}
\label{path-to-array}
Let $\mu = (\mu_1, \dots, \mu_r)$ be a partition. 
We have
\begin{equation}
\vec{p}_{\mu}(x_1, \dots, x_N) \times a_{\delta}(x_1, \dots, x_N) = \sum_{\omega}
 \varepsilon \cdot x^{\wt(\omega)}
\end{equation}
where the sum is over all standard $\mu$-word arrays $\omega = (w_1 \mid \cdots \mid w_N)$ of length $N$.
\end{lemma}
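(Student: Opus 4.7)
\textbf{Proof plan for Lemma~\ref{path-to-array}.}

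The plan is to establish the stronger polynomial identity
\begin{equation*}
\vec{p}_\mu(x_1,\dots,x_N) \cdot x^\delta = \sum_\omega x^{\wt(\omega)},
\end{equation*}
from which the lemma follows immediately upon applying the antisymmetrizer $\varepsilon$: since $\vec{p}_\mu$ is symmetric, $\varepsilon$ commutes with multiplication by it, giving $\vec{p}_\mu \cdot a_\delta = \varepsilon \cdot (\vec{p}_\mu \cdot x^\delta) = \sum_\omega \varepsilon \cdot x^{\wt(\omega)}$. Factoring out $x^\delta = \prod_i x_i^{N-i}$ from both sides of the polynomial identity further reduces the task to verifying
\begin{equation*}
\vec{p}_\mu(x_1,\dots,x_N) = \sum_\omega \prod_{i=1}^N x_i^{\mu_{w_i}},
\end{equation*}
so this is the combinatorial heart of the proof.

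To prove this, the first step is to apply Proposition~\ref{path-to-classical} to write
\begin{equation*}
\vec{p}_\mu = \sum_{v \in \symm_r} \prod_{C \in v} p_{\mu_C}
\end{equation*}
(sum over cycles of the permutation $v$), then substitute $p_{\mu_C}(x_1,\dots,x_N) = \sum_{i=1}^N x_i^{\mu_C}$ to expand each factor. This turns the right-hand side into a sum indexed by pairs $(v, f)$, where $v \in \symm_r$ and $f: \text{cycles}(v) \to [N]$, with each term equal to $\prod_{C \in v} x_{f(C)}^{\mu_C}$.

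The next step is a reindexing bijection: to each pair $(v,f)$ associate the function $g: [r] \to [N]$ defined by $g(j) := f(C_v(j))$, where $C_v(j)$ is the cycle of $v$ containing $j$. Grouping cycles of $v$ by their image yields $\prod_{C \in v} x_{f(C)}^{\mu_C} = \prod_{i=1}^N x_i^{\mu_{g^{-1}(i)}}$. Given $g$, the pairs $(v,f)$ inducing $g$ are in bijection with permutations $v$ whose cycles are contained in fibers of $g$, which are enumerated by $\prod_i |g^{-1}(i)|!$. This yields
\begin{equation*}
\vec{p}_\mu = \sum_{g:[r] \to [N]} \prod_{i=1}^N |g^{-1}(i)|! \cdot x_i^{\mu_{g^{-1}(i)}}.
\end{equation*}

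The final step is to interpret the right-hand side in terms of word arrays. A standard $\mu$-word array of length $N$ is the data of a function $\phi: [r] \to [N]$ (recording which word each letter lies in) together with a total ordering of each fiber $\phi^{-1}(i)$; for each $\phi$ there are precisely $\prod_i |\phi^{-1}(i)|!$ such orderings, each contributing $\prod_i x_i^{\mu_{w_i}} = \prod_i x_i^{\mu_{\phi^{-1}(i)}}$. Matching $\phi = g$ gives the desired identity. The main (modest) obstacle is keeping the bookkeeping between cycles, functions, and word orderings straight; once the bijection is set up, the identity is mechanical.
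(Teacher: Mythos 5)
Your proof is correct and follows essentially the same route as the paper's: both expand $\vec{p}_\mu$ via Proposition~\ref{path-to-classical}, multiply through by $x^\delta$, and identify the resulting terms (permutation of $[r]$ plus an assignment of its cycles to positions) with standard $\mu$-word arrays before applying $\varepsilon$. Your version just makes the bijective bookkeeping (the factor $\prod_i |g^{-1}(i)|!$ matching permutations of each fiber against orderings of each word) more explicit than the paper does.
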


\begin{proof}
Proposition~\ref{path-to-classical} implies that 
\begin{equation}
\label{path-to-array-one}
\vec{p}_{\mu}(x_1, \dots, x_N) = \sum_{w \, \in \, \symm_r} \prod_{C \, \in \, w} p_{\mu_C}(x_1, \dots, x_N)
\end{equation}
where the notation $C \in w$ indicates that $C$ is a cycle of the permutation $w \in \symm_r$.
Multiplying both sides of Equation~\eqref{path-to-array-one} by $a_{\delta}(x_1, \dots, x_N)$ yields
\begin{multline}
\label{path-to-array-two}
\vec{p}_{\mu}(x_1, \dots, x_N) \times
a_{\delta}(x_1, \dots, x_N) = 
\vec{p}_{\mu}(x_1, \dots, x_N) \times
\varepsilon \cdot x^\delta \\ = 
\varepsilon \cdot \left(
\vec{p}_{\mu}(x_1, \dots, x_N) \times
x^\delta  \right) = 
\varepsilon \cdot \left( \sum_{w \in \symm_r} \prod_{C \in w} p_{\mu_C}(x_1, \dots, x_N) \times x^{\delta} \right)
\end{multline}
where the second equality uses the fact that 
$\vec{p}_\mu(x_1, \dots, x_N)$ is a symmetric polynomial,
and so commutes with $\varepsilon$.

Consider the monomial expansion of 
\begin{equation}
\label{path-to-array-three}
 \sum_{w \, \in \, \symm_r} 
 \prod_{C \, \in \, w} p_{\mu_C}(x_1, \dots, x_N) \times x^{\delta} = 
  \sum_{w \, \in \, \symm_r} \prod_{C \, \in \, w} (x_1^{\mu_C} + \cdots + x_N^{\mu_C}) \times x^{\delta}.
 \end{equation}
 A typical term in this expansion is obtained by first selecting a permutation $w \in \symm_r$, then assigning the cycles $C$ of $w$
 to the $N$ exponents of $x_1, \dots, x_N$ (where a given exponent can get no, one, or multiple cycles), and finally 
 multiplying by $x^{\delta}$. This given, applying the  antisymmetrizer $\varepsilon$ to both sides of 
 Equation~\eqref{path-to-array-three} completes the proof.
\end{proof}

Each term $\varepsilon \cdot x^{\wt(\omega)}$ on the RHS of Lemma~\ref{path-to-array}
 is 0 when $\wt(\omega)$ has repeated entries, 
 or $\pm$ some alternant 
 $a_{\lambda(\omega)}(x_1, \dots, x_N)$
 where $\lambda(\omega)$ is a partition.
Dividing through by the Vandermonde $a_{\delta}(x_1, \dots, x_N)$ and taking the limit as $N \rightarrow \infty$
gives an $s$-expansion of $\vec{p}_\mu$, but this expansion is far too large and
involves too much cancellation to be of much use.
Roughly speaking, the $s$-expansion
coming from Lemma~\ref{path-to-array} arises
by computing the $p$-expansion of $\vec{p}_\mu$,
and then applying classical Murnaghan-Nakayama
to each term.
In the following section we use a sign-reversing involution to cut this expansion down to size.

\section{A sign-reversing involution}  
The word array expansion of Lemma~\ref{path-to-array} involves a massive amount 
of cancellation. In this section we describe a sign-reversing involution $\iota$ on word arrays
which removes a large part (but not all) of this cancellation
and reduces Lemma~\ref{path-to-array} to a combinatorially useful sum.

The involution $\iota$ will act by swapping `unstable pairs' in word arrays.  Let $\omega = (w_1 \mid \cdots \mid w_N)$ 
be a $\mu$-word array. An {\em unstable pair} in $\omega$ consists of two prefix-suffix factorizations
\begin{equation}
w_i = u_i v_i \quad \text{and} \quad w_j = u_j v_j 
\end{equation}
of the words $w_i$ and $w_j$
at distinct positions $1 \leq i < j \leq N$ such that we have the equality
\begin{equation}
\label{unstable-equality}
\mu_{v_i} - i = \mu_{v_j} - j
\end{equation}
involving the suffixes of these factorizations.
A word array $\omega$ can admit unstable pairs at multiple pairs of positions $i < j$, and even at the same pair of positions
there could be more than one factorization
$w_i = u_i v_i$ and $w_j = u_j v_j$ of the words $w_i$ and $w_j$  witnessing an unstable pair. 
It is possible for either or both of the prefixes $u_i$ and $u_j$ in an unstable pair to be empty, but at least one of the suffixes
$v_i$ or $v_j$ must be nonempty.
We define the {\em score}
of an unstable pair to be the common value $\mu_{v_i} - i = \mu_{v_j} - j$ of \eqref{unstable-equality}.
An example should help clarify these definitions.

\begin{example}
\label{unstable-score-example}
Let $\mu = (\mu_1, \dots, \mu_9) = (3,3,3,3,2,2,2,1,1)$ and $N = 6$.  We have the standard $\mu$-word array
\begin{equation*}
\omega = (w_1 \mid w_2 \mid w_3 \mid w_4 \mid w_5 \mid w_6) = ( \varnothing \mid \varnothing \mid 3 \, 7 \mid 8 \, 1 \, 4 \mid \varnothing \mid 9 \, 5 \, 2 \, 6).
\end{equation*}
The ten unstable pairs in $\omega$ correspond to the prefix-suffix factorizations
\begin{equation*}
(w_1, w_3) = ( \varnothing , 3 \, \cdot \, 7),  \quad  
(w_1, w_4) = ( \varnothing, 8 \, 1 \, \cdot \, 4), \quad 
(w_1, w_6) = (\varnothing ,  9 \, \cdot \,  5 \, 2 \, 6),  
\end{equation*}
\begin{equation*}
(w_3, w_4) = ( 3  \, \cdot \,  7 , 8 \, 1 \, \cdot \, 4),  \quad 
(w_3, w_4) = ( \cdot \, 3 \,  7 , 8 \, \cdot \, 1 \, 4),  \quad 
(w_3, w_6) = (3 \cdot 7, 9 \, 5 \cdot  2 \, 6),
\end{equation*}
\begin{equation*}
(w_3, w_6) = ( \cdot \,  3 \, 7 , \cdot \, 9 \, 5 \, 2 \, 6),   \quad 
(w_4, w_6) = ( 8 \, 1 \, 4 \, \cdot \, , 9 \, 5 \, 2 \, \cdot \, 6), \quad 
(w_4, w_6) = (8 \, 1 \, \cdot \, 4, 9 \, 5 \, \cdot \, 2 \, 6),
\end{equation*}
\begin{equation*}
\text{ and } \quad 
(w_4, w_6) = (8 \, \cdot \, 1 \, 4, \, \cdot \, 9 \, 5 \, 2 \, 6).
\end{equation*}
In reading order from the top left, the scores of these unstable pairs are 
\begin{equation*}
-1, -1, -1, -1, 2, -1, 2, -4, -1, \text{ and } 2.
\end{equation*}
\end{example}

A $\mu$-word array $\omega = (w_1 \mid \cdots \mid w_N)$ is {\em stable} if $\omega$ has no unstable pairs.
Stability is sufficient to detect when  the monomial $x^{\wt(\omega)}$ is annihilated by the antisymmetrizer $\varepsilon$.

\begin{lemma}
\label{collision-instability}
Let $\mu$ be a partition and let
 $\omega = (w_1 \mid \cdots \mid w_N)$ be a $\mu$-word array with weight sequence
 $\wt(\omega) = (\wt(\omega)_1, \dots, \wt(\omega)_N)$.  If there exist $i < j$ such that $\wt(\omega)_i = \wt(\omega)_j$
 then $\omega$ is unstable.
\end{lemma}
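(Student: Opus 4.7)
The plan is to produce the unstable pair directly from the trivial prefix-suffix factorizations, using the hypothesis $\wt(\omega)_i = \wt(\omega)_j$ to verify the score condition and a short contradiction argument to rule out the degenerate case where both witnessing suffixes are empty.

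First I would unpack the hypothesis. By definition,
\[
\wt(\omega)_k = \mu_{w_k} + N - k
\]
for every $1 \leq k \leq N$. So $\wt(\omega)_i = \wt(\omega)_j$ with $i < j$ is equivalent to the identity $\mu_{w_i} - i = \mu_{w_j} - j$. This looks exactly like the defining equality \eqref{unstable-equality} for an unstable pair, provided we take the factorizations $w_i = u_i v_i$ and $w_j = u_j v_j$ with empty prefixes $u_i = u_j = \varnothing$ and suffixes $v_i = w_i$, $v_j = w_j$.

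Next I would verify the nonemptiness requirement. The definition of unstable pair demands that at least one of $v_i, v_j$ be nonempty. I would observe that if both $w_i$ and $w_j$ were the empty word, then $\mu_{w_i} = 0 = \mu_{w_j}$, which together with $\mu_{w_i} - i = \mu_{w_j} - j$ forces $i = j$, contradicting $i < j$. Hence at least one of $w_i, w_j$ is nonempty, so the trivial factorizations provide a genuine unstable pair with score $\mu_{w_i} - i = \mu_{w_j} - j$, establishing the lemma.

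There is really no main obstacle here: the lemma is essentially a repackaging of the definition, and the only thing that needs checking is that a `collision' $\wt(\omega)_i = \wt(\omega)_j$ cannot be produced by two empty words, which is immediate since $-i \neq -j$. The more delicate converse statement (that stability is equivalent to the monomial surviving $\varepsilon$, not merely implied by it) will presumably require actual work elsewhere, but that is not what is being claimed here.
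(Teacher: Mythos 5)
Your proof is correct and takes exactly the same route as the paper: the collision $\wt(\omega)_i = \wt(\omega)_j$ is witnessed as an unstable pair via the factorizations with empty prefixes $u_i = u_j = \varnothing$. Your extra check that both suffixes cannot be empty (since $-i \neq -j$) is a point the paper's one-line proof leaves implicit, but it is the same argument.
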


\begin{proof}
This instability is witnessed by the prefix-suffix
 factorizations $w_i = u_i v_i$ and $w_j = u_j v_j$ where $u_i = u_j = \varnothing$ are empty prefixes.
\end{proof}

Lemma~\ref{collision-instability} says that any individual term in the expansion
\begin{equation*}
 \vec{p}_{\mu}(x_1, \dots, x_N) \times a_{\delta}(x_1, \dots, x_N) = \sum_{\text{$\omega$ standard}} \varepsilon \cdot x^{\wt(\omega)}
 \end{equation*}
of Lemma~\ref{path-to-array} which vanishes corresponds to an unstable
word array.  We will use stability to cancel more terms of this expansion pairwise.
Before doing so, we record
the following hereditary property of stable word arrays which will be crucial for our inductive arguments.

\begin{lemma}
\label{hereditary-stability}
Let $\mu$ be a partition and let $\omega = (w_1 \mid \cdots \mid w_N)$ be a stable $\mu$-word array.  
For each position $i$, let $w_i = u_i v_i$ be a prefix-suffix factorization of the word $w_i$.  
The  $\mu$-word array
$(v_1 \mid \cdots \mid v_N)$ formed by the suffices $v_i$ of the words $w_i$ is also stable.
\end{lemma}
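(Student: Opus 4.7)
The plan is to prove Lemma~\ref{hereditary-stability} by contrapositive, lifting any witness of instability in the suffix array $\omega' := (v_1 \mid \cdots \mid v_N)$ back to a witness of instability in the original array $\omega$. The key observation is that prefix-suffix factorizations compose: if $w_i = u_i v_i$ is a factorization of $w_i$, and $v_i = a_i b_i$ is a further factorization of its suffix, then $w_i = (u_i a_i) b_i$ is itself a prefix-suffix factorization of $w_i$, with the same suffix $b_i$.

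Concretely, suppose $\omega'$ is unstable. Then by definition there exist indices $1 \leq i < j \leq N$ and factorizations $v_i = a_i b_i$, $v_j = a_j b_j$, such that at least one of $b_i, b_j$ is nonempty and
\[
\mu_{b_i} - i = \mu_{b_j} - j.
\]
Set $u_i' := u_i a_i$ and $u_j' := u_j a_j$. Then $w_i = u_i' b_i$ and $w_j = u_j' b_j$ are valid prefix-suffix factorizations of $w_i$ and $w_j$ in $\omega$, whose suffixes $b_i, b_j$ satisfy the same equality $\mu_{b_i} - i = \mu_{b_j} - j$, and at least one of $b_i, b_j$ is nonempty. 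This exhibits an unstable pair in $\omega$, contradicting the assumed stability of $\omega$. Hence $\omega'$ must be stable.

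This argument is essentially definitional; I do not anticipate any real obstacle, since the notion of unstable pair was set up precisely so that stability depends only on the suffixes of the factorizations, and suffixes of suffixes are suffixes. The only thing to verify is the nonemptiness condition, which transfers verbatim because the lifted factorizations have the same suffixes $b_i$ and $b_j$ as the original witness in $\omega'$.
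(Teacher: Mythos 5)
Your proof is correct and takes essentially the same route as the paper, which simply observes in one line that the instability condition depends only on suffixes (and suffixes of the $v_i$ are suffixes of the $w_i$). Your contrapositive argument just makes that observation explicit.
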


\begin{proof}
The suffix  array $(v_1 \mid \cdots \mid v_N)$ is stable because
 the condition \eqref{unstable-equality} defining unstable pairs
depends only on the suffixes of the words $w_1, \dots, w_N$.
\end{proof}

We define a swapping operation on unstable pairs as follows.
Consider an unstable $\mu$-word array $\omega = ( w_1 \mid \cdots \mid w_N)$ at positions $1 \leq i < j \leq N$ 
with prefix-suffix factorizations 
$w_i = u_i v_i$ and $w_j = u_j v_j$. The {\em swap} $\sigma(\omega)$ of $\omega$ at this unstable pair is the 
$\mu$-word array
\begin{multline}
\omega = (w_1 \mid \cdots \mid w_i \mid \cdots \mid w_j \mid \cdots \mid w_N) \quad \leadsto \\ 
\sigma(\omega) := (w_1 \mid \cdots \mid w'_i \mid \cdots \mid w'_j \mid \cdots \mid w_N)
\end{multline}
where 
\begin{equation}
w'_i = u_j v_i \quad \text{and} \quad w'_j = u_i v_j. 
\end{equation}
That is, the swap $\sigma(\omega)$ is obtained from $\omega$ by interchanging the 
prefixes $u_i$ and $u_j$ while leaving the suffixes
$v_i$ and $v_j$ in the same positions.
It is possible for $\sigma$ to leave the 
word array $\omega$ unchanged:
if both prefixes $u_i$ and $u_j$ are empty, we have $\sigma(\omega) = \omega$.
The instability condition \eqref{unstable-equality} implies
\begin{equation}
\mu_{w_i} + N - i = \mu_{u_i} + \mu_{v_i} + N - i = \mu_{u_i} + \mu_{v_j} + N - j = \mu_{w'_j} + N - j
\end{equation}
and
\begin{equation}
\mu_{w_j} + N - j = \mu_{u_j} + \mu_{v_j} + N - j = \mu_{u_j} + \mu_{v_i} + N - i = \mu_{w'_i} + N - i
\end{equation}
We record this crucial property of swapping in the following observation.

\begin{observation}
\label{swap-reverse}
Let $\omega = (w_1 \mid \cdots \mid w_N)$ be a 
$\mu$-word array which is unstable
at positions $1 \leq i < j \leq n$ with respect to some prefix-suffix factorizations
$w_i = u_i v_i$ and $w_j = u_j v_j$
 of $w_i$ and $w_j$.  Let $\sigma(\omega)$ be the swapped $\mu$-word array.

The weight sequence $\wt(\sigma(\omega)) = (\wt(\sigma(\omega))_1, \dots, \wt(\sigma(\omega))_N)$ of the swapped word array $\sigma(\omega)$
 is obtained from the weight sequence
$\wt(\omega) = (\wt(\omega)_1, \dots, \wt(\omega)_N)$ of the original word array $\omega$ 
by interchanging the entries in positions $i$ and $j$.
\end{observation}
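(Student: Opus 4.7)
The plan is to verify the statement by a direct computation, which in fact has already been carried out in the paragraph immediately preceding the observation. The key inputs are the additivity $\mu_{w_k} = \mu_{u_k v_k} = \mu_{u_k} + \mu_{v_k}$ (which holds because $\mu_S$ is defined as a sum over the letters appearing in $S$) together with the instability condition $\mu_{v_i} - i = \mu_{v_j} - j$. No new combinatorial ideas are required; the only task is to track how the two affected entries of the weight sequence transform.

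First I would note that positions $k \notin \{i,j\}$ are trivially handled: the swap leaves $w_k$ unchanged, so $\wt(\sigma(\omega))_k = \mu_{w_k} + N - k = \wt(\omega)_k$. Next, I would compute the new entry at position $i$. Since $w'_i = u_j v_i$, we have
\begin{equation}
\wt(\sigma(\omega))_i = \mu_{u_j} + \mu_{v_i} + N - i.
\end{equation}
The instability condition rewrites $\mu_{v_i} + N - i$ as $\mu_{v_j} + N - j$, so
\begin{equation}
\wt(\sigma(\omega))_i = \mu_{u_j} + \mu_{v_j} + N - j = \mu_{w_j} + N - j = \wt(\omega)_j.
\end{equation}
By the symmetric computation, $\wt(\sigma(\omega))_j = \wt(\omega)_i$, proving that the two weight sequences differ precisely by the transposition of entries $i$ and $j$.

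There is no serious obstacle here; the main thing to be careful about is simply bookkeeping: the instability condition was stated in terms of the \emph{suffixes} $v_i, v_j$ rather than the whole words, and it is essential that the swap keeps those suffixes in their original positions while exchanging only the prefixes. Once this is observed, the algebra is a two-line rearrangement. I would also remark that this proves $\sigma$ preserves the absolute value of $\varepsilon \cdot x^{\wt(\omega)}$ while multiplying by the sign of a transposition, which is exactly what will be needed to run $\sigma$ as a sign-reversing involution in the proof of the Path Murnaghan–Nakayama formula in subsequent sections.
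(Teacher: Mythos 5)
Your proof is correct and is essentially the same two-line computation the paper itself gives in the paragraph preceding the observation: expand $\mu_{w'_i}=\mu_{u_j}+\mu_{v_i}$, apply the instability condition $\mu_{v_i}-i=\mu_{v_j}-j$, and conclude $\wt(\sigma(\omega))_i=\wt(\omega)_j$, with the symmetric identity at position $j$ and all other positions untouched. Nothing further is needed.
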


Observation~\ref{swap-reverse}
 implies that
\begin{equation}
\label{basic-swapping-cancellation}
\varepsilon \cdot x^{\wt(\omega)} + \varepsilon \cdot x^{\wt(\sigma(\omega))} = 0
\end{equation}
so that (for standard arrays) the terms corresponding to $\omega$ and $\sigma(\omega)$ appearing in 
Lemma~\ref{path-to-array} cancel.
If both prefixes in the swap $\sigma$ are empty so that $\sigma(\omega) = \omega$, the corresponding entries of $\wt(\omega) = \wt(\sigma(\omega))$ coincide
and
Equation~\eqref{basic-swapping-cancellation} reads $0 + 0 = 0$.

As Example~\ref{unstable-score-example} illustrates, an unstable word array $\omega$ can have multiple unstable pairs.
Our sign-reversing involution $\iota$ will act on unstable arrays by making a  swap
$\iota: \omega \mapsto \sigma(\omega)$ at a strategically chosen unstable pair of $\omega$.
Since the positions and prefix-suffix factorizations of unstable pairs in unstable arrays can be affected by swapping,
care must be taken to ensure the map $\iota$ is actually an involution. 
The score of an unstable pair was introduced to solve this problem.

In order to state the next result, we need one more definition. If $\mu = (\mu_1, \dots, \mu_r)$
is a partition with $r$ parts, the {\em content} of a $\mu$-word
array $\omega = (w_1 \mid \cdots \mid w_N)$ is the sequence $\content(\omega) = (\content(\omega)_1, \dots, \content(\omega)_r)$
where $\content(\omega)_i$ counts the total number of copies of the letter $i$ among the words $w_1, \dots, w_N$.
In particular, a standard word array is simply a word array of content $(1^r)$.

\begin{lemma}
\label{sign-reversing}
Let $\mu = (\mu_1, \dots, \mu_r)$ be a partition with $r$ parts and let $\gamma = (\gamma_1, \dots, \gamma_r)$ be a length
$r$ sequence of nonnegative integers.  We have
\begin{equation}
\sum_{\omega}  \varepsilon \cdot x^{\wt(\omega)} = \sum_{\text{$\omega$ {\em stable}}} \varepsilon \cdot x^{\wt(\omega)}
\end{equation}
where the sum on the LHS is over all $\mu$-word arrays of length $N$ and content $\gamma$ and the sum
on the RHS is over all {\em stable} $\mu$-word arrays of length $N$ and content $\gamma$.
\end{lemma}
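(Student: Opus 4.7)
The plan is to construct a content-preserving, sign-reversing involution $\iota$ on unstable $\mu$-word arrays of length $N$ and content $\gamma$, pairing each such $\omega$ with an $\iota(\omega)$ whose contribution to the LHS exactly cancels it.

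\emph{Defining $\iota$.} Given an unstable $\omega$, an unstable pair is the data of positions $i<j$ and factorizations $w_i = u_iv_i,\ w_j = u_jv_j$ satisfying $\mu_{v_i}-i=\mu_{v_j}-j$. Since every part of $\mu$ is positive, distinct-length suffixes of a single word $w_k$ carry distinct $\mu$-weights, so the suffix $v_k$ is determined by its score $s = \mu_{v_k}-k$ together with its position $k$. Consequently an unstable pair is uniquely recorded by a triple $(i,j,s)$. Let $s^{\star}(\omega)$ be the \emph{maximum} such score in $\omega$, and let $(i^{\star},j^{\star})$ be the lexicographically smallest pair of positions attaining it. Define $\iota(\omega):=\sigma(\omega)$, the swap of $\omega$ at this canonical unstable pair.

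Two of the three required properties are immediate. Content is preserved because the swap only rearranges prefixes between positions, leaving the multiset of letters intact. The sign-reversing identity $\varepsilon \cdot x^{\wt(\omega)}+\varepsilon \cdot x^{\wt(\iota(\omega))}=0$ follows from Observation~\ref{swap-reverse}, which shows that the entries of $\wt$ at positions $i^{\star}$ and $j^{\star}$ are transposed, combined with the antisymmetry of $\varepsilon$. If the canonical pair happens to satisfy $u_{i^{\star}}=u_{j^{\star}}$ as words then $\sigma(\omega)=\omega$ is a fixed point of $\iota$; in this case unrolling the definitions forces $\wt(\omega)_{i^{\star}}=\wt(\omega)_{j^{\star}}$, so by Lemma~\ref{collision-instability} the monomial $\varepsilon\cdot x^{\wt(\omega)}$ vanishes on its own.

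The main obstacle is to verify $\iota\circ\iota=\mathrm{id}$. This reduces to two claims: first, that $s^{\star}(\sigma(\omega))=s^{\star}(\omega)$; second, that the set of unstable pairs of $\sigma(\omega)$ attaining this score coincides with the corresponding set of $\omega$. Lexicographic minimality then selects the same canonical pair in both arrays, so $\iota(\sigma(\omega))=\sigma(\sigma(\omega))=\omega$. I plan to establish both claims by a case analysis on whether the positions of an unstable pair of $\sigma(\omega)$ meet $\{i^{\star},j^{\star}\}$. The heart of the argument is the observation that whenever a new pair in $\sigma(\omega)$ involves a suffix strictly extending into a swapped-in prefix, say as $v'=u'v_{i^{\star}}$ with $u'$ a nonempty suffix of $u_{j^{\star}}$, then the ``twin'' suffix $u'v_{j^{\star}}$ is a genuine suffix of $w_{j^{\star}}$ back in $\omega$. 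Pairing this twin with the unchanged matching-score suffix at the other position produces an unstable pair of $\omega$ with score $\mu_{u'}+s^{\star}>s^{\star}$, since $\mu_{u'}\geq 1$. This contradicts the maximality of $s^{\star}(\omega)$. A symmetric argument rules out the loss of any old pair of score $s^{\star}$ when passing from $\omega$ to $\sigma(\omega)$, completing the proof.
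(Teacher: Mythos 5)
Your proof is correct and follows essentially the same strategy as the paper's: a sign-reversing involution that swaps prefixes at a canonically chosen unstable pair, with all the difficulty concentrated in showing that the canonical pair (and hence the swap) is preserved when passing from $\omega$ to $\sigma(\omega)$. The only difference is that you extremize in the opposite direction (maximal score with lexicographically minimal positions, versus the paper's minimal score with lexicographically maximal positions), and your ``twin suffix'' observation $u'v_{i^{\star}} \leadsto u'v_{j^{\star}}$ is exactly the right dual of the paper's argument that suffixes shorter than $v_i$ survive the swap, so the verification goes through.
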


In particular, Lemma~\ref{path-to-array} and \ref{sign-reversing} imply that $\vec{p}_{\mu}$ may be expressed
\begin{equation}
\label{sign-standard-case}
\vec{p}_{\mu}(x_1, \dots, x_N) \times 
a_{\delta}(x_1, \dots, x_N) = \sum_{\substack{ \text{$\omega$ standard} \\ \text{$\omega$ stable}}} 
\varepsilon \cdot x^{\wt(\omega)}.
\end{equation}
in terms of $\mu$-word arrays $\omega = (w_1 \mid \cdots \mid w_N)$ which are both standard and stable. Upon division by $a_{\delta}$,
Equation~\eqref{sign-standard-case} expresses the $s$-expansion of $\vec{p}_{\mu}$ in the most efficient way known to the authors.
In the next section we give a combinatorially amenable avatar of standard stable arrays.

\begin{proof}
Let $\WWW$ be the family of all $\mu$-word arrays $\omega = ( w_1 \mid \cdots \mid w_N)$ of length $N$ and content $\gamma$. Let 
 $\UUU \subseteq \WWW$ be the subfamily unstable arrays.
 We define a function $\iota: \WWW \rightarrow \WWW$ as follows.
 If $\omega \in \WWW - \UUU$ is stable, we set $\iota(\omega) := \omega$.
 The definition of $\iota(\omega)$ for $\omega \in \UUU$ unstable requires a fact about unstable pairs of minimal score.

Let $\omega = (w_1 \mid \cdots \mid w_N) \in \UUU$. There may be multiple factorization pairs $(w_i = u_i v_i, w_j = u_j v_j)$ 
and multiple pairs of positions which are unstable in $\omega$.  Choose one such unstable pair such that the score
$m := \mu_{v_i} - i = \mu_{v_j} - j$ is as small as possible. Let 
$\sigma(\omega) = (w_1 \mid \cdots \mid w'_i \mid \cdots \mid w'_j \mid \cdots \mid w_N)$ be the result of swapping this pair,
so that $w'_i = u_j v_i$ and $w'_j = u_i v_j$.

{\bf Claim:} {\em The swapped array $\sigma(\omega) = (w_1 \mid \cdots \mid w'_i \mid \cdots \mid w'_j \mid \cdots \mid w_N)$ 
has an unstable pair at positions $i < j$ of score $m$ given by the factorizations $w'_i = u_j v_i$ and $w'_j = u_i v_j$.
Furthermore, the array $\sigma(\omega)$ has no unstable pairs 
of score $< m$. Finally, the unstable pairs in the array $\sigma(\omega)$ of score $m$ occur at the same positions as the unstable pairs of score $m$ in 
$\sigma$.}

The first part of the claim is immediate from the defining condition \eqref{unstable-equality} of unstable pairs. For the second part,
suppose $\sigma(\omega)$ admitted an unstable pair of score $< m$. By the minimality of $m$, this unstable pair must involve 
one of the positions $i, j$ and some other position $k \neq i,j$.  Without loss of generality, assume that this unstable pair
involves the positions $i$ and $k$ and  corresponds to the
prefix-suffix factorizations $w'_i = u'_i v'_i$ and $w_k = u_k v_k$ of the words in at positions $i$ and $k$ of $\sigma(\omega)$.
By assumption the score of this pair is
\begin{equation}
\mu_{v'_i} - i < m = \mu_{v_i} - i 
\end{equation}
which implies that $v'_i$ is a (proper) suffix of $v_i$. If we write $v_i = t_i v'_i$ then $w_i = u_i t_i v'_i$ and
 $v'_i$ is also a suffix of $w_i$. The factorizations $w_i = (u_i t_i) v'_i$ and $w_k = u_k v_k$ witness an unstable pair in $\omega$
 of score $< m$, which contradicts the choice of $m$.  
 For the last part of the claim, if there is an unstable pair in $w$ of score $m$ at positions $i$ and $k$ with corresponding factorization
 $w_k = u_k v_k$, we have 
 \begin{equation}
 \mu_{v_k} - k = m = \mu_{v_i} - i = \mu_{v_j} - j,
 \end{equation}
 which implies that $w$ also has an unstable pair of score $m$ at positions $j$ and $k$.  Since the suffices $v_i, v_j,$ and $v_k$ remain unchanged
 by the operation $\omega \mapsto \sigma(\omega)$, we see that $\sigma(\omega)$ also has unstable pairs of score $m$ at positions $(i,k)$ and $(j,k)$.
 This completes the proof of the claim.
 
 With the claim in hand, the rest of the proof is straightforward. Given $\omega \in \UUU$, define 
 $\iota(\omega) := \sigma(\omega)$ where $\sigma$ swaps the unstable pair of minimal score $m$
 (if there are multiple unstable pairs of score $m$, let $\sigma$ swap the unstable pair of score $m$ at positions 
 $1 \leq i < j \leq N$ which are lexicographically maximal).   Since $\iota$ fixes any stable array, we have a function $\iota: \WWW \rightarrow \WWW$.
 The claim guarantees that $\iota(\iota(\omega)) = \omega$
 for all $\omega \in \UUU$ so that $\iota$ is an involution.
 The discussion following Lemma~\ref{hereditary-stability} shows that
 \begin{equation}
 \sum_{\omega \, \in \, \UUU} \varepsilon \cdot x^{\wt(\omega)} = 0
 \end{equation}
 which completes the proof.
\end{proof}

To show how the involution $\iota$ in the proof of Lemma~\ref{sign-reversing} works, we consider the unstable word
array of Example~\ref{unstable-score-example}.

\begin{example}
\label{involution-example}
Consider applying $\iota$ to the $\mu$-word array $\omega = (w_1 \mid w_2 \mid w_3 \mid w_4 \mid w_5 \mid w_6)$ 
of Example~\ref{unstable-score-example}.
The unstable pair of lowest score $m = -4$  is $(w_4, w_6) = ( 8 \, 1 \, 4 \, \cdot \, , 9 \, 5 \, 2 \, \cdot \, 6)$. Performing a swap at this unstable pair 
interchanges these two prefixes. The resulting array is 
\begin{equation*}
\iota(\omega) = ( \varnothing \mid \varnothing \mid 3 \, 7 \mid 9 \, 5 \, 2 \mid \varnothing \mid 8 \, 1 \, 4 \, 6).
\end{equation*}
The factorizations $(9 \, 5 \, 2 \, \cdot \, , 8 \, 1 \, 4 \, \cdot \, 6)$ yield an unstable pair in $\iota(\omega)$,
also of score $m = -4$. 
\end{example}

\section{Monotonic ribbon tilings}
Lemma~\ref{sign-reversing} is equivalent to our expansion of $\vec{p}_{\mu}$ in the $s$-basis, but we will need 
this expansion in a more combinatorially transparent form.
To achieve this, we consider ribbon tilings of Young diagrams that satisfy a monotone condition on their tails.

\begin{defn}
\label{monotonic-definition}
A {\em monotonic ribbon tiling} $T$ of a shape $\lambda$ is a disjoint union decomposition $\lambda = \xi^{(1)} \sqcup \cdots \sqcup \xi^{(r)}$
of the Young diagram of $\lambda$ into ribbons $\xi^{(1)}, \dots, \xi^{(r)}$ such that 
\begin{itemize}
\item
the tails of $\xi^{(1)}, \dots, \xi^{(r)}$ occupy distinct columns
$c_1 < \cdots < c_r$ of $\lambda$, and 
\item
each initial union $\xi^{(1)} \sqcup \cdots \sqcup \xi^{(i)}$ of ribbons is the Young diagram of a partition for $0 \leq i \leq r$.
\end{itemize}
\end{defn}

The term {\em monotonic} in Definition~\ref{monotonic-definition} refers to the fact that if $b_i$ is the row containing the tail of $\xi^{(i)}$, we have
$b_1 \geq \cdots \geq b_r$.
If $T$ is a monotonic ribbon tiling of shape $\lambda$ with ribbons $\xi^{(1)}, \dots, \xi^{(r)}$, there is precisely one standard ribbon tableau
of shape $\lambda$ with these ribbons.
The tiling shown on the 
left of Figure~\ref{fig:monotonic-and-not} 
is monotonic with tail depth sequence $(b_1 \geq \cdots \geq b_6) = (5,5,3,2,2,1)$; observe that there is a unique
standard ribbon tableau with this underlying tiling.
The tiling shown on the right of Figure~\ref{fig:monotonic-and-not}
 is not monotonic; there are multiple standard ribbon tableaux with this underlying tiling.

\begin{figure}
\begin{center}
\begin{tikzpicture}[scale = 0.5]

  \begin{scope}
    \clip (0,0) -| (2,2) -| (4,3) -| (9,4) -| (10,5) -| (0,0);
    \draw [color=black!25] (0,0) grid (10,5);
  \end{scope}

  \draw [thick] (0,0) -| (2,2) -| (4,3) -| (9,4) -| (10,5) -| (0,0);

  \draw [thick, rounded corners] (0.5,0.5) |- (3.5,4.5);
  \draw [color=black,fill=black,thick] (3.5,4.5) circle (.4ex);
  \node [draw, circle, fill = white, inner sep = 2pt] at (0.5,0.5) { };
  
  \draw [thick, rounded corners] (1.5,0.5) |- (2.5,3.5);
  \draw [color=black,fill=black,thick] (2.5,3.5) circle (.4ex);
  \node [draw, circle, fill = white, inner sep = 2pt] at (1.5,0.5) { };
  
  \draw [thick, rounded corners] (2.5,2.5) -| (3.5,3.5) -| (4.5,4.5) -- (7.5,4.5);
  \draw [color=black,fill=black,thick] (7.5,4.5) circle (.4ex);
  \node [draw, circle, fill = white, inner sep = 2pt] at (2.5,2.5) { };

    \node [draw, circle, fill = white, inner sep = 2pt] at (5.5,3.5) { };
    
  \draw [thick, rounded corners] (6.5,3.5) -| (8.5,4.5);
  \draw [color=black,fill=black,thick] (8.5,4.5) circle (.4ex);
  \node [draw, circle, fill = white, inner sep = 2pt] at (6.5,3.5) { };   
  
      \node [draw, circle, fill = white, inner sep = 2pt] at (9.5,4.5) { };

   \begin{scope}
    \clip (12,0) -| (14,2) -| (16,3) -| (21,4) -| (22,5) -| (12,0);
    \draw [color=black!25] (12,0) grid (22,5);
  \end{scope}

   \draw [thick] (12,0) -| (14,2) -| (16,3) -| (21,4) -| (22,5) -| (12,0);    
  
  \draw [thick, rounded corners] (12.5,2.5) |- (17.5,4.5);
  \draw [color=black,fill=black,thick] (17.5,4.5) circle (.4ex);
  \node [draw, circle, fill = white, inner sep = 2pt] at (12.5,2.5) { };   
  
  \draw [thick, rounded corners] (12.5,0.5) |- (13.5,1.5) -- (13.5,3.5);
  \draw [color=black,fill=black,thick] (13.5,3.5) circle (.4ex);
  \node [draw, circle, fill = white, inner sep = 2pt] at (12.5,0.5) { };   
  
  \draw [thick, rounded corners] (14.5,2.5) -| (15.5,3.5) -- (20.5,3.5);
  \draw [color=black,fill=black,thick] (20.5,3.5) circle (.4ex);
  \node [draw, circle, fill = white, inner sep = 2pt] at (14.5,2.5) { };   
  
  \draw [thick, rounded corners]  (18.5,4.5) -- (21.5,4.5);
  \draw [color=black,fill=black,thick] (21.5,4.5) circle (.4ex);
  \node [draw, circle, fill = white, inner sep = 2pt] at (18.5,4.5) { };

   \node [draw, circle, fill = white, inner sep = 2pt] at (13.5,0.5) { };

    \node [draw, circle, fill = white, inner sep = 2pt] at (14.5,3.5) { };

\end{tikzpicture}
\end{center}
\caption{A monotonic ribbon tiling (left) and a tiling by ribbons which is not monotonic (right).}
\label{fig:monotonic-and-not}
\end{figure}
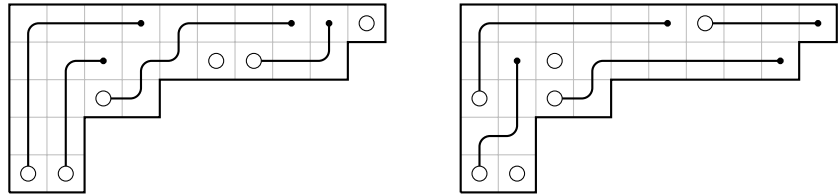

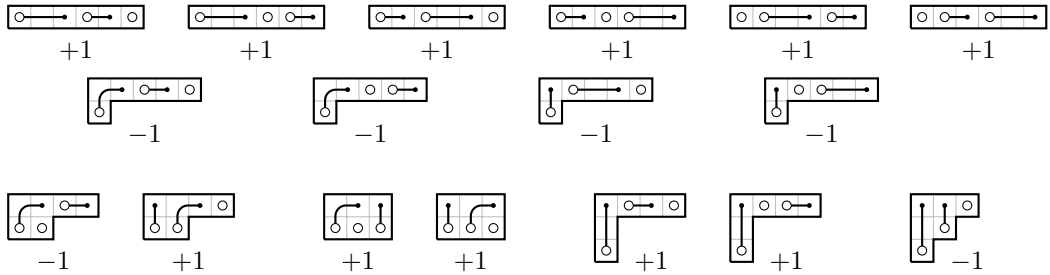
\begin{figure}
\begin{center}
\begin{tikzpicture}[scale = 0.3]

\begin{scope}
   \clip (0,0) -| (6,1) -| (0,0);
    \draw [color=black!25] (0,0) grid (6,1);
\end{scope}

\draw [thick] (0,0) -| (6,1) -| (0,0);

  \draw [thick, rounded corners]  (0.5,0.5) -- (2.5,0.5);
  \draw [color=black,fill=black,thick] (2.5,0.5) circle (.4ex);
  \node [draw, circle, fill = white, inner sep = 1.2pt] at (0.5,0.5) { };    
  
  \draw [thick, rounded corners]  (3.5,0.5) -- (4.5,0.5);
  \draw [color=black,fill=black,thick] (4.5,0.5) circle (.4ex);
  \node [draw, circle, fill = white, inner sep = 1.2pt] at (3.5,0.5) { };    
  
    \node [draw, circle, fill = white, inner sep = 1.2pt] at (5.5,0.5) { };    
    
    \node at (3,-1) {$+1$};

  \begin{scope}
   \clip (8,0) -| (14,1) -| (8,0);
    \draw [color=black!25] (8,0) grid (14,1);
\end{scope}

\draw [thick] (8,0) -| (14,1) -| (8,0);

  \draw [thick, rounded corners]  (8.5,0.5) -- (10.5,0.5);
  \draw [color=black,fill=black,thick] (10.5,0.5) circle (.4ex);
  \node [draw, circle, fill = white, inner sep = 1.2pt] at (8.5,0.5) { };    
  
  \draw [thick, rounded corners]  (12.5,0.5) -- (13.5,0.5);
  \draw [color=black,fill=black,thick] (13.5,0.5) circle (.4ex);
  \node [draw, circle, fill = white, inner sep = 1.2pt] at (12.5,0.5) { };    
  
    \node [draw, circle, fill = white, inner sep = 1.2pt] at (11.5,0.5) { };    
    
    \node at (11,-1) {$+1$};

  \begin{scope}
   \clip (16,0) -| (22,1) -| (16,0);
    \draw [color=black!25] (16,0) grid (22,1);
\end{scope}

\draw [thick] (16,0) -| (22,1) -| (16,0);

  \draw [thick, rounded corners]  (16.5,0.5) -- (17.5,0.5);
  \draw [color=black,fill=black,thick] (17.5,0.5) circle (.4ex);
  \node [draw, circle, fill = white, inner sep = 1.2pt] at (16.5,0.5) { };    
  
  \draw [thick, rounded corners]  (18.5,0.5) -- (20.5,0.5);
  \draw [color=black,fill=black,thick] (20.5,0.5) circle (.4ex);
  \node [draw, circle, fill = white, inner sep = 1.2pt] at (18.5,0.5) { };    
  
    \node [draw, circle, fill = white, inner sep = 1.2pt] at (21.5,0.5) { };    
    
    \node at (19,-1) {$+1$};

 \begin{scope}
   \clip (24,0) -| (30,1) -| (24,0);
    \draw [color=black!25] (24,0) grid (30,1);
\end{scope}

\draw [thick] (24,0) -| (30,1) -| (24,0);

  \draw [thick, rounded corners]  (24.5,0.5) -- (25.5,0.5);
  \draw [color=black,fill=black,thick] (25.5,0.5) circle (.4ex);
  \node [draw, circle, fill = white, inner sep = 1.2pt] at (24.5,0.5) { };    
  
  \draw [thick, rounded corners]  (27.5,0.5) -- (29.5,0.5);
  \draw [color=black,fill=black,thick] (29.5,0.5) circle (.4ex);
  \node [draw, circle, fill = white, inner sep = 1.2pt] at (27.5,0.5) { };    
  
    \node [draw, circle, fill = white, inner sep = 1.2pt] at (26.5,0.5) { };    
    
    \node at (27,-1) {$+1$};     
    
 \begin{scope}
   \clip (32,0) -| (38,1) -| (32,0);
    \draw [color=black!25] (32,0) grid (38,1);
\end{scope}

\draw [thick] (32,0) -| (38,1) -| (32,0);

  \draw [thick, rounded corners]  (33.5,0.5) -- (35.5,0.5);
  \draw [color=black,fill=black,thick] (35.5,0.5) circle (.4ex);
  \node [draw, circle, fill = white, inner sep = 1.2pt] at (33.5,0.5) { };    
  
  \draw [thick, rounded corners]  (36.5,0.5) -- (37.5,0.5);
  \draw [color=black,fill=black,thick] (37.5,0.5) circle (.4ex);
  \node [draw, circle, fill = white, inner sep = 1.2pt] at (36.5,0.5) { };    
  
    \node [draw, circle, fill = white, inner sep = 1.2pt] at (32.5,0.5) { };    
    
    \node at (35,-1) {$+1$};

 \begin{scope}
   \clip (40,0) -| (46,1) -| (40,0);
    \draw [color=black!25] (40,0) grid (46,1);
\end{scope}

\draw [thick] (40,0) -| (46,1) -| (40,0);

  \draw [thick, rounded corners]  (41.5,0.5) -- (42.5,0.5);
  \draw [color=black,fill=black,thick] (42.5,0.5) circle (.4ex);
  \node [draw, circle, fill = white, inner sep = 1.2pt] at (41.5,0.5) { };    
  
  \draw [thick, rounded corners]  (43.5,0.5) -- (45.5,0.5);
  \draw [color=black,fill=black,thick] (45.5,0.5) circle (.4ex);
  \node [draw, circle, fill = white, inner sep = 1.2pt] at (43.5,0.5) { };    
  
    \node [draw, circle, fill = white, inner sep = 1.2pt] at (40.5,0.5) { };    
    
    \node at (43,-1) {$+1$};

\end{tikzpicture}
\end{center}

\begin{center}
\begin{tikzpicture}[scale = 0.3]

\begin{scope}
   \clip (0,0) -| (1,1) -| (5,2) -| (0,0);
    \draw [color=black!25] (0,0) grid (5,2);
\end{scope}

\draw [thick] (0,0) -| (1,1) -| (5,2) -| (0,0);

\node at (2.5,-0.5) {$-1$};

  \draw [thick, rounded corners]  (0.5,0.5) |- (1.5,1.5);
  \draw [color=black,fill=black,thick] (1.5,1.5) circle (.4ex);
  \node [draw, circle, fill = white, inner sep = 1.2pt] at (0.5,0.5) { };

    \draw [thick, rounded corners]  (2.5,1.5) -- (3.5,1.5);
  \draw [color=black,fill=black,thick] (3.5,1.5) circle (.4ex);
  \node [draw, circle, fill = white, inner sep = 1.2pt] at (2.5,1.5) { };  
  
    \node [draw, circle, fill = white, inner sep = 1.2pt] at (4.5,1.5) { };

\begin{scope}
   \clip (10,0) -| (11,1) -| (15,2) -| (10,0);
    \draw [color=black!25] (10,0) grid (15,2);
\end{scope}

\draw [thick] (10,0) -| (11,1) -| (15,2) -| (10,0);

\node at (12.5,-0.5) {$-1$};

  \draw [thick, rounded corners]  (10.5,0.5) |- (11.5,1.5);
  \draw [color=black,fill=black,thick] (11.5,1.5) circle (.4ex);
  \node [draw, circle, fill = white, inner sep = 1.2pt] at (10.5,0.5) { };  
  
      \node [draw, circle, fill = white, inner sep = 1.2pt] at (12.5,1.5) { };

      \draw [thick, rounded corners]  (13.5,1.5) -- (14.5,1.5);
  \draw [color=black,fill=black,thick] (14.5,1.5) circle (.4ex);
  \node [draw, circle, fill = white, inner sep = 1.2pt] at (13.5,1.5) { };

\begin{scope}
   \clip (20,0) -| (21,1) -| (25,2) -| (20,0);
    \draw [color=black!25] (20,0) grid (25,2);
\end{scope}

\draw [thick] (20,0) -| (21,1) -| (25,2) -| (20,0);

\node at (22.5,-0.5) {$-1$};

  \draw [thick, rounded corners]  (20.5,0.5) -- (20.5,1.5);
  \draw [color=black,fill=black,thick] (20.5,1.5) circle (.4ex);
  \node [draw, circle, fill = white, inner sep = 1.2pt] at (20.5,0.5) { };

   \draw [thick, rounded corners]  (21.5,1.5) -- (23.5,1.5);
  \draw [color=black,fill=black,thick] (23.5,1.5) circle (.4ex);
  \node [draw, circle, fill = white, inner sep = 1.2pt] at (21.5,1.5) { };  
  
  \node [draw, circle, fill = white, inner sep = 1.2pt] at (24.5,1.5) { };

\begin{scope}
   \clip (30,0) -| (31,1) -| (35,2) -| (30,0);
    \draw [color=black!25] (30,0) grid (35,2);
\end{scope}

\draw [thick] (30,0) -| (31,1) -| (35,2) -| (30,0);

\node at (32.5,-0.5) {$-1$};

  \draw [thick, rounded corners]  (30.5,0.5) -- (30.5,1.5);
  \draw [color=black,fill=black,thick] (30.5,1.5) circle (.4ex);
  \node [draw, circle, fill = white, inner sep = 1.2pt] at (30.5,0.5) { };

   \draw [thick, rounded corners]  (32.5,1.5) -- (34.5,1.5);
  \draw [color=black,fill=black,thick] (34.5,1.5) circle (.4ex);
  \node [draw, circle, fill = white, inner sep = 1.2pt] at (32.5,1.5) { };  
  
    \node [draw, circle, fill = white, inner sep = 1.2pt] at (31.5,1.5) { };  

\end{tikzpicture}
\end{center}

\begin{center}
\begin{tikzpicture}[scale = 0.3]

\begin{scope}
\clip (0,0) -| (2,1) -| (4,2) -| (0,0);
\draw [color=black!25] (0,0) grid (4,2);
\end{scope}

\draw [thick] (0,0) -| (2,1) -| (4,2) -| (0,0);

\node at (2,-1) {$-1$};

  \draw [thick, rounded corners]  (0.5,0.5) |- (1.5,1.5);
  \draw [color=black,fill=black,thick] (1.5,1.5) circle (.4ex);
  \node [draw, circle, fill = white, inner sep = 1.2pt] at (0.5,0.5) { };

    \draw [thick, rounded corners]  (2.5,1.5) -- (3.5,1.5);
  \draw [color=black,fill=black,thick] (3.5,1.5) circle (.4ex);
  \node [draw, circle, fill = white, inner sep = 1.2pt] at (2.5,1.5) { };  
  
    \node [draw, circle, fill = white, inner sep = 1.2pt] at (1.5,0.5) { };

\begin{scope}
\clip (6,0) -| (8,1) -| (10,2) -| (6,0);
\draw [color=black!25] (6,0) grid (10,2);
\end{scope}

\draw [thick] (6,0) -| (8,1) -| (10,2) -| (6,0);

\node at (8,-1) {$+1$};

  \draw [thick, rounded corners]  (6.5,0.5) -- (6.5,1.5);
  \draw [color=black,fill=black,thick] (6.5,1.5) circle (.4ex);
  \node [draw, circle, fill = white, inner sep = 1.2pt] at (6.5,0.5) { };

  \draw [thick, rounded corners]  (7.5,0.5) |- (8.5,1.5);
  \draw [color=black,fill=black,thick] (8.5,1.5) circle (.4ex);
  \node [draw, circle, fill = white, inner sep = 1.2pt] at (7.5,0.5) { };

 \node [draw, circle, fill = white, inner sep = 1.2pt] at (9.5,1.5) { };

\begin{scope}
\clip (14,0) -| (17,2) -| (14,0);
\draw [color=black!25] (14,0) grid (17,2);
\end{scope}

\draw [thick] (14,0) -| (17,2) -| (14,0);

\node at (15.5,-1) {$+1$};

  \draw [thick, rounded corners]  (14.5,0.5) |- (15.5,1.5);
  \draw [color=black,fill=black,thick] (15.5,1.5) circle (.4ex);
  \node [draw, circle, fill = white, inner sep = 1.2pt] at (14.5,0.5) { };  
  
  \draw [thick, rounded corners]  (16.5,0.5) -- (16.5,1.5);
  \draw [color=black,fill=black,thick] (16.5,1.5) circle (.4ex);
  \node [draw, circle, fill = white, inner sep = 1.2pt] at (16.5,0.5) { };    
  
   \node [draw, circle, fill = white, inner sep = 1.2pt] at (15.5,0.5) { };

\begin{scope}
\clip (19,0) -| (22,2) -| (19,0);
\draw [color=black!25] (19,0) grid (22,2);
\end{scope}

\draw [thick] (19,0) -| (22,2) -| (19,0);

\node at (20.5,-1) {$+1$};

  \draw [thick, rounded corners]  (19.5,0.5) -- (19.5,1.5);
  \draw [color=black,fill=black,thick] (19.5,1.5) circle (.4ex);
  \node [draw, circle, fill = white, inner sep = 1.2pt] at (19.5,0.5) { };  
  
  \draw [thick, rounded corners]  (20.5,0.5) |- (21.5,1.5);
  \draw [color=black,fill=black,thick] (21.5,1.5) circle (.4ex);
  \node [draw, circle, fill = white, inner sep = 1.2pt] at (20.5,0.5) { };   
  
   \node [draw, circle, fill = white, inner sep = 1.2pt] at (21.5,0.5) { };

\begin{scope}
\clip (26,-1) -| (27,1) -| (30,2) -| (26,-1);
\draw [color=black!25] (26,-1) grid (30,2);
\end{scope}

\draw [thick] (26,-1) -| (27,1) -| (30,2) -| (26,-1);

\node at (28.5,-1) {$+1$};

  \draw [thick, rounded corners]  (26.5,-0.5) -- (26.5,1.5);
  \draw [color=black,fill=black,thick] (26.5,1.5) circle (.4ex);
  \node [draw, circle, fill = white, inner sep = 1.2pt] at (26.5,-0.5) { };

    \draw [thick, rounded corners]  (27.5,1.5) -- (28.5,1.5);
  \draw [color=black,fill=black,thick] (28.5,1.5) circle (.4ex);
  \node [draw, circle, fill = white, inner sep = 1.2pt] at (27.5,1.5) { };  
  
   \node [draw, circle, fill = white, inner sep = 1.2pt] at (29.5,1.5) { };

\begin{scope}
\clip (32,-1) -| (33,1) -| (36,2) -| (32,-1);
\draw [color=black!25] (32,-1) grid (36,2);
\end{scope}

\draw [thick]  (32,-1) -| (33,1) -| (36,2) -| (32,-1);

\node at (34.5,-1) {$+1$};

  \draw [thick, rounded corners]  (32.5,-0.5) -- (32.5,1.5);
  \draw [color=black,fill=black,thick] (32.5,1.5) circle (.4ex);
  \node [draw, circle, fill = white, inner sep = 1.2pt] at (32.5,-0.5) { };  
  
    \node [draw, circle, fill = white, inner sep = 1.2pt] at (33.5,1.5) { };  
    
  \draw [thick, rounded corners]  (34.5,1.5) -- (35.5,1.5);
  \draw [color=black,fill=black,thick] (35.5,1.5) circle (.4ex);
  \node [draw, circle, fill = white, inner sep = 1.2pt] at (34.5,1.5) { };

\begin{scope}
\clip (40,-1) -| (41,0) -| (42,1) -| (43,2) -| (40,-1);
\draw [color=black!25] (40,-1) grid (43,2);
\end{scope}

\draw [thick] (40,-1) -| (41,0) -| (42,1) -| (43,2) -| (40,-1);

\node at (42.5,-1) {$-1$};

  \draw [thick, rounded corners]  (40.5,-0.5) -- (40.5,1.5);
  \draw [color=black,fill=black,thick] (40.5,1.5) circle (.4ex);
  \node [draw, circle, fill = white, inner sep = 1.2pt] at (40.5,-0.5) { };  
  
  \draw [thick, rounded corners]  (41.5,0.5) -- (41.5,1.5);
  \draw [color=black,fill=black,thick] (41.5,1.5) circle (.4ex);
  \node [draw, circle, fill = white, inner sep = 1.2pt] at (41.5,0.5) { };

  \node [draw, circle, fill = white, inner sep = 1.2pt] at (42.5,1.5) { };

\end{tikzpicture}
\end{center}

\caption{The monotonic ribbon tilings used to calculate the Schur expansion of $\vec{p}_{321}$ using 
Theorem~\ref{path-murnaghan-nakayama}.}
\label{fig:tilings}
\end{figure}

The sequence $b_1 \geq \cdots \geq b_r$ of tail row depths, together with the sizes of the ribbons at each depth, characterizes a
monotonic ribbon tiling completely.  This is an efficient way to encode monotonic tilings which will be useful in our proofs.

\begin{observation}
\label{monotonic-characterization}
Let $(k_1, \dots, k_r)$ and $(b_1 \geq \cdots \geq b_r)$ be sequences of positive integers of the same length where the sequence of 
$b$'s is weakly decreasing. There is at most one monotonic ribbon tiling 
with ribbons whose tail rows are $b_1 \geq \cdots \geq b_r$ whose ribbon
sizes are $k_1, \dots, k_r$ reading left to right.
\end{observation}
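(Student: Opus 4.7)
The plan is to argue by induction on $i$ that the partial union $\lambda^{(i)} := \xi^{(1)} \sqcup \cdots \sqcup \xi^{(i)}$, which is a partition by the definition of monotonic tiling, is uniquely determined by the initial data $(k_1,\dots,k_i)$ and $(b_1 \geq \cdots \geq b_i)$. The base case $\lambda^{(0)} = \varnothing$ is vacuous, so suppose $\lambda^{(i)}$ has been shown to be determined.

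For the inductive step, I first identify the tail of $\xi^{(i+1)}$. Since $\lambda^{(i+1)} = \lambda^{(i)} \sqcup \xi^{(i+1)}$ is a partition containing $\lambda^{(i)}$, and the tail of any ribbon $\xi$ with $\lambda \sqcup \xi$ a partition is necessarily at the end of some row of $\lambda \sqcup \xi$, the tail of $\xi^{(i+1)}$ must occupy position $(b_{i+1}, \lambda^{(i)}_{b_{i+1}} + 1)$. Thus the tail cell is forced by $\lambda^{(i)}$ and $b_{i+1}$.

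Next I appeal to the classical uniqueness-of-rim-hook fact: given the starting partition $\lambda^{(i)}$, the tail cell $(b_{i+1}, \lambda^{(i)}_{b_{i+1}} + 1)$, and the size $k_{i+1}$, there is at most one rim hook of $\lambda^{(i)} \sqcup \xi^{(i+1)}$ with these data. I would record this as a short separate lemma and prove it by tracing the ribbon cell-by-cell from tail toward head: at each already-placed cell $(r,c)$ of the ribbon, the requirement that the complementary shape be a partition and that the ribbon contain no $2\times 2$ block forces the next cell to be $(r,c+1)$ whenever $(r,c+1)$ lies in the ambient partition $\lambda^{(i)} \sqcup \xi^{(i+1)}$, and forces it to be $(r-1,c)$ otherwise; this deterministic rule, together with the fixed size $k_{i+1}$, pins down $\xi^{(i+1)}$ uniquely. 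Hence $\lambda^{(i+1)}$ is determined, completing the induction.

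The main (and only) obstacle is articulating the cell-by-cell determinism in the rim-hook lemma cleanly, since one must rule out the a priori possibility of two distinct ribbons sharing tail, size, and ambient partition. Once that is done, the column monotonicity condition $c_1 < \cdots < c_r$ in Definition~\ref{monotonic-definition} plays no role in the uniqueness proof — it is a constraint on which tilings qualify as monotonic, not additional information needed to reconstruct the tiling from $(k_1,\dots,k_r)$ and $(b_1,\dots,b_r)$. This is worth noting in the write-up, since it explains why the observation gives a genuinely useful encoding of monotonic tilings.
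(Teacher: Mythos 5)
The paper states this Observation without proof, so there is no argument of the authors' to compare against; the implicit justification in the text is the beta-number picture of Observation~\ref{ribbon-addition-lemma}. Your strategy --- induct on the prefix $\lambda^{(i)} = \xi^{(1)} \sqcup \cdots \sqcup \xi^{(i)}$, force the tail cell to be $(b_{i+1}, \lambda^{(i)}_{b_{i+1}}+1)$, then invoke uniqueness of a rim hook given its base partition, tail, and size --- is correct and is the natural route. Your closing remark that the column condition $c_1 < \cdots < c_r$ is not needed for reconstruction (it only fixes the left-to-right indexing and certifies monotonicity of the result) is also right.

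The one step that needs repair is the cell-by-cell rule in your rim-hook lemma. As written, you decide whether the ribbon continues from $(r,c)$ to $(r,c+1)$ or to $(r-1,c)$ according to whether $(r,c+1)$ lies in the \emph{ambient} partition $\lambda^{(i)} \sqcup \xi^{(i+1)}$ --- but the ambient partition is exactly what you are trying to prove is determined, so when comparing two a priori distinct ribbons with possibly different ambient shapes the rule is circular. The fix is to phrase the rule in terms of the \emph{base} partition $\lambda^{(i)}$ alone: from a non-head cell $(r,c)$ of the ribbon, the successor is $(r-1,c)$ if $r>1$ and $(r-1,c) \notin \lambda^{(i)}$, and is $(r,c+1)$ otherwise. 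Indeed, if $r=1$ or $(r-1,c) \in \lambda^{(i)}$ the ribbon cannot step up; and if $r>1$, $(r-1,c) \notin \lambda^{(i)}$, and the successor were $(r,c+1)$, then since $\lambda^{(i+1)}$ is a partition all four cells $(r,c), (r,c+1), (r-1,c), (r-1,c+1)$ would lie in $\lambda^{(i+1)} \setminus \lambda^{(i)}$, giving a $2\times 2$ square inside the ribbon. With the rule so phrased, the trace depends only on $\lambda^{(i)}$, the tail, and $k_{i+1}$, and the induction closes. Alternatively you can skip the tracing entirely by citing Observation~\ref{ribbon-addition-lemma}: adding a size-$k_{i+1}$ ribbon to $\lambda^{(i)}$ with tail in row $b_{i+1}$ corresponds to adding $k_{i+1}$ to the $b_{i+1}$-th entry of $\lambda^{(i)}+\delta$, an operation with a unique outcome.
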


For example, in the monotonic tiling 
on the left of Figure~\ref{fig:monotonic-and-not} we have $(k_1, \dots, k_r) = (8,5,8,1,4,1)$ and (as mentioned before) 
$(b_1, \dots, b_r) = (5,5,3,2,2,1)$. Not every pair of sequences as in Observation~\ref{monotonic-characterization} corresponds to a monotonic
ribbon tiling.
For example, the pair $(k_1, k_2, k_3) = (5,3,3)$ and $(b_1, b_2, b_3) = (3,2,1)$ does not yield a monotonic ribbon tiling. Indeed, the tiling
\begin{center}
\begin{tikzpicture}[scale = 0.25]

  \begin{scope}
    \clip (0,0) -| (1,1) -| (4,2) -| (6,3)  -| (0,0);
    \draw [color=black!25] (0,0) grid (6,3);
  \end{scope}

  \draw [thick] (0,0) -| (1,1) -| (4,2) -| (6,3)  -| (0,0);
  
  \draw [thick, rounded corners]  (0.5,0.5) |- (2.5,2.5);
  \draw [color=black,fill=black,thick] (2.5,2.5) circle (.4ex);
  \node [draw, circle, fill = white, inner sep = 1pt] at (0.5,0.5) { };

   \draw [thick, rounded corners]  (3.5,2.5) -- (5.5,2.5);
  \draw [color=black,fill=black,thick] (5.5,2.5) circle (.4ex);
  \node [draw, circle, fill = white, inner sep = 1pt] at (3.5,2.5) { };

     \draw [thick, rounded corners]  (1.5,1.5) -- (3.5,1.5);
  \draw [color=black,fill=black,thick] (3.5,1.5) circle (.4ex);
  \node [draw, circle, fill = white, inner sep = 1pt] at (1.5,1.5) { };    

\end{tikzpicture}
\end{center}
satisfies the conclusion of Observation~\ref{monotonic-characterization} but
is not monotonic because the union of the ribbons with the two leftmost tails
\begin{center}
\begin{tikzpicture}[scale = 0.25]

  \begin{scope}
    \clip (0,0) -| (1,1) -| (4,2) -| (3,3)  |- (2,3) -| (0,0);
    \draw [color=black!25] (0,0) grid (6,3);
  \end{scope}

  \draw [thick] (0,0) -| (1,1) -| (4,2) -| (3,3)  |- (2,3) -| (0,0);
  
  \draw [thick, rounded corners]  (0.5,0.5) |- (2.5,2.5);
  \draw [color=black,fill=black,thick] (2.5,2.5) circle (.4ex);
  \node [draw, circle, fill = white, inner sep = 1pt] at (0.5,0.5) { };

   \draw [thick, rounded corners]  (1.5,1.5) -- (3.5,1.5);
  \draw [color=black,fill=black,thick] (3.5,1.5) circle (.4ex);
  \node [draw, circle, fill = white, inner sep = 1pt] at (1.5,1.5) { };    

\end{tikzpicture}
\end{center}
is not the Young diagram of a partition.

\begin{remark}
\label{rmk:young-lattice}
    Every monotonic ribbon tiling of shape $\lambda$
    gives rise to 
    a chain in Young's Lattice $\YY$
    from $\varnothing$ to $\lambda$
    Indeed, if 
    $\lambda = \xi^{(1)} \sqcup \cdots \sqcup \xi^{(r)}$
    is a monotonic ribbon tiling
    we have the nested partitions 
    $\lambda^{(0)} \subseteq \lambda^{(1)} \subseteq \cdots \subseteq \lambda^{(r)} = \lambda$
    given by 
    $\lambda^{(i)} = \xi^{(1)} \sqcup \cdots \sqcup \xi^{(i)}$.
\end{remark}

The sign  of a monotonic ribbon tiling $T$ is (as with classical ribbon tableaux) the product of the signs of the ribbons in $T$.
The main result of this section is as follows.

\begin{theorem}
\label{path-murnaghan-nakayama}
{\em (Path Murnaghan-Nakayama Rule)}  Let $\mu = (\mu_1, \dots, \mu_r)$ be a partition of $n$.  We have 
\begin{equation}
\vec{p}_{\mu} = m(\mu)! \cdot \sum_T \sign(T) \cdot s_{\shape(T)}
\end{equation}
where the sum is over all monotonic ribbon tilings $T$ whose ribbon lengths $(k_1, \dots, k_r)$ form a rearrangement
of $(\mu_1, \dots, \mu_r)$. 
\end{theorem}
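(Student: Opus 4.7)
The plan is to adapt the alternant proof of the classical Murnaghan-Nakayama rule, restricting to a variable set $x_1, \ldots, x_N$ for $N$ sufficiently large. By Lemma~\ref{path-to-array},
\[
\vec{p}_\mu(x_1,\ldots,x_N) \cdot a_\delta(x_1,\ldots,x_N) = \sum_\omega \varepsilon \cdot x^{\wt(\omega)}
\]
summed over standard $\mu$-word arrays $\omega$ of length $N$. Lemma~\ref{sign-reversing} cancels the terms indexed by unstable arrays so the sum restricts to stable standard $\omega$, and Lemma~\ref{collision-instability} guarantees each such $\wt(\omega)$ has distinct entries. Each surviving summand is therefore a signed alternant $\sign(\sigma_\omega) \cdot a_{\lambda(\omega)+\delta}$, where $\sigma_\omega$ is the sign of the permutation that sorts $\wt(\omega)$ into $\lambda(\omega)+\delta$ in decreasing order.

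The crux is to establish a shape- and sign-preserving bijection between stable standard $\mu$-word arrays and pairs $(T,\pi)$, where $T$ is a monotonic ribbon tiling whose ribbon sizes are a rearrangement of $\mu$ and $\pi$ assigns each ribbon of size $\mu_j$ to some letter $j\in[r]$. Given $\omega$, I would define $T_\omega$ by processing letters in the canonical order: words $w_N, w_{N-1},\ldots, w_1$ in decreasing index, and within each word left to right. Each letter $j \in w_i$ contributes a size-$\mu_j$ ribbon obtained by incrementing entry $i$ of the weight vector by $\mu_j$; stability combined with Observation~\ref{ribbon-addition-lemma} ensures every increment produces a genuine nonzero ribbon, and $\pi_\omega$ records the letter-to-ribbon correspondence.

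The main obstacle is showing $T_\omega$ is monotonic. Within a single word $w_i$, entry $i$'s rank in the sorted weight vector can only move upward as its value grows, so the tail rows contributed by $w_i$ are weakly decreasing; simultaneously, the tail columns strictly increase because each addition pushes the frontier of the relevant row further to the right. The delicate step is the transition from $w_i$ to $w_{i-1}$: I would show, using the stability condition applied to prefix-suffix factorizations, that the first ribbon from $w_{i-1}$ has tail column strictly exceeding every previous one and tail row weakly below the preceding ribbon when ribbons are reindexed by increasing column. Each monotonic tiling $T$ then admits exactly $m(\mu)!$ labelings by freely permuting labels among ribbons of equal size, and each $(T,\pi)$ corresponds to a unique $\omega$ recovered by reverse-processing the ribbons in canonical order; the sign identity $\sign(\sigma_\omega)=\sign(T_\omega)$ follows by iterating Observation~\ref{ribbon-addition-lemma}, which assigns each ribbon sign to the corresponding transposition in the sorting permutation. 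Dividing the resulting identity $\vec{p}_\mu\cdot a_\delta = m(\mu)! \sum_T \sign(T)\, a_{\shape(T)+\delta}$ by $a_\delta$ and passing to $N\to\infty$ yields the theorem.
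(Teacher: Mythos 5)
Your overall architecture is the paper's: reduce to stable standard word arrays via Lemmas~\ref{path-to-array} and \ref{sign-reversing}, then biject those arrays with (labelled) monotonic ribbon tilings, matching weights and signs, and the $m(\mu)!$ accounting via labelings of equal-size ribbons is also as in the paper. The gap is in the bijection itself: the ``canonical order'' you propose --- words $w_N, w_{N-1}, \dots, w_1$ in decreasing index, each word read left to right --- is not the right processing order, and the two claims you hang on it both fail. First, reading a word left to right means the already-processed portion of $w_i$ is a \emph{prefix}; stability is inherited by suffixes (Lemma~\ref{hereditary-stability}) but not by prefixes, as the paper explicitly warns, so the intermediate weight vectors need not have distinct entries and Observation~\ref{ribbon-addition-lemma} does not yield a genuine ribbon at each step. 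The paper's own example kills it: take $\mu = (8,8,5,4,1,1)$, $N=6$, and the stable array $\omega = (1 \mid \varnothing \mid \varnothing \mid 3 \mid 6\,4\,5\,2 \mid \varnothing)$. Your order processes the letter $6$ at position $5$ first, raising that entry of the weight vector from $1$ to $1+\mu_6 = 2$, which collides with the entry $2$ at position $4$; the intermediate alternant is zero and there is no ribbon to record.

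Second, even when no collision occurs, monotonicity is not a consequence of any static order. The correct rule (the paper's $\psi$) is dynamic and greedy: at every step, among the positions whose words are not yet exhausted, process the \emph{last remaining} letter of the word at the position of \emph{minimal current weight}. This is exactly what forces the tails to be added in strictly increasing column order with weakly decreasing rows; the paper's Claim shows that among all suffix-respecting processing orders of a stable $\omega$ precisely one yields a monotonic tiling, namely this greedy one, while the others yield non-monotonic standard ribbon tableaux (see Remark~\ref{rmk:multiple-chains-remark}). In the example above the greedy order visits positions $5,4,1,5,5,5$, interleaving between words, which no fixed ordering of the words can reproduce. To repair your proof you must replace the static canonical order by this weight-greedy, suffix-first rule and then prove (i) well-definedness, using that suffix arrays of stable arrays are stable and hence collision-free, and (ii) the uniqueness statement that makes the inverse map well defined.
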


To prove Theorem~\ref{path-murnaghan-nakayama} we will find a bijection between standard stable word arrays and 
monotonic ribbon tilings. We defer this proof to the next section.
For now, we focus on how to apply Theorem~\ref{path-murnaghan-nakayama}.

Let $\mu = (3,2,1)$ so that $m(\mu)! = 1$.  We use Theorem~\ref{path-murnaghan-nakayama} to calculate the Schur expansion of 
$\vec{p}_{\mu}$.
The relevant monotonic tilings, together with their signs, are as shown in Figure~\ref{fig:tilings}.
Using Figure~\ref{fig:tilings} we calculate the Schur expansion
\begin{equation*}
\vec{p}_{321} = 6 s_6 - 4 s_{51} + 2 s_{411} + 2 s_{33} - s_{321}.
\end{equation*}

Figure~\ref{fig:tilings} illustrates the two  differences between applying the classical Murnaghan-Nakayama Rule to find the 
$s$-expansion of $p_{\mu}$ and using the Path Murnaghan-Nakayama Rule to calculate the $s$-expansion of 
$\vec{p}_{\mu}$.
\begin{enumerate}
\item  When calculating the $s$-expansion of $p_{\mu}$, one first fixes a definite order of the composition $\mu = (\mu_1, \dots, \mu_r)$
and always adds ribbons to the empty shape of sizes $\mu_1, \dots, \mu_r$ in that order. 
When calculating the $s$-expansion of $\vec{p}_{\mu}$, one can add ribbons of the sizes $\mu_1, \dots, \mu_r$ in any order.
The factor of $m(\mu)!$ in Theorem~\ref{path-murnaghan-nakayama} distinguishes between repeated parts of $\mu$.
\item  When calculating the $s$-expansion of $p_{\mu}$, one need only add the ribbons in a standard fashion.
When calculating the $s$-expansion of $\vec{p}_{\mu}$, Theorem~\ref{path-murnaghan-nakayama} imposes the stronger
monotonic condition.
\end{enumerate}
Condition (1) above expands the tilings under consideration relative to the classical case while Condition (2)
restricts them.

Figure~\ref{fig:tilings} also shows that the Path Murnaghan-Nakayama Rule (like the classical Murnaghan-Nakayama Rule)
is not cancellation-free. Indeed, the two monotonic tilings contributing to the coefficient of $s_{42}$ have opposite signs.

\begin{problem}
\label{cancellation-free-problem}
Find a cancellation-free formula for the Schur expansion of $\vec{p}_{\mu}$.
\end{problem}

The classical analogue of Problem~\ref{cancellation-free-problem} is equivalent to finding a cancellation-free formula
for the irreducible character evaluations $\chi^{\lambda}_{\mu}$ on the symmetric group. 
This is a famous open problem.
In light of this, Problem~\ref{cancellation-free-problem} could be quite difficult and it may be more realistic to ask for a more compact 
Schur expansion of $\vec{p}_{\mu}$.

On the bright side, Theorem~\ref{path-murnaghan-nakayama} represents a significant improvement over 
expanding $\vec{p}_{\mu}$ into the $p$-basis using Proposition~\ref{path-to-classical} and
then applying the Murnaghan-Nakayama
rule to expand in the Schur basis.
In our example $\mu = (3,2,1)$ Proposition~\ref{path-to-classical} yields
\begin{equation*}
\vec{p}_{321} = p_{321} + p_{51} + p_{42} + p_{33} + 2 p_6.
\end{equation*}
The number of standard ribbon tableaux in the Murnaghan-Nakayama expansion of $p_{\mu}$ depends on the order 
of $\mu = (\mu_1, \dots, \mu_r)$ in general.  In our case, we have at least 8 tableaux when computing $p_{321}$, at least
 5 tableaux when computing $p_{51}$, at least 8 tableaux when computing $p_{42}$, precisely 12 tableaux
when computing $p_{33}$, and precisely 6 tableaux when computing $p_6$. The number of tableaux considered using 
classical Murnaghan-Nakayama is therefore at least 
$8 + 5  + 8 + 12 + 6 = 39$ (with larger numbers possible if suboptimal choices for composition orders are made)
compared with the 17 monotonic tilings in Figure~\ref{fig:tilings}.
This discrepancy grows in larger examples.

In addition to computational advantages, Theorem~\ref{path-murnaghan-nakayama} also adds conceptual understanding 
to the $s$-expansion of $\vec{p}_{\mu}$ over the classical Murnaghan-Nakayama Rule.
In the case of $\vec{p}_{321}$, the $s$-expansion of $p_{321}$ contains terms such as $s_{3111}$ which are cancelled 
by the $s$-expansion of the remaining portion $ p_{51} + p_{42} + p_{33} + 2 p_6 $ of the $p$-expansion of $\vec{p}_{\mu}$.
More generally, we have the following support result.

\begin{corollary}
\label{path-length-bound}
Let $\mu \vdash n$ be a partition and consider the $s$-expansion 
$\vec{p}_{\mu} = \sum_{\lambda \vdash n} c_{\lambda} \cdot s_{\lambda}$.
If $\ell(\lambda) > \mu_1$ then $c_{\lambda} = 0$.
\end{corollary}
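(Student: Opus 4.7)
The plan is to deduce the corollary directly from the Path Murnaghan--Nakayama Rule (Theorem~\ref{path-murnaghan-nakayama}) by examining how the first ribbon of a monotonic tiling controls the first column of the underlying partition. It suffices to show that whenever $T$ is a monotonic ribbon tiling whose ribbon sizes are a rearrangement $(k_1,\dots,k_r)$ of $(\mu_1,\dots,\mu_r)$, the shape $\lambda = \shape(T)$ satisfies $\ell(\lambda) \leq \mu_1$; the corollary then follows because $c_\lambda$ is a signed count of such tilings.

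First I would isolate the role of the first ribbon $\xi^{(1)}$. By Definition~\ref{monotonic-definition}, the initial union $\lambda^{(1)} = \xi^{(1)}$ must itself be the Young diagram of a partition. Writing $(b_i,c_i)$ for the row and column of the tail of $\xi^{(i)}$, this forces $c_1 = 1$, since any nonempty partition has a cell in column $1$ and the tail is the southwest-most cell of $\xi^{(1)}$. Moreover, the monotonicity assumption $c_1 < c_2 < \cdots < c_r$ then gives $c_i \geq 2$ for all $i \geq 2$, which means that no ribbon $\xi^{(i)}$ with $i \geq 2$ has any cell in column $1$ (the tail of a ribbon is the smallest column its cells occupy). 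Hence the first column of $\lambda$ is filled entirely by cells of $\xi^{(1)}$.

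Next, I would observe that the first column of $\xi^{(1)}$ has exactly $b_1$ cells, since $(b_1,1)$ is its bottom-most cell and $\xi^{(1)}$ is a (ribbon-shaped) partition. Therefore $\ell(\lambda) = b_1$. Because $\xi^{(1)}$ is a ribbon of size $k_1$ containing a column of length $b_1$, we have $b_1 \leq k_1$. Finally, $k_1$ is one of the parts of $\mu$, so $k_1 \leq \mu_1$, yielding
\begin{equation*}
    \ell(\lambda) \;=\; b_1 \;\leq\; k_1 \;\leq\; \mu_1.
\end{equation*}

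There is no real obstacle here: the argument is a short structural consequence of the two defining conditions of a monotonic ribbon tiling (distinct tail columns with the first initial union being a partition). The only point requiring care is the claim that ribbons $\xi^{(i)}$ with $i \geq 2$ contribute nothing to column $1$, which I would state as a brief lemma-style observation using the fact that the tail records the smallest column index occupied by a ribbon. With this in hand, Theorem~\ref{path-murnaghan-nakayama} immediately yields $c_\lambda = 0$ whenever $\ell(\lambda) > \mu_1$.
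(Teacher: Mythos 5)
Your proof is correct and follows the same route as the paper: the paper's entire argument is the one-line assertion that no monotonic ribbon tiling with ribbons of sizes $\mu$ can occupy a row below $\mu_1$, which is precisely the fact you establish. Your write-up simply supplies the details the paper leaves implicit (the first ribbon is a partition-shaped ribbon, hence a hook containing all of column $1$, so $\ell(\shape(T)) = b_1 \leq k_1 \leq \mu_1$).
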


\begin{proof}
No monotonic ribbon tiling using ribbons of sizes $\mu = (\mu_1 \geq \mu_2 \geq \cdots )$ can occupy rows lower than $\mu_1$.
\end{proof}

\section{Proof of the Path Murnaghan-Nakayama Rule}
The goal of this section is to prove Theorem~\ref{path-murnaghan-nakayama}.
As mentioned earlier, our proof  
will be bijective.
For the rest of this section, fix a partition $\mu = (\mu_1, \dots, \mu_r)$ with $r$ positive parts
and an integer $N \gg 0$.

The factor $m(\mu)!$ in Theorem~\ref{path-murnaghan-nakayama} is a nuisance when defining a bijection,
so we decorate our ribbon tilings to get around it.  An {\em enhanced monotonic ribbon tiling} with ribbon sizes
$\mu$ is a pair $(T,w)$ where
\begin{itemize}
\item $T$ is a monotonic ribbon tiling whose ribbons (from west to east) have sizes given by some rearrangement
$(k_1, \dots, k_r)$ of $(\mu_1, \dots, \mu_r)$, and
\item   $w \in \symm_r$ is a permutation such that $\mu_{w(i)} = k_i$ for $1 \leq i \leq r$.
\end{itemize}
Theorem~\ref{path-murnaghan-nakayama} is equivalent to the assertion that
\begin{equation}
\label{our-formulation}
\vec{p}_{\mu} = \sum_{T \, \in \, \TTT} \sign(T) \cdot s_{\shape}(T)
\end{equation}
where 
\begin{equation}
\TTT := \{ \text{all enhanced monotonic ribbon tilings $(T,w)$ with ribbon sizes $\mu$} \}.
\end{equation}
In light of Lemmas~\ref{path-to-array} and \ref{sign-reversing},
Equation~\eqref{our-formulation} in the finite variable set $(x_1, \dots, x_N)$ is equivalent to the equality
\begin{equation}
\label{goal-formulation}
\sum_{\omega \, \in \, \OOO} \varepsilon \cdot x^{\wt(\omega)} 
= \sum_{T \, \in \, \TTT} \sign(T) \cdot a_{\shape(T)}(x_1, \dots, x_N).
\end{equation}
of alternating polynomials in $\CC[x_1, \dots, x_N]$ where
\begin{equation}
\OOO := \{ \text{all standard stable $\mu$-word arrays $\omega = ( u_1 \mid \cdots \mid u_N )$} \}.
\end{equation}
We prove Equation~\eqref{goal-formulation} by exhibiting a bijection between $\TTT$ and $\OOO$ that preserves 
weights and signs.

\begin{proof} {\em (of Theorem~\ref{path-murnaghan-nakayama})}
We define a function $\varphi: \TTT \rightarrow \OOO$ recursively as follows. Start with an enhanced
monotonic ribbon tiling $(T, w) \in \TTT$. 
As in Observation~\ref{monotonic-characterization}, the tiling $T$ corresponds to a pair 
$(k_1, \dots, k_r), (b_1 \geq \cdots \geq b_r)$ of integer sequences where the ribbons of $T$ are, read from west to east,
of sizes $k_1, \dots, k_r$ and have tails occupying rows $b_1, \dots, b_r$.  We initialize
\begin{equation*}
\varphi(T,w) := (\varnothing \mid \cdots \mid \varnothing) 
\end{equation*} 
to be $N$ copies of the empty word with weight sequence
$\wt(\varnothing \mid \cdots \mid \varnothing) = (N-1, N-2, \dots, 1, 0)$.
For $i = 1, 2, \dots, r$ we do the following.
\begin{enumerate}
\item  Suppose the word array $\varphi(T,w) = (u_1 \mid \cdots \mid u_N)$ contains precisely
one copy of each of the letters $w(1), w(2), \dots, w(i-1)$, and no other letters.  
Let $\wt( u_1 \mid \cdots \mid u_N) = (a_1, \dots, a_N)$ be the weight sequence of this array; the entries in this weight
sequence are distinct.
\item  Let $1 \leq j \leq n$ be such that $a_j$ is the $b_i^{th}$ largest element among $a_1, \dots, a_N$.
Prepend the letter $w(i)$ to the beginning of the word $u_j$ and change the $j^{th}$ entry of the weight sequence
from $a_j$ to $a_j + k_i$.
\end{enumerate}

We verify that $\varphi$ is  a well-defined function from $\TTT$ to $\OOO$. 
Ill-definedness of $\varphi$ could happen in one of two ways.
\begin{enumerate}
\item  The replacement $a_j \leadsto a_j + k_i$ in (2) could cause two entries in the weight sequence
\begin{equation*}
(a_1, \dots, a_{j-1}, a_j + k_i, a_{j+1}, \dots, a_N)
\end{equation*}
 to coincide. Depending on the value of $b_{i+1}$ in the next iteration, the choice of which word
 among $u_1, \dots, u_{j-1}, w(i) \cdot u_j, u_{j+1}, \dots, u_N$ to prepend with $w(i+1)$ could be ill defined.
\item  The replacement $u_j \leadsto w(i) \cdot u_j$ could destabilize the word array $\varphi(T,w)$.
\end{enumerate}
We need to show that neither of these things actually happen.
The monotonicity of $T$ guarantees that adding a ribbon of size $k_i$ whose tail occupies row $b_i$
results in the Young diagram of a partition, so
Observation~\ref{ribbon-addition-lemma} applies to show that (1) cannot happen. As for (2), since we prepend $w(i)$ 
the only new suffix that appears in the transformation of word arrays
\begin{equation*}
(u_1 \mid \cdots \mid u_{j-1} \mid  u_j \mid u_{j+1} \mid \cdots \mid u_N) \quad \leadsto \quad
(u_1 \mid \cdots \mid u_{j-1} \mid w(i) \cdot u_j \mid u_{j+1} \mid \cdots \mid u_N)
\end{equation*}
is the entire word $w(i) \cdot u_j$ in position $j$. Applying the defining condition \eqref{unstable-equality},
if this transformation destabilized $\varphi(T,w)$ there would be some position $s \neq j$ and a prefix-suffix factorization
$u_s = u'_s u''_s$ of $u_s$ such that 
\begin{equation}
\label{trouble-equation}
\mu_{u''_s} - s = \mu_{w(i) \cdot u_j} - j = \mu_{w(i)} + \mu_{u_j} - j = k_i + \mu_{u_j} - j.
\end{equation}
The prefix $u'_s$ is either empty or nonempty. 
\begin{itemize}
\item
If $u'_s$ is empty, then $u''_s = u_s$ and
Equation~\eqref{trouble-equation} reads 
\begin{equation*}
\mu_{u_s} - s = k_i + \mu_{u_j} - j. 
\end{equation*}
By Observation~\ref{ribbon-addition-lemma}
this means that adding a ribbon of size $k_i$ whose tail occupies row $b_i$ cannot result in a Young diagram,
contradicting the monotonicity of $T$.
\item  If $u'_s$ is not empty, the last letter of $u'_s$ is $w(p)$ for some $p < i$ and corresponds to a ribbon of $T$ located
strictly to the west of the $i^{th}$ west-to-east ribbon. Since $k_i > 0$, Equation~\eqref{trouble-equation}
implies $\mu_{u''_s} - s > \mu_{u_j} - j$ and the algorithm for computing $\varphi(T,w)$ implies that $b_p < b_i$.
Since $p < i$, by the 
remarks following Definition~\ref{monotonic-definition} this also contradicts the monotonicity of $T$.
\end{itemize}
In summary, the function $\varphi: \TTT \rightarrow \OOO$ is well-defined.

The map $\varphi: \TTT \rightarrow \OOO$ is best understood with an example.
Let $r = 6, \mu = (8,8,5,4,1,1),$ and let $T$ be the monotonic ribbon tiling on the left of Figure~\ref{fig:monotonic-and-not}
with ribbon tail depth sequence $(b_1, \dots, b_6) = (5,5,3,2,2,1)$.
There are $m(\mu)! = 2! \cdot 2! = 4$ permutations $w \in \symm_6$ such that $(T,w) \in \TTT$. Among these, we take 
$w = [2,3,1,5,4,6]$.  The stable word array $\varphi(T,w)$ is computed using the following table.

\begin{center}
\begin{tabular}{c | c | c | c | c}
$i$ & $w(i)$ & $b_i$ &  $\varphi(T,w)$ & $\wt(\varphi(T,w))$ \\ \hline
 &  &  & $(\varnothing \mid \varnothing \mid \varnothing \mid \varnothing \mid \varnothing \mid \varnothing)$ & $(5,4,3,2,1,0)$ \\
1 & 2 & 5 & $(\varnothing \mid \varnothing \mid \varnothing \mid \varnothing \mid 2 \mid \varnothing)$ & $(5,4,3,2,9,0)$ \\
2 & 3 & 5 & $(\varnothing \mid \varnothing \mid \varnothing \mid 3 \mid 2 \mid \varnothing)$ & $(5,4,3,7,9,0)$ \\
3 & 1 & 3 & $(1 \mid \varnothing \mid \varnothing \mid 3 \mid 2 \mid \varnothing)$ & $(13,4,3,7,9,0)$ \\
4 & 5 & 2 & $(1 \mid \varnothing \mid \varnothing \mid 3 \mid 5 \, \, 2 \mid \varnothing)$ & $(13,4,3,7,10,0)$ \\
5 & 4 & 2 & $(1 \mid \varnothing \mid \varnothing \mid 3 \mid 4 \, \, 5 \, \, 2 \mid \varnothing)$ & $(13,4,3,7,14,0)$ \\
6 & 6 & 1 & $(1 \mid \varnothing \mid \varnothing \mid 3 \mid 6 \, \,  4 \, \, 5 \, \, 2 \mid \varnothing)$ & $(13,4,3,7,15,0)$ \\
\end{tabular}
\end{center}
We conclude that $\varphi(T,w) = (1 \mid \varnothing \mid \varnothing \mid 3 \mid 6 \, \,  4 \, \, 5 \, \, 2 \mid \varnothing) \in \OOO$.

Our next task is to build the inverse map. To this end, we define a function $\psi: \OOO \rightarrow \TTT$ recursively.
Let $\omega  \in \OOO$. We initialize $\psi(\omega) := (T,w)$ by letting $T$ be the 
empty filling and letting $w$ be the empty word. 
We also initialize two `current word arrays' as $(u_1 \mid \cdots \mid u_N) := \omega$ and
$(v_1 \mid \cdots \mid v_N) := (\varnothing \mid \cdots \mid \varnothing)$.
We build up $T$ and $w$ step-by-step according to the following 
algorithm. Over the course of this algorithm, letters will move from the $u$-array to the $v$-array.
For $i =1, \dots, r$ we do the following.
\begin{enumerate}
\item  Suppose the first $i-1$ values $w(1), w(2), \dots, w(i-1)$ have been determined, these letters are precisely
those in the current $v$-array $(v_1 \mid \cdots \mid v_N)$ (which is stable) while the current $u$-array
$(u_1 \mid \cdots \mid u_N)$ consists of the remaining entries in $[r] - \{ w(1), \dots, w(i-1) \}$,
each occurring exactly once.  Suppose $T$ is a monotonic ribbon tiling consisting of ribbons of sizes 
$\mu_{w(1)}, \dots, \mu_{w(i-1)}$ when read from west to east.  Let $\wt(v_1 \mid \cdots \mid v_N) = (a_1, \dots, a_N)$
be the weight sequence of the $v$-array.
\item At least one of the words $u_1, \dots, u_N$ is not empty. Among the positions $1 \leq j \leq N$ for which $u_j$ is not 
empty, choose $j$ so that $a_j$ is minimal.  Suppose $a_j$ is the $b_i^{th}$ largest number 
in the list  $(a_1, \dots, a_N)$.
\item
If $\ell$ is the last letter of the word $u_j$, set $w(i) := \ell$, erase $\ell$ from the end of $u_j$ and prepend $\ell$ to the start
of $v_j$. Add a ribbon of size $\mu_{\ell}$ to $T$ whose tail occupies row $b_i$.
\end{enumerate}

As before, we verify that $\psi$ is a well-defined function $\OOO \rightarrow \TTT$.
This follows from two observations.
\begin{itemize}
\item
Lemma~\ref{hereditary-stability} says that the stability of an array is inherited by taking any suffices of the words in that 
array.  Since the words $v_1, \dots, v_N$ in the $v$-array $(v_1 \mid \cdots \mid v_N)$ 
are always suffices of those in the stable array $\omega$, the $v$-arrays are always stable and  by
Lemma~\ref{collision-instability} their weight sequences 
$\wt(v_1 \mid \cdots \mid v_N) = (a_1, \dots, a_N)$  have distinct entries. 
Consequently, the minimal value of $a_j$ in Step 2 is always uniquely achieved so that the position $j$ is uniquely determined 
going into
Step 3.
\item  Once we are in Step 3, we need to show that we can add a ribbon $\xi$ to $T$ of size $|\xi| = \mu_{\ell}$ whose tail
occupies row $b_i$ such that $T \sqcup \xi$ remains a monotonic ribbon tiling. First of all, 
Lemma~\ref{hereditary-stability}, Lemma~\ref{collision-instability}, and Observation~\ref{ribbon-addition-lemma}
guarantee that a ribbon $\xi$ of size $\mu_{\ell}$ whose tail occupies row $b_i$ can be added to $T$ 
such that $T \sqcup \xi$ is the Young diagram of a partition. For the remaining part of the monotonicity condition of 
Definition~\ref{monotonic-definition}, observe that the choice of $a_j$ in 
Step 2 forces $b_1 \geq b_2 \geq \cdots \geq b_i$ to be a weakly decreasing sequence.
\end{itemize}
In conclusion, the function $\psi: \OOO \rightarrow \TTT$ is well-defined.

As with $\varphi$, an example should help clarify the definition of $\psi: \OOO \rightarrow \TTT$. 
Suppose we are given $\omega = (1 \mid \varnothing \mid \varnothing \mid 3 \mid 6 \, \,  4 \, \, 5 \, \, 2 \mid \varnothing) \in \OOO$
where $\mu = (8,8,5,4,1,1)$ so that $r = 6$.
\begin{center}
\begin{tabular}{ c | c | c | c | c | c | c}
$i$ & $w(i)$ & $b_i$ & $\mu_{w(i)}$ & $(u_1 \mid \cdots \mid u_N)$  & $(v_1 \mid \cdots \mid v_N)$ & 
$\wt(v_1 \mid \cdots \mid v_N)$ \\ \hline
  &  &  &  & $ (1 \mid \varnothing \mid \varnothing \mid 3 \mid 6 \, \,  4 \, \, 5 \, \, 2 \mid \varnothing)$ &
  $(\varnothing \mid \varnothing \mid \varnothing \mid \varnothing \mid \varnothing \mid \varnothing)$  & 
  $(5,4,3,2,1,0)$ \\
 1 & 2 & 5 & 8 & $ (1 \mid \varnothing \mid \varnothing \mid 3 \mid 6 \, \,  4 \, \, 5  \mid \varnothing)$ &
  $(\varnothing \mid \varnothing \mid \varnothing \mid \varnothing \mid 2 \mid \varnothing)$  & 
  $(5,4,3,2,9,0)$ \\
   2 & 3 & 5 & 5 & $ (1 \mid \varnothing \mid \varnothing \mid \varnothing \mid 6 \, \,  4 \, \, 5  \mid \varnothing)$ &
  $(\varnothing \mid \varnothing \mid \varnothing \mid 3 \mid 2 \mid \varnothing)$  & 
  $(5,4,3,7,9,0)$ \\
   3 & 1 & 3 & 8 & $ (\varnothing \mid \varnothing \mid \varnothing \mid \varnothing \mid 6 \, \,  4 \, \, 5  \mid \varnothing)$ &
  $(1 \mid \varnothing \mid \varnothing \mid 3 \mid 2 \mid \varnothing)$  & 
  $(13,4,3,7,9,0)$ \\
   4 & 5 & 2 & 1 & $ (\varnothing \mid \varnothing \mid \varnothing \mid \varnothing \mid 6 \, \,  4  \mid \varnothing)$ &
  $(1 \mid \varnothing \mid \varnothing \mid 3 \mid 5 \, \,  2 \mid \varnothing)$  & 
  $(13,4,3,7,10,0)$ \\
   5 & 4 & 2 & 4 & $ (\varnothing \mid \varnothing \mid \varnothing \mid \varnothing \mid 6   \mid \varnothing)$ &
  $(1 \mid \varnothing \mid \varnothing \mid 3 \mid 4 \, \,  5 \, \,  2 \mid \varnothing)$  & 
  $(13,4,3,7,14,0)$ \\
    6  & 6 & 1 & 1 & $ (\varnothing \mid \varnothing \mid \varnothing \mid \varnothing \mid \varnothing   \mid \varnothing)$ &
  $(1 \mid \varnothing \mid \varnothing \mid 3 \mid 6 \, \,  4 \, \,  5 \, \,  2 \mid \varnothing)$  & 
  $(13,4,3,7,15,0)$ \\
\end{tabular}
\end{center}
We conclude that $\psi(\omega) = (T,w)$ where $T$ is the unique monotonic $\mu$-ribbon tiling with ribbons of sizes 
$8,5,8,1,4,1$ being added from west to east with tails occupying rows $5,5,3,2,2,1$ (respectively) and 
$w = [2,3,1,5,4,6] \in \symm_6$.
This is again the ribbon tableau from Figure~\ref{fig:monotonic-and-not}.

Our next task to verify that $\varphi: \TTT \rightarrow \OOO$ and $\psi: \OOO \rightarrow \TTT$ are mutually inverse bijections.
Since $\varphi$ and $\psi$ are defined recursively, an inductive proof is tempting. 
However, since both $\varphi$ and $\psi$ work `from the inside out' (constructing the words in $\varphi(T,w) = \omega$ from back to front
and building the enhanced tiling $\psi(\omega) = (T,w)$ from west to east) this approach is cumbersome.
A `from the outside in' version of these maps is difficult to formulate because stable sequences are closed under taking
suffixes (Lemma~\ref{hereditary-stability}) but not prefixes.
To get around this problem, we characterize the map $\psi: \OOO \rightarrow \TTT$ as follows.

{\bf Claim:}  {\em Let $\omega = (u_1 \mid \cdots \mid u_N) \in \OOO$.
Call a permutation $w = [w(1), \dots, w(r)] \in \symm_r$ a {\em $\omega$-suffix permutation} if 
\begin{quote}
for all $0 \leq  i \leq r$ the word array
$\omega^{(w,i)} = (u_1^{(i)} \mid \cdots \mid u_N^{(i)} )$ obtained by restricting $\omega$ to the letters $w(1), w(2), \dots, w(i)$
has the property that each $u_j^{(i)}$ is a suffix of $u_j$. 
\end{quote}
For any $\omega$-suffix permutation $w$, 
sorting each term in the list of the weight sequences
\begin{equation}
\wt(\omega^{(w,0)}), \wt(\omega^{(w,1)}), \dots, \wt(\omega^{(w,r)})
\end{equation}
yields a standard ribbon tableau
\begin{equation}
T^{(w)} = (\varnothing = \lambda^{(0)} \subset \lambda^{(1)} \subset \cdots \subset \lambda^{(r)})
\end{equation}
where $\lambda^{(i)} + (N-1, N-2, \dots, 1, 0)$ is the nonincreasing rearrangement of $\wt(\omega^{(w,i)})$.
There is precisely one $\omega$-suffix permutation $w$ for which the tiling underlying the standard ribbon tableau $T^{(w)}$ is monotonic;
we have $\psi(\omega) = (T^{(w)},w)$.}

To see why the claim is true,
Lemma~\ref{hereditary-stability} guarantees that for any $\omega$-suffix permutation $w$,
each of the word arrays $\omega^{(w,i)}$ is stable for $0 \leq i \leq r$. 
Lemma~\ref{collision-instability} implies that each of the weight sequences 
\begin{equation*}
\wt(\omega^{(w,0)}), \wt(\omega^{(w,1)}), \dots, \wt(\omega^{(w,r)})
\end{equation*} 
will have distinct terms, so Observation~\ref{ribbon-addition-lemma}
implies that $T^{(w)}$ is a valid standard ribbon tableau.
For uniqueness, observe that the monotonicity of the tiling underlying $T^{(w)}$ is equivalent to the sequence $b_1, \dots, b_r$
of ranks in the weight sequences $\wt(\omega^{(w,0)}), \wt(\omega^{(w,1)}), \dots, \wt(\omega^{(w,r)})$ of positions at which entries are added to 
\begin{equation*}
(\varnothing \mid \cdots \mid \varnothing) = \omega^{(w,0)}, \omega^{(w,1)}, \dots, \omega^{(w,r)} = \omega
\end{equation*} satisfying 
$b_1 \geq \cdots \geq b_r$. There is a unique $\omega$-suffix permutation $w$ which achieves this by at every stage adding an entry at greatest possible 
rank. 
If $\psi(\omega) = (T',w')$, by the definition of $\psi$ the permutation $w' \in \symm_r$ is an $\omega$-suffix permutation and 
$T' = T^{(w')}$ is monotonic; uniqueness gives the final assertion, proving the claim.

We use the claim to show that $\varphi$ and $\psi$ are mutually inverse.
If $(T,w) \in \TTT$ is an enhanced tiling with $\varphi(T,w) = \omega \in \OOO$, by the algorithm defining $\varphi(T,w)$ we know that 
$w$ is an $\omega$-suffix permutation such that $T$ is the tiling underlying $(T,w)$. Since $T$ is monotonic the claim forces 
\begin{equation}
\psi(\omega) = (\psi \circ \varphi)(T,w) = (T,w)
\end{equation}
so that $\psi \circ \varphi = \id_{\TTT}$.
Since $\varphi$ and $\psi$ are maps of finite sets, it is enough to show that $\varphi$ is surjective.
Indeed, given $\omega \in \OOO$, 
we apply the claim and let $w \in \symm_r$ be the unique $\omega$-suffix permutation such that $T^{(w)}$ is monotonic.
Then $(T^{(w)},w) \in \TTT$ is an enhanced tiling and the algorithm defining $\varphi$ forces $\varphi(T^{(w)},w) = \omega$.

The inverse bijections $\varphi: \TTT \rightarrow \OOO$ and $\psi: \OOO \rightarrow \TTT$ prove
Equation~\eqref{goal-formulation}, which we restate here:
\begin{equation*}
\sum_{\omega \, \in \, \OOO} \varepsilon \cdot x^{\wt(\omega)} 
= \sum_{T \, \in \, \TTT} \sign(T) \cdot a_{\shape(T)}(x_1, \dots, x_N).
\end{equation*}
Indeed, these maps provide a bijection between the terms $\varepsilon \cdot x^{\wt(\omega)}$ and
$\sign(T) \cdot a_{\shape(T)}(x_1, \dots, x_N)$ in the above sums.  The ribbon addition result in
 Observation~\ref{ribbon-addition-lemma} shows that if $\varphi(T,w) = \omega$ we have the equality of terms
 \begin{equation}
 \label{term-equality}
 \varepsilon \cdot x^{\wt(\omega)} = \sign(T) \cdot a_{\shape(T)}(x_1, \dots, x_N)  
 \end{equation}
 where the RHS of Equation~\eqref{term-equality} comes from $(T,w) \in \TTT$; i.e., the application of 
 $\varepsilon$ on the LHS of Equation~\eqref{term-equality} induces multiplication by $\sign(T) = \pm 1$ on the LHS.
 This proves Equation~\eqref{goal-formulation}.
 Dividing both sides of Equation~\eqref{goal-formulation} by the Vandermonde determinant $\varepsilon \cdot (x_1^{N-1} \cdots x_{N-1}^1 x_N^0)$
 and taking the limit as $N \rightarrow \infty$ completes the proof of 
 Theorem~\ref{path-murnaghan-nakayama}.
\end{proof}

\begin{remark}
    \label{rmk:multiple-chains-remark}
    The Claim
    in the above proof shows that 
    if $\omega = ( \omega_1 \mid \cdots \mid \omega_N)$
    is a stable word array, we get a chain
    \begin{equation*}
    \lambda^{(0,w)} \subset \lambda^{(1,w)} \subset
    \cdots \subset
    \lambda^{(r,w)}
    \end{equation*}
    in Young's Lattice $\YY$ 
    for every $\omega$-suffix permutation
    $w \in \symm_r$, where each successive difference
    $\lambda^{(i,w)}/\lambda^{(i-1,w)}$
    is a ribbon with $\mu_{w(i)}$ boxes.
    Stable word arrays can therefore yield
    multiple chains in Young's Lattice.
    Monotonic ribbon tilings are obtained by selecting
    the unique $\omega$-suffix permutation $w$ 
    which at every stage
    adds a ribbon whose tail is in the lowest possible
    row.
    From this viewpoint, the Path Murnaghan-Nakayama
    formula is obtained by selecting a special 
    $\omega$-suffix permutation $w$. 
    We could obtain another combinatorial
    rule for the $s$-expansion
    of the path power sum $\vec{p}_\mu$ by choosing
    the $\omega$-suffix permutation $w$
    which adds a ribbon to the {\em highest}
    possible row at each stage.

    For the example in the above proof where
    $\omega = (1 \mid \varnothing \mid \varnothing \mid 3 \mid 6 \, 4 \, 5 \, 2 \mid \varnothing)$
    and $\mu = (8,8,5,4,1,1)$ (so that $r = 6$),
    a permutation $w \in \symm_6$ is an $\omega$-suffix
    permutation if and only if 
    \begin{equation*}
        w^{-1}(2) < w^{-1}(5) < w^{-1}(4) < w^{-1}(6),
    \end{equation*}
    i.e. in the one-line notation 
    $w = [w(1), \dots, w(6)]$ the numbers $2,5,4,$ and 
    $6$ appear in that order from left to right.
    There are 30 such permutations in $\symm_6$,
    each of which gives a chain in $\YY$
    where successive differences are ribbon shapes.
    The corresponding 30 ribbon tableaux are shown
    below. Each has the same sign,
    $+1$. The unique monotonic ribbon tiling
    is shown in red.
\end{remark}

\begin{center}
\begin{tikzpicture}[scale = 0.4]

  \begin{scope}
    \clip (0,0) -| (2,2) -| (4,3) -| (9,4) -| (10,5) -| (0,0);
    \draw [color=black!25] (0,0) grid (10,5);
  \end{scope}

  \draw [thick] (0,0) -| (2,2) -| (4,3) -| (9,4) -| (10,5) -| (0,0);

  \draw [thick, rounded corners]  (0.5,0.5) |- (3.5,4.5);
  \draw [color=black,fill=black,thick] (3.5,4.5) circle (0.8ex);
  \node [draw, circle, fill = white, inner sep = 1.2pt] at (0.5,0.5) {{{\footnotesize \bf 1 }}};

  \draw [thick, rounded corners]  (1.5,0.5) |- (2.5,3.5);
  \draw [color=black,fill=black,thick] (2.5,3.5) circle (0.8ex);
  \node [draw, circle, fill = white, inner sep = 1.2pt] at (1.5,0.5) {{{\footnotesize \bf 5 }}};

  \draw [thick, rounded corners]  (2.5,2.5) -| (3.5,3.5) -- (8.5,3.5);
  \draw [color=black,fill=black,thick] (8.5,3.5) circle (0.8ex);
  \node [draw, circle, fill = white, inner sep = 1.2pt] at (2.5,2.5) {{{\footnotesize \bf 6 }}};

  \node [draw, circle, fill = white, inner sep = 1.2pt] at (4.5,4.5) {{{\footnotesize \bf 2 }}};

  \draw [thick, rounded corners]  (5.5,4.5) -- (8.5,4.5);
  \draw [color=black,fill=black,thick] (8.5,4.5) circle (0.8ex);
  \node [draw, circle, fill = white, inner sep = 1.2pt] at (5.5,4.5) {{{\footnotesize \bf 3 }}};

  \node [draw, circle, fill = white, inner sep = 1.2pt] at (9.5,4.5) {{{\footnotesize \bf 4 }}};

  \node at (4.5,-2) {$w = [2,5,4,6,3,1]$};

\begin{scope}
    \clip (12,0) -| (14,2) -| (16,3) -| (21,4) -| (22,5) -| (12,0);
    \draw [color=black!25] (12,0) grid (22,5);
\end{scope}

  \draw [thick] (12,0) -| (14,2) -| (16,3) -| (21,4) -| (22,5) -| (12,0);

  \draw [thick, rounded corners]  (12.5,0.5) |- (15.5,4.5);
  \draw [color=black,fill=black,thick] (15.5,4.5) circle (0.8ex);
  \node [draw, circle, fill = white, inner sep = 1.2pt] at (12.5,0.5) {{{\footnotesize \bf 1 }}};  

  \node [draw, circle, fill = white, inner sep = 1.2pt] at (16.5,4.5) {{{\footnotesize \bf 2 }}};

  \draw [thick, rounded corners]  (17.5,4.5) -- (20.5,4.5);
  \draw [color=black,fill=black,thick] (20.5,4.5) circle (0.8ex);
  \node [draw, circle, fill = white, inner sep = 1.2pt] at (17.5,4.5) {{{\footnotesize \bf 3 }}};  

  \node [draw, circle, fill = white, inner sep = 1.2pt] at (21.5,4.5) {{{\footnotesize \bf 4 }}}; 

  \draw [thick, rounded corners]  (13.5,3.5) -- (20.5,3.5);
  \draw [color=black,fill=black,thick] (20.5,3.5) circle (0.8ex);
  \node [draw, circle, fill = white, inner sep = 1.2pt] at (13.5,3.5) {{{\footnotesize \bf 5 }}};  

  \draw [thick, rounded corners]  (13.5,0.5) |- (15.5,2.5);
  \draw [color=black,fill=black,thick] (15.5,2.5) circle (0.8ex);
  \node [draw, circle, fill = white, inner sep = 1.2pt] at (13.5,0.5) {{{\footnotesize \bf 6 }}};    

  \node at (16.5,-2) {$w = [2,5,4,6,1,3]$};

 \begin{scope}
    \clip (24,0) -| (26,2) -| (28,3) -| (33,4) -| (34,5) -| (24,0);
    \draw [color=black!25] (24,0) grid (34,5);
\end{scope}

  \draw [thick] (24,0) -| (26,2) -| (28,3) -| (33,4) -| (34,5) -| (24,0);

  \draw [thick, rounded corners]  (24.5,0.5) |- (27.5,4.5);
  \draw [color=black,fill=black,thick] (27.5,4.5) circle (0.8ex);
  \node [draw, circle, fill = white, inner sep = 1.2pt] at (24.5,0.5) {{{\footnotesize \bf 1 }}};  

  \node [draw, circle, fill = white, inner sep = 1.2pt] at (28.5,4.5) {{{\footnotesize \bf 2 }}};

  \draw [thick, rounded corners]  (29.5,4.5) -- (32.5,4.5);
  \draw [color=black,fill=black,thick] (32.5,4.5) circle (0.8ex);
  \node [draw, circle, fill = white, inner sep = 1.2pt] at (29.5,4.5) {{{\footnotesize \bf 3 }}}; 

  \draw [thick, rounded corners]  (25.5,0.5) |- (26.5,3.5);
  \draw [color=black,fill=black,thick] (26.5,3.5) circle (0.8ex);
  \node [draw, circle, fill = white, inner sep = 1.2pt] at (25.5,0.5) {{{\footnotesize \bf 4 }}};

  \node [draw, circle, fill = white, inner sep = 1.2pt] at (33.5,4.5) {{{\footnotesize \bf 5 }}};

  \draw [thick, rounded corners]  (26.5,2.5) -| (27.5,3.5) -- (32.5,3.5);
  \draw [color=black,fill=black,thick] (32.5,3.5) circle (0.8ex);
  \node [draw, circle, fill = white, inner sep = 1.2pt] at (26.5,2.5) {{{\footnotesize \bf 6 }}};

  \node at (28.5,-2) {$w = [2,5,4,3,6,1]$};
\end{tikzpicture}
\end{center}
\begin{center}
\begin{tikzpicture}[scale = 0.4]

  \begin{scope}
    \clip (0,0) -| (2,2) -| (4,3) -| (9,4) -| (10,5) -| (0,0);
    \draw [color=black!25] (0,0) grid (10,5);
  \end{scope}

  \draw [thick] (0,0) -| (2,2) -| (4,3) -| (9,4) -| (10,5) -| (0,0);

  \draw [thick, rounded corners]  (0.5,0.5) |- (3.5,4.5);
  \draw [color=black,fill=black,thick] (3.5,4.5) circle (0.8ex);
  \node [draw, circle, fill = white, inner sep = 1.2pt] at (0.5,0.5) {{{\footnotesize \bf 1 }}}; 

  \node [draw, circle, fill = white, inner sep = 1.2pt] at (4.5,4.5) {{{\footnotesize \bf 2 }}}; 

  \draw [thick, rounded corners]  (5.5,4.5) -- (8.5,4.5);
  \draw [color=black,fill=black,thick] (8.5,4.5) circle (0.8ex);
  \node [draw, circle, fill = white, inner sep = 1.2pt] at (5.5,4.5) {{{\footnotesize \bf 3 }}};

  \draw [thick, rounded corners]  (1.5,3.5) -- (8.5,3.5);
  \draw [color=black,fill=black,thick] (8.5,3.5) circle (0.8ex);
  \node [draw, circle, fill = white, inner sep = 1.2pt] at (1.5,3.5) {{{\footnotesize \bf 4 }}};

  \node [draw, circle, fill = white, inner sep = 1.2pt] at (9.5,4.5) {{{\footnotesize \bf 5 }}};  

  \draw [thick, rounded corners]  (1.5,0.5) |- (3.5,2.5);
  \draw [color=black,fill=black,thick] (3.5,2.5) circle (0.8ex);
  \node [draw, circle, fill = white, inner sep = 1.2pt] at (1.5,0.5) {{{\footnotesize \bf 6 }}};  

  \node at (4.5,-2) {$w = [2,5,4,1,6,3]$};

\begin{scope}
    \clip (12,0) -| (14,2) -| (16,3) -| (21,4) -| (22,5) -| (12,0);
    \draw [color=black!25] (12,0) grid (22,5);
\end{scope}

  \draw [thick] (12,0) -| (14,2) -| (16,3) -| (21,4) -| (22,5) -| (12,0);

  \draw [thick, rounded corners]  (12.5,0.5) |- (15.5,4.5);
  \draw [color=black,fill=black,thick] (15.5,4.5) circle (0.8ex);
  \node [draw, circle, fill = white, inner sep = 1.2pt] at (12.5,0.5) {{{\footnotesize \bf 1 }}}; 

  \node [draw, circle, fill = white, inner sep = 1.2pt] at (16.5,4.5) {{{\footnotesize \bf 2 }}};

 \draw [thick, rounded corners]  (13.5,0.5) |- (14.5,3.5);
  \draw [color=black,fill=black,thick] (14.5,3.5) circle (0.8ex);
  \node [draw, circle, fill = white, inner sep = 1.2pt] at (13.5,0.5) {{{\footnotesize \bf 3 }}};   

 \draw [thick, rounded corners]  (17.5,4.5) -- (20.5,4.5);
  \draw [color=black,fill=black,thick] (20.5,4.5) circle (0.8ex);
  \node [draw, circle, fill = white, inner sep = 1.2pt] at (17.5,4.5) {{{\footnotesize \bf 4 }}}; 

  \node [draw, circle, fill = white, inner sep = 1.2pt] at (21.5,4.5) {{{\footnotesize \bf 5 }}}; 

 \draw [thick, rounded corners]  (14.5,2.5) -| (15.5,3.5) -- (20.5,3.5);
  \draw [color=black,fill=black,thick] (20.5,3.5) circle (0.8ex);
  \node [draw, circle, fill = white, inner sep = 1.2pt] at (14.5,2.5) {{{\footnotesize \bf 6 }}};   

  \node at (16.5,-2) {$w = [2,5,3,4,6,1]$};

 \begin{scope}
    \clip (24,0) -| (26,2) -| (28,3) -| (33,4) -| (34,5) -| (24,0);
    \draw [color=black!25] (24,0) grid (34,5);
\end{scope}

  \draw [thick] (24,0) -| (26,2) -| (28,3) -| (33,4) -| (34,5) -| (24,0);

  \draw [thick, rounded corners]  (24.5,0.5) |- (27.5,4.5);
  \draw [color=black,fill=black,thick] (27.5,4.5) circle (0.8ex);
  \node [draw, circle, fill = white, inner sep = 1.2pt] at (24.5,0.5) {{{\footnotesize \bf 1 }}}; 

  \node [draw, circle, fill = white, inner sep = 1.2pt] at (28.5,4.5) {{{\footnotesize \bf 2 }}};

  \draw [thick, rounded corners]  (25.5,3.5) -|(29.5,4.5) -- (31.5,4.5);
  \draw [color=black,fill=black,thick] (31.5,4.5) circle (0.8ex);
  \node [draw, circle, fill = white, inner sep = 1.2pt] at (25.5,3.5) {{{\footnotesize \bf 3 }}};

  \draw [thick, rounded corners]  (30.5,3.5) -|(32.5,4.5);
  \draw [color=black,fill=black,thick] (32.5,4.5) circle (0.8ex);
  \node [draw, circle, fill = white, inner sep = 1.2pt] at (30.5,3.5) {{{\footnotesize \bf 4 }}};

  \node [draw, circle, fill = white, inner sep = 1.2pt] at (33.5,4.5) {{{\footnotesize \bf 5 }}};

  \draw [thick, rounded corners]  (25.5,0.5) |- (27.5,2.5);
  \draw [color=black,fill=black,thick] (27.5,2.5) circle (0.8ex);
  \node [draw, circle, fill = white, inner sep = 1.2pt] at (25.5,0.5) {{{\footnotesize \bf 6 }}};  

  \node at (28.5,-2) {$w = [2,5,1,4,6,3]$};
\end{tikzpicture}
\begin{center}
\begin{tikzpicture}[scale = 0.4]

  \begin{scope}
    \clip (0,0) -| (2,2) -| (4,3) -| (9,4) -| (10,5) -| (0,0);
    \draw [color=black!25] (0,0) grid (10,5);
  \end{scope}

  \draw [thick, rounded corners]  (0.5,0.5) |- (3.5,4.5);
  \draw [color=black,fill=black,thick] (3.5,4.5) circle (0.8ex);
  \node [draw, circle, fill = white, inner sep = 1.2pt] at (0.5,0.5) {{{\footnotesize \bf 1 }}};

  \draw [thick, rounded corners]  (1.5,0.5) |- (2.5,3.5);
  \draw [color=black,fill=black,thick] (2.5,3.5) circle (0.8ex);
  \node [draw, circle, fill = white, inner sep = 1.2pt] at (1.5,0.5) {{{\footnotesize \bf 2 }}};

  \node [draw, circle, fill = white, inner sep = 1.2pt] at (4.5,4.5) {{{\footnotesize \bf 3 }}}; 

  \draw [thick, rounded corners]  (5.5,4.5) -- (8.5,4.5);
  \draw [color=black,fill=black,thick] (8.5,4.5) circle (0.8ex);
  \node [draw, circle, fill = white, inner sep = 1.2pt] at (5.5,4.5) {{{\footnotesize \bf 4 }}}; 

  \node [draw, circle, fill = white, inner sep = 1.2pt] at (9.5,4.5) {{{\footnotesize \bf 5 }}}; 

  \draw [thick, rounded corners]  (2.5,2.5) -| (3.5,3.5) -- (8.5,3.5);
  \draw [color=black,fill=black,thick] (8.5,3.5) circle (0.8ex);
  \node [draw, circle, fill = white, inner sep = 1.2pt] at (2.5,2.5) {{{\footnotesize \bf 6 }}};  

  \draw [thick] (0,0) -| (2,2) -| (4,3) -| (9,4) -| (10,5) -| (0,0);

  \node at (4.5,-2) {$w = [2,3,5,4,6,1]$};

\begin{scope}
    \clip (12,0) -| (14,2) -| (16,3) -| (21,4) -| (22,5) -| (12,0);
    \draw [color=black!25] (12,0) grid (22,5);
\end{scope}

  \draw [thick] (12,0) -| (14,2) -| (16,3) -| (21,4) -| (22,5) -| (12,0);

  \draw [thick, rounded corners]  (12.5,0.5) |- (15.5,4.5);
  \draw [color=black,fill=black,thick] (15.5,4.5) circle (0.8ex);
  \node [draw, circle, fill = white, inner sep = 1.2pt] at (12.5,0.5) {{{\footnotesize \bf 1 }}}; 

  \draw [thick, rounded corners]  (13.5,3.5) -| (16.5,4.5) -- (19.5,4.5);
  \draw [color=black,fill=black,thick] (19.5,4.5) circle (0.8ex);
  \node [draw, circle, fill = white, inner sep = 1.2pt] at (13.5,3.5) {{{\footnotesize \bf 2 }}};

  \node [draw, circle, fill = white, inner sep = 1.2pt] at (17.5,3.5) {{{\footnotesize \bf 3 }}};

  \draw [thick, rounded corners]  (18.5,3.5) -| (20.5,4.5);
  \draw [color=black,fill=black,thick] (20.5,4.5) circle (0.8ex);
  \node [draw, circle, fill = white, inner sep = 1.2pt] at (18.5,3.5) {{{\footnotesize \bf 4 }}}; 

  \node [draw, circle, fill = white, inner sep = 1.2pt] at (21.5,4.5) {{{\footnotesize \bf 5 }}};  

  \draw [thick, rounded corners]  (13.5,0.5) |- (15.5,2.5);
  \draw [color=black,fill=black,thick] (15.5,2.5) circle (0.8ex);
  \node [draw, circle, fill = white, inner sep = 1.2pt] at (13.5,0.5) {{{\footnotesize \bf 6 }}};  

  \node at (16.5,-2) {$w = [2,1,5,4,6,3]$};

 \begin{scope}
    \clip (24,0) -| (26,2) -| (28,3) -| (33,4) -| (34,5) -| (24,0);
    \draw [color=black!25] (24,0) grid (34,5);
\end{scope}

  \draw [thick] (24,0) -| (26,2) -| (28,3) -| (33,4) -| (34,5) -| (24,0);

  \draw [thick, rounded corners]  (24.5,1.5) |- (25.5,4.5);
  \draw [color=black,fill=black,thick] (25.5,4.5) circle (0.8ex);
  \node [draw, circle, fill = white, inner sep = 1.2pt] at (24.5,1.5) {{{\footnotesize \bf 1 }}};

  \draw [thick, rounded corners]  (24.5,0.5) -|(25.5,3.5) -| (26.5,4.5) -- (27.5,4.5);
  \draw [color=black,fill=black,thick] (27.5,4.5) circle (0.8ex);
  \node [draw, circle, fill = white, inner sep = 1.2pt] at (24.5,0.5) {{{\footnotesize \bf 2 }}}; 

  \node [draw, circle, fill = white, inner sep = 1.2pt] at (28.5,4.5) {{{\footnotesize \bf 3 }}};  

  \draw [thick, rounded corners] (29.5,4.5) -- (32.5,4.5);
  \draw [color=black,fill=black,thick] (32.5,4.5) circle (0.8ex);
  \node [draw, circle, fill = white, inner sep = 1.2pt] at (29.5,4.5) {{{\footnotesize \bf 4 }}};  

  \node [draw, circle, fill = white, inner sep = 1.2pt] at (33.5,4.5) {{{\footnotesize \bf 5 }}}; 

  \draw [thick, rounded corners] (26.5,2.5) -| (27.5,3.5) -- (32.5,3.5);
  \draw [color=black,fill=black,thick] (32.5,3.5) circle (0.8ex);
  \node [draw, circle, fill = white, inner sep = 1.2pt] at (26.5,2.5) {{{\footnotesize \bf 6 }}};    

  \node at (28.5,-2) {$w = [3,2,5,4,6,1]$};
\end{tikzpicture}
\end{center}
\begin{center}
\begin{tikzpicture}[scale = 0.4]

  \begin{scope}
    \clip (0,0) -| (2,2) -| (4,3) -| (9,4) -| (10,5) -| (0,0);
    \draw [color=black!25] (0,0) grid (10,5);
  \end{scope}

  \draw [thick] (0,0) -| (2,2) -| (4,3) -| (9,4) -| (10,5) -| (0,0);

  \draw [thick, rounded corners] (0.5,4.5) -- (7.5,4.5);
  \draw [color=black,fill=black,thick] (7.5,4.5) circle (0.8ex);
  \node [draw, circle, fill = white, inner sep = 1.2pt] at (0.5,4.5) {{{\footnotesize \bf 1 }}};

  \draw [thick, rounded corners] (0.5,0.5) |- (4.5,3.5);
  \draw [color=black,fill=black,thick] (4.5,3.5) circle (0.8ex);
  \node [draw, circle, fill = white, inner sep = 1.2pt] at (0.5,0.5) {{{\footnotesize \bf 2 }}}; 

  \node [draw, circle, fill = white, inner sep = 1.2pt] at (5.5,3.5) {{{\footnotesize \bf 3 }}}; 

  \draw [thick, rounded corners] (6.5,3.5) -| (8.5,4.5);
  \draw [color=black,fill=black,thick] (8.5,4.5) circle (0.8ex);
  \node [draw, circle, fill = white, inner sep = 1.2pt] at (6.5,3.5) {{{\footnotesize \bf 4 }}}; 

  \node [draw, circle, fill = white, inner sep = 1.2pt] at (9.5,4.5) {{{\footnotesize \bf 5 }}}; 

  \draw [thick, rounded corners] (1.5,0.5) |- (3.5,2.5);
  \draw [color=black,fill=black,thick] (3.5,2.5) circle (0.8ex);
  \node [draw, circle, fill = white, inner sep = 1.2pt] at (1.5,0.5) {{{\footnotesize \bf 6 }}};   

  \node at (4.5,-2) {$w = [1,2,5,4,6,3]$};

\begin{scope}
    \clip (12,0) -| (14,2) -| (16,3) -| (21,4) -| (22,5) -| (12,0);
    \draw [color=black!25] (12,0) grid (22,5);
\end{scope}

  \draw [thick] (12,0) -| (14,2) -| (16,3) -| (21,4) -| (22,5) -| (12,0);

  \draw [thick, rounded corners] (12.5,0.5) |- (15.5,4.5);
  \draw [color=black,fill=black,thick] (15.5,4.5) circle (0.8ex);
  \node [draw, circle, fill = white, inner sep = 1.2pt] at (12.5,0.5) {{{\footnotesize \bf 1 }}}; 

  \node [draw, circle, fill = white, inner sep = 1.2pt] at (16.5,4.5) {{{\footnotesize \bf 2 }}}; 

  \draw [thick, rounded corners] (17.5,4.5) |- (20.5,4.5);
  \draw [color=black,fill=black,thick] (20.5,4.5) circle (0.8ex);
  \node [draw, circle, fill = white, inner sep = 1.2pt] at (17.5,4.5) {{{\footnotesize \bf 3 }}}; 

  \draw [thick, rounded corners] (13.5,0.5) |- (14.5,3.5);
  \draw [color=black,fill=black,thick] (14.5,3.5) circle (0.8ex);
  \node [draw, circle, fill = white, inner sep = 1.2pt] at (13.5,0.5) {{{\footnotesize \bf 4 }}};

  \draw [thick, rounded corners] (14.5,2.5) -| (15.5,3.5) -- (20.5,3.5);
  \draw [color=black,fill=black,thick] (20.5,3.5) circle (0.8ex);
  \node [draw, circle, fill = white, inner sep = 1.2pt] at (14.5,2.5) {{{\footnotesize \bf 5 }}};

 \node [draw, circle, fill = white, inner sep = 1.2pt] at (21.5,4.5) {{{\footnotesize \bf 6 }}};

  \node at (16.5,-2) {$w = [2,5,4,3,1,6]$};

 \begin{scope}
    \clip (24,0) -| (26,2) -| (28,3) -| (33,4) -| (34,5) -| (24,0);
    \draw [color=black!25] (24,0) grid (34,5);
\end{scope}

  \draw [thick] (24,0) -| (26,2) -| (28,3) -| (33,4) -| (34,5) -| (24,0);

    \draw [thick, rounded corners] (24.5,0.5) |- (27.5,4.5);
  \draw [color=black,fill=black,thick] (27.5,4.5) circle (0.8ex);
  \node [draw, circle, fill = white, inner sep = 1.2pt] at (24.5,0.5) {{{\footnotesize \bf 1 }}}; 

  \node [draw, circle, fill = white, inner sep = 1.2pt] at (28.5,4.5) {{{\footnotesize \bf 2 }}}; 

  \draw [thick, rounded corners] (29.5,4.5) |- (32.5,4.5);
  \draw [color=black,fill=black,thick] (32.5,4.5) circle (0.8ex);
  \node [draw, circle, fill = white, inner sep = 1.2pt] at (29.5,4.5) {{{\footnotesize \bf 3 }}}; 

  \draw [thick, rounded corners] (25.5,3.5) -- (32.5,3.5);
  \draw [color=black,fill=black,thick] (32.5,3.5) circle (0.8ex);
  \node [draw, circle, fill = white, inner sep = 1.2pt] at (25.5,3.5) {{{\footnotesize \bf 4 }}}; 

  \draw [thick, rounded corners] (25.5,0.5) |- (27.5,2.5);
  \draw [color=black,fill=black,thick] (27.5,2.5) circle (0.8ex);
  \node [draw, circle, fill = white, inner sep = 1.2pt] at (25.5,0.5) {{{\footnotesize \bf 5 }}};

  \node [draw, circle, fill = white, inner sep = 1.2pt] at (33.5,4.5) {{{\footnotesize \bf 6 }}};  

  \node at (28.5,-2) {$w = [2,5,4,1,3,6]$};
\end{tikzpicture}
\end{center}
\begin{center}
\begin{tikzpicture}[scale = 0.4]

  \begin{scope}
    \clip (0,0) -| (2,2) -| (4,3) -| (9,4) -| (10,5) -| (0,0);
    \draw [color=black!25] (0,0) grid (10,5);
  \end{scope}

  \draw [thick] (0,0) -| (2,2) -| (4,3) -| (9,4) -| (10,5) -| (0,0);

  \draw [thick, rounded corners] (0.5,0.5) |- (3.5,4.5);
  \draw [color=black,fill=black,thick] (3.5,4.5) circle (0.8ex);
  \node [draw, circle, fill = white, inner sep = 1.2pt] at (0.5,0.5) {{{\footnotesize \bf 1 }}}; 

  \node [draw, circle, fill = white, inner sep = 1.2pt] at (4.5,4.5) {{{\footnotesize \bf 2 }}};  

  \draw [thick, rounded corners] (1.5,0.5) |- (2.5,3.5);
  \draw [color=black,fill=black,thick] (2.5,3.5) circle (0.8ex);
  \node [draw, circle, fill = white, inner sep = 1.2pt] at (1.5,0.5) {{{\footnotesize \bf 3 }}}; 

  \draw [thick, rounded corners] (5.5,4.5) |- (8.5,4.5);
  \draw [color=black,fill=black,thick] (8.5,4.5) circle (0.8ex);
  \node [draw, circle, fill = white, inner sep = 1.2pt] at (5.5,4.5) {{{\footnotesize \bf 4 }}};

  \draw [thick, rounded corners] (2.5,2.5) -| (3.5,3.5) -- (8.5,3.5);
  \draw [color=black,fill=black,thick] (8.5,3.5) circle (0.8ex);
  \node [draw, circle, fill = white, inner sep = 1.2pt] at (2.5,2.5) {{{\footnotesize \bf 5 }}}; 

  \node [draw, circle, fill = white, inner sep = 1.2pt] at (9.5,4.5) {{{\footnotesize \bf 6 }}};   

  \node at (4.5,-2) {$w = [2,5,3,4,1,6]$};

\begin{scope}
    \clip (12,0) -| (14,2) -| (16,3) -| (21,4) -| (22,5) -| (12,0);
    \draw [color=black!25] (12,0) grid (22,5);
\end{scope}

  \draw [thick] (12,0) -| (14,2) -| (16,3) -| (21,4) -| (22,5) -| (12,0);

  \draw [thick, rounded corners] (12.5,0.5) |- (15.5,4.5);
  \draw [color=black,fill=black,thick] (15.5,4.5) circle (0.8ex);
  \node [draw, circle, fill = white, inner sep = 1.2pt] at (12.5,0.5) {{{\footnotesize \bf 1 }}}; 

  \node [draw, circle, fill = white, inner sep = 1.2pt] at (16.5,4.5) {{{\footnotesize \bf 2 }}};

  \draw [thick, rounded corners] (13.5,3.5) -| (17.5,4.5) -- (19.5,4.5);
  \draw [color=black,fill=black,thick] (19.5,4.5) circle (0.8ex);
  \node [draw, circle, fill = white, inner sep = 1.2pt] at (13.5,3.5) {{{\footnotesize \bf 3 }}}; 

  \draw [thick, rounded corners] (18.5,3.5) -| (20.5,4.5);
  \draw [color=black,fill=black,thick] (20.5,4.5) circle (0.8ex);
  \node [draw, circle, fill = white, inner sep = 1.2pt] at (18.5,3.5) {{{\footnotesize \bf 4 }}};  

  \draw [thick, rounded corners] (13.5,0.5) |- (15.5,2.5);
  \draw [color=black,fill=black,thick] (15.5,2.5) circle (0.8ex);
  \node [draw, circle, fill = white, inner sep = 1.2pt] at (13.5,0.5) {{{\footnotesize \bf 5 }}}; 

  \node [draw, circle, fill = white, inner sep = 1.2pt] at (21.5,4.5) {{{\footnotesize \bf 6 }}};   

  \node at (16.5,-2) {$w = [2,5,1,4,3,6]$};

 \begin{scope}
    \clip (24,0) -| (26,2) -| (28,3) -| (33,4) -| (34,5) -| (24,0);
    \draw [color=black!25] (24,0) grid (34,5);
\end{scope}

  \draw [thick] (24,0) -| (26,2) -| (28,3) -| (33,4) -| (34,5) -| (24,0);

  \draw [thick, rounded corners] (24.5,0.5) |- (27.5,4.5);
  \draw [color=black,fill=black,thick] (27.5,4.5) circle (0.8ex);
  \node [draw, circle, fill = white, inner sep = 1.2pt] at (24.5,0.5) {{{\footnotesize \bf 1 }}};

  \draw [thick, rounded corners] (25.5,0.5) |- (26.5,3.5);
  \draw [color=black,fill=black,thick] (26.5,3.5) circle (0.8ex);
  \node [draw, circle, fill = white, inner sep = 1.2pt] at (25.5,0.5) {{{\footnotesize \bf 2 }}};

  \node [draw, circle, fill = white, inner sep = 1.2pt] at (28.5,4.5) {{{\footnotesize \bf 3 }}}; 

  \draw [thick, rounded corners] (29.5,4.5) -- (32.5,4.5);
  \draw [color=black,fill=black,thick] (32.5,4.5) circle (0.8ex);
  \node [draw, circle, fill = white, inner sep = 1.2pt] at (29.5,4.5) {{{\footnotesize \bf 4 }}}; 

  \draw [thick, rounded corners] (26.5,2.5) -| (27.5,3.5) -- (32.5,3.5);
  \draw [color=black,fill=black,thick] (32.5,3.5) circle (0.8ex);
  \node [draw, circle, fill = white, inner sep = 1.2pt] at (26.5,2.5) {{{\footnotesize \bf 5 }}}; 

  \node [draw, circle, fill = white, inner sep = 1.2pt] at (33.5,4.5) {{{\footnotesize \bf 6 }}};   

  \node at (28.5,-2) {$w = [2,3,5,4,1,6]$};
\end{tikzpicture}
\end{center}
\begin{center}
\begin{tikzpicture}[scale = 0.4]

  \begin{scope}
    \clip (0,0) -| (2,2) -| (4,3) -| (9,4) -| (10,5) -| (0,0);
    \draw [color=black!25] (0,0) grid (10,5);
  \end{scope}

  \draw [thick] (0,0) -| (2,2) -| (4,3) -| (9,4) -| (10,5) -| (0,0);

  \draw [thick, rounded corners] (0.5,0.5) |- (3.5,4.5);
  \draw [color=black,fill=black,thick] (3.5,4.5) circle (0.8ex);
  \node [draw, circle, fill = white, inner sep = 1.2pt] at (0.5,0.5) {{{\footnotesize \bf 1 }}};  

  \draw [thick, rounded corners] (1.5,3.5) -| (4.5,4.5) -- (7.5,4.5);
  \draw [color=black,fill=black,thick] (7.5,4.5) circle (0.8ex);
  \node [draw, circle, fill = white, inner sep = 1.2pt] at (1.5,3.5) {{{\footnotesize \bf 2 }}}; 

  \node [draw, circle, fill = white, inner sep = 1.2pt] at (5.5,3.5) {{{\footnotesize \bf 3 }}}; 

 \draw [thick, rounded corners] (6.5,3.5) -| (8.5,4.5);
  \draw [color=black,fill=black,thick] (8.5,4.5) circle (0.8ex);
  \node [draw, circle, fill = white, inner sep = 1.2pt] at (6.5,3.5) {{{\footnotesize \bf 4 }}}; 

   \draw [thick, rounded corners] (1.5,0.5) |- (3.5,2.5);
  \draw [color=black,fill=black,thick] (3.5,2.5) circle (0.8ex);
  \node [draw, circle, fill = white, inner sep = 1.2pt] at (1.5,0.5) {{{\footnotesize \bf 5 }}}; 

  \node [draw, circle, fill = white, inner sep = 1.2pt] at (9.5,4.5) {{{\footnotesize \bf 6 }}};   

  \node at (4.5,-2) {$w = [2,1,5,4,3,6]$};

\begin{scope}
    \clip (12,0) -| (14,2) -| (16,3) -| (21,4) -| (22,5) -| (12,0);
    \draw [color=black!25] (12,0) grid (22,5);
\end{scope}

  \draw [thick] (12,0) -| (14,2) -| (16,3) -| (21,4) -| (22,5) -| (12,0);

  \draw [thick, rounded corners] (12.5,1.5) |- (13.5,4.5);
  \draw [color=black,fill=black,thick] (13.5,4.5) circle (0.8ex);
  \node [draw, circle, fill = white, inner sep = 1.2pt] at (12.5,1.5) {{{\footnotesize \bf 1 }}}; 

  \draw [thick, rounded corners] (12.5,0.5) -| (13.5,3.5) -| (14.5,4.5) -- (15.5,4.5);
  \draw [color=black,fill=black,thick] (15.5,4.5) circle (0.8ex);
  \node [draw, circle, fill = white, inner sep = 1.2pt] at (12.5,0.5) {{{\footnotesize \bf 2 }}};  

  \node [draw, circle, fill = white, inner sep = 1.2pt] at (16.5,4.5) {{{\footnotesize \bf 3 }}};    

  \draw [thick, rounded corners] (17.5,4.5) -- (20.5,4.5);
  \draw [color=black,fill=black,thick] (20.5,4.5) circle (0.8ex);
  \node [draw, circle, fill = white, inner sep = 1.2pt] at (17.5,4.5) {{{\footnotesize \bf 4 }}}; 

  \draw [thick, rounded corners] (14.5,2.5) -| (15.5,3.5) -- (20.5,3.5);
  \draw [color=black,fill=black,thick] (20.5,3.5) circle (0.8ex);
  \node [draw, circle, fill = white, inner sep = 1.2pt] at (14.5,2.5) {{{\footnotesize \bf 5 }}}; 

  \node [draw, circle, fill = white, inner sep = 1.2pt] at (21.5,4.5) {{{\footnotesize \bf 6 }}};   

  \node at (16.5,-2) {$w = [3,2,5,4,1,6]$};

 \begin{scope}
    \clip (24,0) -| (26,2) -| (28,3) -| (33,4) -| (34,5) -| (24,0);
    \draw [color=black!25] (24,0) grid (34,5);
\end{scope}

  \draw [thick] (24,0) -| (26,2) -| (28,3) -| (33,4) -| (34,5) -| (24,0);

  \draw [thick, rounded corners] (24.5,4.5) -- (31.5,4.5);
  \draw [color=black,fill=black,thick] (31.5,4.5) circle (0.8ex);
  \node [draw, circle, fill = white, inner sep = 1.2pt] at (24.5,4.5) {{{\footnotesize \bf 1 }}}; 

  \draw [thick, rounded corners] (24.5,0.5) |- (28.5,3.5);
  \draw [color=black,fill=black,thick] (28.5,3.5) circle (0.8ex);
  \node [draw, circle, fill = white, inner sep = 1.2pt] at (24.5,0.5) {{{\footnotesize \bf 2 }}}; 

  \node [draw, circle, fill = white, inner sep = 1.2pt] at (29.5,3.5) {{{\footnotesize \bf 3 }}}; 

  \draw [thick, rounded corners] (30.5,3.5) -| (32.5,4.5);
  \draw [color=black,fill=black,thick] (32.5,4.5) circle (0.8ex);
  \node [draw, circle, fill = white, inner sep = 1.2pt] at (30.5,3.5) {{{\footnotesize \bf 4 }}}; 

   \draw [thick, rounded corners] (25.5,0.5) |- (27.5,2.5);
  \draw [color=black,fill=black,thick] (27.5,2.5) circle (0.8ex);
  \node [draw, circle, fill = white, inner sep = 1.2pt] at (25.5,0.5) {{{\footnotesize \bf 5 }}}; 

  \node [draw, circle, fill = white, inner sep = 1.2pt] at (33.5,4.5) {{{\footnotesize \bf 6 }}};

  \node at (28.5,-2) {$w = [1,2,5,4,3,6]$};
\end{tikzpicture}
\end{center}
\begin{center}
\begin{tikzpicture}[scale = 0.4]

  \begin{scope}
    \clip (0,0) -| (2,2) -| (4,3) -| (9,4) -| (10,5) -| (0,0);
    \draw [color=black!25] (0,0) grid (10,5);
  \end{scope}

  \draw [thick] (0,0) -| (2,2) -| (4,3) -| (9,4) -| (10,5) -| (0,0);

  \draw [thick, rounded corners] (0.5,0.5) |- (3.5,4.5);
  \draw [color=black,fill=black,thick] (3.5,4.5) circle (0.8ex);
  \node [draw, circle, fill = white, inner sep = 1.2pt] at (0.5,0.5) {{{\footnotesize \bf 1 }}}; 

  \node [draw, circle, fill = white, inner sep = 1.2pt] at (4.5,4.5) {{{\footnotesize \bf 2 }}};

  \draw [thick, rounded corners] (1.5,0.5) |- (2.5,3.5);
  \draw [color=black,fill=black,thick] (2.5,3.5) circle (0.8ex);
  \node [draw, circle, fill = white, inner sep = 1.2pt] at (1.5,0.5) {{{\footnotesize \bf 3 }}}; 

  \draw [thick, rounded corners] (2.5,2.5) -| (3.5,3.5) -| (5.5,4.5) -- (7.5,4.5);
  \draw [color=black,fill=black,thick] (7.5,4.5) circle (0.8ex);
  \node [draw, circle, fill = white, inner sep = 1.2pt] at (2.5,2.5) {{{\footnotesize \bf 4 }}}; 

  \draw [thick, rounded corners] (6.5,3.5) -| (8.5,4.5);
  \draw [color=black,fill=black,thick] (8.5,4.5) circle (0.8ex);
  \node [draw, circle, fill = white, inner sep = 1.2pt] at (6.5,3.5) {{{\footnotesize \bf 5 }}}; 

  \node [draw, circle, fill = white, inner sep = 1.2pt] at (9.5,4.5) {{{\footnotesize \bf 6 }}};   

  \node at (4.5,-2) {$w = [2,5,3,1,4,6]$};

\begin{scope}
    \clip (12,0) -| (14,2) -| (16,3) -| (21,4) -| (22,5) -| (12,0);
    \draw [color=black!25] (12,0) grid (22,5);
\end{scope}

  \draw [thick] (12,0) -| (14,2) -| (16,3) -| (21,4) -| (22,5) -| (12,0);

  \draw [thick, rounded corners] (12.5,0.5) |- (15.5,4.5);
  \draw [color=black,fill=black,thick] (15.5,4.5) circle (0.8ex);
  \node [draw, circle, fill = white, inner sep = 1.2pt] at (12.5,0.5) {{{\footnotesize \bf 1 }}}; 

  \node [draw, circle, fill = white, inner sep = 1.2pt] at (16.5,4.5) {{{\footnotesize \bf 2 }}}; 

  \draw [thick, rounded corners] (13.5,3.5) -| (17.5,4.5) -- (19.5,4.5);
  \draw [color=black,fill=black,thick] (19.5,4.5) circle (0.8ex);
  \node [draw, circle, fill = white, inner sep = 1.2pt] at (13.5,3.5) {{{\footnotesize \bf 3 }}};

  \draw [thick, rounded corners] (13.5,0.5) |- (15.5,2.5);
  \draw [color=black,fill=black,thick] (15.5,2.5) circle (0.8ex);
  \node [draw, circle, fill = white, inner sep = 1.2pt] at (13.5,0.5) {{{\footnotesize \bf 4 }}};

  \draw [thick, rounded corners] (18.5,3.5) -| (20.5,4.5);
  \draw [color=black,fill=black,thick] (20.5,4.5) circle (0.8ex);
  \node [draw, circle, fill = white, inner sep = 1.2pt] at (18.5,3.5) {{{\footnotesize \bf 5 }}};

  \node [draw, circle, fill = white, inner sep = 1.2pt] at (21.5,4.5) {{{\footnotesize \bf 6 }}};  

  \node at (16.5,-2) {$w = [2,5,1,3,4,6]$};

 \begin{scope}
    \clip (24,0) -| (26,2) -| (28,3) -| (33,4) -| (34,5) -| (24,0);
    \draw [color=black!25] (24,0) grid (34,5);
\end{scope}

  \draw [thick] (24,0) -| (26,2) -| (28,3) -| (33,4) -| (34,5) -| (24,0);

  \draw [thick, rounded corners] (24.5,0.5) |- (27.5,4.5);
  \draw [color=black,fill=black,thick] (27.5,4.5) circle (0.8ex);
  \node [draw, circle, fill = white, inner sep = 1.2pt] at (24.5,0.5) {{{\footnotesize \bf 1 }}}; 

  \draw [thick, rounded corners] (25.5,0.5) |- (26.5,3.5);
  \draw [color=black,fill=black,thick] (26.5,3.5) circle (0.8ex);
  \node [draw, circle, fill = white, inner sep = 1.2pt] at (25.5,0.5) {{{\footnotesize \bf 2 }}}; 

  \node [draw, circle, fill = white, inner sep = 1.2pt] at (28.5,4.5) {{{\footnotesize \bf 3 }}}; 

  \draw [thick, rounded corners] (26.5,2.5) -| (27.5,3.5) -| (29.5,4.5) -- (31.5,4.5);
  \draw [color=black,fill=black,thick] (31.5,4.5) circle (0.8ex);
  \node [draw, circle, fill = white, inner sep = 1.2pt] at (26.5,2.5) {{{\footnotesize \bf 4 }}}; 

  \draw [thick, rounded corners] (30.5,3.5) -| (32.5,4.5);
  \draw [color=black,fill=black,thick] (32.5,4.5) circle (0.8ex);
  \node [draw, circle, fill = white, inner sep = 1.2pt] at (30.5,3.5) {{{\footnotesize \bf 5 }}};

  \node [draw, circle, fill = white, inner sep = 1.2pt] at (33.5,4.5) {{{\footnotesize \bf 6 }}};  

  \node at (28.5,-2) {$w = [2,3,5,1,4,6]$};
\end{tikzpicture}
\end{center}
\begin{center}
\begin{tikzpicture}[scale = 0.4]

  \begin{scope}
    \clip (0,0) -| (2,2) -| (4,3) -| (9,4) -| (10,5) -| (0,0);
    \draw [color=black!25] (0,0) grid (10,5);
  \end{scope}

  \draw [thick] (0,0) -| (2,2) -| (4,3) -| (9,4) -| (10,5) -| (0,0);

  \draw [thick, rounded corners] (0.5,0.5) |- (3.5,4.5);
  \draw [color=black,fill=black,thick] (3.5,4.5) circle (0.8ex);
  \node [draw, circle, fill = white, inner sep = 1.2pt] at (0.5,0.5) {{{\footnotesize \bf 1 }}}; 

  \draw [thick, rounded corners] (1.5,3.5) -| (4.5,4.5) -- (7.5,4.5);
  \draw [color=black,fill=black,thick] (7.5,4.5) circle (0.8ex);
  \node [draw, circle, fill = white, inner sep = 1.2pt] at (1.5,3.5) {{{\footnotesize \bf 2 }}}; 

  \node [draw, circle, fill = white, inner sep = 1.2pt] at (5.5,3.5) {{{\footnotesize \bf 3 }}};  

  \draw [thick, rounded corners] (1.5,0.5) |- (3.5,2.5);
  \draw [color=black,fill=black,thick] (3.5,2.5) circle (0.8ex);
  \node [draw, circle, fill = white, inner sep = 1.2pt] at (1.5,0.5) {{{\footnotesize \bf 4 }}};  

  \draw [thick, rounded corners] (6.5,3.5) -| (8.5,4.5);
  \draw [color=black,fill=black,thick] (8.5,4.5) circle (0.8ex);
  \node [draw, circle, fill = white, inner sep = 1.2pt] at (6.5,3.5) {{{\footnotesize \bf 5 }}}; 

  \node [draw, circle, fill = white, inner sep = 1.2pt] at (9.5,4.5) {{{\footnotesize \bf 6 }}};   

  \node at (4.5,-2) {$w = [2,1,5,3,4,6]$};

\begin{scope}
    \clip (12,0) -| (14,2) -| (16,3) -| (21,4) -| (22,5) -| (12,0);
    \draw [color=black!25] (12,0) grid (22,5);
\end{scope}

  \draw [thick, rounded corners] (12.5,1.5) |- (13.5,4.5);
  \draw [color=black,fill=black,thick] (13.5,4.5) circle (0.8ex);
  \node [draw, circle, fill = white, inner sep = 1.2pt] at (12.5,1.5) {{{\footnotesize \bf 1 }}};

  \draw [thick, rounded corners] (12.5,0.5) -| (13.5,3.5) -| (14.5,4.5) -- (15.5,4.5);
  \draw [color=black,fill=black,thick] (15.5,4.5) circle (0.8ex);
  \node [draw, circle, fill = white, inner sep = 1.2pt] at (12.5,0.5) {{{\footnotesize \bf 2 }}}; 

  \node [draw, circle, fill = white, inner sep = 1.2pt] at (16.5,4.5) {{{\footnotesize \bf 3 }}}; 

  \draw [thick, rounded corners] (14.5,2.5) -| (15.5,3.5) -| (17.5,4.5) -- (19.5,4.5);
  \draw [color=black,fill=black,thick] (19.5,4.5) circle (0.8ex);
  \node [draw, circle, fill = white, inner sep = 1.2pt] at (14.5,2.5) {{{\footnotesize \bf 4 }}};

  \draw [thick, rounded corners] (18.5,3.5) -| (20.5,4.5);
  \draw [color=black,fill=black,thick] (20.5,4.5) circle (0.8ex);
  \node [draw, circle, fill = white, inner sep = 1.2pt] at (18.5,3.5) {{{\footnotesize \bf 5 }}}; 

  \node [draw, circle, fill = white, inner sep = 1.2pt] at (21.5,4.5) {{{\footnotesize \bf 6 }}};   

  \draw [thick] (12,0) -| (14,2) -| (16,3) -| (21,4) -| (22,5) -| (12,0);

  \node at (16.5,-2) {$w = [3,2,5,1,4,6]$};

 \begin{scope}
    \clip (24,0) -| (26,2) -| (28,3) -| (33,4) -| (34,5) -| (24,0);
    \draw [color=black!25] (24,0) grid (34,5);
\end{scope}

  \draw [thick] (24,0) -| (26,2) -| (28,3) -| (33,4) -| (34,5) -| (24,0);

  \draw [thick, rounded corners] (24.5,4.5) -- (31.5,4.5);
  \draw [color=black,fill=black,thick] (31.5,4.5) circle (0.8ex);
  \node [draw, circle, fill = white, inner sep = 1.2pt] at (24.5,4.5) {{{\footnotesize \bf 1 }}};

  \draw [thick, rounded corners] (24.5,0.5) |- (28.5,3.5);
  \draw [color=black,fill=black,thick] (28.5,3.5) circle (0.8ex);
  \node [draw, circle, fill = white, inner sep = 1.2pt] at (24.5,0.5) {{{\footnotesize \bf 2 }}};

  \node [draw, circle, fill = white, inner sep = 1.2pt] at (29.5,3.5) {{{\footnotesize \bf 3 }}};

  \draw [thick, rounded corners] (25.5,0.5) |- (27.5,2.5);
  \draw [color=black,fill=black,thick] (27.5,2.5) circle (0.8ex);
  \node [draw, circle, fill = white, inner sep = 1.2pt] at (25.5,0.5) {{{\footnotesize \bf 4 }}}; 

  \draw [thick, rounded corners] (30.5,3.5) -| (32.5,4.5);
  \draw [color=black,fill=black,thick] (32.5,4.5) circle (0.8ex);
  \node [draw, circle, fill = white, inner sep = 1.2pt] at (30.5,3.5) {{{\footnotesize \bf 5 }}};  

  \node [draw, circle, fill = white, inner sep = 1.2pt] at (33.5,4.5) {{{\footnotesize \bf 6 }}};    

  \node at (28.5,-2) {$w = [1,2,5,3,4,6]$};
\end{tikzpicture}
\end{center}
\begin{center}
\begin{tikzpicture}[scale = 0.4]

  \begin{scope}
    \clip (0,0) -| (2,2) -| (4,3) -| (9,4) -| (10,5) -| (0,0);
    \draw [color=red!25] (0,0) grid (10,5);
  \end{scope}

  \draw [thick, color = red] (0,0) -| (2,2) -| (4,3) -| (9,4) -| (10,5) -| (0,0);

  \draw [thick, rounded corners, color = red] (0.5,0.5) |- (3.5,4.5);
  \draw [color=red,fill=red,thick] (3.5,4.5) circle (0.8ex);
  \node [draw, circle, fill = white, inner sep = 1.2pt] at (0.5,0.5) {{{\footnotesize \bf 1 }}}; 

\draw [thick, rounded corners, color = red] (1.5,0.5) |- (2.5,3.5);
  \draw [color=red,fill=red,thick] (2.5,3.5) circle (0.8ex);
  \node [draw, circle, fill = white, inner sep = 1.2pt] at (1.5,0.5) {{{\footnotesize \bf 2 }}}; 

\draw [thick, rounded corners, color = red] (2.5,2.5) -| (3.5,3.5) -| (4.5,4.5) -- (7.5,4.5);
  \draw [color=red,fill=red,thick] (7.5,4.5) circle (0.8ex);
  \node [draw, circle, fill = white, inner sep = 1.2pt] at (2.5,2.5) {{{\footnotesize \bf 3 }}}; 

  \node [draw, circle, fill = white, inner sep = 1.2pt] at (5.5,3.5) {{{\footnotesize \bf 4 }}}; 

 \draw [thick, rounded corners, color = red] (6.5,3.5) -| (8.5,4.5);
  \draw [color=red,fill=red,thick] (8.5,4.5) circle (0.8ex);
  \node [draw, circle, fill = white, inner sep = 1.2pt] at (6.5,3.5) {{{\footnotesize \bf 5 }}}; 

  \node [draw, circle, fill = white, inner sep = 1.2pt] at (9.5,4.5) {{{\footnotesize \bf 6 }}};   

  \node at (4.5,-2) {$w = [2,3,1,5,4,6]$};

\begin{scope}
    \clip (12,0) -| (14,2) -| (16,3) -| (21,4) -| (22,5) -| (12,0);
    \draw [color=black!25] (12,0) grid (22,5);
\end{scope}

  \draw [thick] (12,0) -| (14,2) -| (16,3) -| (21,4) -| (22,5) -| (12,0);

 \draw [thick, rounded corners] (12.5,0.5) |- (15.5,4.5);
  \draw [color=black,fill=black,thick] (15.5,4.5) circle (0.8ex);
  \node [draw, circle, fill = white, inner sep = 1.2pt] at (12.5,0.5) {{{\footnotesize \bf 1 }}}; 

 \draw [thick, rounded corners] (13.5,3.5) -| (16.5,4.5) -- (19.5,4.5);
  \draw [color=black,fill=black,thick] (19.5,4.5) circle (0.8ex);
  \node [draw, circle, fill = white, inner sep = 1.2pt] at (13.5,3.5) {{{\footnotesize \bf 2 }}};

  \draw [thick, rounded corners] (13.5,0.5) |- (15.5,2.5);
  \draw [color=black,fill=black,thick] (15.5,2.5) circle (0.8ex);
  \node [draw, circle, fill = white, inner sep = 1.2pt] at (13.5,0.5) {{{\footnotesize \bf 3 }}}; 

  \node [draw, circle, fill = white, inner sep = 1.2pt] at (17.5,3.5) {{{\footnotesize \bf 4 }}};

  \draw [thick, rounded corners] (18.5,3.5) -| (20.5,4.5);
  \draw [color=black,fill=black,thick] (20.5,4.5) circle (0.8ex);
  \node [draw, circle, fill = white, inner sep = 1.2pt] at (18.5,3.5) {{{\footnotesize \bf 5 }}};  

  \node [draw, circle, fill = white, inner sep = 1.2pt] at (21.5,4.5) {{{\footnotesize \bf 6 }}};    

  \node at (16.5,-2) {$w = [2,1,3,5,4,6]$};

 \begin{scope}
    \clip (24,0) -| (26,2) -| (28,3) -| (33,4) -| (34,5) -| (24,0);
    \draw [color=black!25] (24,0) grid (34,5);
\end{scope}

  \draw [thick] (24,0) -| (26,2) -| (28,3) -| (33,4) -| (34,5) -| (24,0);

 \draw [thick, rounded corners] (24.5,1.5) |- (25.5,4.5);
  \draw [color=black,fill=black,thick] (25.5,4.5) circle (0.8ex);
  \node [draw, circle, fill = white, inner sep = 1.2pt] at (24.5,1.5) {{{\footnotesize \bf 1 }}};

 \draw [thick, rounded corners] (24.5,0.5) -| (25.5,3.5) -| (26.5,4.5) -- (27.5,4.5);
  \draw [color=black,fill=black,thick] (27.5,4.5) circle (0.8ex);
  \node [draw, circle, fill = white, inner sep = 1.2pt] at (24.5,0.5) {{{\footnotesize \bf 2 }}};

 \draw [thick, rounded corners] (26.5,2.5) -| (27.5,3.5) -| (28.5,4.5) -- (31.5,4.5);
  \draw [color=black,fill=black,thick] (31.5,4.5) circle (0.8ex);
  \node [draw, circle, fill = white, inner sep = 1.2pt] at (26.5,2.5) {{{\footnotesize \bf 3 }}};  

  \node [draw, circle, fill = white, inner sep = 1.2pt] at (29.5,3.5) {{{\footnotesize \bf 4 }}}; 

   \draw [thick, rounded corners] (30.5,3.5) -| (32.5,4.5);
  \draw [color=black,fill=black,thick] (32.5,4.5) circle (0.8ex);
  \node [draw, circle, fill = white, inner sep = 1.2pt] at (30.5,3.5) {{{\footnotesize \bf 5 }}};

   \node [draw, circle, fill = white, inner sep = 1.2pt] at (33.5,4.5) {{{\footnotesize \bf 6 }}}; 

  \node at (28.5,-2) {$w = [3,2,1,5,4,6]$};
\end{tikzpicture}
\end{center}
\begin{center}
\begin{tikzpicture}[scale = 0.4]

  \begin{scope}
    \clip (0,0) -| (2,2) -| (4,3) -| (9,4) -| (10,5) -| (0,0);
    \draw [color=black!25] (0,0) grid (10,5);
  \end{scope}

  \draw [thick] (0,0) -| (2,2) -| (4,3) -| (9,4) -| (10,5) -| (0,0);

  \draw [thick, rounded corners] (0.5,4.5) -- (7.5,4.5);
  \draw [color=black,fill=black,thick] (7.5,4.5) circle (0.8ex);
  \node [draw, circle, fill = white, inner sep = 1.2pt] at (0.5,4.5) {{{\footnotesize \bf 1 }}};

  \draw [thick, rounded corners] (0.5,0.5) |- (4.5,3.5);
  \draw [color=black,fill=black,thick] (4.5,3.5) circle (0.8ex);
  \node [draw, circle, fill = white, inner sep = 1.2pt] at (0.5,0.5) {{{\footnotesize \bf 2 }}}; 

  \draw [thick, rounded corners] (1.5,0.5) |- (3.5,2.5);
  \draw [color=black,fill=black,thick] (3.5,2.5) circle (0.8ex);
  \node [draw, circle, fill = white, inner sep = 1.2pt] at (1.5,0.5) {{{\footnotesize \bf 3 }}}; 

  \node [draw, circle, fill = white, inner sep = 1.2pt] at (5.5,3.5) {{{\footnotesize \bf 4 }}}; 

  \draw [thick, rounded corners] (6.5,3.5) -| (8.5,4.5);
  \draw [color=black,fill=black,thick] (8.5,4.5) circle (0.8ex);
  \node [draw, circle, fill = white, inner sep = 1.2pt] at (6.5,3.5) {{{\footnotesize \bf 5 }}}; 

  \node [draw, circle, fill = white, inner sep = 1.2pt] at (9.5,4.5) {{{\footnotesize \bf 6 }}};   

  \node at (4.5,-2) {$w = [1,2,3,5,4,6]$};

\begin{scope}
    \clip (12,0) -| (14,2) -| (16,3) -| (21,4) -| (22,5) -| (12,0);
    \draw [color=black!25] (12,0) grid (22,5);
\end{scope}

  \draw [thick] (12,0) -| (14,2) -| (16,3) -| (21,4) -| (22,5) -| (12,0);

  \draw [thick, rounded corners] (12.5,1.5) |- (13.5,4.5);
  \draw [color=black,fill=black,thick] (13.5,4.5) circle (0.8ex);
  \node [draw, circle, fill = white, inner sep = 1.2pt] at (12.5,1.5) {{{\footnotesize \bf 1 }}}; 

  \draw [thick, rounded corners] (13.5,3.5) -| (14.5,4.5) -- (19.5,4.5);
  \draw [color=black,fill=black,thick] (19.5,4.5) circle (0.8ex);
  \node [draw, circle, fill = white, inner sep = 1.2pt] at (13.5,3.5) {{{\footnotesize \bf 2 }}}; 

  \draw [thick, rounded corners] (12.5,0.5) -| (13.5,2.5) -| (15.5,3.5) -- (16.5,3.5);
  \draw [color=black,fill=black,thick] (16.5,3.5) circle (0.8ex);
  \node [draw, circle, fill = white, inner sep = 1.2pt] at (12.5,0.5) {{{\footnotesize \bf 3 }}};  

  \node [draw, circle, fill = white, inner sep = 1.2pt] at (17.5,3.5) {{{\footnotesize \bf 4 }}}; 

  \draw [thick, rounded corners] (18.5,3.5) -| (20.5,4.5);
  \draw [color=black,fill=black,thick] (20.5,4.5) circle (0.8ex);
  \node [draw, circle, fill = white, inner sep = 1.2pt] at (18.5,3.5) {{{\footnotesize \bf 5 }}};  

  \node [draw, circle, fill = white, inner sep = 1.2pt] at (21.5,4.5) {{{\footnotesize \bf 6 }}};    

  \node at (16.5,-2) {$w = [3,1,2,5,4,6]$};

 \begin{scope}
    \clip (24,0) -| (26,2) -| (28,3) -| (33,4) -| (34,5) -| (24,0);
    \draw [color=black!25] (24,0) grid (34,5);
\end{scope}

  \draw [thick] (24,0) -| (26,2) -| (28,3) -| (33,4) -| (34,5) -| (24,0);

    \draw [thick, rounded corners] (24.5,4.5) -- (31.5,4.5);
  \draw [color=black,fill=black,thick] (31.5,4.5) circle (0.8ex);
  \node [draw, circle, fill = white, inner sep = 1.2pt] at (24.5,4.5) {{{\footnotesize \bf 1 }}};

  \draw [thick, rounded corners] (24.5,1.5) |- (26.5,3.5);
  \draw [color=black,fill=black,thick] (26.5,3.5) circle (0.8ex);
  \node [draw, circle, fill = white, inner sep = 1.2pt] at (24.5,1.5) {{{\footnotesize \bf 2 }}}; 

  \draw [thick, rounded corners] (24.5,0.5) -| (25.5,2.5) -| (27.5,3.5) -- (28.5,3.5);
  \draw [color=black,fill=black,thick] (28.5,3.5) circle (0.8ex);
  \node [draw, circle, fill = white, inner sep = 1.2pt] at (24.5,0.5) {{{\footnotesize \bf 3 }}}; 

  \node [draw, circle, fill = white, inner sep = 1.2pt] at (29.5,3.5) {{{\footnotesize \bf 4 }}};

  \draw [thick, rounded corners] (30.5,3.5) -| (32.5,4.5);
  \draw [color=black,fill=black,thick] (32.5,4.5) circle (0.8ex);
  \node [draw, circle, fill = white, inner sep = 1.2pt] at (30.5,3.5) {{{\footnotesize \bf 5 }}}; 

  \node [draw, circle, fill = white, inner sep = 1.2pt] at (33.5,4.5) {{{\footnotesize \bf 6 }}};   

  \node at (28.5,-2) {$w = [1,3,2,5,4,6]$};
\end{tikzpicture}
\end{center}
\end{center}

\section{Irreducible characters on partial permutations}
Let $\lambda, \mu \vdash n$ be partitions with $\mu = (\mu_1, \dots, \mu_r)$.  We introduce the monotonic tiling enumerator
\begin{equation}
\vec{\chi}^{ \, \lambda}_{ \, \mu} := \sum_T \sign(T)
\end{equation}
where the sum is over monotonic ribbon tilings $T$ of shape $\lambda$ with ribbon sizes $\mu_1, \dots, \mu_r$.
With this notation, Theorem~\ref{path-murnaghan-nakayama} reads
\begin{equation}
\label{pmn}
\vec{p}_{\mu} = m(\mu)! \cdot \sum_{\lambda \vdash n} \vec{\chi}^{ \, \lambda}_{ \, \mu}  \cdot s_{\lambda}
\end{equation}
in parallel with the classical expansion $p_{\mu} = \sum_{\lambda \vdash n} \chi^{\lambda}_{\mu} \cdot s_{\lambda}.$

To state the Schur expansion of the atomic functions as cleanly as possible, we use one more piece of notation.
If $\lambda/\rho$ is a skew shape and $\nu$ is a composition, let $\chi^{\lambda/\rho}_{\nu}$ be the signed count of standard 
ribbon tableaux of skew shape $\lambda/\rho$ of type $\nu$.  If $\rho \not\subseteq \lambda$ we set
$\chi^{\lambda/\rho}_{\nu} := 0$.

\begin{corollary}
\label{atomic-schur-expansion}
Let $(I,J) \in \symm_{n,k}$ be a partial permutation whose graph $G_n(I,J)$ consists of paths of sizes
$\mu = (\mu_1 \geq \mu_2 \geq \cdots )$ and cycles of sizes
$\nu = (\nu_1 \geq \nu_2 \geq \cdots )$ where $\mu \vdash a$ and $\nu \vdash b$ so that $a + b = n$.  Then
\begin{equation}
\label{cor:atomic-schur-eqn}
A_{n,I,J} = m(\mu)! \cdot \sum_{\lambda \, \vdash \, n}  \left(
\sum_{\rho \, \vdash \, a}  \vec{\chi}_{ \, \mu}^{ \, \rho} \cdot \chi_{\nu}^{\lambda/\rho}
 \right) \cdot s_{\lambda}.
\end{equation}
\end{corollary}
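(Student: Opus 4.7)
The plan is to combine three ingredients that have already been established in the excerpt: the factorization of the atomic function into path and cycle parts (Proposition~\ref{atomic-factorization}), the Path Murnaghan--Nakayama formula (Theorem~\ref{path-murnaghan-nakayama}) for expanding $\vec{p}_\mu$, and the classical Murnaghan--Nakayama rule (Theorem~\ref{mn-rule}) iterated to expand $p_\nu$ against a fixed Schur function. None of these steps individually requires new ideas; the corollary is really a bookkeeping consequence of the machinery built in Chapter~\ref{Path}.

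First I would write $A_{n,I,J} = \vec{p}_\mu \cdot p_\nu$ using Proposition~\ref{atomic-factorization}, which is valid since $G_n(I,J)$ decomposes as the disjoint union of its path components (with total size $a$) and its cycle components (with total size $b$). Next, I would apply Theorem~\ref{path-murnaghan-nakayama} to rewrite
\begin{equation*}
\vec{p}_\mu = m(\mu)! \cdot \sum_{\rho \,\vdash\, a} \vec{\chi}^{\,\rho}_{\,\mu} \cdot s_\rho,
\end{equation*}
so that
\begin{equation*}
A_{n,I,J} = m(\mu)! \cdot \sum_{\rho \,\vdash\, a} \vec{\chi}^{\,\rho}_{\,\mu} \cdot (p_\nu \cdot s_\rho).
\end{equation*}

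The remaining step is to expand each product $p_\nu \cdot s_\rho$ into the Schur basis. Writing $\nu = (\nu_1, \nu_2, \dots, \nu_\ell)$ and applying the classical Murnaghan--Nakayama rule (Theorem~\ref{mn-rule}) $\ell$ times, the product $p_{\nu_1} p_{\nu_2} \cdots p_{\nu_\ell} \cdot s_\rho$ expands as a signed sum over standard ribbon tableaux of some skew shape $\lambda/\rho$ of type $\nu$, where the initial shape is $\rho$ rather than $\varnothing$. That is,
\begin{equation*}
p_\nu \cdot s_\rho = \sum_{\lambda \,\vdash\, n} \chi^{\lambda/\rho}_\nu \cdot s_\lambda,
\end{equation*}
with the convention $\chi^{\lambda/\rho}_\nu = 0$ when $\rho \not\subseteq \lambda$. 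Substituting this into the previous display, interchanging the order of summation, and grouping by $\lambda$ yields Equation~\eqref{cor:atomic-schur-eqn}.

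There is no real obstacle here; the only small point to verify is that iterating the Murnaghan--Nakayama rule on a skew-starting Schur function does produce the signed count $\chi^{\lambda/\rho}_\nu$, but this is immediate from the telescoping chain of ribbon additions and matches the standard definition of ribbon tableaux on skew shapes. Everything else is symbolic manipulation of the identities already proved in the chapter.
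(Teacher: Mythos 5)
Your proposal is correct and follows exactly the same route as the paper's own proof: factor $A_{n,I,J} = \vec{p}_\mu \cdot p_\nu$ via Proposition~\ref{atomic-factorization}, expand $\vec{p}_\mu$ with the Path Murnaghan--Nakayama Rule, and then expand $p_\nu \cdot s_\rho$ with the classical Murnaghan--Nakayama Rule to produce the skew characters $\chi^{\lambda/\rho}_\nu$. Your remark that iterating ribbon additions starting from $\rho$ rather than $\varnothing$ yields the signed skew ribbon tableau count is the right (and only) point needing verification, and it holds as you say.
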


\begin{proof}
We calculate
\begin{align}
A_{n,I,J} &= \vec{p}_{\mu} \cdot p_{\nu} \\
&= m(\mu)! \cdot \left( \sum_{\rho \, \vdash \, a} \vec{\chi}_{ \, \mu}^{ \, \rho} \cdot s_{\rho} \right) \cdot p_{\nu} \\
&= m(\mu)! \cdot \sum_{\lambda \, \vdash \, n}  \left(
\sum_{\rho \, \vdash \, a}  \vec{\chi}_{ \, \mu}^{ \, \rho} \cdot \chi_{\nu}^{\lambda/\rho}
 \right) \cdot s_{\lambda}
\end{align}
where the first equality uses Proposition~\ref{atomic-factorization}, the second uses the 
Path Murnaghan-Nakayama Rule as 
stated in
 Equation~\eqref{pmn}, 
and the third uses the classical Murnaghan-Nakayama Rule (Theorem~\ref{mn-rule}).
\end{proof}

\begin{figure}
\begin{center}
\begin{tikzpicture}[scale = 0.3]

\begin{scope}
   \clip (0,0) -| (7,1) -| (0,0);
    \draw [color=black!25] (0,0) grid (7,1);
\end{scope}

\draw [thick] (0,0) -| (7,1) -| (0,0);

\draw [thick, rounded corners]  (0.5,0.5) -- (1.5,0.5);
\draw [color=black,fill=black,thick] (1.5,0.5) circle (.4ex);
\node [draw, circle, fill = white, inner sep = 1.2pt] at (0.5,0.5) { };    
\node [draw, circle, fill = white, inner sep = 1.2pt] at (2.5,0.5) { };    
\draw [color=red, thick, rounded corners]  (4.5,0.5) -- (6.5,0.5);   
\draw [color=red,fill=black,thick] (6.5,0.5) circle (.4ex);
\node [color=red,draw, circle, fill = white, inner sep = 1.2pt] at (4.5,0.5) { }; 
\node [color=red,draw, circle, fill = white, inner sep = 1.2pt] at (3.5,0.5) { };   
    
\node at (3,-1) {$+1$};

\begin{scope}
   \clip (10,0) -| (17,1) -| (10,0);
    \draw [color=black!25] (10,0) grid (17,1);
\end{scope}

\draw [thick] (10,0) -| (17,1) -| (10,0);

\draw [thick, rounded corners]  (11.5,0.5) -- (12.5,0.5);
\draw [color=black,fill=black,thick] (12.5,0.5) circle (.4ex);
\node [draw, circle, fill = white, inner sep = 1.2pt] at (11.5,0.5) { };    
\node [draw, circle, fill = white, inner sep = 1.2pt] at (10.5,0.5) { };    
\draw [color=red, thick, rounded corners]  (14.5,0.5) -- (16.5,0.5);   
\draw [color=red,fill=black,thick] (16.5,0.5) circle (.4ex);
\node [color=red,draw, circle, fill = white, inner sep = 1.2pt] at (14.5,0.5) { }; 
\node [color=red,draw, circle, fill = white, inner sep = 1.2pt] at (13.5,0.5) { };   
    
\node at (13,-1) {$+1$};

\begin{scope}
   \clip (20,-1) -| (21,0) -| (26,1) -| (20,-1);
    \draw [color=black!25] (20,-1) grid (26,1);
\end{scope}

\draw [thick] (20,-1) -| (21,0) -| (26,1) -| (20,-1);

\draw [thick, rounded corners]  (20.5,0.5) -- (21.5,0.5);
\draw [color=black,fill=black,thick] (21.5,0.5) circle (.4ex);
\node [draw, circle, fill = white, inner sep = 1.2pt] at (20.5,0.5) { };    
\node [draw, circle, fill = white, inner sep = 1.2pt] at (22.5,0.5) { };    
\draw [color=red, thick, rounded corners]  (23.5,0.5) -- (25.5,0.5);   
\draw [color=red,fill=black,thick] (25.5,0.5) circle (.4ex);
\node [color=red,draw, circle, fill = white, inner sep = 1.2pt] at (23.5,0.5) { }; 
\node [color=red,draw, circle, fill = white, inner sep = 1.2pt] at (20.5,-0.5) { };   
    
\node at (23,-1) {$+1$};

\begin{scope}
   \clip (30,-1) -| (31,0) -| (36,1) -| (30,-1);
    \draw [color=black!25] (30,-1) grid (36,1);
\end{scope}

\draw [thick] (30,-1) -| (31,0) -| (36,1) -| (30,-1);

\draw [thick, rounded corners]  (31.5,0.5) -- (32.5,0.5);
\draw [color=black,fill=black,thick] (32.5,0.5) circle (.4ex);
\node [draw, circle, fill = white, inner sep = 1.2pt] at (31.5,0.5) { };    
\node [draw, circle, fill = white, inner sep = 1.2pt] at (30.5,0.5) { };    
\draw [color=red, thick, rounded corners]  (33.5,0.5) -- (35.5,0.5);   
\draw [color=red,fill=black,thick] (35.5,0.5) circle (.4ex);
\node [color=red,draw, circle, fill = white, inner sep = 1.2pt] at (33.5,0.5) { }; 
\node [color=red,draw, circle, fill = white, inner sep = 1.2pt] at (30.5,-0.5) { };   
    
\node at (33,-1) {$+1$};

\begin{scope}
   \clip (40,-1) -| (41,0) -| (46,1) -| (40,-1);
    \draw [color=black!25] (40,-1) grid (46,1);
\end{scope}

\draw [thick] (40,-1) -| (41,0) -| (46,1) -| (40,-1);

\draw [thick, rounded corners]  (40.5,-0.5) -- (40.5,0.5);
\draw [color=black,fill=black,thick] (40.5,0.5) circle (.4ex);
\node [draw, circle, fill = white, inner sep = 1.2pt] at (40.5,-0.5) { };    
\node [draw, circle, fill = white, inner sep = 1.2pt] at (41.5,0.5) { };    
\draw [color=red, thick, rounded corners]  (43.5,0.5) -- (45.5,0.5);   
\draw [color=red,fill=black,thick] (45.5,0.5) circle (.4ex);
\node [color=red,draw, circle, fill = white, inner sep = 1.2pt] at (43.5,0.5) { }; 
\node [color=red,draw, circle, fill = white, inner sep = 1.2pt] at (42.5,0.5) { };   
    
\node at (43,-1) {$-1$};

\end{tikzpicture}
\end{center}

\begin{center}
\begin{tikzpicture}[scale = 0.3]

\begin{scope}
   \clip (0,0) -| (2,1) -| (5,2) -| (0,0);
    \draw [color=black!25] (0,0) grid (5,2);
\end{scope}

\draw [thick] (0,0) -| (2,1) -| (5,2) -| (0,0);

\draw [thick, rounded corners]  (0.5,0.5) -- (0.5,1.5);
\draw [color=black,fill=black,thick] (0.5,1.5) circle (.4ex);
\node [draw, circle, fill = white, inner sep = 1.2pt] at (0.5,0.5) { };    
\node [draw, circle, fill = white, inner sep = 1.2pt] at (1.5,1.5) { };    
\draw [color=red, thick, rounded corners]  (2.5,1.5) -- (4.5,1.5);   
\draw [color=red,fill=black,thick] (4.5,1.5) circle (.4ex);
\node [color=red,draw, circle, fill = white, inner sep = 1.2pt] at (2.5,1.5) { }; 
\node [color=red,draw, circle, fill = white, inner sep = 1.2pt] at (1.5,0.5) { };   
    
\node at (3,0) {$-1$};

\begin{scope}
   \clip (8,-1) -| (9,1) -| (13,2) -| (8,-1);
    \draw [color=black!25] (8,-1) grid (13,2);
\end{scope}

\draw [thick] (8,-1) -| (9,1) -| (13,2) -| (8,-1);

\draw [thick, rounded corners]  (8.5,0.5) -- (8.5,1.5);
\draw [color=black,fill=black,thick] (8.5,1.5) circle (.4ex);
\node [draw, circle, fill = white, inner sep = 1.2pt] at (8.5,0.5) { };    
\node [draw, circle, fill = white, inner sep = 1.2pt] at (9.5,1.5) { };    
\draw [color=red, thick, rounded corners]  (10.5,1.5) -- (12.5,1.5);   
\draw [color=red,fill=black,thick] (12.5,1.5) circle (.4ex);
\node [color=red,draw, circle, fill = white, inner sep = 1.2pt] at (10.5,1.5) { }; 
\node [color=red,draw, circle, fill = white, inner sep = 1.2pt] at (8.5,-0.5) { };   
    
\node at (11,0) {$-1$};

\begin{scope}
   \clip (16,0) -| (19,1) -| (20,2) -| (16,0);
    \draw [color=black!25] (16,0) grid (20,2);
\end{scope}

\draw [thick] (16,0) -| (19,1) -| (20,2) -| (16,0);

\draw [thick, rounded corners]  (16.5,1.5) -- (17.5,1.5);
\draw [color=black,fill=black,thick] (17.5,1.5) circle (.4ex);
\node [draw, circle, fill = white, inner sep = 1.2pt] at (16.5,1.5) { };    
\node [draw, circle, fill = white, inner sep = 1.2pt] at (18.5,1.5) { };    
\draw [color=red, thick, rounded corners]  (16.5,0.5) -- (18.5,0.5);   
\draw [color=red,fill=black,thick] (18.5,0.5) circle (.4ex);
\node [color=red,draw, circle, fill = white, inner sep = 1.2pt] at (16.5,0.5) { }; 
\node [color=red,draw, circle, fill = white, inner sep = 1.2pt] at (19.5,1.5) { };   
    
\node at (18,-1) {$+1$};

\begin{scope}
   \clip (22,0) -| (25,1) -| (26,2) -| (22,0);
    \draw [color=black!25] (22,0) grid (26,2);
\end{scope}

\draw [thick]  (22,0) -| (25,1) -| (26,2) -| (22,0);

\draw [thick, rounded corners]  (23.5,1.5) -- (24.5,1.5);
\draw [color=black,fill=black,thick] (24.5,1.5) circle (.4ex);
\node [draw, circle, fill = white, inner sep = 1.2pt] at (23.5,1.5) { };    
\node [draw, circle, fill = white, inner sep = 1.2pt] at (22.5,1.5) { };    
\draw [color=red, thick, rounded corners]  (22.5,0.5) -- (24.5,0.5);   
\draw [color=red,fill=black,thick] (24.5,0.5) circle (.4ex);
\node [color=red,draw, circle, fill = white, inner sep = 1.2pt] at (22.5,0.5) { }; 
\node [color=red,draw, circle, fill = white, inner sep = 1.2pt] at (25.5,1.5) { };   
    
\node at (24,-1) {$+1$};

\begin{scope}
   \clip (28,0) -| (31,1) -| (32,2) -| (28,0);
    \draw [color=black!25] (28,0) grid (32,2);
\end{scope}

\draw [thick]  (28,0) -| (31,1) -| (32,2) -| (28,0);

\draw [thick, rounded corners]  (28.5,0.5) -- (28.5,1.5);
\draw [color=black,fill=black,thick] (28.5,1.5) circle (.4ex);
\node [draw, circle, fill = white, inner sep = 1.2pt] at (28.5,0.5) { };    
\node [draw, circle, fill = white, inner sep = 1.2pt] at (29.5,1.5) { };    
\draw [color=red, thick, rounded corners]  (30.5,0.5) |- (31.5,1.5);   
\draw [color=red,fill=black,thick] (31.5,1.5) circle (.4ex);
\node [color=red,draw, circle, fill = white, inner sep = 1.2pt] at (30.5,0.5) { }; 
\node [color=red,draw, circle, fill = white, inner sep = 1.2pt] at (29.5,0.5) { };   
    
\node at (30,-1) {$+1$};

\begin{scope}
   \clip (36,-1) -| (37,0) -| (38,1) -| (40,2) -| (36,-1);
    \draw [color=black!25] (36,-1) grid (40,2);
\end{scope}

\draw [thick]  (36,-1) -| (37,0) -| (38,1) -| (40,2) -| (36,-1);

\node at (39,-0.5) {$-1$};

\draw [thick, rounded corners]  (36.5,1.5) -- (37.5,1.5);
\draw [color=black,fill=black,thick] (37.5,1.5) circle (.4ex);
\node [draw, circle, fill = white, inner sep = 1.2pt] at (36.5,1.5) { };    
\node [draw, circle, fill = white, inner sep = 1.2pt] at (38.5,1.5) { };  
\draw [color=red, thick, rounded corners]  (36.5,-0.5) |- (37.5,0.5);   
\draw [color=red,fill=black,thick] (37.5,0.5) circle (.4ex);
\node [color=red,draw, circle, fill = white, inner sep = 1.2pt] at (36.5,-0.5) { }; 
\node [color=red,draw, circle, fill = white, inner sep = 1.2pt] at (39.5,1.5) { };

\begin{scope}
   \clip (42,-1) -| (43,0) -| (44,1) -| (46,2) -| (42,-1);
    \draw [color=black!25] (42,-1) grid (46,2);
\end{scope}

\draw [thick] (42,-1) -| (43,0) -| (44,1) -| (46,2) -| (42,-1);

\draw [thick, rounded corners]  (43.5,1.5) -- (44.5,1.5);
\draw [color=black,fill=black,thick] (44.5,1.5) circle (.4ex);
\node [draw, circle, fill = white, inner sep = 1.2pt] at (43.5,1.5) { };    
\node [draw, circle, fill = white, inner sep = 1.2pt] at (42.5,1.5) { };  
\draw [color=red, thick, rounded corners]  (42.5,-0.5) |- (43.5,0.5);   
\draw [color=red,fill=black,thick] (43.5,0.5) circle (.4ex);
\node [color=red,draw, circle, fill = white, inner sep = 1.2pt] at (42.5,-0.5) { }; 
\node [color=red,draw, circle, fill = white, inner sep = 1.2pt] at (45.5,1.5) { };

\node at (45,-0.5) {$-1$};

\end{tikzpicture}
\end{center}

\begin{center}
\begin{tikzpicture}[scale = 0.3]

\begin{scope}
   \clip (0,0) -| (1,3) -| (4,4) -| (0,0);
    \draw [color=black!25] (0,0) grid (4,4);
\end{scope}

\draw [thick] (0,0) -| (1,3) -| (4,4) -| (0,0);

\draw [thick, rounded corners]  (0.5,3.5) -- (1.5,3.5);
\draw [color=black,fill=black,thick] (1.5,3.5) circle (.4ex);
\node [draw, circle, fill = white, inner sep = 1.2pt] at (0.5,3.5) { };    
\node [draw, circle, fill = white, inner sep = 1.2pt] at (2.5,3.5) { };    
\draw [color=red, thick, rounded corners]  (0.5,0.5) -- (0.5,2.5);   
\draw [color=red,fill=black,thick] (0.5,2.5) circle (.4ex);
\node [color=red,draw, circle, fill = white, inner sep = 1.2pt] at (0.5,0.5) { }; 
\node [color=red,draw, circle, fill = white, inner sep = 1.2pt] at (3.5,3.5) { };   
    
\node at (2.5,1.5) {$+1$};

\begin{scope}
   \clip (6,0) -| (7,3) -| (10,4) -| (6,0);
    \draw [color=black!25] (6,0) grid (10,4);
\end{scope}

\draw [thick] (6,0) -| (7,3) -| (10,4) -| (6,0);

\draw [thick, rounded corners]  (7.5,3.5) -- (8.5,3.5);
\draw [color=black,fill=black,thick] (8.5,3.5) circle (.4ex);
\node [draw, circle, fill = white, inner sep = 1.2pt] at (7.5,3.5) { };    
\node [draw, circle, fill = white, inner sep = 1.2pt] at (6.5,3.5) { };    
\draw [color=red, thick, rounded corners]  (6.5,0.5) -- (6.5,2.5);   
\draw [color=red,fill=black,thick] (6.5,2.5) circle (.4ex);
\node [color=red,draw, circle, fill = white, inner sep = 1.2pt] at (6.5,0.5) { }; 
\node [color=red,draw, circle, fill = white, inner sep = 1.2pt] at (9.5,3.5) { };   
    
\node at (8.5,1.5) {$+1$};

\begin{scope}
   \clip (14,1) -| (15,2) -| (17,4) -| (14,1);
    \draw [color=black!25] (14,1) grid (17,4);
\end{scope}

\draw [thick] (14,1) -| (15,2) -| (17,4) -| (14,1);

\draw [thick, rounded corners]  (14.5,2.5) -- (14.5,3.5);
\draw [color=black,fill=black,thick] (14.5,3.5) circle (.4ex);
\node [draw, circle, fill = white, inner sep = 1.2pt] at (14.5,2.5) { };  
\node [draw, circle, fill = white, inner sep = 1.2pt] at (15.5,3.5) { };  
\draw [color=red, thick, rounded corners]  (15.5,2.5) -| (16.5,3.5);   
\draw [color=red,fill=black,thick] (16.5,3.5) circle (.4ex);
\node [color=red,draw, circle, fill = white, inner sep = 1.2pt] at (15.5,2.5) { }; 
\node [color=red,draw, circle, fill = white, inner sep = 1.2pt] at (14.5,1.5) { }; 
\node at (16,0.5) {$+1$};

\begin{scope}
   \clip (22,1) -| (24,3) -| (25,4) -| (22,1);
    \draw [color=black!25] (22,1) grid (25,4);
\end{scope}

\draw [thick] (22,1) -| (24,3) -| (25,4) -| (22,1);

\draw [thick, rounded corners]  (22.5,3.5) -- (23.5,3.5);
\draw [color=black,fill=black,thick] (23.5,3.5) circle (.4ex);
\node [draw, circle, fill = white, inner sep = 1.2pt] at (22.5,3.5) { };  
\node [draw, circle, fill = white, inner sep = 1.2pt] at (24.5,3.5) { };  
\node [color=red,draw, circle, fill = white, inner sep = 1.2pt] at (22.5,2.5) { }; 
\draw [color=red, thick, rounded corners]  (22.5,1.5) -| (23.5,2.5);  
\draw [color=red,fill=black,thick] (23.5,2.5) circle (.4ex);
\node [color=red,draw, circle, fill = white, inner sep = 1.2pt] at (22.5,1.5) { }; 
\node at (24.5,0.5) {$-1$};

\begin{scope}
   \clip (27,1) -| (29,3) -| (30,4) -| (27,1);
    \draw [color=black!25] (27,1) grid (30,4);
\end{scope}

\draw [thick] (27,1) -| (29,3) -| (30,4) -| (27,1);

\draw [thick, rounded corners]  (28.5,3.5) -- (29.5,3.5);
\draw [color=black,fill=black,thick] (29.5,3.5) circle (.4ex);
\node [draw, circle, fill = white, inner sep = 1.2pt] at (28.5,3.5) { };  
\node [draw, circle, fill = white, inner sep = 1.2pt] at (27.5,3.5) { };  
\node [color=red,draw, circle, fill = white, inner sep = 1.2pt] at (27.5,2.5) { }; 
\draw [color=red, thick, rounded corners]  (27.5,1.5) -| (28.5,2.5);  
\draw [color=red,fill=black,thick] (28.5,2.5) circle (.4ex);
\node [color=red,draw, circle, fill = white, inner sep = 1.2pt] at (27.5,1.5) { }; 
\node at (29.5,0.5) {$-1$};

\begin{scope}
   \clip (32,1) -| (34,3) -| (35,4) -| (32,1);
    \draw [color=black!25] (32,1) grid (35,4);
\end{scope}

\draw [thick] (32,1) -| (34,3) -| (35,4) -| (32,1);

\draw [thick, rounded corners]  (32.5,2.5) -- (32.5,3.5);
\draw [color=black,fill=black,thick] (32.5,3.5) circle (.4ex);
\node [draw, circle, fill = white, inner sep = 1.2pt] at (32.5,2.5) { };  
\node [draw, circle, fill = white, inner sep = 1.2pt] at (33.5,3.5) { };  
\node [color=red,draw, circle, fill = white, inner sep = 1.2pt] at (34.5,3.5) { }; 
\draw [color=red, thick, rounded corners]  (32.5,1.5) -| (33.5,2.5);  
\draw [color=red,fill=black,thick] (33.5,2.5) circle (.4ex);
\node [color=red,draw, circle, fill = white, inner sep = 1.2pt] at (32.5,1.5) { }; 
\node at (34.5,0.5) {$+1$};

\begin{scope}
   \clip (38,-1) -| (39,3) -| (41,4) -| (38,-1);
    \draw [color=black!25] (38,-1) grid (41,4);
\end{scope} 

\draw [thick] (38,-1) -| (39,3) -| (41,4) -| (38,-1);

\draw [thick, rounded corners]  (38.5,3.5) -- (39.5,3.5);
\draw [color=black,fill=black,thick] (39.5,3.5) circle (.4ex);
\node [draw, circle, fill = white, inner sep = 1.2pt] at (38.5,3.5) { };  
\node [draw, circle, fill = white, inner sep = 1.2pt] at (40.5,3.5) { };  
\node [color=red,draw, circle, fill = white, inner sep = 1.2pt] at (38.5,2.5) { }; 
\draw [color=red, thick, rounded corners]  (38.5,-0.5) -| (38.5,1.5);  
\draw [color=red,fill=black,thick] (38.5,1.5) circle (.4ex);
\node [color=red,draw, circle, fill = white, inner sep = 1.2pt] at (38.5,-0.5) { }; 
\node at (40.5,0.5) {$+1$};

\begin{scope}
   \clip (43,-1) -| (44,3) -| (46,4) -| (43,-1);
    \draw [color=black!25] (43,-1) grid (46,4);
\end{scope} 

\draw [thick] (43,-1) -| (44,3) -| (46,4) -| (43,-1);

\draw [thick, rounded corners]  (44.5,3.5) -- (45.5,3.5);
\draw [color=black,fill=black,thick] (45.5,3.5) circle (.4ex);
\node [draw, circle, fill = white, inner sep = 1.2pt] at (44.5,3.5) { };  
\node [draw, circle, fill = white, inner sep = 1.2pt] at (43.5,3.5) { };  
\node [color=red,draw, circle, fill = white, inner sep = 1.2pt] at (43.5,2.5) { }; 
\draw [color=red, thick, rounded corners]  (43.5,-0.5) -| (43.5,1.5);  
\draw [color=red,fill=black,thick] (43.5,1.5) circle (.4ex);
\node [color=red,draw, circle, fill = white, inner sep = 1.2pt] at (43.5,-0.5) { }; 
\node at (45.5,0.5) {$+1$};

\begin{scope}
   \clip (48,-1) -| (49,3) -| (51,4) -| (48,-1);
    \draw [color=black!25] (48,-1) grid (51,4);
\end{scope} 

\draw [thick] (48,-1) -| (49,3) -| (51,4) -| (48,-1);

\draw [thick, rounded corners]  (48.5,2.5) -- (48.5,3.5);
\draw [color=black,fill=black,thick] (48.5,3.5) circle (.4ex);
\node [draw, circle, fill = white, inner sep = 1.2pt] at (48.5,2.5) { };  
\node [draw, circle, fill = white, inner sep = 1.2pt] at (49.5,3.5) { };  
\node [color=red,draw, circle, fill = white, inner sep = 1.2pt] at (50.5,3.5) { }; 
\draw [color=red, thick, rounded corners]  (48.5,-0.5) -| (48.5,1.5);  
\draw [color=red,fill=black,thick] (48.5,1.5) circle (.4ex);
\node [color=red,draw, circle, fill = white, inner sep = 1.2pt] at (48.5,-0.5) { }; 
\node at (50.5,0.5) {$-1$};

\end{tikzpicture}
\end{center}

\begin{center}
\begin{tikzpicture}[scale = 0.3]

\begin{scope}
   \clip (0,0) -| (1,1) -| (2,4) -| (0,0);
    \draw [color=black!25] (0,0) grid (2,4);
\end{scope}

\draw [thick] (0,0) -| (1,1) -| (2,4) -| (0,0);

\draw [thick, rounded corners]  (0.5,2.5) -- (0.5,3.5);
\draw [color=black,fill=black,thick] (0.5,3.5) circle (.4ex);
\node [draw, circle, fill = white, inner sep = 1.2pt] at (0.5,2.5) { };    
\node [draw, circle, fill = white, inner sep = 1.2pt] at (1.5,3.5) { };    
\node [color=red,draw, circle, fill = white, inner sep = 1.2pt] at (1.5,2.5) { };  
\draw [color=red, thick, rounded corners]  (0.5,0.5) |- (1.5,1.5);   
\draw [color=red,fill=black,thick] (1.5,1.5) circle (.4ex);
\node [color=red,draw, circle, fill = white, inner sep = 1.2pt] at (0.5,0.5) { }; 
\node at (2.5,0) {$+1$};

\begin{scope}
   \clip (10,-1) -| (11,2) -| (12,4) -| (10,-1);
    \draw [color=black!25] (10,-1) grid (12,4);
\end{scope}

\draw [thick]  (10,-1) -| (11,2) -| (12,4) -| (10,-1);
\draw [thick, rounded corners]  (10.5,2.5) -- (10.5,3.5);
\draw [color=black,fill=black,thick] (10.5,3.5) circle (.4ex);
\node [draw, circle, fill = white, inner sep = 1.2pt] at (10.5,2.5) { };    
\node [draw, circle, fill = white, inner sep = 1.2pt] at (11.5,3.5) { }; 
\node [color=red,draw, circle, fill = white, inner sep = 1.2pt] at (11.5,2.5) { };   
\draw [color=red, thick, rounded corners]  (10.5,-0.5) -- (10.5,1.5);   
\draw [color=red,fill=black,thick] (10.5,1.5) circle (.4ex);
\node [color=red,draw, circle, fill = white, inner sep = 1.2pt] at (10.5,-0.5) { };  
\node at (12.5,0) {$-1$};

\begin{scope}
   \clip (20,-2) -| (21,3) -| (22,4) -| (20,-2);
    \draw [color=black!25] (20,-2) grid (22,4);
\end{scope}

\draw [thick]  (20,-2) -| (21,3) -| (22,4) -| (20,-2);
\draw [thick, rounded corners]  (20.5,2.5) -- (20.5,3.5);
\draw [color=black,fill=black,thick] (20.5,3.5) circle (.4ex);
\node [draw, circle, fill = white, inner sep = 1.2pt] at (20.5,2.5) { };    
\node [draw, circle, fill = white, inner sep = 1.2pt] at (21.5,3.5) { }; 
\node [color=red,draw, circle, fill = white, inner sep = 1.2pt] at (20.5,1.5) { };  
\draw [color=red, thick, rounded corners]  (20.5,-1.5) -- (20.5,0.5);  
\draw [color=red,fill=black,thick] (20.5,0.5) circle (.4ex);
\node [color=red,draw, circle, fill = white, inner sep = 1.2pt] at (20.5,-1.5) { };    
\node at (22.5,0) {$-1$};

\end{tikzpicture}
\end{center}

\caption{The Schur expansion in Example~\ref{atomic-expansion-example}.}
\label{fig:atomic-expansion}
\end{figure}
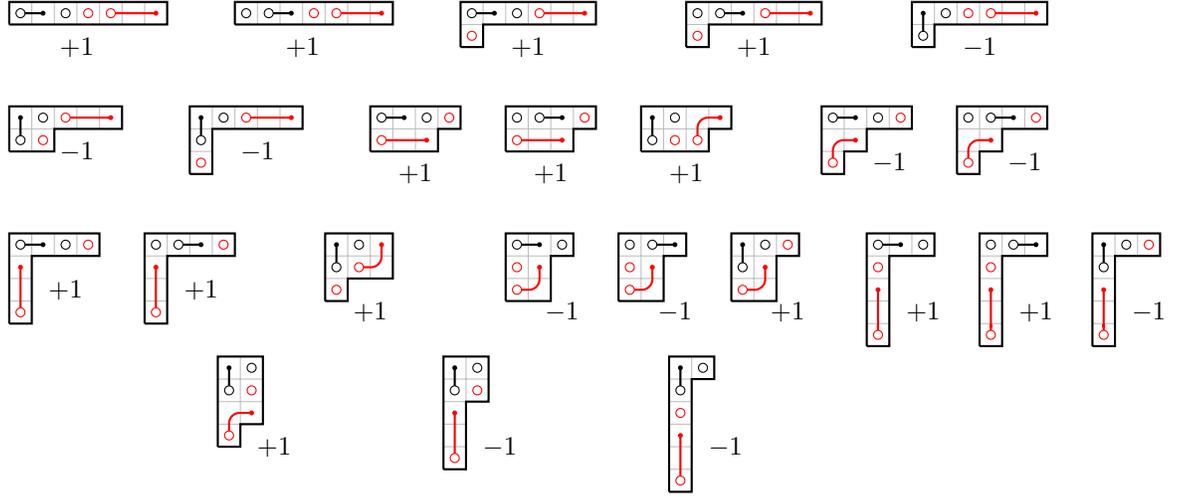

Before proceeding, we give an example of Corollary~\ref{atomic-schur-expansion}. Its combinatorics are a hybrid of those of the Path and Classical 
Murnaghan-Nakayama Rules.

\begin{example}
\label{atomic-expansion-example}
Let $(I,J) \in \symm_{7,5}$ with $I = [1,4,5,6,7]$ and $J = [2,5,6,4,7]$.
The path partition of $(I,J)$ is $\mu = (2,1)$ while the cycle partition is $\nu = (3,1)$.  
Figure~\ref{fig:atomic-expansion} illustrates how Corollary~\ref{atomic-schur-expansion} calculates the Schur expansion
\begin{equation*}
A_{7,I,J} = 2 s_7 + s_{61} - s_{52} - s_{511} + 3 s_{43} - 2 s_{421} + 2 s_{41^3} + s_{331} - s_{322} + s_{31^4} + s_{2^3 1} - s_{221^3} - s_{21^5}
\end{equation*} 
of $A_{7,I,J}$.  One first adds a 1-ribbon and a 2-ribbon (in either order) in a monotonic fashion; this is shown in black.
Once this is done, one adds a 1-ribbon and a 3-ribbon (in that order) in a fashion which may not be monotonic; this is shown in red.
The signs of both the black and red ribbons contribute to the expansion. In this case $m(\mu)! = 1$, so it does not make an appearance.
\end{example}

The coefficient of $s_{\lambda}$ in Equation~\eqref{cor:atomic-schur-eqn} is the trace of a natural linear operator.

\begin{corollary}
\label{trace-interpretation}
Let $(I,J) \in \symm_{n,k}$ be a partial permutation whose graph $G_n(I,J)$ consists of paths of sizes
$\mu = (\mu_1 \geq \mu_2 \geq \cdots )$ and cycles of sizes
$\nu = (\nu_1 \geq \nu_2 \geq \cdots )$ where $\mu \vdash a$ and $\nu \vdash b$. If $n = a  + b$ and $\lambda \vdash n$, we have
\begin{equation}
\chi^{\lambda}([I,J]) = m(\mu)! \cdot \sum_{\rho \, \vdash \, a}
 \vec{\chi}_{ \, \mu}^{ \, \rho} \cdot \chi_{\nu}^{\lambda/\rho}
\end{equation}
where $\chi^{\lambda}: \CC[\symm_n] \rightarrow \CC$ is the linearly extended irreducible character of $\symm_n$ and
$[I,J] \in \CC[\symm_n]$ is the group algebra element 
$[I,J] = \sum_{w(I) = J} w$.
\end{corollary}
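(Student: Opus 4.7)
The plan is to simply compare two Schur expansions of the same symmetric function $A_{n,I,J}$. On the one hand, Proposition~\ref{atomic-character-interpretation} gives the representation-theoretic expansion
\[
A_{n,I,J} = \sum_{\lambda \, \vdash \, n} \chi^{\lambda}([I,J]) \cdot s_{\lambda},
\]
in which the coefficient of $s_\lambda$ is precisely the character evaluation we wish to compute. On the other hand, the combinatorial Schur expansion just established in Corollary~\ref{atomic-schur-expansion} reads
\[
A_{n,I,J} = m(\mu)! \cdot \sum_{\lambda \, \vdash \, n} \left( \sum_{\rho \, \vdash \, a} \vec{\chi}^{\, \rho}_{\, \mu} \cdot \chi^{\lambda/\rho}_{\nu} \right) \cdot s_{\lambda}.
\]

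Since the Schur functions $\{s_\lambda : \lambda \vdash n\}$ form a $\CC$-basis of $\Lambda_n$, I would equate the coefficients of $s_\lambda$ in these two expansions to obtain the stated identity. No further work is required, so there is no substantial obstacle: the content of the corollary is entirely packaged inside Proposition~\ref{atomic-character-interpretation} and Corollary~\ref{atomic-schur-expansion}, and the present statement merely repackages the combinatorial Schur coefficient of $A_{n,I,J}$ as a character value on the group algebra element $[I,J]$.
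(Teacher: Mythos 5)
Your proposal is correct and matches the paper's own argument: the paper's proof likewise compares the character-theoretic Schur expansion of $A_{n,I,J}$ (obtained from its $p$-expansion together with the classical Murnaghan--Nakayama Rule, which is exactly the content of Proposition~\ref{atomic-character-interpretation}) against the combinatorial expansion of Corollary~\ref{atomic-schur-expansion}, and extracts the coefficient of $s_\lambda$.
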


Informally, the expression in Corollary~\ref{trace-interpretation} gives the value $\chi^{\lambda}[I,J]$ of the irreducible character
$\chi^{\lambda}$ of $\symm_n$ on the partial permutation $(I,J) \in \symm_{n,k}$.

\begin{proof}
Apply the definition of $A_{n,I,J}$ as a $p$-expansion, the classical Murnaghan-Nakayama Rule, and 
Corollary~\ref{atomic-schur-expansion}.
\end{proof}

When $(I,J) \in \symm_{n,0}$ is the partial permutation with $I = J = \varnothing$, the group algebra element $[I,J] = [\symm_n]_+$
is the sum over all permutations $w \in \symm_n$.  For any $\symm_n$-module $V$, the operator 
$[\symm_n]_+$ projects onto the trivial submodule $V^{\symm_n} \subseteq V$ and scales by $n!$.
On the other hand, Corollary~\ref{atomic-schur-expansion} gives $A_{n,I,J} = n! \cdot s_{n}$ since the only monotonic ribbon tiling consisting
of $n$ ribbons of size 1 is 
\begin{center}
\begin{tikzpicture}[scale = 0.3]
\begin{scope}
   \clip (0,0) -| (10,1) -|  (0,0);
    \draw [color=black!25] (0,0) grid (10,1);
\end{scope}
\draw [thick]  (0,0) -| (10,1) -| (0,0);
\node [draw, circle, fill = white, inner sep = 1.2pt] at (0.5,0.5) { };   
\node [draw, circle, fill = white, inner sep = 1.2pt] at (1.5,0.5) { };   
\node [draw, circle, fill = white, inner sep = 1.2pt] at (2.5,0.5) { };   
\node [draw, circle, fill = white, inner sep = 1.2pt] at (3.5,0.5) { };   
\node [draw, circle, fill = white, inner sep = 1.2pt] at (4.5,0.5) { };   
\node [draw, circle, fill = white, inner sep = 1.2pt] at (5.5,0.5) { };   
\node [draw, circle, fill = white, inner sep = 1.2pt] at (6.5,0.5) { };   
\node [draw, circle, fill = white, inner sep = 1.2pt] at (7.5,0.5) { };   
\node [draw, circle, fill = white, inner sep = 1.2pt] at (9.5,0.5) { };   
\node [draw, circle, fill = white, inner sep = 1.2pt] at (8.5,0.5) { };   
\end{tikzpicture}
\end{center}
which has shape $(n)$.
Since 
\begin{equation}
[I,J] \cdot V^{\lambda} = [\symm_n]_+ \cdot V^{\lambda} = 
(V^{\lambda})^{\symm_n} =
\begin{cases}
V^{\lambda} & \lambda = (n) \\
0 & \text{otherwise}
\end{cases}
\end{equation}
we have
\begin{equation}
\chi^{\lambda} ([I,J]) = \sum_{w \in \symm_n} \chi^{\lambda}(w) = \begin{cases}
n! & \lambda = (n) \\
0 & \text{otherwise}
\end{cases}
\end{equation}
which is in agreement with Corollary~\ref{trace-interpretation}.
At the other extreme, if $(I,J) \in \symm_{n,n} = \symm_n$ is a genuine (rather than merely partial) permutation, 
Corollary~\ref{trace-interpretation} reduces to the classical fact that the character table of $\symm_n$ is the transition matrix from the power sum to 
Schur bases of $\Lambda_n$.

The character evaluation in Corollary~\ref{trace-interpretation} can be na\"ively computed using the classical
Murhaghan-Nakayama rule via
\begin{equation}
\label{stupid-implementation}
\chi^{\lambda} \left( [I,J] \right) = \sum_{\substack{w \in \symm_n \\ w(I) = J}} \chi^{\lambda}(w).
\end{equation}
For $(I,J) \in \symm_{n,k}$ this sum contains $(n-k)!$ terms.
Grouping terms of \eqref{stupid-implementation} according to the cycle type of $w$ improves matters,
 but Corollary~\ref{trace-interpretation} still eliminates much (but not all) of the cancellation in \eqref{stupid-implementation}
 and is substantially more efficient.
 
The Littlewood-Richardson coefficients (and in particular their interpretation in terms of skew Schur functions)
yield a version
\begin{equation}
\label{less-stupid-implementation}
\chi^{\lambda} \left( [I,J]  \right) = m(\mu)! \cdot \sum_{\substack{\rho \, \vdash \, a \\ \tau \, \vdash \, b}} c_{\rho, \tau}^{\lambda} \cdot
 \vec{\chi}_{ \, \mu}^{ \, \rho} \cdot \chi_{\nu}^{\tau}
\end{equation}
of Corollary~\ref{trace-interpretation} which only involves straight shapes. However, the skew shape formula
in Corollary~\ref{trace-interpretation} is easier to apply.

%As mentioned in the introduction,
%Corollary~\ref{trace-interpretation} gives an interpretation of irreducible characters evaluated on certain coset sums.
%For any partial permutation $(I,J) \in \symm_{n,n-k}$, there exist permutations $w_1, w_2 \in \symm_n$ such that we have a 
%factorization
%\begin{equation}
%[I,J] = w_1 \cdot [\symm_k]_+ \cdot w_2
%\end{equation}
%in the group algebra $\CC[\symm_n]$.  Here $\symm_k \subseteq \symm_n$ acts on the first $k$ letters.  By the cyclic invariance
%of the trace function, we have
%\begin{equation}
%\chi^{\lambda} [I,J] = \chi^{\lambda} ( w_1 \cdot [\symm_k]_+ \cdot w_2 ) = \chi^{\lambda} (w_2 w_1 \cdot [\symm_k]_+) = 
%\chi^{\lambda}(w \cdot [\symm_k]_+)
%\end{equation}
%where $w = w_2 w_1$.
%Thus, Corollary~\ref{trace-interpretation} gives a formula for calculating $\chi^{\lambda}$ on sums over cosets in $\symm_n/\symm_k$.

\begin{remark}
\label{support-remark}
Theorem~\ref{path-murnaghan-nakayama}
implies Theorem~\ref{atomic-support} on the Schur support of the $A_{n,I,J}$.
Indeed, if $\mu \vdash s$ then any monotonic ribbon tiling whose ribbons are of size $\mu$ has at lest $\ell(\mu)$ boxes in its 
 first row.  Thus, if the coefficient of 
 $s_{\lambda}$ in the product $\vec{p}_{\mu} \cdot p_{\nu}$ is nonzero, the partition $\lambda$ must also have at least
 $\ell(\mu)$ boxes in its first row.  Now observe that if $(I,J) \in \symm_{n,k}$ is a partial permutation, the graph $G_n(I,J)$ will have at least
 $n-k$ paths so that in the factorization $A_{n,I,J} = \vec{p}_{\mu} \cdot p_{\nu}$ we have $\ell(\mu) \geq n-k$.
\end{remark}

\section{The $n \rightarrow \infty$ limit}
We will use the atomic functions $A_{n,I,J}$ to study
the asymptotic behavior of functions $f: \symm_n \rightarrow \CC$
 as $n \rightarrow \infty$.
These results are based on asymptotics of the atomic functions.
We examine the case of path power sums first.

For a partition $\lambda = (\lambda_1, \lambda_2, \dots )$ recall the padded partition
$\lambda[n] = ( n - |\lambda|, \lambda_1, \lambda_2, \dots )$ which is defined whenever 
$n \geq |\lambda| + \lambda_1$.  We also let $\lambda(n) \vdash n$ be the partition 
$\lambda(n) = (\lambda_1, \lambda_2, \dots , 1^{n-|\lambda|})$ given by appending $n-|\lambda|$ copies of 1 to the end of $\lambda$.
The partition $\lambda(n)$ is defined whenever $n \geq |\lambda|$.

\begin{corollary}
\label{degree-corollary}
Let $\mu = (\mu_1, \dots, \mu_r)$ be a partition whose $r$ parts are all 
of size $> 1$.   We have the Schur expansion
\begin{equation}
\frac{1}{(n-|\mu|)!} \times \vec{p}_{\mu(n)} = \sum_{|\lambda| \leq |\mu| - r} f_{\lambda}(n) \cdot s_{\lambda[n]}
\end{equation}
where each $f_{\lambda}(n)$ is a polynomial function of $n$ on the domain $\{n \geq 2  |\mu| - 2r \}$. 
The degree of $f_{\lambda}(n)$ is $\leq r$, with this maximum
degree uniquely achieved when $\lambda = \varnothing$.
\end{corollary}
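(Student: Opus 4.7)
The plan is to apply the Path Murnaghan--Nakayama formula (Theorem~\ref{path-murnaghan-nakayama}) to $\vec{p}_{\mu(n)}$ and analyze the resulting monotonic ribbon tilings as $n$ grows. Since no part of $\mu$ equals $1$, the multiplicity factor satisfies $m(\mu(n))! = (n-|\mu|)! \cdot m(\mu)!$, so
\[
\frac{1}{(n-|\mu|)!} \, \vec{p}_{\mu(n)} \;=\; m(\mu)! \sum_T \sign(T) \cdot s_{\shape(T)},
\]
the sum ranging over monotonic tilings $T$ with ribbon multiset $\mu(n) = (\mu_1, \ldots, \mu_r, 1^{n-|\mu|})$. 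Interpreting $\vec{p}_{\mu(n)}$ as the atomic function of a partial permutation whose graph is a disjoint union of paths of sizes $\mu(n)$ (of partial permutation size $|\mu|-r$ by \eqref{eq:cycle-path-size}), Theorem~\ref{atomic-support} forces every $\shape(T)$ appearing to be of the form $\lambda[n]$ with $|\lambda| \leq |\mu|-r$; the hypothesis $n \geq 2|\mu|-2r$ ensures that all such $\lambda[n]$ are genuine partitions.

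I would then classify monotonic tilings of $\lambda[n]$ by the sub-tiling $T_2$ formed by those ribbons whose tails lie in rows $\geq 2$. Monotonicity forces these ribbons to come first in the left-to-right tail ordering, and the intermediate shape they produce is of the form $(\rho_1, \lambda_1, \lambda_2, \ldots)$ with $\rho_1 \geq \lambda_1$; thus $T_2$ covers all $|\lambda|$ cells outside row $1$ together with a prefix of row $1$. The tails of $T_2$ lie in rows $\{2, \ldots, \ell(\lambda)+1\}$ and columns $\{1, \ldots, \lambda_1\}$, and each ribbon has size at most $\max_i \mu_i$; so the set of possible $T_2$ is finite and, crucially, independent of $n$ for $n$ large. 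Writing $q = q(T_2)$ for the number of big ribbons in $T_2$ and $a = a(T_2)$ for the number of $1$-ribbons, the remaining cells form a single interval at the east end of row $1$, to be tiled by the $r-q$ unused big ribbons (all horizontal, hence of sign $+1$) and the remaining $n-|\mu|-a$ single boxes.

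A standard stars-and-bars count then shows the number of such type-(1) completions is $\frac{(r-q+n-|\mu|-a)!}{(n-|\mu|-a)! \prod_j m'_j!}$, where $m'_j$ records the multiplicity of each size among the type-(1) big ribbons---a polynomial in $n$ of degree exactly $r-q$. Since type-(1) ribbons contribute sign $+1$, the full tiling satisfies $\sign(T) = \sign(T_2)$, so summing over the finite collection of $T_2$ realizes $f_\lambda(n)$ as a polynomial in $n$ of degree $\max_{T_2}(r - q(T_2))$.

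Finally I would verify the sharp degree claim. For $\lambda = \varnothing$ there are no cells outside row $1$, so the only $T_2$ is empty ($q=0$), and a direct computation gives $f_\varnothing(n) = \prod_{i=1}^{r}(n - |\mu| + i)$, of degree exactly $r$. For $\lambda \neq \varnothing$, the cell $(2,1)$ lies in $\lambda[n]$ and must be the tail of whichever ribbon contains it; if that ribbon were a $1$-ribbon, then $(1,1)$ would have to be covered by a different ribbon with tail $(1,1)$, placing two tails in column $1$ and violating the monotonicity condition. Hence the ribbon at $(2,1)$ is big, forcing $q(T_2) \geq 1$ for every $T_2$ and so $\deg f_\lambda \leq r-1$. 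The main obstacle is the structural analysis of the second paragraph---verifying that the type-(2)/type-(1) decomposition captures every monotonic tiling and that the type-(2) configurations are genuinely bounded independently of $n$---since once this is in place the polynomial counts and degree bound follow from elementary combinatorics.
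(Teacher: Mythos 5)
Your proof is correct and takes essentially the same approach as the paper's: your split into ribbons whose tails lie in rows $\geq 2$ versus ribbons lying in row $1$ is exactly the paper's decomposition $T = T_0 \sqcup T_1$, and your stars-and-bars count and degree formula $r - q(T_2)$ coincide with the paper's \eqref{multinomial-coefficient} and \eqref{difference-degree}. One minor imprecision in your last paragraph: the cell $(2,1)$ need not be the tail of its ribbon (the tail could sit lower in column $1$) and the ribbon covering $(1,1)$ need not have tail $(1,1)$, but your conclusion that the ribbon through $(2,1)$ is big still follows, since column $1$ admits at most one tail and every initial union of ribbons must be a Young diagram.
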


\begin{proof}
Consider a monotonic
ribbon tiling $T$ of the padded partition $\lambda[n]$ with ribbon sizes $\mu(n)$.
We decompose $T$ into two pieces
$$
T = T_0 \sqcup T_1
$$
by letting $T_1$ be the union of all ribbons which are entirely contained in the first row of $T$ and letting $T_0$ be the union of the remaining
ribbons in $T$.  
Since $T$ is monotonic, the ribbons in $T_1$ will be at the east end of the first row of $T$ and the remaining ribbons in $T_1$ 
will be a monotonic ribbon tiling of a shape contained in $\lambda[n]$.

Schematically, the decomposition $T = T_0 \sqcup T_1$ 
is shown in Figure~\ref{fig:tiling-decomposition}, where the length of the first row of $\lambda[n]$,
and the number of size 1 ribbons coming from $\mu(n)$, both increase with $n$.
Ribbons only partially contained in the first row are not included in $T_1$.
Ribbons contained entirely within $T_0$ are not shown.

\begin{figure}
\begin{center}
\begin{tikzpicture}[scale = 0.3]

\begin{scope}

\clip (0,0) -| (2,3) -| (5,4) -| (7,6) -| (40,7) -| (0,0) ;
  \draw [color=black!25] (0,0) grid (40,7);

\end{scope}

\draw [thick]  (0,0) -| (2,3) -| (5,4) -| (7,6) -| (40,7) -| (0,0) ;

\draw [dashed] (10,3.5) -- (10,8);

  \draw [thick, rounded corners]  (9.5,6.5) -| (6.5,4.5) -- (5.5,4.5) ;
  \draw [color=black,fill=black,thick] (9.5,6.5) circle (.4ex);
  \node [draw, circle, fill = white, inner sep = 1.2pt] at (5.5,4.5) { }; 
  
  \node at (7,2) {$T_0$}; 
  
   \draw [thick, rounded corners]  (10.5,6.5) -- (12.5,6.5) ;
  \draw [color=black,fill=black,thick] (12.5,6.5) circle (.4ex);
  \node [draw, circle, fill = white, inner sep = 1.2pt] at (10.5,6.5) { }; 
  \node [draw, circle, fill = white, inner sep = 1.2pt] at (13.5,6.5) { }; 
  \node [draw, circle, fill = white, inner sep = 1.2pt] at (14.5,6.5) { }; 
  \node [draw, circle, fill = white, inner sep = 1.2pt] at (15.5,6.5) { }; 
  \node [draw, circle, fill = white, inner sep = 1.2pt] at (16.5,6.5) { }; 
     \draw [thick, rounded corners]  (17.5,6.5) -- (18.5,6.5) ;
  \draw [color=black,fill=black,thick] (18.5,6.5) circle (.4ex);
  \node [draw, circle, fill = white, inner sep = 1.2pt] at (17.5,6.5) { }; 
  \node [draw, circle, fill = white, inner sep = 1.2pt] at (19.5,6.5) { }; 
  \node [draw, circle, fill = white, inner sep = 1.2pt] at (20.5,6.5) { }; 
   \node [draw, circle, fill = white, inner sep = 1.2pt] at (21.5,6.5) { }; 
  \node [draw, circle, fill = white, inner sep = 1.2pt] at (22.5,6.5) { }; 
   \node [draw, circle, fill = white, inner sep = 1.2pt] at (23.5,6.5) { }; 
     \node [draw, circle, fill = white, inner sep = 1.2pt] at (24.5,6.5) { }; 
  \draw [thick, rounded corners]  (25.5,6.5) -- (27.5,6.5) ;
  \draw [color=black,fill=black,thick] (27.5,6.5) circle (.4ex);
  \node [draw, circle, fill = white, inner sep = 1.2pt] at (25.5,6.5) { }; 
  \draw [thick, rounded corners]  (28.5,6.5) -- (29.5,6.5) ;
  \draw [color=black,fill=black,thick] (29.5,6.5) circle (.4ex);
  \node [draw, circle, fill = white, inner sep = 1.2pt] at (28.5,6.5) { }; 
    \node [draw, circle, fill = white, inner sep = 1.2pt] at (30.5,6.5) { }; 
    \node [draw, circle, fill = white, inner sep = 1.2pt] at (31.5,6.5) { }; 
 \node [draw, circle, fill = white, inner sep = 1.2pt] at (32.5,6.5) { }; 
 \node [draw, circle, fill = white, inner sep = 1.2pt] at (33.5,6.5) { }; 
  \node [draw, circle, fill = white, inner sep = 1.2pt] at (34.5,6.5) { }; 
  \node [draw, circle, fill = white, inner sep = 1.2pt] at (35.5,6.5) { }; 
   \node [draw, circle, fill = white, inner sep = 1.2pt] at (36.5,6.5) { }; 
     \draw [thick, rounded corners]  (37.5,6.5) -- (38.5,6.5) ;
  \draw [color=black,fill=black,thick] (38.5,6.5) circle (.4ex);
  \node [draw, circle, fill = white, inner sep = 1.2pt] at (37.5,6.5) { }; 
  \node [draw, circle, fill = white, inner sep = 1.2pt] at (39.5,6.5) { };

\node at (25,4.5) {$T_1$}; 

\end{tikzpicture}
\end{center}
\caption{The decomposition $T = T_0 \sqcup T_1$ of a monotonic tiling used in the proof of
Proposition~\ref{degree-corollary}.}
\label{fig:tiling-decomposition}
\end{figure}
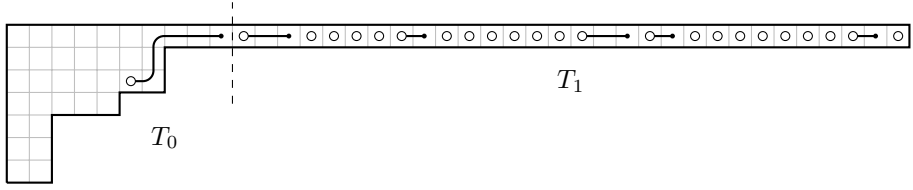

The monotonicity condition implies that only a finite set of tilings $T_0$ can appear among the family of all monotonic
tilings $T = T_0 \sqcup T_1$ of all partition shapes $\{ \lambda[n] \,:\, n \geq 0 \}$. 
We fix a `frozen tiling' $T_0$ and consider the possibilities for $T_1$ as a function of $n$.
Observe (as in Figure~\ref{fig:tiling-decomposition}) that a ribbon of size $> 1$ can extend strictly east of the portion of the 
Young diagram $\lambda[n]$ occupied by $\lambda$.
We assume that $n \geq 2 |\mu| - 2 r$, 
which guarantees that all possible frozen tilings $T_0$ actually appear among monotonic tilings with ribbon sizes $\mu(n)$.

For fixed $T_0$, we form a tiling of $T_1$ such extending $T_0$ such that the overall shape of $T = T_0 \sqcup T_1$ is 
$\lambda[n]$ by selecting an order on the ribbons of sizes $\mu(n)$ not contained in $T_0$ and placing these ribbons in $T_1$ from left to right.
The overall tiling $T$ contains ribbons of sizes $m_i(\mu)$ ribbons of size $i$ for each $i > 1$ together with $n - |\mu|$ singleton ribbons.
Each of these ribbons is in $T_0$ or in $T_1$.  Writing $a_i$ for the number of size $i$ ribbons in $T_0$, 
\begin{multline}
\label{multinomial-coefficient}
\# \{ \text{ribbon tilings $T = T_0 \sqcup T_1$ of $\lambda[n]$ extending $T_0$} \}  \\ = 
\binom{n - |\mu| + r - a_1 - a_2 - \cdots}{ n - |\mu| - a_1, m_2(\mu) - a_2, m_3(\mu) - a_3, \dots }
\end{multline}
where the multinomial coefficient counts ways order ribbons within $T_1$.
Since $\sign(T) = \sign(T_0)$,
Theorem~\ref{path-murnaghan-nakayama} says that $T_0$ will contribute 
\begin{equation}
\label{polynomial-expression}
\sign(T_0) \cdot m_2(\mu)! m_3(\mu)! \cdots \times
\binom{ n - |\mu| + r - a_1 - a_2 - \cdots}{ n - |\mu| - a_1, m_2(\mu) - a_2, m_3(\mu) - a_3, \dots }
\end{equation}
to the coefficient of $s_{\lambda[n]}$ in $\frac{1}{(n - |\mu|)!} \times \vec{p}_{\mu(n)}$.
The expression \eqref{polynomial-expression}
is a polynomial in $n$ of degree equal to
\begin{equation}
\label{difference-degree}
r - a_2 - a_3 - \cdots = 
r - \text{(number of ribbons in $T_0$ of size $> 1$)}.
\end{equation}
Summing over $T_0$,
 the overall coefficient $f_{\lambda}(n)$ of $s_{\lambda[n]}$ in $\frac{1}{(n - |\mu|)!} \times \vec{p}_{\mu(n)}$ is a polynomial in $n$.
The degree \eqref{difference-degree}
is uniquely maximized (and equal to $r$) when $\lambda = \varnothing$ is the empty partition and $T_0 = \varnothing$ is the empty tiling.
\end{proof}

We make two remarks on Corollary~\ref{degree-corollary}. The first is on the domain of polynomiality of the
coefficients $f_{\lambda}(n)$ and the second is on their degree bound.

\begin{remark}
\label{polynomiality-domain-remark}
The coefficients $f_{\lambda}(n)$ appearing in Corollary~\ref{degree-corollary} are often polynomials in $n$ on a larger domain than the set 
 $\{ n \geq 2 |\mu| - 2 r \}$. If $N$ is the maximum integer such that there is a monotonic ribbon tiling $T$ of $\lambda[N]$ with ribbons 
 of sizes $\mu(N)$ whose decomposition $T = T_0 \sqcup T_1$ satisfies $T_1 = \varnothing$, then $f_{\lambda}(n)$ 
 is a polynomial on the domain $\{ n \geq N \}$.
 The integer $N$ depends on both $\lambda$ and $\mu$. Since any monotonic tiling $T$ with $T_1 = \varnothing$ will have at most one singleton
 ribbon in any column, and at least $r = \ell(\mu)$ columns free of singleton ribbons, we have $N \leq 2 |\mu| - 2 r$, but this upper bound on $N$
 is not tight.
 For example, when $\lambda = \varnothing$ (so that $\lambda[n]$ is a single row for all
 $n$) any monotonic ribbon tiling $T = T_0 \sqcup T_1$ of $\lambda[n]$ satisfies
 $T = T_1$ which means $N = 0$ and $f_{\varnothing}(n)$ is a polynomial for all values of $n$.
 
 Determining the integer $N = N(\lambda,\mu)$ that extends the domain of polynomiality of $f_{\lambda}(n)$ seems to be a tricky 
 combinatorial problem on monotonic tilings.  Corollary~\ref{degree-corollary} is stated with the bound $N \leq 2 |\mu| - 2 r$ which is 
 independent of $\lambda$ for simplicity and to concord with a result of Gaetz-Ryba \cite[Thm 1.1 (b)]{GR} which we will generalize.
\end{remark}

\begin{remark}
\label{path-degree-remark}
Finding the degree of $f_{\lambda}(n)$ in Corollary~\ref{degree-corollary} involves minimizing the number of size $> 1$ ribbons 
in the $T_0$ portion of a monotonic tiling of $\lambda[n]$ with ribbons of lengths $\mu(n)$ as $n \rightarrow \infty$.
Equivalently, we aim to maximize the number of size $> 1$ ribbons in the $T_1$ portion.
While the $\mu$-dependence of this combinatorial problem seems to be subtle, we have the bound
\begin{equation}
\label{durfee-bound}
\deg f_{\lambda}(n) \leq r - \text{side length of the largest Durfee square contained in $\lambda$}
\end{equation}
which is independent of $\mu$.
Recall that a {\em Durfee square} of side length $a$ contained in $\lambda$ is an $a \times a$ grid
contained in the northwest corner of its Young diagram.

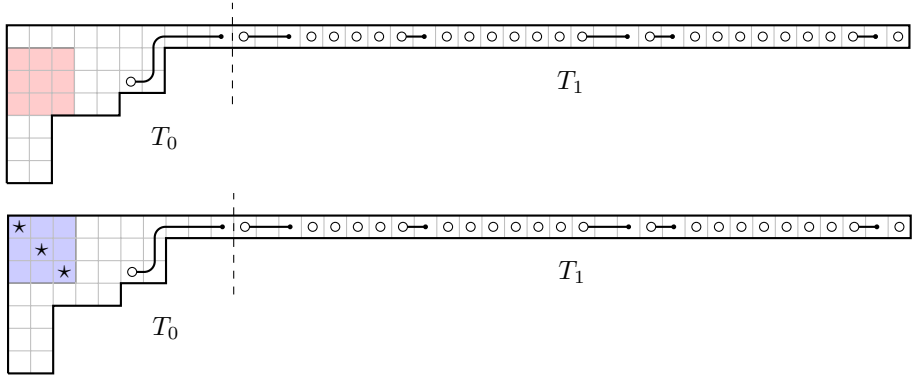
\begin{figure}
\begin{center}
\begin{tikzpicture}[scale = 0.3]

\draw [fill = red!20] (0,3) -- (0,6) -- (3,6) -- (3,3) -- (0,3);

\begin{scope}

\clip (0,0) -| (2,3) -| (5,4) -| (7,6) -| (40,7) -| (0,0) ;
  \draw [color=black!25] (0,0) grid (40,7);

\end{scope}

\draw [thick]  (0,0) -| (2,3) -| (5,4) -| (7,6) -| (40,7) -| (0,0) ;

\draw [dashed] (10,3.5) -- (10,8);

  \draw [thick, rounded corners]  (9.5,6.5) -| (6.5,4.5) -- (5.5,4.5) ;
  \draw [color=black,fill=black,thick] (9.5,6.5) circle (.4ex);
  \node [draw, circle, fill = white, inner sep = 1.2pt] at (5.5,4.5) { }; 
  
  \node at (7,2) {$T_0$}; 
  
   \draw [thick, rounded corners]  (10.5,6.5) -- (12.5,6.5) ;
  \draw [color=black,fill=black,thick] (12.5,6.5) circle (.4ex);
  \node [draw, circle, fill = white, inner sep = 1.2pt] at (10.5,6.5) { }; 
  \node [draw, circle, fill = white, inner sep = 1.2pt] at (13.5,6.5) { }; 
  \node [draw, circle, fill = white, inner sep = 1.2pt] at (14.5,6.5) { }; 
  \node [draw, circle, fill = white, inner sep = 1.2pt] at (15.5,6.5) { }; 
  \node [draw, circle, fill = white, inner sep = 1.2pt] at (16.5,6.5) { }; 
     \draw [thick, rounded corners]  (17.5,6.5) -- (18.5,6.5) ;
  \draw [color=black,fill=black,thick] (18.5,6.5) circle (.4ex);
  \node [draw, circle, fill = white, inner sep = 1.2pt] at (17.5,6.5) { }; 
  \node [draw, circle, fill = white, inner sep = 1.2pt] at (19.5,6.5) { }; 
  \node [draw, circle, fill = white, inner sep = 1.2pt] at (20.5,6.5) { }; 
   \node [draw, circle, fill = white, inner sep = 1.2pt] at (21.5,6.5) { }; 
  \node [draw, circle, fill = white, inner sep = 1.2pt] at (22.5,6.5) { }; 
   \node [draw, circle, fill = white, inner sep = 1.2pt] at (23.5,6.5) { }; 
     \node [draw, circle, fill = white, inner sep = 1.2pt] at (24.5,6.5) { }; 
  \draw [thick, rounded corners]  (25.5,6.5) -- (27.5,6.5) ;
  \draw [color=black,fill=black,thick] (27.5,6.5) circle (.4ex);
  \node [draw, circle, fill = white, inner sep = 1.2pt] at (25.5,6.5) { }; 
  \draw [thick, rounded corners]  (28.5,6.5) -- (29.5,6.5) ;
  \draw [color=black,fill=black,thick] (29.5,6.5) circle (.4ex);
  \node [draw, circle, fill = white, inner sep = 1.2pt] at (28.5,6.5) { }; 
    \node [draw, circle, fill = white, inner sep = 1.2pt] at (30.5,6.5) { }; 
    \node [draw, circle, fill = white, inner sep = 1.2pt] at (31.5,6.5) { }; 
 \node [draw, circle, fill = white, inner sep = 1.2pt] at (32.5,6.5) { }; 
 \node [draw, circle, fill = white, inner sep = 1.2pt] at (33.5,6.5) { }; 
  \node [draw, circle, fill = white, inner sep = 1.2pt] at (34.5,6.5) { }; 
  \node [draw, circle, fill = white, inner sep = 1.2pt] at (35.5,6.5) { }; 
   \node [draw, circle, fill = white, inner sep = 1.2pt] at (36.5,6.5) { }; 
     \draw [thick, rounded corners]  (37.5,6.5) -- (38.5,6.5) ;
  \draw [color=black,fill=black,thick] (38.5,6.5) circle (.4ex);
  \node [draw, circle, fill = white, inner sep = 1.2pt] at (37.5,6.5) { }; 
  \node [draw, circle, fill = white, inner sep = 1.2pt] at (39.5,6.5) { };

\node at (25,4.5) {$T_1$}; 

\end{tikzpicture}
\end{center}

\begin{center}
\begin{tikzpicture}[scale = 0.3]

\draw [fill = blue!20] (0,4) -- (0,7) -- (3,7) -- (3,4) -- (0,4);

\node at (0.5,6.5) {$\star$};
\node at (1.5,5.5) {$\star$};
\node at (2.5,4.5) {$\star$};

\begin{scope}

\clip (0,0) -| (2,3) -| (5,4) -| (7,6) -| (40,7) -| (0,0) ;
  \draw [color=black!25] (0,0) grid (40,7);

\end{scope}

\draw [thick]  (0,0) -| (2,3) -| (5,4) -| (7,6) -| (40,7) -| (0,0) ;

\draw [dashed] (10,3.5) -- (10,8);

  \draw [thick, rounded corners]  (9.5,6.5) -| (6.5,4.5) -- (5.5,4.5) ;
  \draw [color=black,fill=black,thick] (9.5,6.5) circle (.4ex);
  \node [draw, circle, fill = white, inner sep = 1.2pt] at (5.5,4.5) { }; 
  
  \node at (7,2) {$T_0$}; 
  
   \draw [thick, rounded corners]  (10.5,6.5) -- (12.5,6.5) ;
  \draw [color=black,fill=black,thick] (12.5,6.5) circle (.4ex);
  \node [draw, circle, fill = white, inner sep = 1.2pt] at (10.5,6.5) { }; 
  \node [draw, circle, fill = white, inner sep = 1.2pt] at (13.5,6.5) { }; 
  \node [draw, circle, fill = white, inner sep = 1.2pt] at (14.5,6.5) { }; 
  \node [draw, circle, fill = white, inner sep = 1.2pt] at (15.5,6.5) { }; 
  \node [draw, circle, fill = white, inner sep = 1.2pt] at (16.5,6.5) { }; 
     \draw [thick, rounded corners]  (17.5,6.5) -- (18.5,6.5) ;
  \draw [color=black,fill=black,thick] (18.5,6.5) circle (.4ex);
  \node [draw, circle, fill = white, inner sep = 1.2pt] at (17.5,6.5) { }; 
  \node [draw, circle, fill = white, inner sep = 1.2pt] at (19.5,6.5) { }; 
  \node [draw, circle, fill = white, inner sep = 1.2pt] at (20.5,6.5) { }; 
   \node [draw, circle, fill = white, inner sep = 1.2pt] at (21.5,6.5) { }; 
  \node [draw, circle, fill = white, inner sep = 1.2pt] at (22.5,6.5) { }; 
   \node [draw, circle, fill = white, inner sep = 1.2pt] at (23.5,6.5) { }; 
     \node [draw, circle, fill = white, inner sep = 1.2pt] at (24.5,6.5) { }; 
  \draw [thick, rounded corners]  (25.5,6.5) -- (27.5,6.5) ;
  \draw [color=black,fill=black,thick] (27.5,6.5) circle (.4ex);
  \node [draw, circle, fill = white, inner sep = 1.2pt] at (25.5,6.5) { }; 
  \draw [thick, rounded corners]  (28.5,6.5) -- (29.5,6.5) ;
  \draw [color=black,fill=black,thick] (29.5,6.5) circle (.4ex);
  \node [draw, circle, fill = white, inner sep = 1.2pt] at (28.5,6.5) { }; 
    \node [draw, circle, fill = white, inner sep = 1.2pt] at (30.5,6.5) { }; 
    \node [draw, circle, fill = white, inner sep = 1.2pt] at (31.5,6.5) { }; 
 \node [draw, circle, fill = white, inner sep = 1.2pt] at (32.5,6.5) { }; 
 \node [draw, circle, fill = white, inner sep = 1.2pt] at (33.5,6.5) { }; 
  \node [draw, circle, fill = white, inner sep = 1.2pt] at (34.5,6.5) { }; 
  \node [draw, circle, fill = white, inner sep = 1.2pt] at (35.5,6.5) { }; 
   \node [draw, circle, fill = white, inner sep = 1.2pt] at (36.5,6.5) { }; 
     \draw [thick, rounded corners]  (37.5,6.5) -- (38.5,6.5) ;
  \draw [color=black,fill=black,thick] (38.5,6.5) circle (.4ex);
  \node [draw, circle, fill = white, inner sep = 1.2pt] at (37.5,6.5) { }; 
  \node [draw, circle, fill = white, inner sep = 1.2pt] at (39.5,6.5) { };

\node at (25,4.5) {$T_1$}; 

\end{tikzpicture}
\end{center}

\caption{The bound \eqref{durfee-bound} on the degree of $f^{\lambda}(n)$ from Corollary~\ref{degree-corollary}.}
\label{fig:durfee}
\end{figure}

The justification for the bound \eqref{durfee-bound} is shown in Figure~\ref{fig:durfee}.
The Durfee square of $\lambda$ (which excludes the first row of $\lambda[n]$) is shown in red.
The blue square is the top-justification of the red square and has the same dimensions.
In any monotonic tiling of $T_0$, each box on the starred blue diagonal will be occupied
by distinct ribbons, all of size $> 1$.
Equation~\eqref{difference-degree} applies to prove \eqref{durfee-bound}.
\end{remark}

For the final result of this section, we bootstrap Corollary~\ref{degree-corollary} from path power sums to atomic symmetric functions.
If $(I,J) \in \symm_n$ is a partial permutation, we abuse notation to write $I \cup J$ for the {\bf set} of letters which are contained in
either $I$ or $J$, e.g. if $(I,J) = (253,423)$ then $I \cup J =  \{ 2,3,4,5\}$.

\begin{figure}
\begin{center}
\begin{tikzpicture}[scale = 0.3]

\draw [fill = green!20] (0,0) -- (2,0) -- (2,3) -- (5,3) -- (5,4) -- (7,4) -- (7,6) -- (0,6) -- (0,0);

\begin{scope}

\clip (0,0) -| (2,3) -| (5,4) -| (7,6) -| (40,7) -| (0,0) ;
  \draw [color=black!25] (0,0) grid (40,7);

\end{scope}

\draw [thick]  (0,0) -| (2,3) -| (5,4) -| (7,6) -| (40,7) -| (0,0) ;

\draw [dashed] (7,0) -- (7,8);

  \draw [thick, rounded corners]  (0.5,0.5) -- (0.5,6.5) ;
  \draw [color=black,fill=black,thick] (0.5,6.5) circle (.4ex);
  \node [draw, circle, fill = white, inner sep = 1.2pt] at (0.5,0.5) { }; 
  
  \draw [thick, rounded corners]  (1.5,0.5) -- (1.5,6.5) ;
  \draw [color=black,fill=black,thick] (1.5,6.5) circle (.4ex);
  \node [draw, circle, fill = white, inner sep = 1.2pt] at (1.5,0.5) { }; 
  
    \draw [thick, rounded corners]  (2.5,3.5) -- (2.5,6.5) ;
  \draw [color=black,fill=black,thick] (2.5,6.5) circle (.4ex);
  \node [draw, circle, fill = white, inner sep = 1.2pt] at (2.5,3.5) { }; 
  
  \draw [thick, rounded corners]  (3.5,3.5) -- (3.5,6.5) ;
  \draw [color=black,fill=black,thick] (3.5,6.5) circle (.4ex);
  \node [draw, circle, fill = white, inner sep = 1.2pt] at (3.5,3.5) { }; 
  
  \draw [thick, rounded corners]  (4.5,3.5) -- (4.5,6.5) ;
  \draw [color=black,fill=black,thick] (4.5,6.5) circle (.4ex);
  \node [draw, circle, fill = white, inner sep = 1.2pt] at (4.5,3.5) { }; 
  
  \draw [thick, rounded corners]  (5.5,4.5) -- (5.5,6.5) ;
  \draw [color=black,fill=black,thick] (5.5,6.5) circle (.4ex);
  \node [draw, circle, fill = white, inner sep = 1.2pt] at (5.5,4.5) { }; 
  
  \draw [thick, rounded corners]  (6.5,4.5) -- (6.5,6.5) ;
  \draw [color=black,fill=black,thick] (6.5,6.5) circle (.4ex);
  \node [draw, circle, fill = white, inner sep = 1.2pt] at (6.5,4.5) { }; 
  
   % \node [draw, circle, fill = white, inner sep = 1.2pt] at (7.5,6.5) { }; 
   % \node [draw, circle, fill = white, inner sep = 1.2pt] at (8.5,6.5) { }; 
   % \node [draw, circle, fill = white, inner sep = 1.2pt] at (9.5,6.5) { };     
  
  \node at (5,1) {$T_0$};

\node at (25,4.5) {$T_1$, all ribbons have sizes 1 or 2}; 

\end{tikzpicture}
\end{center}
\caption{The greedy tiling.}
\label{fig:greedy}
\end{figure}
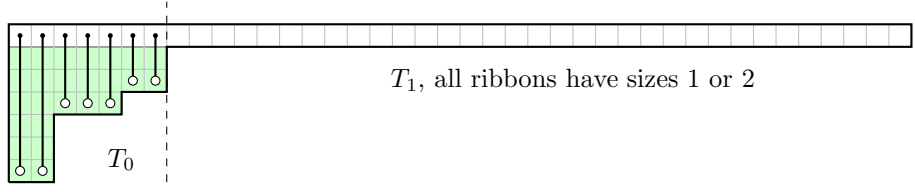

\begin{corollary}
\label{atomic-asymptotics}
Let $(I,J) \in \symm_{n,k}$ be a partial permutation. We have
\begin{equation}
\label{atomic-asymptotic-expansion-degrees}
\frac{1}{(n-|I \cup J|)!} \times A_{n,I,J} = \sum_{|\lambda| \leq k} g_{\lambda}(n) \cdot s_{\lambda[n]}
\end{equation}
where $g_{\lambda}(n)$ is a polynomial in $n$ on the domain $\{ n \geq 2 k\}$.
The degree of $g_{\lambda}(n)$ is at most $k - |\lambda|$.
\end{corollary}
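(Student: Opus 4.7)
The plan is to deduce Corollary~\ref{atomic-asymptotics} from Corollary~\ref{degree-corollary} together with the classical Murnaghan--Nakayama rule (Theorem~\ref{mn-rule}), using the factorization from Proposition~\ref{atomic-factorization}. Write the cycle-path type of $(I,J)$ as $(\nu,\mu)$, split $\mu = \bar\mu \cup 1^{n-|I\cup J|}$ with $\bar\mu$ consisting of the parts of $\mu$ of size $>1$ (say $r$ parts), and recall $|I\cup J| = |\bar\mu| + |\nu|$ so $k = |\bar\mu| + |\nu| - r$. Proposition~\ref{atomic-factorization} gives $A_{n,I,J} = p_\nu \cdot \vec p_{\bar\mu(n-|\nu|)}$. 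Applying Corollary~\ref{degree-corollary} with $\bar\mu$ in place of $\mu$ and $n - |\nu|$ in place of $n$, valid for $n - |\nu| \geq 2|\bar\mu| - 2r$ (which is implied by $n \geq 2k$), I get
\[
\frac{1}{(n-|I\cup J|)!} \vec p_{\bar\mu(n-|\nu|)} = \sum_{|\rho| \leq |\bar\mu|-r} f_\rho(n-|\nu|)\, s_{\rho[n-|\nu|]},
\]
where each $f_\rho$ is polynomial in its argument. Multiplying by $p_\nu$ and expanding each product $p_\nu \cdot s_{\rho[n-|\nu|]}$ by Theorem~\ref{mn-rule} yields
\[
g_\tau(n) = \sum_\rho f_\rho(n-|\nu|) \cdot \chi^{\tau[n]/\rho[n-|\nu|]}_\nu.
\]

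Next I would show that, for $n \geq 2k$, each skew character $\chi^{\tau[n]/\rho[n-|\nu|]}_\nu$ stabilizes to a constant $c_{\rho,\tau,\nu}$ independent of $n$. The point is that the skew shape $\tau[n]/\rho[n-|\nu|]$ splits into (i) a horizontal strip of length $|\nu| + |\rho| - |\tau|$ sitting at the east end of row $1$, and (ii) the (bounded) skew shape $\tau/\rho$ below row $1$. For $n \geq 2k$ the column gap between these two pieces exceeds $|\nu|$, so no standard ribbon tableau of type $\nu$ can have a ribbon spanning both parts; tableaux factor as an ordered choice of which ribbons go into the strip (each placed horizontally in a unique way with sign $+1$) and which tile $\tau/\rho$, and this factorized count does not depend on $n$. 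In particular $g_\tau(n)$ is a polynomial in $n$ on the domain $n \geq 2k$, and $c_{\rho,\tau,\nu} = 0$ unless $\rho \subseteq \tau$ and $|\rho| \geq |\tau|-|\nu|$.

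The main obstacle is getting the sharp degree bound $\deg g_\tau \leq k - |\tau|$ rather than the blunt $\deg g_\tau \leq r$ one obtains from applying Corollary~\ref{degree-corollary} directly. My plan is to refine the degree analysis in the proof of Corollary~\ref{degree-corollary} to establish that $\deg f_\rho \leq |\bar\mu| - r - |\rho|$. To do this I would revisit the decomposition $T = T_0 \sqcup T_1$: the contribution of a frozen tiling $T_0$ has degree $r - a_{>1}(T_0)$, and I must lower-bound $a_{>1}(T_0)$ across all $T_0$ extending to a monotonic tiling of $\rho[N]$. The key input is that, because size-$1$ ribbons in $T_0$ lie only in rows $\geq 2$ and their tails compete with other ribbons' tails in column $1$, monotonicity forces a Durfee-square-style count on how the $|\rho|$ cells of $\rho$ distribute among ribbons — combined with the fact that size-$>1$ ribbons in $T_1$ can cover at most $|\bar\mu| - 2r$ row-$1$ cells beyond their minimum footprint. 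Granting this refinement, the restriction $|\rho| \geq |\tau|-|\nu|$ yields $\deg g_\tau \leq \max_\rho(|\bar\mu| - r - |\rho|) = |\bar\mu| - r + |\nu| - |\tau| = k - |\tau|$. I expect most of the technical work to lie in establishing this refined bound cleanly; if a direct monotonicity argument proves awkward, the alternative is to redo the analysis on combined tilings $(T,T')$ of $\tau[n]$ and bound the number of size-$>1$ ribbons forced into the first row by a pigeonhole argument on the $|\tau|$ cells that lie outside row~$1$.
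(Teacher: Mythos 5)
Your overall route coincides with the paper's: the same factorization $A_{n,I,J} = p_\nu \cdot \vec p_{\bar\mu(n-|\nu|)}$, the same appeal to Corollary~\ref{degree-corollary}, and the same stabilization argument for the classical Murnaghan--Nakayama step (your ``column gap exceeds $|\nu|$'' is exactly the paper's observation that for $n \geq 2k$ the first two rows of $\lambda[n-|\nu|]$ differ by at least $|\nu|$ boxes, so the coefficients $c_{n,\lambda,\rho}$ become independent of $n$). The polynomiality claim and the domain $\{n \geq 2k\}$ come out correctly.

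Where you diverge is the degree bound, and here your modularization is genuinely different from the paper's. The paper first reduces to the acyclic case (replacing cycles by paths can only increase the degree, by Equation~\eqref{difference-degree}) and then bounds the number of size-$>1$ ribbons in the $T_1$ portion of a single monotonic tiling of $\lambda[n]$ via the ``greedy tiling'' of Figure~\ref{fig:greedy}. You instead propose the refined estimate $\deg f_\rho \leq |\bar\mu| - r - |\rho|$ for the path power sum alone and then transfer it through the skew Murnaghan--Nakayama step using $|\rho| \geq |\tau| - |\nu|$; your arithmetic $|\bar\mu| - r - (|\tau|-|\nu|) = k - |\tau|$ is correct, and the refined bound is true (it is a sharpening of Corollary~\ref{degree-corollary} and of the Durfee-square bound in Remark~\ref{path-degree-remark} that the paper does not state explicitly). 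This is a clean way to organize the argument. However, be aware that your refined bound and the paper's greedy-tiling assertion rest on the \emph{same} combinatorial fact, namely that the $T_0$ portion of a monotonic tiling must spend at least $|\rho|$ edges (i.e.\ at least $|\rho|$ non-tail cells of size-$>1$ ribbons) covering the $|\rho|$ cells below row one. Your sketch of this -- ``a Durfee-square-style count'' plus competition of tails in column $1$ -- is not yet a proof: the danger is that cells of $\rho$ could in principle be covered cheaply by singleton ribbons sitting below row one (which contribute no edges), and ruling this out requires using the distinct-tail-columns condition, e.g.\ via the observation that all cells of $\rho[N]$ in column $j$ are covered by ribbons whose tails lie in columns $\leq j$, so the cells of column $1$ must all lie in a single ribbon, and inductively each column of $\rho$ forces a corresponding number of up- or right-steps among the $T_0$ ribbons. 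Once that lemma is pinned down (it is exactly what justifies the paper's one-sentence ``maximized by the greedy tiling''), your proof is complete.
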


\begin{proof}
Let $\nu$ be the cycle partition of $(I,J)$ and $\mu$ be the {\em reduced} path partition of $(I,J)$, so that 
the parts of $\mu$ record the path lengths in $G_n(I,J)$ which are $> 1$ in weakly decreasing order.
The path partition of $(I,J)$ is therefore $\mu(n - |\nu|)$.

By Proposition~\ref{atomic-factorization} we have the factorization
 $A_{n,I,J} = \vec{p}_{\mu(n-|\nu|)} \cdot p_{\nu}$.
Corollary~\ref{degree-corollary} states that the Schur expansion of $\vec{p}_{\mu(n - |\nu|)}$ has the form
\begin{equation}
\label{atomic-asymptotics-one}
\frac{1}{m_1( \mu(n-|\nu|) )!} \times \vec{p}_{\mu(n-|\nu|)} = \sum_{|\lambda| \, \leq \, |\mu| - \ell(\mu)} 
f_{\lambda}(n) \cdot s_{\lambda[n - |\nu|]}
\end{equation}
where the coefficients $f_{\lambda}(n)$ are polynomials in the regime $n-|\nu| \geq 2 \cdot |\mu| - 2 \cdot \ell(\mu)$.
We need to analyze what happens 
when we multiply both size of Equation~\eqref{atomic-asymptotics-one} 
by the classical power sum $p_{\nu}$.

Given an integer $n$ such that 
\begin{equation*}
n-|\nu| \geq 2 \cdot |\mu| - 2 \cdot \ell(\mu) \quad \Leftrightarrow \quad n  \geq 2 \cdot |\mu| - 2 \cdot \ell(\mu) + |\nu|
\end{equation*}
consider what happens to Equation~\eqref{atomic-asymptotics-one}
under the transition $n \rightsquigarrow n+1$. The coefficients $f_{\lambda}(n)$ are polynomials in this regime and the first rows
$\lambda[n - |\nu|]_1$ of the partitions 
indexing the Schur functions $s_{\lambda[n - |\nu|]}$ increase in length by 1, i.e. $\lambda[n-|\nu|+1]_1 = \lambda[n-|\nu|]_1 + 1$. Iterating  $|\nu|$
times, when
\begin{equation*}
n-|\nu| \geq 2 \cdot |\mu| - 2 \cdot \ell(\mu) + |\nu| \quad \Leftrightarrow \quad n  \geq 2 \cdot |\mu| - 2 \cdot \ell(\mu) + 2 \cdot |\nu|
\end{equation*} 
every partition $\lambda[n-|\nu|]$ indexing a Schur function in Equation~\eqref{atomic-asymptotics-one} satisfies 
\begin{equation*}
\lambda[n-|\nu]]_1 \geq \lambda[n-|\nu|]_2 + |\nu| 
\end{equation*}
so that the Young diagram of $\lambda[n - |\nu|]$ has the form below. 
\begin{center}
\begin{tikzpicture}[scale = 0.3]

\begin{scope}

\clip (0,0) -| (2,3) -| (5,4) -| (7,6) -| (40,7) -| (0,0) ;
  \draw [color=black!25] (0,0) grid (40,7);

\end{scope}

\draw [thick]  (0,0) -| (2,3) -| (5,4) -| (7,6) -| (40,7) -| (0,0) ;

\draw [<->, thick] (7,5.5) -- (40,5.5);

\node at (23,4.5) {$\geq |\nu|$}; 
\end{tikzpicture}

\end{center}
By the classical Murnaghan-Nakayama Rule, calculating the Schur expansion of $s_{\lambda[n-|\nu|]} \cdot p_{\nu}$
involves the signed addition of ribbons of sizes $\nu_1, \nu_2, \dots $ to the Young diagram $\lambda[n-|\nu|]$, resulting in
\begin{equation}
\label{atomic-asymptotics-two}
s_{\lambda[n-|\nu|]} \cdot p_{\nu} = \sum_{\rho} c_{n,\lambda,\rho} \cdot s_{\rho[n]}
\end{equation}
where the sum is over partitions $\rho$ satisfying $|\rho| \leq \min(n - \rho_1, |\lambda| + |\nu|)$.
When $n$ belongs to the range $\{n  \geq 2 \cdot |\mu| - 2 \cdot \ell(\mu) + 2 \cdot |\nu|\}$ so that 
the first two rows in the Young diagram of $\lambda[n-|\nu|]$ differ by at least $|\nu$ boxes  as above, 
the coefficients $c_{n,\lambda,\rho}$ 
in Equation~\eqref{atomic-asymptotics-two} satisfy
\begin{equation}
c_{n,\lambda,\rho} = c_{n+1,\lambda,\rho}
\end{equation}
since the total number of boxes added by the classical Murnaghan-Nakayama rule is $|\nu|$.
Since
\begin{equation}
n - |I \cup J| = m_1(\mu(n-|\nu|))
\end{equation}
and
\begin{equation}
2 \cdot |\mu| - 2 \cdot \ell(\mu) + 2 \cdot |\nu| = 2 \cdot ( |\mu| - \ell(\mu) + |\nu| ) = 2 \cdot k,
\end{equation}
multiplying Equation~\eqref{atomic-asymptotics-one} by $p_{\nu}$ yields
 the  expansion 
\begin{equation}
\frac{1}{(n-|I \cup J|)!} \times A_{n,I,J} = \sum_{|\lambda| \leq k} g_{\lambda}(n) \cdot s_{\lambda[n]}
\end{equation}
where the coefficients $g_{\lambda}(n)$ are polynomials for $n \geq 2k$.

It remains to prove the degree bound.  Fix a partition $\lambda$ with $|\lambda| < k$.
By Equation~\eqref{difference-degree} in the proof of Corollary~\ref{degree-corollary}, the maximum possible 
degree of $g_{\lambda}(n)$ over all $(I,J) \in \symm_{n,k}$ occurs when $G_n(I,J)$ has no cycles.
If $G_n(I,J)$ has no cycles, this degree is the maximum number of size $> 1$ paths in the $T_1$ portion of a monotonic ribbon tiling
of $\lambda[n]$. This number is maximized by the {\em greedy tiling} shown in Figure~\ref{fig:greedy} (the partition $\lambda$ is shown in green)
where all ribbons in the $T_0$ portion are vertical and all ribbons in the $T_1$ portion have sizes 1 or 2. 
Since $(I,J) \in \symm_{n,k}$, the number of size $> 1$ ribbons in $T_1$ is $k - |\lambda|$.
\end{proof}

When $\lambda = \varnothing$ in Corollary~\ref{atomic-asymptotics} so that $\lambda[n] = (n)$ is a single row,
the conclusion can be sharpened.
Suppose the graph $G_n(I,J)$ has path partition $\mu$ and cycle partition $\nu$ so that 
$A_{n,I,J} = \vec{p}_{\mu} \cdot p_{\nu}$.
For any symmetric function $F$ of degree $n - |\nu|$ we have
$\langle F, s_{n-|\nu|} \rangle = \langle F p_{|\nu|}, s_n \rangle$ by the classical Murnaghan-Nakayama Rule.
By taking $F = \vec{p}_{\mu}$,
 Remark~\ref{polynomiality-domain-remark} extends to show that $g_{\varnothing}(n)$ is a polynomial function for all values of $n$
 and Remark~\ref{path-degree-remark} implies that the degree of $g_{\varnothing}(n)$ precisely equals the number of paths in 
 $G_n(I,J)$ of length $> 1$.

\section{A polynomiality result}
A partial permutation $(I,J)$ is {\em packed} if $I \cup J = [r]$ for some $r \geq 0$.
Let $(I,J)$ be a packed partial permutation of size $k$ such that $I \cup J = [r]$.
The character polynomial result Theorem~\ref{character-polynomial-theorem} and Corollary~\ref{atomic-asymptotics}
imply that the class function $(n)_r \cdot R \, \one_{I,J}: \symm_n \rightarrow \CC$ is a polynomial in 
$\CC[n,m_1,m_2,\dots,m_k]$ on the domain $\{n \geq 2k\}$, namely
\begin{equation}
\label{indicator-character-polynomial-first}
(n)_r \cdot R \, \one_{I,J} = \sum_{|\lambda| \leq k} g_{\lambda}(n) \cdot q_{\lambda}(m_1, \dots, m_k)  \quad \quad
\text{for $n \geq 2k$}
\end{equation}
where 
the $g_{\lambda}(n)$ are as in 
Corollary~\ref{atomic-asymptotics} and  $q_{\lambda}(m_1, \dots, m_k)$ is the character polynomial corresponding to $\lambda$
defined after Theorem~\ref{character-polynomial-theorem}.
We will see that the class function $(n)_r \cdot R \, \one_{I,J}$ is a polynomial in $\CC[n,m_1,\dots,m_k]$ on the full disjoint union
$\symm = \bigsqcup_{n \geq 1} \symm_n$ of symmetric groups and (more specifically) that the restriction
$n \geq 2k$ in Equation~\eqref{indicator-character-polynomial-first} can be dropped.
Note that the packed condition is necessary, as otherwise $\one_{I,J}$ vanishes on $\symm_r$.

We begin with a polynomial identity involving falling factorials. 
This will be used to show that $R \one_{I,J}$ is independent of cycle multiplicities $m_i$ for $i$ large.

\begin{lemma}
\label{falling-factorial-lemma}
Let $a,b, \mu_1, \dots, \mu_t \geq 1$ be integers with $\sum_i \mu_i < \min(a,b)$. There holds the identity
\begin{multline}
\label{falling-factorial-lemma-formula}
(a + b) \cdot \left(  a+b - \mu_{[t]} + t - 1  \right)_{t-1} =
a \cdot \left(  a - \mu_{[t]} + t - 1  \right)_{t-1}  + 
b \cdot \left( b - \mu_{[t]} + t - 1  \right)_{t-1}   \\ +
\sum_{[t] \, = \, A \sqcup B} ab \left( a - \mu_A + |A| - 1 \right)_{|A| - 1} \cdot 
\left( b - \mu_B + |B| - 1 \right)_{|B| - 1}
\end{multline}
where the sum is over all ordered partitions $A \sqcup B$ of $[t]$ into two nonempty sets.
\end{lemma}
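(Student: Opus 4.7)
The plan is to prove the identity as a polynomial identity in $a$, $b$, and $\mu_1, \ldots, \mu_t$; the hypothesis $\sum_i \mu_i < \min(a,b)$ is needed only for positivity of the falling factorials, not for the polynomial statement itself. First I would reformulate the lemma in a symmetric, set-indexed form. Define, for $S \subseteq [t]$,
\[
G(x; S) := \begin{cases} x \cdot (x - \mu_S + |S| - 1)_{|S|-1} & \text{if } |S| \geq 1, \\ 1 & \text{if } S = \emptyset, \end{cases}
\]
with $\mu_S := \sum_{i \in S} \mu_i$. Splitting the right-hand side of the lemma into the three cases $S = \emptyset$, $S = [t]$, and $\emptyset \subsetneq S \subsetneq [t]$ shows the lemma is equivalent to
\[
G(a+b; [t]) = \sum_{S \subseteq [t]} G(a; S) \cdot G(b; [t] \setminus S).
\]

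Next, I would prove the stronger claim that this equation holds for every subset of $[t]$ in place of $[t]$, via a generating-function argument. Introduce formal variables $z_1, \ldots, z_t$ and the generating polynomial $F(x; z) := \sum_{S \subseteq [t]} G(x; S) \prod_{i \in S} z_i$. Matching the coefficient of each square-free monomial $\prod_{i \in S} z_i$, the stronger set-indexed identity becomes equivalent to the multiplicative factorization
\[
F(a+b; z) = F(a; z) \cdot F(b; z),
\]
which in turn follows once one establishes an exponential form $F(x; z) = \exp\bigl(x K(z)\bigr)$ for some series $K(z)$, independent of $x$, with vanishing constant term.

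To exhibit $K(z)$, I would differentiate the ansatz $F = \exp(xK)$ to get $\partial_x F = K \cdot F$ and extract the coefficient of $\prod_{i \in S} z_i$, yielding the recursive formula
\[
K_S = \partial_x G(x; S) - \sum_{\substack{T \subsetneq S \\ T \neq \emptyset}} K_T \cdot G(x; S \setminus T),
\]
with base case $K_{\{i\}} = 1$. The key claim, which I would verify by induction on $|S|$, is that this right-hand side is independent of $x$ and admits the closed form $K_S = (-1)^{|S|-1} (\mu_S - 1)_{|S|-1}$. The induction proceeds by substituting the inductive expressions for each $K_T$ into the recursion, expanding the products of shifted falling factorials as polynomials in $x$, and verifying that the $x$-dependent contributions cancel.

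The main obstacle is carrying out this inductive cancellation cleanly, as the algebra involves delicate telescoping among products of shifted falling factorials indexed by the partition lattice of $S$. An alternative route, which bypasses the exponential formula entirely, is to induct directly on $t$ in the original formulation: both sides are polynomials of degree $t - 1$ in $\mu_t$, matching the leading coefficient in $\mu_t^{t-1}$ is immediate, and the remaining identity of degree $\leq t - 2$ in $\mu_t$ reduces---after separating terms in the partition sum according to whether $t \in A$ or $t \in B$---to the inductive hypothesis applied to subsets of $[t-1]$. Either approach is conceptually clean but computationally non-trivial.
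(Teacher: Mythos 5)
Your reformulation is correct and attractive: writing $G(x;S) = x\,(x-\mu_S+|S|-1)_{|S|-1}$, the lemma is exactly the binary convolution $G(a+b;[t]) = \sum_{S\subseteq[t]} G(a;S)\,G(b;[t]\setminus S)$, and this would indeed follow from an exponential form $F(x;z)=\exp(xK(z))$ for the generating polynomial $F(x;z)=\sum_S G(x;S)\,z^S$ (one small repair: the factorization $F(a+b;z)=F(a;z)F(b;z)$ must be read in the quotient ring $\CC[z_1,\dots,z_t]/(z_1^2,\dots,z_t^2)$, since in the full polynomial ring the product has non-squarefree terms that do not cancel). Your candidate $K_S=(-1)^{|S|-1}(\mu_S-1)_{|S|-1}$ checks out for $|S|\le 3$. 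However, the proof as written has a genuine gap at its center: the assertion that the recursively defined $K_S$ is independent of $x$ and equals this closed form is equivalent to the partition-lattice identity
\begin{equation*}
x\,(x-\mu_S+|S|-1)_{|S|-1} \;=\; \sum_{\pi\in\Pi_S} x^{|\pi|}\prod_{B\in\pi}(-1)^{|B|-1}(\mu_B-1)_{|B|-1},
\end{equation*}
which carries essentially all of the content of the lemma (it is the exponential, or $\log$, form of Hurwitz's binomial theorem). You have relocated the difficulty rather than resolved it; the ``inductive cancellation'' you defer is not routine, and the alternative induction on $t$ is likewise only a sketch (matching the top coefficient in $\mu_t$ is easy, but the claimed reduction of the degree-$\le t-2$ remainder to the inductive hypothesis is not spelled out and is not obviously clean).

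For comparison, the paper closes exactly this gap by a different expansion: using the Stirling-number identity $e_k(1,\dots,n)=c(n+1,n+1-k)$ it rewrites each falling factorial as a sum over permutations graded by cycle count, $G(x;S)=x\sum_{w\in\symm_S}(x-\mu_S)^{\cyc(w)-1}$, fixes a permutation $w$ with cycles $C_1,\dots,C_k$, and applies Hurwitz's Binomial Theorem with the substitution $z_i\mapsto-\mu_{C_i}$ to match the contribution of $w$ on both sides. To complete your argument you should either prove the displayed partition identity directly (for instance by the tree-enumeration proof of Hurwitz's theorem, or by citing Hurwitz/Abel), or graft the paper's Stirling-plus-Hurwitz step into your framework; with that done, your exponential-formula packaging is a clean and arguably more conceptual organization, since it yields the $n$-ary convolution for free.
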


\begin{proof}
We will prove Equation~\eqref{falling-factorial-lemma-formula} with the $t+2$ quantities $a, b, \mu_1, \dots, \mu_t$ therein regarded as variables.
Recall that the {\em (signless) Stirling number of the first kind} $c(n,k)$ counts permutations $w \in \symm_n$ with $\cyc(w) = k$.
If $e_k(x_1, x_2, \dots, x_n)$ is the elementary symmetric polynomial of degree $k$ in $n$ variables, there holds the identity
\begin{equation}
\label{stirling-number-identity}
e_k(1, 2, \dots, n) = c(n+1,n+1-k).
\end{equation}
Expanding the LHS of Equation~\eqref{falling-factorial-lemma-formula} and applying Equation~\eqref{stirling-number-identity} yields
\begin{align}
(a + b) \cdot \left(  a+b - \mu_{[t]} + t - 1  \right)_{t-1} &= (a+b) \cdot \sum_{i = 1}^{t} (a+b - \mu_{[t]})^{i-1} e_{t-i}(1, 2, \dots, t-1) \\
\label{new-lhs-falling}
&= (a+b) \sum_{w \in \symm_t} (a+b-\mu_{[t]})^{\cyc(w)-1}.
\end{align}
Similarly, the LHS of Equation~\eqref{falling-factorial-lemma-formula} expands as 
\begin{multline}
\label{new-rhs-falling}
a \cdot \sum_{w \in \symm_t} (a - \mu_{[t]})^{\cyc(w)-1}  + 
b \cdot \sum_{w \in \symm_t} (b - \mu_{[t]})^{\cyc(w)-1}  + \\
a b \cdot
\sum_{\substack{[t] \, = \, A \sqcup B \\ A,B  \, \neq  \, \varnothing}} 
\sum_{\substack{u \in \symm_A  \\  v \in \symm_B}} (a - \mu_A)^{\cyc(u)-1} \cdot (b - \mu_B)^{\cyc(v)-1}
\end{multline}
where $\symm_A$ and $\symm_B$ denote the groups of permutations of $A$ and $B$, respectively.

We aim to prove the equality of expressions \eqref{new-lhs-falling} =  \eqref{new-rhs-falling}. 
To do this, fix a permutation $w \in \symm_t$ with $k$ cycles $C_1, \dots, C_k$.
We have $2^k$ partitions $[t] = A \sqcup B$ obtained by letting the elements of each cycle $C_i$ be contained in $A$ or $B$.
Any such partition also gives rise to permutations $w \mid_A \in \symm_A$ and $w \mid_B \in \symm_B$ which satisfy
 $\cyc(w \mid_A) + \cyc(w \mid_B) = k = \cyc(w)$.
We show that the contribution of $w$ to \eqref{new-lhs-falling} equals the contribution of $w$ to \eqref{new-rhs-falling}.

If $x, y, z_1, \dots, z_k$ are variables, the following equation
\begin{multline}
\label{hurwitz-identity}
(x + y) (x + y + z_{[k]})^{k-1} = \\
x  (x + z_{[k]} )^{k-1} + y  (y + z_{[k]})^{k-1} +  xy
\sum_{\substack{[k] \, = \, S \sqcup T \\ S, T  \, \neq  \, \varnothing}} 
(x + z_S)^{|S| - 1} (y + z_T)^{|T|-1}
\end{multline}
is a version of {\em Hurwitz's Binomial Theorem}  \cite{Hurwitz} where we use our usual shorthand
$z_{R} = \sum_{i \in R} z_i$ for any subset $R \subseteq [k]$.  Equation~\eqref{hurwitz-identity} is a generalization of Abel's Binomial
Theorem; its most enlightening proof uses tree enumeration.
Making the substitutions $x \rightarrow a, y \rightarrow b,$ and $z_i \rightarrow - \mu_{C_j} = - \sum_{j \in C_i}  \mu_j$ into 
Equation~\eqref{hurwitz-identity} yields
\begin{multline}
\label{w-hurwitz-identity}
(a+b)(a+b-\mu_{[t]})^{\cyc(w) - 1} = 
a (a - \mu_{[t]})^{\cyc(w) - 1} + b (b - \mu_{[t]})^{\cyc(w) - 1} \\
+ ab \sum_{\substack{[k] \, = \, S \sqcup T \\ S, T \neq \varnothing}}
(a - \sum_{i \in S} \mu_{C_i})^{\cyc(w \mid_A) - 1}
(b - \sum_{j \in T} \mu_{C_j})^{\cyc(w \mid_B) - 1}
\end{multline}
which shows that the contributions of $w$ to \eqref{new-lhs-falling} and \eqref{new-rhs-falling} coincide.
\end{proof}

Lemma~\ref{falling-factorial-lemma} controls the behavior of $R \, \one_{I,J}$ on conjugacy classes with only long cycles.
The next result bootstraps  to all cycle types.

\begin{lemma}
\label{indicator-polynomiality-lemma}
Let $(I,J) \in \symm_{r,k}$ be a packed partial permutation size $k$ such that $I \cup J = [r]$. There is a polynomial
 $p_{I,J}(n,m_1,\dots,m_r) \in \CC[n,m_1,\dots,m_r]$ such that 
 \begin{equation}
 \label{indicator-character-polynomial-second}
 (n)_r \cdot R \, \one_{I,J} = p_{I,J}(n,m_1, \dots, m_r)
 \end{equation}
 as functions on $\symm = \bigsqcup_{n \geq 1} \symm_n$.  The LHS of Equation~\eqref{indicator-character-polynomial-second} is regarded
 as the zero map $\symm_n \rightarrow \CC$ for $n < r$.
\end{lemma}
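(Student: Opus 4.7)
The plan is to combine the combinatorial interpretation of the Reynolds operator with the polynomial identity of Lemma~\ref{falling-factorial-lemma} to directly construct the polynomial $p_{I,J}$. First I would unpack the definition: for $n \geq r$ and $w \in \symm_n$,
\[
(n)_r \cdot R\,\one_{I,J}(w) \;=\; \left|\{\phi:[r] \hookrightarrow [n] \;:\; w(\phi(i_t)) = \phi(j_t) \text{ for all } t\}\right|,
\]
since each $v \in \symm_n$ with $v^{-1}wv$ sending $I$ to $J$ is determined by its restriction $\phi := v|_{[r]}$ together with an arbitrary bijection on the complement, contributing the factor $(n-r)!$. Combinatorially, this counts injective graph homomorphisms from $G_r(I,J)$ into the functional digraph $D(w)$ of $w$, which is a disjoint union of directed cycles of lengths recorded by $m_1(w), m_2(w), \ldots$.

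Next I would decompose $G_r(I,J)$ into its cycle components (of partition $\nu$) and path components (of partition $\mu$). Cycles of $G_r(I,J)$ must map bijectively to matching-length cycles of $w$, contributing the manifestly polynomial factor $\prod_i i^{m_i(\nu)} \cdot (m_i(w))_{m_i(\nu)}$. The paths must be embedded disjointly into arcs of the remaining cycles of $w$. Stratifying the embeddings of the $p$ labeled paths by a set partition $\sigma$ of $[p]$ (indicating which paths share a cycle of $w$) and an injective assignment of the blocks of $\sigma$ to cycles of $w$, a direct count shows that a block $B$ of $\sigma$ containing paths of total size $a_B$ and cardinality $s = |B|$ contributes $c \cdot (c - a_B + s - 1)_{s-1}$ when embedded in a cycle of $w$ of length $c \geq a_B$.

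The delicate point, and the main obstacle, is that the expression $c(c - a_B + s-1)_{s-1}$ is the correct embedding count only when $c \geq a_B$, yet one wants a genuine polynomial identity in $n$ and the $m_i(w)$. Here Lemma~\ref{falling-factorial-lemma} is decisive: the Hurwitz-type identity states precisely that
\[
(a+b)(a+b - \mu_{[t]} + t - 1)_{t-1} \;=\; a(\cdots)_{t-1} + b(\cdots)_{t-1} + \sum_{[t]=A\sqcup B} ab(\cdots)_{|A|-1}(\cdots)_{|B|-1}
\]
is valid in the long-cycle regime $\mu_{[t]} < \min(a,b)$, so the polynomial extrapolation of single-cycle embedding counts can be ``merged'' and ``split'' across cycles without changing the total evaluation. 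Using this identity iteratively, together with the elementary symmetric-to-power-sum translation $\sum_j c_j^k = \sum_i i^k \cdot m_i(w)$ and the relation $\sum_i i \cdot m_i(w) = n$ to eliminate tail sums involving indices exceeding $r$, I would express the full path contribution as a polynomial $P_{\text{path}}(n,m_1,\ldots,m_r)$.

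Finally, setting $p_{I,J}(n,m_1,\ldots,m_r)$ equal to the product of the cycle contribution and $P_{\text{path}}$, this polynomial matches $(n)_r R\,\one_{I,J}$ on all of $\symm$, including the trivial cases $n < r$ where both sides vanish since $(n)_r = 0$. Agreement with the expression \eqref{indicator-character-polynomial-first} arising from Corollary~\ref{atomic-asymptotics} and Theorem~\ref{character-polynomial-theorem} is automatic on the infinite set $\{n \geq 2k\}$, and uniqueness of polynomial extension identifies the two descriptions, completing the proof.
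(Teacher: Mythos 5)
Your proposal is correct and follows essentially the same route as the paper: reinterpret $(n)_r \cdot R\,\one_{I,J}(w)$ as a count of embeddings of $G_r(I,J)$ into the cycle diagram of $w$, split off the cycle components as falling factorials in the $m_i$, stratify the path embeddings by a set partition recording which paths share a cycle of $w$, and invoke Lemma~\ref{falling-factorial-lemma} to show that the contribution of the long cycles depends only on their total mass $n - \sum_{i \le r} i\,m_i$. The one phrase to tighten is the appeal to $\sum_j c_j^k = \sum_i i^k m_i(w)$ for ``eliminating tail sums'' --- for $k \ge 2$ such a power sum over long cycles is \emph{not} a polynomial in $n, m_1, \dots, m_r$ --- but this is harmless because, after the Hurwitz-type merging you already describe, only the linear relation $\sum_i i\,m_i = n$ is needed, exactly as in the paper's Subcase 1.3.
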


\begin{proof}
For any $w \in \symm_n$, the evaluation of $(n)_r \cdot R \, \one_{I,J}$ on $w$ is given by
\begin{equation}
\label{indicator-character-polynomial-third}
 (n)_r \cdot R \, \one_{I,J}  = \sum_{v \in T} \one_{vI,vJ}(w)
\end{equation}
where the sum ranges over a fixed left transversal $T$ of coset representatives $v$ in $\symm_n/\symm_{n-r}$.
Since $r$ is a constant, we need only show that $\sum_{v \in T} \one_{vI,vJ}$  is a polynomial in $n, m_1, \dots, m_k$.
The following special case is pivotal.

{\bf Case 1:}
{\em The graph $G_n(I,J)$ is a disjoint union of paths of sizes 
$\mu = (\mu_1, \dots, \mu_t) \vdash r$ together with $n-r$ singleton paths.}

For a given permutation $w \in \symm_n$, we give a combinatorial model for the sum $\sum_{v \in T} \one_{vI,vJ}(w)$
appearing in Equation~\eqref{indicator-character-polynomial-third}.
A {\em $(\mu,w)$-cyclic sequence} is a $t$-tuple
$(S_1, \dots, S_t)$ of sequences drawn from $[n]$ with disjoint entries
such that every set $S_i$ is a list of $\mu_i$ consecutive elements in the cycle decomposition of $w$.
For example, if $w$ has cycle decomposition
\begin{equation*}
w = ( 3, 7, 2, 10, 5, 14 ) ( 1, 18, 17, 6, 4, 8) ( 12 , 13, 15) (9, 11, 16)
\end{equation*}
and $\mu = (\mu_1, \mu_2, \mu_3, \mu_4) =  (4,3,3,1)$ then one possible $(\mu,w)$-cyclic sequence is 
$(S_1, S_2, S_3, S_4)$ where
\begin{equation*}
S_1 = (10, 5, 14, 3), \quad S_2 = (13, 15, 12), \quad S_3 = (18, 17, 6), \quad S_4 = (7).
\end{equation*}
Interchanging $S_2$ and $S_3$ or `rotating' $S_2$ (obtaining $(15, 12, 13)$ or $(12, 13, 15)$) results in different 
$(\mu,w)$-cyclic sequences.
Since $G_n(I,J)$ consists only of paths, it follows that 
\begin{equation}
R \, \one_{I,J}(w) = \sum_{v \in T} \one_{vI,vJ}(w) = \text{number of $(\mu,w)$-cyclic sequences $(S_1, \dots, S_t)$}.
\end{equation}
We enumerate $(\mu,w)$-cyclic sequences for permutations $w$ with cycle types of increasing complexity.

{\bf Subcase 1.1:}
{\em $w \in \symm_n$ is an $n$-cycle.}

In this subcase, we form a $(\mu,w)$-cyclic sequence $(S_1, \dots, S_t)$ by selecting a cyclic order on the set $[t]$ in $(t-1)!$ ways corresponding to the 
cyclic order these sets appear in $w$, 
then placing $n-r$ singletons
(which will not appear among the sequences $S_1, \dots, S_r$) between the $t$ members of this cyclic order in
$\binom{n-r+t-1}{t-1}$ ways, and finally choosing one of $n$ possible cyclic rotations of the relevant figure. We conclude that 
\begin{equation}
\label{subcase-1.1-formula}
\sum_{v \in T} \one_{vI, vJ} w = n \cdot (t-1)! \cdot \binom{n-r+t-1}{t-1} = n \cdot (n-r+t-1)_{t-1}
\end{equation}
in this case. Observe that \eqref{subcase-1.1-formula} is a polynomial in $n$.

{\bf Subcase 1.2:}
{\em Every cycle of $w \in \symm_n$ has length $> r$.}

We claim that Equation~\eqref{subcase-1.1-formula} is true for $w$.
We induct on the number $\cyc(w)$ of cycles in $w$.  If $\cyc(w) = 1$, we are done by Subcase 1.1. If $\cyc(w) \geq 2$,
Lemma~\ref{falling-factorial-lemma} and induction show that
Equation~\eqref{subcase-1.1-formula} holds for $w$.
Observe that \eqref{subcase-1.1-formula} is independent of the cycle structure of $w$.

{\bf Subcase 1.3:}
{\em The permutation $w \in \symm_n$ is arbitrary.}

Given a $(\mu,w)$-cyclic sequence $(S_1, \dots, S_t)$, we form $r+1$ equivalence relations $\sim_1, \sim_2, \dots, \sim_r, \sim_{\infty}$ 
on the set $[t]$ as follows.  For $1 \leq i \leq r$, we write $a \sim_i b$ if $S_a$ and $S_b$ are in of the same cycle of $w$,
and this cycle has size $i$.  We also write $a \sim_{\infty} b$ if $S_a$ and $S_b$ are  in (possibly different) ``big" cycles of $w$ of sizes $> r$ .
If $\{a_1, \dots, a_p \}$ is an equivalence class of $\sim_i$, we must have $\mu_{a_1} + \cdots + \mu_{a_b} \leq i$.

Turning this process around, call a pair $(\Pi,f)$ {\em valid} if  
\begin{itemize}
\item
$\Pi$ is a set partition of $[t]$, 
\item $f$ is a labeling $f: \Pi \rightarrow \{1,2,\dots,r,\infty\}$ 
of the blocks of $\Pi$ with $1, 2, \dots, r, \infty$ such that $f^{-1}(\infty)$ has at most one element, and
\item for any block $B$ of $\Pi$ with $f(B) = i < \infty$ we have $\sum_{j \in B} \mu_j \leq i$.
\end{itemize}
Observe that a pair $(\Pi,f)$ being valid is a condition independent of $n$ and $w$.
Applying Subcase 1.2, 
the number of $(\mu,w)$-cyclic sequences $(S_1, \dots, S_t)$ giving rise to a fixed valid pair $(\Pi,f)$ is 
\begin{multline}
\label{subcase-1.3-equation}
\prod_{i = 1}^r  i^{|f^{-1}(i)|}  (m_i(w))_{|f^{-1}(i)|} \left(  \prod_{f(B) = i} (i - \sum_{j \in B} \mu_j + |B| - 1)_{|B| - 1} \right) \\
\times (n - \sum_{i = 1}^r i m_i(w) \cdot 
(n - \sum_{i = 1}^r i m_i(w) ) - \sum_{j \in B_{\infty}} \mu_j + |B_{\infty}| - 1)_{|B_{\infty}| - 1}
\end{multline}
where $f^{-1}(\infty) = \{ B_{\infty} \}$ when $f^{-1}(\infty) \neq \varnothing$ and the second line is interpreted as 1 when 
$f^{-1}(\infty) = \varnothing$. The expression \eqref{subcase-1.3-equation} is a (complicated)
polynomial in $\CC[n,m_1, \dots, m_r]$.  Summing \eqref{subcase-1.3-equation} over all valid pairs $(\Pi,f)$, we conclude that 
\begin{quote}
{\em the function $\sum_{v \in T} \one_{vI,vJ}$ is a polynomial in $\CC[n,m_1, \dots, m_r]$ whenever $I \cup J = [r]$ and $G_n(I,J)$ consists 
entirely of disjoint paths.}
\end{quote}
This completes our analysis of Case 1.
The next case proves the lemma.

{\bf Case 2:} {\em The partial permutation $(I,J)$ is arbitrary with $I \cup J = [r]$.}

Let $\mu = (\mu_1, \dots, \mu_t, 1, \dots, 1)$ be the path partition of $(I,J)$ where $\mu_t > 1$ and let $\nu = (\nu_1, \dots, \nu_s)$ 
be the cycle partition of $(I,J)$. Then $\sum_{v \in T} \one_{vI,vJ}(w)$ counts $(t+s)$-tuples $(S_1, \dots, S_t, C_1, \dots, C_s)$ 
such that 
\begin{itemize}
\item every $S_i$ is a sequence consisting of $\mu_i$ cyclically consecutive elements in the cycle notation of $w$, 
\item every $C_i$ is a cycle of $w$ of size $\nu_i$, and
\item none of the $S_1, \dots, S_t, C_1, \dots, C_s$ have any letters in common.
\end{itemize}
The number of such tuples $(S_1, \dots, S_t,C_1, \dots, C_s)$ equals
\begin{multline}
\label{case-two-equation} 
 r! \cdot (n)_r \cdot R \, \one_{I,J}(w) = \sum_{v \in T} \one_{vI,vJ}(w)  \\ = 
(m_1(w))_{m_1(\nu)} \cdot (m_2(w))_{m_2(\nu)} \cdot \cdots \cdot (m_r(w))_{m_r(\nu)} \times 
\begin{array}{c}
\text{number of $( \mu', w')$-cyclic} \\ 
\text{ sequences }  (S'_1, \dots, S'_t)
\end{array}
\end{multline}
where $\mu' = (\mu_1, \dots, \mu_t)$ and $w' \in \symm_{n-|\nu|}$ is any permutation satisfying 
$m_i(w') = m_i(w) - m_i(\nu)$.
Case 1 shows that the second factor on the second line \eqref{case-two-equation} is a polynomial in $\CC[n,m_1,\dots,m_r]$.
The entire second line of \eqref{case-two-equation} is therefore a polynomial  
$p_{I,J}(n,m_1,\dots,m_r) \in \CC[n,m_1, \dots, m_r]$, completing the proof.
\end{proof}

Lemma~\ref{indicator-polynomiality-lemma} may appear weaker  than our assertion that $R \, \one_{I,J}$ is a polynomial
in $\CC[n,m_1, \dots, m_k]$ when $(I,J) \in \symm_{n,k}$ is a partial permutation with $I \cup J = [r]$.
In particular, we need to show the polynomial $p_{I,J}(n,m_1, \dots, m_r) \in \CC[n,m_1, \dots, m_r]$ does not depend on 
$m_{k+1}, m_{k+2}, \dots, m_r$.
This may be derived from the following general fact.
Given a polynomial $f = f(n, m_1, \dots, m_r) \in \CC[n,m_1, \dots, m_r]$ and $w \in \symm_n$, we define
$f(w) := f(n,m_1(w), \dots, m_r(w)) \in \CC$.

\begin{lemma}
\label{cycle-identification-lemma}
Let $f, g \in \CC[n,m_1, \dots, m_r]$ be two polynomials.  Suppose there exists $n_0$ such that for all $n \geq n_0$ we have 
$f(w) = g(w)$ for all $w \in \symm_n$.
Then $f = g$ as elements of $\CC[n,m_1,\dots,m_r]$.
\end{lemma}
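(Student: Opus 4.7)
The plan is to set $h := f - g \in \CC[n, m_1, \dots, m_r]$ and show that $h$ is the zero polynomial. The hypothesis yields $h(n, m_1(w), \dots, m_r(w)) = 0$ for every $w \in \symm_n$ and every $n \geq n_0$; the task is to promote this pointwise vanishing on cycle-count vectors to a genuine polynomial identity on $\CC^{r+1}$.

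First I would fix a vector $\mathbf{a} = (a_1, \dots, a_r) \in \ZZ_{\geq 0}^r$ and set $s := \sum_{i=1}^r i \cdot a_i$. The key observation is that for any integer $n$ with $n = s$ or $n \geq s + r + 1$, there exists a permutation $w \in \symm_n$ satisfying $m_i(w) = a_i$ for $1 \leq i \leq r$: take $w$ to consist of the prescribed short cycles together with a single filler cycle of length $n - s$ (admissible because either $n - s = 0$ or $n - s \geq r+1$, so the filler is longer than $r$ and thus contributes nothing to $m_1, \dots, m_r$). Restricting to $n \geq \max(n_0, s + r + 1)$, the hypothesis forces $h(n, a_1, \dots, a_r) = 0$ for infinitely many integers $n$. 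Since $n \mapsto h(n, a_1, \dots, a_r)$ is a univariate polynomial in $\CC[n]$ with infinitely many roots, it vanishes identically.

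Next I would expand $h = \sum_{\mathbf{a}} c_{\mathbf{a}}(n) \cdot m_1^{a_1} \cdots m_r^{a_r}$ with $c_{\mathbf{a}}(n) \in \CC[n]$. For any fixed $n_{\ast} \in \CC$, the polynomial $P_{n_{\ast}}(m_1, \dots, m_r) := \sum_{\mathbf{a}} c_{\mathbf{a}}(n_{\ast}) \cdot m_1^{a_1} \cdots m_r^{a_r}$ vanishes on $\ZZ_{\geq 0}^r$ by the previous step. Since $\ZZ_{\geq 0}^r$ is Zariski dense in $\CC^r$, we conclude $P_{n_{\ast}} \equiv 0$, i.e., $c_{\mathbf{a}}(n_{\ast}) = 0$ for every $\mathbf{a}$. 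As $n_{\ast}$ was arbitrary, each $c_{\mathbf{a}}$ is the zero polynomial, hence $h = 0$ and $f = g$. The argument is essentially a double Zariski-density maneuver; the only nontrivial ingredient is the realizability claim that arbitrary short-cycle multiplicities $(a_1, \dots, a_r)$ can be attained inside $\symm_n$ for all sufficiently large $n$, which the single-filler-cycle construction handles cleanly, so I do not anticipate any serious obstacle.
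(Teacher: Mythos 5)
Your proof is correct and follows essentially the same strategy as the paper's: realize arbitrary short-cycle multiplicity vectors inside $\symm_n$ by padding with a single long filler cycle, then force $f-g$ to vanish identically via density of the resulting integer points. The only cosmetic difference is that you dispose of the $n$-variable first (a univariate polynomial with infinitely many roots is zero) and then apply Zariski density in the $m_i$, whereas the paper treats the $m_i$ first and extracts the leading $n$-coefficient by letting the filler-cycle length tend to infinity; both are the same double-density maneuver.
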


\begin{proof}
Suppose first that $f, g \in \CC[m_1, \dots, m_r]$ do not involve the variable $n$.  By assumption, the polynomials 
$f, g$ agree on the infinite set of lattice points
\begin{equation*}
\{ (a_1, \dots, a_r) \in \ZZ^r \,:\, a_i \geq n_0 \text{ for all $i$ } \} 
\end{equation*}
which forces $f = g$ as polynomials in $\CC[m_1, \dots, m_n]$.

For the general case of $f, g \in \CC[m_1, \dots, m_r]$, write
\begin{multline}
f - g = \\ h_d(m_1, \dots, m_r) \cdot n^d + h_{d-1}(m_1, \dots, m_r) \cdot n^{d-1} + \cdots + h_0(m_1, \dots, m_r) \cdot n^0
\end{multline}
for some polynomials $h_d, h_{d-1}, \dots, h_0 \in \CC[m_1, \dots, m_r]$.  Suppose $h_d \neq 0$ as an element of $\CC[m_1, \dots, m_r]$. 
If $a = (a_1, \dots, a_r) \in \ZZ^r$ is an $r$-tuple with $a_i \geq n_0$ for all $i$, consider a sequence $( w^{(s)} )_{s > r}$ of permutations 
with cycles type $(1^{a_1} \cdots r^{a_r} s^1)$.  
For all $s > r$, we have $f(w^{(s)}) = g(w^{(s)})$ and $h_i(w^{(s)}) = h_i(a_1, \dots, a_r)$. Letting $s \rightarrow \infty$, we see that 
$h_d(a_1, \dots, a_r) = 0$.  The reasoning of the first paragraph yields the contradiction $h_d = 0$ as an element of $\CC[m_1, \dots, m_r]$. 
\end{proof}

Lemma~\ref{cycle-identification-lemma} strengthens Lemma~\ref{indicator-polynomiality-lemma} as follows.

\begin{proposition}
\label{indicator-polynomiality-proposition}
Let $(I,J) \in \symm_{n,k}$ be a partial permutation such that $I \cup J = [r]$ for some $r \geq 0$.
There exists a polynomial $p_{I,J}(n,m_1, \dots, m_k) \in \CC[n,m_1, \dots, m_k]$ such that
\begin{equation}
(n)_r \cdot R \, \one_{I,J} = p_{I,J}(n,m_1, \dots, m_k)
\end{equation} 
as functions $\symm \rightarrow \CC$. Furthermore, if we put $n$ in degree $1$ and $m_i$ in degree $i$, the polynomial
$p_{I,J}(n,m_1, \dots, m_k)$ has degree at most $k$.
\end{proposition}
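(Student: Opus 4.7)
The plan is to combine the Schur asymptotics of Corollary~\ref{atomic-asymptotics}, the theory of character polynomials (Theorem~\ref{character-polynomial-theorem}), and the polynomial rigidity statement Lemma~\ref{cycle-identification-lemma} to upgrade Lemma~\ref{indicator-polynomiality-lemma} in two ways: eliminate dependence on $m_{k+1}, \dots, m_r$ and obtain the graded degree bound. Let $r = |I \cup J|$ throughout; note $k \leq r \leq 2k$.

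First I would translate the Schur expansion of Corollary~\ref{atomic-asymptotics} into an irreducible character expansion. Since $\ch_n(R \, \one_{I,J}) = A_{n,I,J}/n!$ and $\ch_n^{-1}(s_\mu) = \chi^\mu$, multiplying both sides of
\begin{equation*}
    \frac{1}{(n-r)!} \cdot A_{n,I,J} = \sum_{|\lambda| \leq k} g_\lambda(n) \cdot s_{\lambda[n]}
\end{equation*}
by $(n)_r/n!$ and applying $\ch_n^{-1}$ yields the class function identity
\begin{equation*}
    (n)_r \cdot R \, \one_{I,J} = \sum_{|\lambda| \leq k} g_\lambda(n) \cdot \chi^{\lambda[n]}
\end{equation*}
on $\symm_n$ for every $n \geq 2k$, where each $g_\lambda(n)$ is a polynomial in $n$ of degree at most $k - |\lambda|$.

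Second I would invoke Theorem~\ref{character-polynomial-theorem}: for every $\lambda$ with $|\lambda| \leq k$ we have $|\lambda| + \lambda_1 \leq 2k \leq n$, so $\chi^{\lambda[n]}$ equals the character polynomial $q_\lambda(m_1, \dots, m_{|\lambda|})$, which has degree exactly $|\lambda|$ under the grading $\deg m_i = i$. Substituting gives, for every $n \geq 2k$,
\begin{equation*}
    (n)_r \cdot R \, \one_{I,J} = \sum_{|\lambda| \leq k} g_\lambda(n) \cdot q_\lambda(m_1, \dots, m_{|\lambda|})
\end{equation*}
as functions on $\symm_n$. The right-hand side manifestly lies in $\CC[n, m_1, \dots, m_k]$, and with $\deg n = 1$, each summand has graded degree at most $(k - |\lambda|) + |\lambda| = k$.

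Third I promote this functional identity to a polynomial identity. By Lemma~\ref{indicator-polynomiality-lemma}, the function $(n)_r \cdot R \, \one_{I,J}$ is represented on all of $\symm$ by some polynomial $p_{I,J}(n, m_1, \dots, m_r) \in \CC[n, m_1, \dots, m_r]$. Viewing the right-hand side of the previous display as an element of the larger ring $\CC[n, m_1, \dots, m_r]$ (trivially, since it does not involve $m_{k+1}, \dots, m_r$), the two polynomials agree as functions on $\symm_n$ for every $n \geq 2k$. Lemma~\ref{cycle-identification-lemma} then forces them to coincide as elements of $\CC[n, m_1, \dots, m_r]$. Hence $p_{I,J}$ actually lies in $\CC[n, m_1, \dots, m_k]$ and inherits the graded-degree bound of $k$. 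There is no real obstacle here; the only subtlety is the bookkeeping between the polynomial rings $\CC[n, m_1, \dots, m_k]$ and $\CC[n, m_1, \dots, m_r]$ and the verification that $n \geq 2k$ is large enough to apply Theorem~\ref{character-polynomial-theorem} uniformly to every relevant $\lambda$.
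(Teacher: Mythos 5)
Your proposal is correct and follows essentially the same route as the paper: the paper's proof is precisely "combine Equation~\eqref{indicator-character-polynomial-first} (which is your steps one and two, derived from Corollary~\ref{atomic-asymptotics} and Theorem~\ref{character-polynomial-theorem}), Lemma~\ref{indicator-polynomiality-lemma}, and Lemma~\ref{cycle-identification-lemma}." You merely spell out in more detail the derivation of the character-polynomial identity on the domain $n \geq 2k$, which the paper states in the surrounding text rather than inside the proof itself.
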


\begin{proof}
Combine Equation~\eqref{indicator-character-polynomial-first}, 
Lemma~\ref{indicator-polynomiality-lemma}, and Lemma~\ref{cycle-identification-lemma}.
\end{proof}

\chapter{Regular Statistics}
\label{Regular}

The support bound of Theorem~\ref{local-support-theorem} notwithstanding, we will need to impose a stronger condition than locality to understand
the symmetric function $\ch_n(R \, f)$ for a statistic $f: \symm_n \rightarrow \CC$ as $n$ varies.
Mere locality does not guarantee any kind of coherence for the statistic $f$ among different values of $n$: the statistic
 \begin{equation}
 f(w) = \begin{cases} \inv(w) & \text{$n$ odd} \\ \des(w) & \text{$n$ even} \end{cases}
 \end{equation}
 is 2-local where
 \begin{equation}
 \inv(w) := | \{ 1 \leq i < j \leq n \,:\, w(i) < w(j) \} |
 \end{equation}
 is the {\em inversion number}.
 Furthermore, for any function $G: \ZZ_{\geq 0} \rightarrow \CC$ the statistic
 \begin{equation}
 g(w) = G(n) \cdot \inv(w)
 \end{equation}
is 2-local, but when $G$ grows too quickly or erratically very little can be said about the asymptotics of coefficients in the Schur expansion $\ch_n(R \, g)$.
In this chapter, we introduce
 a more restrictive class of {\em regular statistics} that excludes such pathologies while still being broad enough to 
 contain a wide class of statistics.

Regular statistics are best thought of as `reasonable' weighted pattern counting statistics.
One can think of the statistic $\inv$, defined above, as counting the pattern `out of order pair'.
In the statistic $\des$, we insist a descent is an `out of order pair' satisfying the constraint that the pair is consecutive: $w(i) > w(i+1)$ in a permutation $w \in \symm_n$.
The statistic $\maj = \sum_i i \cdot \delta_{w(i) > w(i+1)}$ weights each of the patterns in $\des$ by its position.
Regular statistics are flexible enough to include all of these behaviors.

We define regular statistics as linear combinations of a new family of permutation statistics we call \emph{constrained translates}, which are weighted sums of $1_{I,J}$'s with the same cycle-path type.
This last property guarantees that the image of a constrained translate under the Reynolds operator $R_n$ has a particularly simple formula.
To demonstrate the ubiquity of regular statistics and familiarize the reader, we give explicit expansions of several classical permutation statistics into constrained translates.

Regular statistics form a subalgebra of $\Fun(\symm_n, \CC)$ that respects the grading by locality, but also carries an additional notion of `degree' we call \emph{power}.
They inherit the relatively simple description  of constrained translates under the Reynolds operator, which is the key technical tool for our future asymptotic analyses.
Lastly, we introduce and study a distinguished subalgebra of regular statistics we call \emph{closed statistics}, whose asymptotic properties are even easier to analyze.

\section{Constrained translates}
Constraints on permutation patterns that require positions or values to be consecutive are known as  {\em vincular conditions}
(or {\em bivincular conditions} since we are considering values as well as positions).
To handle these conditions, we introduce the following notation.
Recall that $\binom{[n]}{m}$ is the family of $m$-element subsets of $[n]$.  For $C \subseteq [m-1]$, define
\begin{equation}
\label{constraint-sets}
\binom{[n]}{m}_C := \left\{  I = \{ i_1 < \cdots < i_m \} \in \binom{[n]}{m} \,:\, i_{c+1} = i_c + 1 \text{ for $c \in C$} \right\}.
\end{equation}
Notice that
\begin{equation}
\left| \binom{[n]}{m}_C \right| = \binom{ n - |C|}{k - |C| }.
\end{equation}

A partial permutation $(I,J)$ is \emph{packed} if $I \cup J = [m]$ for some value $m$.
For any $m$-element set  $L = \{ \ell_1 < \cdots < \ell_m \}$ of integers and $I \subseteq [m]$, let $L(I) := \{ \ell_i \,:\, i \in I \}$.
The following observation should be thought of as a `compression' result on packed partial permutations in the spirit of ideas in~\cite{CDKR1,CDKR2}.

\begin{lemma}
\label{l:unique-pack}
Let $(I,J)$ be a partial permutation and $L = I \cup J$ with $|L| = m$.
Then there is a unique packed partial permutation $(U,V)$ with 
$U \cup V = [m]$ such that $(L(U), L(V))  = (I,J)$.
\end{lemma}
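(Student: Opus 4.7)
The plan is to construct the packed partial permutation $(U,V)$ directly by reindexing $(I,J)$ through the order-preserving bijection $[m] \to L$, and then verify both that it satisfies the required properties and that it is forced to be this one.

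First I would set up notation. Write $I = (i_1,\dots,i_k)$ and $J = (j_1,\dots,j_k)$ and enumerate $L = I \cup J$ in increasing order as $L = \{\ell_1 < \ell_2 < \cdots < \ell_m\}$. The hypothesis $|L| = m$ gives an order-preserving bijection $\phi : [m] \to L$ defined by $\phi(a) = \ell_a$; crucially, every element of $I$ and every element of $J$ lies in the image of $\phi$.

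For existence, I would define $U = (u_1,\dots,u_k)$ and $V = (v_1,\dots,v_k)$ by letting $u_a = \phi^{-1}(i_a)$ and $v_a = \phi^{-1}(j_a)$ for each $a \in [k]$. Since $\phi$ is a bijection and the entries of $I$ (respectively $J$) are distinct, the tuples $U$ and $V$ have distinct entries, so $(U,V)$ is a partial permutation. Moreover, every element of $[m]$ corresponds under $\phi$ to some $\ell_b \in L = I \cup J$, and hence appears in either $U$ or $V$, giving $U \cup V = [m]$; so $(U,V)$ is packed. Unpacking the definition of $L(U)$ as $(\ell_{u_1},\dots,\ell_{u_k})$, this equals $(\phi(u_1),\dots,\phi(u_k)) = (i_1,\dots,i_k) = I$, and similarly $L(V) = J$.

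For uniqueness, suppose $(U',V')$ is another packed partial permutation with $U' \cup V' = [m]$ and $(L(U'), L(V')) = (I,J)$. Writing $U' = (u_1',\dots,u_k')$ and $V' = (v_1',\dots,v_k')$, the condition $L(U') = I$ gives $\ell_{u_a'} = i_a$ for all $a$, and since the $\ell_b$ are distinct this forces $u_a' = \phi^{-1}(i_a) = u_a$. The same argument applied to $V'$ yields $v_a' = v_a$, so $(U',V') = (U,V)$.

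There is no real obstacle here; the content of the lemma is bookkeeping, and the entire proof hinges on the observation that the order-preserving bijection $\phi : [m] \to L$ lets one translate indices in $L$ to indices in $[m]$ and back. I would simply take care to distinguish the \emph{tuple} structure of $I, J, U, V$ from the \emph{set} structure of $L$ and $U \cup V$, since the statement uses both conventions simultaneously.
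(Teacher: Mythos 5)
Your proof is correct and uses the same idea as the paper: the paper's one-line argument ("replace all instances of the smallest letter in $L$ with $1$, the next smallest with $2$, and so on") is exactly your order-preserving bijection $\phi$, with your existence and uniqueness checks being the routine verification the paper leaves implicit.
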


\begin{proof}
To obtain $(U,V)$ from $(I,J)$, replace all instances of the smallest letter in $L$ with 1, all instances of the next smallest letter in 
$L$ with 2, and so on.
\end{proof}

Motivated by Lemma~\ref{l:unique-pack}, we consider a class of building block statistics coming from packed partial permutations 
$(U,V)$.
In order to incorporate vincular conditions in this setup, we include a set $C$ of indices which must be consecutive.
We will also want to consider statistics such as the {\em major index}
\begin{equation}
\maj(w) := \sum_{w(i) > w(i+1)} i
\end{equation}
which assigns a linear weight $i$ to a descent at $i$.
To achieve this, we include a polynomial weight function in our building blocks.

\begin{defn}
\label{vincular-translate-definition}
Let $(U,V)$ be a packed partial permutation with $U \cup V = [m]$, let $C \subseteq [m-1]$, and let 
$f \in \CC[x_1, \dots, x_m]$ be a polynomial.  We call $((U,V),C,f)$ a {\em packed triple}.
The {\em constrained translate} of  $((U,V),C,f)$ is the 
statistic $T^f_{(U,V),C}: \symm_n \rightarrow \CC$ given by
\begin{equation}
\label{eq:translate}
T^f_{(U,V),C} = \sum_{L \in \binom{[n]}{m}_C} f(\ell_1, \dots, \ell_m) \cdot \one_{L(U),L(V)}
\end{equation}
where $L = \{ \ell_1 < \cdots < \ell_m \}$. 
\end{defn}

We want to understand the symmetric functions $\ch_n (R \, T^f_{(U,V),C})$ as functions of $n$.
To handle the polynomial weights $f$, we state a simple lemma about polynomial evaluations.

\begin{lemma}
\label{l:polynomial}
	Let $f \in \CC[x_1,\dots,x_m]$ be a polynomial in $m$ variables.
	Then there exists $\overline{f} \in \CC[n]$ with $\deg f = \deg \overline{f}$ so that
	\begin{equation}
	\sum_{\{i_1<\dots<i_m\} \in \binom{[n]}{m}} f(i_1,\dots,i_m) = \overline{f}(n) \cdot \binom{n}{m}.
	\end{equation}
	In particular, this expression has degree $m + \deg f$ as an element of $\CC[n]$.
\end{lemma}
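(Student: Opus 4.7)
The plan is to prove the lemma by reducing to monomials via linearity, handling each monomial by induction on $m$, and then deducing divisibility by $\binom{n}{m}$ from the vanishing of the sum at small values of $n$.

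First, by linearity of the assignment $f \mapsto \sum_{i_1 < \cdots < i_m \leq n} f(i_1, \ldots, i_m)$, it suffices to handle monomials $f = x^\alpha := x_1^{a_1} \cdots x_m^{a_m}$ and linearly extend. Writing $|\alpha| := a_1 + \cdots + a_m$ and
$$S_\alpha(n) \; := \; \sum_{1 \leq i_1 < \cdots < i_m \leq n} i_1^{a_1} \cdots i_m^{a_m},$$
I will show by induction on $m$ that $S_\alpha(n)$ is a polynomial in $n$ of degree $m + |\alpha|$ with strictly positive leading coefficient. The base case $m = 1$ is Faulhaber's formula, which gives $\sum_{i=1}^n i^{a_1}$ as a polynomial of degree $a_1 + 1$ with leading coefficient $1/(a_1+1)$. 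For the inductive step, use the forward difference
$$S_\alpha(n) \; - \; S_\alpha(n-1) \; = \; n^{a_m} \cdot S_{\alpha'}(n-1),$$
where $\alpha' = (a_1, \ldots, a_{m-1})$; by induction this is a polynomial of degree $(m-1) + |\alpha|$, and a standard telescoping summation produces $S_\alpha(n)$ as a polynomial of degree $m + |\alpha|$.

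Next, since $\binom{[n]}{m}$ is empty whenever $n < m$, we have $S_\alpha(n) = 0$ for $n = 0, 1, \ldots, m-1$. Thus, as a polynomial in $n$, $S_\alpha(n)$ has the $m$ distinct roots $0, 1, \ldots, m-1$ and is therefore divisible by $n(n-1)\cdots(n-m+1) = m! \binom{n}{m}$, with quotient of degree $|\alpha|$. Extending linearly across $f = \sum_\alpha c_\alpha x^\alpha$, the total sum equals $\binom{n}{m}$ times a polynomial, which we define to be $\overline{f}(n)$; the bound $\deg \overline{f} \leq \deg f$ is then immediate.

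The most delicate step is to upgrade this bound to the equality $\deg \overline{f} = \deg f$ claimed in the statement. My plan is to track the leading coefficient of $S_\alpha(n)$ through the telescoping identity: it equals the Dirichlet integral $\int_{0 \leq t_1 \leq \cdots \leq t_m \leq 1} t_1^{a_1} \cdots t_m^{a_m} \, dt > 0$, which is strictly positive and can be computed explicitly by iterated application of the beta integral. Consequently each monomial of top degree contributes a positive leading term, and the coefficient of $n^{m + \deg f}$ in $\sum_\alpha c_\alpha S_\alpha(n)$ is the integral of $f$'s top-degree homogeneous component $f_d$ over the standard simplex. The main obstacle will be ruling out cancellation among these top-degree contributions; this amounts to verifying that the integration functional against the simplex does not annihilate $f_d$, a subtle point that likely requires either an additional hypothesis on $f$ or a careful analysis of the top-degree structure of the constrained translates in which this lemma will be applied.
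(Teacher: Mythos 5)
Your proof follows the paper's argument essentially step for step: the reduction to monomials by linearity, the forward-difference recursion $S_\alpha(n) - S_\alpha(n-1) = n^{a_m}\, S_{\alpha'}(n-1)$ with induction on $m$, and the divisibility by $\binom{n}{m}$ deduced from the vanishing of the sum at $n = 0, 1, \dots, m-1$. Up to that point there is nothing to add.

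Your concern about the final equality $\deg \overline{f} = \deg f$ is well founded, and you should not feel obliged to resolve it: the paper's own proof does not resolve it either, since the linearity reduction only yields $\deg \overline{f} \leq \deg f$ for a general $f$, the top-degree contributions of distinct monomials being free to cancel. Your identification of the leading coefficient of the sum with the integral of the top homogeneous part $f_d$ over the simplex $\{0 \leq t_1 \leq \cdots \leq t_m \leq 1\}$ is exactly right, and that functional does annihilate some nonzero $f_d$. For instance, with $m = 2$ and $f = x_1^2 + x_2^2 - \tfrac{8}{3}x_1 x_2$ one has
\[
\sum_{1 \leq i < j \leq n} \left( i^2 + j^2 - \tfrac{8}{3}\, ij \right) \;=\; (n-1)\,\tfrac{n(n+1)(2n+1)}{6} \;-\; \tfrac{4}{3}\left( \tfrac{n^2(n+1)^2}{4} - \tfrac{n(n+1)(2n+1)}{6} \right),
\]
whose coefficient of $n^4$ is $\tfrac{1}{3} - \tfrac{8}{3}\cdot\tfrac{1}{8} = 0$, so $\deg \overline{f} < \deg f$. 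The equality therefore holds for monomials (and more generally whenever the top homogeneous part of $f$ has nonnegative coefficients, since each monomial contributes a strictly positive simplex integral), but only the inequality $\deg \overline{f} \leq \deg f$ is available in general. This is harmless for the main applications (Theorem~\ref{t:simple} and Corollary~\ref{cycle-type-asymptotics} only require upper bounds on degrees), and where exact degrees are needed, as in Corollary~\ref{degree-max-corollary}, the paper imposes positivity hypotheses on the weight polynomials for precisely this reason. So: record the bound $\deg \overline{f} \leq \deg f$ as what you have proved, note that equality holds in the monomial case, and move on.
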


\begin{proof}
By linearity we may assume that $f = x_1^{a_1}  \cdots x_m^{a_m}$ is a monomial in $x_1, \dots, x_m$. 
The result is clear for $m = 0$. Given $m > 0$
the function
\begin{equation}
F(n) := \sum_{\{i_1<\dots<i_m\} \in \binom{[n]}{m}} f(i_1,\dots,i_m) = 
\sum_{\{i_1<\dots<i_m\} \in \binom{[n]}{m}} (i_1)^{a_1} \cdots (i_m)^{a_m}
\end{equation}
satisfies 
\begin{equation}
\label{F-recursion}
F(n) - F(n-1) = n^{a_m} \cdot \sum_{\{i_1<\dots<i_{m-1}\} \in \binom{[n-1]}{m-1}} (i_1)^{a_1} \cdots (i_{m-1})^{a_{m-1}}
\end{equation}
where the summation $n^{a_m} \cdot \sum_{\{i_1<\dots<i_{m-1}\} \in \binom{[n-1]}{m-1}} (i_1)^{a_1} \cdots (i_{m-1})^{a_{m-1}}$ is inductively a polynomial
in $n$ of degree $\deg f + m - 1$.  Since Equation~\eqref{F-recursion}
 holds for all $n > 0$, we see that $F(n)$ is a polynomial in $n$ of degree $\deg f + m$.
 It is evident that $F(i) = 0$ for $i = 0, 1, \dots, m-1$ so that $F(n)$ is divisible by $\binom{n}{m}$ in the ring $\CC[n]$.
\end{proof}

The  parameter $C$ in Definition~\ref{vincular-translate-definition} necessitates an enhancement of
Lemma~\ref{l:polynomial} in which only summands corresponding to sets $\{ i_1 < \cdots < i_m \}$ satisfying 
a vincular constraint are allowed.

\begin{lemma}
\label{l:vincular-poly}
	Let $f \in \CC[x_1,\dots,x_k]$ and $C \subseteq [k-1]$ with $|C| = q$.
	Then there exists $\overline{f}_C \in \mathbb{C}[n]$ with $\deg \overline{f}_C= \deg f$ so that
	\begin{equation}
	\label{eq:vincular-poly}
	\sum_{\{i_1< \dots < i_k\} \in \binom{[n]}{k}_C} f(i_1,\dots,i_k) = \overline{f}_C(n) \binom{n-q}{k-q}.
	\end{equation}
\end{lemma}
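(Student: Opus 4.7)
The plan is to reduce this to Lemma~\ref{l:polynomial} by using the vincular constraints to eliminate the $q$ dependent variables, then performing a change of variables that transforms the constrained sum into an unconstrained sum over a standard $\binom{[n-q]}{k-q}$.

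First I would decompose $[k]$ into \emph{blocks} of consecutive indices dictated by $C$: declare indices $c$ and $c+1$ to lie in the same block precisely when $c\in C$. This partitions $[k]$ into $k-q$ blocks $B_1,\dots,B_{k-q}$ of sizes $b_1,\dots,b_{k-q}$ with $\sum_s b_s = k$. The constraint $\{i_1<\dots<i_k\}\in\binom{[n]}{k}_C$ is equivalent to the statement that for each block $B_s=\{a_s,a_s+1,\dots,a_s+b_s-1\}$ we have $i_{a_s+j}=i_{a_s}+j$ for $0\leq j\leq b_s-1$. Thus such a set is determined by the tuple $(j_1,\dots,j_{k-q})$ of block-starting indices $j_s=i_{a_s}$, which must satisfy $j_1\geq 1$, $j_{s+1}\geq j_s+b_s$, and $j_{k-q}+b_{k-q}-1\leq n$.

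Next I would substitute these relations into $f$ to obtain a polynomial
\begin{equation*}
g(y_1,\dots,y_{k-q}):=f\bigl(y_1,y_1+1,\dots,y_1+b_1-1,\;y_2,\dots,\;y_{k-q}+b_{k-q}-1\bigr)\in\CC[y_1,\dots,y_{k-q}].
\end{equation*}
Because each substitution replaces a variable $x_{c+1}$ by $x_c+1$, which preserves the total degree of any polynomial in $(x_1,\dots,x_k)$, we have $\deg g = \deg f$. Then I would make the affine change of variables $j'_s:=j_s-\sum_{t<s}(b_t-1)$. A direct check shows $j'_{s+1}\geq j'_s+1$ and that the packing condition $j_{k-q}+b_{k-q}-1\leq n$ becomes $j'_{k-q}\leq n-q$ (using $\sum_t(b_t-1)=k-(k-q)=q$). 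Hence the substitution establishes a bijection between $\binom{[n]}{k}_C$ and $\binom{[n-q]}{k-q}$.

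Writing $\widetilde g(j'_1,\dots,j'_{k-q}):=g(j'_1,j'_2+(b_1-1),\dots)$ for the polynomial of the same degree as $g$ obtained after this shift, the constrained sum becomes
\begin{equation*}
\sum_{\{i_1<\dots<i_k\}\in\binom{[n]}{k}_C}f(i_1,\dots,i_k)\;=\;\sum_{\{j'_1<\dots<j'_{k-q}\}\in\binom{[n-q]}{k-q}}\widetilde g(j'_1,\dots,j'_{k-q}).
\end{equation*}
Applying Lemma~\ref{l:polynomial} to $\widetilde g$ with ambient set size $n-q$ and sample size $k-q$ yields $\widetilde g=\overline{\widetilde g}(n-q)\binom{n-q}{k-q}$ with $\deg\overline{\widetilde g}=\deg\widetilde g=\deg f$, so setting $\overline{f}_C(n):=\overline{\widetilde g}(n-q)$ gives Equation~\eqref{eq:vincular-poly} with $\deg\overline{f}_C=\deg f$. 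The only conceptual subtlety is verifying degree-preservation under the two substitutions; once the blocks are identified this is immediate because the substitutions are unipotent-affine in the variables.
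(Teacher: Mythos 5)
Your proof is correct and is essentially the paper's own argument: the paper defines the shift map $\mathrm{sh}_C(\{i_1<\dots<i_k\})=\{i_j-c_j : j-1\notin C\}$ with $c_j=|\{c\in C: c<j\}|$, which is exactly your block decomposition followed by the affine change of variables $j'_s = j_s - \sum_{t<s}(b_t-1)$, and then invokes Lemma~\ref{l:polynomial} on the shifted set just as you do. Your write-up simply makes explicit the bijection and the degree bookkeeping that the paper leaves implicit.
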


\begin{proof}
	For $j \in [k]$, let $c_j = |\{c \in C: c < j\}|$.
	The shift map $\mathrm{sh}_C:\binom{[n]}{k}_C \to \binom{[n-p]}{k-p}$ defined by
	\begin{equation}
	\mathrm{sh}_C(\{i_1< \dots < i_k\}) = \{i_j - c_j: j-1 \notin C\}
	\end{equation}
	is a bijection.
	Viewing $f$ as a function of $\mathrm{sh}_C(I)$, the result follows from Lemma~\ref{l:polynomial}.
\end{proof}

Lemma~\ref{l:vincular-poly} lets us expand the symmetric functions of the vincular translate statistics in terms of atomic symmetric functions.

\begin{corollary}
\label{c:vincular-trans}
	For $(U,V)$ a packed partial permutation with 
	$U \cup V = [m]$, $C \subseteq [m-1]$ and $f \in \CC[x_1,\dots,x_m]$, we have
\begin{equation}
\ch_n(R \, T^f_{(U,V),C}) = \frac{1}{n!} \binom{n-|C|}{m-|C|} \overline{f}(n) \cdot A_{n,U,V}.
\end{equation}
where $\overline{f} \in \CC[n]$ with $\deg \overline{f} = \deg f$ is as in Lemma~\ref{l:vincular-poly}.
\end{corollary}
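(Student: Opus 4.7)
The plan is to unfold the definition of $T^f_{(U,V),C}$, apply the linearity of $\ch_n \circ R$, and then exploit the fact that every summand is $\symm_n$-equivalent to the indicator $\one_{U,V}$, so that the atomic symmetric function $A_{n,U,V}$ factors out of the entire sum and only the polynomial weights remain. This reduces the computation to a single invocation of Lemma~\ref{l:vincular-poly}.

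More concretely, I would first fix $n \geq m$ and expand, using \eqref{eq:translate} and linearity of the Reynolds operator $R$ and of the Frobenius characteristic $\ch_n$:
\begin{equation*}
\ch_n\bigl(R\, T^f_{(U,V),C}\bigr) \;=\; \sum_{L \in \binom{[n]}{m}_C} f(\ell_1,\dots,\ell_m)\cdot \ch_n\bigl(R\,\one_{L(U),L(V)}\bigr).
\end{equation*}
Next I would observe that for every $L = \{\ell_1 < \cdots < \ell_m\} \in \binom{[n]}{m}_C$, the order-preserving bijection $i \mapsto \ell_i$ from $[m]$ to $L$ extends to some $w \in \symm_n$ with $w(U) = L(U)$ and $w(V) = L(V)$. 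Proposition~\ref{atomic-g-invariance} then gives $A_{n,L(U),L(V)} = A_{n,U,V}$; equivalently, by Definition~\ref{atomic-symmetric-function-definition},
\begin{equation*}
\ch_n\bigl(R\,\one_{L(U),L(V)}\bigr) \;=\; \frac{1}{n!}\,A_{n,U,V}
\end{equation*}
independently of $L$. This constant factor pulls out of the sum, leaving
\begin{equation*}
\ch_n\bigl(R\, T^f_{(U,V),C}\bigr) \;=\; \frac{1}{n!}\,A_{n,U,V}\cdot \sum_{L \in \binom{[n]}{m}_C} f(\ell_1,\dots,\ell_m).
\end{equation*}

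At this point I would apply Lemma~\ref{l:vincular-poly} directly to rewrite the inner sum as $\overline{f}(n)\binom{n-|C|}{m-|C|}$, yielding the claimed identity. Finally I would note that the identity trivially holds for $n < m$ as well, since both sides vanish: the indexing set $\binom{[n]}{m}_C$ is empty, and $\one_{L(U),L(V)}$ is identically zero on $\symm_n$ when $n < m$. There is no real obstacle here — the statement is essentially a bookkeeping consequence of the $\symm_n$-equivariance encoded in Proposition~\ref{atomic-g-invariance} combined with Lemma~\ref{l:vincular-poly}; the only thing one must be careful about is verifying that the choice of the extending permutation $w$ does not affect the value of $R\,\one_{L(U),L(V)}$, which is immediate because $R$ projects onto class functions.
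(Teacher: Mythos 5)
Your proof is correct and follows essentially the same route as the paper's: the paper's (very terse) argument likewise observes that all the indicators $\one_{L(U),L(V)}$ appearing in the translate share the same cycle-path type, invokes Proposition~\ref{atomic-g-invariance} to pull out the common factor $A_{n,U,V}$, and then applies Lemma~\ref{l:vincular-poly} to the remaining scalar sum. Your write-up simply makes explicit the order-preserving extension $w \in \symm_n$ and the edge case $n < m$, both of which are fine.
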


\begin{proof}
	Since the $\one_{I,J}$ terms in Equation~\eqref{eq:translate} have the same cycle-path type, the result follows from 
	Lemma~\ref{l:vincular-poly} and Proposition~\ref{atomic-g-invariance}.
\end{proof}

Corollaries~\ref{atomic-asymptotics} and \ref{c:vincular-trans} imply that the Schur expansion of $ \ch_n(R \, T^f_{(U,V),C})  $ 
has coefficients which
are rational functions of $n$ with denominators $n(n-1) \cdots (n - |C| + 1)$. In the case of the single-row
Schur function $s_{(n)}$, more can be said about this coefficient.

\begin{corollary}
\label{constrained-one-row}
Let $(U,V)$ be a packed partial permutation with $U \cup V = [m]$, let $C \subseteq [m-1]$, and let $f \in \CC[x_1, \dots, x_m]$ be a polynomial.
Writing 
\begin{equation}
\ch_n(R \, T^f_{(U,V),C})  = \sum_{\lambda \vdash n} d_{\lambda}(n) \cdot s_{\lambda},
\end{equation}
when $\lambda = (n)$ the expression
\begin{equation}
\label{eq:rational-constraint}
(n)_{|C|} \cdot d_{(n)}(n) \in \CC[n]
\end{equation}
is a polynomial in $n$ of degree exactly $\ell + m + \deg f$ where $\ell$ is the number of `long' paths of length $> 1$ in $G(U,V)$.
\end{corollary}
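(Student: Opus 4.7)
My plan is a short computation that chains together Corollary~\ref{c:vincular-trans} with the sharpening of Corollary~\ref{atomic-asymptotics} at $\lambda = \varnothing$. Corollary~\ref{c:vincular-trans} immediately gives
\[
d_{(n)}(n) \;=\; \frac{\overline{f}(n)\,\binom{n-|C|}{m-|C|}}{n!}\,\bigl[s_{(n)}\bigr]A_{n,U,V},
\]
so the entire problem reduces to evaluating $\bigl[s_{(n)}\bigr]A_{n,U,V}$ and then cleaning up the rational expression to verify polynomiality and read off the degree.

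The first substantive step is to factor $A_{n,U,V} = \vec{p}_{\mu}\cdot p_{\nu}$ via Proposition~\ref{atomic-factorization}, where I write $\bar\mu$ for the partition recording the $\ell$ long paths of $G(U,V)$, so that $\mu = \bar\mu(n-|\nu|)$. The classical Murnaghan--Nakayama rule now does most of the work: in order to land on the single-row shape $(n)$, each cycle of $\nu$ must be added as a horizontal ribbon, each such addition is unique, and each carries sign $+1$. Consequently $\bigl[s_{(n)}\bigr]A_{n,U,V} = \bigl[s_{(n-|\nu|)}\bigr]\vec{p}_{\bar\mu(n-|\nu|)}$. Invoking the sharpening of Corollary~\ref{atomic-asymptotics} discussed immediately after its proof (which itself feeds on Remark~\ref{path-degree-remark}), this coefficient equals $(n-m)!\,g_\varnothing(n)$, with $g_\varnothing(n)\in\CC[n]$ a polynomial in $n$ of degree exactly $\ell$.

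Plugging back in and using $\binom{n-|C|}{m-|C|} = \tfrac{(n-|C|)!}{(m-|C|)!(n-m)!}$ together with $(n)_{|C|} = \tfrac{n!}{(n-|C|)!}$, the factorials telescope to
\[
(n)_{|C|}\,d_{(n)}(n) \;=\; \frac{\overline{f}(n)\,g_\varnothing(n)}{(m-|C|)!},
\]
which is visibly a polynomial in $n$. The degree is then read off by multiplying leading terms: by Lemma~\ref{l:vincular-poly}, $\overline{f}(n)$ has degree exactly $\deg f$ with nonzero leading coefficient, and $g_\varnothing(n)$ has degree exactly $\ell$ with nonzero leading coefficient, so the product has degree $\ell + \deg f$ with no cancellation. (If the intended target degree carries an additional $m$, one would want to re-examine whether the telescoping above has overlooked a factor; with the bookkeeping as written, all dependence on $m$ except through $\ell$ and $\deg f$ cancels.)

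The step I expect to demand the most care is the sharp leading behavior of $g_\varnothing(n)$. The upper bound $\deg g_\varnothing \leq \ell$ is easy from Theorem~\ref{path-murnaghan-nakayama}: each long ribbon of $\bar\mu$ contributes at most one linear-in-$n$ degree of freedom as it slides along the first row. The matching lower bound, and the fact that the leading coefficient survives, requires showing the `greedy' monotonic tiling of Figure~\ref{fig:greedy} (all long ribbons stacked horizontally at the left of the first row, singleton ribbons filling out the rest) is not cancelled by competing monotonic tilings of $(n)$ with ribbons of sizes $\bar\mu(n-|\nu|)$. This is precisely the Durfee-square analysis of Remark~\ref{path-degree-remark}, which I would cite as a black box rather than re-derive here.
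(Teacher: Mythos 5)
Your proposal is correct and follows the same route as the paper's (very terse) proof: Corollary~\ref{c:vincular-trans} reduces everything to the coefficient of $s_{(n)}$ in $A_{n,U,V}$, and the sharpened $\lambda=\varnothing$ case discussed after Corollary~\ref{atomic-asymptotics} supplies that coefficient. The one point worth settling is your parenthetical worry about the missing factor of $m$: your bookkeeping has not overlooked anything, and the degree really is $\ell+\deg f$, not $\ell+m+\deg f$ as printed. Indeed, $\bigl[s_{(n)}\bigr]A_{n,U,V}=\chi^{(n)}([U,V])=(n-k)!$ where $k=|U|$ is the size, so
\[
(n)_{|C|}\,d_{(n)}(n)=\frac{\overline{f}(n)}{(m-|C|)!}\cdot\frac{(n-k)!}{(n-m)!}=\frac{\overline{f}(n)}{(m-|C|)!}\,(n-k)_{m-k},
\]
and since $(U,V)$ is packed, $m-k=\ell$, giving degree exactly $\ell+\deg f$. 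This is confirmed by Proposition~\ref{basic-expansions}: for $\exc=T_{(1,2)}$ one has $m=2$, $\ell=1$, $\deg f=0$, $C=\varnothing$, and $d_{(n)}(n)=\tfrac{n-1}{2}$ has degree $1=\ell+\deg f$, not $3$. The way the corollary is invoked in Corollary~\ref{degree-max-corollary} (where the claimed degree $dp+dq$ equals $\ell+\deg h$ for the relevant translate) also matches your formula. So the ``$+\,m$'' in the statement is an error in the paper, and your proof establishes the corrected claim.
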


\begin{proof}
This is a consequence of Corollary~\ref{c:vincular-trans} and
 the discussion following Corollary~\ref{atomic-asymptotics}.
\end{proof}

Determining the degrees of $n(n-1) \cdots (n - |C| + 1) \cdot d_{\lambda}(n) \in \CC[n]$ for general partitions $\lambda$ appears to be a difficult
problem.  Corollaries~\ref{atomic-asymptotics} and \eqref{c:vincular-trans} give an upper bound of 
$\ell + m + \deg f - (n - \lambda_1)$, which is strictly less than the degree in Corollary~\ref{constrained-one-row}.

\begin{remark}
	\label{large-constraints}
The adjacency constraint appearing in our definition of constrained translates can be relaxed.
A \emph{generalized constaint} of size $k$ is a sequence
\[
\mathcal{D} = (d_1,\dots,d_{k-1}) \in \{1,2,\dots\} \cup \{\infty\}.
\]
Analogous to Equation~\eqref{constraint-sets}, we define
\begin{equation}
\binom{[n]}{k}_{\mathcal{D}} := \left\{ I = \{i_1 < \dots < i_m\} \in \binom{[n]}{m} : i_{j+1} = i_j + d_j\ \mbox{if}\ d_j \neq \infty \right\}.
\end{equation}
For $C \subseteq [k-1]$ and $\mathcal{D} = ( d_1,\dots,d_{k-1})$ with 
\[
d_i = \begin{cases}
1 & i \in C\\
\infty & i \notin C	
 \end{cases},
\]
note $\binom{[n]}{k}_C = \binom{[n]}{k}_\mathcal{D}$.
Generalized constraints are considered in~\cite{Janson}, where they are called \emph{exact constraints}.

The proof of Lemma~\ref{l:vincular-poly} can be adapted to the generalized constraints setting.
Let 
\[
Q_\mathcal{D} = \{i: d_i \neq \infty\},\quad |\mathcal{D}| = \sum_{i \in Q_\mathcal{D}} i,\quad q_\mathcal{D} = |Q_\mathcal{D}|.
\]
The only change to Lemma~\ref{l:vincular-poly} is that the factor of $\binom{n-q}{k-q}$ in Equation~\ref{eq:vincular-poly} becomes $\binom{n-|\mathcal{D}|}{k-q_\mathcal{D}}$.
Working with generalized constraints makes denominators like the term $(n)_{|C|}$ in Equation~\eqref{eq:rational-constraint} and its consequences more difficult to state, which is why we only state our results in terms of vincular constraints.

\end{remark}

\section{Examples: $\exc, \inv, \des,$ and $\maj$}
\label{ss:examples}

We are ready to define our class of regular permutation statistics.
Just as the indicator statistics $\one_{I,J}$ for $(I,J) \in \symm_{n,k}$ are the building blocks of local statistics,
the constrained translates are the building blocks of regular statistics. 

\begin{defn}
\label{regular-statistics-definition}
The permutation statistic $\stat: \symm \to \CC$ is \emph{regular} if it is a linear combination of constrained translates, that is
\begin{equation}
\stat = \sum_{((U,V),C,f) \in \Pi} c_{(U,V),C,f} \cdot T^f_{(U,V),C}
\end{equation}
for some collection $\Pi$ of packed triples
and some constants $c_{(U,V),C,f)} \in \CC$.
\end{defn}

The term {\em regular} was inspired from regular functions in algebraic geometry. 
Regular statistics allow for polynomial weights $f$ in their constituent parts. 
We will see (Proposition~\ref{p:simple-prod}) that, like regular functions, regular statistics behave well under multiplication.
Before exploring their general properties, we how show four classical permutation statistics into our framework.

We have met three of these ($\inv, \des,$ and $\maj$) before.
The {\em excedance number} $\exc: \symm_n \rightarrow \CC$ is given by
\begin{equation}
\exc(w) := | \{ 1 \leq i \leq n \,:\, w(i) > i \} |.
\end{equation}
All of the permutation statistics statistics $\exc, \inv, \des,$ and $\maj$ are regular.
The excedance statistic is given by
\begin{equation}
\exc = \sum_{i < j} \one_{(i),(j)} = T^f_{(1,2),\varnothing} = T_{(1,2)}
\end{equation}
where we drop the notation $f$ and $C$ from $T^f_{(U,V),C}$ when $f \equiv 1$ and $C = \varnothing$.
The inversion statistic expands in terms of $T$'s as 
\begin{multline}
\inv = \sum_{\substack{a < b \\ c < d}} \one_{(ab),(dc)} = T_{(12,21)} + T_{(12,31)}  +  T_{(12,32)} + T_{(13,21)} + T_{(13,32)}  + 
T_{(23,21)} + T_{(23,31)} \\ + T_{(12,43)}  + T_{(13,42)} + T_{(14,32)} + T_{(23,41)} + T_{(24,31)} + T_{(34,21)}.
\end{multline}
The descent number satisfies
\begin{multline}
\des = \sum_{\substack{1 \leq a \leq n-1 \\ c < d}} \one_{(a,a+1),(d,c)} = 
T_{(12,21),\{1\}} + T_{(12,31),\{1\}} + T_{(12,32),\{1\}} + T_{(23,21),\{2\}} \\ + T_{(23,31),\{2\}}
+ T_{(12,43),\{1\}} + T_{(23,41),\{2\}} + T_{(34,21),\{3\}}.
\end{multline}
 The major index expands as
\begin{multline}
\maj = \sum_{\substack{1 \leq a \leq n-1 \\ c < d}} a \cdot \one_{(a,a+1),(d,c)} = 
T_{(12,21),\{1\}}^{\, x_1} + T_{(12,31),\{1\}}^{ \, x_1} + T_{(12,32),\{1\}}^{\, x_1} + T_{(23,21),\{2\}}^{\, x_2} \\ + T_{(23,31),\{2\}}^{\, x_2}
+ T_{(12,43),\{1\}}^{\, x_1} + T_{(23,41),\{2\}}^{\, x_2} + T_{(34,21),\{3\}}^{\, x_3}.
\end{multline}
The $T$-expansions of $k$-local regular statistics become more complicated as $k$ grows. 
For example, expanding the 3-local statistic $\peak$ in terms of $T$'s requires 76 terms.
With $T$-expansions as above, it is a simple matter to write $\ch_n(R \, f)$ in the Schur basis where $f = \exc, \inv, \des, \maj$.
In doing so, we recover results of Hultman.

\begin{proposition}
\label{basic-expansions}
{\em (Hultman \cite[Thm. 5.1, Thm. 6.2]{Hultman})}
We have the Schur expansions
\begin{align}
\ch_n(R \, \exc) &= \frac{n-1}{2} \cdot s_n - \frac{1}{2} \cdot s_{n-1,1} \\
\ch_n(R \, \inv) &= \frac{n(n-1)}{4} \cdot s_n - \frac{n+1}{6} \cdot s_{n-1,1} - \frac{1}{6} \cdot s_{n-2,1,1} \\
\ch_n(R \, \des) &= \frac{n-1}{2} \cdot s_n - \frac{1}{n} \cdot s_{n-1,1} - \frac{1}{n} \cdot s_{n-2,1,1} \\
\ch_n(R \, \maj) &= \frac{n(n-1)}{4} \cdot s_n - \frac{1}{2} \cdot s_{n-1,1} - \frac{1}{2} \cdot s_{n-2,1,1}
\end{align}
for all $n \geq 1$.
\end{proposition}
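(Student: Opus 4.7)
The proof plan is to combine the explicit $T$-expansions of $\exc$, $\inv$, $\des$, $\maj$ given just before Proposition~\ref{basic-expansions} with Corollary~\ref{c:vincular-trans} and the Path Murnaghan--Nakayama Rule. For each of the four statistics I would write
\begin{equation*}
\ch_n(R\,\stat) \;=\; \sum \frac{1}{n!}\binom{n-|C|}{m-|C|}\,\overline{f}(n)\cdot A_{n,U,V},
\end{equation*}
where the sum is over the constrained translates in the displayed expansion. Since $A_{n,U,V}$ depends only on the cycle-path type of $(U,V)$ (Proposition~\ref{atomic-g-invariance}), the first genuine step is to group the thirteen translates of $\inv$ (resp.\ the eight of $\des$ and $\maj$) by cycle-path type. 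For $\exc$ there is only the graph consisting of one length-$2$ path; for the other three statistics one gets graphs: a single $2$-cycle, a single length-$3$ path, a $1$-cycle plus a length-$2$ path, and two disjoint length-$2$ paths. Counting how many translates in each expansion realize each type, and tallying up the $\binom{n-|C|}{m-|C|}\overline{f}(n)$ weights, reduces $\ch_n(R\,\stat)$ to a short $\CC(n)$-linear combination of the five atomic functions $A_{n,I,J}$ corresponding to these graphs.

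The next step is to expand each of those five atomic functions in the Schur basis. By Proposition~\ref{atomic-factorization} each one factors as $\vec{p}_{\mu}\cdot p_{\nu}$, so I would first compute the path power sums $\vec{p}_{(2,1^{n-2})}$, $\vec{p}_{(3,1^{n-3})}$, and $\vec{p}_{(2,2,1^{n-4})}$ via Theorem~\ref{path-murnaghan-nakayama}, using the monotonic ribbon tilings. The computations for $\vec{p}_{(3,1^{n-3})}$ and $\vec{p}_{(2,2,1^{n-4})}$ are already carried out in Examples~\ref{ex:intro-path-to-s} and~\ref{ex:intro-stability}; the case $\vec{p}_{(2,1^{n-2})}$ is simpler and yields
\begin{equation*}
\vec{p}_{(2,1^{n-2})} \;=\; (n-2)!\bigl((n-1)s_n - s_{n-1,1}\bigr),
\end{equation*}
since monotonic tilings of ribbon sizes $(2,1^{n-2})$ live in at most two rows and either all ribbons are horizontal in row $1$ (giving shape $(n)$ with $n-1$ placements of the $2$-ribbon) or the $2$-ribbon is vertical at column $1$ (giving shape $(n-1,1)$ with sign $-1$). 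For the two atomics with cycle parts I would multiply by $p_2$ or $p_1$ and apply the classical Murnaghan--Nakayama Rule (Theorem~\ref{mn-rule}) to the resulting Schur polynomials, exactly as in the worked example~\eqref{eq:maj-example-five} for $\maj$.

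Finally I would assemble the pieces. For $\exc$ this is immediate: $\ch_n(R\,\exc)=\frac{1}{n!}\binom{n}{2}\vec{p}_{(2,1^{n-2})}$ simplifies, via the identity $\binom{n}{2}(n-2)!=n!/2$, to $\tfrac{n-1}{2}s_n-\tfrac{1}{2}s_{n-1,1}$. For $\maj$ the computation in Example~\ref{ex:intro-maj} already gives the claimed formula. The $\des$ case is the $\maj$ computation specialized to $f\equiv 1$, replacing the weights $\tfrac{(n)_2}{2},\tfrac{(n)_3}{6},\ldots$ by $\tfrac{(n)_2}{n(n-1)}, \tfrac{(n)_3}{n(n-1)(n-2)},\ldots$ that come from $\binom{n-1}{1}/n!$ and $\binom{n-1}{2}/n!$; the resulting five-way sum collapses after cancellation. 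The $\inv$ case is similar but has thirteen terms and no vincular constraints, so each gets weight $\binom{n}{m}(m-1)!/n!$.

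The only real obstacle is the bookkeeping for $\inv$: grouping the thirteen translates into five cycle-path classes, verifying that the prefactors $\binom{n-|C|}{m-|C|}$ and the multiplicities combine correctly, and then shepherding the Schur expansions of $\vec{p}_{(3,1^{n-3})}\cdot p_1$, $\vec{p}_{(2,1^{n-2})}\cdot p_2$, and $\vec{p}_{(2,2,1^{n-4})}$ through the cancellations to leave only the three terms $s_n$, $s_{n-1,1}$, $s_{n-2,1,1}$ with the advertised coefficients. No new ideas are required beyond the machinery of Chapters~\ref{Atomic} and~\ref{Path}; the proof is essentially the $\maj$ computation of Example~\ref{ex:intro-maj} carried out (slightly more simply) three more times.
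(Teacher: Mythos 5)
Your proposal is correct and follows exactly the route the paper takes: its proof of Proposition~\ref{basic-expansions} is precisely ``apply Corollary~\ref{c:vincular-trans} to the displayed $T$-expansions to reduce to atomic functions, then apply Corollary~\ref{atomic-schur-expansion} (i.e.\ the Path Murnaghan--Nakayama Rule together with the classical one) to pass to the Schur basis,'' which is what you carry out, including the correct identification of the five cycle-path types and the three needed path power sums. The only blemish is a garbled intermediate expression for the $\des$ weights (the displayed ratios $(n)_2/(n(n-1))$, etc.\ all equal $1$), but the prefactors $\binom{n-1}{m-1}/n!$ you cite are the right ones and the computation goes through as you describe.
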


\begin{proof}
Apply Corollary~\ref{c:vincular-trans} and the $T$-expansions above to write these symmetric functions in terms of atomics.
Then apply Corollary~\ref{atomic-schur-expansion} to convert to the Schur basis.
\end{proof}

Hultman proved Proposition~\ref{basic-expansions} by comparing explicit formulas 
for the expectations of the statistics $\exc, \inv, \des, \maj$ on conjugacy classes $K_{\lambda}$
 with closed formulas for the irreducible characters $\chi^n, \chi^{n-1,1}, \chi^{n-2,1,1}$.
 See Section~\ref{subsection:exc} for the computation of $\ch_n (R \exc)$.

Recall that the statistics $\inv$ and $\maj$ are equidistributed on $\symm_n$
(any statistic with this distribution is {\em Mahonian}). The symmetric functions
$\ch_n(R \, \inv)$ and $\ch_n(R \, \maj)$ differ because $\inv$ and $\maj$ are not equidistributed on every conjugacy class $K_{\lambda} \subseteq \symm_n$.
Similarly, although $\exc$ and $\des$ are equidistributed on the full symmetric group $\symm_n$ (such statistics are called {\em Eulerian}),
we have $\ch_n(R \, \exc) \neq \ch_n(R \, \des)$ since $\exc$ and $\des$ are not equidistributed on conjugacy classes.
Note that the $s_n$ coefficients of the equidistributed pairs of statistics \emph{are} equal.
We will prove this is a general phenomenon in Chapter~\ref{Pattern}.

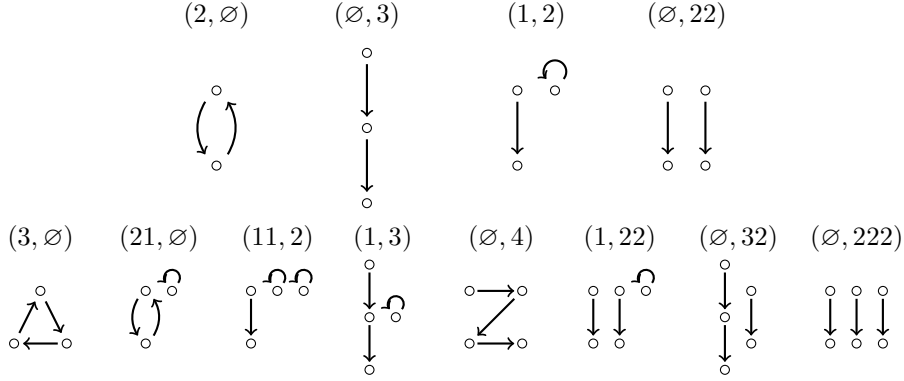
\begin{figure}
\begin{center}
\begin{tikzpicture}[scale = 0.5]

\node at (0,2) {$(2,\varnothing)$};
\node at (4,2) {$(\varnothing,3)$};
\node at (8.5,2) {$(1,2)$};
\node at (12.5,2) {$(\varnothing,22)$};

 \node [draw, circle, fill = white, inner sep = 1.2pt] at (0,0) { }; 
 \node [draw, circle, fill = white, inner sep = 1.2pt] at (0,-2) { }; 
 \draw [->, thick]  (-0.3,-0.3) to[bend right]  (-0.3,-1.7);
 \draw [->, thick]  (0.3,-1.7) to[bend right]  (0.3,-0.3);

 \node [draw, circle, fill = white, inner sep = 1.2pt] at (4,1) { }; 
 \node [draw, circle, fill = white, inner sep = 1.2pt] at (4,-1) { }; 
 \node [draw, circle, fill = white, inner sep = 1.2pt] at (4,-3) { }; 
 \draw [->, thick]  (4,0.7) -- (4,-0.7);
 \draw [->, thick]  (4,-1.3) -- (4,-2.7);

  \node [draw, circle, fill = white, inner sep = 1.2pt] at (8,0) { }; 
 \node [draw, circle, fill = white, inner sep = 1.2pt] at (8,-2) { }; 
  \draw [->, thick]  (8,-0.3) -- (8,-1.7);
   \node [draw, circle, fill = white, inner sep = 1.2pt] at (9,0) { }; 
   \draw[->, thick]  ($(9,0.5) + (-40:3mm)$) arc (-40:220:3mm);
   
   \node [draw, circle, fill = white, inner sep = 1.2pt] at (12,0) { }; 
 \node [draw, circle, fill = white, inner sep = 1.2pt] at (12,-2) { }; 
   \draw [->, thick]  (12,-0.3) -- (12,-1.7);
   \node [draw, circle, fill = white, inner sep = 1.2pt] at (13,0) { }; 
 \node [draw, circle, fill = white, inner sep = 1.2pt] at (13,-2) { }; 
  \draw [->, thick]  (13,-0.3) -- (13,-1.7);

\end{tikzpicture}
\end{center}

\begin{center}
\begin{tikzpicture}[scale = 0.35]

\node at (0,2) {$(3,\varnothing)$};
\node at (4.5,2) {$(21,\varnothing)$};
\node at (9,2) {$(11,2)$};
\node at (13,2) {$(1,3)$};
\node at (17.5,2) {$(\varnothing,4)$};
\node at (22,2) {$(1,22)$};
\node at (26.5,2) {$(\varnothing,32)$};
\node at (31,2) {$(\varnothing,222)$};

 \node [draw, circle, fill = white, inner sep = 1.2pt] at (0,0) { }; 
 \node [draw, circle, fill = white, inner sep = 1.2pt] at (1,-2) { }; 
 \node [draw, circle, fill = white, inner sep = 1.2pt] at (-1,-2) { }; 
 \draw [->, thick]  (0.2,-0.4) -- (0.8,-1.6);
\draw [->, thick]  (0.66,-2) -- (-0.66,-2);
 \draw [->, thick]  (-0.8,-1.6) -- (-0.2,-0.4);
 
 \node [draw, circle, fill = white, inner sep = 1.2pt] at (4,0) { }; 
 \node [draw, circle, fill = white, inner sep = 1.2pt] at (4,-2) { }; 
 \node [draw, circle, fill = white, inner sep = 1.2pt] at (5,0) { }; 
   \draw[->, thick]  ($(5,0.5) + (-40:3mm)$) arc (-40:220:3mm);
 \draw [->, thick]  (3.7,-0.3) to[bend right]  (3.7,-1.7);
 \draw [->, thick]  (4.3,-1.7) to[bend right]  (4.3,-0.3);

  \node [draw, circle, fill = white, inner sep = 1.2pt] at (8,0) { }; 
 \node [draw, circle, fill = white, inner sep = 1.2pt] at (8,-2) { }; 
 \node [draw, circle, fill = white, inner sep = 1.2pt] at (9,0) { }; 
    \draw[->, thick]  ($(9,0.5) + (-40:3mm)$) arc (-40:220:3mm);
  \node [draw, circle, fill = white, inner sep = 1.2pt] at (10,0) { }; 
     \draw[->, thick]  ($(10,0.5) + (-40:3mm)$) arc (-40:220:3mm);
      \draw [->, thick]  (8,-0.3) -- (8,-1.7);
      
\node [draw, circle, fill = white, inner sep = 1.2pt] at (12.5,1) { }; 
  \draw [->, thick]  (12.5,0.7) -- (12.5,-0.7);
 \node [draw, circle, fill = white, inner sep = 1.2pt] at (12.5,-1) { }; 
   \draw [->, thick]  (12.5,-1.3) -- (12.5,-2.7);
 \node [draw, circle, fill = white, inner sep = 1.2pt] at (12.5,-3) { }; 
  \node [draw, circle, fill = white, inner sep = 1.2pt] at (13.5,-1) { }; 
     \draw[->, thick]  ($(13.5,-0.5) + (-40:3mm)$) arc (-40:220:3mm);

     \node [draw, circle, fill = white, inner sep = 1.2pt] at (16.3,0) { }; 
       \draw [->, thick]  (16.6,0) -- (18,0);
     \node [draw, circle, fill = white, inner sep = 1.2pt] at (18.3,0) { }; 
      \draw [->, thick]  (18,-0.3) -- (16.6,-1.7);
     \node [draw, circle, fill = white, inner sep = 1.2pt] at (16.3,-2) { }; 
      \draw [->, thick]  (16.6,-2) -- (18,-2);
     \node [draw, circle, fill = white, inner sep = 1.2pt] at (18.3,-2) { };

 \node [draw, circle, fill = white, inner sep = 1.2pt] at (21,0) { }; 
 \draw [->, thick]  (21,-0.3) -- (21,-1.7);
 \node [draw, circle, fill = white, inner sep = 1.2pt] at (21,-2) { }; 
 \draw [->, thick]  (22,-0.3) -- (22,-1.7);
 \node [draw, circle, fill = white, inner sep = 1.2pt] at (22,0) { }; 
 \node [draw, circle, fill = white, inner sep = 1.2pt] at (22,-2) { }; 
 \node [draw, circle, fill = white, inner sep = 1.2pt] at (23,0) { };
 \draw[->, thick]  ($(23,0.5) + (-40:3mm)$) arc (-40:220:3mm);
 
  \node [draw, circle, fill = white, inner sep = 1.2pt] at (27,0) { }; 
 \draw [->, thick]  (27,-0.3) -- (27,-1.7);
 \node [draw, circle, fill = white, inner sep = 1.2pt] at (27,-2) { }; 
 \node [draw, circle, fill = white, inner sep = 1.2pt] at (26,1) { }; 
\node [draw, circle, fill = white, inner sep = 1.2pt] at (26,-1) { }; 
 \node [draw, circle, fill = white, inner sep = 1.2pt] at (26,-3) { }; 
 \draw [->, thick]  (26,0.7) -- (26,-0.7);
 \draw [->, thick]  (26,-1.3) -- (26,-2.7);

   \node [draw, circle, fill = white, inner sep = 1.2pt] at (30,0) { }; 
 \draw [->, thick]  (30,-0.3) -- (30,-1.7);
 \node [draw, circle, fill = white, inner sep = 1.2pt] at (30,-2) { }; 
 
   \node [draw, circle, fill = white, inner sep = 1.2pt] at (31,0) { }; 
 \draw [->, thick]  (31,-0.3) -- (31,-1.7);
 \node [draw, circle, fill = white, inner sep = 1.2pt] at (31,-2) { }; 
 
   \node [draw, circle, fill = white, inner sep = 1.2pt] at (32,0) { }; 
 \draw [->, thick]  (32,-0.3) -- (32,-1.7);
 \node [draw, circle, fill = white, inner sep = 1.2pt] at (32,-2) { }; 

\end{tikzpicture}
\end{center}
\caption{Top: The cycle-path types and graphs involved in calculating $\ch_n(R \, f)$ for $f = \inv, \des, \maj$. Bottom: The cycle-path types and graphs involved in 
calculating $\ch_n(R \, f)$ for $f = \peak$.}
\label{fig:graph-iso}
\end{figure}

In the case of $f = \exc$, calculating $\ch_n(R \, f)$ as in
Proposition~\ref{basic-expansions} involved finding the Schur expansion of a single atomic function: that corresponding to the graph
$\circ \rightarrow \circ$ together with $n-2$ paths of size 1. 
For $f = \inv, \des, \maj$, the nontrivial components of the graphs $G(I,J)$ involved are shown up to isomorphism
 in the top of Figure~\ref{fig:graph-iso}.
 The bottom of Figure~\ref{fig:graph-iso} shows the graphs involved for the 3-local  peak statistic $f = \peak$ given by
 \begin{equation}
 \peak(w) := | \{ 2 \leq i \leq n-1 \,:\, w(i-1) < w(i) > w(i+1) \} |
 \end{equation}
 for $w \in \symm_n$.
 In particular, although the $T$-expansion of $\peak$ has 76 terms, to calculate $\ch_n(R \, \peak)$ we need only find the Schur expansions
 of 8 atomic functions; the result  is
 \begin{equation}
 \label{eq:peak-expansion}
 \ch_n(R \, \peak) =  \frac{n-2}{3}  \cdot s_n - \frac{1}{n(n-1)} \cdot \left[ s_{n-1,1} +  s_{n-2,2} + s_{n-2,1,1} +  s_{n-3,2,1} \right].
 \end{equation}
 The above expansion of $\ch_n(R \, \peak)$ is equivalent to a result of Gill \cite[Ch. 4, Cor. 4.6]{Gill}.
 In general, finding the Schur expansion of $\ch_n(R f)$ for a $k$-local regular statistic $f$ involves cycle-path types of size $k$.

The support constraints in Proposition~\ref{basic-expansions} are consistent with locality.
As guaranteed by Theorem~\ref{local-support-theorem}, the only Schur functions appearing in $\ch_n(R \, \exc)$ for the 1-local
statistic $\exc$ are $s_n$ and $s_{n-1,1}$.
Theorem~\ref{local-support-theorem} is also consistent with the fact that the Schur expansions coming from the 2-local statistics
$\inv, \des,$ and $\maj$ only contain $s_n, s_{n-1,1},$ and $s_{n-2,1,1}$.
We do not have a conceptual reason why these 
Schur expansions do not contain $s_{n-2,2}$.

We expand the statistics from Proposition~\ref{basic-expansions} into character polynomials.
\begin{corollary}
	We have the character polynomials
	\begin{align}
		R \exc &= \frac{n-m_1}{2}\\
		R \inv &= \frac{1}{12}\left(3n^2 -2 nm_1 - m_1^2 +2 m_2 - n + m_1 \right)\\
		R \des &= \frac{1}{2n}\left(n^2 + m_1^2 + 2m_2 - n + 3 m_1 - 4 \right)\\
		R \maj &= \frac{1}{4}\left(n^2 -m_1^2 + 2m_2 -n - m_1 \right)
	\end{align}
\end{corollary}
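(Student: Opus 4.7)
The plan is to deduce each of the four identities directly from Proposition~\ref{basic-expansions}, which already supplies the Schur expansions of $\ch_n(R\,f)$ for $f = \exc,\inv,\des,\maj$. Applying $\ch_n^{-1}$ to those expansions and using $\ch_n^{-1}(s_\lambda) = \chi^\lambda$ rewrites each $R\,f$ as a $\CC$-linear combination of the irreducible characters $\chi^{(n)}$, $\chi^{(n-1,1)}$, and (for $\inv,\des,\maj$) $\chi^{(n-2,1,1)}$. Then I will substitute the character polynomial formulas listed in Figure~\ref{fig:char-poly}, namely
\begin{equation*}
\chi^{(n)} = 1, \qquad \chi^{(n-1,1)} = m_1-1, \qquad \chi^{(n-2,1,1)} = -m_2 - m_1 + \tfrac{1}{2}m_1(m_1-1) + 1,
\end{equation*}
and collect like terms in $\CC[n,m_1,m_2]$.

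To illustrate the routine: for $\exc$, Proposition~\ref{basic-expansions} gives $R\,\exc = \tfrac{n-1}{2}\chi^{(n)} - \tfrac{1}{2}\chi^{(n-1,1)} = \tfrac{n-1}{2} - \tfrac{m_1-1}{2} = \tfrac{n-m_1}{2}$. For $\inv$, multiplying through by $12$ yields
\begin{equation*}
12\,R\,\inv = 3n(n-1) - 2(n+1)(m_1-1) - 2\bigl(-m_2-m_1+\tfrac{1}{2}m_1(m_1-1)+1\bigr),
\end{equation*}
which expands to $3n^2 - 2nm_1 - m_1^2 + 2m_2 - n + m_1$ as claimed. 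The computations for $\des$ and $\maj$ are identical in spirit, using the coefficients $\tfrac{n-1}{2}, -\tfrac{1}{n}, -\tfrac{1}{n}$ and $\tfrac{n(n-1)}{4}, -\tfrac{1}{2}, -\tfrac{1}{2}$ respectively in front of $\chi^{(n)}, \chi^{(n-1,1)}, \chi^{(n-2,1,1)}$.

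There is no genuine obstacle here; the only care needed is arithmetic. The character polynomial values already do the work of turning each Schur coefficient $c_\lambda(n)$ into a polynomial in $n,m_1,m_2$, so the identities follow by expanding and simplifying. I would organize the proof as four short paragraphs, one per statistic, each consisting of the substitution and a one-line algebraic simplification, and would double-check by verifying that in each case the polynomial on the right lies in $\QQ[n,m_1,m_2]$ with no $m_i$ for $i \geq 3$, consistent with the 1-locality of $\exc$ and the 2-locality of $\inv,\des,\maj$ together with Theorem~\ref{character-polynomial-theorem}.
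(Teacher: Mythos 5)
Your method is exactly the intended one (the paper states this corollary without proof, and the only input is Proposition~\ref{basic-expansions} plus the character polynomials $\chi^{(n)}=1$, $\chi^{(n-1,1)}=m_1-1$, $\chi^{(n-2,1,1)}=\tfrac{1}{2}m_1(m_1-1)-m_2-m_1+1$), and your computations for $\exc$ and $\inv$ are correct. But you should not assert that the $\des$ and $\maj$ cases are ``identical in spirit'' and go through: if you actually carry them out, they do \emph{not} reproduce the stated formulas. Using
\begin{equation*}
\chi^{(n-1,1)}+\chi^{(n-2,1,1)}=\tfrac{1}{2}m_1(m_1-1)-m_2,
\end{equation*}
the coefficients $\bigl(\tfrac{n-1}{2},-\tfrac{1}{n},-\tfrac{1}{n}\bigr)$ and $\bigl(\tfrac{n(n-1)}{4},-\tfrac{1}{2},-\tfrac{1}{2}\bigr)$ give
\begin{equation*}
R\,\des=\frac{1}{2n}\left(n^2-m_1^2+2m_2-n+m_1\right),\qquad
R\,\maj=\frac{1}{4}\left(n^2-m_1^2+2m_2-n+m_1\right),
\end{equation*}
whereas the corollary as printed has $+m_1^2+3m_1-4$ in the $\des$ numerator and $-m_1$ in the $\maj$ formula. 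The corrected $\maj$ expression agrees with the one derived in Example~\ref{ex:intro-maj}, so the error is in the corollary's statement, not in Proposition~\ref{basic-expansions}.

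A related point: your proposed sanity check (that the answer lies in $\QQ[n,m_1,m_2]$) is too weak to catch this. A much sharper check, which you should add, is to evaluate at the identity permutation ($m_1=n$, $m_2=0$), where all four statistics vanish; the corrected formulas above pass this test, while the printed $\des$ formula gives $\tfrac{(n+2)(n-1)}{n}$ and the printed $\maj$ formula gives $-\tfrac{n}{2}$. Checking $K_{(2,1)}\subseteq\symm_3$ (where the average number of descents is $4/3$) confirms the corrected $\des$ formula. So the right conclusion of your write-up is that the method proves corrected versions of the last two identities, and the statement as given should be flagged as containing typographical errors.
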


\section{Multiplication and Schur expansion of regular statistics}
Examining the coefficients in Proposition~\ref{basic-expansions} shows  additional structure.  These coefficients are polynomials in $\CC[n]$
except in the case of $\des$, in which they become polynomials upon multiplication by $n$.
Furthermore, the degree of the coefficient of $s_{\lambda[n]}$ decreases with $|\lambda|$.
We will see that this can be read off from the $T$-expansions of the statistics in Proposition~\ref{basic-expansions}.
To that end, we introduce three parameters associated to a regular statistic.

\begin{defn}
\label{simple-parameters}
For $((U,V),C,f)$ a packed triple, its \emph{size} is $|U| = |V|$, its \emph{shift} is $|C|$ and its \emph{power} is $|U| + \deg f - |C|$.
By analogy, for  $\Pi$ a set of packed triples $((U,V),C,f)$, if we can expand the regular statistic $\stat$ as
\begin{equation}
\label{stat-expansion-parameter}
\stat = \sum_{((U,V),C,f) \in \Pi} c_{(U,V),C,f} \cdot T^f_{(U,V),C}
\end{equation}
with the coefficients $ c_{(U,V),C,f} $ are all nonzero:
\begin{itemize}
\item  The {\em size} of the expansion
\eqref{stat-expansion-parameter} is the largest size of a packed
triple in $\Pi$.  
\item  The {\em shift} of the expansion
\eqref{stat-expansion-parameter} is the largest shift of a packed triple
in $\Pi$.
\item  The {\em power} of the expansion
\eqref{stat-expansion-parameter} is the largest power of a packed triple
in $\Pi$.
\end{itemize}
\end{defn}

The $T$-functions are linearly dependent, so the parameters size, shift, and power  can depend on the
expansion \eqref{stat-expansion-parameter} of the statistic $\stat$.
We define {\em size, shift,} and {\em power} of  $\stat$ itself  to be the minimum size, shift, and power of $\stat$ over all possible $T$-expansions
\eqref{stat-expansion-parameter} of $\stat$.

Every size $k$ regular statistic is $k$-local.
As with local statistics, regular statistics behave well under taking products.
The parameters size, shift, and power transform nicely under multiplication.

\begin{proposition}
	\label{p:simple-prod}
	Let $\mathrm{stat}_1$ and $\mathrm{stat}_2$ be regular statistics with sizes $k_1,k_2$, shifts $q_1,q_2$, and powers $p_1,p_2$ respectively.
	Then $\mathrm{stat}_1 \cdot \mathrm{stat}_2$ is a regular statistic of size  at most $k_1 + k_2$, shift at most  $q_1 + q_2$ and power  at most 
	$p_1 + p_2$.
\end{proposition}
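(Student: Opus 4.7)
By bilinearity of pointwise multiplication, I would reduce immediately to the case where each $\mathrm{stat}_i = T^{f_i}_{(U_i, V_i), C_i}$ is a single constrained translate with $U_i \cup V_i = [m_i]$; the general case follows since each of the three parameters of a linear combination is bounded by the maximum over summands. Expanding the product using the definition of a constrained translate gives a double sum over pairs $(L_1, L_2) \in \binom{[n]}{m_1}_{C_1} \times \binom{[n]}{m_2}_{C_2}$ weighted by $f_1(L_1) f_2(L_2)$. By Lemma~\ref{compatible-lemma}, each product $\one_{L_1(U_1), L_1(V_1)} \cdot \one_{L_2(U_2), L_2(V_2)}$ equals either zero (when the combined arrows conflict) or a single indicator on the union of arrows.

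I would then group the nonzero summands by \emph{merge type}: the data of $L := L_1 \cup L_2$ together with the order-preserving injections $\phi_i : [m_i] \hookrightarrow [m]$ sending each index to the rank of the corresponding element of $L_i$ inside $L$, where $m = |L|$. For a fixed merge type $\tau$, the contributions from all $(L_1, L_2)$ realizing $\tau$ assemble into one constrained translate $T^{g_\tau}_{(U_\tau, V_\tau), C_\tau}$ on $[m]$, where $(U_\tau, V_\tau)$ is the packed partial permutation obtained as the union of the arrows of $(\phi_1(U_1), \phi_1(V_1))$ and $(\phi_2(U_2), \phi_2(V_2))$; $C_\tau$ is $\phi_1(C_1) \cup \phi_2(C_2)$ together with any additional consecutivity constraints forced by requiring $L$ to be the actual union of $L_1$ and $L_2$; and $g_\tau(x_1, \ldots, x_m) = f_1(x_{\phi_1(1)}, \ldots, x_{\phi_1(m_1)}) \cdot f_2(x_{\phi_2(1)}, \ldots, x_{\phi_2(m_2)})$. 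Since only finitely many merge types arise, this gives a finite $T$-expansion proving $\stat_1 \cdot \stat_2$ is regular.

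I would verify the three bounds one merge type at a time. The size bound $|U_\tau| \leq k_1 + k_2$ is immediate from the definition of $U_\tau$. The shift bound $|C_\tau| \leq q_1 + q_2$ follows because every element of $C_\tau$ is the image of some element of $C_1$ or $C_2$ under $\phi_1$ or $\phi_2$. For the power bound, since $\deg g_\tau = \deg f_1 + \deg f_2$, the desired inequality $|U_\tau| + \deg g_\tau - |C_\tau| \leq p_1 + p_2$ reduces to showing that the arrow overlap $(k_1 + k_2) - |U_\tau|$ is at least the constraint overlap $(q_1 + q_2) - |C_\tau|$.

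The main obstacle will be exactly this arrow-overlap/constraint-overlap comparison. A constraint in $\phi_1(C_1) \cap \phi_2(C_2)$ corresponds to a pair of consecutive integers lying in $L_1 \cap L_2$, and the plan is to track how each such shared pair forces shared vertices, and then shared arrows, between the graphs $G_n(U_1, V_1)$ and $G_n(U_2, V_2)$ via a case analysis on how the paths and cycles of these graphs glue along their intersection. In the residual cases where a shared consecutive pair does not produce a directly shared arrow, I would invoke Observation~\ref{lower-closure} to re-expand the offending constrained translate as a sum of constrained translates with one additional free index, trading size and shift in a controlled fashion until every term respects the power bound simultaneously. This last step is where I expect the genuine technical work to lie.
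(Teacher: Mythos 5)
Your overall strategy coincides with the paper's: reduce to a product of two constrained translates by linearity, expand via Lemma~\ref{compatible-lemma}, repackage the surviving indicators with Lemma~\ref{l:unique-pack}, and group terms by merge type into new constrained translates whose parameters you then read off. The regularity claim and the size and shift bounds go through exactly as you describe (your bookkeeping with the order-preserving injections $\phi_i$ is, if anything, more explicit than the paper's ``repeated subtraction''). The step you flag as ``where the genuine technical work lies'' --- showing that the arrow overlap $(k_1+k_2)-|U_\tau|$ dominates the constraint overlap $(q_1+q_2)-|C_\tau|$ --- is indeed the crux of the power bound, and neither you nor the paper proves it: the paper asserts that the power of each merged term is at most $p_1+p_2$, with the maximum attained when the two index sets are disjoint, without addressing the overlapping case.

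Unfortunately the needed inequality is false, so the gap cannot be closed along the lines you sketch. Take $\stat_1 = T^1_{((1),(2)),\{1\}} = \sum_a \one_{(a),(a+1)}$ and $\stat_2 = T^1_{((2),(1)),\{1\}} = \sum_a \one_{(a+1),(a)}$, each of size $1$, shift $1$, power $0$. The diagonal merge type $L_1 = L_2 = \{a,a+1\}$ produces $\sum_a \one_{(a,a+1),(a+1,a)}$: the arrows $a \to a+1$ and $a+1 \to a$ share both vertices and the single vincular constraint but are distinct arrows, so the arrow overlap is $0$ while the constraint overlap is $1$; the term has size $2$, shift $1$, and power $1 > p_1+p_2 = 0$. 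Your fallback via Observation~\ref{lower-closure} cannot repair this, since appending a free index raises the size by one without adding constraints and hence never lowers the power. Worse, no re-expansion of any kind can: the conditional expectation of $\sum_a \one_{(a,a+1),(a+1,a)}$ on $K_\lambda$ is $2m_2(\lambda)/n$, and comparing the graded degree of $(n)_q$ times the conditional expectation of the full product (which contains $2m_2 n \cdots$, of degree $q+1$) with the conclusion of Theorem~\ref{t:simple} shows any expansion of the product must have power at least $1$. So the power clause of Proposition~\ref{p:simple-prod} fails as stated, while the size and shift clauses survive; a safe correction is $p \leq p_1+p_2+\min(q_1,q_2)$, since the excess is at most the constraint overlap. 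You were right to be suspicious of exactly this step.
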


\begin{proof}
Let $((U,V),C,f)$ and $((X,Y),D,g)$ be packed triples with $U \cup V = [m], X \cup Y = [\ell]$.
Applying linearity, it is enough to show this result when  $\stat_1 = T^f_{(U,V),C}$ and $\stat_2 =  T^g_{(X,Y),D}$.

We  expand the product $\stat_1 \cdot \stat_2 = T^f_{(U,V),C} \cdot T^g_{(X,Y),D}$ as a linear combination of indicator functions $\one_{I,J}$.
A typical term in this expansion  looks like
\begin{equation}
\label{fg-product}
(f(M) \cdot \one_{M(U),M(V)}) \cdot (g(L) \cdot \one_{L(X),L(Y)}) = f(M)g(L)\cdot \one_{M(U),M(V)} \cdot \one_{L(X),L(Y)}
\end{equation}
for a pair of sets $M \in \binom{[n]}{m}_C$ and $L \in \binom{[n]}{\ell}_D$.
Here we adopt the shorthand $f(M) := f(a_1, \dots, a_m)$ for $M = \{a_1 < \cdots < a_m \}$ and similarly for $g(L)$.

By Lemma~\ref{compatible-lemma}, if the expression \eqref{fg-product} does not vanish 
it equals  $f(M)g(L) \cdot \one_{I,J}$ for some partial permutation $(I,J)$ with $I \cup J = M \cup L := S$ and $|S| = r \leq \ell+m$.
Applying Lemma~\ref{l:unique-pack}, we write $(I,J) = (S(U'),S(V'))$ for a unique packed partial permutation $(U',V')$.
The vincular constraints given by $C$ and $D$ on $S$ are given by the set $I(C) \cup J(D) \subseteq S$. 
The compression  $C' = [r](I(C)\cup J(D))$ of the set $C$ gives equivalent vincular constraints on the packed partial permutation
 $(U',V')$.
Furthermore, the product
 $f(M) \cdot g(L) = h(M\cup L)$ is a polynomial with $\deg h = \deg f + \deg g$ 
 so $T^{h}_{(U,'V'),C'}$ occurs as a summand in $T^f_{(U,V),C} \cdot T^g_{(X,Y),D}$.
Repeated subtraction shows that $T^f_{(U,V),C} \cdot T^g_{(X,Y),D}$ is regular.
The largest possible value of the size $|U'| = |V'|$ is $k_1 + k_2$; this occurs when $M \cap L = \varnothing$.
Similarly, the power $|U'| + \deg h - |C'| = |V'| + \deg h - |C'|$ is $p_1 + p_2$; this also occurs when $M \cap L = \varnothing$.
Finally, the shift achieves maximum value $|I(C) \cup J(D)| = q_1+q_2$ when $I(C) \cap J(D) = \varnothing$.
While these values are all attained for a given product, cancellation can occur, hence the inequality.
\end{proof}

We are ready to state our main result on regular statistics.
Theorem~\ref{local-support-theorem} implies that the Schur expansions of the symmetric functions attached to $k$-local
statistics have bounded support. 
For the subclass of regular statistics, we can give substantial asymptotic information about the coefficients appearing in these expansions.
If $f$ and $g$ are polynomials, the {\em rational degree} of $f/g$ is given by $\deg(f/g) := \deg f - \deg g$.

\begin{theorem}
\label{t:simple}
	Let $\mathrm{stat}$ be regular statistic of size $k$, shift $q$ and power $p$.
	Then
	\begin{equation}
	\ch_n(R \, \stat) = \sum_{|\lambda| \leq k} c_\lambda(n) \cdot s_{\lambda[n]}
	\end{equation}
	for some coefficients $c_\lambda(n)$ indexed by partitions $\lambda$ of size at most $k$.
	On the domain $\{n \geq 2k \}$, the coefficients
	$c_\lambda(n) \in \CC(n)$ are rational functions of $n$ of rational degree at most $p - |\lambda|$. 
	Furthermore, the product
$
(n)_q \cdot c_\lambda(n) \in \CC[n]
$
is a polynomial in $n$ on the domain $\{ n \geq 2k \}$.

On the full domain $\{ n \geq 1 \}$ of nonnegative integers, the function
\begin{equation}
(n)_q \cdot R\, \stat:\symm_{n} \to \CC
\end{equation}
can be viewed as an element of $\CC[n,m_1,\dots,m_k]$ of degree at most $p$ with the grading where $\deg n = 1$ and $\deg m_i = i$.
\end{theorem}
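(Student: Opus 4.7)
The plan is to reduce to a single constrained translate by linearity, apply Corollary~\ref{c:vincular-trans} to express its image under $\ch_n \circ R$ in terms of an atomic symmetric function, invoke Corollary~\ref{atomic-asymptotics} for the Schur expansion of that atomic function, and use Proposition~\ref{indicator-polynomiality-proposition} for the character-polynomial part valid on all of $\symm$.

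Concretely, I would fix an expansion $\stat = \sum_{t} c_t \cdot T^{f_t}_{(U_t, V_t), C_t}$ realizing the size, shift, and power parameters, with $m_t := |U_t \cup V_t|$ and $k_t := |U_t|$. For each summand Corollary~\ref{c:vincular-trans} gives
\[
\ch_n\!\left(R\, T^{f_t}_{(U_t, V_t), C_t}\right) = \frac{1}{n!} \binom{n - |C_t|}{m_t - |C_t|} \overline{f_t}(n) \cdot A_{n, U_t, V_t}.
\]
Rewriting $\binom{n-|C_t|}{m_t-|C_t|}/n!$ as $\frac{1}{(m_t - |C_t|)!\,(n)_{|C_t|}\,(n-m_t)!}$ and substituting the Schur expansion $\frac{1}{(n-m_t)!}\,A_{n, U_t, V_t} = \sum_{|\lambda| \leq k_t} g_{\lambda, t}(n) \cdot s_{\lambda[n]}$ from Corollary~\ref{atomic-asymptotics} (where $g_{\lambda,t}(n)$ is a polynomial of degree $\leq k_t - |\lambda|$ on $\{n \geq 2k_t\}$), then summing over $t$ and grouping like $s_{\lambda[n]}$ terms, I obtain the required $c_\lambda(n)$. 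A direct degree computation gives rational degree $\leq (\deg f_t - |C_t|) + (k_t - |\lambda|) = p_t - |\lambda| \leq p - |\lambda|$, and because $|C_t| \leq q$ for every term the universal denominator $(n)_q$ clears all of them.

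For the final statement, I would observe that conjugation-invariance of $R$ (Proposition~\ref{atomic-g-invariance}) collapses the defining sum of a constrained translate, yielding
\[
R\, T^{f_t}_{(U_t, V_t), C_t} = \overline{f_t}(n) \binom{n - |C_t|}{m_t - |C_t|} \cdot R\, \one_{U_t, V_t}
\]
as functions on $\symm$. Proposition~\ref{indicator-polynomiality-proposition} supplies $(n)_{m_t} \cdot R\, \one_{U_t, V_t}$ as a polynomial of degree at most $k_t$ in $\CC[n, m_1, \dots, m_{k_t}]$ under the grading $\deg n = 1$, $\deg m_i = i$. Multiplying through by $(n)_q$ and using $(n)_q / (n)_{|C_t|} = (n - |C_t|)_{q - |C_t|}$ turns $(n)_q \cdot R\, T^{f_t}_{(U_t, V_t), C_t}$ into an explicit polynomial in $\CC[n, m_1, \dots, m_k]$; summing over $t$ gives the result, and Lemma~\ref{cycle-identification-lemma} guarantees consistency with the large-$n$ description obtained from the Schur expansion by applying character polynomials term by term.

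The main obstacle I anticipate is careful bookkeeping: simultaneously tracking the three interacting parameters (size, shift, power) across three sources of polynomial factors (the vincular combinatorial count, the weighting polynomial $\overline{f_t}$, and either the Schur coefficients $g_{\lambda, t}$ or the character polynomial $p_{U_t, V_t}$), and doing so while passing between the two regimes $n \geq 2k$ and $n \geq 1$. A secondary but genuine subtlety is that size, shift, and power are defined as minima over expansions, so one must verify that any expansion whose parameters agree with these minima produces the claimed bounds; since all bounds are upper bounds, any realizing expansion suffices and taking the minimum only sharpens them.
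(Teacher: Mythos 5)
Your argument is correct and follows the paper's proof essentially step for step: the first part is Corollary~\ref{c:vincular-trans} followed by Corollary~\ref{atomic-asymptotics} with the same rewriting of $\tfrac{1}{n!}\binom{n-|C|}{m-|C|}$ and the same degree bookkeeping, and the second part uses the same combination of Proposition~\ref{indicator-polynomiality-proposition}, Lemma~\ref{l:vincular-poly}, and Lemma~\ref{cycle-identification-lemma} that the paper does. The only caveat is that your direct computation of $\deg\bigl((n)_q \cdot R\,\stat\bigr)$ naturally yields $p+q$ rather than the literal $p$ in the statement — but the paper's own proof (and its Corollary~\ref{cycle-type-asymptotics}) lands in exactly the same place, so this reflects the statement rather than a gap in your argument.
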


\begin{proof}
Let $(U,V)$ be a packed partial permutation of
 size $k$ satisfying $U \cup V = [m]$ and let $C \subseteq [m-1]$ and $f \in \CC[x_1,\dots,x_m]$.
If $T^f_{(U,V),C}$ appears in the vincular translate expansion of $\stat$,
we have
	\begin{align}
	\ch_n R \, T^f_{(U,V),C} &= \frac{1}{n!} \binom{n-|C|}{m-|C|} \overline{f}(n) \cdot A_{n,U,V} &(\mathrm{Corollary~\ref{c:vincular-trans}})\\
	&= \frac{\overline{f}(n)}{(n)_{|C|} (m-|C|)!} \cdot  \frac{A_{n,U,V}}{(n-m)!}\\
	&= 	\frac{\overline{f}(n)}{(n)_{|C|} (m-|C|)!} \sum_{|\lambda| \leq |U|} g_\lambda(n) \cdot s_{\lambda[n]}			&(\mathrm{Corollary}~\ref{atomic-asymptotics})
	\end{align}
	where $\deg \overline{f} = \deg f$ and the coefficient
	$g_\lambda(n)$ is a polynomial of degree at most $|U| - |\lambda|$ on the domain $\{n \geq 2 |U| \} \supseteq \{n \geq 2k \}$.
	Extracting the coefficient of $s_{\lambda[n]}$ gives  
	\begin{equation}
	\label{c-coefficient-expansion}
	c_\lambda(n) = \frac{\overline{f}(n)g_\lambda(n)}{(n)_{|C|}(m-|C|)!}.
	\end{equation}
	Since $q \geq |C|$ and $(m - |C|)!$ does not depend on $n$,
	the product $(n)_q \cdot c_\lambda(n)$ is a polynomial in $\CC[n]$  on the domain $\{n \geq 2k \}$. 
	Furthermore, the rational degree of $c_{\lambda}(n)$ is 
	\begin{equation} 
	\deg c_{\lambda} = \deg f + \deg g_{\lambda} - |C| \leq \deg f + |U| - |\lambda| - |C| \leq p - |\lambda|
	\end{equation}
	as required.
	
	For the second statement, Theorem~\ref{character-polynomial-theorem} guarantees that $(n)_q \cdot R \, \stat$ coincides with a polynomial
	$f \in \CC[n,m_1,\dots,m_k]$ of the required form {\bf provided} that $n \geq 2k$ so that the first part of the theorem applies.
	By Proposition~\ref{indicator-polynomiality-proposition} and Lemma~\ref{l:vincular-poly}, the product $(n)_q \cdot R \, \stat$ 
	is given by some polynomial $g \in \CC[n,m_1, \dots, m_k]$ {\bf for all $n \geq 0$} (which {\em a priori} does not satisfy the required degree
	bound).
	Lemma~\ref{cycle-identification-lemma} applies to show $f = g$ as elements of $\CC[n,m_1, \dots, m_k]$.
\end{proof}

%Theorem~\ref{t:simple} generalizes immediately to  moments of regular statistics.
%By Proposition~\ref{p:simple-prod}, for any $d \geq 1$, the $d^{th}$ moment $\stat^d$ 
%of the regular statistic $\stat: \symm_n \rightarrow \RR$ of size $k$, shift $q$, and power $p$ satisfies
%	\begin{equation}
%	\ch_n(R \, \stat^d) = \sum_{|\lambda| \leq dk} c_\lambda(n) \cdot s_{\lambda[n]}
%	\end{equation}
%where, on the domain
%$\{n \geq 2dk\}$, the coefficients $c_{\lambda}(n)$ are rational functions of $n$ of rational degree at most $dp - |\lambda|$
%such that
%$(n)_{dq} \cdot c_{\lambda}(n) \in \RR[n]$ is a polynomial.
%As usual, we can eliminate the restriction on domain when expressing class functions as elements of $\RR[n,m_1,m_2,\dots]$.
%
%
%
%
%\begin{corollary}
%\label{cycle-type-asymptotics}
%Let $\stat: \symm_n \rightarrow \RR$ be a regular statistic to size $k$, shift $q$, and power $p$ and let $d \geq 1$.
%For $w \in \symm_n$, the $d^{th}$ moment $R \, \stat^d(w)$ is a rational function of rational degree at most $dp$ in
%$n, m_1(w), m_2(w), \dots, m_{dk}(w)$. 
%Here we assign $\deg n = 1$ and $\deg m_i = i$.
%Furthermore, the statistic $(n)_{dq} \cdot R \, \stat^d$ is a polynomial in $\RR[n,m_1, \dots, m_{dk}]$.
%
%In particular, if the statistic
%$\stat$ has shift 0, then $R \, \stat^d$ is a polynomial in $\RR[n,m_1, \dots, m_{dk}]$.
%\end{corollary}
%
%\begin{proof}
%	Apply the reasoning of the last paragraph of Theorem~\ref{t:simple} to the discussion following 
%	Theorem~\ref{t:simple}.
%\end{proof}

\section{Closed statistics}
Proposition~\ref{p:simple-prod} implies that the regular permutation statistics $\symm_n \rightarrow \CC$ form a subalgebra of 
the family of all statistics $\symm_n \rightarrow \CC$.
In this section we define a further subalgebra of ``closed statistics".
In Theorem~\ref{matching-moments} we will apply closed statistics to recover a result on patterns in perfect matchings due to 
Khare, Lorentz, and Yan \cite{KLY}.

A constrained translate statistic $T^f_{(U,V),C}: \symm_n \rightarrow \CC$ 
indexed by a packed triple $((U,V),C,f)$ is {\em closed} if the packed partial permutation 
$(U,V) \in \symm_{n,k}$
is in fact a permutation in $\symm_k$,  that is, the letters appearing in $U$ are the same as those appearing in $V$.  We write $v \in \symm_k$
for the permutation $(U,V)$ and $T^f_{v,C} := T^f_{(U,V),C}$ and refer to the triple $(v,C,f)$ as a {\em permutation triple}.
The word ``closed" refers to the fact that the graph $G(U,V)$ consists entirely of (closed) cycles and size 1 paths.
Closed statistics are defined in a fashion analogous to regular statistics.

\begin{defn}
\label{closed-statistic-definition}
A permutation statistic $\stat$ is {\em closed} if there is a finite set $\Pi$ of permutation triples such that 
\begin{equation}
\stat = \sum_{(v,C,f) \in \Pi}  c_{v,C,f} \cdot T_{v,C}^f
\end{equation}
for some scalars $c_{v,C,f} \in \CC$.
\end{defn}

Every closed statistic is regular. As with regular statistics, closed statistics are closed under taking products.

\begin{proposition}
\label{closed-product}
If $\stat_1, \stat_2: \symm_n \rightarrow \CC$ are closed statistics, the pointwise product $\stat_1 \cdot \stat_2$ is also closed.
\end{proposition}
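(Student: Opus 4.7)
The plan is to piggyback on Proposition~\ref{p:simple-prod}, which already shows the product is a regular statistic expressible as a linear combination of constrained translates $T^h_{(U',V'),C'}$. The only additional fact to verify is that, when both factors are closed, each resulting packed partial permutation $(U',V')$ appearing in the expansion is itself a genuine permutation—equivalently, that its graph consists entirely of directed cycles with no paths of size $>1$ except for isolated vertices $[m] \setminus (U' \cup V')$. The polynomial weight $h$ and the constraint set $C'$ are irrelevant to closedness, so this graph-theoretic check is the entire content.

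First I would reduce by linearity to the case $\stat_1 = T^f_{v_1, C_1}$ and $\stat_2 = T^g_{v_2, C_2}$ with $v_1 \in \symm_{k_1}$ and $v_2 \in \symm_{k_2}$. Following the argument in Proposition~\ref{p:simple-prod}, the product expands as
\begin{equation*}
T^f_{v_1, C_1} \cdot T^g_{v_2, C_2} = \sum_{M, L} f(M) g(L) \cdot \one_{M(U), M(V)} \cdot \one_{L(X), L(Y)},
\end{equation*}
where $(U,V)$ and $(X,Y)$ are the packed one-line encodings of $v_1$ and $v_2$. By Lemma~\ref{compatible-lemma}, each nonvanishing summand collapses to $f(M) g(L) \cdot \one_{I,J}$ where $(I,J)$ is the partial permutation whose graph is the union $G_n(M(U),M(V)) \cup G_n(L(X),L(Y))$. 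I would then group terms with the same underlying packed partial permutation $(U',V')$, constraint set, and common weighting as in Proposition~\ref{p:simple-prod}.

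The heart of the proof is the following in/out-degree bookkeeping. Because $v_1$ is a permutation of $[k_1]$, every vertex of $M$ has exactly one outgoing edge (to $m_{v_1(i)}$ where $m_i$ is the vertex) and exactly one incoming edge in the graph $G_n(M(U),M(V))$, including self-loops at fixed points. The same holds for $L$. In any nonvanishing summand, the two graphs are compatible, so on the overlap $M \cap L$ the outgoing and incoming edges forced by $v_1$ and $v_2$ agree, and every vertex of $S := M \cup L$ has in-degree $1$ and out-degree $1$ in the union graph. A finite directed graph in which every vertex has in-degree and out-degree $1$ is a disjoint union of directed cycles. Hence the associated packed partial permutation $(U',V')$, obtained via Lemma~\ref{l:unique-pack}, is a genuine permutation in $\symm_{|S|}$, so the term $T^h_{(U',V'),C'}$ is closed.

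I do not anticipate any real obstacle: the bulk of the structural work (regularity, the polynomial weight $h = f \cdot g$, and the vincular set $C'$) is inherited verbatim from the proof of Proposition~\ref{p:simple-prod}, and the new ingredient is just the elementary observation that gluing two permutation graphs compatibly produces another permutation graph. The only thing to be careful about is the edge-case of self-loops coming from fixed points of $v_1$ or $v_2$, but these are automatically $1$-cycles rather than $1$-paths under the conventions of the paper, so they pose no issue.
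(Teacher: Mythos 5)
Your proposal is correct and follows essentially the same route as the paper: reduce by linearity to a product of two closed constrained translates, expand into indicators, invoke Lemma~\ref{compatible-lemma}, and observe that each surviving merged partial permutation is again a genuine permutation of its support. Your in-degree/out-degree bookkeeping is just a more explicit phrasing of the paper's one-line observation that the letter sets of $I$ and $J$ coincide in every nonvanishing term, so the two arguments are the same in substance.
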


\begin{proof}
Let $(v,C,f)$ and $(u,D,g)$ be two packed permutation triples. It is enough to show that the product $T^f_{v,C} \cdot T^g_{u,D}$ is closed.
Lemma~\ref{compatible-lemma} implies that $T^f_{v,C} \cdot T^g_{u,D}$ expands in terms of indicator functions $\one_{I,J}$ for $(I,J) \in \symm_{n,k}$
where the sets of letters appearing in $I$ and $J$ coincide.
Therefore, the $T$-expansion of $T^f_{v,C} \cdot T^g_{u,D}$ derived in the proof of Proposition~\ref{p:simple-prod}  only involves closed $T$-functions.
\end{proof}

By Proposition~\ref{closed-product}, we have a chain of subalgebras
\begin{center}
$\left\{
\begin{array}{c}
\text{closed statistics} \\
\symm \rightarrow \CC
\end{array} \right\}  \subseteq
\left\{
\begin{array}{c}
\text{regular statistics} \\
\symm \rightarrow \CC
\end{array} \right\}  \subseteq
\left\{
\begin{array}{c}
\text{all permutation statistics} \\
\symm \rightarrow \CC
\end{array} \right\}$
\end{center}
of permutation statistics under pointwise product.
We can say a little more about the expansion of $T^f_{v,C} \cdot T^g_{u,D}$ appearing in the proof of Proposition~\ref{closed-product}.
If $T^f_{w,D}$ appears in this expansion, the number $m_i(w)$ of $i$-cycles in $w$ is related to the numbers of $i$-cycles in $v$ and $u$ by the bounds
\begin{equation}
\max( m_i(v), m_i(u) ) \leq m_i(w) \leq m_i(v) + m_i(u).
\end{equation}
These bounds hold because the $i$-cycles in $w$ will come from cycles in $u$ or $v$ (or both).
The following three examples show that the containments are nontrivial.

\begin{example}
\label{not-regular}
If $\stat: \symm_n \rightarrow \CC$ is any regular statistic, it is not hard to see that $\stat = o(n!)$ as $n \rightarrow \infty$.
Therefore $n!$ is not a regular statistic.
\end{example}

\begin{example}
\label{regular-not-closed}
The statistic $n: \symm \rightarrow \CC$ is regular but not closed.  Indeed, as functions on $\symm$ we have
$n = T_{(1,1)} + T_{(1,2)} + T_{(2,1)}$ since for any $w \in \symm_n$ and any $1 \leq i \leq n$ precisely one of $w(i) < i, w(i) = i,$ or $w(i) > i$ is true.
We conclude that $w$ is regular.

On the other hand, if $n$ were closed then $n = \sum_{(v,C,f) \in \Pi} c_{v,C,f} \cdot T_{v,C}^f$ for some set $\Pi$ of permutation triples and some 
constants $c_{v,C,f}$.  If $n_0$ is maximal such that there exists $(v,C,f) \in \Pi$ with $v \in \symm_{n_0}$ then 
for any $n > n_0$ and any $n$-cycle $w \in \symm_n$ we have $\sum_{(v,C,f) \in \Pi} c_{v,C,f} \cdot T_{v,C}^f(w) = 0$. This contradiction shows 
$w$ is not closed.
\end{example}

\begin{example}
\label{m-are-closed}
For any $i \geq 1$, the statistic $m_i: \symm \rightarrow \CC$ is closed. In fact, we have $m_i = \sum_{v \in \symm_i} T_v$.
By Theorem~\ref{character-polynomial-theorem},
for any partition $\mu \vdash k$ the class function $\chi^{\mu[n]}$ is a polynomial in $\CC[m_1, \dots, m_k]$. The statistic 
$\chi^{\mu[-]}: \symm \rightarrow \CC$ is therefore closed.
\end{example}

Combining Examples~\ref{regular-not-closed} and Example~\ref{m-are-closed}, we see that any polynomial
in $\CC[n,m_1,m_2, \dots ]$ is a regular statistic.
An advantage of closed  over regular statistics is that their symmetric functions are relatively easy to compute.

\begin{proposition}
\label{closed-symmetric-function}
Let $(v,C,f)$ be a permutation triple with $v \in \symm_k$, $f \in \CC[x_1, \dots, x_k]$, and $C \subseteq [k-1]$ with $|C| = q$.
Let $\overline{f} \in \CC[n]$ be the polynomial of degree $\deg f$ determined by Lemma~\ref{l:vincular-poly}.  If $v$ has cycle type $\nu \vdash k$, then 
\begin{equation}
\ch_n \left( 
T^f_{v,C}
\right) =
\overline{f}(n) \cdot \frac{(n-k)!}{n!} \cdot \binom{n-q}{k-q} \cdot s_{n-k} \cdot p_{\nu}.
\end{equation}
\end{proposition}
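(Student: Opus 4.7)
The plan is to specialize Corollary~\ref{c:vincular-trans} to the closed setting and then identify the atomic symmetric function explicitly, using the fact that a permutation (as opposed to a genuine partial permutation) has a very simple cycle-path type when embedded in $\symm_{n,k}$. Note that although $T^f_{v,C}$ is not a priori a class function, the appearance of $\ch_n$ in the statement should be read as $\ch_n \circ R$, as in Corollary~\ref{c:vincular-trans}.

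The first step is to identify $(U,V) = v$ with the packed partial permutation having $U = (1,2,\dots,k)$ and $V = (v(1),v(2),\dots,v(k))$, noting that $U \cup V = [k]$ so $m = k$. Applying Corollary~\ref{c:vincular-trans} immediately gives
\begin{equation*}
\ch_n(R \, T^f_{v,C}) \;=\; \frac{1}{n!} \binom{n-q}{k-q} \overline{f}(n) \cdot A_{n,v,v},
\end{equation*}
reducing the problem to an explicit computation of $A_{n,v,v}$.

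The key observation is that because $v$ is a genuine permutation of $[k]$, the graph $G_n(v,v)$ on vertex set $[n]$ decomposes into the cycles of $v$ (all contained in $[k]$) together with the $n - k$ isolated vertices $k+1, k+2, \dots, n$, which count as $n-k$ trivial paths of size $1$. Thus the cycle partition is $\nu$ and the path partition is $(1^{n-k})$, so Proposition~\ref{atomic-factorization} yields
\begin{equation*}
A_{n,v,v} \;=\; \vec{p}_{(1^{n-k})} \cdot p_{\nu}.
\end{equation*}

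The remaining ingredient is the evaluation $\vec{p}_{(1^{n-k})} = (n-k)!\, s_{n-k}$. This can be seen in either of two ways: via the plethystic identity $\vec{p}_{(1^r)} = r!\, h_r[p_1] = r! \cdot h_r$ from Equation~\eqref{path-power-plethysm} together with the elementary fact $h_r = s_r$, or directly from the Path Murnaghan--Nakayama Rule (Theorem~\ref{path-murnaghan-nakayama}), where the monotonicity condition forces the unique monotonic tiling by $n-k$ size-$1$ ribbons to be the single row of shape $(n-k)$, with the prefactor $m((1^{n-k}))! = (n-k)!$ accounting for the symmetry of the parts. Substituting this expression for $A_{n,v,v}$ into the formula above completes the proof. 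No step presents a real obstacle; the proposition is essentially a direct composition of earlier results, with the only conceptual point being the recognition that a permutation $v \in \symm_k$ gives rise to a partial permutation of $[n]$ whose path partition is $(1^{n-k})$.
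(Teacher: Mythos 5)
Your proof is correct and follows exactly the paper's argument: apply Corollary~\ref{c:vincular-trans} to reduce to $A_{n,v,v}$, factor it as $\vec{p}_{1^{n-k}} \cdot p_{\nu}$ via Proposition~\ref{atomic-factorization}, and evaluate $\vec{p}_{1^{n-k}} = (n-k)! \cdot s_{n-k}$ from the Path Murnaghan--Nakayama Rule. Your remark that $\ch_n$ here implicitly means $\ch_n \circ R$ is also the right reading of the statement.
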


\begin{proof}
This follows from Proposition~\ref{atomic-factorization} and  Corollary~\ref{c:vincular-trans} together with the path power sum expansion 
$\vec{p}_{1^{n-k}} = (n-k)! \cdot s_{n-k}$
which follows from the Path Murnaghan-Nakayama Rule (Theorem~\ref{path-murnaghan-nakayama}).
\end{proof}

\chapter{Moments  of Pattern Count}
\label{Pattern}

The Reynolds operator has a simple probabilistic interpretation. Viewing a statistic $\stat: \symm_n \rightarrow \RR$ 
as a random variable on $\symm_n$ with respect to the uniform distribution, for any $w \in \symm_n$ of cycle type $\lambda$ we have
\begin{equation}
\label{eq:cond-exp}
R f(w) = \EE(\stat \mid K_{\lambda})
\end{equation}
which is the expectation of $\stat$ with respect to the uniform distribution on the conjugacy class $K_{\lambda}$.
When $\stat$ is regular, we can reinterpret Theorem~\ref{t:simple} to give a  structural characterization of moments of $f$ applied to uniformly random permutation on a fixed conjugacy class.

\begin{corollary}
	\label{cycle-type-asymptotics}
	Let $\stat$ be a regular permutation statistic with size $k$, shift $q$ and power $p$, and let $d \geq 1$.
	Then for $\Sigma_n$ a uniformly random permutation in $K_\lambda$, 
	\begin{equation}
		\label{eq:regular-moment}
		\EE\left(\stat^d(\Sigma_n) \right) = \frac{f(n,m_1(\lambda),\dots,m_{dk}(\lambda))}{(n)_{dq}}
	\end{equation}
	where $f$ is a polynomial of degree $dp+dq$ with the grading where $\deg(n) = 1$ and $\deg(m_i) =i$.
\end{corollary}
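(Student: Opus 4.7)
The plan is to obtain Corollary~\ref{cycle-type-asymptotics} by iterating Proposition~\ref{p:simple-prod}, feeding the resulting regular statistic into Theorem~\ref{t:simple}, and converting the class-function output into a conditional expectation via the identity $R\,\stat(w) = \EE(\stat \mid K_\lambda)$ recorded in~\eqref{eq:cond-exp}.

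First I would apply Proposition~\ref{p:simple-prod} inductively to the $d$-fold pointwise product $\stat \cdot \stat \cdots \stat$. Since size, shift, and power are all subadditive under pointwise multiplication, the pointwise $d$-th power $\stat^d$ is again regular, with size at most $dk$, shift at most $dq$, and power at most $dp$. The subadditivity of these three parameters is the only combinatorial input needed at this step.

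Next I would invoke Theorem~\ref{t:simple} for the regular statistic $\stat^d$. Its final paragraph asserts that, on the full domain $n \geq 1$, the function $(n)_{dq} \cdot R\,\stat^d : \symm_n \to \CC$ agrees with a polynomial $f \in \CC[n, m_1, \dots, m_{dk}]$ of weighted degree at most $dp + dq$ in the grading $\deg n = 1$, $\deg m_i = i$. Here the parameters $dk$, $dq$, $dp$ coming out of the first step feed directly into the $k$, $q$, $p$ of Theorem~\ref{t:simple}.

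Finally I would translate this statement about the class function $R\,\stat^d$ into the desired probabilistic identity. For any $w \in K_\lambda$, equation~\eqref{eq:cond-exp} gives
\begin{equation*}
\EE\bigl(\stat^d(\Sigma_n)\bigr) = \EE(\stat^d \mid K_\lambda) = R\,\stat^d(w) = \frac{f(n, m_1(\lambda), \dots, m_{dk}(\lambda))}{(n)_{dq}},
\end{equation*}
which is precisely~\eqref{eq:regular-moment}. There is no serious technical obstacle in this argument: all the analytic content is already packaged in Proposition~\ref{p:simple-prod} and Theorem~\ref{t:simple}, and the remaining work is the short bookkeeping that multiplying $d$ copies of a statistic of shift $q$ and power $p$ produces one whose shift and power are at most $dq$ and $dp$, so that the exponents in the corollary match those one reads off from Theorem~\ref{t:simple}.
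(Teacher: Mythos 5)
Your proposal is correct and follows the paper's own argument exactly: the paper likewise handles $d=1$ by reading the second statement of Theorem~\ref{t:simple} as a conditional expectation via~\eqref{eq:cond-exp}, and reduces $d>1$ to this case by the subadditivity of size, shift, and power from Proposition~\ref{p:simple-prod}. No gaps.
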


\begin{proof}
	The $d=1$ case follows by reinterpreting the second statement in Theorem~\ref{t:simple} as a conditional expectation as done in Equation~\eqref{eq:cond-exp}.
	By Proposition~\ref{p:simple-prod}, $\stat^d$ is a regular permutation statistic with size at most $dk$, shift at most $dq$ and power at most $dp$ so the case $d > 1$ follows.
\end{proof}

Corollary~\ref{cycle-type-asymptotics} recovers a litany of results on moments of permutation statistics by viewing them as regular statistics.
The first result of this form we are aware of is due to Zeilberger~\cite{Zeilberger}, who showed the $d$th moment of the random variable counting  permutation pattern occurrences in a uniformly random permutation is a polynomial in $n$.
This result has been generalized in several ways.
The most general family of permutation statistics we are aware where an analogous result was known are weighted bivincular pattern counts~\cite{DK}, whose moments  are rational functions in $n$ with predictable denominators.

When studying expectations for an arbitrary but fixed conjugacy class, the examples in Chapter~\ref{ss:examples} from~\cite{Hultman} are the first results we are aware of.
Gaetz and Ryba were the first to apply the theory of character polynomials to this setting, proving that moments of classical pattern counts for a uniform permutation  in a fixed conjugacy class are polynomials in $n$ and the short cycle counts~\cite{GR}.
Their work was later extended to linear combinations of classical pattern counts~\cite{GP}.
Both of these results rely on the partition algebra, and do not extend in an obvious way to the weighted or constrained settings.

We recover the results from~\cite{DK}   by showing their statistics are regular and that the expectation of any statistic on a uniformly random permutation is the $s_{(n)}$ coefficient computed in~Theorem~\ref{t:simple}.
Since linear combinations of classical pattern counts are regular statistics and the denominator in follow from Corollary~\ref{cycle-type-asymptotics} is always 1 for such functions, we also recover~\cite{GR} and~\cite{GP}.
Lastly, since perfect matchings of $[2n]$ can be identified with fixed point-free involutions (a single conjugacy class), we also recover results from~\cite{KLY} for weighted pattern counts in perfect matchings.

\section{Bivincular pattern counting}
Pattern enumeration gives a rich class of regular statistics.
If $\Upsilon \subset \symm$ is a finite set of patterns, the counting statistic $N_{\Upsilon}: \symm_n \rightarrow \CC$ is regular.
There is a more general notion of permutation patterns which gives a broader family of regular statistics.
If $f \in \CC[x_1, \dots, x_k]$ is a polynomial in $k$ variables and $I = (i_1, \dots, i_k)$ is a length $k$ sequence of integers, we abbreviate
$f(I) := f(i_1, \dots, i_k)$.

\begin{defn} {\em(\cite[Def. 2.1]{DK})}
\label{bivincular-statistics}
	A \emph{bivincular pattern}  is a triple $(v,A,B)$ where $v \in \symm_k$ and $A,B \subseteq [k-1]$.
	The partial permutation $(I,J)$ \emph{matches} $(v,A,B)$ 
	if 
	\begin{itemize}
	\item the sequences $I$ and $v(J)$ are increasing,
	\item
	 $a \in A$ implies $i_{a+1} = i_a + 1$, and 
	 \item
	 $b \in B$ implies $j_{v(b+1)} = j_{v(b) + 1}$.
	 \end{itemize}
	Given polynomials $f,g \in \RR[x_1,\dots,x_k]$, define a statistic $N^{f,g}_{v,A,B}: \symm_n \rightarrow \RR$ 
	associated to the quintuple $(v,A,B,f,g)$
	to be the weighted 
	enumeration
	\begin{equation}
	N^{f,g}_{v,A,B} := \sum_{(I,J) \ \mathrm{ matches}\ (v,A,B)} f(I)g(J)\cdot \one_{I,J}.
	\end{equation}
	We call the quintuple $(v,A,B,f,g)$ a {\em weighted bivincular pattern}.
	If $\Upsilon$ is a finite set of weighted bivincular patterns, we let $N_{\Upsilon}: \symm_n \rightarrow \RR$ be the sum
	\begin{equation}
	N_{\Upsilon} := \sum_{(v,A,B,f,g) \, \in \, \Upsilon} N^{f,g}_{v,A,B}
	\end{equation}
	of the corresponding statistics.
\end{defn}

When $A = B = \varnothing$ and $f \equiv g \equiv 1$ in Definition~\ref{bivincular-statistics}, we recover the pattern enumeration statistics
$N_{\Upsilon}$ from before. 
The $B = \varnothing$ case of Definition~\ref{bivincular-statistics} is known as {\em (weighted) vincular} pattern counting.
When $f \equiv g \equiv 1$ and $B = \varnothing$ we are in the situation of {\em unweighted vincular} pattern counting and write
$N_{v,A}$ instead of the more cumbersome $N^{f,g}_{v,A,\varnothing}$.
The statistics $\inv, \des,$ and $\maj$ are  instances of a statistic $N_{\Upsilon}$ as in Definition~\ref{bivincular-statistics}, but the 
statistic $\exc$ is not.

\begin{proposition}
\label{bivincular-simple}
Let $\Upsilon$ be a finite and nonempty set of weighted bivincular patterns.  The statistic $N_{\Upsilon}: \symm_n \rightarrow \RR$
is regular.
\begin{itemize}
\item
The size of $N_{\Upsilon}$ is $\leq$ the largest value of $k$ such that there exists $(v,A,B,f,g) \in \Upsilon$ with $v \in \symm_k$.
\item
The shift of $N_{\Upsilon}$ is $\leq$ the maximum value of $|A| + |B|$ among all $(v,A,B,f,g) \in \Upsilon$.
\item
The power of $N_{\Upsilon}$ is $\leq$ the maximum value of $k + \deg f + \deg g - |A| - |B|$ among all $(v,A,B,f,g) \in \Upsilon$ with $v \in \symm_k$.
\end{itemize}
\end{proposition}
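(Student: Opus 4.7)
The plan is to reduce via linearity to a single weighted bivincular pattern $(v,A,B,f,g)$ with $v \in \symm_k$, and then expand $N^{f,g}_{v,A,B}$ as a finite linear combination of constrained translates by grouping the matching partial permutations $(I,J)$ according to how their entries interleave. First I would use Lemma~\ref{l:unique-pack} to observe that each matching pair $(I,J)$ of size $k$ determines a packed partial permutation $(U,V)$ on $[m]$ with $m = |I \cup J| \leq 2k$, together with the sorted set $L = I \cup J \subseteq [n]$. The increasing conditions on $I$ and $v(J)$ from Definition~\ref{bivincular-statistics} together with $v$ determine $(U,V)$ from the combinatorial type of the interleaving of $I$ and $J$, the adjacency requirements from $A$ and $B$ translate to a constraint set $C \subseteq [m-1]$ (each $a \in A$ forcing the $L$-positions of $i_a$ and $i_{a+1}$ to be adjacent, and each $b \in B$ forcing an analogous pair of $L$-positions coming from $J$ to be adjacent), and the weight $f(I)g(J)$ becomes a polynomial $h \in \CC[x_1,\dots,x_m]$ via the packing substitution.

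Grouping the defining sum by the resulting packed triple $((U,V),C,h)$ yields
\begin{equation*}
N^{f,g}_{v,A,B} = \sum_{((U,V),C,h)} T^h_{(U,V),C},
\end{equation*}
which establishes regularity. The three parameter bounds can then be read off term-by-term: every $(U,V)$ appearing has $|U| = |V| = k$, giving the size bound; every element of $C$ arises from an element of $A$ or $B$, giving $|C| \leq |A|+|B|$ and hence the shift bound; and since $h$ is obtained from $f \cdot g$ by substitution of variables we have $\deg h \leq \deg f + \deg g$, from which the power $k + \deg h - |C|$ can be compared with $k + \deg f + \deg g - |A| - |B|$. Summing over all $(v,A,B,f,g) \in \Upsilon$ and taking componentwise maxima then yields the stated global bounds for $N_\Upsilon$.

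The main obstacle will be verifying the power bound $k + \deg h - |C| \leq k + \deg f + \deg g - |A| - |B|$ in non-generic cases where $I \cap J \neq \varnothing$, since then two constraints from $A$ and $B$ can enforce the same $L$-adjacency and thus coalesce into a single element of $C$. I would handle this by carefully tracking the correspondence between constraint collisions and the variable identifications required in the substitution producing $h$: each pair of constraints that collapses corresponds to identifying a pair of $I$-indices with a pair of $J$-indices, and I would argue combinatorially that these identifications force a matching drop in $\deg h$ so that the difference $\deg h - |C|$ is preserved. Establishing this accounting carefully, case-by-case on the ways in which $I$-constraints and $J$-constraints can overlap inside $L$, is the technical heart of the argument.
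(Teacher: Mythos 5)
Your overall strategy---reduce by linearity to a single weighted bivincular pattern, group the matching pairs $(I,J)$ by the packed triple $((U,V),C,h)$ obtained via Lemma~\ref{l:unique-pack}, and read the parameters off each constrained translate---is exactly the route the paper takes, and it handles the size and shift bounds correctly: every matching pair has $|U|=|V|=k$, and every element of $C$ is induced by an element of $A$ or of $B$, so $|C|\leq |A|+|B|$.

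You have, however, put your finger on the genuinely delicate point: the power bound when an $A$-constraint and a $B$-constraint coalesce into a single element of $C$, which can happen only when $I\cap J\neq\varnothing$. (The paper's own proof dismisses this with ``the claim about power can be seen in a similar way,'' treating only the disjoint case.) Your proposed repair---that each constraint collision forces a matching drop in $\deg h$---cannot work. The polynomial $h$ is obtained from $f(I)g(J)$ by identifying variables, and identifying variables never lowers the degree of a monomial ($xy\mapsto x^2$ still has degree $2$); in particular when $f\equiv g\equiv 1$ there is no degree available to lose, while $|C|$ genuinely drops, so the power $k+\deg h-|C|$ of that translate strictly exceeds $k+\deg f+\deg g-|A|-|B|$.

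Moreover, no alternative accounting can rescue the bound as stated. Take $v=12\in\symm_2$, $A=B=\{1\}$, $f=g=1$, so the claimed power bound is $2+0+0-1-1=0$. Here $N_\Upsilon(w)=\#\{i: w(i+1)=w(i)+1\}$, and the occurrences with $w(i)=i$, $w(i+1)=i+1$ contribute the translate $T_{(12,12),\{1\}}$ of size $2$, shift $1$, and power $1$. No other expansion does better: since $R\,T_{(12,12),\{1\}}=m_1(m_1-1)/n$, the conditional expectation $\EE(N_\Upsilon\mid K_\lambda)$ grows linearly in $n$ on classes with a positive proportion of fixed points, whereas a power-$0$ regular statistic has bounded conditional expectations (by the degree bound for a single constrained translate combined with character polynomials). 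So this statistic has power $1$, not $\leq 0$. The bound you are asked to prove is correct when no $A$-constraint and $B$-constraint can impose the same adjacency on $L$---for instance for vincular patterns, where $B=\varnothing$---and what your argument actually establishes in general is the weaker bound $k+\deg f+\deg g-\min|C|$, with the minimum over the translates that occur. You should flag this rather than try to force the stated inequality.
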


\begin{proof}
It suffices to consider the case where $\Upsilon = \{ (v,A,B,f,g) \}$ is a single weighted bivincular pattern. The statistic
$N^{f,g}_{v,A,B}$ is certainly regular and the claims about size is clear.
For the claim about shift, when an indicator function $\one_{I,J}$ indexed by disjoint sets 
$I \cap J = \varnothing$ appears in the indicator expansion of $N^{f,g}_{v,A,B}$, the vincular conditions imposed by $A$ on $I$ and $B$ on $J$
are disjoint.
The claim about power can be seen in a similar way.
\end{proof}

Our results apply to weighted bivincular pattern enumeration as follows.
The following  is a generalization of a theorem of Gaetz and Ryba \cite[Thm. 1.1]{GR}.

\begin{corollary}
\label{bivincular-corollary}
Let $\Upsilon$ be a finite and nonempty set of weighted bivincular patterns $(v,A,B,f,g)$ with counting statistic
$N_{\Upsilon}: \symm_n \rightarrow \RR$.
Let $k$ be the largest size of a pattern $v$ with $(v,A,B,f,g) \in \Upsilon$, let
 $q$ be the maximum size of $|A| + |B|$ over all $(v,A,B,f,g) \in \Upsilon$, and let
 $p$ be the largest value of $r + \deg f + \deg g - |A| - |B|$ where $v \in \symm_r$ and $(v,A,B,f,g) \in \Upsilon$.
 \begin{itemize}
 \item
 For any $d \geq 1$, we have the Schur expansion
\begin{equation}
\ch_n( R N_{\Upsilon}^d ) = \sum_{|\lambda| \leq dk} c_{\lambda}(n) \cdot s_{\lambda[n]}
\end{equation}
where on the domain $\{ n \geq 2 d k \}$ the coefficient $c_{\lambda}(n)$ is a  rational function of $n$ of rational degree $\leq p - |\lambda|$.
Furthermore, on the domain $\{ n \geq 2 d k \}$ the product $(n)_{dq} \cdot c_{\lambda}(n) \in \RR[n]$ is a polynomial in $n$. 
\item 
For any $w \in \symm_n$, the number $R \, N_{\Upsilon}^d$ is a polynomial of $n, m_1, m_2, \dots, m_{dk}$ divided by $(n)_{dp}$.
Here the grading sets $\deg n = 1$ and $\deg m_i = i$. 
In particular, if $A = B = \varnothing$ for all $(v,A,B,f,g) \in \Upsilon$ then $p = 0$ and $R \, N_{\Upsilon}^d$ is a polynomial in
$n, m_1(w), m_2(w), \dots, m_{dk}(w)$ .
\end{itemize}
\end{corollary}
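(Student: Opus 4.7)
The plan is to derive this corollary as a direct consequence of three previously established results: Proposition~\ref{bivincular-simple} (which identifies the regularity parameters of a single bivincular pattern counter), Proposition~\ref{p:simple-prod} (subadditivity of size, shift, and power under multiplication), and Theorem~\ref{t:simple} (Schur and character-polynomial expansions for regular statistics).

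First, I would apply Proposition~\ref{bivincular-simple} to each of the finitely many weighted bivincular patterns $(v,A,B,f,g) \in \Upsilon$, obtaining regular statistics $N_{v,A,B}^{f,g}$ whose size, shift, and power are controlled by $k$, $q$, and $p$ respectively. Taking the linear combination $N_\Upsilon = \sum N_{v,A,B}^{f,g}$ preserves these upper bounds, so $N_\Upsilon$ is regular of size at most $k$, shift at most $q$, and power at most $p$. Next, I would apply Proposition~\ref{p:simple-prod} inductively (or more precisely, its immediate extension to a $d$-fold pointwise product) to conclude that $N_\Upsilon^d$ is regular of size at most $dk$, shift at most $dq$, and power at most $dp$.

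With $N_\Upsilon^d$ now exhibited as a regular statistic with controlled parameters, the first bullet follows by invoking Theorem~\ref{t:simple} with $(\mathrm{stat},k,q,p) = (N_\Upsilon^d, dk, dq, dp)$: the Schur expansion is supported on $|\lambda| \leq dk$, the rational functions $c_\lambda(n)$ have rational degree at most $dp - |\lambda|$ on the domain $\{n \geq 2dk\}$, and $(n)_{dq} \cdot c_\lambda(n)$ is a polynomial. Reinterpreting $R N_\Upsilon^d$ via the conditional-expectation identity $R\,\stat(w) = \EE(\stat \mid K_{\lambda(w)})$, together with the second statement of Theorem~\ref{t:simple} and the character polynomial machinery of Theorem~\ref{character-polynomial-theorem}, yields the second bullet: $(n)_{dq} \cdot R N_\Upsilon^d$ equals a polynomial in $n, m_1, \dots, m_{dk}$ of degree at most $dp$ under the grading $\deg n = 1$, $\deg m_i = i$.

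The final assertion reduces to observing that if $A = B = \varnothing$ for every pattern in $\Upsilon$, then by Proposition~\ref{bivincular-simple} the shift $q$ vanishes, so $dq = 0$ and the denominator $(n)_{dq}$ collapses to $1$; consequently $R N_\Upsilon^d$ is a genuine polynomial in $n, m_1, \dots, m_{dk}$ rather than merely a rational function. There is no significant obstacle here: the combinatorial heavy lifting was already carried out in Chapter~\ref{Path} (the Path Murnaghan-Nakayama formula) and absorbed into Theorem~\ref{t:simple}, so the role of this corollary is essentially to translate those structural results into the vocabulary of bivincular pattern counting. The only minor care required is to verify that the parameter bounds propagate correctly through the linear combination and through the $d$-fold product, both of which are immediate from the subadditivity statements already on hand.
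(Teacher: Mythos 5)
Your proposal is correct and follows exactly the paper's own route: the paper proves the first bullet by combining Proposition~\ref{bivincular-simple}, Proposition~\ref{p:simple-prod}, and Theorem~\ref{t:simple}, and the second bullet via Proposition~\ref{p:simple-prod} together with Corollary~\ref{cycle-type-asymptotics} (which is itself just the conditional-expectation reading of Theorem~\ref{t:simple} that you invoke directly). Your degree bound $dp-|\lambda|$ and denominator $(n)_{dq}$ are the correct outputs of Theorem~\ref{t:simple} applied to $N_\Upsilon^d$.
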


\begin{proof}
Combine Proposition~\ref{p:simple-prod}, Theorem~\ref{t:simple}, and Proposition~\ref{bivincular-simple} for the first claim.
The second claim is a consequence of Proposition~\ref{p:simple-prod} and Corollary~\ref{cycle-type-asymptotics}.
\end{proof}

Gaetz and Ryba \cite{GR} proved Corollary~\ref{bivincular-corollary} in the special case where 
$\Upsilon = \{ (v,\varnothing,\varnothing,1,1) \}$ consists of a single unweighted pattern $v \in \symm_k$ free of bivincular conditions.
The method of proof in \cite{GR} was representation-theoretic.
Writing  $M_n = \RR^n$ for the defining representation of $\symm_n$, the tensor power $M_n^{\otimes k} = M_n \otimes \cdots \otimes M_n$
carries a diagonal action of $\symm_n$. 
There is an epimorphism from the $k^{th}$ partition algebra $\PPP_k(x)$ at parameter $x = n$ to the endomorphism ring 
$\End_{\symm_n}(M_n^{\otimes k})$, and the actions of $\RR[\symm_n]$ and $\PPP_k(n)$ on $M_n^{\otimes k}$ generate each others
commutator subalgebras.
In \cite{GR} the main result is proven by examining the action of $\PPP_k(n)$ on $\End_{\symm_n}(M_n^{\otimes k})$.
It is unclear how to apply double commutant theory to the case where
$\Upsilon$ contains more than one pattern or has nontrivial bivincular conditions.
Gaetz and Pierson \cite[Thm. 1.3]{GP} built on the results of \cite{GR}  to prove the case of Corollary~\ref{bivincular-corollary}
where $\Upsilon$ is a finite list of patterns (without bivincular conditions).
The result \cite[Thm. 1.3]{GP} is only stated in the case $d = 1$, but the result for $d \geq 1$ follows by expanding
$N_{\Upsilon}^d = \left( \sum_{v \in \Upsilon} N_v \right)^d$.

Theorem~\ref{t:simple} also extends work of Dimitrov and Khare \cite{DK}. To explain how, we need a general fact about 
the symmetric function $\ch_n(R \, f)$ attached to a statistic $f: \symm_n \rightarrow \RR$.
We can ask for the average value of a statistic $f$ on the entire symmetric group rather than a given conjugacy class.
At the level of symmetric functions, this corresponds to extracting the coefficient of $s_n$.

\begin{proposition}
\label{average-value-extraction}
For any statistic $f: \symm_n \rightarrow \RR$, the expectation $\EE(f) = \frac{1}{n!} \sum_{w \in \symm_n} f(w)$ is given by
\begin{equation}
\EE(f) = \langle \ch_n(R \, f), s_n \rangle = \text{{\em coefficient of $s_n$ in $\ch_n(R \, f)$}}.
\end{equation}
\end{proposition}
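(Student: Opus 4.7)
The plan is to reduce the claim to the standard fact that the Reynolds operator $R$ preserves the total sum of a function over $\symm_n$, using the isometry property of the Frobenius characteristic map together with the fact that the trivial character corresponds to $s_n$.

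First, I would unpack what the ``coefficient of $s_n$'' means. Since the Schur basis $\{s_\lambda \,:\, \lambda \vdash n\}$ is orthonormal with respect to the Hall inner product, the coefficient of $s_n$ in $\ch_n(R\,f)$ is literally $\langle \ch_n(R\,f), s_n \rangle$, so the two expressions in the statement are automatically equal; the content of the proposition is the identification with $\EE(f)$. Next, invoking the isometry of the Frobenius characteristic map $\ch_n: \Class(\symm_n,\CC) \to \Lambda_n$ (which sends $\chi^\lambda$ to $s_\lambda$, in particular sends $\chi^{(n)}$ to $s_n$), I get
\begin{equation*}
\langle \ch_n(R\,f), s_n \rangle = \langle \ch_n(R\,f), \ch_n(\chi^{(n)}) \rangle = \langle R\,f, \chi^{(n)} \rangle_{\symm_n}.
\end{equation*}
Since $\chi^{(n)} \equiv 1$ is the trivial character, the right-hand side is simply $\frac{1}{n!} \sum_{w \in \symm_n} R\,f(w)$.

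It remains to verify the intuitively clear fact that $\sum_{w \in \symm_n} R\,f(w) = \sum_{w \in \symm_n} f(w)$. This follows from the definition of $R$ by swapping the order of summation:
\begin{equation*}
\sum_{w \in \symm_n} R\,f(w) = \frac{1}{n!}\sum_{w, v \in \symm_n} f(v^{-1} w v) = \frac{1}{n!}\sum_{v \in \symm_n} \sum_{u \in \symm_n} f(u) = \sum_{u \in \symm_n} f(u),
\end{equation*}
where in the middle equality I substituted $u = v^{-1}wv$ and used that conjugation by a fixed $v$ is a bijection on $\symm_n$. Dividing by $n!$ gives exactly $\EE(f)$, completing the proof.

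There is no real obstacle here; the proposition is essentially a formal consequence of the isometry of $\ch_n$, the identification $\ch_n(\chi^{(n)}) = s_n$, and the elementary invariance of the uniform distribution on $\symm_n$ under conjugation. If desired, an even shorter route is to observe that $R\,f$ and $f$ have the same average (since $R$ is idempotent and self-adjoint with respect to $\langle -,-\rangle_{\symm_n}$, and the constant function $1$ is a class function fixed by $R$), then pair against the trivial character.
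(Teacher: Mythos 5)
Your proof is correct. It takes a mildly different route from the paper's, so a brief comparison: the paper computes directly in the power sum basis, writing $\EE(f) = \frac{1}{n!}\sum_{\lambda \vdash n} |K_{\lambda}| \cdot Rf(\lambda) = \sum_{\lambda \vdash n} Rf(\lambda)/z_{\lambda}$ (where $Rf(\lambda)$ denotes the common value of $Rf$ on $K_{\lambda}$) and then recognizes the last sum as $\langle \ch_n(Rf), s_n\rangle$ using the expansion $s_n = \sum_{\lambda \vdash n} z_{\lambda}^{-1} p_{\lambda}$ together with $\langle p_{\lambda}, p_{\mu}\rangle = z_{\lambda}\delta_{\lambda,\mu}$. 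You instead pull the computation back to class functions: $\langle \ch_n(Rf), s_n\rangle = \langle Rf, \chi^{(n)}\rangle$ by the isometry of $\ch_n$ and the identification $\ch_n(\chi^{(n)}) = s_n$, and then verify separately that $R$ preserves the total sum over $\symm_n$. The two arguments carry identical content---the paper's step of grouping $\sum_w f(w)$ by conjugacy class and replacing $f$ by its class average is exactly your statement that $\sum_w Rf(w) = \sum_w f(w)$---but your version isolates the conceptual ingredients (trivial character, isometry, conjugation-invariance of the uniform measure) rather than folding them into one chain of equalities in the $p$-basis. Either is a complete proof.
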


\begin{proof}
Recall that the power sums form an orthogonal basis of $\Lambda_n$ with
$\langle p_{\lambda}, p_{\mu} \rangle = z_{\mu}$.  We calculate
\begin{equation}
\EE(f) = \frac{1}{n!} \sum_{w \in \symm_n} f(w) = \frac{1}{n!} \sum_{\lambda \vdash n} |K_{\lambda}| \cdot Rf(\lambda) =
\sum_{\lambda \vdash n} \frac{Rf(\lambda)}{z_{\lambda}} = \langle \ch_n(Rf), s_n \rangle
\end{equation}
where we write $Rf(\lambda)$ for the value of $Rf: \symm_n \rightarrow \RR$ on any permutation of cycle type $\lambda$
and use the expansion $s_n = \sum_{\lambda \vdash n} \frac{1}{z_{\lambda}} p_{\lambda}$.
\end{proof}

It would be interesting to find probabilistic interpretations of other parts of the Schur expansion of $\ch_n(R \, f)$.
 The following application of Proposition~\ref{average-value-extraction} is equivalent to a result \cite[Thm. 4.5]{DK} of Dimitrov and Khare.

\begin{corollary}
\label{average-value-corollary}
Let $\Upsilon, p,$ and $q$ be as in Corollary~\ref{bivincular-corollary}. For any $d \geq 1$, the $d^{th}$ moment of the statistic
$N_{\Upsilon}: \symm_n \rightarrow \RR$ is of the form
\begin{equation}
\EE(N_{\Upsilon}^d) = \frac{f(n)}{(n)_{dq}}
\end{equation}
where $f(n) \in \RR[n]$ is a polynomial in $n$ of degree $\leq dp$.
\end{corollary}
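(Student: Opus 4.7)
The plan is to combine Proposition~\ref{average-value-extraction} with Corollary~\ref{bivincular-corollary}, reducing everything to extracting a single coefficient from a Schur expansion that has already been characterized.

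First I would apply Proposition~\ref{p:simple-prod} (or Proposition~\ref{bivincular-simple} directly for the pattern case) iteratively to the regular statistic $N_{\Upsilon}$ to observe that $N_{\Upsilon}^d$ is again regular, with size at most $dk$, shift at most $dq$, and power at most $dp$. This is the subadditivity of the three parameters under pointwise products, already established in the paper. Then I would invoke Corollary~\ref{bivincular-corollary} (equivalently, Theorem~\ref{t:simple}) applied to the regular statistic $N_{\Upsilon}^d$ directly, obtaining a Schur expansion
\[
\ch_n(R\, N_{\Upsilon}^d) = \sum_{|\lambda| \leq dk} c_{\lambda}(n)\cdot s_{\lambda[n]}
\]
on the domain $\{n \geq 2dk\}$, where each $(n)_{dq}\cdot c_{\lambda}(n)$ is a polynomial in $n$ and the rational degree of $c_{\lambda}(n)$ is at most $dp - |\lambda|$.

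Next I would apply Proposition~\ref{average-value-extraction} to the statistic $N_{\Upsilon}^d$, which says that its expectation with respect to the uniform measure on $\symm_n$ is exactly the coefficient of $s_n$ in $\ch_n(R\, N_{\Upsilon}^d)$. Since $s_n = s_{\varnothing[n]}$, extracting this coefficient picks out the $\lambda = \varnothing$ term in the expansion above. By the degree bound, $c_{\varnothing}(n)$ has rational degree at most $dp$ and $(n)_{dq} \cdot c_{\varnothing}(n)$ is a polynomial in $n$ of degree at most $dp + dq - dq = dp$. Setting $f(n) := (n)_{dq}\cdot c_{\varnothing}(n)$ yields
\[
\EE(N_{\Upsilon}^d) = c_{\varnothing}(n) = \frac{f(n)}{(n)_{dq}}
\]
with $\deg f \leq dp$, valid at least for $n \geq 2dk$.

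There is no real obstacle here, since all the heavy lifting is done by Theorem~\ref{t:simple} and the subadditivity of size, shift, and power. The only mild subtlety is that Theorem~\ref{t:simple} only guarantees polynomiality of $(n)_{dq}\cdot c_{\varnothing}(n)$ on $\{n \geq 2dk\}$, whereas the moment $\EE(N_{\Upsilon}^d)$ is defined for all $n \geq 1$; this is handled by the second part of Theorem~\ref{t:simple}, which (via Proposition~\ref{indicator-polynomiality-proposition} and Lemma~\ref{cycle-identification-lemma}) upgrades the identity to all $n \geq 1$ after clearing the denominator $(n)_{dq}$, so the polynomial $f(n)$ works globally.
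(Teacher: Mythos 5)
Your overall route coincides with the paper's: reduce to the coefficient of $s_n$ via Proposition~\ref{average-value-extraction}, observe that $N_{\Upsilon}^d$ is regular of size $\leq dk$, shift $\leq dq$, and power $\leq dp$ by Propositions~\ref{bivincular-simple} and~\ref{p:simple-prod}, and then read off the $\lambda = \varnothing$ coefficient of the Schur expansion supplied by Theorem~\ref{t:simple} (the paper's proof additionally invokes Corollary~\ref{constrained-one-row} at this last step). Your remark about extending from the domain $\{n \geq 2dk\}$ to all $n$ is also handled correctly, since the $\varnothing$-coefficient is polynomial for all $n$ by the discussion following Corollary~\ref{atomic-asymptotics}.

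The problem is the final degree count. Theorem~\ref{t:simple} gives that $c_{\varnothing}(n)$ has \emph{rational} degree at most $dp$ and that $(n)_{dq}\cdot c_{\varnothing}(n)$ is a polynomial. Writing $c_{\varnothing}(n) = f(n)/(n)_{dq}$, the rational degree is $\deg f - dq$, so what you actually obtain is $\deg f \leq dp + dq$; your manipulation ``$dp+dq-dq=dp$'' subtracts $dq$ where it should be added. This is not cosmetic. Take $\stat = \des = N_{21,\{1\}}$, so $d=1$, $k=2$, $q=1$, $p=1$: then $\EE(\des) = (n-1)/2$, and writing this as $f(n)/(n)_1$ forces $f(n) = n(n-1)/2$ of degree $2 = dp+dq$, exceeding the claimed bound $dp = 1$. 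The exponent $dp+dq$ also appears as the \emph{exact} degree of $(n)_{dq}\cdot c_{\varnothing}(n)$ in the paper's own Corollary~\ref{degree-max-corollary}. So the bound your argument (correctly carried out) delivers is $\deg f \leq dp + dq$, and the sharper bound $\deg f \leq dp$ asserted in the statement is not reachable this way — indeed the $\des$ example shows it fails as stated, so the discrepancy lies in the statement rather than in a repairable defect of the method.
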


\begin{proof} 
Applying Proposition~\ref{p:simple-prod},
Proposition~\ref{bivincular-simple},
and Proposition~\ref{average-value-extraction}, 
this amounts to applying Theorem~\ref{t:simple} in the case $\stat = N_{\Upsilon}^d$
to the coefficient $c_{\varnothing}(n)$.
Corollary~\ref{constrained-one-row} shows that $c_{\varnothing}(n)$ has the required form.
\end{proof}

\section{The coefficient of $s_n$} The Schur expansions coming from the statistics
$\exc, \inv, \des,$ and $\maj$  in
Proposition~\ref{basic-expansions} share a common feature: the degree of the coefficient of $s_n$
(as a rational function of $n$) is strictly greater than the degrees of all other coefficients.  Not all regular statistics have this property.
For example, let $\fix: \symm_n \rightarrow \RR$ count the fixed points $w(i) = i$ of a permutation $w$.
Then
$\fix = T_{(1,1)}$
is a regular class function satisfying
\begin{equation}
\ch_n(R \, \fix) = \ch_n(\fix) = s_n + s_{n-1,1}
\end{equation}
where the coefficients both have degree 0.
This does not happen for weighted bivincular pattern counts.

\begin{figure}
\begin{center}
\begin{tikzpicture}[scale = 0.5]

 \node [draw, circle, fill = white, inner sep = 1.2pt] at (0,0) { }; 
 \node [draw, circle, fill = white, inner sep = 1.2pt] at (0,-2) { }; 
 \draw [->, thick]  (0,-0.3) -- (0,-1.7);
 
  \node [draw, circle, fill = white, inner sep = 1.2pt] at (1,0) { }; 
 \node [draw, circle, fill = white, inner sep = 1.2pt] at (1,-2) { }; 
 \draw [->, thick]  (1,-0.3) -- (1,-1.7);

 \node at (3,-1) {$\cdots$};
 \node at (3,-3) {$dr$ paths};
 \node at (13,-1) {$\cdots$};
   \node at (13,-3) {$n-dr$ vertices};
 
  \node [draw, circle, fill = white, inner sep = 1.2pt] at (5,0) { }; 
 \node [draw, circle, fill = white, inner sep = 1.2pt] at (5,-2) { }; 
 \draw [->, thick]  (5,-0.3) -- (5,-1.7);

  \node [draw, circle, fill = white, inner sep = 1.2pt] at (6,0) { }; 
 \node [draw, circle, fill = white, inner sep = 1.2pt] at (6,-2) { }; 
 \draw [->, thick]  (6,-0.3) -- (6,-1.7);

  \node [draw, circle, fill = white, inner sep = 1.2pt] at (10,-1) { }; 
  \node [draw, circle, fill = white, inner sep = 1.2pt] at (11,-1) { }; 
  \node [draw, circle, fill = white, inner sep = 1.2pt] at (15,-1) { }; 
  \node [draw, circle, fill = white, inner sep = 1.2pt] at (16,-1) { };

\end{tikzpicture}
\end{center}
\caption{The graph appearing in the proof of Corollary~\ref{degree-max-corollary} which contributes the maximum degree.}
\label{fig:max-degree}
\end{figure}
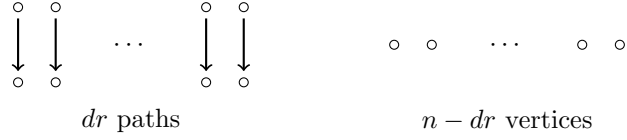

\begin{corollary}
\label{degree-max-corollary}
Preserve the notation of Corollary~\ref{bivincular-corollary}. Assume the polynomials $f, g$ for weighted
bivincular patterns $(v,A,B,f,g) \in \Upsilon$ all have positive  coefficients.
 The rational degree of  the coefficients $c_{\lambda}(n)$ 
 as $\lambda$ varies over partitions with $|\lambda| \leq dk$
is uniquely maximized when $\lambda = \varnothing$.  Furthermore, the coefficient
$(n)_{dq} \cdot c_{\varnothing}(n)$ is a polynomial of degree precisely $dp + dq$.
\end{corollary}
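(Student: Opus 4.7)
The plan is to combine the upper bound coming from Theorem~\ref{t:simple} with a positivity-based lower bound argument. For the upper bound, by Proposition~\ref{p:simple-prod} and Proposition~\ref{bivincular-simple} the statistic $N_\Upsilon^d$ has size at most $dk$, shift at most $dq$, and power at most $dp$. Applying Theorem~\ref{t:simple} directly gives the expansion $\ch_n(R\,N_\Upsilon^d) = \sum_{|\lambda| \leq dk} c_\lambda(n) \cdot s_{\lambda[n]}$ with each $c_\lambda(n)$ a rational function of rational degree at most $dp - |\lambda|$. For any $\lambda \neq \varnothing$ this is strictly less than $dp$, so uniqueness of $\lambda = \varnothing$ as the degree-maximizer reduces to showing that $c_\varnothing(n)$ has rational degree exactly $dp$, equivalently that $(n)_{dq} \cdot c_\varnothing(n)$ is a polynomial of degree exactly $dp + dq$.

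To obtain the matching lower bound I would first apply Proposition~\ref{average-value-extraction} to identify $c_\varnothing(n) = \EE(N_\Upsilon^d)$. Choose a single weighted bivincular pattern $P^* = (v^*, A^*, B^*, f^*, g^*) \in \Upsilon$ attaining the maximum pattern power, so that $k^* + \deg f^* + \deg g^* - |A^*| - |B^*| = p$ with $v^* \in \symm_{k^*}$. Positivity of the coefficients of every $f$ and $g$ appearing in $\Upsilon$ guarantees that $N_P \geq 0$ pointwise on $\symm_n$ for each $P \in \Upsilon$, so $N_\Upsilon \geq N_{P^*}$ pointwise and therefore $\EE(N_\Upsilon^d) \geq \EE(N_{P^*}^d)$ for every $n$.

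The estimate $\EE(N_{P^*}^d) = \Theta(n^{dp})$ is then obtained by expanding the expectation as a sum over ordered $d$-tuples of matches of $P^*$ and isolating the dominant contribution from pairwise disjoint $d$-tuples $(I_1, J_1), \ldots, (I_d, J_d)$ in which each individual pair additionally satisfies $I_j \cap J_j = \varnothing$. For such a tuple $\Pr[\forall j : w(I_j) = J_j] = 1/(n)_{dk^*}$, since $w$ is prescribed on $dk^*$ distinct positions with $dk^*$ distinct values. Counting maximally-disjoint tuples weighted by $\prod_j f^*(I_j) g^*(J_j)$ via iterated application of Lemma~\ref{l:vincular-poly} yields a polynomial in $n$ whose numerator has degree $d(2k^* - |A^*| - |B^*|) + d(\deg f^* + \deg g^*) = d(k^* + p)$. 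Dividing by $(n)_{dk^*}$ leaves rational degree exactly $dp$. The remaining $d$-tuples, those in which some $(I_j, J_j)$ overlaps another or has $I_j \cap J_j \neq \varnothing$, contribute at strictly lower orders, and cannot cancel the dominant term by positivity. Combining with the upper bound, $c_\varnothing(n)$ has rational degree exactly $dp$ and $(n)_{dq} c_\varnothing(n)$ has polynomial degree $dp + dq$.

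The main obstacle will be ruling out hidden cancellation in the lower-bound asymptotics. The positivity hypothesis is essential here: it makes every individual summand in the expansion of $\EE(N_{P^*}^d)$ nonnegative, so the $n^{dp}$ leading order survives with a strictly positive coefficient. A secondary technical point is the bookkeeping needed to show that the disjoint-tuple count together with the polynomial weights $\prod_j f^*(I_j) g^*(J_j)$ produces a polynomial of the claimed degree; this should follow from an iterated application of Lemma~\ref{l:vincular-poly} but requires some care with the adjacency constraints $A^*$ and $B^*$ and with verifying that overlapping tuples genuinely contribute only lower-order terms.
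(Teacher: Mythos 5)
Your argument is correct, and it reaches the lower bound by a different route than the paper. Both proofs share the same skeleton: the upper bound $\deg c_\lambda(n) \leq dp - |\lambda|$ comes from Theorem~\ref{t:simple} exactly as you say, and both lower bounds isolate the configuration of $d$ pairwise disjoint instances of a power-maximizing pattern and invoke positivity to rule out cancellation. Where you diverge is in how that configuration is exploited. The paper stays inside the symmetric-function formalism: it expands $N_\Upsilon^d$ into constrained translates, singles out the term $T^h_{(U,V),C}$ whose graph $G(U,V)$ consists of $dr$ disjoint paths of size $2$, and reads off the exact degree of its $s_n$-coefficient from Corollary~\ref{constrained-one-row}, which ultimately rests on the monotonic-tiling degree analysis of Corollary~\ref{atomic-asymptotics} (the degree of $g_\varnothing(n)$ equals the number of long paths). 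You instead convert the $s_n$-coefficient to $\EE(N_\Upsilon^d)$ via Proposition~\ref{average-value-extraction}, bound it below by $\EE(N_{P^*}^d)$ using pointwise positivity, and compute the latter by elementary counting of disjoint $d$-tuples with $\Pr[\forall j:\, w(I_j)=J_j] = 1/(n)_{dk^*}$. Your route is more elementary and self-contained for this particular coefficient, since it bypasses the path Murnaghan--Nakayama machinery entirely; the paper's route has the advantage of fitting into a uniform framework that also controls the coefficients $c_\lambda(n)$ for $\lambda \neq \varnothing$. Two small points to tighten: the step $\EE(N_\Upsilon^d) \geq \EE(N_{P^*}^d)$ uses that both statistics are pointwise nonnegative so that $x \mapsto x^d$ is monotone where applied, which should be said explicitly; and for the lower bound you can simply \emph{discard} all non-disjoint tuples (their contributions are nonnegative) rather than estimating them, since the matching $O(n^{dp})$ upper bound on $\EE(N_{P^*}^d)$ is already supplied by Corollary~\ref{average-value-corollary} --- this removes the "main obstacle" you flag at the end. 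What remains is only the routine verification that the weighted count of fully disjoint tuples is $\binom{n-|A^*|}{k^*-|A^*|}^d\binom{n-|B^*|}{k^*-|B^*|}^d$-sized up to lower-order corrections with a strictly positive leading coefficient, which follows from Lemma~\ref{l:vincular-poly} and positivity of $f^*, g^*$ on positive integer inputs.
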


\begin{proof}
In order to compute the coefficient $c_{\lambda}(n)$ of $s_{\lambda[n]}$, we first expand $N_{\Upsilon}^d$ in terms of the $T$-statistics,
then use Corollary~\ref{c:vincular-trans} to pass to atomics, and finally use Corollary~\ref{atomic-schur-expansion} to get the Schur expansion.
The crucial observation is that the $T$-expansion of $N_{\Upsilon}^d$ contains a term on which the coefficient of $s_n$ achieves the required
degree.

As in Corollary~\ref{bivincular-corollary}, let $(v,A,B,f,g) \in \Upsilon$ be so that $v \in \symm_r$ and 
$v - |A| - |B| + \deg f + \deg g = p$.
There is a term $T^h_{(U,V),C}$ in the $T$-expansion 
of $N_{\Upsilon}^d$ corresponding to a packed graph $G(U,V)$ consisting of $dr$ paths of size 2 and 
$n-dr$ singleton paths; see Figure~\ref{fig:max-degree}. The graph $G(U,V)$ corresponds to $d$ instances of the size $r$ pattern $v$ which share no 
positions or values.
The sets $A$ and $B$ impose a total of $d \cdot (|A| + |B|)$  bivincular conditions $C$, 
and the polynomial $h \in \RR[x_1, \dots, x_{2dr}]$ is of degree
 $d \cdot (\deg f + \deg g)$.
By Corollary~\ref{constrained-one-row}, the coefficient of $s_n$
in the expansion of 
\begin{equation*}
(n)_{dq} \cdot \ch_n(R \, T^h_{(U,V),C}) 
\end{equation*}
is a polynomial in $n$ of degree $dp + dq$.
The positivity assumptions guarantee that there will be no cancellation in the coefficient of $s_n$ coming from other terms in the $T$-expansion,
so Corollary~\ref{constrained-one-row} implies that the degree of $(n)_{dq} \cdot c_{\varnothing}(n)$ is precisely $dp + dq$.
Corollary~\ref{bivincular-corollary} also implies that the degrees of the polynomials  
 $(n)_{dq} \cdot c_{\lambda}(n)$ for $\lambda \neq \varnothing$ are $\leq dp + dq - |\lambda| < dp + dq$.
\end{proof}

\section{Patterns in perfect matchings}
Given a local statistic $f: \symm_n \rightarrow \RR$ and a partition $\lambda \vdash n$, the $d^{th}$ moment 
$\EE( f^d \mid K_{\lambda} )$ of $f$ restricted to the conjugacy class $K_{\lambda}$ may be obtained 
from the power sum expansion of $\ch_n(R \, f^d)$ by
\begin{equation}
\EE( f^d \mid K_{\lambda} ) = z_{\lambda} \times \text{coefficient of $p_{\lambda}$ in $\ch_n(R \, f^d)$}.
\end{equation}
In practice,
these moments are often difficult to calculate since the number of nonisomorphic graphs $G \in \GGG_n$ involved in the atomic expansion
of $\ch_n(R \, f^d)$, and hence the number of terms in the atomic expansion of $\ch_n(R \, f^d)$, can grow dramatically with $d$.
In this section we apply closed statistics to say more about these moments in the case of matchings, reproving a result
of Khare, Lorentz, and Yan \cite{KLY}.

A {\em perfect matching}  of size $n$ is a partition of the set $[2n]$ into $n$ blocks of size $2$.
Perfect matchings of size $n$ coincide with permutations $t \in \symm_{2n}$ of cycle type $2^n$.
We write $\MMM_{2n} := K_{2^n}$ for the set of size $n$ perfect matchings.
We have  $|\MMM_{2n}| = (2n-1)!! := (2n-1) \cdot (2n-3) \cdot \cdots \cdot 3 \cdot 1$.

We aim to study patterns in perfect matchings. To this end, we define
a {\em partial matching} on $[k]$ to be a collection of pairs in $\binom{[k]}{2}$ whose elements are mutually disjoint. 
A partial matching $A = \{ \{ a_1, b_1\}, \dots, \{ a_j, b_j \} \}$ with $j$ pairs may be identified with the partial permutation $ (I_A, J_A)$ of size $2j$ where
$I_A = (a_1, \dots, a_j, b_1, \dots, b_j)$ and $J_A = (b_1, \dots, b_j, a_1, \dots, a_j)$.

Partial matchings play the role of patterns in perfect matchings. To state our results in greater generality, we introduce a vincular condition and consider 
polynomial weights.
A {\em vincular $\MMM$-pattern of size $k$} is a pair $(A,C)$ where $v$ is a partial matching on $[k]$ and 
$C \subseteq [k-1]$ is such that either $k$ is matched in $A$ or $k-1 \in C$.
We say $t \in \MMM_{2n}$ {\em contains $(A,C)$} at $L = \{ \ell_1 < \cdots < \ell_k \} \subseteq [2 n]$ if
\begin{itemize}
\item
 for all pairs $\{i,j\}$ in $A$, the pair $(\ell_i, \ell_j)$ is a 2-cycle in $t$, and
 \item
 for all $i \in C$ we have $\ell_{i+1} = \ell_i + 1$, or equivalently $L \in \binom{[2n]}{k}_C$.
 \end{itemize}
This is similar to the notion of vincular patterns in permutations, but we allow vincular conditions on unmatched values in $A$.
Given $t \in \MMM_{2n}$ and $L \in \binom{[2n]}{k}_C$, we let 
\begin{equation}
\one^{\MMM}_{A,L}(t) = \begin{cases}
1 & \text{$t$ contains $(A,C)$ at $L$} \\
0 & \text{otherwise}
\end{cases}
\end{equation}
be the corresponding indicator statistic.
Given a polynomial weight $f \in \RR[x_1, \dots, x_k]$ is a polynomial, we define a statistic 
\begin{equation}
N_{A,C}^f = \sum_{L} f(\ell_1, \dots, \ell_k) \cdot \one^{\MMM}_{A,L}
\end{equation}
where the index ranges over all subsets $L = \{\ell_1 < \cdots < \ell_k \} \in \binom{[2n]}{k}_C$.
The matching statistics $N_{A,C}^f$ are closed.

\begin{lemma}
\label{matching-t-expansion}
Let $f \in \RR[x_1, \dots, x_k]$ and let $(A,C)$ be a vincular $\MMM$-pattern of size $k$ with $|A| = j$.
The statistic $N^f_{A,C}$ is a sum of closed constrained translates $T^f_{v,C}$ where $v \in \symm_{2k-2j}$ has cycle type $2^{k-j}$.
In particular, the statistic $N^f_{A,C}$ is closed.
\end{lemma}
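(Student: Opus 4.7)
The plan is to expand $\one^\MMM_{A,L}(t)$ as a sum of indicators of full partial permutations, exploiting the involution structure of perfect matchings, and then to reorganize the resulting double sum as a linear combination of closed constrained translates. Denote the $j$ pairs of $A$ by $\{\{a_r,b_r\}\}_{r=1}^j$ and the set of $k-2j$ unmatched positions by $U \subseteq [k]$. For $t \in \MMM_{2n}$ and $L = \{\ell_1 < \cdots < \ell_k\}$, we have $\one^\MMM_{A,L}(t) = \prod_{r=1}^j \one_{t(\ell_{a_r}) = \ell_{b_r}}$. For each $u \in U$, I would insert the trivial identity $1 = \sum_{m \in [2n]\setminus\{\ell_u\}} \one_{t(\ell_u)=m}$ and expand the resulting product. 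Since $t$ is an involution on $\MMM_{2n}$, each nonzero summand determines a partial matching $A'$ on $U$ (pairs $\{u,u'\}\subseteq U$ with $t(\ell_u)=\ell_{u'}$) together with an injection $\phi$ from $U \setminus \bigcup A'$ to $[2n]\setminus L$ recording the outside partners. The summand is then the indicator of a partial permutation on the extended set $M = L \cup \phi(U\setminus\bigcup A')$, whose graph consists of $k-j-|A'|$ disjoint $2$-cycles coming from $A \cup A'$ and from the edges $\{\ell_u, \phi(u)\}$.

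Next, I would swap the order of summation in $N^f_{A,C}(t) = \sum_L f(L)\,\one^\MMM_{A,L}(t)$ and reorganize by the pair $(A',\tau)$, where $\tau$ records the combinatorial type of the embedding of $L$ and of $\phi(U\setminus\bigcup A')$ into the sorted extended set $M$. For fixed $(A',\tau)$, the inner sum over $L$ and $\phi$ becomes a sum over $M \in \binom{[2n]}{|M|}_{C'}$ for a vincular condition $C' \subseteq [|M|-1]$ induced by $C$ via $\tau$, weighted by a polynomial $f' \in \RR[x_1,\dots,x_{|M|}]$ obtained by composing $f$ with the coordinate projections selecting the positions of $L$ within $M$. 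By the definition of constrained translates, this inner sum is precisely $T^{f'}_{v,C'}$, where $v \in \symm_{|M|}$ is the permutation of cycle type $2^{k-j-|A'|}$ determined by the matching structure on $M$. The dominant case $A'=\varnothing$ produces the translates $T^f_{v,C}$ with $v \in \symm_{2k-2j}$ of cycle type $2^{k-j}$ as claimed in the lemma; the remaining cases $|A'|>0$ contribute closed constrained translates with smaller $v$, and all of these are closed statistics.

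The main obstacle is the careful bookkeeping of the induced vincular condition $C'$ and the weight polynomial $f'$ as the outside partners $\phi(u)$ are interleaved with the elements of $L$ inside $M$: a constraint $c \in C$ imposing $\ell_{c+1}=\ell_c+1$ forces the images of $\ell_c$ and $\ell_{c+1}$ to occupy consecutive coordinates of $M$, so that embeddings $\tau$ inserting an outside partner between them contribute zero and must be excluded from the enumeration. The hypothesis that either $k$ is matched in $A$ or $k-1 \in C$ anchors the position of the last pattern element and ensures that the boundary configurations of $\tau$ can be enumerated uniformly. Once this finite enumeration is carried out, $N^f_{A,C}$ is exhibited as a finite $\RR$-linear combination of closed constrained translates, proving that it is closed.
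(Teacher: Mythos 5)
Your proposal is correct in substance but takes a genuinely different and considerably more careful route than the paper. The paper's entire proof is the single identity $N^f_{A,C} = \sum_{v \in \Gamma} T^f_{v,C}$, where $\Gamma$ is the set of perfect matchings of $[2k-2j]$ whose $2$-cycles include the pairs of $A$. Your expansion---completing $\one^{\MMM}_{A,L}$ by summing over the images of the unmatched positions, classifying the nonzero terms by the internal matching $A'$ together with the injection $\phi$, and regrouping by the interleaving type $\tau$---makes explicit two phenomena that the one-line identity suppresses: (i) when an outside partner falls between two pattern positions in the sorted extended set $M$, both the vincular set and the weight polynomial must be reindexed, so the summands are translates $T^{f'}_{v,C'}$ rather than literally $T^f_{v,C}$; and (ii) the terms with $A' \neq \varnothing$ produce closed translates whose permutation lies in $\symm_{2k-2j-2|A'|}$ with cycle type $2^{\,k-j-|A'|}$, strictly smaller than the size asserted in the lemma's first sentence. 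You flag this honestly, and it is worth emphasizing that your decomposition therefore proves the ``in particular'' conclusion (closedness, which is all that Theorem~\ref{matching-moments} actually uses) rather than the first sentence verbatim. This is not a defect of your argument: already for $k=2$, $A=\varnothing$, $C=\{1\}$, $f \equiv 1$, the statistic $N^f_{A,C}$ is the constant $2n-1$, while $\sum_{v \in \MMM_4} T^1_{v,\{1\}}$ counts $4$-subsets with consecutive smallest elements that are unions of two $2$-cycles, so the clean identity cannot hold once unmatched positions are present and the extra bookkeeping you carry out is genuinely needed. When $A$ is a perfect matching of $[k]$ there are no unmatched positions, $\Gamma$ is a singleton, and the two arguments coincide; this is the case that drives the moment computation. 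One minor caveat: the hypothesis that $k$ is matched in $A$ or $k-1 \in C$ is not what makes your enumeration of interleavings work---that enumeration goes through without it---so your appeal to it for ``anchoring'' $\tau$ is a harmless red herring.
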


\begin{proof}
If we set $\Gamma := \{ v \in \MMM_{2k-2j} \,:\, \text{every pair $\{i,j\} \in A$ is a 2-cycle $(i,j)$ in $v$} \}$, we have
\begin{equation}
N^f_{A,C} = \sum_{v \in \Gamma} T^f_{v,C}
\end{equation}
and the result follows.
\end{proof}

The $T$-expansion in the proof of Proposition~\ref{matching-t-expansion} is not necessarily as simple as possible since unmatched entries in $A$ can
pair with each other.
If $\Phi$ is a nonempty and finite set of triples $(A,C,f)$ where $(A,C)$ is a vincular $\MMM$-pattern of size $k$ and $f \in \RR[x_1, \dots, x_k]$ 
is a polynomial, we write
\begin{equation}
N^{\MMM}_{\Phi} := \sum_{(A,C,f) \in \Phi} N^f_{A,C}
\end{equation}
for the sum of the pattern counting statistics. Motivated by Proposition~\ref{matching-t-expansion}, we define
\begin{itemize}
\item the {\em $\MMM$-size} of $\Phi$ to be the maximum value of $k$ such that $(A,C,f) \in \Phi$ and $A$ is a partial matching on $[k]$,
\item the 
{\em $\MMM$-power} of $\Phi$ be the maximum value of $k + \deg f$ where $(A,C,f) \in \Phi$ and $A$ is a partial matching on $[k]$, and
\item the 
{\em $\MMM$-shift} of $\Phi$ to be the maximum value of $|C|$ where $(A,C,f) \in \Phi$.
\end{itemize}
The moments of $N^{\MMM}_{\Phi}$ have a nice form which depends on these parameters.
The following result is equivalent to \cite[Thm. 3.3]{KLY}.

\begin{theorem}
\label{matching-moments}
{\em (Khare-Lorentz-Yan \cite{KLY})}
Let $\Phi$ be as above with $\MMM$-size $k$, $\MMM$-power $p$, and $\MMM$-shift $q$. Let $d \geq 0$.  There exists a polynomial $g$ of degree 
at most $dp$ such that the $d^{th}$ moment of $N_{\Phi}^{\MMM}$ is given by
\begin{equation}
\EE_{\MMM_{2n}} \left(  (N_{\Phi}^{\MMM})^d \right) = 
\frac{(2n-dk-1)!!}{(2n-1)!!} \cdot \binom{2n-dq}{dk-dq} \cdot g(n)
\end{equation}
for $n$ sufficiently large.
\end{theorem}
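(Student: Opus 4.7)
The plan is to use the closedness of $N^\MMM_\Phi$, which follows from Lemma~\ref{matching-t-expansion} and linearity, to reduce the problem to a direct computation on the single conjugacy class $\MMM_{2n} = K_{2^n}$. By Proposition~\ref{closed-product}, $(N^\MMM_\Phi)^d$ is again closed, so it expands as
\begin{equation*}
(N^\MMM_\Phi)^d \;=\; \sum_{(w,D,h) \,\in\, \Psi} c_{w,D,h}\, T^h_{w,D}
\end{equation*}
for some finite set $\Psi$ of permutation triples. The central structural claim is that every $w$ appearing in $\Psi$ has cycle type $2^{j}$ for some $j$; I would prove this by induction on $d$, using the observation that if $u, v$ are matchings whose partial-permutation graphs unite into a valid partial permutation, then at every shared vertex $u$ and $v$ must specify the same partner, so the union is itself a disjoint union of $2$-cycles. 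Proposition~\ref{p:simple-prod} then yields $|w| \leq dk$, $|D| \leq dq$, and $|w| + \deg h - |D| \leq dp$ for every $(w,D,h) \in \Psi$.

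Next, I would compute the conditional expectation of each closed constrained translate on $\MMM_{2n}$ directly. For $T^h_{w,D}$ with $w$ a matching of size $|w| = 2j$, the indicator $\one_{L(w)}(t)$ evaluated at a uniformly random matching $t \in \MMM_{2n}$ equals $1$ precisely when $t$ extends the matching specified by $w$ at the $2j$ positions $L$; the number of such extensions is $(2n - 2j - 1)!!$. Combining this with the weighted enumeration of Lemma~\ref{l:vincular-poly} over $L \in \binom{[2n]}{2j}_D$ gives
\begin{equation*}
\EE_{\MMM_{2n}}\!\left[T^h_{w,D}\right] \;=\; \frac{(2n-2j-1)!!}{(2n-1)!!}\cdot \binom{2n-|D|}{2j-|D|}\cdot \overline{h}(2n),
\end{equation*}
where $\overline{h} \in \CC[n]$ has degree $\deg h$. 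Summing over $\Psi$ yields an explicit expression for $\EE_{\MMM_{2n}}[(N^\MMM_\Phi)^d]$.

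To recognize this sum in the claimed form, I would invoke Corollary~\ref{cycle-type-asymptotics} applied to the regular statistic $(N^\MMM_\Phi)^d$, which gives $\EE_{\MMM_{2n}}[(N^\MMM_\Phi)^d] = \phi(n)/(2n)_{dq}$ for some $\phi \in \CC[n]$ of degree at most $dp+dq$, obtained by substituting $m_2 = n$ and $m_i = 0$ for $i \neq 2$ into the associated polynomial in $\CC[n,m_1,\dots,m_{dk}]$. Using the identities $(2n-1)!! = (2n)!/(2^n n!)$ and $(2n-2j-1)!! = (2n-2j)!/(2^{n-j}(n-j)!)$ transforms each summand of the previous step into $2^j (n)_j (2n-|D|)!\, \overline{h}(2n)/[(2n)!\,(2j-|D|)!]$, while the target factor $\frac{(2n-dk-1)!!}{(2n-1)!!}\binom{2n-dq}{dk-dq}$ rewrites identically as $2^{dk/2}(n)_{dk/2}(2n-dq)!/[(dk-dq)!(2n)!]$. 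Equating the two expressions and solving for $g(n)$ then identifies it as a rational function whose polynomiality, together with the degree bound $\deg g \leq dp$, is forced by the degree bound on $\phi$. The main obstacle is polynomiality of $g$: individual summands are not polynomial multiples of the target factor, so $g(n) \in \CC[n]$ only after cancellation across all of $\Psi$, which is precisely the cancellation supplied by Corollary~\ref{cycle-type-asymptotics}.
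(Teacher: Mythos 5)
Your overall strategy --- reduce via closedness to a sum of matching-type constrained translates, compute each term's conditional expectation on $\MMM_{2n}$, and assemble --- is the same as the paper's. Your per-term computation is a legitimate variant: where the paper applies Proposition~\ref{closed-symmetric-function} to $\ch_{2n}(R\,T^f_{v,C})$ and extracts the coefficient of $p_{2^n}$, using that the coefficient of $p_{2^{(2n-k)/2}}$ in $(2n-k)!\,s_{2n-k}$ is $(2n-k-1)!!$, you count directly that a fixed partial matching on $2j$ points extends to $(2n-2j-1)!!$ perfect matchings of $[2n]$. These give the same formula, and your route is more self-contained. Your inductive structural claim that every permutation appearing in the closed expansion of $(N^{\MMM}_{\Phi})^d$ has cycle type $2^j$ likewise matches the discussion following Proposition~\ref{closed-product}.

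The gap is in your final step. Knowing from Corollary~\ref{cycle-type-asymptotics} that $\EE_{\MMM_{2n}}[(N^{\MMM}_{\Phi})^d] = \phi(n)/(2n)_{dq'}$ for \emph{some} polynomial $\phi$ of bounded degree cannot force
\[
g(n) \;=\; \EE_{\MMM_{2n}}\!\left[(N^{\MMM}_{\Phi})^d\right]\cdot\frac{(2n-1)!!}{(2n-dk-1)!!}\cdot\binom{2n-dq}{dk-dq}^{-1}
\]
to be a polynomial: a degree bound controls the growth of $\phi$ but says nothing about its roots, and what is actually needed is a divisibility statement. Indeed, by your own rewriting, dividing the summand $2^j(n)_j(2n-|D|)!\,\overline{h}(2n)/[(2n)!(2j-|D|)!]$ by the target factor $2^{dk/2}(n)_{dk/2}(2n-dq)!/[(dk-dq)!(2n)!]$ leaves a factor $(n)_j/(n)_{dk/2} = 1/(n-j)_{dk/2-j}$ in every term with $j < dk/2$, and nothing you have cited guarantees these denominators cancel in the sum over $\Psi$. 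The paper avoids this by reducing to a single vincular $\MMM$-pattern and summing over the completions $\Gamma$ of Lemma~\ref{matching-t-expansion}, which all have the same size and cycle type, so that every term carries the \emph{same} prefactor $\frac{(2n-k-1)!!}{(2n-1)!!}\binom{2n-q}{k-q}$ and $g = |\Gamma|\cdot\overline{f}$ is manifestly a polynomial. To repair your argument, you should likewise organize the closed expansion so that the prefactor is uniform within each group of terms you sum (or verify the cancellation across the values of $j$ explicitly), rather than appealing to the degree bound of Corollary~\ref{cycle-type-asymptotics}.
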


\begin{proof}
By the discussion following Proposition~\ref{closed-product} and
 Lemma~\ref{matching-t-expansion}, for any $d \geq 0$ the statistic $(N^{\MMM}_{\Phi})^d$ is a linear combination of statistics
 of the form $N^{\MMM}_{\Phi'}$
 of $\MMM$-size at most $dk$, $\MMM$-power at most $dp$, and $\MMM$-shift at most $dq$.
  This reduces us to the case where $d = 1$ and $\Phi = \{ (A,C,f) \}$ is a singleton with $A$ a partial matching on $[k]$ with $|A| = j$
  and $|C| = q$.
 
If we let
 $\Gamma := \{ v \in \MMM_{2k-2j} \,:\, \text{every pair $\{i,j\} \in A$ is a 2-cycle $(i,j)$ in $v$} \}$ 
 be the set of perfect matchings appearing in the proof of Lemma~\ref{matching-t-expansion}, the reasoning of that proof gives
 \begin{align}
 \ch_{2n} \left( R \, N_{A,C}^f \right) &= \sum_{v \in \Gamma} \ch_{2n} \left(  R \, T_{v,C}^f  \right) \\
 &= \frac{1}{(2n)!}  \sum_{v \in \Gamma} \binom{2n - q}{k-q} \overline{f}(n)  (2n-k)! \cdot s_{2n-k}  \cdot p_{2^{k/2}}  \\
  &= \frac{|\Gamma|}{(2n)!}   \binom{2n - q}{k-q} \overline{f}(n)  (2n-k)! \cdot s_{2n-k} \cdot  p_{2^{k/2}} 
 \end{align}
 where we applied Proposition~\ref{closed-symmetric-function} for the second equality. Since the coefficient of $p_{2^{(2n-k)/2}}$ in $(2n-k)! \cdot s_{2n-k}$
 is $(2n-k-1)!!$ for $k$ even, we have
 \begin{align}
 \EE_{\MMM_{2n}} \left( N_{A,C}^f   \right) &= z_{2^n} \times \text{coefficient of $p_{2^n}$ in $\ch_{2n}\left( R \, N_{A,C}^f \right)$} \\
 &= \frac{|\Gamma|}{(2n-1)!!} \binom{2n-q}{k-q} \overline{f}(n) (2n-k-1)!!
 \end{align}
 where $\deg \overline{f} = \deg f$.  Bringing the constant $|\Gamma|$ into $\overline{f}(n)$ completes the proof.
\end{proof}

A similar result should be true for partial matchings, which correspond to involutions.
We omit the details.

\chapter{Convergence for Local and Regular Statistics}
\label{Convergence}

In this section we analyze limiting behavior of regular and local permutation statistics using properties of their moments.
For local statistics, we present a general method to translate the limiting behavior on a uniformly random permutation to permutations drawn from certain non-uniform distributions.
This allows us to improve on a variety of previous results in this vein.
Most notably, we extend  the asymptotic normality of vincular statistics~\cite{Hofer} to conjugacy classes with all long cycles, generalizing a classic result of Fulman for $\Des$~\cite{Fulman}.

We also study properties of normalized regular statistics applied to a permutations drawn from conjugacy invariant distributions.
Specifically, we show their expectations depend only on the limiting proportion of fixed points and their variances depends only on the limiting proportion of fixed points and two-cycles.
One applications of these theorems is a weak law of large numbers for these normalized regular statistics.
Additionally, we present a proof that uniformly random permutations drawn from a sequence conjugacy classes whose proportion of fixed points converges converges to a permuton\footnote{We thank Valentin F\'eray for showing us how to characterize this permuton}.

\section{Prior work}
\label{ss:prior-work}
Let $v \in \symm_k$ be a pattern, let $A \subseteq [k-1]$, and consider the statistic $N_{v,A}: \symm_n \rightarrow \RR$ that counts 
instances of the vincular pattern $(v,A)$.
The limiting behavior of the statistics $N_{v,A}$ has received extensive study. By linearity of expectations we have
\begin{equation}
\EE(N_{v,A}) = \frac{n^{k-q}}{k! (k-q)!}
\end{equation}
when $|A| = q$.
Hofer proved that 
the random variables $N_{v,A}: \symm_n \rightarrow \RR$ converge to a normal distribution as $n \rightarrow \infty$.

\begin{theorem}
\label{hofer-convergence}
{\em (Hofer \cite[Thm. 3.1]{Hofer})}
Let $v \in \symm_k$, $A \subsetneq [k-1]$ with $|A|=q$ and $X_n = N_{v,A}(\Sigma_n)$ wher $\Sigma_n \sim Unif(\symm_n)$.
Then there exists a constant $\sigma_{v,A}>0$ so that as $n \to \infty$ we have
\begin{equation}
\left(  \frac{X_n - \frac{n^{k-q}}{k! (k-q)!}}{n^{k-q-\frac{1}{2}}}
 \right) \xrightarrow{d} \NNN(0, \sigma^2_{v,A}).
\end{equation}
\end{theorem}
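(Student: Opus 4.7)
The plan is to apply the method of moments (Theorem~\ref{method-of-moments}) using the machinery of regular statistics. First, Proposition~\ref{bivincular-simple} identifies $N_{v,A}$ as a regular statistic of size $k$, shift $q$, and power $p := k-q \geq 1$. By Corollary~\ref{bivincular-corollary} combined with Proposition~\ref{average-value-extraction}, the $d$-th moment under uniform sampling has the form
\[
\EE\bigl[N_{v,A}(\Sigma_n)^d\bigr] = \frac{P_d(n)}{(n)_{dq}}
\]
for some polynomial $P_d \in \RR[n]$, and Corollary~\ref{degree-max-corollary} pins down the exact degree $\deg P_d = dp + dq$.

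Next I would determine the leading behavior of $P_d$. By the proof of Proposition~\ref{p:simple-prod}, the $T$-expansion of $N_{v,A}^d$ decomposes according to the overlap pattern among $d$ chosen index sets $L_1, \dots, L_d \in \binom{[n]}{k}_A$: a configuration in which the union $L_1 \cup \cdots \cup L_d$ has $dk - e$ distinct elements contributes to order $n^{dp - e}$. The fully disjoint configurations ($e = 0$) give $\mu_n^d + O(n^{dp-1})$, where $\mu_n := \EE[N_{v,A}(\Sigma_n)] \sim n^p/(k!(k-q)!)$, so the leading term in the centered moment $M_d(n) := \EE[(X_n - \mu_n)^d]$ already cancels.

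To identify the scale of $M_d(n)$, the key claim is a Wick-type statement: among overlap configurations with a given total merger count $e$, only those in which the $d$ patterns pair off into $e$ overlapping pairs (each pair sharing one index) survive in the centered expansion, all other contributions being either of strictly smaller order or canceling against the subtracted powers of $\mu_n$. Consequently, $M_{2m}(n) \sim (2m-1)!! \sigma_{v,A}^{2m} n^{m(2p-1)}$ for some $\sigma_{v,A} > 0$ computed from pairs of patterns overlapping in a single position, and $M_{2m+1}(n) = o(n^{(2m+1)(p-1/2)})$ because no perfect matching of $2m+1$ patterns exists. Dividing by $n^{d(p-1/2)}$ and passing to the limit then yields convergence of all moments of $(X_n - \mu_n)/n^{p-1/2}$ to those of $\NNN(0, \sigma_{v,A}^2)$, completing the method of moments.

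The main obstacle will be the combinatorial bookkeeping required to separate pairwise overlaps from higher overlaps in the centered moment expansion: the coefficient of $n^{dp - e}$ in $P_d(n)/(n)_{dq}$ is a sum over many overlap types, and isolating the $m(2p-1)$-power contribution in $M_{2m}$ requires delicate inclusion-exclusion. The cleanest organization is likely via cumulants: by expressing the $d$-th cumulant of $N_{v,A}$ as a sum over \emph{connected} overlap structures on $d$ copies of the pattern, the degree bounds in Corollary~\ref{bivincular-corollary} imply that the $d$-th cumulant of $X_n/n^{p-1/2}$ is $O(n^{-(d-2)/2})$ for $d \geq 3$, which is equivalent to the moment statement above.
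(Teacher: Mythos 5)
The paper does not prove this statement: it is quoted verbatim from Hofer and used as a black box (see the citation, and later the remark that ``Our proofs of convergence to normality ultimately relied on Hofer's Theorem''). In fact, Problem~\ref{hofer-problem} explicitly poses the task you are attempting --- proving Hofer's theorem with the paper's machinery --- as an open problem, and the discussion there warns that the na\"ive method-of-moments route founders precisely where your sketch defers to ``combinatorial bookkeeping'': the $T$-expansions of $N_{v,A}^d$ become unmanageably complicated as $d$ grows. So your proposal is not a variant of the paper's argument; it is an outline of a proof the paper deliberately does not supply.

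On its merits, the outline has two genuine gaps. First, the ``Wick-type statement'' is the entire content of the theorem, and as stated it is not correct: the limiting variance is \emph{not} computed only from pairs of patterns overlapping in one index. For two disjoint index sets the indicators $\one_{I_1,J_1}$ and $\one_{I_2,J_2}$ are not independent; their covariance is of order $n^{-2k-1}$, and summed over the $\Theta(n^{4k-2q})$ disjoint configurations this contributes at order $n^{2p-1}$ --- the same order as the overlapping pairs. (This is exactly the phenomenon Lemma~\ref{translate-product} is designed to control for the variance.) Your own accounting concedes that disjoint configurations contribute $O(n^{dp-1})$ beyond $\mu_n^d$, which for $d=2$ is the leading order of $M_2(n)$, contradicting the claim that only overlapping pairs survive. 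The same near-independence corrections must be tracked in every higher centered moment before one can conclude that the pairings dominate with the $(2m-1)!!$ count; the cumulant reformulation does not dispose of this, since ``connected overlap structure'' must be defined to include these $O(1/n)$ dependencies between disjoint copies. Second, the positivity $\sigma_{v,A}>0$ is asserted without argument; this non-degeneracy is a known nontrivial step (the paper notes that Hofer's dependency-graph proof ultimately reduces to showing exactly that this normalized variance does not vanish), and without it the theorem as stated fails.
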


The analogue of this result for generalized constraints appears as \cite[Thm. 14.1]{Janson}.
F\'eray \cite{Feray} had previously shown convergence to the normal distribution assuming $\VV(X_n) = \Theta(n^{2k-2q-1})$.

We want to consider the restriction of the statistics $N_{v,A}$ to a conjugacy class $K_{\lambda}$ in the limit as $n \rightarrow \infty$.
To do this, we must impose conditions on how a sequence $\{\lambda^{(n)}\}$ of partitions of $n$ will grow.
Recall that $m_i(\lambda)$ denotes the multiplicity of $i$ as a part of the partition $\lambda$. We say that a sequence
$\{ \lambda^{(n)} \}$ of partitions of $n$ has {\em all long cycles} if 
\begin{equation}
\lim_{n \rightarrow \infty} m_i(\lambda^{(n)}) = 0
\end{equation}
for all $i$.
Fulman proved the following result for the vincular statistic $\des$.

\begin{theorem}
\label{fulman-theorem}
{\em (\cite[Cor. 4]{Fulman})}
Let $\{ \lambda^{(n)} \}$ be a sequence of partitions of $n$ with all long cycles. Then
\begin{equation}
\left(  
\frac{\des - n/2}{\sqrt{n/12}} \, \, \middle\vert \, \,  K_{\lambda^{(n)}} \right) \xrightarrow{d} \NNN(0,1)
\end{equation}
as $n \rightarrow \infty$.
\end{theorem}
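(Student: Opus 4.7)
The plan is to apply the Method of Moments (Theorem~\ref{method-of-moments}) to the normalized statistic $Z_n := (\des - n/2)/\sqrt{n/12}$ on $K_{\lambda^{(n)}}$. First, I would recall from Section~\ref{ss:examples} that $\des$ is a regular statistic of size $2$, shift $1$, and power $1$. Proposition~\ref{p:simple-prod} then gives that $\des^d$ is regular of size at most $2d$, shift at most $d$, and power at most $d$. Applying Corollary~\ref{cycle-type-asymptotics}, there exists a polynomial $f_d \in \CC[n, m_1, \ldots, m_{2d}]$ of total degree at most $2d$ (with $\deg n = 1$ and $\deg m_i = i$) such that $\EE(\des^d \mid K_\lambda) = f_d(n, m_1(\lambda), \ldots, m_{2d}(\lambda))/(n)_d$; a binomial expansion of $(\des - n/2)^d$ then yields a polynomial $F_d$ of the same degree type with
\[
\EE[Z_n^d \mid K_\lambda] = \frac{F_d(n, m_1(\lambda), \ldots, m_{2d}(\lambda))}{(n/12)^{d/2}(n)_d}.
\]

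Second, the assumption that $\{\lambda^{(n)}\}$ has all long cycles, combined with the integrality of the $m_i$'s, implies that for each fixed $d$ there exists $N_d$ such that $m_i(\lambda^{(n)}) = 0$ for all $i \leq 2d$ and $n \geq N_d$. Hence for large $n$,
\[
\EE[Z_n^d \mid K_{\lambda^{(n)}}] = \frac{F_d(n, 0, \ldots, 0)}{(n/12)^{d/2}(n)_d},
\]
which is a rational function of $n$ alone. To identify the limiting value, I would invoke Hofer's Theorem~\ref{hofer-convergence} applied to the vincular pattern $(v, A) = ([2,1], \{1\})$ (for which $N_{v,A} = \des$), giving $Z_n \xrightarrow{d} \NNN(0,1)$ under uniform $\symm_n$. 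Since all polynomial moments of $Z_n$ under uniform are uniformly bounded in $n$ (by Corollary~\ref{cycle-type-asymptotics} applied to the uniform average, together with Goncharov's theorem on tightness of joint short-cycle counts), this distributional convergence upgrades to $\EE_{\text{unif}}[Z_n^d] \to \EE[\NNN(0,1)^d]$ for every $d \geq 0$.

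Third, I would decompose $F_d = F_d(n, 0, \ldots, 0) + H_d$, where every monomial of $H_d$ involves at least one $m_i$. The grading constraint $a + \sum_i i b_i \leq 2d$ forces the $n$-exponent $a$ of each such monomial to be strictly less than the maximum attainable value when all $b_i = 0$. Averaging over the uniform distribution and using that the cycle-count moments $\EE_\Sigma[m_i(\Sigma)^{b_i}]$ remain bounded as $n \to \infty$, one obtains
\[
\EE_{\text{unif}}[Z_n^d] = \frac{F_d(n, 0, \ldots, 0)}{(n/12)^{d/2}(n)_d} + o(1).
\]
Combining this with the preceding paragraph forces $F_d(n, 0, \ldots, 0)/[(n/12)^{d/2}(n)_d] \to \EE[\NNN(0,1)^d]$, and hence $\EE[Z_n^d \mid K_{\lambda^{(n)}}] \to \EE[\NNN(0,1)^d]$ by Step~2. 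Since the normal distribution is characterized by its moments, the Method of Moments completes the proof.

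The main obstacle is the degree analysis underlying the asymptotic comparison between the uniform and all-long-cycles moments: one must upgrade the grading bound on $F_d$ to a statement about the literal $n$-degrees of its monomials, and then show the expected error $\EE_\Sigma[H_d]$ is subdominant relative to the normalization $(n/12)^{d/2}(n)_d$. The difficulty is compounded by the fact that the naive grading bound of $2d$ is not saturated; substantial cancellation across conjugacy classes occurs due to centering around $n/2$, and quantifying this cancellation requires careful bookkeeping. An alternative avoiding Goncharov would be to compute $\EE[Z_n^d \mid K_{\lambda^{(n)}}]$ directly as a rational function of $n$ via the Path Murnaghan--Nakayama formula and character polynomials, bootstrapping from the explicit Schur expansion $\ch_n(R\des)$ recorded in Proposition~\ref{basic-expansions}.
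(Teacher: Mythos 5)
The statement you are proving is cited from Fulman; the paper itself does not reprove it, but its generalization (Theorem~\ref{our-long-cycles}) covers it as the case $v=21$, $A=\{1\}$, $\sigma_{v,A}^2 = 1/12$. Your Steps 1--2 are fine and essentially rederive Proposition~\ref{local-statistic-long} through the lens of regularity rather than locality: once $m_1 = \cdots = m_{2d}$ vanish, the $d$-th moment on $K_{\lambda^{(n)}}$ is a rational function of $n$ alone, equal to its value on $K_{(n)}$. The divergence, and the gap, is in Step 3, where you identify the limit by comparing to the uniform distribution via a degree count.

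The problem is quantitative. Corollary~\ref{cycle-type-asymptotics} bounds the graded degree of $F_d$ by $2d$ (with $\deg n = 1$, $\deg m_i = i$), so a monomial $n^a\prod_i m_i^{b_i}$ of $H_d$ satisfies only $a \leq 2d-1$. Since the joint moments of the short-cycle counts are $O(1)$ under the uniform measure, the best you get is $\EE_{\mathrm{unif}}[H_d] = O(n^{2d-1})$, whereas the normalization is $(n/12)^{d/2}(n)_d \asymp n^{3d/2}$. As $2d-1 > 3d/2$ for every $d>2$, the claimed estimate $\EE_{\mathrm{unif}}[Z_n^d] = F_d(n,0,\dots,0)/[(n/12)^{d/2}(n)_d] + o(1)$ does not follow; the same obstruction defeats your earlier claim that the uniform moments of $Z_n$ are uniformly bounded (the grading bound gives only $\EE_{\mathrm{unif}}[(\des-n/2)^d] = O(n^d)$, not the $O(n^{d/2})$ you need, and that uniform integrability is required to pass from Hofer's distributional convergence to moment convergence). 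What is missing in both places is precisely the cancellation produced by centering at $n/2$, which the grading bound is blind to; you name this as ``the main obstacle'' but do not resolve it, and resolving it amounts to redoing the concentration analysis for $\des$ from scratch. The paper avoids the issue entirely: it never compares \emph{moments} of the uniform measure against a conjugacy class. Instead, Corollary~\ref{kammoun-one-cycle} couples a uniform permutation $\Pi_n$ to a uniform $n$-cycle $\Sigma_n$ and shows $|N_{v,A}(\Sigma_n)-N_{v,A}(\Pi_n)|/n^{k-q} \to 0$ almost surely, transferring Hofer's CLT to $K_{(n)}$ pathwise; only then does it invoke the moment identity between $K_{(n)}$ and $K_{\lambda^{(n)}}$ and the Method of Moments. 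To repair your argument you should replace Step 3 with that coupling (or some other direct proof that $Z_n \mid K_{(n)} \xrightarrow{d} \NNN(0,1)$ together with uniform integrability of $Z_n^d$ on $K_{(n)}$), after which Steps 1--2 and the Method of Moments finish the proof as you intend.
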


Kim \cite{Kim} proved an analogue of Theorem~\ref{fulman-theorem} for the sequence of cycle types $(2^n)$.
The following version of Theorem~\ref{fulman-theorem} for all cycle types is due to Kim and Lee.

\begin{theorem}
\label{kl-theorem}
{\em (\cite[Thm. 1.4]{KL1})}
Let $\{ \lambda^{(n)} \}$ be a sequence of partitions of $n$ and suppose $m_1(\lambda^{(n)}) \rightarrow \alpha \cdot n$ for some $0 < \alpha < 1$.
Then
\begin{equation}
\left(  
\frac{\des - n(1-\alpha^2)/2}{\sqrt{n(1- 4 \alpha^3 + 3 \alpha^4)/12}} \, \, \middle\vert \, \, K_{\lambda^{(n)}} \right) \xrightarrow{d} \NNN(0,1)
\end{equation}
as $n \rightarrow \infty$.
\end{theorem}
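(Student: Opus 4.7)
The plan is to apply the Method of Moments (Theorem~\ref{method-of-moments}). First I would observe that $\des$ is a regular statistic with size $k=2$, shift $q=1$, and power $p=1$ (as recorded in Section~\ref{ss:examples}). By Corollary~\ref{cycle-type-asymptotics}, for each $d \geq 1$ there exists a polynomial $F_d \in \CC[n, m_1, \dots, m_{2d}]$ of weighted degree at most $2d$ (with $\deg n = 1$ and $\deg m_i = i$) such that
\begin{equation}
\EE(\des^d \mid K_\lambda) = \frac{F_d(n, m_1(\lambda), \dots, m_{2d}(\lambda))}{(n)_d}.
\end{equation}
This reduces the theorem to a finite polynomial calculation at each $d$, after which one evaluates the leading behavior under the hypothesis $m_1(\lambda^{(n)})/n \to \alpha$. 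The constraint $\sum_i i \cdot m_i(\lambda^{(n)}) = n$ forces $\sum_{i\geq 2} i\, m_i(\lambda^{(n)}) \leq (1-\alpha+o(1))n$, and this bound suffices to control any monomial in $F_d$ involving $m_i$ for $i \geq 2$.

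For the first moment I would invoke Proposition~\ref{basic-expansions} directly: the Schur expansion $\ch_n(R\,\des) = \tfrac{n-1}{2}s_n - \tfrac{1}{n}s_{n-1,1} - \tfrac{1}{n}s_{n-2,1,1}$ translates via character polynomials into an explicit polynomial in $n, m_1, m_2$. Substituting $m_1 = \alpha n + o(n)$ and collecting the top term gives $\EE(\des \mid K_{\lambda^{(n)}}) = n(1-\alpha^2)/2 + o(n)$, which matches the centering in the statement.

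The hard step is the variance. One needs $\ch_n(R\,\des^2)$, computed in the spirit of Example~\ref{ex:intro-maj}: expand $\des^2$ as a regular statistic (size $4$, shift $2$, power $2$), group indicator terms by cycle-path type, pass through atomic symmetric functions via Corollary~\ref{c:vincular-trans}, and extract the Schur expansion by applying the path Murnaghan-Nakayama formula. Conversion via character polynomials yields an explicit element of $\CC[n,m_1,m_2,m_3,m_4]$. The obstacle is not computability but cancellation: both $\EE(\des^2 \mid K_{\lambda^{(n)}})$ and $\EE(\des \mid K_{\lambda^{(n)}})^2$ are $\Theta(n^2)$ with leading coefficient $(1-\alpha^2)^2/4$, and these must cancel exactly so that the $\Theta(n)$ remainder evaluates to $n(1-4\alpha^3+3\alpha^4)/12$ in the limit. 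Verifying this requires tracking several cycle-path types through the machinery and carefully collecting $m_2, m_3, m_4$ contributions, all of which are $o(n^2)$ after dividing by the $m_1$ bound.

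The genuinely difficult step is then verifying all higher moments. For $d \geq 3$ one must show that the rescaled central moment $\EE((\des-\mu_n)^d/\sigma_n^d \mid K_{\lambda^{(n)}})$ tends to the $d$-th moment of $\NNN(0,1)$, i.e.\ to $(d-1)!!$ for even $d$ and $0$ for odd $d$. A direct moment analysis requires showing that in the polynomial $F_d$, after subtracting appropriately from lower moments and specializing $m_1 \approx \alpha n$, the surviving top-order term is $(d-1)!! \cdot \sigma_\alpha^d \cdot n^{d/2}$. This Gaussian cancellation does not follow formally from the machinery here; I expect the cleanest route is to combine the fact that the conditional moments are polynomial expressions (which is the contribution of this paper's framework) with a dependency-graph or exchangeable-pair argument along the lines of Hofer~\cite{Hofer} to identify the leading Gaussian term. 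The paper's framework thus provides the structural half of the theorem (polynomiality and dependence only on $m_1, \dots, m_{2d}$), while the combinatorial or probabilistic identification of the Gaussian limit is the principal technical obstacle.
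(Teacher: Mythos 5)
The paper does not prove this statement at all: it is quoted verbatim from Kim--Lee \cite{KL1}, and the authors say explicitly (both in the introduction and in Chapter~\ref{Convergence}) that they ``do not demonstrate normality in these settings'' --- their own contributions stop at the law of large numbers (Corollary~\ref{regular-weak-law}) and the structure of the variance (Theorem~\ref{regular-variance}). So there is no in-paper proof to compare against, and your proposal must stand on its own. It does not, for two reasons, one of which you name yourself.

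First, your variance step has a concrete error: you dismiss the $m_2, m_3, m_4$ contributions as ``$o(n^2)$,'' but the variance of $\des$ is $\Theta(n)$, so terms of order $n$ in those variables are \emph{not} negligible. A linear-in-$m_2$ term contributes $\beta = \lim m_2(\lambda^{(n)})/n$ to the limiting variance, and the hypothesis of the theorem constrains only $m_1$, not $m_2$. This is not a hypothetical worry: the paper computes $\VV(\exc \mid K_\lambda) = (n - m_1 - 2m_2)/12$ in Section~\ref{subsection:exc}, which genuinely depends on $m_2$, and Theorem~\ref{regular-variance} only guarantees the limiting variance has the form $f(\alpha) + \beta g(\alpha)$. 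To get the stated $\alpha$-only formula you must actually verify that $g \equiv 0$ for $\des$, which your sketch does not do. Second --- and this you concede --- the Method of Moments cannot be closed with the paper's machinery. Corollary~\ref{cycle-type-asymptotics} gives polynomiality and degree bounds for $\EE(\des^d \mid K_\lambda)$ but says nothing about the leading coefficients for general $d$, and identifying them as $(d-1)!!\,\sigma_\alpha^d$ is exactly the hard content of the theorem. The published proofs (Kim--Lee, and later F\'eray--Kammoun building on this paper) supply that missing input with external probabilistic tools (dependency graphs in the style of Hofer); without such an argument your proposal is a program, not a proof.
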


Fulman, Kim, and Lee proved \cite{FKL} a version of Theorem~\ref{kl-theorem} for the statistic $\peak$ that also depends on the proportion $\alpha$
of fixed points.
For the general vincular statistics $N_{v,A}$, the state of the art prior to our work was as follows\footnote{Building on our work, Feray and Slim have since  extended Theorem~\ref{kl-theorem} to all vincular statistics~\cite{FK}.}.
A probability distribution $\mu_n: \symm_n \rightarrow [0,1]$ is {\em conjugacy invariant} if $\mu_n(vwv^{-1}) = \mu_n(w)$ for all $w,v \in \symm_n$.
Recall $\cyc(w)$ is the total number of cycles in a permutation $w$.

\begin{theorem}
\label{kammoun-theorem}
{(\cite[Prop. 31]{Kammoun})}
For all $n$, let $\mu_n$ be a conjugacy invariant distribution on $\symm_n$ so that for $\Sigma_n \sim \mu_n$ we have
\begin{equation}
\PP \left(  \lim_{n \rightarrow \infty} \cyc(\Sigma_n)/\sqrt{n} = 0 \right) = 1.
\end{equation}
For $v \in \symm_k$ and $A \subseteq [k-1]$ with $|A| = q$, let $X_n = N_{v,A}(\Sigma_n)$ where $\Sigma_n \sim \mu_n$. 
With $\sigma_{v,A} > 0$ as in Theorem~\ref{hofer-convergence}, we have 
\begin{equation}
\left(  \frac{X_n - \frac{n^{k-q}}{k! (k-q)!}}{n^{k-q-\frac{1}{2}}}
 \right) \xrightarrow{d} \NNN(0, \sigma^2_{v,A})
\end{equation}
as $n \rightarrow \infty$.
\end{theorem}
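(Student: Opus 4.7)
The plan is to establish the convergence by the method of moments (Theorem~\ref{method-of-moments}), with convergence of all moments extracted from the regular-statistics machinery. First observe that by Proposition~\ref{bivincular-simple}, $N_{v,A}$ is a regular statistic of size $k$, shift $q$, and power $k-q$. Combining Proposition~\ref{p:simple-prod} with Corollary~\ref{bivincular-corollary} and Theorem~\ref{character-polynomial-theorem}, for each $d \geq 1$ the Reynolds projection expands in the irreducible character basis as
\[
R X_n^d = \sum_{|\nu| \leq dk} a_\nu^{(d)}(n) \cdot \chi^{\nu[n]},
\]
where each $a_\nu^{(d)}(n)$ is a rational function of $n$ whose denominator divides $(n)_{dq}$, and each $\chi^{\nu[n]}$ stabilizes as $n \to \infty$ to a fixed polynomial $q_\nu$ in $(m_1, m_2, \ldots)$.

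Set $Y_n = (X_n - n^{k-q}/(k!(k-q)!))/n^{k-q-1/2}$ and, by linearity of $R$, write $R Y_n^d = \sum_\nu b_\nu^{(d)}(n) \chi^{\nu[n]}$. Since $\chi^{\nu[n]}$ is orthogonal to the trivial character under the uniform measure whenever $\nu \neq \varnothing$, we have $\mathbb{E}_{\mathrm{unif}}(Y_n^d) = b_\varnothing^{(d)}(n)$. Hofer's Theorem~\ref{hofer-convergence} together with Theorem~\ref{method-of-moments} ensures $\mathbb{E}_{\mathrm{unif}}(Y_n^{d}) \to \mathbb{E}(\NNN(0, \sigma_{v,A}^2)^{d})$ for every $d$. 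Because $\mu_n$ is conjugacy invariant, $\mathbb{E}_{\mu_n}(Y_n^d) = \mathbb{E}_{\mu_n}(R Y_n^d)$, so it remains to prove that for each fixed $\nu \neq \varnothing$ with $|\nu| \leq dk$, the contribution $b_\nu^{(d)}(n) \cdot \mathbb{E}_{\mu_n}[\chi^{\nu[n]}(\Sigma_n)]$ vanishes in the limit.

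Two ingredients handle this. On the coefficient side, orthonormality of irreducible characters under the uniform measure together with the bound $\mathrm{Var}_{\mathrm{unif}}(Y_n^d) = O(1)$ (a consequence of Hofer) forces $\sum_{\nu \neq \varnothing} |b_\nu^{(d)}(n)|^2 = O(1)$, so each $b_\nu^{(d)}(n)$ is bounded in $n$. On the character side, $\chi^{\nu[n]}$ is a fixed polynomial $q_\nu$ of positive degree under the grading $\deg m_i = i$, and it satisfies the deterministic bound $|\chi^{\nu[n]}(w)| \leq C_\nu (1 + \cyc(w))^{|\nu|}$. The hypothesis $\cyc(\Sigma_n)/\sqrt{n} \to 0$ almost surely forces each $m_i(\Sigma_n) \to 0$ in probability and, after a truncation splitting on the event $\{\cyc(\Sigma_n) \leq \epsilon\sqrt{n}\}$ (whose complement has vanishing probability by hypothesis), yields $\mathbb{E}_{\mu_n}[\chi^{\nu[n]}(\Sigma_n)] \to 0$.

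The main obstacle is making the truncation rigorous at the level of expectations: almost sure convergence of $\cyc(\Sigma_n)/\sqrt{n}$ to $0$ does not automatically supply uniform integrability for the character polynomial evaluations $\chi^{\nu[n]}(\Sigma_n)$. Resolving this requires exploiting the polynomial (rather than exponential) dependence of $\chi^{\nu[n]}$ on cycle counts, the deterministic cap $\cyc(\Sigma_n) \leq n$, and the sharper rational-degree bounds on the $b_\nu^{(d)}(n)$ inherited from Corollary~\ref{degree-max-corollary} and the cancellation implicit in centering and normalizing $X_n$. With moment convergence proved, Theorem~\ref{method-of-moments} delivers the stated weak limit.
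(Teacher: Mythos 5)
First, a framing point: the paper does not prove this theorem at all — it is quoted from Kammoun as \cite[Prop.~31]{Kammoun}. The only self-contained argument the paper gives in this direction is for the special case Corollary~\ref{kammoun-one-cycle}, and that argument is a pathwise coupling, not a moment computation: one realizes the target permutation by repeatedly concatenating cycles of a uniform $\Pi_n$, observes that the set of positions where the two permutations differ has size $\cyc(\Pi_n)$, and bounds $|N_{v,A}(\Sigma_n) - N_{v,A}(\Pi_n)|$ by $O\bigl(\cyc \cdot n^{k-q-1}\bigr)$, so that after dividing by $n^{k-q-\frac{1}{2}}$ the difference is $O(\cyc/\sqrt{n}) \to 0$ almost surely; Slutsky plus Hofer's Theorem~\ref{hofer-convergence} then finishes. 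The general conjugacy-invariant case is proved the same way (condition on cycle type, perform cycle surgery). This explains why the hypothesis is stated as almost sure convergence of $\cyc(\Sigma_n)/\sqrt{n}$ with \emph{no rate}: a coupling argument needs nothing more. Your method-of-moments route is genuinely different, and unfortunately it cannot close under the stated hypothesis.

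The gap is concrete and twofold. First, your coefficient bound is far too weak. From $\sum_{\nu} |b^{(d)}_{\nu}(n)|^2 \leq \EE_{\mathrm{unif}}(Y_n^{2d}) = O(1)$ you only get $b^{(d)}_{\nu}(n) = O(1)$, but $\EE_{\mu_n}[\chi^{\nu[n]}(\Sigma_n)]$ can legitimately be of order $n^{|\nu|/2 - \epsilon}$ under the hypothesis (e.g.\ $\nu = (1)$, $\chi^{(n-1,1)} = m_1 - 1$, and $m_1(\Sigma_n) \sim \sqrt{n}/\log n$ is consistent with $\cyc/\sqrt{n} \to 0$). So you would need something like $b^{(d)}_{\nu}(n) = o(n^{-|\nu|/2})$, a sharp decay statement about the character expansion of the \emph{centered, normalized} powers $Y_n^d$ that you never establish; it does hold in examples (for $\des$ the coefficient of $\chi^{(n-1,1)}$ in $RY_n$ is of order $n^{-3/2}$), but proving it in general is essentially the hard content of the theorem. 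Second, the truncation cannot be repaired: on the bad event $\{\cyc(\Sigma_n) > \epsilon\sqrt{n}\}$ the deterministic cap gives only $|\chi^{\nu[n]}(\Sigma_n)| \leq C_{\nu}(1+n)^{|\nu|}$, and almost sure convergence supplies $\PP(\cyc(\Sigma_n) > \epsilon\sqrt{n}) = p_n \to 0$ with \emph{no rate}, so the bad-event contribution $C_{\nu} n^{|\nu|} p_n$ need not vanish — polynomial rather than exponential dependence on cycle counts does not rescue this. (A smaller slip: $\cyc(\Sigma_n)/\sqrt{n} \to 0$ does not imply $m_i(\Sigma_n) \to 0$ in probability, only $m_i = o(\sqrt{n})$.) The moral is that the hypothesis of this theorem is tailored to a pathwise argument; any expectation-based argument needs quantitative tail control that the hypothesis does not provide. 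I recommend replacing the proposal with the coupling argument of Corollary~\ref{kammoun-one-cycle}, extended to arbitrary cycle types.
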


If $\{ \lambda^{(n)} \}$ is a sequence of partitions of $n$, a sequence of conjugacy invariant distributions $\mu_n$ on $\symm_n$
may be obtained by concentrating $\mu_n$ on the permutations of cycle type $\lambda^{(n)}$, viz.
\begin{equation}
\mu_n(w) = \begin{cases}
z_{\lambda^{(n)}}/n! & w \in K_{\lambda^{(n)}} \\
0 & \text{otherwise.}
\end{cases}
\end{equation}
Theorem~\ref{kammoun-theorem} applies to such a sequence $\mu_n$ if and only if $\frac{\ell( \lambda^{(n)} )}{\sqrt{n}} \rightarrow 0$ 
as $n \rightarrow \infty$.
The special case of Theorem~\ref{kammoun-theorem} where $\mu_n$ is concentrated on the conjugacy class of a single cycle $(n)$ will
be especially important. 
To be self-contained, we give a proof below depending only on Hofer's Theorem~\ref{hofer-convergence}.

\begin{corollary}
\label{kammoun-one-cycle}
Let $v \in \symm_k$, $A \subsetneq [k-1]$ with $|A| = q$ and $X_n = N_{v,A}(\Sigma_n)$ with $\Sigma_n \sim Unif(K_{(n)})$.
With $\sigma_{v,A} > 0$ as in Theorem~\ref{hofer-convergence}, we have
\begin{equation}
\left(  \frac{X_n - \frac{n^{k-q}}{k! (k-q)!}}{n^{k-q-\frac{1}{2}}}  \,\, \middle\vert \,\, K_{(n)}
 \right) \xrightarrow{d} \NNN(0, \sigma^2_{v,A})
\end{equation}
as $n \rightarrow \infty$.
\end{corollary}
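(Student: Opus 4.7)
The plan is to apply the method of moments: since $\NNN(0,\sigma_{v,A}^2)$ is determined by its moments, by Theorem~\ref{method-of-moments} it suffices to show that for every $d \geq 1$ the $d$-th moment of the normalized variable on $K_{(n)}$ converges to the $d$-th moment of $\NNN(0,\sigma_{v,A}^2)$. Hofer's Theorem~\ref{hofer-convergence}, combined with the same method-of-moments argument, already supplies this convergence in the unconditional setting $\Sigma_n \sim \mathrm{Unif}(\symm_n)$, so the entire task will be to transfer the moment asymptotics from $\symm_n$ to $K_{(n)}$.

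To set up the transfer, first apply Corollary~\ref{bivincular-corollary} to the regular statistic $X_n^d$ (of size $\leq dk$, shift $\leq dq$, and power $\leq d(k-q)$): this provides a polynomial $P_d$ of weighted degree at most $dk$, with $\deg n = 1$ and $\deg m_i = i$, such that $\EE[X_n^d \mid K_\lambda] = P_d(n, m_1(\lambda),\ldots, m_{dk}(\lambda))/(n)_{dq}$. On $K_{(n)}$ all $m_i$ with $i \leq dk$ vanish once $n > dk$, so the $K_{(n)}$-moment is just the rational function $P_d(n,0,\ldots,0)/(n)_{dq}$. The structural observation I would exploit is that any monomial of $P_d$ that involves some $m_i$ has pure $n$-degree at most $dk - 1$, because $a + \sum_i i\,b_i \leq dk$ with $\sum_i b_i \geq 1$ forces $a \leq dk-1$; hence the $n^{dk}$-coefficient of $P_d$ is a scalar independent of $(m_1,\ldots,m_{dk})$. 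Combined with the classical joint convergence of $(m_1(\Sigma_n),\ldots, m_{dk}(\Sigma_n))$ in all moments to independent Poissons with parameters $(1,1/2,\ldots,1/dk)$, this yields $\EE[X_n^d] - \EE[X_n^d \mid K_{(n)}] = O(n^{d(k-q)-1})$.

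The main obstacle, and the place where the bulk of the work will concentrate, is to lift this raw-moment agreement to the normalized central moments, which live on the finer scale $n^{k-q-1/2}$. The approach is to apply the same analysis directly to $R(X_n - \mu_n)^d$: writing this as $\widetilde{P}_d/(n)_{dq}$ with $\widetilde{P}_d$ of weighted degree at most $dk$ by the same reasoning, one uses Hofer's bound $\EE[(X_n-\mu_n)^d] = O(\sigma_n^d)$ to force the $n^{dk}$-coefficient of $\widetilde{P}_d$ to vanish; by the weighted-degree argument above this top coefficient is constant in the $m_i$'s, so the cancellation holds pointwise and transfers automatically to $K_{(n)}$. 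The delicate part of the argument will be extending this pointwise-cancellation pattern to the subleading $n$-degrees down to $n^{dk - d/2}$, where the coefficients of $\widetilde{P}_d$ genuinely depend on the $m_i$'s and where one must verify that the differences between evaluation at $m_i = 0$ and expectation against the Poisson limits contribute strictly below the Gaussian scale. The bookkeeping needed to establish this separation at each intermediate $n$-order, by matching the weighted-degree constraints on each coefficient against the low-order moment identities for the Poisson cycle counts, is what remains to be carried out.
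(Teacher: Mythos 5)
Your proposal takes a genuinely different route from the paper, and as written it has a gap at exactly the step you flag as ``bookkeeping to be carried out.'' The paper's proof is not a moment argument at all: it couples a uniform $\Pi_n \in \symm_n$ to a uniform $\Sigma_n \in K_{(n)}$ by repeatedly concatenating two randomly chosen cycles of $\Pi_n$ until a single $n$-cycle results, observes that $\Sigma_n$ and $\Pi_n$ disagree on a set $S$ of only $c_n = \cyc(\Pi_n)$ positions, bounds $|N_{v,A}(\Sigma_n) - N_{v,A}(\Pi_n)|$ by the number of constrained index sets meeting $S$, which is $O(c_n \cdot n^{k-q-1})$, and uses $c_n/\sqrt{n} \to 0$ almost surely to conclude that the two normalized statistics have the same distributional limit. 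That argument needs only Hofer's convergence in distribution and no control of moments.

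The gap in your approach is that the information you invoke cannot pin down the centered conditional moments at the Gaussian scale. Writing $\EE[(X_n-\mu_n)^d \mid K_\lambda] = \widetilde{P}_d(n,m_1,\dots)/(n)_{dq}$, the unconditional bound $\EE[(X_n-\mu_n)^d] = O(n^{d(k-q)-d/2})$ constrains only the \emph{Poisson average} of $\widetilde{P}_d$, i.e.\ a single linear functional of its coefficients at each $n$-order; it does not determine the evaluation at $m_1=\cdots=m_{dk}=0$. Your weighted-degree observation forces the coefficient of $n^{dk}$ to be constant in the $m_i$, which handles exactly one order, but between $n^{dk}$ and $n^{dk-d/2}$ there are roughly $d/2$ further orders whose coefficients are genuine polynomials in the $m_i$, and for these nothing you have cited shows that the value at $m_i=0$ agrees with the Poisson average up to the Gaussian scale. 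Completing this would require either new moment computations specific to $K_{(n)}$ or some structural cancellation result you have not identified, so the argument does not close. There is also a secondary issue at the start: you assert that Hofer's convergence in distribution ``supplies'' convergence of all moments in the uniform case, but convergence in distribution does not imply moment convergence without uniform integrability (Theorem~\ref{method-of-moments} goes in the opposite direction), so even the unconditional moment asymptotics you take as input would need a separate justification. The coupling argument sidesteps both difficulties.
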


\begin{proof}
Let $\Pi_n$ be a uniformly random permutation in $\symm_n$ and let $c_n = \cyc(\Pi_n)$.
We construct a cyclic permutation in $\symm_n$ by picking two cycles in $\Pi_n$ uniformly at random and concatenating them,
repeating this process until the resulting permutation $\Sigma_n$ is cyclic.  As discussed in \cite{Kammoun}, $\Sigma_n$ is uniformly random on $K_{(n)}$.

Define $S = \{ 1 \leq i \leq n \,:\, \Sigma_n(i) \neq \Pi_n(i) \}$.  We observe that $|S| = c_n$ and $\Sigma_n(j) = \Pi_n(j)$ for $j \notin S$.
Then
\begin{align}
N_{v,A}(\Sigma_n) - N_{v,A}(\Pi_n) &= \sum_{I \in \binom{[n]}{k}_A} \sum_{J \in \binom{[n]}{k}}
 \left(  
 \one_{I,v(J)}(\Sigma_n) - \one_{I,v(J)}(\Pi_n) \right) \\
 &= \sum_{\substack{I \in \binom{[n]}{k}_A \\ I \cap S \neq \varnothing}} \sum_{J \in \binom{[n]}{k}} 
 \left(  
 \one_{I,v(J)}(\Sigma_n) - \one_{I,v(J)}(\Pi_n)
 \right).\nonumber
\end{align}
For fixed $I$, notice that 
\begin{equation}
\left|   
\sum_{J \in \binom{[n]}{k}}
 \left(  
 \one_{I,v(J)}(\Sigma_n) - \one_{I,v(J)}(\Pi_n)
 \right)
\right|  \leq 1
\end{equation}
since there is at most one instance of the pattern $v$ at the indices $I$. Furthermore, there are fewer than 
$c_n \cdot \binom{n}{k - q - 1}$ choices of $I \in \binom{[n]}{k}_A$ for which $I \cap S \neq \varnothing$ so that 
\begin{equation}
\left|
\frac{N_{v,A}(\Sigma_n) - N_{v,A}(\Pi_n)}{n^{k-q}}
\right| \leq \frac{c_n}{n}.
\end{equation}
The number of cycles in a uniformly random permutation in $\symm_n$ satisfies
\begin{equation}
\PP\left( \lim_{n \rightarrow \infty} \frac{c_n}{n} = 0 \right) = 1 
\end{equation}
(this is discussed in \cite[Appendix A]{Kammoun}), so 
\begin{equation}
\PP \left( 
\lim_{n \rightarrow \infty}
\left|
X_n -  
\frac{N_{v,A} - \frac{n^{k-q}}{k!(k-q)!}}{n^{k-q-\frac{1}{2}}} 
\right| = 0
\right) = 1.
\end{equation}
Since almost sure convergence implies convergence in distribution, the result follows from
Theorem~\ref{hofer-convergence}.
\end{proof}

Kammoun's methods also give a law of large numbers so long as the number of cycles is $o(n)$.

\begin{theorem}
\label{kammoun-large-numbers}
{\em (Corollary of \cite[Prop. 31]{Kammoun})}
For all $n$, let $\mu_n$ be a conjugacy invariant distribution on $\symm_n$ so that for $\Sigma_n \sim \mu_n$ we have
\begin{equation}
\PP \left(  \lim_{n \rightarrow \infty} \cyc(\Sigma_n)/n = 0 \right) = 1.
\end{equation}
For $v \in \symm_k$ and $A \subseteq [k-1]$ with $|A| = q$, let $X_n = N_{v,A}(\Sigma_n)$ where $\Sigma_n \sim \mu_n$.
Then
\begin{equation}
\left(  \frac{X_n}{n^{k-q}}
 \right) \xrightarrow{p} \frac{1}{k!(k-q)!}
\end{equation}
as $n \rightarrow \infty$.
\end{theorem}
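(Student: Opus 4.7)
The plan is a conditional Chebyshev argument driven by Corollary~\ref{cycle-type-asymptotics}. By conjugacy invariance, conditioning on the cycle type $\Lambda_n$ of $\Sigma_n$ makes $\Sigma_n$ uniform on $K_{\Lambda_n}$. Writing $X_n = N_{v,A}(\Sigma_n)$ and noting from Proposition~\ref{bivincular-simple} that $N_{v,A}$ is regular with size at most $k$, shift at most $q$, and power at most $k-q$, Corollary~\ref{cycle-type-asymptotics} with $d = 1, 2$ gives
\begin{equation*}
\EE[X_n \mid \Lambda_n] = \frac{f(n, m_1(\Lambda_n), \ldots, m_k(\Lambda_n))}{(n)_q}, \qquad \EE[X_n^2 \mid \Lambda_n] = \frac{g(n, m_1(\Lambda_n), \ldots, m_{2k}(\Lambda_n))}{(n)_{2q}},
\end{equation*}
where $f, g$ are polynomials of weighted degree at most $k$ and $2k$ respectively (with $\deg n = 1$ and $\deg m_i = i$).

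I would then identify the top-degree terms of $f, g$ by averaging against the uniform distribution on $\symm_n$. Classical counting gives $\EE_{\mathrm{Unif}}[N_{v,A}] = \binom{n-q}{k-q}/k! = n^{k-q}/(k!(k-q)!) + O(n^{k-q-1})$, and counting (asymptotically disjoint) pairs of pattern copies gives $\EE_{\mathrm{Unif}}[N_{v,A}^2] = n^{2(k-q)}/(k!(k-q)!)^2 + O(n^{2(k-q)-1})$. Combined with the classical fact that $\EE_{\mathrm{Unif}}[m_i(\Pi_n)^b]$ is bounded uniformly in $n$ (e.g.\ $\EE[(m_i)_r] = 1/(i^r r!)$ for $n \geq ri$), matching top $n$-powers forces the coefficients of $n^k$ in $f$ and $n^{2k}$ in $g$ to equal $1/(k!(k-q)!)$ and $1/(k!(k-q)!)^2$ respectively, since the weighted degree bound $a + \sum_j i_j b_j \leq k$ implies the only $a = k$ monomial in $f$ is the pure $n^k$ term (and similarly for $g$).

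The hypothesis $\cyc(\Sigma_n)/n \to 0$ a.s.\ kills every other monomial after normalizing by $n^{k-q}(n)_q \sim n^k$. For a monomial $n^a \prod_j m_{i_j}^{b_j}$ in $f$ with $\sum_j b_j \geq 1$, the weighted degree constraint combined with $i_j \geq 1$ gives $\sum_j b_j \leq k - a$, so the crude estimate $m_i(\Lambda_n) \leq \cyc(\Lambda_n)$ yields
\begin{equation*}
\frac{n^a \prod_j m_{i_j}(\Lambda_n)^{b_j}}{n^k} \leq \frac{\cyc(\Lambda_n)^{\sum_j b_j}}{n^{k-a}} \leq \left( \frac{\cyc(\Lambda_n)}{n} \right)^{\!\sum_j b_j} \!\!\longrightarrow 0 \quad \text{a.s.},
\end{equation*}
while monomials with $\sum_j b_j = 0$ and $a < k$ are trivially $o(n^k)$. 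Summing and repeating for $g$ yields $\EE[X_n \mid \Lambda_n]/n^{k-q} \to 1/(k!(k-q)!)$ and $\EE[X_n^2 \mid \Lambda_n]/n^{2(k-q)} \to 1/(k!(k-q)!)^2$ almost surely, so $\mathrm{Var}(X_n \mid \Lambda_n)/n^{2(k-q)} \to 0$ a.s.

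Conditional Chebyshev then gives
\begin{equation*}
\PP\bigl( |X_n - \EE[X_n \mid \Lambda_n]| > \epsilon n^{k-q} \bigr) \leq \epsilon^{-2} \EE\bigl[ \mathrm{Var}(X_n \mid \Lambda_n)/n^{2(k-q)} \bigr],
\end{equation*}
and since $\mathrm{Var}(X_n \mid \Lambda_n)/n^{2(k-q)}$ is uniformly bounded (via $X_n \leq \binom{n}{k-q}$) and $\to 0$ a.s., bounded convergence sends this probability to $0$; together with the a.s.\ limit for $\EE[X_n \mid \Lambda_n]/n^{k-q}$ this proves $X_n/n^{k-q} \xrightarrow{p} 1/(k!(k-q)!)$. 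The main obstacle is the variance control in the third paragraph: one must verify that the leading $n^{2k}$ contributions to $\EE[X_n^2 \mid \Lambda_n]$ and $\EE[X_n \mid \Lambda_n]^2$ cancel exactly, which happens automatically given our identification of both leading constants but requires the first and second moment calculations on $\mathrm{Unif}(\symm_n)$ to be carried out in tandem rather than separately.
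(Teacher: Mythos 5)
Your proof is correct, but it is worth noting that the paper itself does not prove this statement at all: Theorem~\ref{kammoun-large-numbers} is cited as a corollary of Kammoun's Prop.~31, whose method (echoed in the paper's self-contained proof of Corollary~\ref{kammoun-one-cycle}) is a coupling argument — build $\Sigma_n$ from a uniform permutation $\Pi_n$ by merging cycles, bound $|N_{v,A}(\Sigma_n)-N_{v,A}(\Pi_n)|$ by $O(\cyc(\Pi_n)\cdot n^{k-q-1})$, and transfer the uniform law of large numbers. You instead derive the result internally from the paper's own machinery: conditioning on cycle type, invoking Corollary~\ref{cycle-type-asymptotics} for the first two conditional moments as weighted-degree-bounded polynomials in $n,m_1,\dots$, pinning the leading coefficients by averaging against the uniform measure, killing every cycle-dependent monomial with the crude bound $m_i\le \cyc=o(n)$, and finishing with conditional Chebyshev plus bounded convergence. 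This is close in spirit to the paper's later Corollaries~\ref{regular-weak-law} and~\ref{c:conj-invariant-fixed}, which also run Chebyshev off the moment polynomials, but those rely on the finer variance structure of Theorem~\ref{regular-variance} and only control $m_1/n$; your hypothesis $\cyc(\Sigma_n)/n\to 0$ is strong enough that the cruder estimate suffices. What your route buys is independence from Kammoun's coupling and an argument that generalizes verbatim to any regular statistic whose leading uniform moments can be computed; what the coupling buys is that it avoids the second-moment computation entirely (and Kammoun's version works under the weaker scaling $\cyc/\sqrt{n}\to 0$ for the CLT). The one point you rightly flag — that the $n^{2k}$ terms of $\EE[X_n^2\mid\Lambda_n]$ and $\EE[X_n\mid\Lambda_n]^2$ must cancel exactly — does require the uniform variance bound $\VV_{\mathrm{Unif}}(N_{v,A})=O(n^{2(k-q)-1})$, which is standard (disjoint pairs of occurrences are exactly independent, overlapping pairs lose a degree of freedom) and is implicit in the normalization of Theorem~\ref{hofer-convergence}.
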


\section{Local statistics}

Let $f$ be a permutation statistic.
For any sequence $\{ \lambda^{(n)} \}$ of partitions of $n$, we have a sequence 
of expectations $\EE(f \mid K_{\lambda^{(n)}} )$.  Changing the partition sequence $\{ \lambda^{(n)} \}$ will usually result in different expectation
sequences.
However, if $f$ is local and $\{ \lambda^{(n)} \}$ has all
long cycles, the tail of the expectation sequence is determined.

\begin{proposition}
\label{local-statistic-long} 
Fix $d,k \geq 0$ and
let $f:\symm_n \to \mathbb{R}$ be $k$-local.
For $\lambda \vdash n$ so that
$
m_1(\lambda) = m_2(\lambda) = \dots = m_{dk}(\lambda) = 0$,
we have
\begin{equation}
\EE( f^d \mid K_{\lambda} ) = \EE( f^d \mid K_{(n)} ).
\end{equation}

\end{proposition}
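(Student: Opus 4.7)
The plan is to reduce the claim to indicator statistics and then invoke the polynomial presentation of $R\,\one_{I,J}$ from Proposition~\ref{indicator-polynomiality-proposition}. By Proposition~\ref{product-local}, the pointwise power $f^d$ is $(dk)$-local, so I may expand $f^d = \sum c_{I,J}\, \one_{I,J}$ where $(I,J)$ ranges over partial permutations of $[n]$ of size at most $dk$. Applying $R$ and using linearity, the proposition reduces to showing that for each such $(I,J)$ the class function $R\,\one_{I,J}$ takes the same value on $K_\lambda$ as on $K_{(n)}$.

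Fix such an $(I,J)$ of size $k' \leq dk$. Using Proposition~\ref{atomic-g-invariance} (equivalently Lemma~\ref{product-action-on-indicators}), I would translate by a suitable $u \in \symm_n$ to replace $(I,J)$ by a packed partial permutation $(U,V)$ with $U \cup V = [r]$ for some $r \leq 2k'$, noting that the translation does not change $R\,\one_{I,J}$. Proposition~\ref{indicator-polynomiality-proposition} then delivers a polynomial $p_{U,V} \in \CC[n,m_1,\dots,m_{k'}]$ with $(n)_r \cdot R\,\one_{U,V} = p_{U,V}(n,m_1,\dots,m_{k'})$ as functions on all of $\symm$. The key point is that the cycle-count variables appearing in $p_{U,V}$ are indexed by $i \leq k' \leq dk$, so the hypothesis $m_1(\lambda) = \cdots = m_{dk}(\lambda) = 0$ forces evaluation of $R\,\one_{U,V}$ on any $w \in K_\lambda$ to yield $p_{U,V}(n,0,\dots,0)/(n)_r$.

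The same hypothesis implies every part of $\lambda$ exceeds $dk$, so $n > dk$, and therefore the partition $(n)$ itself satisfies $m_i((n)) = 0$ for every $i \leq dk$; evaluating $R\,\one_{U,V}$ at any $w' \in K_{(n)}$ returns the identical value $p_{U,V}(n,0,\dots,0)/(n)_r$ (with $(n)_r \neq 0$ since $r \leq n$). Summing against the coefficients $c_{I,J}$ then gives $R\,f^d(w) = R\,f^d(w')$, which is the required equality of conditional expectations. The argument is essentially a corollary of Proposition~\ref{indicator-polynomiality-proposition}, so there is no serious obstacle; the only point requiring care is verifying that $(n)$ also lies in the regime where the short-cycle counts vanish, which is immediate from the cycle-length constraint on $\lambda$.
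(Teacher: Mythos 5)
Your argument is correct. It proves the statement by a route that is close in spirit to the paper's but uses different machinery at the key step. The paper's proof is shorter: it expands $R\,f^d$ directly into irreducible characters via Theorem~\ref{local-support-theorem}, obtaining $R\,f^d = \sum_{|\nu| \leq dk} c_\nu \cdot \chi^{\nu[n]}$, and then invokes the character polynomial theorem (Theorem~\ref{character-polynomial-theorem}) to conclude that each $\chi^{\nu[n]}$ depends only on $m_1, \dots, m_{dk}$ — all of which vanish on both $K_\lambda$ and $K_{(n)}$. You instead expand $f^d$ into indicator statistics $\one_{I,J}$ and lean on Proposition~\ref{indicator-polynomiality-proposition}, the polynomiality of $(n)_r \cdot R\,\one_{I,J}$ in $n, m_1, \dots, m_{k'}$. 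Both arguments rest on the same underlying fact — that $R\,f^d$ is a function of $n$ and the cycle counts $m_i$ for $i \leq dk$ only — and your route is valid, including the small but necessary observations that $n > dk$ (so $(n)$ itself has no cycles of length $\leq dk$) and that $(n)_r \neq 0$ since $r \leq n$. What the paper's approach buys is economy: it needs only the support result for local class functions plus the classical character polynomial theorem. What yours buys is a more combinatorial, basis-free-of-characters presentation, at the cost of invoking Proposition~\ref{indicator-polynomiality-proposition}, which is itself a heavier result (proved in the paper via character polynomials together with the combinatorial Lemma~\ref{indicator-polynomiality-lemma}).
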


\begin{proof}
Since locality is submultiplicative, we see $f^d$ is $(d k)$--local.
Then Theorem~\ref{local-support-theorem} gives a class function expansion
\begin{equation}
R  f^d = \sum_{|\nu| \, \leq \, dk}  c_{\nu} \cdot \chi^{\nu[n]}.
\end{equation}
for some coefficients $c_{\nu}$.
For $w \in K_{\lambda}$ and $c^{(n)} = (1,2, \dots , n) \in \symm_n$ the long cycle,
Theorem~\ref{character-polynomial-theorem} implies that
\begin{equation}
\EE( f \mid K_{\lambda} )  = R f(w) = R f(c) = \EE( f \mid K_{(n)} )
\end{equation}
for any $n$ such that $m_i(\lambda) = 0$ for $i = 1, 2, \dots, dk$.
\end{proof}

We remark that Proposition~\ref{local-statistic-long} is the main result of the preprint~\cite{LLLSSY}, where it is proved in a purely combinatorial fashion.
We use Proposition~\ref{local-statistic-long} and the Method of Moments (Theorem~\ref{method-of-moments}) 
to generalize Fulman's Theorem~\ref{fulman-theorem} to all vincular statistics.

\begin{theorem}
\label{our-long-cycles}
Let $v \in \symm_k$ be a pattern and let $A \subsetneq [k-1]$ satisfy $|A| = q$.  Consider the random variable $X_n = N_{v,A}$ on $\symm_n$ and let 
$\{ \lambda^{(n)} \}$ be a sequence of partitions of $n$ with all long cycles. There exists $\sigma_{v,A} > 0$ so that
\begin{equation}
\left(
\frac{X_n - \frac{n^{k-q}}{k! (k-q)!}}{n^{k-q-\frac{1}{2}}} \, \, \middle\vert \, \, K_{\lambda^{(n)} }
\right)  \xrightarrow{d} \NNN(0, \sigma_{v,A}^2)
\end{equation}
as $n \rightarrow \infty$.
\end{theorem}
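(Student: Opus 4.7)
The plan is to apply the Method of Moments (Theorem~\ref{method-of-moments}), leveraging the locality of vincular pattern counts to transfer moment asymptotics from $K_{(n)}$ to the arbitrary sequence $K_{\lambda^{(n)}}$. Proposition~\ref{bivincular-simple} shows $N_{v,A}$ is a regular statistic of size $k$, shift $q$, and power $k-q$, so by Proposition~\ref{product-local} the $d$-th pointwise power $N_{v,A}^d$ is $(dk)$-local for every $d \geq 1$. Writing $c_n = n^{k-q}/(k!(k-q)!)$, $s_n = n^{k-q-1/2}$, and $Y_n = (N_{v,A} - c_n)/s_n$, the goal reduces to showing $\EE(Y_n^d \mid K_{\lambda^{(n)}}) \to \EE(Z^d)$ for every $d \geq 1$, where $Z \sim \NNN(0,\sigma_{v,A}^2)$.

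The first step is to reduce to the single-cycle case. The hypothesis that $\{\lambda^{(n)}\}$ has all long cycles means that for any fixed $D$, eventually $m_i(\lambda^{(n)}) = 0$ for all $i \leq D$. Applied with $D = dk$, together with the observation that $m_i((n)) = 0$ for all $i \leq dk$ once $n > dk$, Proposition~\ref{local-statistic-long} gives
\begin{equation*}
\EE(N_{v,A}^j \mid K_{\lambda^{(n)}}) = \EE(N_{v,A}^j \mid K_{(n)}) \qquad (0 \leq j \leq d)
\end{equation*}
for all $n$ sufficiently large (depending on $d$). Expanding $(N_{v,A} - c_n)^d$ via the binomial theorem and dividing by $s_n^d$ yields $\EE(Y_n^d \mid K_{\lambda^{(n)}}) = \EE(Y_n^d \mid K_{(n)})$ for $n$ large.

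It remains to establish convergence of the moments on $K_{(n)}$. Corollary~\ref{kammoun-one-cycle} already gives $Y_n \mid K_{(n)} \xrightarrow{d} \NNN(0, \sigma_{v,A}^2)$. To upgrade convergence in distribution to convergence of moments, I will use the structural output of Theorem~\ref{t:simple} (equivalently Corollary~\ref{cycle-type-asymptotics}): the expectation $\EE(N_{v,A}^j \mid K_\lambda)$ is a polynomial in $n, m_1, \dots, m_{jk}$ divided by $(n)_{jq}$, of bounded graded degree. On $K_{(n)}$ all $m_i$ with $i \leq jk$ vanish for $n$ large, so $\EE(Y_n^j \mid K_{(n)})$ is a rational function of $n$ alone with explicit degree bounds coming from Corollary~\ref{bivincular-corollary}. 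In particular, $\sup_n \EE(|Y_n|^{d+1} \mid K_{(n)}) < \infty$, giving uniform integrability of $Y_n^d$; combined with convergence in distribution, this forces $\EE(Y_n^d \mid K_{(n)}) \to \EE(Z^d)$. The normal distribution is determined by its moments, so the Method of Moments applied on $K_{\lambda^{(n)}}$ completes the argument.

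The main obstacle will be the uniform integrability step: ensuring that the rational-function degree bounds from Chapter~\ref{Regular} really do produce a uniform bound on some moment higher than the $d$-th. Once the correct leading asymptotic for $\EE(Y_n^{d+1} \mid K_{(n)})$ is extracted from Corollary~\ref{bivincular-corollary}—this is essentially a computation comparing the power of the centering $c_n$ against the normalization $s_n^{d+1}$—the uniform integrability is automatic, and the rest of the argument is a routine application of the machinery already developed.
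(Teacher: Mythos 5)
Your proposal is essentially the paper's own argument: reduce to the single $n$-cycle class by observing that $X_n^d$ is $(dk)$-local and invoking Proposition~\ref{local-statistic-long} (so all moments on $K_{\lambda^{(n)}}$ eventually agree with those on $K_{(n)}$), then conclude via Corollary~\ref{kammoun-one-cycle} and the Method of Moments. The one place you go beyond the paper is the uniform-integrability digression: the paper simply invokes its background assertion that convergence in distribution yields convergence of moments, whereas you try to certify $\sup_n \EE(|Y_n|^{d+1}\mid K_{(n)})<\infty$ from the degree bounds of Corollary~\ref{bivincular-corollary}; note that those crude bounds alone only give $\EE(Y_n^{2m}\mid K_{(n)})=O(n^{m})$ after centering and rescaling, so closing that step genuinely requires tracking the cancellation of the leading terms of $\EE(X_n^j\mid K_{(n)})$ against the powers of the centering constant --- a computation you correctly flag as the obstacle but do not carry out (and which the paper does not carry out either).
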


\begin{proof}
When $\lambda = (n)$ this is Corollary~\ref{kammoun-one-cycle}. 
In general, for any fixed $d$
the $d^{th}$ moment $X_n^d$ is a $dk$-local function.
Since $\{ \lambda^{(n)} \}$ has all long cycles, Proposition~\ref{local-statistic-long} guarantees that 
\begin{equation}
\EE(X_n^d \mid K_{\lambda^{(n)}}) = \EE(X_n^d \mid K_{(n)} ) 
\end{equation}
for $n$ sufficiently large.
Since this is true for all $d$, after standardizing $(X_n \mid K_{\lambda^{(n)}})$ the result follows from Theorem~\ref{method-of-moments}.
\end{proof}

Kammoun's Theorem~\ref{kammoun-theorem} and  our Theorem~\ref{our-long-cycles} are incomparable results.
For example, only Theorem~\ref{kammoun-theorem} applies to the sequence
$\lambda^{(n)} = (n - \lfloor \sqrt{n} \rfloor, 1^{\lfloor \sqrt{n} \rfloor} )$ whereas only Theorem~\ref{our-long-cycles}
applies to a sequence
$\lambda^{(n)}$ where $\lambda^{(n)}$ has approximately $n/\log n$ parts of size approximately $\log n / n$.

\begin{remark}
The conditions of Theorem~\ref{our-long-cycles} may be relaxed to the setting of conjugacy invariant probability measures 
$\mu_n$ on $\symm_n$ where the number of ``short'' cycles is almost surely $0$.
We chose not to pursue a precise statement since we suspected it would be much weaker than best possible for this family of statistics, a suspicion that has been vindicated by subsequent work in~\cite{FK}.
\end{remark}

\section{Regular statistics}
Proposition~\ref{local-statistic-long} on local statistics assumes that the partition $\lambda$ has no cycles below a certain size.
For regular statistics we can work with small cycles, though our results are less broad.
In particular, for $\{\lambda^{(n)}\}$ a sequence of partitions and $\stat$ a sufficiently generic regular statistic we show the limiting behavior of
$
\EE\left(\stat \mid K_{\lambda^{(n)}}\right)
$
only depends on the sequence $\{m_1(\lambda^{(n)})\}$ while
$
\VV \left(\stat \mid K_{\lambda^{(n)}}\right)
$
only depends on $\{m_1(\lambda^{(n)})\}$ and $\{m_2(\lambda^{(n)})\}$.
This allows us to prove a law of large numbers much stronger than Theorem~\ref{kammoun-large-numbers}.

Our main tool for understanding expected values is the following asymptotic result.

\begin{theorem}
\label{regular-class-asymptotics}
Let $\stat$ be a regular permutation statistic of size $k$ and power $p$.
There exists a polynomial
$g \in \RR[x_1, \dots, x_n]$ of degree at most $p$ such that
\begin{equation}
R \left( \frac{\stat}{n^p} \right) = g \left( \frac{m_1}{n}, \frac{m_2}{n^2}, \dots, \frac{m_k}{n^k}  \right) + O(n^{-1})
\end{equation}
where $m_i(w)$ is the number of cycles in $w$ of length $i$.
\end{theorem}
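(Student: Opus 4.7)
The plan is to combine Theorem~\ref{t:simple} with character polynomial theory (Theorem~\ref{character-polynomial-theorem}) to write $R\,\stat$ explicitly as a finite sum of terms $c_\lambda(n) \cdot q_\lambda(m_1, \dots, m_k)$, and then extract the $n \to \infty$ leading asymptotics under the substitution $m_i = n^i y_i$.

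First I would apply the first assertion of Theorem~\ref{t:simple} and the Frobenius isomorphism to write, for $n \geq 2k$,
\[
R\,\stat(w) = \sum_{|\lambda| \leq k} c_\lambda(n) \cdot q_\lambda\bigl(m_1(w), \dots, m_k(w)\bigr),
\]
where each $c_\lambda(n) \in \CC(n)$ has rational degree at most $p - |\lambda|$, and $q_\lambda \in \QQ[m_1, \dots, m_k]$ is the character polynomial of $\chi^{\lambda[n]}$, of weighted degree $|\lambda|$ under the grading $\deg m_i = i$. For each $\lambda$, let $a_\lambda \in \RR$ denote the coefficient of $n^{p - |\lambda|}$ in $c_\lambda(n)$ (with $a_\lambda = 0$ if the rational degree of $c_\lambda$ is strictly less than $p - |\lambda|$), and let $q_\lambda^{\mathrm{top}}$ denote the weighted-homogeneous degree-$|\lambda|$ part of $q_\lambda$. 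Define
\[
g(x_1, \dots, x_k) := \sum_{|\lambda| \leq k} a_\lambda \cdot q_\lambda^{\mathrm{top}}(x_1, \dots, x_k).
\]

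The central identity is that weighted homogeneity of $q_\lambda^{\mathrm{top}}$ implies $q_\lambda^{\mathrm{top}}(m_1, \dots, m_k) = n^{|\lambda|} \cdot q_\lambda^{\mathrm{top}}(m_1/n, \dots, m_k/n^k)$. Writing $c_\lambda(n) = a_\lambda n^{p - |\lambda|} + r_\lambda(n)$ with $r_\lambda(n) = O(n^{p - |\lambda| - 1})$, and $q_\lambda(m) = q_\lambda^{\mathrm{top}}(m) + \tilde q_\lambda(m)$ with $\tilde q_\lambda$ of weighted degree $\leq |\lambda| - 1$, dividing $c_\lambda \cdot q_\lambda$ by $n^p$ produces the main term $a_\lambda \cdot q_\lambda^{\mathrm{top}}(m_1/n, \dots, m_k/n^k)$ plus three cross terms. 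Summing over $|\lambda| \leq k$ then reduces the theorem to showing that the total error is $O(1/n)$ uniformly in $w \in \symm_n$.

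The main technical obstacle is this uniformity of the error in $w$. The crucial input is the trivial bound $m_i(w) \leq n/i \leq n$, which implies that any weighted-homogeneous polynomial $P \in \QQ[m_1, \dots, m_k]$ of weighted degree $d$ satisfies $|P(m(w))| \leq C_P \cdot n^d$ with $C_P$ independent of $n$ and $w$. Applying this bound to $q_\lambda^{\mathrm{top}}$ and $\tilde q_\lambda$ in turn, and combining with the asymptotic expansion of $c_\lambda(n)$, each cross term is dominated by $C \cdot n^{p-1}/n^p = C/n$ uniformly in $w$, yielding the required estimate. For the domain issue (Theorem~\ref{t:simple} only gives the rational-degree bound on $n \geq 2k$), I would appeal to Proposition~\ref{indicator-polynomiality-proposition} together with Lemma~\ref{cycle-identification-lemma} to ensure that the rational functions $c_\lambda(n)$ admit the required asymptotic structure on the full domain. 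The degree bound on $g$ follows by tracking weighted degrees: each summand has weighted degree $|\lambda|$, and only finitely many $\lambda$ (those where $c_\lambda$ achieves its rational-degree upper bound) contribute.
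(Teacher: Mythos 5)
Your proposal is correct and follows essentially the same route as the paper: the paper simply invokes the second assertion of Theorem~\ref{t:simple} (that $(n)_q\cdot R\,\stat$ is a polynomial in $n,m_1,\dots,m_k$ of weighted degree at most $p+q$), divides by $n^{p+q}$, and takes the top weighted-homogeneous part as $g$, using exactly your observation that $m_i\le n/i$ makes the lower-degree terms $O(n^{-1})$. You have merely unpacked that second assertion into its ingredients (the Schur-coefficient bounds plus character polynomials), so your $g=\sum_\lambda a_\lambda q_\lambda^{\mathrm{top}}$ coincides with the paper's $g_{p+q}$ and no new ideas are needed.
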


\begin{proof}
Theorem~\ref{t:simple}
 yields the equality 
 \begin{equation}
R \left( \frac{\stat}{n^p} \right) = \frac{f(n,m_1, \dots, m_k)}{n^{p+q}} \cdot \frac{n^q}{(n)_q}
 \end{equation} 
 of class functions on $\symm_n$ where $f$ is a polynomial of degree at most $p+q$ under the degree conventions $\deg n = 1$ and $\deg m_i = i$.
 Every monomial in $f$ is of the form $n^{a_0} m_1^{a_1} \cdots m_k^{a_k}$ where the exponents $a_0, a_1, \dots, a_k$ satisfy
 $a_0 + \sum_{i = 1}^k i \cdot a_i = \ell \leq p + q$. We write
 \begin{equation}
 \frac{n^{a_0} m_1^{a_1} \cdots m_k^{a_k}}{n^{p+q}} = \left(  \frac{n}{n}  \right) \left(  \frac{x_1}{n}  \right) \left(  \frac{x_2}{n^2}  \right) \cdots 
 \left(  \frac{x_k}{n^k}  \right) \cdot n^{-(p+q-\ell)}.
 \end{equation}
 Grouping terms of degree $\ell$ into the polynomial $g_\ell$, we have 
 \begin{equation}
 R \left( \frac{\stat}{n^p} \right) = \frac{n^q}{(n)_q} \cdot \sum_{\ell = 0}^{k+q} n^{-(p+q-\ell) } \cdot
  g_{\ell} \left(\frac{m_1}{n}, \frac{m_2}{n^2}, \dots, \frac{m_k}{n^k} \right)
 \end{equation}
 and the result follows by taking $g = g_{p+q}$.
\end{proof}

\begin{corollary}
\label{expectation-fixed-point}
	Let $\stat$ be a regular permutation statistic of power $p$ and $\{\lambda^{(n)}\}$ be a sequence of cycle types so that 
\begin{equation}
\lim_{n \to \infty}m_1(\lambda^{(n)})/n = \alpha \in [0,1].
\end{equation}
Then exists a polynomial $f \in \mathbb{R}[x]$ so that viewing $\stat$ as a random variable with respect to the uniform distribution
\begin{equation}
	\lim_{n \to \infty} \EE \left( \frac{\stat}{n^p}\mid K_{\lambda^{(n)}}\right) = f(\alpha).
\end{equation}
\end{corollary}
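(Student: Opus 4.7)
The plan is to deduce this corollary directly from Theorem~\ref{regular-class-asymptotics}, which does all of the heavy lifting. First I would recall that for any $w \in K_{\lambda^{(n)}}$ the conditional expectation $\EE(\stat/n^p \mid K_{\lambda^{(n)}})$ coincides with the class function $R(\stat/n^p)$ evaluated at $w$, and that $m_i(w) = m_i(\lambda^{(n)})$ whenever $w \in K_{\lambda^{(n)}}$. Let $k$ be the size of $\stat$. Applying Theorem~\ref{regular-class-asymptotics} then gives a polynomial $g \in \RR[x_1, \dots, x_k]$ of degree at most $p$ with
\[
\EE\!\left( \frac{\stat}{n^p}\;\middle\vert\; K_{\lambda^{(n)}}\right) \;=\; g\!\left( \frac{m_1(\lambda^{(n)})}{n},\, \frac{m_2(\lambda^{(n)})}{n^2},\, \dots,\, \frac{m_k(\lambda^{(n)})}{n^k} \right) + O(n^{-1}).
\]

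The next step is to analyze the limit of each argument of $g$ as $n \to \infty$. By hypothesis, the first argument satisfies $m_1(\lambda^{(n)})/n \to \alpha$. For $i \geq 2$, since $\lambda^{(n)}$ is a partition of $n$ with multiplicity $m_i(\lambda^{(n)})$ of $i$ as a part, we have the crude bound $i \cdot m_i(\lambda^{(n)}) \leq n$, so
\[
0 \;\leq\; \frac{m_i(\lambda^{(n)})}{n^i} \;\leq\; \frac{1}{i\cdot n^{i-1}} \;\longrightarrow\; 0.
\]
The polynomial $g$ is continuous on $\RR^k$, so invoking continuity together with the $O(n^{-1})$ error term yields
\[
\lim_{n \to \infty} \EE\!\left( \frac{\stat}{n^p}\;\middle\vert\; K_{\lambda^{(n)}}\right) \;=\; g(\alpha,0,\dots,0).
\]
Setting $f(x) := g(x,0,\dots,0) \in \RR[x]$ completes the argument.

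Since Theorem~\ref{regular-class-asymptotics} has already been established, there is no real obstacle here; the only verification needed is the elementary bound $m_i(\lambda^{(n)})/n^i \to 0$ for $i \geq 2$, which reflects the general principle that after the appropriate normalization only fixed points survive in the $n \to \infty$ limit for a regular statistic. This is precisely the phenomenon that the subsequent variance statement in Theorem~\ref{t:intro-regular-asymptotics} refines by tracking the additional contribution of two-cycles.
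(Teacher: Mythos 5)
Your argument is correct and is essentially identical to the paper's own proof: both invoke Theorem~\ref{regular-class-asymptotics}, use the bound $m_i(\lambda^{(n)}) \leq n/i$ to kill the arguments with $i \geq 2$ in the limit, and set $f(x) = g(x,0,\dots,0)$. No issues.
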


\begin{proof}
	Let $k$ be the size of $\stat$, define $g$ as in Theorem~\ref{regular-class-asymptotics} and observe that $m_i(\lambda^{(n)}) \leq n/i$ for all $i$.
	For $w \in K_{\lambda^{(n)}}$, we then see
\[
\lim_{n \to \infty} \EE \left( \frac{\stat}{n^p}\mid K_{\lambda^{(n)}}\right) = \lim_{n \to \infty} g \left(\frac{m_1}{n}, \frac{m_2}{n^2}, \dots, \frac{m_k}{n^k}  \right) + O(n^{-1}) = g(\alpha,0,\dots,0).
\]
Setting $f(x) = g(x,0,\dots,0)$, the result follows.
\end{proof}

We note that $n^p$ in Corollary~\ref{expectation-fixed-point} need not be the correct normalizing coefficient.

\begin{example}
The $2$-cycle counting statistic $m_2$ has power $2$ but expected value $1/2$, so with $f$ as in Corollary~\ref{expectation-fixed-point} we have $f(\alpha) = 0$ for all $\alpha$.
However, $m_2$ is maximized on the cycle type $(2^{n/2})$ ($n$ even) with the value $n/2$, we see $\EE(m_2 \mid K_{(2^{n/2})}) = n/2$, we see $\EE(m_2/n^p \mid K_\lambda) = O(n^{-1})$ for all $\lambda \vdash n$ as desired.
\end{example}

For a large class of regular statistics including vincular statistics, the normalization factor $n^p$ in Corollary~\ref{expectation-fixed-point} is the correct normalizing coefficient.
For such statistics, we now extend Corollary~\ref{expectation-fixed-point} to a law of large numbers.
To do so, we will show structural properties for the variance of a regular statistic.
First, we show a technical lemma about vincular translates.

\begin{lemma}
	\label{translate-product}
Let $((I,J),A,f)$ and $((K,L),B,g)$ be packed triples with respective sizes $k$, $\ell$, shifts $a$, $b$, powers $p=k-a+\deg(f)$, $q=\ell - b +\deg(g)$ and let $G_1, G_2$ be the graphs of $(I,J)$ and $(K,L)$.
Then the sum of coefficients for indicators with a fixed graph  $G$ with $v$ vertices in
\begin{equation}
\label{eq:translate-covariance}
	T_{(I,J),A}^f \cdot T_{(K,L),B}^g - T_{(I,J),A}^f \cdot R (T_{(K,L),B}^g)
\end{equation}
is $O(n^v)$  and $O(n^{p+q-1})$ when $G = G_1 \sqcup G_2$.
\end{lemma}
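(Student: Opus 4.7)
The strategy is to expand both $T^f_{(I,J),A} \cdot T^g_{(K,L),B}$ and $T^f_{(I,J),A} \cdot R T^g_{(K,L),B}$ into the basis of indicator statistics, group coefficients by graph type, and exploit the cancellation built into the subtraction. By Lemma~\ref{compatible-lemma}, the first product becomes
\[
T^f_{(I,J),A} \cdot T^g_{(K,L),B} = \sum_{M,L'} f(M) g(L') \one_{M(I) \cup L'(K),\ M(J) \cup L'(L)},
\]
summed over compatible $M \in \binom{[n]}{k}_A$ and $L' \in \binom{[n]}{\ell}_B$; the combined graph of the resulting indicator is $G_1 \sqcup G_2$ exactly when $M \cap L' = \varnothing$ and has strictly fewer than $k+\ell$ vertices otherwise, with identifications forced by the overlap. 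For the second product, Corollary~\ref{c:vincular-trans} (together with the fact that $R\one_{K,L}$ depends only on the graph type) yields $R T^g_{(K,L),B} = \overline{g}(n) \binom{n-b}{\ell-b} R\one_{K,L}$ with $\deg \overline{g} = \deg g$, and the symmetrization identity $R\one_{K,L} = (n!)^{-1} \sum_{v \in \symm_n} \one_{v(K),v(L)}$ casts the second product into a parallel sum indexed by pairs $(M, v)$, which again splits according to whether $v(K \cup L)$ is disjoint from $M$.

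For a fixed graph $G$ with $v$ involved vertices, the number of partial permutations $(P,Q)$ with $G_n(P,Q) \cong G$ is $O(n^v)$, and for each such $(P,Q)$ the number of decompositions into a graph-of-$(I,J)$ and a graph-of-$(K,L)$ (respectively, an $M$-image of $(I,J)$ and a $v$-image of $(K,L)$) contributing to its coefficient is bounded uniformly in $n$ by automorphism considerations depending only on $G_1$, $G_2$, and $G$. Together with uniform control on the weights $f(M), g(L')$, this gives the generic bound $O(n^v)$ on the summed coefficient for graph $G$ in either product, hence in their difference. The critical case is $G = G_1 \sqcup G_2$ with $v = k+\ell$, where the two individual contributions are of order $O(n^{p+q})$: the first is approximately $(\sum_M f(M))(\sum_{L'} g(L')) = O(n^p) O(n^q)$ by Lemma~\ref{l:vincular-poly}, and the second equals the same product of sums at leading order after evaluating the average over $v \in \symm_n$ and using that $v(K \cup L)$ is disjoint from $M$ with probability $1 - O(n^{-1})$.

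The key cancellation is that these leading $O(n^{p+q})$ contributions coincide and cancel exactly in the subtraction, leaving a residual of order $O(n^{p+q-1})$ from the overlap corrections on both sides---this is the essential content of the $G = G_1 \sqcup G_2$ bound. The main obstacle is the precise bookkeeping required to align the normalizations: the effective count of disjoint pairs $(M,L')$ in the first product must match the combination of $\overline{g}(n) \binom{n-b}{\ell-b}$ with the averaged disjoint fraction in the second product, modulo an $O(n^{-1})$ correction. Conceptually this is a centered-variable argument---$T^g_{(K,L),B} - R T^g_{(K,L),B}$ has conditional expectation zero on every conjugacy class---translated to the combinatorial level of indicator expansions, and the proof amounts to making this centering explicit term-by-term.
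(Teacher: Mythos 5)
Your proposal follows essentially the same route as the paper's proof: both expand the two products into indicator statistics grouped by graph type, observe that for $G = G_1 \sqcup G_2$ each product contributes $\overline{f}(n)\,\overline{g}(n)$ at leading order (the paper via the restricted sum $\overline{g}_S$ over sets disjoint from $S$ and the ratio $(n-k)_\ell/(n)_\ell = 1 + O(n^{-1})$, you via the same two disjointness corrections), and conclude that the leading terms cancel leaving $O(n^{p+q-1})$. The only soft spot is the phrase ``uniform control on the weights'' in your generic-graph bound --- $f(M)$ grows like $n^{\deg f}$ rather than $O(1)$ --- but the paper's own treatment of the $G \neq G_1 \sqcup G_2$ case is equally terse on this point, so it does not distinguish your argument from theirs.
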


Implicitly,  Lemma~\ref{translate-product} is an assertion about the naive expansion of Equation~\eqref{eq:translate-covariance} into $1_{IJ}$'s. 

\begin{proof}
We treat the case where $G = G_1 \sqcup G_2$ first.
We will see this is the only case of real interest.

Let $\overline{f}$ and $\overline{g}$ be associated to $f$ and $g$ as in Lemma~\ref{l:vincular-poly}.
We first compute the sum of coefficients for indicators with graph $G_1 \sqcup G_2$ in $T_{(I,J),A}^f \cdot T_{(K,L),B}^g$, which have the form
	\[
	\one_{(S[I] \sqcup T[K], S[J] \sqcup T[L])}
	\]
	where $S \in \binom{[n]}{k}_A$ and $T \in \binom{[n]}{\ell}_B$.
	Note 
	\begin{equation}
\overline{g}_S(n) := \sum_{T = \{t_1 < \cdots < t_\ell \}\in \binom{[n]}{\ell}_B \cap \binom{[n]-S}{\ell}}  g(t_1,\dots,t_\ell)		
	\end{equation}
depends on the set $S$ in a non-trivial way.
However, one can see $\overline{g} - \overline{g}_S$ is of degree at most $q-1$ since the difference necessarily involves one of the $k$ elements of $S$, and for each of these fewer elements is available when selecting $T$.
Therefore, the sum of coefficients for such indicators will be
	\begin{equation}
\sum_{S = \{s_1 < \cdots < s_k\} \in \binom{[n]}{k}_A} f(s_1,\dots,s_k) \overline{g}_S = \sum_{S = \{s_1 < \cdots < s_k \} \in \binom{[n]}{k}_A} f(s_1,\dots,s_k) (\overline{g} + O(n^{q-1})),
\end{equation}
which is $ (\overline{f} \cdot \overline{g})(n) + O(n^{p+q-1})$.
	
	We now compute the sum of coefficients for indicator with graph $G_1 \sqcup G_2$ in $T_{(I,J),A}^1 \cdot R (T_{(K,L),B}^1)$, which expands as
	\begin{align}
T_{(I,J),A}^1 \cdot R (T_{(K,L),B}^1) &= T_{(I,J),A}^1 \cdot \overline{g}(n) R( \one_{(K,L)}) \\
& = \overline{g}(n) \sum_{S \in \binom{[n]}{k}} \one_{(S[I],S[J])} \cdot  \frac{1}{n!} \sum_{v \in S_n} \one_{(v \cdot K, v \cdot L)}.
	\end{align}
	For each partial permutation $(S[I],S[J])$ and a permutation $v$ so that the set $\{v(1), \dots, v(\ell)\} \cap S$ is empty, 
	we obtain an indicator with graph $G_1 \sqcup G_2$, while all other indicators have graphs with fewer vertices.
	There are $(n-k)_\ell \cdot (n-\ell)!$ such permutations $v$ for each $S$.
	Therefore the desired sum of coefficients is
	\begin{equation}
	\overline{f}(n) \cdot \overline{g}(n) \cdot \frac{(n-k)_\ell}{(n)_\ell} = (\overline{f}\cdot \overline{g})(n) + O(n^{p+q-1}),	
	\end{equation}
	and the result follows for the graph $G_1 \sqcup G_2$.
	
To obtain a graph $G \neq G_1 \sqcup G_2$ with $v$ vertices, note $k+\ell-v$ vertices must be merged with other vertices, which can only happen in finitely many ways. 
Therefore, a similar argument shows the sum of coefficients for indicators with graph $G$ in either product is at most $O(n^{p+q-v})$ and the result follows.
\end{proof}

Note the Reynolds operator applied to Equation~\eqref{eq:translate-covariance} is the covariance of $T_{(I,J),A}^f$ and $T_{(K,L),B}^g$.

We need an additional result about coefficients in character polynomials.
Recall for $\mu \vdash k$ that $\mu(n) = (\mu, 1^{n-k})$.

\begin{proposition}
\label{fixed-point-coefficient}
	Let  $\mu = (\mu_1,\dots,\mu_k)$ be a partition of $m$ with $\mu_k > 1$.
	Viewing $(n-m)!\ch^{-1}(\vec{p}_{\mu(n)})$ as a polynomial in $\RR[n,m_1,m_2,\dots,m_k]$, maximal degree terms of the form $n^a m_1^b$ have degree $k$.
\end{proposition}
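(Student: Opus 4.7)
The plan is to identify the ``only $n$ and $m_1$'' part of the polynomial $\frac{1}{(n-m)!}\ch^{-1}(\vec{p}_{\mu(n)})$ (interpreting the statement's factorial as division, which is the only reading that makes the polynomiality sensible in light of Corollary~\ref{degree-corollary}) and to compute this part directly by evaluating on a convenient conjugacy class.

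First, I will use Corollary~\ref{degree-corollary} with $r=k$ to write
\[
\frac{1}{(n-m)!}\vec{p}_{\mu(n)} = \sum_{|\lambda|\leq m-k} f_\lambda(n)\,s_{\lambda[n]},
\]
with each $f_\lambda(n)\in\RR[n]$. Applying $\ch^{-1}$ termwise and invoking Theorem~\ref{character-polynomial-theorem}, the quantity $\frac{1}{(n-m)!}\ch^{-1}(\vec{p}_{\mu(n)})$ is polynomial in $n,m_1,\dots,m_{m-k}$. Its ``only $n,m_1$'' part is obtained by setting $m_2=m_3=\cdots=0$, which by character polynomial theory is the value of the polynomial on any permutation $\rho_{n,m_1}$ with exactly $m_1$ fixed points and no cycles of lengths $2,\ldots,m-k$. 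For $n>m_1+m-k$ I may take $\rho_{n,m_1}$ to consist of $m_1$ fixed points together with a single cycle of length $n-m_1$, and the resulting polynomial identity in $(n,m_1)$ then extends to all values by polynomial continuation.

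Next, I would compute $\ch^{-1}(\vec{p}_{\mu(n)})(\rho_{n,m_1})$ directly. Since $\ch^{-1}(p_\sigma)(w)=z_\sigma$ for $w\in K_\sigma$, the evaluation equals $z_\sigma\cdot[p_\sigma]\vec{p}_{\mu(n)}$ with $\sigma=(1^{m_1},n-m_1)$ and $z_\sigma=m_1!(n-m_1)$. Using Proposition~\ref{path-to-classical} to express $[p_\sigma]\vec{p}_{\mu(n)}$ as a weighted sum (with weight $\prod_B(|B|-1)!$) over set partitions $\pi$ of the $n-m+k$ indices of $\mu(n)$, the key observation is that because every $\mu_i>1$, none of the ``heavy'' indices $1,\ldots,k$ can form a singleton block, so all $m_1$ singleton blocks of $\pi$ must come from the $n-m$ unit-weight indices; this leaves a unique remaining block containing all of $[k]$ together with the $n-m-m_1$ leftover unit-weight indices. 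The sum collapses to
\[
[p_\sigma]\vec{p}_{\mu(n)} = \binom{n-m}{m_1}\,(n-m-m_1+k-1)!,
\]
and assembling the factors and dividing by $(n-m)!$ produces
\[
\frac{1}{(n-m)!}\ch^{-1}(\vec{p}_{\mu(n)})(\rho_{n,m_1}) = (n-m_1)\prod_{j=1}^{k-1}(n-m-m_1+j).
\]
This is a product of $k$ polynomials linear in $(n,m_1)$ whose expansion contains the monomial $n^k$ with coefficient $1$, so it has degree exactly $k$, which is the claimed maximum.

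The main obstacle is the combinatorial step, and it becomes tractable precisely because of the two structural features at play: the choice of $\rho_{n,m_1}$ with a single long cycle forces every non-singleton index to live in one big block, while the hypothesis $\mu_k>1$ eliminates the possibility of singleton blocks drawn from the heavy indices. Together these reduce the set-partition sum in Proposition~\ref{path-to-classical} to a single term, after which the degree assertion is immediate. One could also cross-check the formula by interpreting $\ch^{-1}(\vec{p}_{\mu(n)})(w)/(n-m)!$ as counting ordered embeddings of $k$ disjoint arcs of sizes $\mu_1,\ldots,\mu_k$ into the cycle structure of $w$, which on a single $L$-cycle gives the same product $L\prod_{j=1}^{k-1}(L-m+j)$ with $L=n-m_1$.
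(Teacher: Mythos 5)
Your proof is correct and follows essentially the same route as the paper's: both evaluate on the conjugacy class with $m_1$ fixed points and a single long cycle of length $p = n - m_1$ and arrive at the same degree-$k$ polynomial $(n-m_1)\,(n-m-m_1+k-1)_{k-1}$. The only difference is bookkeeping --- the paper counts placements of the paths of $\mu$ on the long cycle directly (cyclic orders plus stars-and-bars), while you extract the coefficient of $p_{(n-m_1,1^{m_1})}$ via Proposition~\ref{path-to-classical}; your justification of why this evaluation isolates the pure $n^a m_1^b$ monomials is, if anything, slightly more explicit than the paper's.
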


The proof of Proposition~\ref{fixed-point-coefficient} is closely related to the proof of Lemma~\ref{indicator-polynomiality-lemma}.

\begin{proof}
	Let $f = \ch^{-1}(\vec{p}_{\mu(n)})$ and $w \in K_{(p,1^{n-p})}$.
	To compute $f(w)$, observe each of the fixed points must be filled by a $1$-path, while the $p$-cycle must be filled with the parts of $\mu$ and the remaining $1$-paths.
	To place the parts of $\mu$ on the cycle, first endow them with a cyclic order, then place the remaining $(p-m)$ singletons.
	There are $(k-1)!$ cyclic orders, $\binom{p-m+(k-1)}{k-1}$ ways to place the $1$-paths and $p$ locations to begin the path $\mu_1$ (after which all other path locations are determined).
	This gives a total of 
	\begin{equation}
		p(p-|\mu|+(k-1))_{k-1}
	\end{equation}
	ways to place the paths in $\mu$, which is a polynomial in $p$ of degree $k$.
	Since $p = n - m_1$, we see this is a polynomial in $n$ and $m_1$ of maximal degree $k$.
\end{proof}

As a consequence, we can show the limiting variance of a regular statistic depends only on the proportion fixed points and 2-cycles.

\begin{theorem}
\label{regular-variance}
	Let $\stat$ be a regular statistic of size $k$ and power $p$.
	Also, let $\{\lambda^{(n)}\}$ be a sequence of integer partitions so that
	\begin{equation}
		\lim_{n \to \infty} \frac{m_1(\lambda^{(n)})}{n} = \alpha \in [0,1], \quad \lim_{n \to \infty} \frac{m_2(\lambda^{(n)})}{n} = \beta \in [0,1].
	\end{equation}
	Viewing $\stat$ as a random variable with respect to the uniform distribution, there exists polynomials $f,g \in \RR[x]$ independent
	of $\{ \lambda^{(n)} \}$ so that
	\begin{equation}
	\label{eq:beta-variance}
		\lim_{n \to \infty} \frac{\VV(\stat \mid K_{\lambda^{(n)}})}{n^{2p-1}} = f(\alpha) + \beta g(\alpha).
	\end{equation}
	
\end{theorem}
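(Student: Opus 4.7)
The plan is to expand $\stat$ into constrained translates, analyze each covariance via Lemma~\ref{translate-product}, and then identify the surviving monomials in the polynomial expression for the variance after normalizing by $n^{2p-1}$.

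Write $\stat = \sum_i c_i T_i$ with each $T_i$ a constrained translate of size $\leq k$, shift $\leq q$, and power $\leq p$. Since $R(T_j)$ is a class function, $R(T_i\cdot R(T_j)) = R(T_i)\cdot R(T_j)$, so
$$\VV(\stat\mid K_\lambda) = \sum_{i,j} c_i c_j\, R\bigl(T_i T_j - T_i R(T_j)\bigr).$$
For each pair $(i,j)$, Lemma~\ref{translate-product} controls the expansion of $T_i T_j - T_i R(T_j)$ in the indicator basis: coefficient sums on graph isomorphism classes with $v$ vertices are $O(n^v)$ in general, and the refined bound $O(n^{p_i+p_j-1})$ holds on the dominant disjoint-union class $G_i\sqcup G_j$. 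Combining this with Proposition~\ref{indicator-polynomiality-proposition}, which presents $R(\one_{I,J})$ as a polynomial in $n,m_1,\ldots$ of controlled weighted degree divided by a falling factorial, and summing over $(i,j)$ yields $\VV(\stat\mid K_\lambda) = O(n^{2p-1})$ uniformly in $\lambda\vdash n$.

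By Theorem~\ref{t:simple} applied to $\stat$ and to $\stat^2$, the class function $(n)_q(n)_{2q}\cdot\VV(\stat\mid K_\lambda)$ agrees with a polynomial $P\in\RR[n,m_1,\ldots,m_{2k}]$ of weighted degree at most $2p+3q$ under the grading $\deg n = 1$, $\deg m_i = i$. The $O(n^{2p-1})$ bound, together with $m_i\leq n/i$, forces every monomial of $P$ contributing non-trivially to the top growth to have total (unweighted) degree at most $2p+3q-1$. Normalizing by $n^{2p+3q-1}$ and passing to the limit along $\{\lambda^{(n)}\}$, a monomial $n^a m_1^{b_1}\cdots m_{2k}^{b_{2k}}$ of $P$ contributes if and only if $a+\sum_i b_i = 2p+3q-1$; combined with the weighted-degree bound $a+\sum_i i\,b_i\leq 2p+3q$, subtracting yields
$$b_2 + 2b_3 + \cdots + (2k-1)b_{2k} \leq 1,$$
so every surviving monomial has $b_i=0$ for $i\geq 3$ and $b_2\in\{0,1\}$. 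Terms with $b_2=0$ contribute $\alpha^{b_1}$ times a rational constant, while terms with $b_2=1$ contribute $\beta\alpha^{b_1}$ times a rational constant. Summing, and using $(n)_q(n)_{2q}/n^{3q}\to 1$ to absorb the denominator, the limit has the required form $f(\alpha)+\beta g(\alpha)$ with $f,g\in\RR[x]$ determined by $\stat$ alone and independent of $\{\lambda^{(n)}\}$.

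The principal obstacle is securing the $O(n^{2p-1})$ bound on the variance, equivalently the total-degree drop of $P$ by one. The naive bound from Theorem~\ref{t:simple} alone yields only $\VV = O(n^{2p})$, so leading-order cancellation between $R(\stat^2)$ and $(R\stat)^2$ is essential. This cancellation is precisely what the refined $O(n^{p_i+p_j-1})$ estimate of Lemma~\ref{translate-product} on the dominant disjoint-union graphs $G_i\sqcup G_j$ encodes: to leading order, the main (disjoint-support) contributions to $T_i T_j$ and to $T_i R(T_j)$ agree. Translating this coefficient-level statement into the bound on the class function $R\bigl(T_iT_j - T_iR(T_j)\bigr)$, using that for a graph $G$ with $k_G$ edges and $v$ vertices one has $R(\one_{I,J}) = O(n^{k_G - v})$ via Proposition~\ref{indicator-polynomiality-proposition}, is the technical crux. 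Once this propagation is done, the degree-chasing argument above delivers the limiting decomposition essentially automatically.
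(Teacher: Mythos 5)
Your proposal is correct and follows the same essential route as the paper's proof: both reduce the variance to a sum of covariances $R\bigl(T_iT_j - T_iR(T_j)\bigr)$ over pairs of constrained translates, and both locate the crucial leading-order cancellation (the drop from the naive $O(n^{2p})$ to $O(n^{2p-1})$) in Lemma~\ref{translate-product}, applied to the dominant disjoint-union graph class $G_1\sqcup G_2$ versus the merged classes. Where you diverge is only in the endgame. The paper feeds the cancellation into the decomposition $R(\VV(\stat)/n^{2p-1}) = n\cdot g_{2p}(m_1/n,\dots,m_k/n^k) + g_{2p-1}(\cdots) + O(n^{-1})$ from the proof of Theorem~\ref{regular-class-asymptotics} and then invokes Proposition~\ref{fixed-point-coefficient} to show that $g_{2p}$ has no pure-$(n,m_1)$ terms, so the divergent piece vanishes and the $\beta g(\alpha)$ term emerges from the part of $g_{2p}$ linear in $m_2/n^2$. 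You instead work directly with the polynomial $P=(n)_q(n)_{2q}\VV$, play its weighted-degree bound $2p+3q$ against the uniform growth bound $O(n^{2p+3q-1})$ to kill the top unweighted-degree monomials (necessarily of the form $n^am_1^{b}$), and then extract the constraint $\sum_{i\ge 2}(i-1)b_i\le 1$ on surviving monomials, which immediately yields the $f(\alpha)+\beta g(\alpha)$ form. This is a cleaner and more transparent accounting of why only $m_1$ and a single factor of $m_2$ can appear in the limit; its one extra requirement, which you correctly flag, is that the $O(n^{2p-1})$ bound be uniform over cycle types (needed for the density argument that forces the top-degree polynomial in $\alpha$ to vanish identically), and this uniformity does follow since the coefficient sums in Lemma~\ref{translate-product} are $\lambda$-independent and $m_i(\lambda)\le n/i$ for all $\lambda$.
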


\begin{proof}
For $n \geq 1$, let $w^{(n)} \in K_{\lambda^{(n)}}$.
By definition 
\[
\VV\left(\stat \mid K_{\lambda^{(n)}}\right) = R(\stat^2)(w^{(n)}) - R(\stat)^2(w^{(n)}).
\]
Therefore by Proposition~\ref{p:simple-prod} we can view $\VV(\stat)$ as a regular statistic with power at most $2p$.
Following notation in the proof of Theorem~\ref{regular-class-asymptotics}, there exist polynomials $g_{2p}$ and $g_{2p-1}$ so that
\begin{align*}
	R\left(\frac{\VV(\stat)}{n^{2p-1}}\right) &= n R\left(\frac{\VV(\stat)}{n^{2p}}\right) \\
	&= n \cdot g_{2p} \left(\frac{m_1}{n},\frac{m_2}{n^2}, \dots, \frac{m_k}{n^k}\right) + g_{2p-1}\left(\frac{m_1}{n},\frac{m_2}{n^2}, \dots, \frac{m_k}{n^k}\right) + O(n^{-1}).
\end{align*}
Therefore
\begin{align}
\label{regular-variance-one}
\begin{split}
	R\left(\frac{\VV(\stat)}{n^{2p-1}}\right)(w^{(n)}) = \ & n \cdot g_{2p}\left(\alpha, \frac{\beta}{n}, \frac{m_3(\lambda^{(n)})}{n^3},\dots , \frac{m_k(\lambda^{(n)})}{n^k}\right) \\
	&+ g_{2p-1}\left(\alpha, \frac{\beta}{n}, \frac{m_3(\lambda^{(n)})}{n^3},\dots , \frac{m_k(\lambda^{(n)})}{n^k}\right).
	\end{split}
\end{align}
Assume the first term on the RHS of Equation~\eqref{regular-variance-one}
has no terms involving only $\alpha$ and has linear term $\beta g(\alpha)$ (and possibly higher order terms as well).
Then taking the limit as $n \to \infty$, the result will follow.

We now show the first term on the RHS of Equation~\eqref{regular-variance-one}
 has no terms involving only $\alpha$.
Since $\stat$ is a regular statistic, there is a collection of packed triples $\Upsilon$ so that
	\begin{equation}
		\stat = \sum_{((I,J),A,f) \in \Upsilon} c_{(I,J),A,f} T^f_{(I,J),A}.
	\end{equation}
	We then have
	\begin{align}
	\VV\left(\stat \mid \lambda^{(n)}\right) = \sum_{((I,J),A,f) \in \Upsilon} \sum_{((K,L),B,g) \in \Upsilon} c_{(I,J),A,f} c_{(K,L),B,g} \mbox{Cov}\left(T^f_{(I,J),A} T^g_{(K,L),B}\right).
	\end{align}
	where for $X,Y$ random variables $\mbox{Cov}(X,Y) = \EE(X^2) -2\EE(XY) + \EE(Y^2)$ is the covariance.
	We obtain $\mbox{Cov}\left(T^f_{(I,J),A}, T^g_{(K,L),B}\right)$ by applying $R$ to Equation~\eqref{eq:translate-covariance}.
	
	Let $G_1$ be the graph of $(I,J)$ with $u$ vertices and $\ell$ connected components and $G_2$ be the graph of $(K,L)$ with $v$ vertices and  $m$ connected components.
	We observe the maximum number of components for an indicator occurring in the products
	\begin{equation}
		T^f_{(I,J),A} T^g_{(K,L),B} \quad \mbox{and} \quad T^f_{(I,J),A} R(T^g_{(K,L),B})
	\end{equation}
	is $\ell + m$, and that this is uniquely attained by indicators with the graph $G_1 \sqcup G_2$.
	By Lemma~\ref{translate-product} there are at most $O(n^{u+v-1})$ such terms.
	Recall from Corollary~\ref{atomic-asymptotics} that the rational degree of $R(\one_{G})$ is the number of edges minus the number of vertices.
	Therefore, applying Proposition~\ref{fixed-point-coefficient} we see the degree contributed by terms $\one_{G_1 \sqcup G_2}$ is of order $\ell+m-1$.
	Meanwhile, for graphs $G \neq G_1 \sqcup G_2$, Lemma~\ref{translate-product} and Proposition~\ref{fixed-point-coefficient} show the degree of these terms is at most the number of connected components, i.e, at most $\ell + m - 1$.
	Since $\ell, m \leq p$, we see $g_{2p}$ must vanish on $\alpha$, and the result follows.
	\end{proof}
	
As a consequence, we derive a weak law of large numbers for regular statistics.
	
\begin{corollary}
	\label{regular-weak-law}
	Let $\stat$ be a regular statistic with power $p$, $\alpha \in [0,1]$ and $\{\lambda^{(n)}\}$ be a sequence of partitions so that
	\begin{equation}
	\label{eq:fixed-point}
	\lim_{n \to \infty} \frac{m_1(\lambda^{(n)})}{n} = \alpha.
	\end{equation}
Viewing $\stat$ as a random variable with respect to the uniform measure, 
	\begin{equation}
	\frac{1}{n^p}\left((\stat\mid K_{\lambda^{(n)}}) - R(\stat \mid K_{\lambda^{(n)}})\right) \xrightarrow{p} 0.
	\end{equation}
\end{corollary}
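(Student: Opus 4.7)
The plan is to deduce convergence in probability from a variance bound via Chebyshev's inequality. Setting $Y_n := \frac{1}{n^p}\left((\stat\mid K_{\lambda^{(n)}}) - R(\stat\mid K_{\lambda^{(n)}})\right)$, I observe that $R(\stat\mid K_{\lambda^{(n)}})$ is constant on $K_{\lambda^{(n)}}$ (it equals the conditional expectation), so $Y_n$ is centered. Thus
\[
\VV(Y_n) = \frac{1}{n^{2p}}\VV(\stat\mid K_{\lambda^{(n)}}),
\]
and it suffices to show the right-hand side is $o(1)$, since then Chebyshev yields $\PP(|Y_n|>\epsilon)\leq \VV(Y_n)/\epsilon^2\to 0$ for every $\epsilon>0$.

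The key input is a uniform variance bound $\VV(\stat\mid K_{\lambda^{(n)}}) = O(n^{2p-1})$ depending only on $\stat$, which I extract from the proof (rather than the statement) of Theorem~\ref{regular-variance}. That proof shows
\[
R\!\left(\frac{\VV(\stat)}{n^{2p-1}}\right)\!(w^{(n)}) = n\cdot g_{2p}\!\left(\tfrac{m_1}{n},\tfrac{m_2}{n^2},\dots,\tfrac{m_k}{n^k}\right) + g_{2p-1}\!\left(\tfrac{m_1}{n},\tfrac{m_2}{n^2},\dots,\tfrac{m_k}{n^k}\right) + O(n^{-1}),
\]
together with the crucial fact that $g_{2p}$ has no monomials involving only the variable $m_1/n$. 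Since $m_i(\lambda^{(n)})/n^i \leq 1/(i\,n^{i-1}) = O(1/n)$ for every $i\geq 2$, every monomial of $g_{2p}$ contributes an $O(1/n)$ factor, so $g_{2p}(\alpha, m_2(\lambda^{(n)})/n^2,\ldots) = O(1/n)$ uniformly in $\lambda^{(n)}$. The polynomial $g_{2p-1}$ is trivially bounded since all of its arguments lie in $[0,1]$. Hence the bracketed quantity is $O(1)$, giving $\VV(\stat\mid K_{\lambda^{(n)}}) = O(n^{2p-1})$ and therefore $\VV(Y_n) = O(1/n)$.

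The main subtlety, and essentially the only step requiring care, is verifying that the $O(n^{2p-1})$ variance bound holds without assuming a limit for $m_2(\lambda^{(n)})/n$; only $m_1(\lambda^{(n)})/n \to \alpha$ is hypothesized. This is handled by the observation above that the vanishing property of $g_{2p}$ at $(x_1,0,\ldots,0)$ combined with the universal estimate $m_i/n^i = O(1/n)$ for $i\geq 2$ is already enough to absorb the $n$-prefactor, so no further hypothesis on short cycle proportions is needed for the variance bound. Once this is in hand, Chebyshev delivers the conclusion immediately.
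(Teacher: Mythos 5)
Your proof is correct and follows essentially the same route as the paper: Chebyshev's inequality applied to the centered variable together with the bound $\VV(\stat\mid K_{\lambda^{(n)}})=O(n^{2p-1})$ coming from Theorem~\ref{regular-variance}. Your extra care in re-deriving that variance bound directly from the vanishing of $g_{2p}$ at $(x_1,0,\dots,0)$ and the universal estimate $m_i/n^i=O(1/n)$ for $i\geq 2$ is a legitimate (and welcome) refinement, since the paper simply cites Theorem~\ref{regular-variance}, whose statement formally assumes convergence of $m_2(\lambda^{(n)})/n$, a hypothesis absent from the corollary.
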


\begin{proof}
Combining Theorem~\ref{regular-class-asymptotics} and Theorem~\ref{regular-variance} with Chebyshev's inequality we have
	\begin{equation}
\PP\left( \frac{1}{n^p}\left|(\stat \mid K_{\lambda^{(n)}}) - R(\stat \mid K_{\lambda^{(n)}})\right| >  n^{1/3} \sqrt{\VV\left(\frac{1}{n^p}\stat\right)} \right) \leq n^{-1/3}.
	\end{equation}
	Since 
	\[
	\lim_{n \to \infty} \VV\left(\frac{1}{n^p}\stat\right)  = O(n^{-1}),
	\]
	the result follows.
\end{proof}

We can extend Corollary~\ref{regular-weak-law} to more general conjugacy invariant distributions.
\begin{corollary}
\label{c:conj-invariant-fixed}
Let $\stat$ be a regular statistic with power $p$, $\alpha \in [0,1]$ and $\{\mu_n\}$ be a sequence of conjugacy invariant probability measures so that for $\Sigma_n \sim \mu_n$
\begin{equation}
\label{conj-invariant-fixed}
	\PP\left(\lim_{n \to \infty} \frac{m_1(\Sigma_n)}{n} = \alpha\right) = 1.
\end{equation}
Then
\begin{equation}
\label{eq:conj-invariant-fixed}
	\frac{1}{n^p}\left(\stat(\Sigma_n) - R\left(\stat \mid K_{(n-\lfloor\alpha n\rfloor,1^{\lfloor\alpha n\rfloor})}\right)\right) \xrightarrow{p} 0.
	\end{equation}
\end{corollary}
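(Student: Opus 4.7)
The plan is to combine three ingredients: a mixture decomposition of $\mu_n$ via conjugacy invariance, uniform versions of Theorems~\ref{regular-class-asymptotics} and~\ref{regular-variance} over all conjugacy classes $K_\lambda \subseteq \symm_n$, and a routine application of Chebyshev's inequality together with the law of total probability. Since $\mu_n$ is conjugacy invariant, write $\mu_n = \sum_{\lambda \vdash n} p_\lambda \cdot U_{K_\lambda}$ where $U_{K_\lambda}$ is uniform on $K_\lambda$ and $p_\lambda = \mu_n(K_\lambda)$. The hypothesis \eqref{conj-invariant-fixed} implies convergence in probability, so for every $\epsilon > 0$, setting $\Lambda_n^\epsilon := \{\lambda \vdash n : |m_1(\lambda)/n - \alpha| < \epsilon\}$, we have $\sum_{\lambda \in \Lambda_n^\epsilon} p_\lambda \to 1$ as $n \to \infty$.

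Next, I would show a version of Theorem~\ref{regular-class-asymptotics} uniform in $\lambda$: with $f \in \RR[x]$ the polynomial from Corollary~\ref{expectation-fixed-point},
\begin{equation*}
\frac{R(\stat \mid K_\lambda)}{n^p} = f\!\left(\frac{m_1(\lambda)}{n}\right) + O(n^{-1})
\end{equation*}
with an $O(n^{-1})$ implicit constant independent of $\lambda \vdash n$. This follows from the expansion in Theorem~\ref{regular-class-asymptotics} combined with the trivial bound $m_i(\lambda)/n^i \leq 1/(i\,n^{i-1}) = O(n^{-1})$ for $i \geq 2$, which lets us replace all arguments of $g$ after the first by zero at cost $O(n^{-1})$ uniformly. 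Similarly, I would upgrade Theorem~\ref{regular-variance} to a uniform variance bound $\VV(\stat/n^p \mid K_\lambda) = O(n^{-1})$, with constant independent of $\lambda$. The key observation is already present in the proof of Theorem~\ref{regular-variance}: the polynomial $g_{2p}$ has no constant term when viewed as a polynomial in its last $k-1$ arguments, so $n \cdot g_{2p}(m_1/n, m_2/n^2, \dots, m_k/n^k) = O(1)$ uniformly in $\lambda$ by the same bound $m_i/n^i = O(n^{-1})$; the remainder $g_{2p-1}(\cdots)$ is bounded on $[0,1]^k$. Chebyshev then gives, uniformly in $\lambda \vdash n$ and in $\delta > 0$,
\begin{equation*}
\PP\!\left(\left|\frac{\stat(\Sigma_n) - R(\stat \mid K_\lambda)}{n^p}\right| > \delta \;\middle|\; \Sigma_n \in K_\lambda\right) = O(1/(\delta^2 n)).
\end{equation*}

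To finish, fix $\delta > 0$ and choose $\epsilon > 0$ so small that $|f(x) - f(\alpha)| < \delta/3$ whenever $|x - \alpha| < \epsilon$. The reference partition $\lambda_0^{(n)} := (n - \lfloor \alpha n \rfloor, 1^{\lfloor \alpha n \rfloor})$ satisfies $m_1(\lambda_0^{(n)})/n \to \alpha$, so lies in $\Lambda_n^\epsilon$ for large $n$, and for every $\lambda \in \Lambda_n^\epsilon$ the uniform form of Theorem~\ref{regular-class-asymptotics} combined with the triangle inequality yields $|R(\stat \mid K_\lambda)/n^p - R(\stat \mid K_{\lambda_0^{(n)}})/n^p| < 2\delta/3 + O(n^{-1})$. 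Decomposing $\PP(|\stat(\Sigma_n)/n^p - R(\stat \mid K_{\lambda_0^{(n)}})/n^p| > \delta)$ as a sum over $\lambda$ and splitting according to whether $\lambda \in \Lambda_n^\epsilon$ then gives the conclusion, since the mass outside $\Lambda_n^\epsilon$ vanishes and Chebyshev controls the mass inside. The main obstacle is the uniform variance bound: Theorem~\ref{regular-variance} is stated as a limit along a fixed sequence of cycle types, so the work lies in extracting from its proof that the leading coefficient $g_{2p}(\alpha, 0, \dots, 0)$ vanishes and exploiting this together with the deterministic bounds $m_i(\lambda) \leq n/i$ to obtain a bound that holds uniformly over all $\lambda \vdash n$.
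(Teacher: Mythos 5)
Your proposal is correct and follows essentially the same route as the paper's proof: condition on cycle type, split via the triangle inequality into the deviation of $\stat$ from its class-conditional mean (controlled by the variance asymptotics of Theorem~\ref{regular-variance} plus Chebyshev, i.e.\ Corollary~\ref{regular-weak-law}) and the deviation of the class-conditional means from the reference class (controlled by Corollary~\ref{expectation-fixed-point} and continuity of $f$), with the mass where $m_1(\Sigma_n)/n$ is far from $\alpha$ discarded. Your explicit attention to uniformity of the $O(n^{-1})$ error terms over all $\lambda\vdash n$ is a point the paper's proof leaves implicit when it applies those corollaries after conditioning on a random cycle type, so your version is if anything the more carefully executed rendering of the same argument.
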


\begin{proof}
	First, we must set an explicit rate of convergence in Equation~\eqref{conj-invariant-fixed} and rule out the contribution coming when the limit is not attained.
	This can be done using Markov's inequality since 
	\[
	\frac{1}{n^p}\EE(\stat(\Sigma_n)) \leq f(\alpha_0)
	\]
	where $f$ is the polynomial from Corollary~\ref{expectation-fixed-point} and $\alpha_0 \in [0,1]$ maximizes $f$ on this interval.
	
	By the triangle inequality, Equation~\eqref{eq:conj-invariant-fixed} is bounded above by
\begin{equation}
\frac{1}{n^p}\left|\stat(\Sigma_n) - R(\stat(\Sigma_n)\right| + \frac{1}{n^p}\left|R(\stat(\Sigma_n) - R\left(\stat \mid K_{(n-\lfloor\alpha n\rfloor,1^{\lfloor\alpha n\rfloor})}\right)\right|.
\label{eq:conj-invariant-proof}
\end{equation}
By conditioning on cycle type and applying Corollary~\ref{regular-weak-law}, the first term in Equation~\eqref{eq:conj-invariant-proof} converges in probability to 0.
Similarly, by conditioning on cycle type and applying Corollary~\ref{expectation-fixed-point} the second term in Equation~\eqref{eq:conj-invariant-proof} also converges in probability to 0, so the result follows.
\end{proof}

\section{The statistic $\mathrm{exc}$}
\label{subsection:exc}

We give a worked example demonstrating the dependence on $m_2$ occurring in Theorem~\ref{regular-variance}.
Recall $\exc$ is the excedance statistic
\[\exc = \sum_{1 \leq i < j < \infty } \one_{(i,j)} = T_{(1,2)}.
\]
For fixed $n$ we have
\[
R \, \exc = \binom{n}{2} \cdot R \, \one_{(1,2)}.
\]
Taking the Frobenius characteristic yields
\begin{equation}
	\label{exc-three}
\ch_n \, R \, \exc = \binom{n}{2} \frac{1}{n!} \cdot A_{n,(1,2)} = \frac{1}{2 \cdot (n-2)!} \cdot \vec{p}_{2 1^{n-2}}.
\end{equation}
The Path Murnaghan-Nakayama Rule Theorem~\ref{path-murnaghan-nakayama} implies 
\begin{equation}
\label{exc-four}
\vec{p}_{2 1^{n-2}} = (n-2)! \cdot \left(  (n-1) \cdot s_n - s_{n-1,1} \right)
\end{equation}
and plugging \eqref{exc-four} into \eqref{exc-three} gives
\begin{equation}
\label{exc-five}
\ch_n \, R \, \exc = \frac{1}{2} \cdot \left(  (n-1) \cdot s_n - s_{n-1,1} \right)
\end{equation}
The character polynomial associated to $s_n$ is the constant polynomial $1$ whereas the character polynomial associated to $s_{n-1,1}$ is $m_1 - 1$, so that
\begin{equation}
\label{exc-six}
R \, \exc = \frac{1}{2} \cdot \left(  (n-1) - (m_1 - 1) \right) = \frac{n - m_1}{2}.
\end{equation}
Equivalently, the average value of $\exc$ on a permutation of cycle type $\lambda$ is half the number of non-fixed points as can be seen directly from \eqref{exc-des} below and linearity of expectation.

Following Theorem~\ref{translate-product}, the statistic $\exc^2 = T_{(1,2)}^2$ on $\symm_n$ can be written as
\begin{equation}
\label{exc-seven}
\exc^2 =  T_{(1,2)} + 2 T_{(12,23)}  +
2  \left(T_{(12,34)} + T_{(12,43)} + T_{(13,24)}\right).
\end{equation}
Applying the Reynolds operator for fixed $n$ to \eqref{exc-seven} gives
\begin{equation}
\label{exc-eight}
R \, \exc^2 = \binom{n}{2} \cdot R \, \one_{(1,2)} + 2 \binom{n}{3} \cdot R \, \one_{(12,23)} + 6 \cdot \binom{n}{4} \cdot R \, \one_{(12,34)}
\end{equation}
and applying $\ch_n$ to \eqref{exc-eight} gives
\begin{equation}
\label{exc-nine}
\ch_n \, R \, \exc^2 = \frac{1}{2 \cdot (n{-}2)!} \cdot \vec{p}_{21^{n-2}} + \frac{2}{3! \cdot (n{-}3)!} \cdot \vec{p}_{31^{n-3}} + \frac{6}{4! \cdot (n{-}4)!} \cdot \vec{p}_{2^21^{n-4}}.
\end{equation}
We calculated that $s$-expansion of $\vec{p}_{2 1^{n-2}}$ in \eqref{exc-five}.
The other path power sums in \eqref{exc-nine} have $s$-expansions
\begin{equation}
\label{exc-ten}
\vec{p}_{3 1^{n-3}} = (n{-}3)! \cdot \left( (n{-}2) \cdot s_n -  s_{n-1,1} - s_{n-2,2} + s_{n-2,1,1}  \right)
\end{equation}
and
\begin{equation}
\label{exc-eleven}
\vec{p}_{2^2,1^{n-4}} = 2! \cdot (n{-}4)! \cdot \left(  \binom{n{-}2}{2} \cdot s_n - (n{-}3) \cdot s_{n-1,1} + s_{n-2,2}  \right).
\end{equation}
As can be found in e.g.~\cite[p. 323]{Specht}, 
the character polynomials of $s_{n-2,2}$ and 
$s_{n-2,1,1}$ are given by
\begin{equation}
   \frac{m_1 (m_1 - 1)}{2} + m_2 - m_1 \quad \text{and}
   \quad
   \frac{m_1(m_1 - 1)}{2} - m_2 - m_1 + 1,
\end{equation}
respectively.
Combining with \eqref{exc-nine}, \eqref{exc-ten}, and \eqref{exc-eleven} gives
\begin{equation}
\label{exc-twelve}
R \, \exc^2 = \frac{1}{12} \cdot \left( 3 m_1^2 - 6 m_1 n + 3 n^2 - 2 m_2 - m_1 + n    \right).
\end{equation}
From \eqref{exc-six} we have $(R \exc)^2 = \left(\frac{n - m_1}{2}\right)^2$ so by \eqref{exc-twelve} the variance of $\exc$ on a given conjugacy class is
\begin{equation}
\label{exc-thirteen}
R \, \exc^2 - \left( R \exc \right)^2 = \frac{n -m_1 - 2m_2}{12}.
\end{equation}
This quantity vanishes for conjugacy classes that are all fixed points and two cycles, as should be expected since a fixed point contributes nothing and a two cycle always contributes one to $\exc$.

The results here can also be derived directly since $\exc$ has an alternate combinatorial description.
Every cyclic permutation $w \in K_{(n)}$ has the form $(1, v_1, \dots , v_{n-1})$ for some $v \in \symm_{n-1}$, and $\exc(w) = 1 + \asc(v)$ unless $n = 1$, in which case $\exc(w) = 0$.
Here $\asc = n - 1 - \des$ is the ascent statistic.
Therefore, for $\lambda = (\lambda_1,\dots,\lambda_k)$ a partition of $n$ and $\Sigma$ uniformly random in $K_\lambda$, we have
\begin{equation}
\label{exc-des}
		\exc(\Sigma) = \sum_{i=1}^k \lambda_i - \des(\Sigma_i) = n - \sum_{i=1}^k \des(\Sigma_i).
\end{equation}
where the $\Sigma_i$ mutually independent and uniformly random in $\symm_{(\lambda_i-1)}$.
For fixed $n$ recall $\EE(\asc) = \EE(\des) = \frac{n-1}{2}$.
Then for $\Sigma$ uniformly random in $K_\lambda$, by \eqref{exc-des}
\begin{equation}
	\label{exc-exp}
\EE(\exc(\Sigma)) = \sum_{i=1}^k 1 + \frac{\lambda_1-2}{2} - \frac{\delta_{\lambda_i = 1}}{2} = \frac{n - m_1}{2}
\end{equation}
as in \eqref{exc-six}.
Recall variance of independent random variables is additive and for fixed $n \geq 3$ that $\VV(\asc) = \VV(\des) = \frac{n+1}{12}$ and $\VV(\asc) = 0$ for $n = 1,2$.
Therefore by \eqref{exc-des} we compute
\begin{equation}
	\label{exc-var}
	\VV(\exc(\Sigma)) = \sum_{i=1}^k \frac{\lambda_i \cdot \delta_{\lambda_i \notin \{1,2\}}}{12} = \frac{n - m_1 - 2m_2}{12}
\end{equation}
as in \eqref{exc-thirteen}.

Let $\alpha,\beta \in [0,1]$ and $\{\lambda^{(n)}\}$ be a sequence of cycle types with
\begin{equation}
	\lim_{n\to \infty} \frac{m_1(\lambda^{(n)})}{n} = \alpha \quad \mbox{and} \quad \lim_{n\to \infty} \frac{m_2(\lambda^{(n)})}{n} = \beta.
\end{equation}
Using Equation~\eqref{exc-thirteen}, we see for $\Sigma^{(n)}$ uniformly random on $K_{\lambda^{(n)}}$ that $\exc(\Sigma^{(n)})$ has variance $\frac{n - \alpha - 2\beta}{12}$.
Moreover, the decomposition from \eqref{exc-des} shows $\exc(\Sigma^{(n)})$ is a sum of independent random variables.
The short cycles in $\lambda^{(n)}$ that are not fixed points or two cycles contribute independent random variables with bounded positive variance to $\exc(\Sigma^{(n)})$.
The long cycles contribute approximately normal random variables.
Therefore, the mixture will be asymptotically normal.
Following Hofer's work, we believe the asymptotic normality of $\exc$ should be a general phenomenon for a wide variety of regular statistics. 
See Section~\ref{ss:normality} for further discussion.

\section{Quasi-random permutations}

A sequence $\{ w^{(n)} \}$ of permutations with $w^{(n)} \in \symm_n$ is called {\em quasi-random} if for all $k \geq 1$ and for all $v \in \symm_k$ we have
\begin{equation}
\label{quasi-random-def} \frac{N_v(w^{(n)})}{\binom{n}{k}} = \frac{1}{k!} + o(1).
\end{equation}
This pattern-theoretic definition of randomness asserts that $\{ w^{(n)} \}$, in the limit, contains equal proportion of patterns of size $k$
for every $k \geq 1$.
This property shows that the small-scale behavior of a quasi-random sequence is close to that for a sequence of uniformly random permutations.
Note when $w^{(n)}$ is a sequence of random permutations that Equation~\eqref{quasi-random-def} becomes
\[
\PP\left(\lim_{n \to \infty} \frac{N_v(w^{(n)})}{\binom{n}{k}} = \frac{1}{k!} \right) = 1.
\]
Here, $N_v(w^{(n)})/\binom{n}{k}$ would be the probability a uniformly random size $k$ partial permutation in $w^{(n)}$ has the pattern $v$.
For this reason, it may be more appropriate to call quasi-random sequences \emph{quasi-uniform}, since not every random sequence of permutations has this property.

Quasi-random sequences of permutations were studied by Cooper \cite{Cooper} in his PhD thesis at UC San Diego.
Answering a question of Graham (see \cite{Cooper}), Kr\'al' and Pikhurko proved \cite[Thm. 1]{KP} the surprising result that 
Equation~\eqref{quasi-random-def} need only hold for $v \in \symm_4$, but that checking patterns of size up to 3 is insufficient.

\begin{corollary}
\label{quasirandom-permutation-generation}
Let $\mu_n$ be a sequence of congugacy-invariant probability distributions on $\symm_n$.
For $\Sigma_n \sim \mu_n$, if
\begin{equation}
\PP \left(  \lim_{n \to \infty} \frac{m_1(\Sigma_n)}{n} = 0 \right) = 1,
\end{equation}
then the sequence $\Sigma_n$ is quasi-random.
\end{corollary}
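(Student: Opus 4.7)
The plan is to apply Corollary~\ref{c:conj-invariant-fixed} to each classical pattern-counting statistic $N_v$ separately and then to identify the limiting normalization as $1/(k!)^2$. Fix $k \geq 1$ and $v \in \symm_k$. By Proposition~\ref{bivincular-simple} applied to the singleton $\Upsilon = \{(v,\varnothing,\varnothing,1,1)\}$, the statistic $N_v$ is regular with size $k$, shift $0$, and power $k$. The hypothesis that $m_1(\Sigma_n)/n \to 0$ almost surely is the $\alpha=0$ case of Corollary~\ref{c:conj-invariant-fixed}, so
\[
\frac{N_v(\Sigma_n)}{n^k} - \frac{R(N_v \mid K_{(n)})}{n^k} \xrightarrow{p} 0,
\]
the reference partition $(n - \lfloor 0 \cdot n \rfloor, 1^{\lfloor 0 \cdot n\rfloor})$ collapsing to the single $n$-cycle $(n)$.

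Next I would identify the deterministic limit of $R(N_v \mid K_{(n)})/n^k$. Theorem~\ref{regular-class-asymptotics} provides a polynomial $g \in \RR[x_1,\dots,x_k]$ with
\[
R\!\left(\frac{N_v}{n^k}\right) = g\!\left(\frac{m_1}{n},\frac{m_2}{n^2},\dots,\frac{m_k}{n^k}\right) + O(n^{-1}).
\]
Evaluating at any $w \in K_{(n)}$, all $m_i(w)=0$ for $i \leq k$ as soon as $n > k$, so $R(N_v \mid K_{(n)})/n^k \to g(0,\dots,0)$. To compute $g(0,\dots,0)$ I would compare with the uniform measure on $\symm_n$. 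For $\Pi_n \sim \mathrm{Unif}(\symm_n)$ the cycle counts $m_i(\Pi_n)$ converge in distribution to independent Poisson random variables, hence $m_i(\Pi_n)/n^i \xrightarrow{p} 0$, and since $N_v/n^k$ is bounded by $1$ bounded convergence gives $\EE[N_v(\Pi_n)]/n^k \to g(0,\dots,0)$. On the other hand $\EE[N_v(\Pi_n)] = \binom{n}{k}/k!$, so $\EE[N_v(\Pi_n)]/n^k \to 1/(k!)^2$, identifying $g(0,\dots,0) = 1/(k!)^2$.

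Combining the two steps yields $N_v(\Sigma_n)/n^k \xrightarrow{p} 1/(k!)^2$, and since $\binom{n}{k}/n^k \to 1/k!$ we conclude
\[
\frac{N_v(\Sigma_n)}{\binom{n}{k}} \xrightarrow{p} \frac{1}{k!}
\]
for every fixed $k$ and $v \in \symm_k$, which is the quasi-randomness condition interpreted in probability. The main obstacle is the identification of $g(0,\dots,0)$; the proof above sidesteps an explicit Schur-level computation of $R(N_v \mid K_{(n)})$ by using the uniform measure as a probe, exploiting that both distributions give $\alpha = 0$ so share the same polynomial $g$. I anticipate that upgrading from convergence in probability to almost-sure convergence (matching the a.s.\ formulation of quasi-randomness for random sequences) would require a Borel--Cantelli argument leveraging the variance bounds of Theorem~\ref{regular-variance}, but this refinement is not necessary for the stated conclusion.
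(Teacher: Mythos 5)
Your proof is correct and takes the same route as the paper, whose entire argument for this corollary is a one-line citation of Corollary~\ref{c:conj-invariant-fixed} (with $\alpha=0$, so the reference class collapses to $K_{(n)}$). The additional step you supply — identifying $g(0,\dots,0)=1/(k!)^2$ by probing the polynomial of Theorem~\ref{regular-class-asymptotics} with the uniform measure, where $\EE[N_v(\Pi_n)]=\binom{n}{k}/k!$ is known — cleanly fills in the computation of the limiting pattern density that the paper leaves implicit, and your closing remark correctly notes that, like the paper's own Corollary~\ref{c:conj-invariant-fixed}, the argument as given yields convergence in probability rather than the almost-sure form of the quasi-randomness condition.
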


\begin{proof}
This follows immediately from Corollary~\ref{c:conj-invariant-fixed}.
\end{proof}

A result analogous to Corollary~\ref{quasirandom-permutation-generation} is possible when the limiting proportion of fixed points tends to any $\alpha \in [0,1]$.
To state this, we need the following definition, introduced in~\cite{HKMRS}.
A \emph{permuton} is a measure $\mu$ on the Borel $\sigma$-algebra of $[0,1] \times [0,1]$ so that for every measurable $A \subseteq [0,1]$
\[
\mu(A \times [0,1]) = \mu([0,1] \times A) = \mathcal{L}_{[0,1]}(A)
\]
where $\mathcal{L}$ is Lebesgue measure.
Given a permuton $\mu$, sample $(X_1,Y_1),\dots,(X_k,Y_k) \sim \mu$ i.i.d., reordereed so that $X_1 < \dots < X_k$, and define $\Pi^\mu_k$ to be the permutation with the same relative order as $(Y_1,\dots,Y_k)$.
A sequence $\{\mu_n\}$ of measures on $\symm_n$ \emph{converges} to $\mu$, denoted $\mu_n \to \mu$, if given $\Sigma_n \sim \mu_n$, for every $k$ and $v \in \symm_k$ we have
\[
\lim_{n \to \infty} \frac{N_v(\Sigma_n)}{\binom{n}{k}} = \PP(\Pi^\mu_k = v).
\]

\begin{theorem}
	\label{alpha-permuton}
Let $\mu_n$ be a sequence of congugacy-invariant probability distributions on $\symm_n$ and $\alpha \in [0,1]$ so that for $\Sigma_n \sim \mu_n$, we have 
\[
\PP\left(\lim_{n \to \infty} \frac{m_1(\Sigma_n)}{n} = \alpha \right) = 1.
\]
Then $\mu_\alpha$ so $\mu_n \to \mu_\alpha$ with $\mu_\alpha = (1-\alpha) U + \alpha I$ where $U$ is Lebesgue measure on $[0,1,] \times [0,1]$ and $I$ is Lebesgue measure on $\{(x,x): x\in [0,1]\}$. 
\end{theorem}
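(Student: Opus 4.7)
The strategy is to combine the concentration result Corollary~\ref{c:conj-invariant-fixed} with an explicit empirical permuton computation for a canonical sequence of conjugacy classes, which together identify the limiting pattern densities.

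For any pattern $v \in \symm_k$, the statistic $N_v$ is regular of size and power $k$. Applying Corollary~\ref{c:conj-invariant-fixed} to $N_v$ (and dividing through by $\binom{n}{k} = n^k/k! + O(n^{k-1})$), we obtain, for any sequence $\mu_n$ satisfying the hypotheses of the theorem,
\[
\frac{N_v(\Sigma_n)}{\binom{n}{k}} \xrightarrow{p} \phi_v(\alpha) := \lim_{n\to\infty} \frac{R(N_v \mid K_{\lambda_n^*})}{\binom{n}{k}}, \qquad \lambda_n^* := (n - \lfloor \alpha n \rfloor, 1^{\lfloor \alpha n \rfloor}),
\]
where the limit $\phi_v(\alpha)$ exists and depends only on $v$ and $\alpha$ by Corollary~\ref{expectation-fixed-point}. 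It therefore suffices to identify $\phi_v(\alpha) = \PP(\Pi_k^{\mu_\alpha} = v)$ for every $v$.

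To do so, specialize to $\mu_n^\star := \mathrm{Unif}(K_{\lambda_n^*})$. A sample $\Sigma_n^\star \sim \mu_n^\star$ is built by selecting a uniformly random $\lfloor \alpha n \rfloor$-subset $F_n \subseteq [n]$ of fixed points and, conditional on $F_n$, a uniformly random long cycle on $C_n := [n] \setminus F_n$. Form the empirical permuton
\[
\nu_n^\star := \tfrac{1}{n} \sum_{i=1}^n \delta_{(i/n,\, \Sigma_n^\star(i)/n)}
\]
and split it as $\nu_n^\star = \nu_n^{\mathrm{fix}} + \nu_n^{\mathrm{cyc}}$ according as $i \in F_n$ or $i \in C_n$. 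The fixed-point piece is supported on the diagonal, and by Glivenko--Cantelli for the uniform random subset $F_n$, $\nu_n^{\mathrm{fix}} \to \alpha I$ weakly almost surely. Both marginals of $\nu_n^{\mathrm{cyc}}$ converge to $(1-\alpha)\mathcal{L}$, and a second-moment estimate using the pair correlations of a uniform long cycle yields, for any bounded continuous $f,g$ on $[0,1]$,
\[
\int f(x)g(y) \, d\nu_n^{\mathrm{cyc}}(x,y) \xrightarrow{p} (1-\alpha) \int f\, d\mathcal{L} \int g\, d\mathcal{L},
\]
so $\nu_n^\star \to \mu_\alpha$ weakly in probability.

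Finally, for any permutation $w \in \symm_n$ we have $N_v(w)/\binom{n}{k} = \PP(\Pi_k^{\nu_w} = v) + O(1/n)$, the error coming from the fact that sampling $k$ indices iid from the empirical permuton produces repeated indices with probability $O(1/n)$ for fixed $k$. Since $\mu \mapsto \PP(\Pi_k^\mu = v)$ is continuous with respect to weak convergence of permutons (see~\cite{HKMRS}), the previous paragraph gives $N_v(\Sigma_n^\star)/\binom{n}{k} \xrightarrow{p} \PP(\Pi_k^{\mu_\alpha} = v)$; combined with the first paragraph applied to $\mu_n^\star$, this identifies $\phi_v(\alpha) = \PP(\Pi_k^{\mu_\alpha} = v)$ and hence proves $\mu_n \to \mu_\alpha$ for every qualifying $\mu_n$. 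The main technical point is the variance estimate for $\nu_n^{\mathrm{cyc}}$: one must control the joint covariance of $(\Sigma_n^\star(i), \Sigma_n^\star(j))$ under the uniform cyclic permutation on $C_n$, which requires the explicit pair-correlation formula for random long cycles but is otherwise routine. All other components are direct applications of results established earlier in the monograph.
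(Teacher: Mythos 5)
Your proof is correct and follows the same strategy as the paper's: reduce the general case to a single witness sequence via Corollary~\ref{c:conj-invariant-fixed}, then identify the limiting permuton of that witness by a direct probabilistic computation, invoking \cite{HKMRS} for the translation between pattern densities and permuton convergence. The only differences are in the details: the paper's witness is the mixture obtained by fixing a uniformly chosen $\lfloor \alpha n\rfloor$-subset pointwise and permuting the remaining values uniformly (so its off-diagonal part converges to $(1-\alpha)U$ by the classical uniform case), whereas you take the single conjugacy class $K_{(n-\lfloor \alpha n\rfloor,\,1^{\lfloor \alpha n\rfloor})}$ and therefore need the pair-correlation estimate for a uniform long cycle --- a step the paper dismisses as ``clear'' and you correctly identify and carry out.
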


\begin{proof}
	The existence of $\mu_\alpha$ follows from Corollary~\ref{c:conj-invariant-fixed} and~\cite[Thm. 1.6]{HKMRS}.
	The explicit description of $\mu_\alpha$ can be seen as follows.
	Let $\mu_{\alpha,n}$ be the measure on $\symm_n$ given by sampling $\lfloor \alpha n \rfloor$ values in $[n]$ uniformly at random, setting those as fixed points, then permuting the remaining values uniformly at random.
	It is clear both that $\mu_{\alpha,n} \to \mu_\alpha$ and that $\mu_{\alpha,n}$ satisfies the assumptions of our theorem.
\end{proof}

The explicit description of $\mu_\alpha$ in Theorem~\ref{alpha-permuton} is due to Valentin F\'eray, and we thank him for allowing us to include his proof.

\chapter{Concluding Remarks}
\label{Conclusion}

 \section{Describing low degree and related functions}
 \label{ss:local-approx}

Theorem~\ref{shadow-basis} gives a natural basis for the space $\Loc_k(\symm_n,\CC)$ of local functions with many desirable properties.
However, this basis is not homogeneous as it is comprised of indicator functions for partial permutations of varying size.
Theorem~\ref{local-support-theorem} guarantees the existence of a basis of indicators for partial permutations of the same size.

\begin{problem}
Find a uniform description for given $n$ and $k$ of a basis for $\Loc_k(\symm_n,\CC)$ whose elements are of the for $1_{IJ}$ with $(I,J) \in \symm_{n,k}$.
\end{problem}

Recall the $k$-local approximation $L_k f$ for any map $f: \symm_n \rightarrow \CC$.
Huang, Guestrin, and Guibas \cite{HGG} considered a generalization of $L_k$ defined as follows.
Let $\mu$ be a partition of $n$ and let $f: \symm_n \rightarrow \CC$ be regarded as an element of the group algebra 
$\CC[\symm_n]$ in the usual way. 
The Artin-Wedderburn theorem gives an isomorphism $\Psi: \CC[\symm_n] \xrightarrow{\, \, \sim \, \, } \bigoplus_{\lambda \vdash n} \End_\CC(V^\lambda)$.
The {\em band-limited approximation} $L_\mu f \in \CC[\symm_n]$ corresponding to $\mu$ is characterized by 
\begin{equation}
\Psi(L_\mu f) = \bigoplus_{\lambda \vdash n} \psi_\lambda,
\end{equation}
where $\psi_\lambda: V^\lambda \rightarrow V^\lambda$ is given by
\begin{equation}
\psi_\lambda(v) = \begin{cases}
f \cdot v & \mu \leq \lambda \text{ in dominance order,} \\
0 & {otherwise.}
\end{cases}
\end{equation}
The special case of $L_k f$ corresponds to when $\mu = (n-k,1^k)$ is a hook.

%As discussed in Section~\ref{Local},
% The approximation $f \rightsquigarrow L_k f$ is the special case of the band-limiting 
% $f \rightsquigarrow L_{\mu} f$ considered by Huang, Guestrin, and Guibas \cite{HGG} where the partition $\mu \vdash n$ is a hook.
% As alluded to at the end of Section~\ref{Local}, we have the problem:
 
 \begin{problem}
 \label{band-limit-problem}
 Let $\mu \vdash n$ be a partition.
 Clarify the relationship between the functions $f: \symm_n \rightarrow \CC$ and $L_{\mu} f: \symm_n \rightarrow \CC$.
 In particular, describe a natural basis of the subspace
 \begin{equation*}
 \{ L_{\mu} f \,:\, f: \symm_n \rightarrow \CC \} \subseteq \Fun(\symm_n,\CC)
 \end{equation*}
 of the vector space $\Fun(\symm_n,\CC)$ of maps $\symm_n \rightarrow \CC$.
 \end{problem}

 An argument similar to that in the proof of Theorem~\ref{aw-local} gives a combinatorial {\em spanning set} of the space in
 Problem~\ref{band-limit-problem}.
 Given
 ordered set partitions $\mathcal{S} = (S_1,\dots,S_r)$ and $\mathcal{T} = (T_1, \dots,T_r)$ of $[n]$ with $|S_i| = |T_i|$ for all $i$, consider the indicator function $\chi_{\mathcal{S},\mathcal{T}}: \symm_n \rightarrow \CC$ defined  by
\begin{equation}
\label{set-partition-indicator}
\chi_{\mathcal{S},\mathcal{T}} = \prod_{i=1}^r \chi_{S_i,T_i}.	
\end{equation}
Such functions appear in another guise in the representation-theoretic perspective on voting theory pioneered
by Diaconis \cite{Diaconis2} and generalize our functions $\one_{I,J}$ corresponding to partial permutations $(I,J)$.
We claim that
the set of $\chi_{\mathcal{S},\mathcal{T}}$ functions where $\mathcal{S}$ and $\mathcal{T}$ are ordered set partitions with $|S_i| = |T_i| = \mu_i$ form a spanning set as requested in Problem~\ref{band-limit-problem}.
(When $r = 2$, Huang, Guestrin, and Guibas \cite{HGG} proved that these functions lie in the relevant subspace of $\Fun(\symm_n,\CC)$.)

Indeed, let $\symm_\mu \subseteq \symm_n$ be the parabolic subgroup corresponding to $\mu$, and let $[\symm_\mu]_+ = \sum_{w \in \symm_\mu} w \in \CC[\symm_n]$
be the group algebra element summing over $\symm_\mu$.
Let $\III_\mu \subseteq \CC[\symm_n]$ be the two-sided ideal in the group algebra generated by $[\symm_\mu]_+$.  Lemma~\ref{product-action-on-indicators}
extends in a straightforward fashion to shows that
\begin{equation}
\III_\mu = \mathrm{span}_\CC \{ \chi_{\mathcal{S},\mathcal{T}}  \,:\, \mathcal{S} = (S_1, \dots, S_r), \, \mathcal{T} = (T_1, \dots, T_r), \, |S_i| = |T_i| = \mu_i \}
\end{equation}
as subspaces of $\CC[\symm_n] \cong \Fun(\symm_n,\CC)$.
On the other hand, the same reasoning as in the proof of Theorem~\ref{aw-local} implies that
under the Artin-Wedderburn isomorphism $\Psi: \CC[\symm_n] \xrightarrow{ \, \, \sim \, \, } \bigoplus_{\lambda \vdash n} \End_\CC(V^\lambda)$
the ideal $\III_\mu$ maps to
\begin{equation}
\Psi(\III_\mu) = \bigoplus_{\substack{ \lambda \vdash n \\ [\symm_\mu]_+ \cdot V^\lambda \neq 0}} \End_\CC(V^\lambda).
\end{equation}
For $\lambda \vdash n$, Frobenius Reciprocity implies that
\begin{align}
[\symm_\mu]_+ \cdot V^{\lambda} \neq 0  &\Leftrightarrow  \langle \Res^{\symm_n}_{\symm_\mu} V^\lambda, \mathrm{triv}_{\symm_{\mu}} \rangle_{\symm_\mu} \neq 0  \\
&\Leftrightarrow  
\langle V^\lambda, \mathrm{Ind}_{\symm_\mu}^{\symm_n} \mathrm{triv}_{\symm_\mu} \rangle_{\symm_n} \neq 0.
\end{align}
Young's Rule (see e.g. \cite{Sagan}) implies that the induction
$\mathrm{Ind}_{\symm_\mu}^{\symm_n} \mathrm{triv}_{\symm_\mu}$ contains a copy of $V^\lambda$ if and only if $\mu \leq \lambda$ in dominance order.
One way to solve Problem~\ref{band-limit-problem} would be to find a basis generalizing that of Theorem~\ref{shadow-basis}
from hooks to arbitrary partitions $\mu$.

\section{Coefficients of $s_{\lambda[n]}$ for regular statistics}
 For any real-valued $f: \symm_n \rightarrow \RR$ we studied the symmetric function $\ch_n(R f)$ and, in particular, its Schur expansion
 \begin{equation*}
 \ch_n(Rf) = \sum_{\lambda \vdash n} d_{\lambda}(n) \cdot s_{\lambda}.
 \end{equation*}
 We proved (Proposition~\ref{average-value-extraction}) that for $\lambda = (n)$, the coefficient $d_{(n)}$ is the average value
 $\EE(f)$ of $f$ on $\symm_n$.
  In the context of voting theory, Diaconis \cite{Diaconis2} interpreted more general character inner products in terms of coalitions within an electorate.
  In our setting, we have the following problem.
 
 \begin{problem}
 \label{other-values-problem}
Interpret the other coefficients $d_{\lambda}$ appearing in the Schur expansion of $\ch_n(Rf)$.
 \end{problem}
 
For $v \in \symm_k$, Gaetz and Pierson give an explicit formula for the coefficient of $s_{(1)[n]}$ in $\ch_n(R N_v)$~\cite[Prop. 4.1]{GP}.
Additionally, for the special case where $v = 12\dots k$, they show the coefficients of $s_{(1,1)[n]}$ and $s_{(2)[n]}$ are non-negative.
This is partial progress towards their conjecture:

\begin{conjecture} \em{(\cite[Conj. 1.4]{GP})}
\label{GaetzPierson}
	For $\lambda$ a fixed partition, the coefficient of $s_{\lambda[n]}$ in $\ch_n(R N_{12\dots k})$ is non-negative.
\end{conjecture}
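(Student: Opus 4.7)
The plan is to exploit the structural constraint that occurrences of the identity pattern correspond to order-preserving partial bijections. Specifically, if $(I,J)$ matches $v = 12 \cdots k$, then $I$ and $J$ are both increasing, so the partial bijection $i_p \mapsto j_p$ is order-preserving. A rank analysis shows that the directed graph $G_n(I,J)$ has \emph{no cycles of length $\geq 2$}: comparing ranks in $I$ and $J$ along any connected path $a_0 \to a_1 \to \cdots \to a_s$ forces the rank sequence $p_0, p_1, \dots, p_{s-1}$ to be strictly monotone, so the vertex sequence is either strictly increasing or strictly decreasing. Hence every connected component of $G_n(I,J)$ is a fixed point or a monotone path, and Proposition~\ref{atomic-factorization} yields the factorization $A_{n,I,J} = p_1^c \cdot \vec{p}_\mu$, where $c$ counts fixed points and $\mu$ records the path sizes (including singletons for isolated vertices).

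Using this, I would rewrite
\begin{equation*}
n!\cdot \ch_n(R\,N_{12\cdots k}) = \sum_{(I,J)\text{ incr}} A_{n,I,J} = \sum_{(c,\mu_{>1})} M(c,\mu_{>1};n)\cdot p_1^c\cdot \vec{p}_{(\mu_{>1},\,1^{n-c-|\mu_{>1}|})},
\end{equation*}
where the outer sum is over graph isomorphism types and $M(c,\mu_{>1};n)$ is a manifestly non-negative polynomial count of increasing pairs with that graph type. The path Murnaghan--Nakayama formula of Theorem~\ref{path-murnaghan-nakayama}, together with the Pieri rule for $p_1^c = s_1^c$, then expresses the coefficient of each $s_{\lambda[n]}$ as an explicit signed sum over pairs (monotonic ribbon tiling, horizontal strip of size $c$), indexed by the graph types $(c,\mu_{>1})$. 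The conjecture asserts that all of the signed cancellation among these terms produces a non-negative total.

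To establish this positivity, my preferred route is representation-theoretic: I would look for an $\symm_n$-module $U_{n,k}$ such that $\chi_{U_{n,k}} = \alpha(n)\cdot R\,N_{12\cdots k}$ for an explicit positive normalising factor $\alpha(n)$, which would make Schur-positivity automatic. A natural starting candidate is a subspace or quotient of the Young permutation module $M^{(n-k,1^k)}$, whose Frobenius image $h_{n-k}p_1^k$ is already Schur-positive. Small examples rule out the naive identification $R\,N_{12\cdots k} = \frac{1}{k!}h_{n-k}p_1^k$, but they are consistent with a refinement that quotients by $\symm_k$-rearrangements compatible with the order-preserving constraint. Failing this, I would attempt a direct sign-reversing involution on the combinatorial expansion, using the monotone-up / monotone-down symmetry $(I,J) \leftrightarrow (J,I)$ on paths as the starting pairing.

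The hard part will be taming the signed cancellation in the path Murnaghan--Nakayama expansion. This expansion is not cancellation-free (Problem~\ref{cancellation-free-problem}), and small computations already show the cancellation is delicate: for $k=2$ the coefficient of $s_{n-2,2}$ in $\ch_n(R\,N_{12})$ vanishes identically, so any representation-theoretic identification must predict a module whose $V^{(n-2,2)}$-isotypic component is exactly zero. I anticipate the cleanest approach will mirror the partition-algebra / double-commutant strategy used by Gaetz--Ryba for $\lambda = (1)$ and extended by Gaetz--Pierson to $\lambda = (1,1),(2)$: the identity pattern $12\cdots k$ should single out a specific idempotent in the partition algebra $\PPP_k(n)$ whose image in $\End_{\symm_n}(V^{\otimes k})$ acts on each $V^\mu$-isotypic component as a non-negative operator; extending this construction to arbitrary $\lambda$ is where I expect the main technical effort to lie.
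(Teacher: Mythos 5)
This statement is an open conjecture (Gaetz--Pierson, Conj.\ 1.4), and the paper offers no proof of it; it is stated in the concluding chapter precisely as an unsolved problem, with only the partial cases $\lambda=(1),(1,1),(2)$ known from \cite{GP} and Iskander's verification for $|\lambda|\leq 8$. Your proposal therefore cannot be checked against a proof in the paper, and, more importantly, it is not itself a proof. The parts of your setup that are verifiable are correct and worth recording: an occurrence of the identity pattern is an order-preserving partial injection, and an order-preserving bijection of a finite totally ordered set onto itself is the identity, so $G_n(I,J)$ indeed has no cycles of length $\geq 2$; Proposition~\ref{atomic-factorization} then gives $A_{n,I,J}=p_1^c\cdot\vec{p}_{\mu}$, and grouping by isomorphism type yields the non-negative counts $M(c,\mu_{>1};n)$. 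Your numerical sanity check is also right: combining Proposition~\ref{basic-expansions} with $N_{12}=\binom{n}{2}-\inv$ gives $\ch_n(R\,N_{12})=\frac{n(n-1)}{4}s_n+\frac{n+1}{6}s_{n-1,1}+\frac{1}{6}s_{n-2,1,1}$, so the $s_{n-2,2}$ coefficient vanishes.

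The gap is that everything after this reduction is a wish list rather than an argument. You reduce the conjecture to showing that a signed sum over monotonic ribbon tilings paired with horizontal strips is non-negative, and then offer three candidate strategies (a module realization, a sign-reversing involution, a partition-algebra idempotent), each of which you acknowledge you cannot currently execute; you even note that your leading candidate module is ruled out by small examples and must be ``refined'' in an unspecified way. The Path Murnaghan--Nakayama rule is explicitly not cancellation-free (Problem~\ref{cancellation-free-problem}), so the positivity you need is exactly the hard content of the conjecture, and none of your proposed routes supplies it. If you want to make progress, the most concrete target coming out of your reduction is Corollary~\ref{atomic-schur-expansion} specialized to $\nu=(1^c)$: you would need to show that the sums $\sum_{(c,\mu)}M(c,\mu;n)\,m(\mu)!\sum_{\rho}\vec{\chi}^{\,\rho}_{\,\mu}\chi^{\lambda/\rho}_{(1^c)}$ are non-negative, and at present there is no mechanism in your write-up that controls the signs $\vec{\chi}^{\,\rho}_{\,\mu}$.
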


Recently, Iskander made significant progress on Conjecture~\ref{GaetzPierson}, demonstrating that the coefficient of $s_{\lambda[n]}$ for $\ch_n(R N_{12\dots k})$ can be viewed as a bivariate polynomial in $n$ and $k$.
He has verified the conjecture up for $|\lambda| \leq 8$ and disproved the stronger conjecture that such coefficients are real rooted.

More generally, for $f$ a regular function it would be interesting to identify when the coefficient of $s_{\lambda[n]}$ in $\ch_n (R f)$ is non-negative, or when its numerator is real rooted.
 
 \section{Path power sums as character polynomials}
 Proposition~\ref{indicator-polynomiality-proposition} shows that for any partial permutation $(I,J) \in \symm_{n,k}$, the function $R \, \one_{I,J}$
 is (up to a normalizing factor) a polynomial in the variables $n, m_1, \dots, m_k$.
 We have made extensive use of this polynomiality and the relevant degree bounds in this paper.

 \begin{problem}
 \label{characteristic-poly-problem}
 	For $(I,J) \in \symm_{n,k}$, give an explicit description of or an
	algorithm for computing the polynomial $R \, \one_{I,J}$ in terms of $n,m_1,m_2,\dots,m_k$.
 \end{problem}

Note that Proposition~\ref{fixed-point-coefficient} characterizes all terms of the form $n^a m_1^b$.
For $\lambda \vdash k$, a solution Problem~\ref{characteristic-poly-problem} would 
also give a description for the polynomial $\ch_n^{-1}(p_\lambda s_{(n-k)})$.
Garsia and Goupil \cite{GG} solved Problem~\ref{characteristic-poly-problem} for the character polynomials attached to partitions.
Their result~\cite[Prop. 6.2 (a)]{GG} shows $\ch_n^{-1}(p_{(k)} s_{(n-k)}) = k \cdot m_k$.

 \section{Other groups}
 In this paper we consider functions on the symmetric group.
One can also consider functions on other groups $G$ from a 
 Fourier theoretic perspective (the cases $G = S^1$ and $G = (\ZZ/2\ZZ)^n$ were mentioned in the introduction).
 For representation-theoretic purposes, it is best to consider complex-valued functions on $G$.
 
 \begin{problem}
 \label{other-groups-problem}
 Extend the results of this paper to (complex-valued) functions $f: G \rightarrow \CC$
 on a wider class of groups $G$.
 \end{problem}
 
One class of groups $G$ to which Problem~\ref{other-groups-problem} could be applied are the complex reflection groups.
Given parameters $n, r,$ and $p$ with $p \mid r$, let $G(r,p,n)$ be the group of $n \times n$ complex matrices $A$ with a unique nonzero entry in every 
row and column such that 
\begin{itemize}
\item every nonzero entry of $A$ is a $r^{th}$ root-of-unity, and
\item the product of the nonzero entries of $A$ is a $(r/p)^{th}$ root-of-unity.
\end{itemize}
We have $G(1,1,n) = \symm_n$ and we may identify $G(r,1,n) = (\ZZ/r\ZZ) \wr \symm_n$ with a wreath product, which have been previously studied from a Fourier-theoretic perspective~\cite{Rockmore} with applications to image processing~\cite{FMRHO}.
A result analogous to Corollary~\ref{cycle-type-asymptotics} has been proved for these groups in~\cite{LLLSY2}.
The groups $G(r,p,n)$ constitute the single infinite family in the classification of irreducible complex reflection groups (with the proviso that 
$G(1,1,n)$ is viewed as acting on the subspace $x_1 + \cdots + x_n = 0$ of $\CC^n$).
The representation theory of $G(r,p,n)$ is analogous to that of $\symm_n$ (see \cite{Stembridge}), so Problem~\ref{other-groups-problem}
could have a productive solution in this setting.

Another class of groups to which Problem~\ref{other-groups-problem} could be applied are compact simple complex Lie groups $G$.
For example, the character theory of the unitary group $U(n)$ has many formal similarities with that of the symmetric group $\symm_n$.
One would likely need conditions on $f: G \rightarrow \CC$ such as $f \in L^2(G)$ or $f$ being polynomial to prove interesting results in this setting.
Alternatively, one could consider Lie groups over finite fields.
For these groups, a notion of ``level'' analogous to degree appears in~\cite{GLT}, where it is used to compute bounds on characters.
 
\section{Character evaluations}
For any $k \leq n$ and $w \in \symm_n$,
Corollary~\ref{trace-interpretation} gives a combinatorial interpretation of the irreducible character evaluations $\chi^{\lambda}([w \cdot \symm_k]_+)$
on the sum over the coset $w \cdot \symm_k$ for any partition $\lambda \vdash n$.
This leads to the following problem, which we will sharpen and motivate after its statement.

\begin{problem}
\label{trace-evaluation-problem}
Let $\psi: \CC[\symm_n] \rightarrow \CC$ be a linearly extended class function $\symm_n \rightarrow \CC$. Give a combinatorial rule to evaluate 
$\psi(g)$ for certain group algebra elements $g \in \CC[\symm_n]$.
\end{problem}

In this paper we solved Problem~\ref{trace-evaluation-problem} when $\psi = \chi^{\lambda}$ is an irreducible character and 
$g = [w \cdot \symm_k]_+$ is conjugate to a partial permutation.
One way to generalize would be to replace $\symm_k$ with a more general parabolic subgroup $\symm_{\mu}$.
Cosets in $\symm_n/\symm_{\mu}$ are in bijection with {\em ordered set partitions} \cite{HRS}
of $[n]$ into blocks $(B_1 \mid B_2 \mid \cdots )$
where $B_i$ has size $\mu_i$. 
It could be interesting to find an interpretation of $\chi^{\lambda}([w \cdot \symm_{\mu}]_+)$, the irreducible character value `on an ordered set partition'.

The class function $\psi$ in Problem~\ref{trace-evaluation-problem} can also be varied in interesting ways.
If $\varepsilon^{\lambda}: \CC[\symm_n] \rightarrow \CC$ is the {\em induced sign character} satisfying $\ch_n(\varepsilon^{\lambda}) = e_{\lambda}$
and $\{ C'_w(1) \,:\, w \in \symm_n \}$ is the {\em Kazhdan-Lusztig basis} of $\CC[\symm_n]$,
Clearman, Hyatt, Shelton, and Skandera \cite{CHSS} proved that if 
 $\varepsilon^{\lambda}(C'_w(1)) > 0$ whenever $w \in \symm_n$ is 312-avoiding and $\lambda \vdash n$, the 
 Stanley-Stembridge $e$-positivity conjecture \cite{SS} would follow.
 Since $[\symm_k]_+ = C'_w(1)$ where $w = [k, k-1, \dots, 2, 1, k+1, k+2, \dots, n-1,n]$, there is some overlap between evaluations on
 the KL basis and evaluations on $[w \cdot \symm_k]_+$.
 
 Problem~\ref{trace-evaluation-problem} has a natural $q$-analog.  Let $\HHH_n(q)$ be the Hecke algebra attached to $\symm_n$ over the field
 $\CC(q)$.  A $\CC(q)$-linear function $\psi: \HHH_n(q) \rightarrow \CC(q)$ is a {\em trace} if $\psi(ab) = \psi(ba)$ for 
 all $a, b \in \HHH_n(q)$. Hecke algebra traces reduce to symmetric group class functions in the limit $q \rightarrow 1$.
 The irreducible and induced sign characters extend to traces $\chi^{\lambda}_q, \varepsilon^{\lambda}_q: \HHH_n(q) \rightarrow \CC(q)$.
 
 Ram \cite{Ram} extended the Murnaghan-Nakayama Rule to $\HHH_n(q)$ by giving a combinatorial formula for 
 $\chi^{\lambda}_q(T_w)$ where $\{ T_w \,:\, w \in \symm_n \}$ is the natural basis of $\HHH_n(q)$ lifting the basis $\{ w \,:\, w \in \symm_n \}$ 
 of $\CC[\symm_n]$. 
 It may be interesting to give an appropriate lift $[w \cdot \symm_k]^{(q)}_+$ of $[w \cdot \symm_k]_+$ to $\HHH_n(q)$ such that 
 $\chi^{\lambda}_q( [w \cdot \symm_k]^{(q)}_+ )$ has a nice form.
 
 In a different direction, there is an extension $\{ C'_w(q) \,:\, w \in \symm_n \}$ of the KL-basis to $\HHH_n(q)$.
 Clearman et. al. \cite{CHSS} gave combinatorial interpretations for 
 $\chi^{\lambda}_q(C'_w(q)), \varepsilon^{\lambda}_q(C'_w(q)) \in \CC(q)$ when $w$ is 3412,4231-avoiding.
 The same authors proved \cite{CHSS} that if $\varepsilon^{\lambda}_q(C'_w(q)) \in \ZZ_{\geq 0}[q]$ for 
 all $\lambda \vdash n$ and 312-avoiding $w \in \symm_n$, the Shareshian-Wachs refinement \cite{SW} of the Stanley-Stembridge Conjecture holds.
 
 \section{Joint distributions}
 In this paper we exclusively considered real-valued permutation statistics of the form $\stat:\symm_n \rightarrow \RR$.
It is natural to ask whether our methods can extend to functions of the form $f:\symm_n \rightarrow \RR^k$.
Since $f = (f_1,\dots,f_k)$ with $f_i : \symm_n \to \RR$, this would correspond to understanding the joint distribution of $f_1,\dots,f_k$.

\begin{problem}
\label{multivariate-problem}
	Use our methods to understand the joint distribution of regular functions.
\end{problem}

In the uniform case, the joint distribution of ordinary pattern counting statistics is relatively well understood.
Building on work of Burstein and H\"ast\"o~ for the bivariate case\cite{BH}, Janson, Nakamura and Zeilberger showed $(N_\sigma)_{\sigma \in \symm_k}$ is jointly normal and explicitly computed its covariance matrix~\cite{JNZ}.
Even-Zohar gives refinements of their work by describing some geometric  properties for the image of this $k!$-variate function~\cite{EZ}.

One can also consider non-numeric functions of permutations.
For example, there are important set-valued permutation statistics such as the 
 {\em descent set} 
 \begin{equation}
 \Des(w) := \{ 1 \leq i \leq n-1 \,:\, w(i) > w(i+1) \}
 \end{equation}
 of $w \in \symm_n$.
 If $D: \symm_n \rightarrow 2^{[n-1]}$ is a statistic (such as $\Des$) which attaches a subset $D(w) \subseteq [n-1]$
 to a permutation $w \in \symm_n$, we have a formal power series
 \begin{equation}
 \label{eq:quasi}
 \ch_n(R \, D) = \frac{1}{n!} \cdot \sum_{w \in \symm_n} F_{D(w)} \cdot p_{\lambda(w)}
 \end{equation}
 where $F_{D(w)} = F_{D(w)}(y_1, y_2, \dots )$ is the {\em fundamental quasisymmetric function} over a variable set  $(y_1, y_2, \dots )$
 disjoint from the variables $(x_1, x_2, \dots)$ of $p_{\lambda(w)}$.
Here $\lambda(w) \vdash n$ is the cycle type of $w$.

In the case $D(w) = \Des(w)$, the statistic $\ch_n(R \, D)$ has a nice formula. 
For $n \geq 1$, consider the symmetric function 
$L_n := \frac{1}{n} \sum_{d \mid n} \mu(d) p_d^{n/d} \in \Lambda_n$ where $\mu(d)$ is the number-theoretic M\"obius function.
Given a partition $\lambda = (1^{m_1} 2^{m_2} \cdots )$, we extend this definition by setting
$L_{\lambda} := h_{m_1}[L_1] h_{m_2}[L_2] \cdots $ where $h_m[ - ]$ denotes  plethystic substitution.
The $L_{\lambda}$ are the {\em Lyndon symmetric functions}. Gessel and Reutenauer proved \cite[Thm. 2.1]{GesReu} that
\begin{equation}
\ch_n(R \, \Des) = \frac{1}{n!} \cdot \sum_{\lambda \vdash n} L_{\lambda} \cdot p_{\lambda},
\end{equation}
motivating the following problem.

\begin{problem}
\label{set-valued-problem}
Extend the techniques of this paper to set-valued permutation statistics.
\end{problem}

Problems~\ref{multivariate-problem} and~\ref{set-valued-problem}  can also be combined in the context of tuples of statistics $(\stat_1, \stat_2, \dots )$ on $\symm_n$.
Gessel and Zhuang \cite{GZ} used plethystic techniques to compute the functions defined in Equation~\eqref{eq:quasi} attached to 
 various (tuples of) statistics
 over sets $\Pi \subseteq \symm_n$ when such functions are in fact symmetric.
 By the work of Gessel-Reutenauer \cite{GR}, this includes all conjugacy classes $K_{\lambda} \subseteq \symm_n$.

\section{Limiting distributions}
\label{ss:normality}

In Section~\ref{Convergence} we applied our results to obtain asymptotic properties of certain regular statistics restricted to sequences of conjugacy classes, 
 most notably for vincular pattern enumerators.
 While our results on the asymptotic mean and variance are quite general, they do not extend to higher moments.
The sequences $\{ \lambda^{(n)} \}$ of partitions of $n$ for which we establish convergence are much more limited.
We suspect convergence should be much more general.

\begin{problem}
	Given a sequence $\{\lambda^{(n)}\}$ of cycle types, identify conditions on small cycle counts for which regular statistics are asymptotically normal.
\end{problem} 

Using our work, Feray and Kammoun have solved this problem for classicaly statistics.
Specifically, they demonstrate for $\{\lambda^{(n)}\}$ a sequence of partitions with $\alpha = \lim_{n \to \infty} m_1(\lambda^{(n)})/n$ and $\beta = \lim_{n \to \infty} m_2(\lambda^{(n)})/n$ with $\alpha < 1$ that any vincular statistics is asympotically normal or degenerate~\cite[Thm~1.3]{FK}.
For classical patterns, they establish non-degeneracy~\cite[Thm~1.4]{FK}.
We remark that their methods extend to joint distributions.
Further work by Dubach established several of these results independently of our work~\cite{Dubach}.

Our proofs of convergence to normality ultimately relied on Hofer's Theorem~\ref{hofer-convergence} showing that the statistics $N_{v,A}$ 
  converge to normality on all of $\symm_n$, or alternatively on Janson's~\cite{Janson}.
The Feray-Slim proofs rely critically on Hofer's methods as well.

  \begin{problem}
  \label{hofer-problem}
  Use the techniques of this paper to prove Hofer's Theorem~\ref{hofer-convergence}.
  More generally, use the methods of this paper to describe further families of regular permutation statistics that are asymptotically normal.
  \end{problem}
  
Hofer's proof relies on dependency graphs, and ultimately reduces to showing the variance of a regular statistic, as normalized in Theorem~\ref{regular-variance}, does not vanish.
The Feray--Kammoun work also relies on this method.
Alternatively, a na\"ive approach to Problem~\ref{hofer-problem} via the Method of Moments would use the expression
  \begin{equation}
  \EE(N_{v,A}^d) = \text{coefficient of $s_n$ in } \ch_n(R \, N_{v,A}^d)
  \end{equation}
  coming from Proposition~\ref{average-value-extraction}. Unfortunately, the $T$-expansions of the statistics $N_{v,A}^d$ become 
  very complicated as $d$ grows, making the coefficient of $s_n$ in $\ch_n(R \, N_{v,A}^d)$ difficult to access.

It would also be interesting to find the limiting distributions of regular statistics that are not asymptotically normal on a sequence of cycle types.
For example, a bivincular pattern $(v,A,B)$ is {\em very tight} if 
 $v \in \symm_k$ and $A = B = [k-1]$.
The statistic $N_{v,A,B}$ of a very tight pattern counts instances of $v$ in which all positions
 and all values must be consecutive.
Such statistics studied in \cite{CLP}.
The very tight random variables $N_{v,A,B}: \symm_n \rightarrow \RR$ converge to a Poisson distribution for patterns of size $k = 2$.
When $k > 2$, the probability to find a very tight pattern of size $k$ in a permutation in $\symm_n$ tends to zero.
Characterizing the cycle types where this holds would be an interesting challenge.
  
The building blocks of regular permutation statistics are the constrained translate statistics $T^f_{(U,V),C}: \symm_n \rightarrow \RR$
  indexed by packed triples $((U,V),C,f)$.
 The following would constitute a general approach to Problem~\ref{hofer-problem}
   
   \begin{problem}
   \label{constrained-problem}
   Let $((U,V),C,f)$ be a packed triple and let $T_{(U,V),C}^f: \symm_n \rightarrow \RR$ be the associated constrained translate statistic.
   Characterize the convergence properties of $T_{(U,V),C}^f$ as $n \rightarrow \infty$, both on the full symmetric group $\symm_n$ and 
   on sequences $\{ \lambda^{(n)} \}$ of cycle types.
   \end{problem}
   In unpublished work, Coopman uses similar methods to identify the unweighted constrained translates that are asymptotically normal~\cite{Coopman}.
   
   Since regular statistics are linear combinations of $T$-statistics, Problem~\ref{constrained-problem} could be studied in conjunction 
   with the following to determine their moments.
   
   \begin{problem}
   \label{dependence-problem}
   Characterize the dependency between the constrained translate statistics $T_{(U,V),C}^f$ for various packed triples 
   $((U,V),C,f)$.
   \end{problem}
   
   Although the random variables $T_{(U,V),C}^f$ are not independent, one  hopes that Problem~\ref{dependence-problem} could be solved to
   show that they are ``almost" independent in many cases of interest.
   This would allow one to apply the method of dependency graphs.

 \section{Other probability measures}
   Kammoun's Theorem~\ref{kammoun-theorem} and our Theorem~\ref{our-long-cycles} consider permutation statistics 
   with respect to conjugacy-invariant measures 
   $\mu_n: \symm_n \rightarrow \RR_{\geq 0}$ which are not necessarily concentrated on a single conjugacy class $K_{\lambda}$.
   Conjugacy-invariant measures on $\symm_n$ bear the same relationship to arbitrary measures as class functions $\symm_n \rightarrow \CC$ do to 
   arbitrary functions $\symm_n \rightarrow \CC$.
   
   \begin{problem}
   \label{measures-problem}
   Extend the results of Section~\ref{Convergence} to measures $\mu_n: \symm_n \rightarrow \RR_{\geq 0}$ that are not necessarily conjugacy-invariant.
   \end{problem}

  Problem~\ref{measures-problem} features challenges similar to those present when studying functions $\symm_n \rightarrow \CC$
  that are not necessarily class functions.
  One loses the machinery of symmetric functions and often needs to consider specific bases of 
  $\symm_n$-irreducibles $V^{\lambda}$.
  Nevertheless, Hultman has demonstrated a method for applying $R(\stat)$ to understand its behavior for certain distributions induced by random walks~\cite{Hultman}.

\section{Nonlocal functions}  
There are many interesting permutation statistics which do not possess nontrivial locality. One famous example is
  $\lis: \symm_n \rightarrow \CC$ where $\lis(w)$ is the longest increasing subsequence of a permutation $w \in \symm_n$.
   More formally, we have
   \begin{equation}
   \lis(w) = \max \{\, |I| \,:\, I = \{i_1 < \cdots < i_k\}\  \mbox{with}\ w(i_1) < \cdots < w(i_k) \}.
   \end{equation}
   Baik, Deift, and Johansson proved \cite{BDJ} that the statistic $\lis$ converges 
    to the Tracy-Widom distribution as $n \rightarrow \infty$, after shifting and rescaling.
  
  \begin{problem}
  \label{nonlocal-problem}
  Apply the methods of this paper to nonlocal permutation statistics $\symm_n \rightarrow \CC$ such as the statistic $\lis$.
  \end{problem}
  
  One approach to Problem~\ref{nonlocal-problem} could involve `cutting off' permutation statistics to lower their locality.
  If a statistic $f$ is well-approximated by $L_k f$ for some value $k$, it is reasonable to expect our methods to still apply.
  For example, it is known that $\EE(\lis)/\sqrt{n} \rightarrow 2$ as $n \rightarrow \infty$ and that the distribution of $\lis$ is strongly 
  concentrated around its expectation.
  This suggests introducing a new statistic $\overline{\lis}$ given by
  \begin{equation}
  \overline{\lis}(w) := \min( \lis(w), n^{\beta} )
  \end{equation}
  where $1/2 < \beta < 1$ is a parameter.
  Although not $k$-local for any fixed $k$, it is conceivable that 
  $\overline{\lis}$ can be analyzed by methods that apply to local statistics and admit profitable comparison to the statistic $\lis$.
  
In a related vein, Dubail has recently given a concentration inequality for certain functions with bounds based on locality~\cite[Thm.~1.2]{Dubail}.
His application to Markov chain mixing uses the inequality for functions that are $O(\sqrt{n})$--local.
  
  \section{Other combinatorial objects}
 This paper was exclusively concerned with functions $f: \symm_n \rightarrow \CC$ defined on permutations.
 
 \begin{problem}
 \label{other-objects-problem}
 Extend this paper to the study of statistics $f: \Omega_n \rightarrow \CC$ on other classes $\Omega_n$ of combinatorial objects.
 \end{problem}
 
 For example, in Section~\ref{Pattern} we studied the case  $\Omega_n = \MMM_n$ of perfect matchings by means of the identification 
 $\MMM_n = K_{2^n}$ with fixed-point-free involutions.
 We could also study the class $\Omega_n = \AAA^n$ of all length $n$ words over a given alphabet $\AAA$ or
 (following Chern-Diaconis-Kane-Rhoades \cite{CDKR1, CDKR2}) the class $\Omega_n = \Pi_n$ of set partitions of $[n]$.

 For $r$ fixed, another setting to which Problem~\ref{other-objects-problem} can be applied is the class
 $\Omega_n = \VVV_{n,r}$ of $r$-uniform hypergraphs on $[n]$.  Hypergraphs $\gamma \in \VVV_{n,r}$ may be thought
 of as Boolean functions $\gamma: \{0,1\}^{\binom{n}{r}} \rightarrow \{0,1\}$ on the vertex set of a hypercube of dimension $\binom{n}{r}$.
The  space $\CC[\VVV_{n,r}]$ of functions $\VVV_{n,r} \rightarrow \CC$
 as the polynomial ring quotient
 \begin{equation}
 \CC[\VVV_{n,r}] = \CC \left[x_S \,:\, S \in \binom{[n]}{r} \middle] \right/ \III
 \end{equation}
 where the $x_S$ are variables indexed by $r$-element subsets $S \subseteq [n]$ and the ideal $\III$ is generated by 
 the quadratic relations $x_S^2 - x_S$.
  Since the ideal $\III$ is inhomogeneous, the quotient ring $\CC[\VVV_{n,k}]$ is not graded.
However, for any degree $d$  one can still consider the space 
\begin{equation}
\CC[\VVV_{n,r}]_{\leq d} = \text{image of $\CC[x_S]_{\leq d}$ in $\CC[\VVV_{n,r}]$}
\end{equation}
of functions $\VVV_{n,r} \rightarrow \CC$ of degree $\leq d$.  
The space $\CC[\VVV_{n,r}]_{\leq d}$ carries an action of $\symm_n$.

In the important case $r = 2$ of graphs on $[n]$, Raymond, Saunderson, Singh, and Thomas proved \cite[Thm. 2.4]{RSST}
a support result on the $\symm_n$-structure of 
$\CC[\VVV_{n,2}]_{\leq d}$ analogous to those in Section~\ref{Local}.
They also determined \cite[Thm. 3.18]{RSST} a spanning set of $\CC[\VVV_{n,2}]_{\leq d}$ in which the indicator functions $\one_{I,J}$
on permutation are replaced with indicator functions involving subgraph containment; this is a graph-theoretic analogue
of Corollary~\ref{aw-local}.
As sketched at the conclusion of \cite{RSST}, the results of Raymond et. al. extend to the hypergraph setting of $\VVV_{n,r}$ with minor modifications.
The authors of \cite{RSST} applied their methods to the computation of sum of squares certificates which establish the nonnegativity of polynomial functions
on the vertex set $\{ 0,1 \}^{\binom{n}{k}}$.

The results of Raymond et. al. in \cite{RSST} may be thought of (at a high level)
as graph-theoretic versions of the results in Section~\ref{Local} on local permutation
statistics.
These results concern the {\em support} of certain $\symm_n$-modules.
It would likely be interesting to study the actual {\em multiplicities} in these modules by developing a machinery analogous 
to the Path Murnaghan-Nakayama Rule in Section~\ref{Path} and defining a `regular' statistic on hypergraphs as in Section~\ref{Regular}.
The deeper representation theory of Section~\ref{Path} and more powerful convergence results of Section~\ref{Convergence} suggest that 
strong results on (hyper)graph statistics can be obtained in this way.

\backmatter
%    Bibliography styles amsplain or author-year (using natbib) are
%    also acceptable.
\bibliographystyle{amsalpha}
\bibliography{local-bib}

%    See note above about multiple indexes.
\printindex

\end{document}